\renewcommand*{\backref}[1]{}
\renewcommand*{\backrefalt}[4]{%
    \ifcase #1 %
        \emph{(No citations.)}%
    \or
        \emph{(One citation on page #2.)}%
    \else
        \emph{(#1 citations on pages #2.)}%
    \fi
}
\tikzstyle myBG=[line width=3pt,opacity=1]
\newcommand\pgfmathsinandcos[3]{%
  \pgfmathsetmacro#1{sin(#3)}%
  \pgfmathsetmacro#2{cos(#3)}%
}
\newcommand\LongitudePlane[3][current plane]{%
  \pgfmathsinandcos\sinEl\cosEl{#2} 
  \pgfmathsinandcos\sint\cost{#3} 
  \tikzset{#1/.style={cm={\cost,\sint*\sinEl,0,\cosEl,(0,0)}}}
}
\newcommand\LatitudePlane[3][current plane]{%
  \pgfmathsinandcos\sinEl\cosEl{#2} 
  \pgfmathsinandcos\sint\cost{#3} 
  \pgfmathsetmacro\yshift{\cosEl*\sint}
  \tikzset{#1/.style={cm={\cost,0,0,\cost*\sinEl,(0,\yshift)}}} %
}
\newcommand\DrawLongitudeCircleFront[2][1]{
  \LongitudePlane{\elevationangle}{#2}
  \tikzset{current plane/.prefix style={scale=#1}}
  \pgfmathsetmacro\angVis{atan(sin(#2)*cos(\elevationangle)/sin(\elevationangle))} %
  \draw[current plane, very thick] (\angVis:1) arc (\angVis:\angVis+180:1);
}
\newcommand\DrawLongitudeCircleBack[2][1]{
  \LongitudePlane{\elevationangle}{#2}
  \tikzset{current plane/.prefix style={scale=#1}}
  \pgfmathsetmacro\angVis{atan(sin(#2)*cos(\elevationangle)/sin(\elevationangle))} %
  \draw[current plane, thick, dashed, color=black!50] (\angVis-180:1) arc (\angVis-180:\angVis:1);
}
\newcommand\DrawLatitudeCircle[2][1]{
  \LatitudePlane{\elevationangle}{#2}
  \tikzset{current plane/.prefix style={scale=#1}}
  \pgfmathsetmacro\sinVis{sin(#2)/cos(#2)*sin(\elevationangle)/cos(\elevationangle)}
  \pgfmathsetmacro\angVis{asin(min(1,max(\sinVis,-1)))}
  \draw[current plane, dotted, color=black!30] (\angVis:1) arc (\angVis:-\angVis-180:1);
  \draw[current plane, dotted, color=black!20] (180-\angVis:1) arc (180-\angVis:\angVis:1);
}
\definecolor{cof}{RGB}{219,144,71}
\definecolor{pur}{RGB}{186,146,162}
\definecolor{greeo}{RGB}{91,173,69}
\definecolor{greet}{RGB}{52,111,72}
\newcommand{\link}{\mathop{\mathrm{lk}}\nolimits}
\newcommand{\bd}{\partial}
\newcommand{\sgn}{\mathop{\mathrm{sgn}}}
\newcommand{\gldim}{\mathop{\mathrm{gl.~dim}}}
\newcommand{\Stab}{\mathop{\mathrm {Stab}}}
\newcommand{\Cliq}{\mathsf{Cliq}}
\renewcommand{\to}{\longrightarrow}
\newcommand{\rad}{\mathop{\mathrm{rad}}\nolimits}
\newcommand{\rk}{\mathop{\mathrm{rk}}\nolimits}
\newcommand{\R}{{\mathrel{\mathscr R}}} 
\newcommand{\res}{\mathop{\mathrm{res}}\nolimits}
\newcommand{\inv}{^{-1}}
\newcommand{\p}{\varphi}
\newcommand{\ov}[1]{\ensuremath{\overline {#1}}}
\newcommand{\til}[1]{\ensuremath{\widetilde {#1}}}
\newcommand{\wh}{\widehat}
\newcommand{\Hom}{\mathop{\mathrm{Hom}}\nolimits}
\newcommand{\Ext}{\mathop{\mathrm{Ext}}\nolimits}
\newcommand{\CW}{\Sigma}
\def\AAA{\mathcal A} 
\def\LLL{\mathcal L} 
\def\FFF{\mathcal F} 
\def\source{s}
\def\target{t}
\newcommand{\cdim}{\mathop{\mathrm{cd}}\nolimits}
\newtheorem{Thm}{Theorem}[section]
\newtheorem{Prop}[Thm]{Proposition}
\newtheorem{Lemma}[Thm]{Lemma}
{\theoremstyle{definition}
\newtheorem{Def}[Thm]{Definition}}
{\theoremstyle{remark}
\newtheorem{Rmk}[Thm]{Remark}}
\newtheorem{Cor}[Thm]{Corollary}
{\theoremstyle{remark}
\newtheorem{Example}[Thm]{Example}}
\theoremstyle{remark}
\theoremstyle{remark}
\theoremstyle{remark}
\numberwithin{equation}{section}
\numberwithin{figure}{section}
\newcommand{\FeasibleSets}{\mathscr F}
\newcommand{\ext}{\operatorname{ext}} 
\newcommand{\SeparationSet}{S} 
\newcommand{\OIG}{\mathcal L} 
\newcommand{\conv}{\operatorname{conv}} 
\newcommand{\con}{\operatorname{con}}
\newcommand{\Flat}{\operatorname{flat}}
\title[Cell complexes and representation theory]{Cell complexes, poset topology and the representation theory of algebras arising in algebraic combinatorics and discrete geometry}
\author{Stuart Margolis}
\address[S.~Margolis]{%
    Department of Mathematics\\
    Bar Ilan University\\
    52900 Ramat Gan\\
    Israel}
\email[S.~Margolis]{margolis@math.biu.ac.il}
\author{Franco Saliola}
\address[F.~Saliola]{
D{\'e}partement de Math{\'e}matiques -- LaCIM\\
Universit{\'e} du Qu{\'e}bec {\`a} Montr{\'e}al\\
C.P. 8888, Succursale Centre-Ville\\
Montr{\'e}al, Qu{\'e}bec  H3C 3P8\\
Canada
}
\email[F.~Saliola]{saliola.franco@uqam.ca}
\author{Benjamin Steinberg}
\address[B.~Steinberg]{%
    Department of Mathematics\\
    City College of New York\\
    Convent Avenue at 138th Street\\
    New York, New York 10031\\
    USA}
\email[B.~Steinberg]{bsteinberg@ccny.cuny.edu}
\thanks{This work was partially supported by a grant from the Simons Foundation (\#245268 to Benjamin Steinberg) and the Binational Science Foundation of Israel and the US (\#2012080 to Stuart Margolis and Benjamin Steinberg).  Steinberg was also supported in part by a CUNY Collaborative Research Incentive Grant, by  NSA MSP \#H98230-16-1-0047, a PSC CUNY grant and a Fulbright Scholarship. Saliola was supported by an NSERC grant}
\date{Submitted, December 7, 2015, Revised September 26, 2018 }
\keywords{regular cell complexes, quivers, finite dimensional algebras, hyperplane arrangements, zonotopes, oriented matroids, CAT(0) cube complexes, CAT(0) zonotopal complexes, left regular bands}
\subjclass[2010]{20M30, 16G10, 05E10, 52C35, 52C40, 16S37, 20M25, 52B05, 16E10}
\renewcommand\nomgroup[1]{%
  \bigskip
  \item[\bfseries
  \ifstrequal{#1}{A}{General}{%
  \ifstrequal{#1}{C}{CW complexes}{%
  \ifstrequal{#1}{H}{Hyperplane arrangements}{%
  \ifstrequal{#1}{L}{Left regular bands}{%
  \ifstrequal{#1}{M}{Matroids}{%
  \ifstrequal{#1}{P}{Posets}{%
  \ifstrequal{#1}{Q}{Quivers}{%
  \ifstrequal{#1}{R}{Rings and modules}{%
  \ifstrequal{#1}{S}{Simplicial complexes}{%
  }}}}}}}}}%
] \leavevmode \smallskip}
\begin{document}

\begin{abstract}
In recent years it has been noted that a number of combinatorial structures such as real and complex hyperplane arrangements, interval greedoids, matroids and oriented matroids have the structure of a finite monoid called a left regular band. Random walks on the monoid model a number of interesting Markov chains such as the Tsetlin library and riffle shuffle. The representation theory of left regular bands then comes into play and has had a major influence on both the combinatorics and the probability theory associated to such structures. In a recent paper, the authors established a close connection between algebraic and
combinatorial invariants of a left regular band by showing that certain
homological invariants of the algebra of a left regular band coincide with the
cohomology of order complexes of posets naturally associated to the left
regular band.

The purpose of the present monograph is to further develop and deepen the
connection between left regular bands and poset topology. This allows us to compute finite projective resolutions of all simple modules of unital left regular band algebras over fields and much more. In the process, we are led to define the class
of CW left regular bands as the class of left regular bands whose associated posets
are the face posets of regular CW complexes. Most of the examples that have arisen in the
literature belong to this class. A new and important class of examples is
a left regular band structure on the face poset of a CAT(0) cube complex.
Also, the recently introduced notion of a COM (complex of oriented matroids or conditional oriented matroid) fits nicely into our setting and includes CAT(0) cube complexes and certain more general CAT(0) zonotopal complexes. A fairly complete picture of the representation theory for CW left regular bands is obtained.
\end{abstract}

\maketitle
\tableofcontents

\section{Introduction}
We begin with a brief history of the background and motivation for the topics of this monograph before delving into the new results and the organization of the paper.  An announcement of the results herein appears in~\cite{MSSFPSAC}.

\subsubsection*{Application to Markov chains}
In a highly influential paper~\cite{BHR}, Bidigare, Hanlon and Rockmore showed
that a number of popular Markov chains, including the Tsetlin library and the
riffle shuffle, are random walks on the faces of a hyperplane arrangement (the
braid arrangement for these two examples).
More importantly, they showed that the representation theory of the monoid of
faces, where the monoid structure on the faces of a central hyperplane
arrangement is given by the Tits projections~\cite{Titsappendix}, could be used
to analyze these Markov chains and, in particular, to compute the spectrum of
their transition operators.

The face monoid of a hyperplane arrangement satisfies the semigroup identities $x^2 = x$ and $xyx = xy$. Semigroups satisfying these identities are known in the literature as left regular bands (although they were studied early on by Sch\"utzenberger~\cite{Schutzlrb} under the more descriptive name ``\textit{treillis gauches},'' translated by G.~Birkhoff in his Math Review as skew lattices, a term which nowadays has a different meaning). Brown developed~\cite{Brown1,Brown2} a theory of random walks on finite left regular bands. He gave numerous examples that do not come from hyperplane arrangements, as well as examples of hyperplane walks that could more easily be modeled on simpler left regular bands. For example, Brown considered random walks on bases of matroids. Brown used the representation theory of left regular bands to extend the spectral results of Bidigare, Hanlon and Rockmore~\cite{BHR} and gave an algebraic proof of the diagonalizability of random walks on left regular bands. Some of these results, including a simpler proof of diagonalizability, can be found in~\cite[Chapter~14]{repbook} with a textbook presentation.

Brown's theory has since been used and further developed by numerous authors.
Diaconis highlighted hyperplane face monoid and left regular band walks in his
1998 ICM lecture~\cite{DiaconisICM}.
Bj\"orner used it to develop the theory of random walks on complex hyperplane
arrangements and interval greedoids~\cite{bjorner1,bjorner2}.
Athanasiadis and Diaconis revisited random walks on hyperplane face monoids and left
regular bands in~\cite{DiaconisAthan}.
Chung and Graham considered further left regular band random walks associated
to graphs in~\cite{GrahamLRB}.
Saliola and Thomas proposed a definition of oriented interval greedoids by
generalizing the left regular bands associated to oriented matroids and
antimatroids~\cite{SaliolaThomas}.
A detailed study of symmetrized versions of random walks on hyperplane face
monoids was initiated in recent work of Reiner, Saliola and
Welker~\cite{RSW2014}; see also~\cite{SaliolaSym}.
Left regular bands have also appeared in Lawvere's work in topos theory~\cite{graphic1,moregraphic} under the name ``graphic monoids.''

The representation theory of hyperplane face monoids is also closely connected to Solomon's descent algebra~\cite{SolomonDescent}.  Bidigare showed in his thesis~\cite{Bidigarethesis} (see also~\cite{Brown2}) that if $W$ is a finite Coxeter group and $\AAA_W$ is the associated
reflection arrangement, then the descent algebra of $W$  is the (opposite of the) algebra of invariants for the action of $W$ on the algebra of the face monoid of $\AAA_W$. This, together with his study of the representation theory of hyperplane face monoids~\cite{Saliolahyperplane}, allowed the second author to compute the quiver of the descent algebra in types A and B~\cite{SaliolaDescent} (see also~\cite{schocker}),
and to compute the Loewy length of the descent algebra of type D~\cite{SaliolaTypeD}.

To further emphasize the scope and applicability of the theory of left regular
bands, we remark that the original version of Brown's book on
buildings~\cite{Brown:book1} makes no mention of the face monoid of
a hyperplane arrangement, whereas it plays a prominent role in the new
edition~\cite{Brown:book2}.
Hyperplane face monoids also have a salient position in the work of Aguiar and
Mahajan on combinatorial Hopf algebras and Hopf monoids in
species~\cite{Aguiar,aguiarspecies}. Their new work~\cite{THA}, which appeared
after the first version of this text was written, makes extensive use of
hyperplane face semigroups.
Among other things, they advance an intriguing relationship between
hyperplane face semigroups and the theory of
operads~\cite[Section~15.9]{THA}. We attempt to briefly explain the
relationship here without assuming familiarity with operads.

The starting point is the observation that several results concerning operads
can be connected to properties of the braid arrangement or its face semigroup.
For instance, one can associate certain algebras with an operad, and when this
is done with two prominent examples of operads---$\mathsf{Com}$ and
$\mathsf{Lie}$---one obtains the incidence algebra of the intersection lattice of the braid arrangement and
the monoid algebra of the hyperplane face monoid, respectively. Properties of these operads descend
to properties of these algebras: for instance, there is a notion of Koszul
duality for operads, and it turns out that $\mathsf{Com}$ and $\mathsf{Lie}$
are Koszul dual to each other. In other words, the fact that the (opposite)
incidence algebra of the intersection lattice of the braid arrangement is the Koszul dual to the
corresponding hyperplane face monoid can be seen as a consequence of
a relationship between $\mathsf{Com}$ and $\mathsf{Lie}$.
It turns out that this relationship persists when these notions are extended to
any arrangement $\mathcal A$: in this more general setting,  
$\mathsf{Com}[\mathcal A]$ and $\mathsf{Lie}[\mathcal A]$ are again Koszul dual
to each other and this fact results in the Koszul duality between the
(opposite) incidence algebra of the intersection lattice of $\mathcal A$ and the algebra of the hyperplane face monoid of
$\mathcal A$.  This Koszul duality is proved in Theorem~\ref{t:Koszul} for a much larger class
of left regular band algebras, leaving open the natural question of whether there is an operadic proof in this more general setting.

\subsubsection*{Techniques from the topology of hyperplane arrangements}

Using the topology of hyperplane arrangements, Brown and Diaconis~\cite{DiaconisBrown1} found resolutions of the simple modules for the face monoid that were later shown by the second author to be the minimal projective resolutions~\cite{Saliolahyperplane}. Brown and Diaconis used these resolutions to prove diagonalizability of the transition operator of the hyperplane chamber random walk. Bounds on rates of convergence to stationarity were obtained in~\cite{BHR,DiaconisBrown1}. They observed, moreover, that one can replace the faces of a hyperplane arrangement by the covectors of an oriented matroid~\cite{OrientedMatroids1999} and the theory carries through.

The resolutions of Brown and Diaconis coming from the topology of hyperplane
arrangements also played an important role in work of the second author.
They were used to compute the $\Ext$-spaces between simple modules for face
monoids of hyperplane arrangements~\cite{Saliolahyperplane}.
He also computed a quiver presentation of such algebras.
Consequently, he computed the global dimension of these algebras.
(Left regular band algebras have acyclic quivers and hence finite global dimension, which makes
computing their global dimension a natural question.)

In~\cite{Saliola} the second author computed the projective indecomposable
modules for arbitrary left regular band algebras and also the quiver.
The topological arguments to compute minimal projective resolutions and
$\Ext$-spaces did not immediately extend to this setting because one does
not have any obvious topology available.
Our previous paper~\cite{oldpaper} provided the appropriate
topological tools and will be expanded upon, and simplified, in this work.

The paper~\cite{Saliola} also contained an intriguing unpublished result of Ken Brown stating that the algebra of a free left regular band is hereditary. The proof is via a computation of the quiver. A theorem of Gabriel~\cite{assem} states that a split basic finite dimensional algebra is hereditary if and only if it is isomorphic to the path algebra of an acyclic quiver. Since left regular band algebras are split basic (over any field), Brown's proof amounts to proving that the dimension of the path algebra of the quiver of the free left regular band is the cardinality of this monoid.

The third author (unpublished) showed that Brown's counting argument could be modified to prove that the algebra of any left regular band whose poset of principal right ideals has a Hasse diagram which is a tree is hereditary. Brown's result is a direct consequence because the Hasse diagram of the poset of principal right ideals of a free left regular band is a tree. On the other hand, the poset of principal right ideals of a hyperplane monoid is dually isomorphic to the poset of faces of the arrangement. This offers a topological explanation for the computations of Brown and Diaconis and of Saliola cited in the previous paragraph.

These two examples strongly suggested that there was a deep connection between the representation theory and cohomology of the algebra of a left regular band $B$ and the topology of the poset of principal right ideals of $B$. There seem to be only a handful of results in the finite dimensional algebra literature that use topological techniques to compute homological invariants of algebras. The primary examples seem to be in the setting of incidence algebras, where the order complex of the poset plays a key role~\cite{Cibils,Gerstenhaber,Igusa}. A more general setting is considered in~\cite{Bustamente}.

In our previous paper~\cite{oldpaper}, we clarified this connection by using topological techniques to compute all the $\Ext$-spaces between simple modules of the algebra of a left regular band monoid. In particular, we used order complexes of posets and classifying spaces of small categories (in the sense of Segal~\cite{GSegal}) to achieve this. A fundamental role was played by the celebrated Quillen's Theorem A~\cite{Quillen}, which gives a sufficient condition for a functor between categories to induce a homotopy equivalence of classifying spaces. Somewhat surprisingly to us, a combinatorial invariant of simplicial complexes, the Leray number~\cite{Kalai1,Kalai2}, played an important part in~\cite{oldpaper}. The Leray number is tied to the Castelnuovo-Mumford regularity of Stanley-Reisner rings. In particular, that paper gives a new, non-commutative interpretation of the regularity of the Stanley-Reisner ring of a flag complex.

\subsubsection*{CW left regular bands}

Despite the use of this heavy machinery, we were not able to directly compute finite projective resolutions for the simple modules of the algebra of a left regular band in~\cite{oldpaper}. One of the main contributions of the current work is to do just that, provided the algebra is unital.
Our starting point is the well-known observation that the face poset of
a central hyperplane arrangement in $\mathbb{R}^n$ is dually isomorphic to the
face poset of a regular CW complex decomposition of a ball---in fact, the face poset of an important type of polytope known as a zonotope~\cite{Ziegler}---and that
the same is true for every contraction of the arrangement.
Moreover, the face poset of the zonotope is isomorphic to the
poset of principal right ideals of the face monoid.
These observations motivate the following important definition in this paper.

We define a left regular band $B$ to be a (connected) CW left regular band if every ``contraction" of the poset of principal right ideals of $B$ is isomorphic to the face poset of a connected regular CW complex. (Contractions are certain upper sets, corresponding to contractions in the case of the face poset of a central hyperplane arrangement or oriented matroid; a precise definition appears in Section~\ref{sssec:operations-on-lrbs}.) Most of the examples that have been studied in the literature have this property. We study actions of left regular bands $B$ on CW posets $P$, that is, face posets of regular CW complexes. We prove that under a technical restriction (that we term semi-freeness of the action; see Section~\ref{sssec:semi-free-actions}) and when the (order complex of the) poset $P$ is acyclic, then the augmented chain complex of the associated CW complex is a projective resolution of the trivial $\Bbbk B$ module for any commutative ring with unit $\Bbbk$.
\nomenclature[A]{$\Bbbk$}{a field or a commutative ring with unit}%

In particular, this allows us to show that if $B$ contains a two-sided identity
(that is, if $B$ is a left regular band \emph{monoid})
then the augmented chain complex of the order complex of $B$ is
a projective resolution of the trivial $\Bbbk B$-module.
More generally, this result holds for any connected left regular band (these are defined in terms of a topological condition that turns out to be equivalent to the semigroup having a unital algebra over any ground ring).
This allows us to compute
all $\Ext$-spaces between simple $\Bbbk B$-modules and thus the global dimension of
the algebra.

In the case that $B$ is a connected CW left regular band, we also compute
a quiver presentation of $\Bbbk B$, which is an exact generalization of the second
author's results in the case of a real central hyperplane arrangement monoid~\cite{Saliolahyperplane}.  However, even when restricted to the case of hyperplane arrangements, our approach here is fundamentally different from the original approach of~\cite{Saliolahyperplane} in that Brown and Diaconis~\cite{DiaconisBrown1} had to carefully choose orientations for the cells of the zonotope (or incidence numbers) in order to ensure that the augmented cellular chain complex of the zonotope was a chain complex of modules rather than just vector spaces.  This would have been virtually impossible to extend to the level of generality we are considering here.  Instead, we construct actions of the left regular bands on their associated cell complexes by cellular maps.  Functoriality of the cellular chain complex then comes into play to turn it into a chain complex of modules.  To the best of our knowledge it was not even previously known that hyperplane face monoids act on their associated zonotopes by cellular mappings.

Let us formulate here one of our main results.  Let $B$ be a connected CW left regular band, e.g., a real or complex hyperplane face semigroup, the monoid of covectors of an oriented matroid or the face semigroup of a CAT(0) cube complex.  The set $\Lambda(B)=\{Bb\mid b\in B\}$ of principal left ideals of $B$ is closed under intersection and hence is a meet semilattice. Moreover, the mapping $\sigma\colon B\to \Lambda(B)$ given by $\sigma(b)=Bb$ is a semigroup homomorphism.  Let $\Bbbk$ be a field.  Then the simple $\Bbbk B$-modules are one-dimensional and are indexed by $\Lambda(B)$.  We denote by $\Bbbk_X$ the simple module corresponding to $X\in \Lambda(B)$. The fundamental theorem of the representation theory of CW left regular bands is then the following, where we note that a left regular band that is a monoid is automatically connected.

\begin{Thm}
Let $B$ be a connected CW left regular band and $\Bbbk$ a field.
\begin{enumerate}
\item The regular CW complex with face poset $B$ is acyclic.
\item The poset $\Lambda(B)$ is graded.
\item Each open interval of $\Lambda(B)$ is a Cohen-Macaulay poset.
\item $\Bbbk B$ is a unital basic algebra with quiver the Hasse diagram $Q$ of $\Lambda(B)$.
\item $\Bbbk B\cong \Bbbk Q/I$ where $I$ is the ideal generated by the sum of all paths of length two in $Q$.
\item $\Bbbk B$ is a Koszul algebra with Koszul dual the incidence algebra of the opposite poset of $\Lambda(B)$.
\item The minimal projective resolution of the simple module $\Bbbk_X$ is given by the augmented cellular chain complex of the regular CW complex whose face poset is the contraction of $B$ to $X$.
\item If $X,Y\in \Lambda(B)$, then
\begin{equation*}
\Ext^q_{\Bbbk B}(\Bbbk_X,\Bbbk_Y) \cong \begin{cases} \Bbbk, & \text{if}\ X\leq Y\ \text{and}\ \rk[X,Y]=q\\ 0, & \text{else.}\end{cases}
\end{equation*}
\item The global dimension of $\Bbbk B$ is the dimension of the regular CW complex whose face poset is $B$.
\end{enumerate}
\end{Thm}

\subsubsection*{Outline}
We have tried to keep the text as self-contained as possible by providing background material in a number of fields because we are targeting it at a variety of audiences, in particular, toward specialists in algebraic combinatorics, representation theory, poset topology and semigroup theory.

In Section~\ref{s:lrbs} we recall the definitions and basic properties of left regular
bands. In particular, left regular bands are also posets and left multiplication induces an action on the corresponding poset and on the order complex of this poset. This observation plays a crucial role in the paper.

We define a number of new operations on left regular bands such as suspension and join that model the corresponding operations for posets and simplicial complexes. Also the fundamental notion of a contraction of a left regular band is introduced. We give examples of left regular bands that have arisen in the literature and that play an important part in this work. We pay particular attention to the left regular band structure on various combinatorial structures such as real and complex hyperplane arrangements~\cite{BHR,bjorner2}, matroids~\cite{Brown1,White}, oriented matroids~\cite{OrientedMatroids1999} and COMs~\cite{COMS}. We include the necessary background material for these structures as well.

Section~\ref{s:cwposets} begins with a survey of cell decompositions of various types. All are examples of regular CW complexes. We look at the corresponding notion of CW posets, which by definition are posets isomorphic to face posets of regular CW complexes. Novel to this text is that we promote the standard construction~\cite{BjornerCW} of a regular CW complex from a CW poset to a functor by establishing the functorial nature of the construction with respect to appropriate mappings of posets that we term ``cellular.'' This is crucial in order to transform an action of a semigroup on a CW poset into an action on the corresponding CW complex by regular cellular maps. This development could be of interest in its own right for workers in poset topology.

With this background out of the way, we are able to define the notion of CW left regular band as a left regular band all of whose contractions (as defined in Section~\ref{sssec:operations-on-lrbs}) are CW posets. Most of the left regular bands that are associated to combinatorial structures are CW left regular bands.  Oriented interval greedoids from~\cite{SaliolaThomas} are  presented as another naturally arising family of CW left regular bands.  The notion of an interval greedoid is discussed here as well.  Left regular bands associated to interval greedoids, generalizing Brown's construction for matroids~\cite{Brown1}, were considered in~\cite{bjorner2}.

An exciting new example of CW left regular bands is detailed in the penultimate subsection of Section~\ref{s:cwposets}. We recall the important notion of a cube complex and, in particular, that of a CAT(0) cube complex. There is a natural notion of hyperplane in such complexes and this leads us to define the structure of a left regular band associated to a CAT(0) cube complex by associating covectors to the faces of a CAT(0) cube complex. CAT(0) cube complexes have played an important role in geometric group theory and three-dimensional manifold theory in recent years, in particular, with respect to the work of Agol and Wise on Thurston's virtual Haken and virtual fibering conjectures~\cite{wisebook,agol}. In combinatorics, the Billera-Holmes-Vogtman phylogenetic tree space~\cite{treespace} has been a popular CAT(0) cube complex to study, particularly because  of its connections with tropical geometry, where it appears in the guise of the tropical Grassmannian~\cite{tropgrass}. We provide all the necessary background on cube complexes and CAT(0) spaces.  The left regular band structure for CAT(0) cube complexes was discovered independently in~\cite{COMS}, where it is shown that there is, in fact, a left regular band structure on the face poset of any CAT(0) zonotopal complex whose cells are Coxeter zonotopes.  We discuss this development briefly in the last subsection of Section~\ref{s:cwposets}.

In Section~\ref{s:algebraprelim} we review several elements of the representation theory of finite dimensional algebras that may not be familiar to all our target audience. We give the necessary background on the Gabriel quiver, quiver presentations and the connections with split basic algebras. We recall the definition of a Koszul algebra and its Koszul dual. We then review known results on the algebra of a left regular band monoid and its representation theory. We note that the algebra of any left regular band semigroup has a right identity element and show that the algebra has an identity element if and only if every contraction of the semigroup (in the sense noted above) is connected.  By extending the setting from monoids to semigroups with unital algebras, we are able to treat a number of important examples---such as affine hyperplane arrangements, affine oriented matroids, CAT(0) cube complexes and, more generally, COMs---whose associated left regular bands are not monoids but do have unital algebras.

Section~\ref{s:projrel} is devoted to one of the central results of this work. We introduce the notion of a semi-free action of a left regular band on a poset. We show that if a left regular band admits a semi-free action on an acyclic CW poset $P$, then the augmented cellular chain complex of the associated CW complex is a
projective resolution of the trivial $\Bbbk B$-module. In
particular, this applies to the poset of faces of the order complex of $B$ if $B$ is a monoid, or more generally if $B$ is connected (so that its algebra is unital), and so
the augmented simplicial chain complex of the order complex of $B$ is a projective
resolution of the trivial $\Bbbk B$-module. We use these projective resolutions to compute all $\Ext$-spaces between all simple modules for  monoid (and connected) left regular band algebras over a field, providing a more direct and enlightening proof  of the main results of our previous paper~\cite{oldpaper}.

We show that the global dimension of a CW left regular band $B$ is equal to the dimension of the CW complex associated to $B$.
For CW left regular bands, the chain complexes we discuss in this section turn out to provide the minimal
projective resolutions of every simple module over a field of coefficients.
In particular, this applies to the face monoid of a real or complex hyperplane
arrangement, the monoid of covectors of an oriented matroid or oriented
interval greedoid, the face semigroup of an affine hyperplane arrangement or
CAT(0) cube complex, or the semigroup of covectors associated to a COM.

We turn to quiver presentations in Section~\ref{s:quiverpres}. We abstract the argument used by the second author in the case of real hyperplane monoids~\cite{Saliolahyperplane} to prove that if $B$ is a connected CW left regular band (so, in particular, if $B$ is a monoid), then over a field of coefficients $\Bbbk$, the quiver of $\Bbbk B$ is isomorphic to the Hasse diagram of the maximal semilattice image (or support semilattice) $\Lambda(B)$ of $B$, which is a graded semilattice. Furthermore, $\Bbbk B$ is the quotient of the path algebra of its quiver by adding one relation for each interval of length $2$ in the Hasse diagram of $\Lambda(B)$, which is just the sum of all paths that begin at the bottom point of the interval and end at the top point. Thus $\Bbbk B$ is completely determined by $\Lambda(B)$.

It follows from the results of Section~\ref{s:quiverpres} that the algebra $\Bbbk B$ of a CW left regular band is a quadratic algebra---it has a basis of quiver relations of degree $2$.  We prove in Section~\ref{s:koszul} that the algebra of CW left regular band is, in fact, a Koszul algebra. We show that the Koszul dual of $\Bbbk B$ is the opposite algebra of the incidence algebra of the support semilattice $\Lambda(B)$. It follows that the $\Ext$-algebra of $\Bbbk B$ is isomorphic to the incidence algebra of $\Lambda(B)$. Results proved independently by Polo~\cite{Polo} and Woodcock~\cite{Woodcock} then yield that every open interval of the semilattice $\Lambda(B)$ is a Cohen-Macaulay poset.  This is particularly important because it has consequences for the M\"obius function of $\Lambda(B)$.  To the best of our knowledge, the results of Polo and Woodcock have never before been used to prove a poset is Cohen-Macaulay, but rather have been used to prove that certain incidence algebras are Koszul.

Section~\ref{s:injenv} studies injective envelopes.  In particular, we give an explicit geometric construction of the injective envelopes of the simple $\Bbbk B$-modules when $B$ is a CW left regular band whose cells are zonotopes (or, more generally, dual to face posets of oriented matroids).  In particular, the result applies to COMs.  The construction is based on the idea of performing a line shelling of the zonotope in order to construct a visual hemisphere whose faces in turn span a projective right ideal in the algebra.

There are many famous enumerative results that count cells in hyperplane arrangements and, more generally, in oriented matroids.  In particular, the theorem of Las Vergnas and Zaslavsky~\cite{OrientedMatroids1999} computes the number of chambers in a real hyperplane arrangement (or topes in an oriented matroid) in terms of the M{\"o}bius function of its intersection lattice (lattice of flats). From our perspective, the number of chambers is equal to the number of elements in the minimal ideal of the hyperplane face monoid. This leads to a natural direction for generalizing these results. The main results of Section~\ref{s:enum} do just that for connected CW left regular bands: we count the number of cells in each dimension in terms of the M\"obius function of the support semilattice.  When our result is restricted to hyperplane arrangements and oriented matroids we recover the Las Vergnas-Zaslavsky theorem discussed above.   In the case of complex hyperplane arrangements we recover a result of Bj\"orner~\cite{bjorner2} and in the  case of CAT(0) cube complexes we recover an enumerative result of Dress \textit{et. al}~\cite{dresscube}, originally expressed in the equivalent language of median graphs.  We use these results to generalize the second author's computation of the Cartan matrix of hyperplane face monoids to that of all connected CW left regular bands.

In the last section of the paper we compute the cohomological dimension of left regular band monoid algebras. We show that there is a surprising connection to Leray numbers. The cohomological dimension of the free partially commutative left regular band on a graph $\Gamma$ is the Leray number of the clique complex of $\Gamma$. We show how to use our result to construct easily a finite monoid (in fact, a regular band, that is, a submonoid  of a direct product of a left regular band and right regular band) that has left cohomological dimension $m$ and right cohomological dimension $n$ for any pair of natural numbers $m$ and $n$. (The reader is invited to compare with the construction in~\cite{GubaPride}.)

\section{Left regular bands, hyperplane arrangements, oriented matroids and generalizations}\label{s:lrbs}

In this section we review some basic facts about  left regular bands and then provide a miscellany of examples coming from combinatorics and topology.  In particular, we begin with some standard notions from semigroup theory that can be found, for example, in books such as~\cite[Appendix~A]{qtheor} and~\cite[Chapter~1]{repbook}.  A gentle introduction to the theory of $\mathscr R$-trivial monoids, including left regular bands, can be found in~\cite[Chapter~2]{repbook}.  After the introduction to left regular bands, we proceed with our first collection of important examples.  Our first examples are free left regular bands, and some close relatives like matroid left regular bands and free partially commutative left regular bands.  Then we turn to hyperplane face semigroups and oriented matroids, which are crucial examples.  The subsequent subsection considers the relatively new notion of a COM~\cite{COMS} (complex of oriented matroids), which generalizes oriented matroids.  The final subsection considers complex hyperplane face monoids after Bj\"orner~\cite{bjorner2}.

\subsection{Green's relations and the structure of left regular bands}

\subsubsection{Semigroups and ideals}
In this paper, all semigroups and monoids are assumed finite unless otherwise stated.
Standard references on semigroup theory are~\cite{CP,qtheor}.

A \emph{semigroup}\index{semigroup} $S$ is a set with an associative binary operation.  If $A,B$ are subsets of $S$, then $AB=\{ab\mid a\in A, b\in B\}$.  Multiplication of subsets is associative.

A \emph{left ideal}\index{left ideal} $L$ of $S$ is a non-empty subset with $SL\subseteq L$.  Right ideals are define dually.  A non-empty subset $I\subseteq S$ is a two-sided ideal, or simply an \emph{ideal}, if it is both a right and left ideal.  Each finite semigroup contains a unique minimal ideal.  This follows from the observation that if $I_1,I_2$ are ideals of $S$, then $I_1I_2\subseteq I_1\cap I_2$ and hence $I_1\cap I_2$ is an ideal.

\subsubsection{Green's relations}
We shall make some use of
\emph{Green's relations}\index{Green's relations}~\cite{Green}.
\nomenclature[L, 00]{$\mathscr L, \mathscr R, \mathscr J$}{Green's relations}%
 Two elements $s,t\in S$ of a semigroup $S$ are said to be \emph{$\mathscr J$-equivalent}\index{$\mathscr J$-equivalent}\index{equivalent!$\mathscr J$-} \index{Green's relations!$\mathscr J$}(respectively, \emph{$\mathscr R$-equivalent}\index{$\mathscr R$-equivalent}\index{equivalent!$\mathscr R$-}\index{Green's relations!$\mathscr R$}, respectively \emph{$\mathscr L$-equivalent}\index{$\mathscr L$-equivalent})\index{equivalent!$\mathscr L$-}\index{Green's relations!$\mathscr L$} if they generate the same principal two-sided (respectively, right, respectively, left) ideal. An element $e\in S$ is \emph{idempotent}\index{idempotent} if $e^2=e$. In this case, notice that the principal left ideal generated by $e$ is $Se$ and that $a\in Se$ if and only if $a=ae$.  Indeed, if $a\in Se$, then $a=xe$ with $x\in S$ and so $ae=xee=xe=a$.  Conversely, if $ae=a$, then trivially $a\in Se$. In particular, if $e,f\in S$ are idempotents, then they are $\mathscr L$-equivalent if and only if $ef=e$ and $fe=f$.  Dual statements hold for the principal right ideal generated by an idempotent.

\subsubsection{Left regular bands}
\nomenclature[L, 01]{$B$}{left regular band}%
A \emph{left regular band}\index{left regular band}\index{band!left regular} is a semigroup $B$ satisfying the identities $x^2=x$ and $xyx=xy$. More generally, a semigroup is called a \emph{band}\index{band} if all its elements are idempotent.  By a \emph{left regular band monoid}\index{left regular band!monoid}, we mean a left regular band with identity. Left regular band monoids have also been studied by Lawvere in the context of topos theory under the name \emph{graphic monoids}\index{graphic monoids}~\cite{graphic1,moregraphic,Lawvere}.
If $B$ is a left regular band, then $B^1$ denotes the monoid obtained by adjoining an external identity element to $B$ (even if it already had an identity). Notice that if $B$ is a left regular band, then so is $B^1$.

\subsubsection{Green's $\mathscr R$-order for left regular bands}
In any semigroup, there is a natural partial order on the set of idempotents defined by putting $e\leq f$ if $ef=fe=e$.  In a left regular band, $fe=e$ implies $ef=efe=e$ and so $e\leq f$ if and only if $fe=e$. Said differently, Green's relation $\mathscr R$ is trivial on a left regular band $B$ and the order on idempotents is Green's $\mathscr R$-order, that is, $e\leq f$ if and only if $eB\subseteq fB$.

It is crucial to what follows that the action of $B$ on itself by left multiplication is order preserving.

\begin{Lemma}\label{l:left.act.order}
The action of a left regular band $B$ on itself by left multiplication is order preserving.
\end{Lemma}
\begin{proof}
Indeed, if $e\leq f$, then $fe=e$ and so if $b\in B$, then $(bf)(be)=bfe=be$.  Thus $be\leq bf$.
\end{proof}

\subsubsection{Commutative left regular bands and meet semilattices}
If $P$ is a (meet) semilattice, that is, a partially ordered set with binary meets, then $P$ is a commutative left regular band with respect to the meet operation.  Moreover, $e\leq f$ if and only if $e=f\wedge e$ and so the natural partial order on $P$ is its original ordering.  Conversely, if $P$ is a commutative left regular band, then we claim that $P$ is a semilattice with respect to the natural partial order and the meet is the product in $P$. Indeed, if $e,f\in P$ then $e(ef)=ef$ and $f(ef)=(ef)f=ef$ and so $ef\leq e,f$.  If $a\leq e,f$, then $ea=a$ and $fa=a$, whence $efa=ea=a$.  Therefore, $a\leq ef$ and we conclude that $ef=e\wedge f$.   Let us record this observation as a proposition.

\begin{Prop}
Commutative left regular bands are precisely meet semilattices.
\end{Prop}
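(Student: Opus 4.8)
The plan is to establish the two containments of classes separately, verifying in each direction the relevant algebraic identities and, crucially, that the natural partial order coming from the band structure coincides with the semilattice order. This last point is what makes the two constructions mutually inverse on a fixed underlying set and hence upgrades the pair of inclusions into an honest identification of the two kinds of structures.

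First I treat the easy direction. Starting from a meet semilattice $P$, equip it with the binary operation given by meet. Associativity, idempotency ($x \wedge x = x$) and commutativity are all part of the definition of a meet semilattice, so $P$ is at once a commutative idempotent semigroup. The left regular band identity then comes for free: using commutativity and idempotency, $xyx = x \wedge y \wedge x = x \wedge x \wedge y = x \wedge y = xy$. Thus $P$ is a commutative left regular band, and one checks that the natural partial order agrees with the original one, since by the characterization recalled just above, $e \leq f$ in the band sense means $fe = e$, that is $f \wedge e = e$, which is precisely $e \leq f$ in $P$.

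For the converse, let $B$ be a commutative left regular band with its natural partial order, for which $e \leq f$ iff $fe = e$. The key claim is that the product $ef$ is the meet of $e$ and $f$. To see that $ef$ is a lower bound I compute $e(ef) = e^2 f = ef$ and $f(ef) = ef^2 = ef$ (the latter using commutativity), giving $ef \leq e$ and $ef \leq f$. To see it is the greatest lower bound, suppose $a \leq e$ and $a \leq f$, so that $ea = a$ and $fa = a$; then $(ef)a = e(fa) = ea = a$, whence $a \leq ef$. Therefore $ef = e \wedge f$ exists for every pair, so $B$ is a meet semilattice whose meet is its product.

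The individual verifications are elementary; the point I would foreground as the main thing to get right is the bookkeeping of the order convention, namely checking that the rule ``$e \leq f$ iff $fe = e$'' is consistent with the direction in which lower bounds are computed, and confirming that the two passages (semilattice to band, and band back to semilattice) recover both the same order and the same operation. Once that compatibility is in place, the asserted equivalence of the two structures follows immediately.
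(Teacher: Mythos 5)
Your proof is correct and follows essentially the same route as the paper's: verify the identities to see that a meet semilattice is a commutative left regular band whose natural order is the original one, and conversely show in a commutative left regular band that $ef$ is a lower bound of $e,f$ (via $e(ef)=ef$ and $f(ef)=ef$) and is the greatest one (via $efa=ea=a$ for any common lower bound $a$). The only cosmetic difference is that you make the order-compatibility check explicit in both directions, which the paper only spells out for the first.
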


Thus left regular bands can be thought of as ``skew semilattices,'' and, for this reason, Sch\"utzenberger called left regular bands by the name \textit{treillis gauches}~\cite{Schutzlrb}.

\subsubsection{Support semilattice of a left regular band}
\nomenclature[L, 02]{$\Lambda(B)$}{support semilattice of a left regular band $B$}%
\nomenclature[L, 03]{$X, Y$}{elements of $\Lambda(B)$}%
The set $\Lambda(B)$ of principal left ideals of a left regular band is closed under intersection and hence is a semilattice called the \emph{support semilattice}\index{support semilattice} of $B$; see~\cite{Clifford,Brown1,Brown2}. More precisely we have the following proposition, which is a special case of a theorem of Clifford on semigroups all of whose elements belong to a subgroup~\cite{Clifford}.

\begin{Prop}\label{p:supp.lattice}
Let $B$ be a left regular band and $a,b\in B$.  Then $Aa\cap Ab=Aab=Aba$.  Thus $\Lambda(B)=\{Bb\mid b\in B\}$ is a semilattice.
\end{Prop}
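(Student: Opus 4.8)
The plan is to reduce the entire statement to the membership criterion for principal left ideals recorded immediately before the proposition: for an idempotent $e$ and any $x$, one has $x \in Be$ exactly when $xe = x$. Since $B$ is a band, every element is idempotent (in particular $a$, $b$, and the products $ab$, $ba$ all satisfy $z^2 = z$), so this criterion is freely available. I would also keep at hand the two immediate instances of $xyx = xy$ that $aba = ab$ and $bab = ba$. The real content is the equality $Ba \cap Bb = Bab$; once that is proved for every pair $a,b$, the remaining assertions follow formally.

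To prove $Ba \cap Bb = Bab$ I would establish the two inclusions separately, each time converting ``membership'' into a right-multiplication identity via the criterion. For $Bab \subseteq Ba \cap Bb$, I take a general element $xab$ and check the two fixed-point equations $(xab)a = x(aba) = x(ab) = xab$ and $(xab)b = xa(bb) = xab$, where the first uses $aba = ab$ and the second uses $b^2 = b$; by the criterion these say exactly that $xab \in Ba$ and $xab \in Bb$. For the reverse inclusion I take $x \in Ba \cap Bb$, so that the criterion gives $xa = x$ and $xb = x$, and then the computation $x(ab) = (xa)b = xb = x$ exhibits $x$ as $x\cdot(ab) \in Bab$.

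With $Ba \cap Bb = Bab$ in hand, the equality $Bab = Bba$ is essentially free: applying the result already proved to the pair $(b,a)$ gives $Bb \cap Ba = Bba$, and since intersection is commutative the two left-hand sides agree, forcing $Bab = Bba$. (Alternatively one could verify directly that $ab$ and $ba$ are $\mathscr L$-equivalent idempotents by checking $(ab)(ba) = ab$ and $(ba)(ab) = ba$ and invoking the $\mathscr L$-criterion for idempotents recalled earlier, but the symmetry argument is shorter.) Finally, the semilattice claim is immediate: the displayed identity shows that the intersection of any two principal left ideals is again a principal left ideal, so $\Lambda(B)$ is closed under intersection, and since intersection is precisely the meet in the containment order, $\Lambda(B)$ is a meet-semilattice.

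I do not expect any step to be genuinely hard --- the whole argument is a handful of one-line manipulations. The single point that demands care, and the likeliest source of error, is the bookkeeping of sides: membership in the \emph{left} ideal $Be$ is detected by multiplying by $e$ on the \emph{right}, so throughout I would stay vigilant about which of the identities $z^2 = z$, $xyx = xy$, $aba = ab$, $bab = ba$ is being applied and on which side, rather than about any conceptual obstacle.
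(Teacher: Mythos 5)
Your proof is correct and follows essentially the same route as the paper: both arguments reduce everything to the membership criterion $x\in Be \iff xe=x$ and the identities $aba=ab$, $bab=ba$, with the key step being the computation $x(ab)=(xa)b=xb=x$ for the inclusion $Ba\cap Bb\subseteq Bab$. The only cosmetic difference is that the paper deduces $Bab=Bba$ first (from $ab=a(ba)$ and $ba=b(ab)$) and then gets $Bab\subseteq Ba\cap Bb$ from the trivial containments $Bba\subseteq Ba$ and $Bab\subseteq Bb$, whereas you verify the fixed-point equations directly and obtain $Bab=Bba$ at the end by symmetry of intersection.
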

\begin{proof}
First note that $aba=ab$ and $bab=ba$ implies that $Aab=Aba$.  Since $Aba\subseteq Aa$ and $Aab\subseteq Ab$, we conclude that $Aab\subseteq Aa\cap Ab$.  Conversely, if $x\in Aa\cap Ab$, then $xa=a$ and $xb=b$.  Thus $xab=xb=x$ and so $x\in Aab$.  We conclude that $Aa\cap Ab=Aab$.
\end{proof}

\nomenclature[L, 04]{$\sigma\colon B\xrightarrow{} \Lambda(B)$}{support map of the left regular band $B$}%
In the case that $B$ is a monoid, $\Lambda(B)$ will be a lattice with $B$ as the top.    Equipping $\Lambda(B)$ with the binary operation of intersection, the mapping \[\sigma\colon B\to \Lambda(B)\] given by $\sigma(a)=Ba$ becomes a surjective homomorphism (by Proposition~\ref{p:supp.lattice}) called the \emph{support map}\index{support map}.  Observe that $\sigma(a)=\sigma(b)$ if and only if $a,b$ are $\mathscr L$-equivalent, if and only if both $ab=a$ and $ba=b$.  The following proposition will be used many times throughout the text without explicit reference.

\begin{Prop}\label{p:support.lat.sub}
Let $B$ be a left regular band and let $B'\subseteq B$ be a subsemigroup.  Then $\Lambda(B')$ is isomorphic to the image of $B'$ under the support map $\sigma\colon B\to \Lambda(B)$.
\end{Prop}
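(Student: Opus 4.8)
The plan is to exploit the intrinsic characterization of $\mathscr{L}$-equivalence recorded just above the statement: in any left regular band, $Ba = Bb$ if and only if $ab = a$ and $ba = b$. The crucial feature of this condition is that it refers only to the two products $ab$ and $ba$ and says nothing about the ambient band. Since $B'$ is a subsemigroup, products of elements of $B'$ agree whether computed in $B'$ or in $B$, so for $a,b \in B'$ the statements $B'a = B'b$ and $Ba = Bb$ are \emph{literally the same condition}. This is precisely what forces the two support semilattices to match, even though in a general semigroup Green's relation $\mathscr{L}$ restricted to a subsemigroup need not coincide with the ambient one.

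Concretely, I would first note that $B'$, being a subsemigroup of a left regular band, is itself a left regular band, so Proposition~\ref{p:supp.lattice} applies to it and $\Lambda(B') = \{B'b \mid b \in B'\}$ is a semilattice with meet $B'a \wedge B'b = B'(ab)$. I would then define $\phi\colon \Lambda(B') \to \Lambda(B)$ by $\phi(B'b) = Bb$; equivalently, $\phi$ is the map induced by the restriction $\sigma|_{B'}$, whose image is exactly $\sigma(B')$. Well-definedness and injectivity then reduce to the single chain of equivalences
\[
B'a = B'b \iff (ab = a \text{ and } ba = b) \iff Ba = Bb,
\]
obtained by applying the product-only characterization of $\mathscr{L}$ in $B'$ and in $B$ respectively. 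Surjectivity onto $\sigma(B')$ is immediate from the definition of the image.

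For the homomorphism property I would invoke Proposition~\ref{p:supp.lattice} on both bands: $\phi(B'a \wedge B'b) = \phi(B'(ab)) = B(ab) = Ba \cap Bb = \phi(B'a) \wedge \phi(B'b)$. In particular this shows $\sigma(B')$ is closed under the meet of $\Lambda(B)$, as it must be, being the image of the subsemigroup $B'$ under the semilattice homomorphism $\sigma$. Hence $\phi$ is a semilattice isomorphism of $\Lambda(B')$ onto $\sigma(B')$, as claimed.

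I do not expect a genuine obstacle here: once the intrinsic description of $\mathscr{L}$ is in hand, the argument is essentially forced. The only point meriting a moment's care is the natural worry that passing from $B$ to $B'$ could alter the relation $\sigma(a) = \sigma(b)$ on $B'$; the entire content of the proposition is that it does not, and isolating the product-only characterization of $\mathscr{L}$-equivalence is exactly what dissolves that concern.
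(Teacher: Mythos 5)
Your proof is correct and uses exactly the same key idea as the paper: the characterization $Ba=Bb \iff (ab=a \text{ and } ba=b)$, which depends only on products and hence is insensitive to whether one works in $B$ or $B'$. The paper's proof is just a terser version of your argument, leaving the well-definedness, injectivity, and homomorphism checks implicit.
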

\begin{proof}
It suffices to show that if $a,b\in B'$, then $Ba=Bb$ if and only if $B'a=B'b$.  But both these equalities hold if and only if $ab=a$ and $ba=b$.
\end{proof}

The support semilattice of $B$ is the abelianization of $B$ in the following categorical sense.

\begin{Prop}
Let $B$ be a left regular band and let $\Lambda$ be a meet semilattice.  Then any homomorphism $\tau\colon B\to \Lambda$ factors uniquely through $\Lambda(B)$, that is, there is a unique homomorphism $\tau'\colon \Lambda(B)\to \Lambda$ such that the diagram
\[\xymatrix{B\ar[rr]^{\sigma}\ar[rd]_{\tau} &&\Lambda(B)\ar[dl]^{\tau'}\\ & \Lambda &}\]
commutes.
\end{Prop}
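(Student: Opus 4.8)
The plan is to exploit the surjectivity of $\sigma$ for uniqueness and the $\mathscr L$-description of its fibers for existence. First I would observe that any $\tau'$ making the triangle commute must satisfy $\tau'(Ba)=\tau'(\sigma(a))=\tau(a)$ for every $a\in B$; since $\sigma$ is onto, this forces $\tau'$ to be the assignment $Ba\mapsto\tau(a)$. This pins down the only possible candidate and yields uniqueness immediately, so the real content lies entirely in showing that this candidate is a well-defined semilattice homomorphism.

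The substance of the argument is the well-definedness check. Here I would invoke the characterization recorded just before the statement: $\sigma(a)=\sigma(b)$, i.e.\ $Ba=Bb$, holds if and only if $ab=a$ and $ba=b$. Applying $\tau$ to these two equalities gives $\tau(a)=\tau(ab)=\tau(a)\tau(b)$ and $\tau(b)=\tau(ba)=\tau(b)\tau(a)$. The crucial leverage is that $\Lambda$ is commutative---being a meet semilattice, it is a commutative left regular band by the first Proposition of this section---so the two right-hand sides coincide and we conclude $\tau(a)=\tau(b)$. Hence $Ba\mapsto\tau(a)$ does not depend on the chosen representative. Equivalently, one can phrase this as saying that the congruence $\ker\sigma$ is exactly $\mathscr L$-equivalence, and that $\tau$ is constant on its classes.

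It then remains to confirm that $\tau'$ respects the meets. By Proposition~\ref{p:supp.lattice} the meet in $\Lambda(B)$ is $Ba\cap Bb=Bab$, while in the semilattice $\Lambda$ the meet is just the product; therefore $\tau'(Ba\cap Bb)=\tau'(Bab)=\tau(ab)=\tau(a)\tau(b)=\tau'(Ba)\wedge\tau'(Bb)$, as required. I do not anticipate a genuine obstacle here: the only delicate step is the well-definedness, and even that collapses to a one-line computation once commutativity of $\Lambda$ is used. In essence this is the standard universal property of the quotient by a kernel congruence, specialized to the congruence induced by $\sigma$.
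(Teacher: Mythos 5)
Your proposal is correct and follows essentially the same route as the paper: the key step in both is that $Ba=Bb$ forces $ab=a$ and $ba=b$, whence commutativity of $\Lambda$ gives $\tau(a)=\tau(ab)=\tau(a)\tau(b)=\tau(b)\tau(a)=\tau(ba)=\tau(b)$. You merely spell out the uniqueness and homomorphism checks that the paper leaves implicit, and these are done correctly.
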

\begin{proof}
If $\sigma(a)=\sigma(b)$, that is, $Ba=Bb$, then $ab=a$ and $ba=b$.  Therefore, as $\Lambda$ is commutative, we have that $\tau(a)=\tau(ab)=\tau(a)\tau(b)=\tau(b)\tau(a)=\tau(ba)=\tau(b)$.  It follows that $\tau$ factors uniquely through $\sigma$.
\end{proof}

\subsubsection{Characterisations of left regular bands}
It will be convenient to place into a single theorem several characterizations of left regular bands that we shall use without comment throughout the text.

\begin{Thm}
Let $S$ be a semigroup.  Then the following are equivalent.
\begin{enumerate}
\item  $S$ is a left regular band.
\item $S$ is an $\mathscr R$-trivial band, that is, a band in which $aS=bS$ implies $a=b$.
\item $S$ is a band in which each left ideal is two-sided.
\item $S$ is a semigroup with a homomorphism $\rho\colon S\to \Lambda$ to a semilattice $\Lambda$ such that each fiber $\rho^{-1}(\lambda)$ with $\lambda\in \Lambda$ (which is necessarily a semigroup) satisfies the identity $xy=x$.
\end{enumerate}
\end{Thm}
\begin{proof}
Suppose that $S$ is a left regular band and $aS=bS$.  Then $b=ab=aba=aa=a$ as $b\in aS$ and $a\in bS$ imply $ab=b$ and $ba=a$.  Thus $S$ is an $\mathscr R$-trivial band.  Conversely, if $S$ is an $\mathscr R$-trivial band and $a,b\in S$, then $abaS=abS$ because $abab=ab$.  Therefore, $aba=ab$ by $\mathscr R$-triviality and hence $S$ is a left regular band.  This proves the equivalence of (1) and (2).

Suppose that $S$ is a left regular band and $L$ is a left ideal of $S$.  Let $a\in S$ and $b\in L$.  Then from $ba=bab$ we conclude that $ba\in L$ and hence $L$ is a two-sided ideal.  Thus (1) implies (3).  Assume that (3) holds and let $a,b\in S$.  Then $Sa$ is a two-sided ideal and so $ab\in Sa$.  Therefore, $aba=ab$ and hence $S$ is a left regular band.  This establishes the equivalence of (1) and (3).

If $S$ is a left regular band, then the support map $\sigma\colon S\to \Lambda(S)$ is a homomorphism to a semilattice with the property that $\sigma(a)=\sigma(b)$ implies $ab=a$ and $ba=b$ (as $Ba=Bb$).  Therefore, (1) implies (4).  If (4) holds and $a\in S$, then $aa=a$ because the fiber over $\rho(a)$ satisfies $xy=x$.  Also, if $a,b\in S$, then $\rho(aba)=\rho(a)\rho(b)\rho(a)=\rho(a)^2\rho(b)=\rho(a)\rho(b)=\rho(ab)$ and so $ab,aba$ belong to the same fiber of $\rho$.  Thus $aba=(ab)^2a=(ab)(aba)=ab$ and so $S$ is a left regular band.  This completes the proof of the theorem.
\end{proof}

\subsubsection{Operations on left regular bands}
\label{sssec:operations-on-lrbs}

We define here some operations on left regular bands, inspired by topology,
that will come into play later in the theory.

\nomenclature[P, 01]{$P$}{poset}%
\nomenclature[P, 02]{$P_{\leq p}, P_{< p}$}{principal lower sets of the poset $P$ generated by $p \in P$}%
\nomenclature[P, 02]{$P_{\geq p}, P_{> p}$}{principal upper sets of the poset $P$ generated by $p \in P$}%
We first introduce notation for the subposet of a poset $P$
consisting of all elements greater than or equal to a specified element $p \in P$:
\[P_{\geq p} = \{q\in P\mid q\geq p\}.\]
The subposets $P_{\leq p}$, $P_{>p}$ and $P_{<p}$ are defined analogously.
For example, if $B$ is a left regular band and $a \in B$, then
$B_{\leq a}$ is the right ideal of $B$ generated by $a$:
\begin{equation*}
    B_{\leq a} = \{b \in B \mid b \leq a\} = a B.
\end{equation*}
\nomenclature[L, 07]{$B_{\leq a}$}{all elements of $B$ less than or equal to $a \in B$; also $aB$}%
\nomenclature[L, 08]{$B_{< a}$}{all elements of $B$ less than $a \in B$; also $\bd aB$}%
\nomenclature[L, 05]{$aB$}{right ideal generated by $a \in B$}%

\nomenclature[L, 09]{$B_{\geq X}$}{contraction of a left regular band $B$ to $X \in \Lambda(B)$}%
If $X\in \Lambda(B)$, then the \emph{contraction}\index{contraction} of $B$ to $X$ is the subsemigroup
\[B_{\geq X} = \sigma\inv(\Lambda(B)_{\geq X})=\{a\in B\mid \sigma(a)\geq X\}.\] Later we shall see that this corresponds to the operation of contraction in oriented matroid theory.  Observe that $\Lambda(B_{\geq X})\cong \Lambda(B)_{\geq X}$ by Proposition~\ref{p:support.lat.sub}.

\nomenclature[L, 06]{$\bd aB$}{right ideal $aB$ with $a$ removed; $aB \setminus \{a\}$}%
If $B$ is a left regular band and $a\in B$, we want to think of $aB = B_{\leq a}$ as the face poset of a closed cell of $B$ and hence we put \[\bd aB= aB\setminus \{a\}=B_{<a},\] which we think of as the boundary of $aB$ (and this, in fact,  will literally be the case for a number of important examples). In particular, if $B$ is a left regular band monoid, then $\partial B=B\setminus \{1\}$.   Note that $aB$ and $\bd aB$ are right ideals of $B$. The subsemigroup $aB$ is a left regular band monoid with identity $a$ as $aB=aBa$. Readers familiar with oriented matroid theory should think of $aB$ as being the result of applying a deletion to $B$.  Observe that $\Lambda(aB)\cong \Lambda(B)_{\leq Ba}$ by Proposition~\ref{p:support.lat.sub}.

\begin{Lemma}
If $Ba=Bb$, then $aB$ is isomorphic to $bB$ as a left regular band monoid via $x\mapsto bx$ for $x\in aB$ (with inverse $y\mapsto ay$ for $y\in bB$).
\end{Lemma}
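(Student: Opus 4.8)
The plan is to verify directly that the two given maps are mutually inverse homomorphisms of monoids, with every identity I need flowing from the single equational consequence of the hypothesis. The assumption $Ba = Bb$ says exactly that $a$ and $b$ are $\mathscr L$-equivalent, which, as observed above, is equivalent to the two equations $ab = a$ and $ba = b$; these are the only facts about $a$ and $b$ that I will use. I also recall from the preceding discussion that $aB$ is a left regular band monoid with identity $a$, so that $ax = xa = x$ for every $x \in aB$, and symmetrically $by = yb = y$ for every $y \in bB$. In particular both candidate maps land in the correct target, since $bx \in bB$ for $x \in aB$ and $ay \in aB$ for $y \in bB$.

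First I would check that $\phi(x) = bx$ and $\psi(y) = ay$ are mutually inverse. This is immediate from the two hypothesis equations together with the identity laws: $\psi(\phi(x)) = a(bx) = (ab)x = ax = x$, using $ab = a$ and $x \in aB$, and dually $\phi(\psi(y)) = b(ay) = (ba)y = by = y$, using $ba = b$ and $y \in bB$. Thus $\phi$ is a bijection $aB \to bB$ with inverse $\psi$, and it carries the identity $a$ of $aB$ to $ba = b$, the identity of $bB$.

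The only real content, and the step I expect to be the crux, is that $\phi$ is multiplicative. The key observation is a small sublemma: for every $x \in aB$ one has $xb = x$. Indeed, using $xa = x$ (identity of $aB$), the hypothesis $ab = a$, and associativity,
\[
xb = (xa)b = x(ab) = xa = x.
\]
Granting this, multiplicativity is a one-line computation: for $x,y \in aB$,
\[
\phi(x)\phi(y) = (bx)(by) = b(xb)y = bxy = \phi(xy),
\]
the middle equality being precisely the sublemma $xb = x$. Since $\phi$ is then a bijective monoid homomorphism it is an isomorphism of left regular band monoids, with $\psi$ its inverse isomorphism. I note that beyond what is already packaged into the equations $ab = a$, $ba = b$ and the monoid structure of $aB$ and $bB$, the argument uses nothing but associativity; so the main (and essentially only) subtlety is spotting the auxiliary identity $xb = x$, which is what makes the crossed multiplication by $b$ respect products.
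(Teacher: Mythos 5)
Your proof is correct and follows essentially the same route as the paper: verify that $x\mapsto bx$ and $y\mapsto ay$ are mutually inverse using $ab=a$, $ba=b$, and the fact that $a$ (resp.\ $b$) is the identity of $aB$ (resp.\ $bB$), then check multiplicativity. The only cosmetic difference is that the paper obtains $(bx)(by)=b(xy)$ directly from the left regular band identity $bxb=bx$, whereas you route it through the auxiliary identity $xb=x$ for $x\in aB$; both are valid one-line computations.
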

\begin{proof}
Indeed, $b(xy) = bxby$ by the left regular band axiom and if $x\in aB$ and $y\in bB$, then $abx=ax=x$ and $bay=by=y$ because $ab=a$ and $ba=b$.
\end{proof}

\nomenclature[L, 16]{$B \ast B'$}{join of left regular bands $B$ and $B'$}%
If $B,B'$ are (disjoint) left regular bands, we define their \emph{join}\index{join} $B\ast B'$ to be $B\cup B'$ where $B$ and $B'$ are subsemigroups and $b'b=b=bb'$ for all $b\in B$ and $b'\in B'$. Notice that $B\ast B'$ is a monoid if and only if $B'$ is a monoid. Also observe that $\Lambda(B\ast B')=\Lambda(B)\ast \Lambda(B')$ (where the join is as left regular bands).

\nomenclature[L, 13]{$L = \{0, +, -\}$}{three-element left regular band}%
The variety (in the sense of universal algebra~\cite{Burris}) of left regular bands is well known to be generated by the three-element left regular band $L=\{0,+,-\}$ with the multiplication table in Table~\ref{multtableL}.  See~\cite[Proposition~7.3.2]{qtheor}.
\begin{figure}[tb]
\centering
\begin{gather*}
\begin{array}{|c|ccc|}
  \hline
    & 0 & + & - \\ \hline
  0 & 0 & + & - \\
  + & + & + & + \\
  - & - & - & - \\
  \hline
\end{array}
\end{gather*}
\caption{The multiplication table for $L$}\label{multtableL}
\end{figure}

One can view $L$ as the monoid of self-mappings of $[-1,1]$ (acting on the left) generated by the identity map $r_0$, and the constant maps $r_{\pm}$ with image $\pm 1$.  When we come to the relationship between hyperplane face semigroups and zonotopes, described later on, this way of thinking about $L$ will make sense because $L$ is the face monoid of the hyperplane arrangement in $\mathbb R$ with the origin as the hyperplane and $[-1,1]$ is the corresponding zonotope.
In oriented matroid theory, elements of $L^n$ are called \emph{covectors}\index{covectors}. Note that $\partial L=\{-,+\}$.

\nomenclature[L, 17]{$\mathsf S(B)$}{suspension of a left regular band $B$}%
Define the \emph{suspension}\index{suspension} of a left regular band $B$ to be $\mathsf S(B)=\partial L\ast B$; for instance, $L=S(\{0\})$. Note that $\mathsf S(B)$ is a monoid if and only if $B$ is a monoid. Also $\Lambda(\mathsf S(B)) = \Lambda(B)\cup \{-\infty\}$ where $-\infty$ is an adjoined minimum element.

We turn now to some particularly salient examples of left regular bands.

\subsection{Free left regular bands and matroids}\label{ss:matroid}
In this section we consider the free left regular band monoid $F(A)$ on a set $A$ and a generalization to matroids.

\subsubsection{Free left regular bands}
\nomenclature[L, 11]{$F(A)$}{free left regular band on the set $A$}%
The free monoid on the set $A$, denoted $A^*$, is the set of all words over the alphabet $A$ (including the empty word) with the binary operation of concatenation.  By standard universal algebra (or the adjoint functor theorem), there is a \emph{free left regular band monoid}\index{free left regular band} $F(A)$ on the set $A$.  Let $\rho\colon A^*\to F(A)$ be the canonical surjective homomorphism.  We claim that the repetition-free words form a cross-section to $\rho$, that is, they constitute a set of normal forms. Indeed, $\rho(u)=\rho(\overline{u})$ where $\overline{u}$ is the word obtained from $u$ by removing repetitions as you scan $u$ from left-to-right  (for example, $\overline{ababcbac}=abc$) because of the identities $x^2=x$ and $xyx=xy$ satisfied by $F(A)$.

The power set $P(A)$ is a commutative left regular band with respect to the operation of union and the mapping $c\colon A^*\to P(A)$ sending a word $w\in A^*$ to the set $c(w)$ of letters appearing in $w$ is a homomorphism.  Thus if $u,v$ are words with the same image under $\rho$, then $c(u)=c(v)$.  In order to show that the repetition-free words make up a set of normal forms, it suffices to show that if $u,v$ are distinct repetition-free words with $c(u)=c(v)$, then there is a homomorphism $\psi\colon A^*\to L$  with $\psi(u)\neq \psi(v)$. (This also shows that $L$ generates the variety of left regular bands.)   Let $w$ be the longest common prefix of $u$ and $v$.  Then $u=wax$ and $v=wby$ with $a\neq b$ in $A$ and $x,y\in A^*$.  Since $u,v$ are repetition-free, the letters $a$ and $b$ do not appear in $w$.  Hence if we define a homomorphism $\psi\colon A^*\to L$ on $A$ by $\psi(a)=+$, $\psi(b)=-$ and $\psi(c)=0$ for $c\in A\setminus \{a,b\}$, then we obtain that $\psi(u)=+$ and $\psi(v)=-$.

It follows that we can identify $F(A)$ with the set of repetition-free words over the alphabet $A$ with the binary operation $u\cdot v=\ov{uv}$ where $\ov{uv}$ is the result of removing repetitions as you scan $uv$ from left to right.  The free left regular band on $A$ is obtained from $F(A)$ by removing the empty word.  If $A$ has at least two elements, the free left regular band (semigroup) does not have a unital algebra over any base ring (as is easily verified).  For this reason we are only interested in free left regular band monoids.

It is not difficult to see that $u\leq v$ in $F(A)$ if and only if $v$ is a prefix of $u$ and that $F(A)u\subseteq F(A)v$ if and only if $c(v)\subseteq c(u)$.  Hence $\Lambda(F(A))$ can be identified with $P(A)$ ordered by reverse inclusion.  With this identification if $X\subseteq A$, then $F(A)_{\geq X}=F(X)$.    If $w\in F(A)$ with $c(w)=X$, then $wF(A)\cong F(A\setminus X)$.  Thus every contraction and ``deletion'' of a free left regular band monoid is again a free left regular band monoid.  See~\cite[Chapter~14]{repbook} for further details.

\subsubsection{Matroids}
Brown~\cite[Section~6]{Brown1} generalized the free left regular band monoid to matroids.  Matroids were introduced by Whitney in 1932 as an abstraction of linear independence in vector spaces, algebraic independence over fields and independence in graphs. Since then, matroid theory has expanded in many directions, with applications in geometry, topology, algebraic combinatorics and optimization theory. We give a brief introduction to matroids and the related notion of geometric lattice suited to the needs of the current work. We refer to~\cite{White} as a good reference on matroids.

\nomenclature[M, 01]{$\mathcal M = (E,\FeasibleSets)$}{matroid on ground set $E$ with independent sets $\FeasibleSets$}%
A \emph{matroid}\index{matroid} is a pair $\mathcal M = (E,\FeasibleSets)$ where $E$ is the \emph{ground set}\index{ground set} and $\FeasibleSets$ is a \emph{hereditary}\index{hereditary} collection of subsets of $E$ (that is, $X\in \FeasibleSets$ and $Y\subseteq X$ implies $Y\in \FeasibleSets$) satisfying the following Exchange Axiom.
\begin{itemize}
\item [(EA)] If $I_{1}$ and $I_{2}$ are elements of $\FeasibleSets$ with $|I_{1}| < |I_{2}|$, then there is an $e \in I_{2}- I_{1}$ such that $I_{1} \cup \{e\} \in \FeasibleSets$.
\end{itemize}
Elements of $\FeasibleSets$ are called \emph{independent subsets}\index{independent subsets}.

A \emph{loop}\index{loop} in $\mathcal M$ is an element $e \in E$ that belongs
to no independent subset or, equivalently, $\{e\} \notin \FeasibleSets$.
We will often restrict our attention to matroids that do not contain loops.
Two elements $e, f$ of $E$ are \emph{parallel}\index{parallel} if they are not
loops and $\{e, f\}$ is not an independent subset. This is a clear
generalization of the notion of parallel vectors in a vector space. A matroid
is \emph{simple}\index{simple} if it contains no loops and no distinct pair of
parallel elements.

The motivating examples are the  matroid of all linearly independent subsets of a vector space, the matroid of all forests in a graph (thought of as sets of edges of the graph) and the matroid of all algebraically independent subsets of a field. It follows easily from the exchange axiom that all maximal independent subsets (called \emph{bases}\index{bases}) in a matroid have the same size.   In the three motivating examples, we have bases in a vector space matroid are the usual bases of linear algebra, bases in a graphical matroid are spanning forests, and bases in the field example are transcendence bases.

\nomenclature[M, 03]{$r(A)$}{rank of a set $A$ in a matroid}%
\nomenclature[M, 04]{$r(E)$}{rank of a matroid}%
Let $\mathcal M = (E,\FeasibleSets)$ be a matroid and let $A$ be a subset of $E$. The \emph{rank}\index{rank} of $A$ is  defined by \[r(A)=\max\{|I|\mid  I\in \FeasibleSets, I \subseteq A\}.\] In the vector space matroid, the rank of a set of vectors is the dimension of the space that they span. The \emph{rank}\index{rank} of a matroid is the cardinality of a basis; this is also $r(E)$.

The rank function, $r\colon 2^{E} \to \mathbb{N}$ is a \emph{semimodular}\index{semimodular!rank function} rank function, meaning that it satisfies the following axioms.

\begin{itemize}
  \item[(R1)] $r(A) \leq |A|$
  \item[(R2)] A $\subseteq B \implies r(A) \leq r(B)$
  \item[(R3)] \emph{Upper Semimodularity}: $r(A \cup B) + r(A \cap B) \leq r(A) + r(B)$
\end{itemize}

One can prove that the independent sets $\FeasibleSets$ are precisely the subsets $I \subseteq E$ such that $r(I)=|I|$. Thus, the rank function gives another way to axiomatize matroids.

Flats are the matroid theoretic analogue of subspaces of a vector space. They are the maximal subsets of $E$ of a given rank. That is, a subset $A$ is a \emph{flat}\index{flat} if and only if $r(A) < r(A \cup \{e\})$ for all $e \in E\setminus A$.  It can be shown that the intersection of a collection of flats is a flat.

By the usual considerations, we can define a closure operator on $2^{E}$ by sending a subset $A$ of $E$ to the subset $\ov {A}$ which is the intersection of all the flats containing $A$. More concretely, \[\ov{A} = A \cup \{e \in E\mid r(A) = r(A \cup \{e\})\}.\] The assignment of $A$ to $\ov{A}$ is a closure operator on $2^E$, in that it is an order preserving, idempotent and non-decreasing function. In addition, it satisfies the following version of the exchange axiom, familiar in the case of vector spaces and linear independence.

\begin{itemize}
\item[(EX)] If $e,f \in E$ and $A\subseteq E$, then if $f \in \overline{A \cup \{e\}}\setminus \ov{A}$, then $e \in \ov{A \cup \{f\}}$.
\end{itemize}

It can be shown that, conversely, a closure operator $c\colon 2^{E} \to 2^{E}$ satisfying the Exchange Axiom (EX) defines a unique matroid where the flats are precisely those subsets $A$ of $E$ such that $c(A) = A$.

The collection  $\Lambda(\mathcal M)$ of all flats of a matroid $\mathcal M$ is a lattice where the meet operation is intersection and the join is defined by $F \vee G = \ov{F \cup G}$ for flats $F$ and $G$.  In fact, $\Lambda(M)$ is a geometric lattice. In order to define this notion, recall that an element $a$ in a lattice $\Lambda$ is an
\emph{atom}\index{atom} if  it covers the bottom element of $\Lambda$.  The lattice $\Lambda$ is \emph{ranked}\index{ranked!poset}\index{poset!ranked} or \emph{graded}\index{graded!poset}\index{poset!graded} if all maximal chains between $x$ and $y$ in $\Lambda$, with $x<y$, have the same length. The \emph{rank function}\index{rank function} of a graded lattice $\Lambda$ is the function $r\colon \Lambda \to \mathbb{N}$ where $r(x)$ is the length of a maximal chain from the bottom element of $\Lambda$ to $x$.
\nomenclature[P, 06]{$r(x)$}{rank of an element $x$ in a ranked poset}%

\begin{Def}[Geometric lattice]\label{d:geom.lattice}
A lattice $\Lambda$ is said to be a \emph{geometric lattice}\index{geometric lattice}\index{lattice!geometric} if it satisfies the following axioms.
\begin{itemize}
  \item[(G1)] $\Lambda$ is an atomic lattice, that is, every element in $\Lambda$ is a join of atoms.
  \item[(G2)] $\Lambda$ is ranked.
  \item[(G3)] $\Lambda$ is \emph{upper semimodular}\index{upper semimodular}: $r(A \vee B) + r(A \wedge B) \leq r(A) + r(B)$.
\end{itemize}
\end{Def}

A theorem of Birkhoff~\cite{White} shows that there is a one-to-one
correspondence between geometric lattices and simple matroids.
The correspondence associates with a geometric lattice $\Lambda$ the simple
matroid $\mathcal M(\Lambda)$ defined as follows. The ground set of this
matroid is the set $E$ of atoms of $\Lambda$. A collection $I$ of atoms is
independent if the rank of the join of $I$ is the cardinality of $I$. Then the
lattice of flats of $\mathcal M(\Lambda)$ is isomorphic to $\Lambda$.

\nomenclature[M, 02]{$\ov A$}{closure of a set $A$ in a matroid}%
 If $\mathcal M$ is a matroid with ground set $E$, then Brown considers the left regular band monoid whose elements are all linearly ordered independent subsets of $E$.  The product is given by concatenation and then removing elements that are dependent on the previous elements where we say $e\in E$ is \emph{dependent}\index{dependent} on a subset $A\subseteq E$ if $\ov{A\cup \{e\}}=\ov A$.  The free left regular band is obtained from the matroid in which all subsets of elements are independent. The Hasse diagram of the left regular band associated to a matroid is a rooted tree.  Bj\"orner extended this construction to interval greedoids, which form a natural extension of the notion of matroids.  See~\cite{bjorner2} for details.

\subsection{Free partially commutative left regular bands}
\nomenclature[L, 12]{$B(\Gamma)$}{free partially commutative left regular band on the graph $\Gamma$}%
If $\Gamma=(V,E)$ is a (finite simple) graph, then the \emph{free partially commutative left regular band}\index{free partially commutative left regular band} $B(\Gamma)$ is the left regular band \emph{monoid} with presentation \[\langle V\mid xy=yx, \text{if}\ \{x,y\}\in E\rangle\] (where this presentation is as a left regular band monoid).  They are left regular band analogues of free partially commutative monoids~\cite{CartierFoata} (or trace monoids~\cite{tracebook}) and right-angled Artin groups (cf.~\cite{BestvinaBrady,wisebook}).

The elements of $B(\Gamma)$ are in bijection with acyclic orientations of induced subgraphs of the complementary graph $\ov \Gamma$~\cite[Theorem~3.7]{oldpaper}.  Namely, if $w$ is a word in $V$ and $W$ is the set of letters appearing in $w$, then we associate to $w$ the induced subgraph of $\ov \Gamma$ on the vertex set $W$, acyclically oriented by orienting an edge $\{x,y\}$ from $x$ to $y$ if the first occurrence of $x$ in $w$ is \emph{before} the first occurrence of $y$ in $w$.  Conversely, given an acyclically oriented induced subgraph of $\ov \Gamma$ we can find a repetition-free word giving rise to that subgraph by performing a topological sorting of the vertices, that is, choosing an ordering of the vertices such that if there is a directed edge from $x$ to $y$, then $x$ occurs before $y$ in the ordering.

For example, if $E=\emptyset$, i.e., the graph $\Gamma$ is edgeless, then $B(\Gamma)$ is the free left regular band monoid on the vertices of $\Gamma$ and elements correspond to acyclic orientations of induced subgraphs of the complete graph on the vertex set $V$.  But these correspond exactly to repetition-free words over the alphabet $V$.  At the other extreme, if $\Gamma$ is complete, then $\ov \Gamma$ is edgeless.  Hence each induced subgraph has only one acyclic orientation. Thus the class of each word is determined by its support and so $B(\Gamma)\cong (P(V),\bigcup)$.

Like the case of free left regular band monoids, free partially commutative left regular bands behave well under contraction and ``deletion.''  First observe that the natural homomorphism $F(V)\to P(V)$ factors through $B(\Gamma)$ and hence $\Lambda(B(\Gamma))\cong (P(V),\bigcup)$ as a semilattice via the map taking an acyclically oriented induced subgraph of the complement of $\Gamma$ to its set of vertices.  If $X\subseteq V$, then one easily checks that $B(\Gamma)_{\geq X}\cong B(\Gamma[X])$ where $\Gamma[X]$ denotes the induced subgraph of $\Gamma$ with vertex set $X$.  On the other hand, if $b\in B(\Gamma)$ is the image of a word $w$ in $F(V)$ with $c(w)=X$, then $bB(\Gamma)\cong B(\Gamma[V\setminus X])$.  Details can be found in~\cite[Section~3.6]{oldpaper} or verified directly.

If $\Gamma_1,\Gamma_2$ are graphs, then it is not difficult to check that $B(\Gamma_1)\times B(\Gamma_2)\cong B(\Gamma_1\ast \Gamma_2)$, where $\Gamma_1\ast \Gamma_2$ is the join of the two graphs.  In other words, $\Gamma_1\ast \Gamma_2$ is the graph obtained by connecting each vertex of $\Gamma_1$ to each vertex of $\Gamma_2$ by an edge.

Free partially commutative left regular bands turn out to give a simple model of a family of random walks considered by Athanasiadis and Diaconis~\cite{DiaconisAthan} and their cohomology turns out to be connected to classical invariants of the clique (or flag) complex of $\Gamma$.
More details on free partially commutative left regular bands can be found in~\cite[Section~3.6]{oldpaper}.

\subsection{Hyperplane arrangements and oriented matroids}\label{ssec:hyperplane-arrangements-and-oriented-matroids}
We follow here Brown~\cite{Brown1,Brown2}, Brown and Diaconis~\cite{DiaconisBrown1} and Bj\"orner \textit{et al.}~\cite{OrientedMatroids1999}.  See also~\cite{Brown:book2} and the recent book~\cite{THA}.
\nomenclature[H, 01]{$\mathcal A$}{hyperplane arrangement}%
A \emph{hyperplane arrangement}\index{hyperplane arrangement} in a real vector space $V$ is a finite collection $\mathcal A=\{H_1,\ldots,H_n\}$ of (affine) hyperplanes in $V$.  The arrangement is called \emph{central}\index{central} $\bigcap_{i=1}^n H_i\neq \emptyset$ and otherwise is called \emph{affine}\index{affine}.  Without loss of generality, in the case of a central arrangement, we may (and do) assume that the hyperplanes are linear (that is, pass through the origin). A hyperplane arrangement in $V$ is said to be \emph{essential}\index{essential} if the unit normals to the hyperplanes span $V$.  A central arrangement is essential if and only if the intersection of all the hyperplanes is the origin.  We can  always assume a central arrangement is essential by considering $V/\bigcap_{i=1}^n H_i$. Similarly, we may restrict our attention to essential affine arrangements without loss of generality. See~\cite{Stanleyhyp} for details. The connected components of $V\setminus \bigcup_{i=1}^nH_i$ are called \emph{chambers}\index{chambers}.

\subsubsection{Central arrangements and oriented matroids}
\label{sssec:central-arrangements-and-oriented-matroids}
For the moment we consider the case that $\mathcal A$ is central and we assume that it is given by linear forms $f_i=0$, for $i=1,\ldots, n$, with $f_1,\ldots, f_n\in V^*$.   If $x\in \mathbb R$, put
\[\sgn (x)= \begin{cases} +, & \text{if}\ x>0 \\ -, & \text{if}\ x<0\\ 0, & \text{if}\ x=0.\end{cases}\] Then we can define a mapping $\theta\colon V\to L^n$ by
\begin{equation}\label{eq:define.theta}
\theta(x) = (\sgn(f_1(x)),\ldots,\sgn(f_n(x))).
 \end{equation}
  It turns out that the set of covectors $\theta(V)$ is a submonoid of $L^n$, introduced independently by Tits~\cite{Titsbuildings,Titsappendix} and Bland.  Trivially, we have $\theta(x)\theta(x)=\theta(x)$.  If $x\neq y$, then $\theta(x)\theta(y)=\theta(z)$ where $z$ is an element on the open line segment $(x,y)$ such that $f_i(z)=f_i(x)$ whenever $f_i(x)\neq 0$. (Such a $z$ exists because $f_1,\ldots, f_n$ are continuous so we can find $\varepsilon>0$ such that, for each $i$ with $f_i(x)\neq 0$, we have that $\sgn(f_i(B_{\varepsilon}(x)))=\sgn(f_i(x))$.)   If $z=(1-t)x+ty$ with $0<t<1$ and $f_i(x)=0$, then $f_i(z)=tf_i(y)$ and so $\sgn(f_i(z))=\sgn(f_i(y))$. It follows that $\theta(x)\theta(y)=\theta(z)$.   Note that $\theta(0)=0$ (the identity). The fibers of $\theta$ are relative interiors of polyhedral cones and the fibers over elements of $\theta(V)\cap (\bd L)^n$ are the chambers.

  \nomenclature[H, 05]{$\FFF(\mathcal A)$}{face monoid of the hyperplane arrangement $\mathcal A$}%
  Define $\FFF(\mathcal A)= \theta(V)$ to be the \emph{face monoid}\index{face monoid} of $\mathcal A$.  Note that up to isomorphism it does not depend on the choice of the $f_i$. Being a submonoid of $L^n$, the monoid $\FFF(\AAA)$ is a left regular band.  Figure~\ref{f:covectorsforhyp} gives an example of the covectors associated to a hyperplane arrangement in $\mathbb R^2$.
\begin{figure}[tbhp]
\begin{center}
\begin{tikzpicture}
    \draw[thick] (60:3) -- (240:3);
    \draw[thick] (120:3) -- (300:3);
    \draw[thick] (-3,0) -- (3,0);
    \draw (0,0)       node[fill=white,font=\tiny] {$(000)$};
    \draw (30:2.25)    node[font=\small]           {$(+++)$};
    \draw (60:1.5)       node[fill=white,font=\tiny] {$(0++)$};
    \draw (90:2.25)   node[font=\small]           {$(-++)$};
    \draw (120:1.5)  node[fill=white,font=\tiny] {$(-0+)$};
    \draw (150:2.25)    node[font=\small]           {$(--+)$};
    \draw (180:1.5) node[fill=white,font=\tiny] {$(--0)$};
    \draw (210:2.25)  node[font=\small]           {$(---)$};
    \draw (240:1.5)      node[fill=white,font=\tiny] {$(0--)$};
    \draw (270:2.25)   node[font=\small]           {$(+--)$};
    \draw (300:1.5)  node[fill=white,font=\tiny] {$(+0-)$};
    \draw (330:2.25)     node[font=\small]           {$(++-)$};
    \draw (0:1.5)   node[fill=white,font=\tiny] {$(++0)$};
\end{tikzpicture}
\end{center}
\caption{The covectors of the faces of the hyperplane arrangement in
$\mathbb R^2$ consisting of three distinct lines.\label{f:covectorsforhyp}}
\end{figure}

\nomenclature[H, 07]{$Z(\mathcal A)$}{zonotope associated with the arrangement $\mathcal A$}%
\nomenclature[H, 08]{$Z_\sigma$}{zonotope associated with a covector $\sigma \in L^n$}%
Associated to the arrangement $\mathcal A$ is the  Minkowski sum
\begin{align*}
Z(\mathcal A)&=[-f_1,f_1]+\cdots + [-f_n,f_n]\\
 &= \left\{\sum_{i=1}^n t_if_i\mid -1\leq t_i\leq 1,\ \text{for}\ i=1,\ldots, n\right\},
\end{align*}
 which  is a convex polytope in $V^*$ known as the \emph{zonotope polar to $\mathcal A$}\index{zonotope!polar to an arrangement}. We refer the reader to~\cite{Ziegler} for a good source on polytopes and, in particular, zonotopes.  Formally, a \emph{zonotope}\index{zonotope} is an image of a hypercube under an affine map or, equivalently, a Minkowski sum of line segments.
  If $\sigma=(\sigma_1,\ldots, \sigma_n)\in L^n$ is any covector, then we can associate to it the zonotope
\begin{equation}\label{facezonotope}
Z_{\sigma} = \sum_{\sigma_i=0} [-f_i,f_i]+\sum_{\sigma_i=-} \{-f_i\}+\sum_{\sigma_i=+}\{f_i\}.
\end{equation}
Clearly, $Z_{\sigma}\subseteq Z(\mathcal A)=Z_0$.  It turns out that the faces of $Z(\mathcal A)$ are exactly the zonotopes $Z_{\sigma}$ with $\sigma \in \FFF(\mathcal A)$ and that $\sigma\leq \tau$ if and only if $Z_{\sigma}\leq Z_{\tau}$. Consequently, $\FFF(\mathcal A)$ is isomorphic, as a poset, to the face poset of $Z(\mathcal A)$. Indeed, if $x\in (V^*)^*=V$, then the face of $Z(\mathcal A)$ on which $x$ (viewed as a functional) is maximized is $Z_{\theta(x)}$. See~\cite[Proposition~2.2.2]{OrientedMatroids1999} for details. Notice that the vertices of the zonotope are in bijection with the chambers of the arrangement.

\nomenclature[H, 06]{$\mathcal L(\mathcal A)$}{intersection lattice of the hyperplane arrangement $\mathcal A$}%
The \emph{intersection lattice}\index{intersection!lattice} $\mathcal L(\mathcal A)$ is the poset of subspaces which are intersections of hyperplanes from $\mathcal A$, ordered by reverse inclusion.  It is a geometric lattice~\cite[Proposition~3.8]{Stanleyhyp}.  One can verify directly that $\Lambda(\FFF(\mathcal A))\cong \mathcal L(\mathcal A)$ and the support map takes a covector $x\in \FFF(\mathcal A)$ to the linear span of the fiber $\theta\inv(x)$.  See~\cite{Brown1,Brown2} for details.

\nomenclature[H, 04]{$\mathcal B_n$}{Boolean arrangement}%
Some important examples are the following.  The \emph{boolean arrangement}\index{boolean arrangement} is the arrangement $\mathcal B_n$ of coordinate hyperplanes $x_i=0$ in $\mathbb R^n$.  The zonotope $Z(\mathcal A)$ is the $n$-cube $[-1,1]^n$.  One has $\FFF(\mathcal B_n)=L^n$. Figure~\ref{f:zonotope} depicts the zonotope polar to the hyperplane arrangement in Figure~\ref{f:covectorsforhyp}.
\begin{figure}[tbhp]
\begin{center}
\begin{tikzpicture}[vertices/.style={draw, fill=black, circle, inner sep=1pt}]
    \foreach \a in {30,90,150,210,270,330} {
      \node[vertices] at (\a:2) {};
    }
    \draw[fill=gray!20] (30:2)--(90:2)--(150:2)--(210:2)--(270:2)--(330:2)--(30:2)--cycle;
    \foreach \a in {30,90,150,210,270,330} {
            \draw[thick] (\a:2)--(\a+60:2);
    }
    \draw[dashed,semithick,blue] (60:3) -- (240:3);
    \draw[dashed,semithick,blue] (120:3) -- (300:3);
    \draw[dashed,semithick,blue] (-3,0) -- (3,0);
    \draw (0,0)       node[fill=white,font=\tiny] {$(000)$};
    \draw (30:2.75)    node[font=\small]           {$(+++)$};
    \draw (60:1.75)       node[fill=white,font=\tiny] {$(0++)$};
    \draw (90:2.75)   node[font=\small]           {$(-++)$};
    \draw (120:1.75)  node[fill=white,font=\tiny] {$(-0+)$};
    \draw (150:2.75)    node[font=\small]           {$(--+)$};
    \draw (180:1.75) node[fill=white,font=\tiny] {$(--0)$};
    \draw (210:2.75)  node[font=\small]           {$(---)$};
    \draw (240:1.75)      node[fill=white,font=\tiny] {$(0--)$};
    \draw (270:2.75)   node[font=\small]           {$(+--)$};
    \draw (300:1.75)  node[fill=white,font=\tiny] {$(+0-)$};
    \draw (330:2.75)     node[font=\small]           {$(++-)$};
    \draw (0:1.75)   node[fill=white,font=\tiny] {$(++0)$};
\end{tikzpicture}
\end{center}
\caption{The zonotope polar to the arrangement in Figure~\ref{f:covectorsforhyp}.\label{f:zonotope}}
\end{figure}
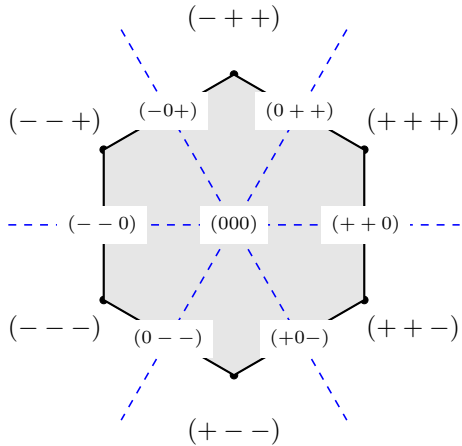

\nomenclature[H, 02]{$\mathcal A(\Gamma)$}{graphical arrangement associated with the graph $\Gamma$}%
\nomenclature[H, 03]{$\mathcal A_W$}{reflection arrangement of the finite Coxeter group $W$}%
\nomenclature[A]{$[n]$}{$\{1, 2, \ldots, n\}$}%
 If $\Gamma$ is a simple graph (say with vertex set $[n]$), then the \emph{graphical arrangement}\index{graphical arrangement} $\mathcal A(\Gamma)$ associated to $\Gamma$ is defined by the hyperplanes
 \[H_{ij} = \{x\in \mathbb R^n\mid x_i-x_j=0\}\]
  with $\{i,j\}$ an edge of $\Gamma$ and $j>i$ (note that this arrangement is not essential).  The special case $\mathcal A(K_n)$ of the complete graph is known as the \emph{braid arrangement}\index{braid arrangement}.  The zonotope corresponding to the braid arrangement is the \emph{permutohedron}\index{permutohedron}, which is the convex hull of the vectors \[\{(\sigma(1),\ldots,\sigma(n))\mid \sigma\in S_n\}.\] The braid arrangement is the Coxeter arrangement associated to $S_n$, viewed as a Coxeter group.  More generally, if $W$ is any finite Coxeter group, there is a corresponding reflection arrangement $\AAA_W$.  One can obtain the associated zonotope, called the \emph{$W$-permutohedron}\index{$W$-permutohedron}, $Z(\AAA_W)$ by choosing a point $x$ in some chamber and then taking the convex hull of the orbit $Wx$ of $x$.  The $1$-skeleton of $Z(\AAA_W)$ is the Cayley graph of $W$ with respect to its Coxeter generators.  Detailed information on Coxeter groups can be found in~\cite{Brown:book2}.

The sets of covectors of a central hyperplane arrangement are the primary examples of oriented matroids.  The reader is referred to the treatise~\cite{OrientedMatroids1999} for a definitive account of the theory of oriented matroids, including all facts that we state below. We need some notation to define this notion. There is a natural automorphism of $L$ that fixes $0$ and switches $+,-$, which we write $x\mapsto -x$.  Thus $L$ can be viewed as a unary monoid and hence $L^n$ can be viewed as a unary monoid via \[(x_1,\ldots, x_n)\longmapsto -(x_1,\ldots, x_n)=(-x_1,\ldots, -x_n).\]

For covectors $x,y\in L^n$, define their \emph{separation set}\index{separation set} to be \[S(x,y) = \{i\in [n]\mid x_i=-y_i\neq 0\}\] that is, it is the set of indices where $x,y$ have opposite signs.  The following properties are enjoyed by the set $\FFF(\mathcal A)$ of covectors of a central hyperplane arrangement and form the set of axioms for an oriented matroid.

\nomenclature[M, 05]{$\mathcal L \subseteq \{+, -, 0\}^E$}{covectors of an oriented matroid}%
\begin{Def}[Oriented matroid]\label{defn:oriented-matroids}
A set of $\mathcal L$ of covectors in $\{+,-,0\}^E$ is said to form an \emph{oriented matroid}\index{oriented matroid} (with (finite) \emph{ground set}\index{ground set} $E$) if it satisfies:
\begin{itemize}
\item [(OM0)] $0\in \mathcal L$;
\item [(OM1)] $x\in \mathcal L$ implies $-x\in \mathcal L$;
\item [(OM2)] $x,y\in \mathcal L$ implies $xy\in \mathcal L$;
\item [(OM3)] \emph{Strong Elimination}\index{strong elimination}: $x,y\in \mathcal L$ and $e\in S(x,y)$ implies there exists $z\in \mathcal L$ such that $z_e=0$ and $z_f=(xy)_f=(yx)_f$ for all $f\notin S(x,y)$.
\end{itemize}
Sometimes we say that the oriented matroid is the pair $(E,\mathcal L)$ to emphasize that the ground set is really part of the data.
\end{Def}

Note that (OM0)--(OM2) say that $\mathcal L$ is a unary submonoid of $L^E$.  Let us verify that the face monoid $\FFF(\mathcal A)$ of a central hyperplane arrangement $\mathcal A$ satisfies these four axioms. Here we will take $E$ to be the set of hyperplanes and view $\FFF(\mathcal A)\subseteq L^E$ (whereas before we viewed it as a submonoid of $L^n$ with $n=|E|$). It is a unary submonoid because $\theta(-x)=-\theta(x)$ (where $\theta$ is defined as per \eqref{eq:define.theta}).  To see that $\FFF(\mathcal A)$ satisfies (OM3), if $e\in S(x,y)$  and $x=\theta(x')$, $y=\theta(y')$, then the line segment $[x',y']$ will intersect the hyperplane corresponding to $e$ at some point $z'$. Putting $z=\theta(z')$, we have $z_e=0$, and if $f\notin S(x,y)$, then $z_f=(xy)_f=(yx)_f$. An oriented matroid is said to be \emph{realizable}\index{realizable!oriented matroid}\index{oriented matroid!realizable} if it comes from a hyperplane arrangement.  Not all oriented matroids are realizable (in fact, in some sense most are not).  However, if one replaces hyperplane arrangements by pseudosphere arrangements (roughly speaking arrangements in a sphere of subspaces homeomorphic to spheres of one dimension smaller), then one has a geometric realization of all oriented matroids; see~\cite[Chaper~5]{OrientedMatroids1999} for details.

Let us introduce some oriented matroid terminology and compare it with semigroup terminology.  Let $\mathcal L\subseteq L^E$ be an oriented matroid.  Let us warn the reader that  the oriented matroid literature customarily uses the reverse of the order that we have been considering on left regular bands.  However, we shall stick to using the  order $x\leq y$ if $yx=x$.  With our convention, maximal non-identity elements of $\mathcal L$ are called \emph{cocircuits}\index{cocircuit} and the elements of the minimal ideal (that is, the minimal elements) are called \emph{topes}\index{topes} (in hyperplane theory, topes are called \emph{chambers}\index{chambers}).  The set of all topes is denoted by $\mathcal T$.  It is known that the cocircuits generate $\mathcal L$ as a monoid and that  \[\mathcal L = \{x\in L^E\mid x\mathcal T\subseteq \mathcal T\};\] see the dual of~\cite[Proposition~3.7.2]{OrientedMatroids1999} and~\cite[Theorem~4.2.13]{OrientedMatroids1999}.  In particular, the set of topes determines the oriented matroid.

If $x\in \mathcal L$, the \emph{zero set}\index{zero set} of $x$ is \[Z(x) = \{e\in E\mid x_e=0\}\] The mapping $Z\colon \mathcal L\to (P(E),\cap)$ given by $x\mapsto Z(x)$ is a monoid homomorphism whose corestriction $Z\colon \mathcal L\to Z(\mathcal L)$ can be identified with the support map $\sigma\colon \mathcal L\to \Lambda(\mathcal L)$.  One has that $\Lambda(\mathcal L)$ is a geometric lattice and hence the lattice of flats of a unique simple matroid. The \emph{rank}\index{rank} of $\mathcal L$ is defined to be the rank of the corresponding matroid.  When $\mathcal L$ is the set of covectors associated to a central hyperplane arrangement $\mathcal A$, one has that the support lattice $\Lambda(\FFF(\mathcal A))$ is isomorphic to the intersection lattice of $\mathcal A$~\cite{Brown2,Stanleyhyp} and the rank is the codimension of the intersection of the hyperplanes in $\mathcal A$. We remark that there is a notion of isomorphism of oriented matroids and an unpublished result of the third author and Hugh Thomas shows that it coincides with isomorphism of the corresponding unary left regular band monoids~\cite{blog}.

\nomenclature[M, 06]{$\mathcal L/A$}{contraction of the oriented matroid $\mathcal L$ at $A$}%
If $A\subseteq E$, then the \emph{contraction}\index{contraction} of $(E,\mathcal L)$ at $A$ is the oriented matroid with ground set $E\setminus A$ and whose set $\mathcal L/A$ of covectors is $\{x|_{E\setminus A}\mid A\subseteq Z(x)\}$.  This monoid is isomorphic to the contraction  $\mathcal L_{\geq \ov A}$ where
\[\ov A=\bigcap \{B\in Z(\mathcal L)\mid B \supseteq A\}\]
(under the identification of $\Lambda(\mathcal L)$ with the zero sets of elements of $\mathcal L$). This is why we use the terminology contraction in this context for left regular bands in general.  In the case of an oriented matroid $(E,\mathcal L)$ coming from a hyperplane arrangement $\mathcal A$, one has that $\mathcal L/A$ is the face monoid of the hyperplane arrangement $\mathcal A/A$ whose underlying vector space $W$ is the intersection of the hyperplanes coming from $A$ and whose hyperplanes are the proper intersections of hyperplanes from $E\setminus A$ with $W$.

\nomenclature[M, 07]{$\mathcal L \setminus A$}{deletion of $A$ from the oriented matroid $\mathcal L$}%
If $(E,\mathcal L)$ is an oriented matroid and $A\subseteq E$, then the \emph{deletion}\index{deletion} of $A$ from $(E,\mathcal L)$ is the oriented matroid with ground set $E\setminus A$ and whose set $\mathcal L\setminus A$ of covectors is $\{x|_{E\setminus A}\mid x\in \mathcal L\}$.  If $x\in \mathcal L$ with $E\setminus Z(x)=A$, then one easily checks that $x\mathcal L\cong \mathcal L\setminus A$, cf.~the discussion after~\cite[Lemma~4.1.8]{OrientedMatroids1999}. For hyperplane arrangements, the operation of deletion corresponds to removing a subset of the hyperplanes.

An element $e\in E$ is called a \emph{loop}\index{loop} of $(E,\mathcal L)$ if $x_e=0$ for all $x\in \mathcal L$ or, equivalently, if $T_e=0$ for each tope $T$. If $e\in E$ is not a loop, then there is an associated partition of $\mathcal T$ into \emph{half-spaces}\index{half-spaces}.  Namely, we have
\[\mathcal T_e^+=\{T\in \mathcal T\mid T_e=+\}\quad \text{and}\quad \mathcal T_e^-=\{T\in \mathcal T\mid T_e=-\}.\]
For hyperplane arrangements these correspond to the sets of chambers in the positive and negative half-spaces determined by the hyperplane corresponding to $e$.

The reader should consult~\cite{OrientedMatroids1999} for more details and motivation for studying oriented matroids. In this paper, we mostly need results and notions from~\cite[Chapter~4]{OrientedMatroids1999}. However, in the next subsection we will explain how zonotopal tilings of a zonotope correspond to certain oriented matroids.  In particular, we will need the following notion. Let $(E,\mathcal L)$ be an oriented matroid.  Then an oriented matroid $(E\cup \{g\},\mathcal L')$ is a \emph{single-element lifting}\index{single-element lifting} of $(E,\mathcal L)$ if $g$ is not a loop of $\mathcal L'$ and $\mathcal L'/\{g\}=\mathcal L$.

\subsubsection{Affine arrangements, affine oriented matroids, $T$-convex sets of topes and zonotopal tilings}
Next we turn to affine hyperplane arrangements and affine oriented matroids.   Suppose that $\mathcal A=\{H_1,\ldots, H_n\}$ is an affine hyperplane arrangement in $V=\mathbb R^d$ given by equations $f_i(x)=c_i$ with $f_i\in V^*$ and $c_i\in \mathbb R$ for $i=1,\ldots, n$. See Figure~\ref{f:affinearrange}.
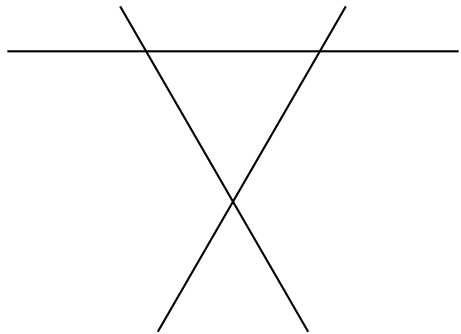
\begin{figure}[htbp]
\begin{center}
\begin{tikzpicture}
    \draw[thick] (60:3) -- (240:2);
    \draw[thick] (120:3) -- (300:2);
    \draw[thick] (-3,2) -- (3,2);
\end{tikzpicture}
\end{center}
\caption{An affine arrangement\label{f:affinearrange}}
\end{figure}
Then one can define a mapping $\theta\colon V\to L^n$ by \[\theta(x) = (\sgn(f_1(x)-c_1),\ldots,\sgn(f_n(x)-c_n))\] and obtain a subsemigroup of $L^n$.  It will not be a monoid for a non-central arrangement.  It turns out, however, to be more convenient to describe affine hyperplane face semigroups via central arrangements.

To do this, we embed $V$ in $\mathbb R^{d+1}$ as the affine hyperplane $x_{d+1}=1$. Now let $H_i'$ be the hyperplane in $\mathbb R^{d+1}$ defined by \[f'_i(x_1,\ldots, x_{d+1}) = f_i(x_1,\ldots, x_d)-x_{d+1}c_i\] and observe that $H_i'\cap \{x\in \mathbb R^{d+1}\mid x_{d+1}=1\}=\{(x,1)\mid x\in H_i\}$.  Let $H=\{x\in \mathbb R^{d+1}\mid x_{d+1}=0\}$ and put $\mathcal A'=\{H_1',\ldots, H_n',H\}$.  We now define the \emph{hyperplane face semigroup}\index{hyperplane face semigroup} of $\mathcal A$ to be $\FFF(\mathcal A)=\{y\in \FFF(\mathcal A')\mid y_{n+1}=+\}$. The reader should verify that this gives an isomorphic semigroup to the one constructed in the previous paragraph.
The support semilattice of $\FFF(\mathcal A)$ is isomorphic to the intersection semilattice of $\mathcal A$~\cite{Stanleyhyp} (that is the poset, of all non-empty intersections of hyperplanes of $\mathcal A$ ordered by reverse inclusion).  Notice that $\FFF(\mathcal A)$ is a right ideal in $\FFF(\mathcal A')$.

An \emph{affine oriented matroid}\index{affine oriented matroid} consists of a triple $(E,\mathcal L,g)$ where $\mathcal L$ is an oriented matroid on $E$ and $g\in E$ is not a loop.  The corresponding left regular band is \[\mathcal L^+(g)=\{x\in \mathcal L\mid x_g=+\}.\] 
For example, each affine hyperplane arrangement $\mathcal A$ determines an affine oriented matroid $([n+1],\FFF(\mathcal A'),n+1)$ in the manner described above. Affine oriented matroids play a role in linear programming over oriented matroids~\cite[Chapter 10]{OrientedMatroids1999}. See~\cite[Section~4.5]{OrientedMatroids1999} for more on affine oriented matroids.

Affine oriented matroids are closely related to zonotopal tilings via the Bohne-Dress theorem.  We follow here Richter-Gebert and Ziegler~\cite{GebertZiegler}.  Let $\mathcal A$ be a hyperplane arrangement with associated oriented matroid $\FFF(\mathcal A)$ and zonotope $Z(\mathcal A)$.  A (simple) \emph{zonotopal tiling}\index{zonotopal tiling} of $Z(\mathcal A)$ is a collection $\mathcal Z=\{Z_1,\ldots, Z_m\}$ of zonotopes such that:
\begin{itemize}
\item [(ZT1)] $Z(\mathcal A)=\bigcup_{i=1}^m Z_i$;
\item [(ZT2)] each face of an element of $\mathcal Z$ belongs to $\mathcal Z$;
\item [(ZT3)] if $Z_i,Z_j\in \mathcal Z$, then $Z_i\cap Z_j$ is a common face of $Z_i,Z_j$ (or empty);
\item [(ZT4)] every edge of $Z(\mathcal A)$ belongs to $\mathcal Z$.
\end{itemize}
Notice that (ZT1)--(ZT3) just assert that $\mathcal Z$ is the structure of a polyhedral complex on $Z(\mathcal A)$.
The Bohne-Dress theorem asserts that zonotopal tilings of $Z(\mathcal A)$ are in bijection with single-element liftings of $\FFF(\mathcal A)$.  If $(E\cup \{g\}, \mathcal L)$ is a single element lifting of $\FFF(\mathcal A)$, then the corresponding zonotopal tiling is $\mathcal Z=\{Z_{\sigma}\mid (\sigma,+)\in \mathcal L^+(g)\}$ where $Z_\sigma$ is defined as in \eqref{facezonotope}. Hence the faces of the polyhedral complex $\mathcal Z$ can be identified with the elements of the left regular band $\mathcal L^+(g)$ associated to the affine oriented matroid $(E\cup \{g\},\mathcal L,g)$.  The Bohne-Dress theorem is a good motivation for studying oriented matroids even when one is only interested in zonotopes. (It is known that not all zonotopal tilings come from realizable single-element liftings.)

The construction of affine oriented matroids admits the following generalization.  A set of topes of an oriented matroid $(E,\mathcal L)$ is called \emph{$T$-convex}\index{$T$-convex} if it is an intersection of half-spaces (in the sense defined earlier).  For example, the topes belonging to $\mathcal L^+(g)$ are precisely those in $\mathcal T_g^+$, and hence form a $T$-convex set.  In~\cite{Brown1}, $T$-convex sets of topes for a central hyperplane arrangement are called convex sets of chambers. Up to reorientation of the oriented matroid, we may assume without loss of generality that we are considering an intersection of positive half-spaces.  So let $A\subseteq E$ be a subset, none of whose elements are loops.  Put $\mathcal L^+(A) = \bigcap_{e\in A}\mathcal L^+(e)$.  Note that $\mathcal L^+(\emptyset) = \mathcal L$.  If non-empty, then we call $\mathcal L^+(A)$ the \emph{face semigroup}\index{face semigroup}
of the $T$-convex set of topes $\bigcap_{e\in A}\mathcal T_e^+$.  Random walks on these semigroups, in the case of hyperplane arrangements, were considered by Brown~\cite[Section~4]{Brown1} (actually, there is a slight difference in that Brown looks at the closure of the convex set of chambers, while we look at the interior).   Details on $T$-convex sets can be found in~\cite[Section~4.2]{OrientedMatroids1999}.

\subsection{Strong elimination systems, lopsided systems and COMs}
As we were preparing this manuscript for submission, the paper~\cite{COMS} appeared on ArXiv, which includes a large source of new examples of left regular bands and, in particular, includes all the examples of Section~\ref{ssec:hyperplane-arrangements-and-oriented-matroids}. We elaborate on these new classes of examples below.
The weakest notion considered in~\cite{COMS} is that of a strong elimination system.

\begin{Def}[Strong elimination system]
A \emph{strong elimination system}\index{strong elimination system} $(E,\mathcal L)$ with \emph{ground set}\index{ground set} $E$ is a collection $\mathcal L\subseteq L^E$ of covectors satisfying (OM2) and (OM3) from Definition~\ref{defn:oriented-matroids}.
\end{Def}

The notion of a strong elimination system abstracts the following setup.  A \emph{realizable strong elimination system}\index{realizable!strong elimination system}\index{strong elimination system!realizable} consists of a central hyperplane arrangement $\AAA$ in $\mathbb R^d$ and a convex subset $C\subseteq \mathbb R^d$.  It is convenient to assume that the strong elimination system is \emph{essential}\index{essential} meaning that $C$ spans $\mathbb R^d$ and that every hyperplane in $\AAA$ passes through $C$.  This assumption does not change the semigroups one can obtain.  Again, we choose forms defining the hyperplane arrangement to obtain a mapping $\theta\colon \mathbb R^d\to L^E$, as per~\eqref{eq:define.theta}, where $E$ is the set of hyperplanes.  Then $\theta(C)$ is a subsemigroup of the face monoid $\FFF(\AAA)$ because if $x,y\in C$, then the line segment $[x,y]$ from $x$ to $y$ is contained in $C$ and hence the element $z\in \mathbb R^n$ constructed in Section~\ref{sssec:central-arrangements-and-oriented-matroids} with $\theta(x)\theta(y)=\theta(z)$ belongs to $C$. Also, if $x,y\in C$ and $H$ is a hyperplane with $x,y$ on opposite sides of $H$, then the intersection $z$ of $[x,y]$ with $H$ belongs to $C$ and so $\theta(C)$ satisfies (OM3).    We remark that one could replace central arrangements with affine ones, but one obtains nothing new since our embedding of face semigroups of affine arrangements as right ideals in face monoids of central arrangements shows that the strong elimination system obtained from intersecting a convex set with the faces of an affine arrangement can also be obtained from intersecting a convex set with a central arrangement in a Euclidean space of one larger dimension.

Lawrence introduced the notion of a lopsided system in~\cite{lopsided}, which has applications in statistics, combinatorics, learning theory and computational geometry; see~\cite{BCDK1,BCDK2,Moran}.  A \emph{lopsided system}\index{lopsided system} $(E,\mathcal L)$ with ground set $E$ is a right ideal in $\mathcal L\subseteq L^E$ satisfying (OM3). It turns out that CAT(0) cube complexes are examples of lopsided systems, which we essentially proved before we became aware of the notion of a lopsided system, and which was proved explicitly and independently in~\cite{COMS} (see also the discussion following Theorem~\ref{t:cat(0).zono}).  Lopsided systems abstract the strong elimination systems obtained by considering the regions obtained by intersecting orthants (i.e., faces of the Boolean arrangement) with an open convex set in Euclidean space.

More generally, you can obtain a right ideal of a hyperplane face monoid satisfying (OM3) by considering those faces of a hyperplane arrangement $\AAA$ in $\mathbb R^d$  intersecting a convex open subset $C$ in $\mathbb R^d$. More precisely, following~\cite{COMS}, a \emph{realizable COM}\index{realizable!COM}\index{COM!realizable}\footnote{COM is used in~\cite{COMS} as an acronym for both ``complex of oriented matroids'' and ``conditional oriented matroid.''} is the set of covectors of a realizable strong elimination system corresponding to a central hyperplane arrangement $\AAA$ in $\mathbb R^d$ and a convex \emph{open}\index{open} subset $C$ of $\mathbb R^d$ (and again we may assume it is \emph{essential}\index{essential}, meaning that each hyperplane of $\AAA$ meets $C$).  The right ideal property follows because if $x\in C$ and $y\in \mathbb R^d$, then any point $z$ on $[x,y]$ sufficiently closed to $x$ belongs to $C$ and hence we may choose $z\in C$ with $\theta(x)\theta(y)=\theta(z)$ (by choosing $z$ in an $\varepsilon$-ball around $x$ small enough so that $z\in C$ and is in the same open half-space containing $x$ associated to each hyperplane of $\AAA$ not containing $x$). Note that any face semigroup of an affine hyperplane arrangement is a realizable COM.  Indeed, our embedding of the face semigroup of an affine arrangement in $\mathbb R^d$ into the face monoid of a central hyperplane arrangement in $\mathbb R^{d+1}$ consists of taking the covectors of elements in $\mathbb R^{d+1}$ belonging to the open convex set which is the positive half-space with $x_{d+1}>0$ in the homogenization of the affine arrangement. As was the case for strong elimination systems, there is no extra generality obtained by considering the faces of an affine arrangement intersecting a convex open subset.

Let us now give the formal definition of a COM from~\cite{COMS}.
\begin{Def}[COM]
A \emph{COM}\index{COM}\index{complex of oriented matroids}\index{conditional oriented matroid} with \emph{ground set}\index{ground set} $E$ is defined  to be a pair $(E,\mathcal L)$ with $\mathcal L\subseteq L^E$ such that $\mathcal L$ satisfies (OM3) of Definition~\ref{defn:oriented-matroids} and the \emph{face symmetry axiom}\index{face symmetry axiom} (FS):
\begin{itemize}
\item [(FS)] $x,y\in \mathcal L$ implies $x(-y)\in \mathcal L$.
\end{itemize}
\end{Def}

It is obvious that a right ideal  $\mathcal L$ in an oriented matroid satisfies (FS). In particular, every lopsided system is a COM.  It is apparently an open problem whether every COM is a right ideal in an oriented matroid.  It is observed in~\cite{COMS} that every COM is a strong elimination system and hence a left regular band.  We thus have the chain of inclusions depicted in Figure~\ref{fig:inclusions-LSs-COMs-SESs}.
\begin{figure}[h!]
    \centering
    \begin{equation*}
        \text{Lopsided systems}\subseteq \text{COMs}\subseteq \text{Strong elimination systems}
    \end{equation*}
    \caption{Relationship between the classes of lopsided systems, COMs and strong elimination systems}
    \label{fig:inclusions-LSs-COMs-SESs}
\end{figure}

\begin{Prop}\label{p:comis.an.lrb}
If $(E,\mathcal L)$ is a COM, then $\mathcal L$ is a subsemigroup of $L^E$.
\end{Prop}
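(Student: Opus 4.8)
The product in $L^E$ is computed coordinatewise, so for $u,v\in L^E$ one has $(uv)_e=u_e$ when $u_e\neq 0$ and $(uv)_e=v_e$ when $u_e=0$. Since a COM satisfies (OM3) and (FS) by definition, the content of the proposition is exactly that (OM3) together with (FS) forces the composition axiom (OM2): $x,y\in\mathcal L\Rightarrow xy\in\mathcal L$. (Recall that a strong elimination system is required to satisfy (OM2), so this is precisely what must be verified.) My plan is to prove $xy\in\mathcal L$ by induction on the size of the separation set $S(x,y)$.

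For the inductive step, suppose $S(x,y)\neq\emptyset$ and choose $e\in S(x,y)$. Strong elimination (OM3) produces $z\in\mathcal L$ with $z_e=0$ and $z_f=(xy)_f$ for every $f\notin S(x,y)$. Two routine checks then drive the induction. First, $xy=xz$: on $\supp(x)$ both sides equal $x$, while every $f\in Z(x)$ lies outside $S(x,y)$ (separation requires $x_f\neq 0$), so there $z_f=(xy)_f=y_f$ and hence $(xz)_f=z_f=y_f=(xy)_f$. Second, $S(x,z)\subseteq S(x,y)\setminus\{e\}$: the coordinate $e$ has been killed, off $S(x,y)$ the vector $z$ agrees with $x$ on $\supp(x)$ and vanishes on $Z(x)$, and only the remaining coordinates of $S(x,y)$ can still separate $x$ from $z$. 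Thus $|S(x,z)|<|S(x,y)|$, so $xz\in\mathcal L$ by induction and $xy=xz\in\mathcal L$. Note that this step uses only (OM3).

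The base case $S(x,y)=\emptyset$ (that is, $x$ and $y$ conformal) is where (FS) enters and where the real difficulty lies. Here (FS) supplies both $x(-y)\in\mathcal L$ and $y(-x)\in\mathcal L$. A direct computation shows that $x(-y)$ coincides with the target $xy$ except on $Z(x)\cap\supp(y)$, where it carries the wrong sign $-y$, and dually $y(-x)$ coincides with $xy$ except on $\supp(x)\cap Z(y)$, where it carries $-x$; off $\supp(x)\,\triangle\,\supp(y)$ both already equal $xy$. The plan is to reinstate the correct signs on $\supp(x)\,\triangle\,\supp(y)$ by applying strong elimination to these two covectors (and to $x,y$ themselves), removing the offending coordinates one at a time, controlled by a secondary induction on $|\supp(x)\,\triangle\,\supp(y)|$; the base of that induction, $Z(x)\cap\supp(y)=\emptyset$, gives $xy=x\in\mathcal L$ outright.

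The main obstacle is precisely this conformal base case. Strong elimination only ever produces a coordinate that has been set to $0$, and it pins down the eliminated coordinate together with the coordinates outside the separation set, leaving the other separating coordinates unconstrained; reinstating the prescribed nonzero signs of $y$ on the zero set of $x$ therefore cannot be achieved by (OM3) alone and must be fed by (FS). Keeping the secondary induction well-founded is delicate, because the natural reductions swap the roles of the two arguments (one is led from $xy$ to a composition with $y$ on the left), so the measure that decreases must be chosen symmetrically in $x$ and $y$---for instance $|\supp(x)\,\triangle\,\supp(y)|$ paired lexicographically with the separation set---rather than the one-sided count $|Z(x)\cap\supp(y)|$, which the one-sided reductions leave unchanged. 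An alternative route that sidesteps some of this bookkeeping is an induction on $|E|$ via deletions: each deletion $\mathcal L\setminus e$ is again a COM, so by induction it is closed under composition and $xy$ agrees off $e$ with some covector of $\mathcal L$, after which one need only correct the single coordinate $e$, again using (FS) and (OM3).
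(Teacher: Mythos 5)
There is a genuine gap: your argument is not complete. The inductive step on $|S(x,y)|$ via (OM3) is fine (the verifications $xy=xz$ and $S(x,z)\subseteq S(x,y)\setminus\{e\}$ are correct), but everything is then thrown onto the conformal base case $S(x,y)=\emptyset$, and there you only describe a ``plan'' -- a secondary induction on $|\supp(x)\,\triangle\,\supp(y)|$ whose well-foundedness you yourself flag as delicate and which you never carry out. As written, the base case is an acknowledged open end, so the proposition is not proved.

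The missing idea is that (FS) applied \emph{twice} already gives the conclusion with no induction at all, which is exactly the paper's proof. Since negation is an automorphism of the monoid $L^E$, one has $-(uv)=(-u)(-v)$, hence $-[x(-y)]=(-x)y$; also $x(-x)=x$ coordinatewise, and the coordinatewise product on $L^E$ is associative. So from $x,y\in\mathcal L$, axiom (FS) gives $x(-y)\in\mathcal L$, and applying (FS) again to the pair $x$, $x(-y)$ gives
\[
x\bigl(-[x(-y)]\bigr)=x\bigl((-x)y\bigr)=\bigl[x(-x)\bigr]y=xy\in\mathcal L .
\]
Note that (OM3) is not needed anywhere, and the separation-set induction, the conformal case, and the deletion detour are all superfluous. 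If you want to salvage your own outline, observe that even in your base case you could close the argument instantly with this double application of (FS) rather than trying to repair signs coordinate by coordinate.
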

\begin{proof}
If $x,y\in \mathcal L$, then $x(-y)\in \mathcal L$ and hence $x(-[x(-y)])=x((-x)y)=[x(-x)]y=xy$ belongs to $\mathcal L$.
\end{proof}

Affine oriented matroids and the face semigroups of $T$-convex sets of topes are examples of COMs.
\begin{Prop}
Every affine oriented matroid or face semigroup of a $T$-convex set of topes is a COM.
\end{Prop}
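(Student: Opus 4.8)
The plan is to observe that the face semigroup of an affine oriented matroid is exactly the special case $A=\{g\}$ of the face semigroup $\mathcal L^+(A)$ of a $T$-convex set of topes, so it suffices to prove that $\mathcal L^+(A)$ is a COM whenever $A\subseteq E$ contains no loops. By definition, verifying that $\mathcal L^+(A)$ is a COM means checking the face symmetry axiom (FS) together with strong elimination (OM3); thanks to Proposition~\ref{p:comis.an.lrb} I need not separately check that $\mathcal L^+(A)$ is a subsemigroup, since being a subsemigroup follows from (OM3) and (FS).

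The first step is to show that $\mathcal L^+(A)$ is a right ideal in the oriented matroid $\mathcal L$. Recalling the multiplication in $L$ from Table~\ref{multtableL}, the product of covectors satisfies $(xy)_f=x_f$ whenever $x_f\neq 0$. Hence if $x\in\mathcal L^+(A)$ and $y\in\mathcal L$, then for every $e\in A$ we have $x_e=+\neq 0$, so $(xy)_e=x_e=+$; combined with (OM2) for $\mathcal L$ this gives $xy\in\mathcal L^+(A)$. The face symmetry axiom is then immediate: given $x,y\in\mathcal L^+(A)$, axiom (OM1) yields $-y\in\mathcal L$, and the right ideal property gives $x(-y)\in\mathcal L^+(A)$. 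This is precisely the content of the earlier remark that a right ideal in an oriented matroid satisfies (FS).

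It remains to transfer strong elimination from $\mathcal L$ to $\mathcal L^+(A)$. Given $x,y\in\mathcal L^+(A)$ and $e\in S(x,y)$, I would apply (OM3) in the ambient oriented matroid $\mathcal L$ to produce $z\in\mathcal L$ with $z_e=0$ and $z_f=(xy)_f=(yx)_f$ for all $f\notin S(x,y)$. The crux is that indices in $A$ can never lie in $S(x,y)$: for $f\in A$ both $x_f$ and $y_f$ equal $+$, so they never have opposite nonzero signs. Consequently $f\notin S(x,y)$, and the defining property of $z$ forces $z_f=(xy)_f=x_f=+$. Thus $z\in\mathcal L^+(A)$, and since the separation set $S(x,y)$ depends only on $x,y$ it is unchanged upon passing to the subsemigroup, so this same $z$ witnesses (OM3) inside $\mathcal L^+(A)$. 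With (FS) already established, $\mathcal L^+(A)$ is a COM.

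The main (and essentially only) obstacle is the observation that the coordinates indexed by $A$ are never separating, so that the eliminated covector $z$ retains its positive entries on $A$ and lands back in $\mathcal L^+(A)$; every other step is a direct invocation of the oriented matroid axioms (OM0)--(OM3) together with Proposition~\ref{p:comis.an.lrb}.
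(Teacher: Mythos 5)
Your proof is correct and follows essentially the same route as the paper: establish (FS) via the right ideal property, and then observe that the coordinates indexed by $A$ (or $g$) never lie in the separation set $S(x,y)$, so the covector $z$ produced by strong elimination in the ambient oriented matroid retains the value $+$ on those coordinates and hence lies in $\mathcal L^+(A)$. The extra details you supply (the explicit verification of the right ideal property and the reduction of the affine case to $A=\{g\}$) are accurate but do not change the argument.
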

\begin{proof}
As any right ideal in an oriented matroid satisfies (FS), the only issue is to prove that (OM3) holds.  The key point is that if $x_g=+=y_g$ and $z$ is an element with $z_g=(xy)_g=(yx)_g$, then $z_g=+$.  Thus the set of covectors of an affine oriented matroid or the face semigroup of a $T$-convex set of topes will inherit (OM3) from the ambient oriented matroid.
\end{proof}

Elements of the minimal ideal of a COM are called \emph{topes}\index{topes}.  The following beautiful example of a COM associated to a poset is from~\cite{COMS}; see~\cite{reinerthesis} for the origins of this example and~\cite[Section~6.4]{THA} for more details on the connections between posets and the braid arrangement.

\begin{Example}[Ranking COM of a poset]
Let $(P,\preceq)$ be a poset.  Without loss of generality we may assume that the underlying set of $P$ is $\{1,\ldots, n\}$.  Consider then the braid arrangement $\AAA$ in $\mathbb R^n$ defined by the hyperplanes \[H_{ij}=\{x\in \mathbb R^n\mid x_i=x_j\}\] with $1\leq i<j\leq n$.  Let $C\subseteq \mathbb R^n$ be the convex open set defined by $x_i<x_j$ if $i\prec j$.  The corresponding realizable COM $\mathcal R(P)$ is called the \emph{ranking COM}\index{ranking COM}\index{COM!ranking} associated to $P$~\cite{COMS}.  The topes of $\mathcal R(P)$ are the linear extensions of $P$~\cite{COMS}.   The COM $\mathcal R(P)$ is closely connected to the order polytope~\cite{StanleyOrder} of $P$; see~\cite{COMS} for details and some worked out examples.

If we view $\FFF(\AAA)$ as consisting of all ordered partitions $(P_1,\ldots, P_r)$ of $\{1,\ldots,n\}$ (as per~\cite{BHR} or Example~\ref{ex:braid} below), then $\mathcal R(P)$ is the right ideal of $\FFF(\AAA)$ consisting of those ordered partitions satisfying $i\prec j$ implies the block of $i$ comes before the block of $j$.
\end{Example}

An important fact is that every contraction of a COM is again a COM and every ``deletion'' is an oriented matroid~\cite[Section~3]{COMS}.

\begin{Prop}\label{p:Com.minor}
Let $(E,\mathcal L)$ be a COM.  Then each contraction $\mathcal L_{\geq X}$ with $X\in \Lambda(\mathcal L)$ is a COM.  Moreover, if $x\in \mathcal L$, then $x\mathcal L$ is isomorphic to the monoid of covectors of an oriented matroid.
\end{Prop}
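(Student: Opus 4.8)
The plan is to work throughout with the identification of the support map $\sigma\colon\mathcal L\to\Lambda(\mathcal L)$ with the zero-set map $Z\colon\mathcal L\to Z(\mathcal L)$, under which the semilattice order on $\Lambda(\mathcal L)$ becomes inclusion of zero sets. With this dictionary the contraction becomes the explicit set $\mathcal L_{\geq X}=\{x\in\mathcal L\mid X\subseteq Z(x)\}$ of covectors vanishing on $X$, while the principal right ideal becomes $x\mathcal L=\{w\in\mathcal L\mid w_e=x_e\ \text{for all}\ e\notin Z(x)\}$, whose members agree with $x$ off $Z(x)$ and are otherwise constrained only on $Z(x)$. Both claims then reduce to verifying the relevant covector axioms on these concrete sets, and the whole argument is driven by the fact that products and negation in $L^E$ act coordinatewise.

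For the first claim I would check (FS) and (OM3) directly for $\mathcal L_{\geq X}$ (optionally restricting the ground set to $E\setminus X$). For (FS), if $x,y$ vanish on $X$ then so does $x(-y)$, and $x(-y)\in\mathcal L$ since $\mathcal L$ is a COM. For (OM3), given $x,y\in\mathcal L_{\geq X}$ and $e\in S(x,y)$, one observes $e\notin X$ because both covectors vanish on $X$ and so cannot separate there; applying (OM3) in $\mathcal L$ yields $z\in\mathcal L$ with $z_e=0$ and $z_f=(xy)_f$ for all $f\notin S(x,y)$, and since every $f\in X$ lies off $S(x,y)$ with $(xy)_f=0$, the witness $z$ again vanishes on $X$. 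Hence $\mathcal L_{\geq X}$ inherits (FS) and (OM3) and is a COM.

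For the second claim, set $A=E\setminus Z(x)$ and consider the restriction $w\mapsto w|_{Z(x)}$. This is a bijective homomorphism from $x\mathcal L$ onto $\mathcal M:=\{y|_{Z(x)}\mid y\in\mathcal L\}$ (injective because an element of $x\mathcal L$ is determined by its values on $Z(x)$, its other values being those of $x$), so it suffices to prove $\mathcal M$ is the covector set of an oriented matroid on ground set $Z(x)$. The crux is that the two axioms a general COM may lack are recovered precisely here: $x$ itself restricts to the zero covector, giving (OM0); and for (OM1) the face symmetry axiom supplies, for each $y\in\mathcal L$, the element $x(-y)\in\mathcal L$ whose restriction to $Z(x)$ is $-(y|_{Z(x)})$. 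Axiom (OM2) is immediate because $x\mathcal L$ is a right ideal, hence a subsemigroup of $\mathcal L$, and restriction is a homomorphism.

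The remaining and most delicate point is transferring strong elimination (OM3) to $\mathcal M$. Here I would take $w_1=y_1|_{Z(x)}$, $w_2=y_2|_{Z(x)}$ and $e\in S(w_1,w_2)$, and apply (OM3) in $\mathcal L$ to $xy_1,xy_2\in x\mathcal L\subseteq\mathcal L$. The essential bookkeeping is that these elements agree with $x$ off $Z(x)$ and so do not separate there, whence $S(xy_1,xy_2)=S(w_1,w_2)\subseteq Z(x)$, and that on $Z(x)$ one has $((xy_1)(xy_2))_f=(w_1w_2)_f$; the witness $z\in\mathcal L$ produced by (OM3) then restricts to the required $w_3\in\mathcal M$. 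I expect this coordinate accounting---keeping separation sets inside $Z(x)$ and confirming that products commute with restriction there---to be the main obstacle, though it is routine once the dictionary above is fixed. Assembling the four axioms shows that $\mathcal M$ is an oriented matroid and that $x\mathcal L\cong\mathcal M$, which is the monoid of covectors of an oriented matroid.
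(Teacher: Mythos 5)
Your proof is correct. Note that the paper does not actually prove this proposition: it states it and defers to~\cite[Section~3]{COMS}, so there is no in-text argument to compare against. Your direct verification is the natural one and all the steps check out. For the contraction, the observations that $e\in S(x,y)$ forces $e\notin X$ and that the eliminating covector $z$ inherits vanishing on $X$ from $(xy)_f=0$ are exactly what is needed. For the deletion, the key identifications are all right: $x\mathcal L$ consists precisely of the covectors agreeing with $x$ off $Z(x)$, so restriction to $Z(x)$ is injective on $x\mathcal L$ and surjective onto $\mathcal M=\{y|_{Z(x)}\mid y\in\mathcal L\}$ because $(xy)|_{Z(x)}=y|_{Z(x)}$; the zero covector and negation come from $x$ itself and from (FS) applied as $x(-y)$; and the transfer of (OM3) works because $xy_1,xy_2$ agree with $x$ off $Z(x)$, so $S(xy_1,xy_2)=S(w_1,w_2)\subseteq Z(x)$, and the Tits product is computed coordinatewise so it commutes with restriction. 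One could shorten the last step slightly by noting that $\mathcal M$ is itself the deletion $\mathcal L\setminus A$ with $A=E\setminus Z(x)$ and quoting the COM deletion result, but your self-contained verification is preferable given that the proposition is precisely the statement being established.
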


\subsection{Complex hyperplane arrangements}
In~\cite{complexstrat}, Bj\"orner and Ziegler associated covectors to complex hyperplane arrangements.
Bj\"orner later observed that these sets of covectors are left regular band monoids~\cite{bjorner2}.
Ziegler~\cite{ComplexMatroid} generalized the notion of oriented matroid to complex oriented matroids (which again are left regular band monoids) and our results should also apply in this context, but we did not work out the details.

Let $\til L=\{0,+,-,i,j\}$ with the multiplication table in Table~\ref{multtablecomplex}. The Hasse diagram of  $\til L$ is drawn in Figure~\ref{multtableRorder}.  Note that $\til L=\mathsf S(L)$, the suspension of $L$ as defined at the end of Subsection~\ref{sssec:operations-on-lrbs}.
\begin{table}[tbph]
\centering
\begin{gather*}
\begin{array}{c}
\begin{array}{c| c c c c c}
 & 0 & + & - & i & j\\ \hline
0& 0& + &- &i &j \\
+ &+&+ & + &i & j\\
- & - &- & -  & i & j\\
i & i & i &i & i & i\\
j & j & j & j & j & j
\end{array}
\end{array}
\end{gather*}
\caption{The multiplication table of $\til L$.
\label{multtablecomplex}}
\end{table}
\begin{figure}[tbhp]
\begin{center}
\begin{tikzpicture}
\node  	(A) 								{$0$};
\node  	(B) [below left =1cm of A] 		{$+$};
\node  	(C) [below right=1cm of A] 	{$-$};
\node 	(D) [below=1cm of B]			{$i$};
\node 	(E) [below=1cm of C]			{$j$};
\draw (A)--(B);
\draw (A)--(C);
\draw (B)--(D);
\draw (B)--(E);
\draw (C)--(D);
\draw (C)--(E);
\end{tikzpicture}
\end{center}
\caption{The Hasse diagram of $\til L$\label{multtableRorder}}
\end{figure}
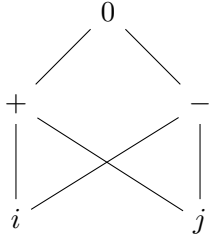

Elements of $\til L^n$ will again be called \emph{covectors}\index{covectors}. To define the covectors of a complex hyperplane arrangement, we must first associate elements of $\til L$ to complex numbers.
Define a function $\sgn\colon \mathbb C\to \til L$ by
\[\sgn(x+iy) = \begin{cases}i, & \text{if}\ y>0,\\ j, & \text{if}\ y <0,\\ +, & \text{if}\ y=0, x>0,\\ -, & \text{if}\ y=0, x<0, \\ 0, & \text{if}\ x=0=y \end{cases}\]
as depicted in Figure~\ref{f:complexplane}.
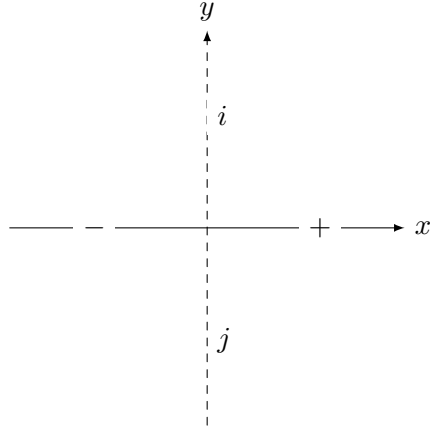
\begin{figure}[tbhp]
\begin{center}
     \begin{tikzpicture}[scale=1.5,cap=round,>=latex]
        \draw[->] (-1.75cm,0cm) -- (1.75cm,0cm) node[right,fill=white] {$x$};
        \draw[->,dashed] (0cm,-1.75cm) -- (0cm,1.75cm) node[above,fill=white] {$y$};
        \draw (-1.00cm,0cm) node[fill=white] {$-$}
              (1.00cm,0cm)  node[fill=white] {$+$}
              (0cm,-1.00cm) node[fill=white,right] {$j$}
              (0cm,1.00cm)  node[fill=white,right] {$i$};
 \end{tikzpicture}
\end{center}
\caption{Complex ``signs''\label{f:complexplane}}
\end{figure}

A (central) \emph{complex hyperplane arrangement}\index{complex hyperplane arrangement} in $\mathbb C^d$ is a collection  $\mathcal A=\{H_1,\ldots, H_n\}$ of (complex) codimension one subspaces, which we take to be the zero sets of complex linear forms $f_1,\ldots, f_n$ on $\mathbb C^d$.  We may always assume that the arrangement is \emph{essential}\index{essential}, that is, $H_1\cap\cdots \cap H_n=\{0\}$.  Analogously to the case of real hyperplane arrangements, we can define a map $\tau\colon \mathbb C^d\to \til L^n$ given by \[\tau(z_1,\ldots,z_d)= (\sgn(f_1(z_1)),\ldots, \sgn(f_d(z_d))).\] The image $\FFF(\mathcal A)=\tau(\mathbb C^d)$ is a submonoid of $\til L^n$ called the \emph{face monoid}\index{face monoid} of $\mathcal A$.
The minimal ideal of $\FFF(\mathcal A)$ consists of the elements of $\FFF(\AAA)\cap \{i,j\}^n$, cf.~\cite[Proposition~3.1]{bjorner2}.  It is shown in~\cite[Theorem~3.5]{complexstrat} that the right ideal $\FFF(\mathcal A)\cap (\bd \til L)^n$ of $\FFF(\AAA)$ is the face poset of a regular CW complex that is homotopy equivalent to the complement $\mathbb C^d\setminus (H_1\cup\cdots \cup H_n)$.

The \emph{augmented intersection lattice}\index{intersection!lattice!augmented}\index{augmented intersection lattice} $\LLL(\mathcal A)$ of $\mathcal A$ is the collection of all intersections of elements of
\[\mathcal A_{\mathrm{aug}} = \{H_1,\ldots, H_n, H_1^{\mathbb R},\ldots, H_n^{\mathbb R}\}\] ordered by reverse inclusion.  Here \[H_i^{\mathbb R} = \{z\in \mathbb C^d\mid \sgn(f_i(z))\in \{0,+,-\}\},\] which is a real hyperplane defined by $\Im (f_i(z))=0$.  One has that $\LLL(\mathcal A)$ is the support lattice of $\FFF(\mathcal A)$ and that the support map takes $F\in \FFF(\mathcal A)$ to the intersection of all elements of $\mathcal A_{\mathrm{aug}}$ containing $\tau\inv(F)$~\cite[Proposition~3.3]{bjorner2}.  The lattice $\LLL(\mathcal A)$ is a semimodular lattice of length $2d$~\cite[Proposition~3.2]{bjorner2}.   More generally, if $X<Y$ in $\LLL(\mathcal A)$, then the length of the longest chain from $X$ to $Y$ in $\LLL$ is $\dim_{\mathbb R} X-\dim_{\mathbb R} Y$.

It is not true in general that contractions of the left regular band $\FFF(\mathcal A)$ along elements of $\LLL(\mathcal A)$ are again face monoids of complex hyperplane arrangements, but their underlying posets are always face posets of regular (even PL) cell decompositions of a ball. See~\cite[Section~4.9]{oldpaper} for details.

\section{Regular CW complexes and CW posets}\label{s:cwposets}
In this section, we begin a systematic exploration of the topological aspects of left regular bands.  Left regular bands are posets and so they have an associated simplicial complex, called the order complex.  As left multiplications preserve order (Lemma~\ref{l:left.act.order}), the left regular band will act on its order complex by simplicial maps, which shall later be exploited to study resolutions of modules over the semigroup algebra of the left regular band.

Many of the left regular bands we have been considering, e.g., hyperplane face semigroups, covectors of COMS, etc.,  are in fact face posets of regular CW complexes. It is well known that regular CW complexes are determined up to isomorphism by their face posets~\cite{BjornerCW}.  Here we promote the construction taking a poset isomorphism to an isomorphism of regular CW complexes into a functor from the category of CW posets (face posets of regular CW complexes) with appropriate maps to the category of regular CW complexes with regular cellular mappings.  We shall see that the left action of the left regular band on itself is taken by the functor to a left action by cellular mappings on the corresponding CW complex.  This will form a crucial ingredient in constructing minimal projective resolutions of simple modules later in the text.   We believe that our functor from CW posets to regular CW complexes is of interest in its own right and is, to the best of our knowledge, novel to this text.

This section begins by reviewing the basics of simplicial complexes and regular CW complexes, as well as introducing our functor.  We then recall the notion of an oriented interval greedoid~\cite{SaliolaThomas}, which generalizes oriented matroids and certain complex hyperplane arrangement face monoids.    These also give left regular bands which are face posets of regular CW complexes.  This is followed by some results on the topology of the different left regular bands discussed in the previous section and this one.

The final two subsections concern left regular bands constructed from CAT(0) cube complexes and CAT(0) zonotopal complexes.  We quickly review some fundamentals of CAT(0) geometry and prove that the faces of a CAT(0) cube complex admit a left regular band structure; our proof in fact shows that they are lopsided systems.  This result was discovered independently in~\cite{COMS} in greater generality, which we also discuss.

The reader is referred to~\cite[Section~4.7]{OrientedMatroids1999} and~\cite[Appendix~A.2]{Brown:book2} for basics about regular CW complexes and CW posets.  See also~\cite{BjornerCW} and~\cite[Chapter~IX]{Massey}.
In this text all posets and CW complexes are finite unless otherwise stated.

\subsection{Simplicial complexes and order complexes of posets}

\subsubsection{Simplicial complexes and simplicial maps}
\nomenclature[S, 01]{$K$}{simplicial complex}%
A \emph{simplicial complex}\index{simplicial complex} $K$ is a pair $(V,\mathscr F)$ where $V$ is a (finite) set of \emph{vertices}\index{vertices} and $\mathscr F$ is a collection of non-empty subsets of $V$ such that:
\begin{itemize}
\item $\emptyset\neq \sigma\subseteq \tau\in \mathscr F\implies \sigma\in \mathscr F$;
\item $V=\bigcup \mathscr F$.
\end{itemize}
Elements of $\mathscr F$ are called \emph{simplices}\index{simplices} or \emph{faces}\index{faces}.    More precisely, if $|\sigma|=q+1$, then $\sigma$ is called a \emph{$q$-simplex}\index{$q$-simplex} and we write $\dim \sigma=q$. Maximal faces are called \emph{facets}\index{facets}.
A simplicial complex $K$ is said to be \emph{pure}\index{pure} if all its facets have the same dimension.  The dimension $\dim K$ of $K$ is the maximum dimension of a facet of $K$.

A \emph{simplicial map}\index{simplicial map} $f\colon K\to L$ of simplicial complexes is a map between their vertex sets such that the image of a simplex of $K$ under $f$ is a simplex of $L$. Simplicial complexes and simplicial maps form a category.

\subsubsection{Geometric realization}
\nomenclature[S, 02]{$\|K\|$}{geometric realization of a simplicial complex $K$}%
The \emph{geometric realization}\index{geometric realization} of a simplicial complex $K=(V,\mathscr F)$  is denoted $\|K\|$. By definition $\|K\|$ is the subspace of $\mathbb R^V$ consisting of all non-negative mappings $f\colon V\to \mathbb R$ such that $\sum_{v\in V}f(v)=1$ and whose support is a simplex of $K$.  Notice that the vertex of $\|K\|$ corresponding to a vertex $v\in V$ is the mapping $\delta_v\colon V\to \mathbb R$ with
\[\delta_v(x) = \begin{cases} 1, & \text{if}\ x=v\\ 0, & \text{else.}\end{cases}\]

\subsubsection{Subcomplex, $q$-skeleton, flag complex}
\nomenclature[S, 03]{$K[W]$}{induced subcomplex of the simplicial complex $K$ with vertex set $W$}%
A \emph{subcomplex}\index{subcomplex} of $K=(V,\mathscr F)$ is a simplicial complex $(V',\mathscr F')$ with $V'\subseteq V$ and $\mathscr F'\subseteq \mathscr F$.
If $W\subseteq V$, then the subcomplex of $K$ \emph{induced}\index{induced} by $W$ is $K[W] = (W, \mathscr F\cap P(W))$, that is, the vertex set of $K[W]$ is $W$ and a subset of $W$ is a simplex of $K[W]$ if and only if it is a simplex of $K$.  It is the largest subcomplex of $K$ whose vertex set is $W$. The \emph{$q$-skeleton}\index{$q$-skeleton} of $K$ is the subcomplex $K^q$ consisting of all simplices of $K$ of dimension at most $q$.
\nomenclature[S, 04]{$K^q$}{$q$-skeleton of the simplicial complex $K$}%

A \emph{flag complex}\index{flag complex} is a simplicial complex $K=(V,\mathscr F)$ such that whenever the $1$-skeleton of a simplex belongs to $K$, then so does the simplex.  Formally, this means that the minimal non-empty subsets of $V$ that do not belong to $\mathscr F$ have size two.

\nomenclature[S, 21]{$\Cliq(\Gamma)$}{clique complex of the graph $\Gamma$}%
An important class of examples arises by considering the set of cliques of a graph.
If $\Gamma=(V,E)$ is a (simple) graph, the \emph{clique complex}\index{clique complex} of $\Gamma$ is the simplicial complex  $\Cliq(\Gamma)$ whose vertex set is $V$ and whose simplices are the cliques (subsets of vertices inducing complete subgraphs) of $\Gamma$.  It is the unique flag complex whose $1$-skeleton is $\Gamma$.

\subsubsection{Operations on simplicial complexes}

\nomenclature[S, 07]{$\link_K(\sigma)$}{link of the simplex $\sigma$ in the simplicial complex $K$}%
If $\sigma$ is a simplex of $K=(V,\mathscr F)$, then the \emph{link}\index{link} of $\sigma$ is the simplicial complex $\link_K(\sigma)$ with vertex set $W=\{v\in V\setminus \sigma\mid \{v\}\cup \sigma\in \mathscr F\}$ and with simplices all non-empty subsets $\tau\subseteq W$ such that $\sigma\cup \tau\in \mathscr F$.  For example, if $v$ is a vertex, then $\link_K(v)$ has vertex set the neighbors of $v$.  A non-empty subset $\tau$ of neighbors of $v$ is a simplex in $\link_K(v)$ if and only if $\tau\cup \{v\}$ is a simplex of $K$.

\nomenclature[S, 07]{$K\ast v$}{cone on the simplicial complex $K$ with cone point $v$}%
If $K=(V,\mathscr F)$ is a simplicial complex and $v\notin V$, then the \emph{cone}\index{cone} on $K$ with \emph{cone point}\index{cone point} $v$ is the simplicial complex \[K\ast v=(V\cup \{v\},\{v\}\cup \{\sigma\subseteq V\cup \{v\}\mid \sigma\cap V\in \mathscr F\}).\] It is easy to see that $\|K\ast v\|$ is contractible via a straightline homotopy to the vertex $\delta_v$. In fact, $\|K\ast v\|$ is homeomorphic to the topological cone $(\|K\|\times I)/(\|K\|\times \{1\})$ on $\|K\|$.

\nomenclature[S, 08]{$K \ast K'$}{join of the simplicial complexes $K$ and $K'$}%
More generally, if $K=(V,\mathscr F), K'=(V',\mathscr F')$ with $V\cap V'=\emptyset$, then their \emph{join}\index{join} is the simplicial complex \[K\ast K'=(V\cup V', \mathscr F\cup \mathscr F'\cup \{\sigma\cup \tau\mid \sigma\in \mathscr F,\tau\in \mathscr F' \}).\]  One has that $\|K\ast K'\|\cong \|K\|\ast \|K'\|$ where the \emph{join}\index{join} $A\ast B$ of topological spaces $A,B$ is $(A\times B\times I)/R$ where $R$ is the equivalence relation identifying all elements of $A\times B\times \{0\}$ which project to the same element of $A$ and identifying all elements of $A\times B\times \{1\}$ that project to the same element of $B$. It is well known that the join is associative up to homeomorphism for locally compact spaces and that $S^n\ast S^m$ is homeomorphic to $S^{n+m+1}$.
\nomenclature[S, 09]{$\mathsf S(K)$}{suspension of the simplicial complex $K$}%
\nomenclature[C, 10]{$\mathsf S(X)$}{suspension of the space $X$}%
The join $\mathsf S(X)=S^0\ast X$ is called the \emph{suspension}\index{suspension} of $X$.  Notice that $\mathsf S(S^m)$ is homeomorphic to $S^{m+1}$. Similarly, we can define the suspension of a simplicial complex by taking the join with the simplicial complex consisting of two vertices and no simplices of dimension greater than zero.

\subsubsection{Order complexes of posets}
We specialize the above definitions and constructions to simplicial complexes associated with posets.

\nomenclature[P, 10]{$\Delta(P)$}{order complex of the poset $P$}%
The \emph{order complex}\index{order complex} $\Delta(P)$ of a poset $P$ is the simplicial complex whose vertex set is $P$ and whose simplices are the chains in $P$.

  If $P$ is a poset with a maximum $\wh 1$, then $\Delta(P)=\Delta(P\setminus \{\wh 1\})\ast \wh 1$ and a similar statement holds for a minimum.

\nomenclature[P, 11]{$P \ast Q$}{join or ordinal sum of two posets $P$ and $Q$}%
If $P,Q$ are disjoint posets, their \emph{join}\index{join} (or \emph{ordinal sum}\index{ordinal sum}) $P\ast Q$ is $P\cup Q$ where we define $p\leq q$ for all $p\in P$ and $q\in Q$ and where $P$ and $Q$ retain their respective orders.  Our notation follows that of Bj\"orner~\cite{bjornersurvey}.
It is easy to see that
\begin{equation*}
    \Delta(P \ast Q) = \Delta(P) \ast \Delta(Q).
\end{equation*}

\nomenclature[P, 12]{$\mathsf S(P)$}{suspension of the poset $P$}%
The \emph{suspension}\index{suspension} $\mathsf S(P)$ of a poset $P$ is the join of the two element antichain with $P$. The operations of taking joins or suspensions for left regular bands correspond to performing these operations on their underlying posets.

An important theorem in poset topology is Rota's cross-cut theorem.  A proof can be found in~\cite[Theorem~10.8]{bjornersurvey}.

\begin{Thm}[Rota]\label{t:crosscut}
Let $P$ be a finite poset such that any subset with a common lower bound has a meet.  Let $\mathscr M(P)$ be the set of maximal elements of $P$ and let $K$ be the simplicial complex with vertex set  $\mathscr M(P)$ and whose simplices are those subsets of $\mathscr M(P)$ with a common lower bound.Then $\|\Delta(P)\|$ is homotopy equivalent to $\|K\|$.
\end{Thm}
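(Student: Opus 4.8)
The plan is to recognize the complex $K$ as the nerve of a natural cover of $\|\Delta(P)\|$ by contractible subcomplexes and to conclude via the Nerve Lemma (see, e.g.,~\cite{bjornersurvey}). For each maximal element $m\in \mathscr M(P)$, consider the subposet $P_{\leq m}$ and its order complex $\Delta(P_{\leq m})$, which is a subcomplex of $\Delta(P)$. First I would check that the family $\{\|\Delta(P_{\leq m})\|\}_{m\in \mathscr M(P)}$ covers $\|\Delta(P)\|$: since $P$ is finite, every chain of $P$ has a greatest element, and that element lies below some maximal element $m$, so the whole chain is a chain of $P_{\leq m}$. Hence every simplex of $\Delta(P)$ lies in some $\Delta(P_{\leq m})$, and the subcomplexes cover $\Delta(P)$.

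Next I would verify that each member of the cover and each of their nonempty finite intersections is contractible. Each $\Delta(P_{\leq m})$ is a cone with apex $m$ (as $P_{\leq m}$ has maximum $m$, using the cone description of order complexes recorded earlier) and hence contractible. For the intersections, a chain lies in $\Delta(P_{\leq m_0})\cap\cdots\cap\Delta(P_{\leq m_k})$ precisely when each of its elements is a common lower bound of $m_0,\ldots,m_k$; thus this intersection equals $\Delta(Q)$, where $Q$ is the subposet of common lower bounds of $\{m_0,\ldots,m_k\}$. If $Q$ is nonempty, then by the hypothesis on $P$ the subset $\{m_0,\ldots,m_k\}$ has a meet $m_0\wedge\cdots\wedge m_k$, which is the greatest element of $Q$; therefore $\Delta(Q)$ is again a cone and contractible. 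So every nonempty intersection is contractible, and every other intersection is empty.

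Finally, the nerve of this cover has a vertex for each $m\in\mathscr M(P)$ and a simplex $\{m_0,\ldots,m_k\}$ exactly when the corresponding intersection is nonempty, i.e.\ exactly when $m_0,\ldots,m_k$ have a common lower bound. This is precisely the complex $K$. Since the cover consists of subcomplexes whose nonempty intersections are all contractible, the Nerve Lemma yields a homotopy equivalence $\|\Delta(P)\|\simeq \|K\|$. I expect the main obstacle to be the careful bookkeeping needed to invoke the Nerve Lemma in its simplicial form: one must confirm that $\{\Delta(P_{\leq m})\}$ is an admissible cover, which here rests on the key identity that an intersection of order complexes of principal down-sets is again the order complex of the (meet-closed) poset of common lower bounds, so that the intersection pattern is governed by meets exactly as the hypothesis guarantees.
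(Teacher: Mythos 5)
Your proof is correct, and it is essentially the standard argument: the paper does not prove Theorem~\ref{t:crosscut} itself but defers to~\cite{bjornersurvey}, where the cross-cut theorem is obtained in exactly this way, by applying the Nerve Lemma to the closed cover of $\|\Delta(P)\|$ by the cones $\|\Delta(P_{\leq m})\|$, $m\in\mathscr M(P)$, whose nonempty intersections are cones on the meets guaranteed by the hypothesis. Your identification of each intersection with $\Delta(Q)$ for $Q$ the poset of common lower bounds, and of the nerve with $K$, is exactly the required bookkeeping, so there is nothing to add.
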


There is, of course, a dual version of the cross-cut theorem concerning upper bounds, joins and minimal elements.

\subsubsection{Cohen-Macaulay simplicial complexes}

Let $\Bbbk$ be a commutative ring with unit.  A simplicial complex $K$ is called $\Bbbk$-\emph{Cohen-Macaulay}\index{Cohen-Macaulay} if, for all simplices $\sigma$ of $K$, the reduced homology of $\link_K(\sigma)$ satisfies \[\til H_q(\link_K(\sigma);\Bbbk)=0\] for all $q<\dim \link_K(\sigma)$. When $\Bbbk$ is a field, this is equivalent to the Stanley-Reisner ring of $K$ over $\Bbbk$ being Cohen-Macaulay, whence the name.  See~\cite{Stanleycommut} for details. If $K$ is $\mathbb Z$-Cohen-Macaulay, we shall just say that $K$ is \emph{Cohen-Macaulay}\index{Cohen-Macaulay}.  By the universal coefficient theorem, $K$ is Cohen-Macaulay if and only if $K$ is $\Bbbk$-Cohen-Macaulay for all fields $\Bbbk$.

\subsubsection{Leray number of a simplicial complex}

\nomenclature[S, 10]{$L_\Bbbk(K)$}{$\Bbbk$-Leray number of the simplicial complex $K$}%
Let $\Bbbk$ be a commutative ring with unit.  Define the \emph{$\Bbbk$-Leray number}\index{$\Bbbk$-Leray number} $L_\Bbbk(K)$ of a simplicial complex $K=(V,\mathscr F)$ as follows:
\begin{align*}
    L_\Bbbk(K) =\min\{d\mid \til H^i(K[W];\Bbbk) =0, \forall i\geq d, \forall W\subseteq V\}
\end{align*}
where $\til H^i(L;\Bbbk)$ denotes the reduced cohomology of $L$ with coefficients in $\Bbbk$.  The Leray number is classically defined when $\Bbbk$ is a field, and usually the definition is formulated in terms of homology.  Of course, over a field homology and cohomology are dual vector spaces. It is known that $L_\Bbbk(K)=0$ if and only if $K$ is a simplex and $L_\Bbbk(K)\leq 1$ if and only if $K=\Cliq(\Gamma)$ for a chordal graph $\Gamma$ (recall that $\Gamma$ is \emph{chordal}\index{chordal} if it contains no induced cycle on four or more vertices).  See~\cite{Kalai1,Kalai2,Woodroofe,Wegner} for details, as well as for connections with Castelnuovo-Mumford regularity of Stanley-Reisner rings and with representability of simplicial complexes as nerves of families of compact convex subspaces of Euclidean space.

\subsection{Regular CW complexes and CW posets}
\nomenclature[C, 01]{$X$}{regular CW complex}%
We use the books of Fritsch and Piccinini~\cite{Fritsch} and Lundell and Weingram~\cite{Lundell} as our main references on CW complexes.
A \emph{regular CW complex}\index{regular CW complex} is a CW complex all of whose attaching maps are homeomorphisms to their images.  Simplicial complexes, convex polytopes and polyhedral complexes are examples.  In this text, we consider only finite CW complexes.  Let us give the formal definition.

\nomenclature[C, 03]{$\ov e$}{the closed cell corresponding to an open cell $e$}%
\nomenclature[C, 04]{$\bd e$}{the boundary of the closed cell $\ov e$}%
Let $E^n$ be the closed unit ball in $\mathbb R^n$ and $S^{n-1}$ be the unit sphere.  Let $X$ be a Hausdorff space  and  let $\Phi$ be a finite  collection of continuous injective maps $\varphi\colon E^n\to X$ (ranging over different $n$).  We call $\varphi(E^n)$ a \emph{closed $n$-cell}\index{closed $n$-cell} of $(X,\Phi)$ and $\varphi(E^n\setminus S^{n-1})$ an \emph{open $n$-cell}\index{open $n$-cell}.  We usually denote an open cell by $e$, the corresponding closed cell by $\ov e$ (which will be the topological closure of $e$) and we put $\bd e=\ov e-e$.  The pair $(X,\Phi)$ is called a \emph{regular CW complex}\index{regular CW complex} if:
\begin{enumerate}
\item the open cells partition $X$;
\item for each open $n$-cell $e$ with $n\geq 1$, we have that $\partial e$ is contained in the union of the open $k$-cells with $k<n$.
\end{enumerate}
 A subcomplex  of $(X,\Phi)$ is a CW complex $(Y,\Psi)$ with $Y$ a subspace of $X$ and $\Psi\subseteq \Phi$.  Equivalently, $Y$ is a subspace with the property that it contains the closure of every open cell that it intersects.  Note that each closed cell is a subcomplex in a regular CW complex; see~\cite[Theorem III.2.1]{Lundell} or~\cite[Theorem~1.4.10]{Fritsch}. A $0$-cell of a CW complex is called a \emph{vertex}\index{vertex}.

\nomenclature[C, 02]{$X^q$}{$q$-skeleton of the CW complex $X$}%
If $q\geq 0$, then the \emph{$q$-skeleton}\index{$q$-skeleton} of $X$ is the subcomplex $X^q$ consisting of all closed $k$-cells with $k\leq q$.  A continuous mapping $f\colon X\to Y$ of CW complexes is called \emph{cellular}\index{cellular} if $f(X^q)\subseteq Y^q$ for all $q\geq 0$. A cellular mapping $f\colon X\to Y$ is called \emph{regular}\index{regular} if it maps each open cell of $X$ onto an open cell of $Y$~\cite{Fritsch,Lundell}. One can then show that the image under a regular cellular mapping $f$ of each closed cell is a closed cell and that the image of each subcomplex is a subcomplex (cf.~\cite[Proposition~2.1.1]{Fritsch}); in fact, if $e$ is an open cell, then $f(\ov e)=\ov {f(e)}$, as is shown in the proof of~\cite[Proposition~2.1.1]{Fritsch}.  A bijective regular cellular map is called an \emph{isomorphism}\index{isomorphism!CW complexes}.  The inverse of a bijective regular cellular map is automatically a regular cellular map~\cite[Proposition~I.4.7]{Lundell}.

\nomenclature[P, 20]{$\mathcal P(X)$}{face poset of the regular CW complex $X$}%
\nomenclature[C, 07]{$\mathcal P(X)$}{face poset of the regular CW complex $X$}%
The face poset $\mathcal P(X)$ of a regular CW complex $X$ is the set of open cells of $X$ ordered by $\sigma\leq \tau$ if $\sigma\subseteq \ov \tau$.  We do not admit an empty face.

\nomenclature[P, 08]{$\dim(\sigma)$}{dimension of $\sigma$ in a graded (ranked) poset $P$}%
\nomenclature[P, 07]{$\rk[\sigma, \tau]$}{rank of a closed interval in a poset}%
A (finite) poset $P$ is said to be \emph{graded}\index{graded} (or ranked) if $\Delta(P_{\leq p})$ is a pure simplicial complex for each $p\in P$.  Define the \emph{dimension}\index{dimension} $\dim \sigma$ of $\sigma\in P$ by $\dim \sigma = \dim \Delta(P_{\leq \sigma})$. The \emph{rank}\index{rank} of a closed interval $[\sigma,\tau]$ in a graded poset is given by \[\rk[\sigma,\tau]=\dim \tau-\dim \sigma.\] It is the length of all maximal chains starting at $\sigma$ and ending at $\tau$.  The face poset of a regular CW complex is graded (cf.~\cite[Corollary~4.7.12]{OrientedMatroids1999}).

A graded poset $P$ is called a \emph{CW poset}\index{CW poset}\index{poset!CW} if $\|\Delta(P_{<\sigma})\|$ is homeomorphic to a $(\dim \sigma-1)$-sphere for each $\sigma\in P$ (where for this purpose we consider $S^{-1}=\emptyset$). In this case $\|\Delta(P_{\leq \sigma})\|$ is a cone on $\|\Delta(P_{<\sigma})\|$ and hence a topological $\dim \sigma$-ball.  Any (non-empty) lower set of a CW poset is a CW poset in its own right. A \emph{lower set}\index{lower set} of a poset is a  subset $L$ such that $x\in L$ and $y\leq x$ implies $y\in L$; an upper set\index{upper set} is defined dually.  An element $\sigma$ of a CW poset with $\dim \sigma=q$ will be called a \emph{$q$-cell}\index{$q$-cell}.

If $X$ is a regular CW complex, then $\|\Delta(\mathcal P(X))\|$ is homeomorphic to $X$ and $\Delta(\mathcal P(X))$ is the barycentric subdivision of $X$~\cite{OrientedMatroids1999,Massey,Fritsch,Lundell}.  In particular, if $e$ is an open $n$-cell of $X$, then $\bd e$ is a subcomplex and $\|\Delta(\mathcal P(X)_{<e})\|=\|\Delta(\mathcal P(\bd e))\|\cong S^{n-1}$. Thus if $X$ is a regular CW complex, then $\mathcal P(X)$ is a CW poset and lower sets correspond to subcomplexes of $X$.   If $f\colon X\to Y$ is a regular cellular map, then it induces an order preserving map \[\mathcal P(f)\colon \mathcal P(X)\to \mathcal P(Y)\] via $e\mapsto f(e)$.  An additional property of $\mathcal P(f)$ will be exploited later.

\nomenclature[C, 08]{$\CW(P)$}{regular CW complex associated with the CW poset $P$}%
\nomenclature[P, 21]{$\CW(P)$}{regular CW complex associated with the CW poset $P$}%
Conversely, there is a functor $\CW$ from CW posets and appropriate morphisms to regular CW complexes and regular cellular maps which can be described on objects as follows.  The geometric realization $\|\Delta(P)\|$ of the order complex of a CW poset $P$ admits a regular CW complex structure $\CW(P)$ whose closed cells are the subcomplexes of $\|\Delta(P)\|$ of the form $\ov e_a=\|\Delta(P_{\leq a})\|$ with $a\in P$. Note that $\ov e_a$ is indeed a $\dim a$-ball as described above.  The open cell $e_a$ corresponding to $a\in P$ is the union of the (relative interiors of the) geometric simplices of $\|\Delta(P)\|$ spanned by chains with maximal element $a$.  Hence each point of $\|\Delta(P)\|$ belongs to a unique open cell.  Indeed, a mapping $f\colon V\to \mathbb R$ from $\|\Delta(P)\|$ belongs to $e_a$ if and only if $a$ is the largest element of its support.  Also, $\bd e_a=\|\Delta(P_{<a})\|$ and is contained in the union of the open cells $e_b$ with $b<a$.  Thus $\|\Delta(P)\|$, together with the closed cells $\{\ov e_a\}_{a\in P}$, is a regular CW complex $\CW(P)$.   Notice that $P\cong \mathcal P(\CW(P))$ via the map $a\mapsto e_a$.
On the other hand, if $X$ is a regular CW complex, the homeomorphism $\|\Delta(\mathcal P(X))\|\to X$ of~\cite[Proposition~4.7.9]{OrientedMatroids1999}  is induced by an invertible regular cellular map $\CW(\mathcal P(X))\to X$ and so $X\cong \CW(\mathcal P(X))$.

Let us say that an order preserving map $f\colon P\to Q$ of posets is \emph{cellular}\index{cellular} if $\tau\leq f(\sigma)$ implies there exists $\sigma'\leq \sigma$ with $f(\sigma')=\tau$. Equivalently, $f$ is cellular if $f(P_{\leq \sigma}) = Q_{\leq f(\sigma)}$ for all $\sigma\in P$. Alternatively, $f$ is cellular if $f(L)$ is a lower set for each lower set $L$ of $P$.

\begin{Rmk}
 The reason for the terminology ``cellular'' is that we shall see that cellular maps correspond to regular cellular mappings between CW complexes.  However, there are other reasonable terminologies.

 We remark that one can place a topology, called the \emph{Alexandrov topology}\index{Alexandrov topology}, on any poset $P$ in which the lower sets of $P$ are the open sets.  Then order preserving maps are precisely the continuous mappings between posets equipped with their Alexandrov topologies.  Cellular mappings of posets correspond to the open mappings between posets with respect to their Alexandrov topologies.  However, the term open mapping would be confusing when we later realize these maps as maps between CW complexes.

 In~\cite{THA} a stronger notion of poset maps $f\colon P\to Q$ is considered, namely those which restrict to a bijection $P_{\leq \sigma}\to Q_{\leq f(\sigma)}$ for all $\sigma\in P$.  These are often called \emph{discrete fibrations}\index{discrete fibration} in category theory and correspond precisely to local homeomorphisms (or \'etale maps) of posets with respect to their Alexandrov topologies and are in bijection with presheaves on the poset.  Perhaps for this reason they are called ``coverings'' in~\cite{THA}.
\end{Rmk}

\begin{Prop}\label{faceposetasfunctor}
Let $f\colon X\to Y$ be a regular cellular map between regular cell complexes.  Then the induced map $\mathcal P(f)\colon \mathcal P(X)\to \mathcal P(Y)$ is cellular.
\end{Prop}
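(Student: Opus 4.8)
The plan is to unwind both the partial order and the map $\mathcal P(f)$ into their topological meanings and then exploit the single key fact, recorded just above the statement, that a regular cellular map satisfies $f(\ov e)=\ov{f(e)}$ for every open cell $e$. Recall that $\mathcal P(f)$ sends an open cell $\sigma$ of $X$ to the open cell $f(\sigma)$ of $Y$, and that $\sigma'\leq\sigma$ in $\mathcal P(X)$ means precisely $\sigma'\subseteq\ov\sigma$, where the closed cell $\ov\sigma$ is a subcomplex of $X$. First I would record that, being a subcomplex, $\ov\sigma$ is the disjoint union of the open cells it contains, namely the $\sigma'$ with $\sigma'\leq\sigma$; thus $\ov\sigma=\bigcup_{\sigma'\leq\sigma}\sigma'$.

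Next I would verify the defining condition for cellularity in its first form: given $\tau\leq\mathcal P(f)(\sigma)=f(\sigma)$ in $\mathcal P(Y)$, produce some $\sigma'\leq\sigma$ with $\mathcal P(f)(\sigma')=\tau$. By definition $\tau\leq f(\sigma)$ means $\tau\subseteq\ov{f(\sigma)}$, and applying the key fact gives $\ov{f(\sigma)}=f(\ov\sigma)$, so $\tau\subseteq f(\ov\sigma)$. Pushing the decomposition of $\ov\sigma$ forward and using that a regular cellular map carries each open cell \emph{onto} an open cell, I obtain $f(\ov\sigma)=\bigcup_{\sigma'\leq\sigma}f(\sigma')$, a union of open cells of $Y$. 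Choosing any point $p\in\tau$, we have $p\in f(\ov\sigma)$, so $p\in f(\sigma')$ for some $\sigma'\leq\sigma$. Now $\tau$ and $f(\sigma')$ are both open cells of $Y$ containing $p$, and the open cells of $Y$ partition $Y$; hence $\tau=f(\sigma')=\mathcal P(f)(\sigma')$, which is exactly the desired $\sigma'\leq\sigma$.

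The argument is short, and I expect no serious obstacle once the right fact is in hand: essentially all the content is packaged into the identity $f(\ov\sigma)=\ov{f(\sigma)}$ for regular cellular maps, combined with the elementary observation that two open cells sharing a point must coincide. The one point deserving care is that regularity of $f$ is genuinely used—so that each $f(\sigma')$ really is an open cell, hence a block of the partition of $Y$; for a merely cellular map the images $f(\sigma')$ need not be cells and the final "shared point implies equal" step would break down. I would flag this dependence explicitly so the reader sees where regularity (as opposed to mere cellularity) enters.
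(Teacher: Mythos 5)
Your proof is correct and follows essentially the same route as the paper's: both arguments hinge on the identity $f(\ov\sigma)=\ov{f(\sigma)}$ for regular cellular maps and then conclude by noting that the open cells partition $Y$, so an open cell of $Y$ meeting $f(\sigma')$ must equal it. The only cosmetic difference is that the paper pulls back a single point $y\in\tau$ to some $x\in\ov\sigma$ and takes the open cell containing $x$, whereas you push forward the decomposition $\ov\sigma=\bigcup_{\sigma'\leq\sigma}\sigma'$; these are the same argument, and your explicit remark on where regularity enters (versus order-preservation, which the paper dispatches with a word and you should also note is part of the definition of a cellular map of posets) is a sensible addition.
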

\begin{proof}
Clearly, $\mathcal P(f)$ is order preserving.  Let $e\in \mathcal P(X)$. Suppose that $e'\leq f(e)$, that is, $e'\subseteq \ov {f(e)}$.  Let $y\in e'$. Then as $f(\ov e)=\ov {f(e)}$ (by the proof of~\cite[Proposition~2.1.1]{Fritsch}), we can choose $x\in \ov e$ such that $f(x)=y$.  There is a unique open cell $e''$ contained in $\ov e$ such that $x$ belongs to $e''$.  Then $e''\leq e$ and $f(e'')$ is an open cell containing $y$.  Since each element of a CW complex belongs to a unique open cell, we conclude that $f(e'')=e'$.  This concludes the proof that $\mathcal P(f)$ is cellular.
\end{proof}

It follows that $\mathcal P$ is a functor from the category of regular CW complexes and regular cellular maps to the category of CW posets and cellular maps.  We now want to show that $\CW$ extends to a functor going the opposite way.  The functoriality of $\CW$ seems to be novel to this text.

\begin{Lemma}\label{l:chainlifting}
Let $f\colon P\to Q$ be a cellular map of posets.  Then given any chain $\tau_0<\cdots<\tau_q=\tau$ in $Q$ and any $\sigma\in P$ with $f(\sigma)=\tau$, there is a chain $\sigma_0<\cdots<\sigma_q=\sigma$ in $P$ with $f(\sigma_i)=\tau_i$ for $0\leq i\leq q$.
\end{Lemma}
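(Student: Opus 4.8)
The plan is to build the chain by \emph{downward} induction on the index, starting from the top and peeling off one element at a time using the cellular hypothesis. Recall that $f$ being cellular means precisely that whenever $\tau'\leq f(\sigma')$ there is some $\sigma''\leq \sigma'$ with $f(\sigma'')=\tau'$. This lifting property is exactly what lets one descend along the given chain $\tau_0<\cdots<\tau_q=\tau$ while staying in the fiber structure over it, so an inductive construction from the top down is the natural strategy.

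Concretely, I would set $\sigma_q=\sigma$, which satisfies $f(\sigma_q)=\tau=\tau_q$ by hypothesis. For the inductive step, suppose $\sigma_i\in P$ has been produced with $f(\sigma_i)=\tau_i$ for some $i$ with $1\leq i\leq q$. Since $\tau_{i-1}<\tau_i=f(\sigma_i)$, in particular $\tau_{i-1}\leq f(\sigma_i)$, so cellularity of $f$ yields an element $\sigma_{i-1}\leq \sigma_i$ with $f(\sigma_{i-1})=\tau_{i-1}$. Iterating from $i=q$ down to $i=1$ produces elements $\sigma_0\leq\sigma_1\leq\cdots\leq\sigma_q=\sigma$ with $f(\sigma_i)=\tau_i$ for all $i$.

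The one point requiring a moment's care is strictness: I want $\sigma_{i-1}<\sigma_i$ rather than merely $\sigma_{i-1}\leq\sigma_i$. This is immediate, however, because $f(\sigma_{i-1})=\tau_{i-1}\neq\tau_i=f(\sigma_i)$ (the $\tau_i$ being distinct along a strict chain) forces $\sigma_{i-1}\neq\sigma_i$, and combined with $\sigma_{i-1}\leq\sigma_i$ this gives the strict inequality. Thus the resulting chain $\sigma_0<\cdots<\sigma_q=\sigma$ is genuinely a chain and maps onto the prescribed chain in $Q$. I do not expect any serious obstacle here: the statement is essentially a direct unwinding of the definition of a cellular map, and the only bookkeeping is to verify that each application of the lifting property lands \emph{strictly} below the previous element, which the distinctness of the $\tau_i$ guarantees.
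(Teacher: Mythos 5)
Your proof is correct and follows essentially the same downward-induction argument as the paper's proof, peeling off one element at a time via the cellular lifting property starting from $\sigma_q=\sigma$. Your explicit remark that strictness of $\sigma_{i-1}<\sigma_i$ follows from $f(\sigma_{i-1})=\tau_{i-1}\neq\tau_i=f(\sigma_i)$ is a small detail the paper glosses over, but the argument is the same.
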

\begin{proof}
Set $\sigma_q=\sigma$.  Let $0\leq i<q$ and assume that $\sigma_j$ has been defined for $i<j\leq q$ with $\sigma_{i+1}<\cdots<\sigma_q=\sigma$ and $f(\sigma_j) = \tau_j$.  By definition of a cellular map there exists $\sigma_i<\sigma_{i+1}$ such that $f(\sigma_i)=\tau_i$. This completes the proof.
\end{proof}

\begin{Lemma}
If $f\colon P\to Q$ is a cellular map of CW posets, then the induced simplicial map $\Delta(f)\colon \Delta(P)\to \Delta(Q)$ induces a regular cellular map $\CW(f)\colon \CW(P)\to \CW(Q)$.
\end{Lemma}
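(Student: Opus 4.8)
The plan is to identify $\CW(f)$ with the geometric realization $\|\Delta(f)\|$ of the simplicial map, which is legitimate because $\CW(P)$ and $\CW(Q)$ have underlying spaces $\|\Delta(P)\|$ and $\|\Delta(Q)\|$; in particular $\CW(f)$ is automatically continuous, so the only content is to check that this map respects the two CW structures, i.e.\ that it is cellular and regular. I would work throughout with barycentric coordinates: a point of $\|\Delta(P)\|$ is a function $p\colon P\to[0,1]$ summing to $1$ whose support $\supp p$ is a chain, and $\|\Delta(f)\|$ sends $p$ to the point $p'$ of $\|\Delta(Q)\|$ given by $p'(y)=\sum_{f(x)=y}p(x)$, so that $\supp p'=f(\supp p)$. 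Recall from the construction of $\CW$ that the open cell $e_a$ consists precisely of those $p$ with $\max(\supp p)=a$.

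First I would check that open cells map \emph{into} open cells and bound dimensions. If $p\in e_a$, then $\max(\supp p)=a$, and since $f$ is order preserving, $\max f(\supp p)=f(a)$; hence $\max(\supp p')=f(a)$, i.e.\ $\CW(f)(e_a)\subseteq e_{f(a)}$. For the skeleton condition I need $\dim f(a)\le\dim a$, and this is exactly where the hypothesis that $f$ is \emph{cellular} enters: by Lemma~\ref{l:chainlifting}, any chain $\tau_0<\cdots<\tau_q=f(a)$ in $Q$ lifts to a chain $\sigma_0<\cdots<\sigma_q=a$ in $P$, so a longest chain in $Q_{\le f(a)}$ lifts to a chain of equal length in $P_{\le a}$, giving $\dim f(a)=\dim\Delta(Q_{\le f(a)})\le\dim\Delta(P_{\le a})=\dim a$. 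Consequently $\CW(f)(e_a)\subseteq e_{f(a)}\subseteq\CW(Q)^{\dim a}$, whence $\CW(f)(\CW(P)^q)\subseteq\CW(Q)^q$ for every $q$, so $\CW(f)$ is cellular.

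The crux is regularity, namely that $\CW(f)$ maps each open cell \emph{onto} an open cell, i.e.\ $\CW(f)(e_a)=e_{f(a)}$, and surjectivity again rests on chain lifting. Given $q\in e_{f(a)}$, write $\supp q=\{\tau_0<\cdots<\tau_k=f(a)\}$; by Lemma~\ref{l:chainlifting} lift this to $\sigma_0<\cdots<\sigma_k=a$ in $P$ with $f(\sigma_i)=\tau_i$, and define $p$ by $p(\sigma_i)=q(\tau_i)$ and $p=0$ elsewhere. Then $\supp p=\{\sigma_0,\dots,\sigma_k\}$ is a chain with maximum $a$, so $p\in e_a$; and since the values $f(\sigma_i)=\tau_i$ are distinct, $\CW(f)(p)(\tau_i)=p(\sigma_i)=q(\tau_i)$, so $\CW(f)(p)=q$. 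Thus $\CW(f)(e_a)=e_{f(a)}$ and $\CW(f)$ is a regular cellular map. I expect the main obstacle to be locating, rather than computing: the \emph{into} statement and the dimension bound follow from order-preservation alone, so the real work is recognizing that cellularity of the poset map is used in exactly one place---the chain-lifting lemma---which is precisely the tool that upgrades the easy inclusion $\CW(f)(e_a)\subseteq e_{f(a)}$ to the surjectivity demanded by regularity (and simultaneously supplies the dimension estimate for the skeleton condition).
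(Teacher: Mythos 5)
Your proof is correct and follows essentially the same route as the paper: both arguments observe that order-preservation gives $\CW(f)(e_a)\subseteq e_{f(a)}$ and then invoke Lemma~\ref{l:chainlifting} exactly where you do, namely to get the dimension bound $\dim f(a)\leq\dim a$ and to upgrade the inclusion to surjectivity onto $e_{f(a)}$. The paper's proof is just a terser version of yours, phrased in terms of closed cells $\Delta(P_{\leq p})$ mapping onto $\Delta(Q_{\leq f(p)})$ rather than spelled out in barycentric coordinates.
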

\begin{proof}
Lemma~\ref{l:chainlifting} easily implies that if $p\in P$, then $\Delta(f)$ takes the closed cell $\Delta(P_{\leq p})$ of $\CW(P)$ onto the closed cell  $\Delta(Q_{\leq f(p)})$ of $\CW(Q)$ and $\dim \Delta(Q_{\leq f(p)})\leq \dim \Delta(P_{\leq p})$.  Thus $\CW(f)$ is cellular.  The open cell of $\CW(P)$ corresponding to $p\in P$ is the union (of the relative interiors) of those simplices in $\|\Delta(P)\|$ spanned by chains with maximum element $p$.  Hence $\Delta(f)$ takes the open cell corresponding to $p$ into the open cell corresponding to $f(p)$, and in fact onto it by Lemma~\ref{l:chainlifting}. Thus $\Delta(f)$ is a regular cellular map.
\end{proof}

We conclude that $\CW$ is a functor from the category of CW posets and cellular maps to the category of regular CW complexes and regular cellular maps with the property that $\CW(\mathcal P(X))\cong X$ for any regular CW complex $X$.

\begin{Rmk}
It is easy to see that $\mathcal P\circ \CW$ is isomorphic to the identity functor.  But it is possible that $\mathcal P(f)=\mathcal P(g)$ for two distinct regular cellular maps $f,g\colon X\to Y$.  Nonetheless, it is not difficult to show that $\CW(\mathcal P(f))$ is always homotopy equivalent to $f$ for any regular cellular map (by an aspherical carrier type argument).  In particular, as far as homology is concerned there is not much of a difference between working with CW posets and cellular maps and with regular CW complexes and regular cellular maps.
\end{Rmk}

Notice that if $K,L$ are simplicial complexes and $f\colon K\to L$ is a simplicial map, then the induced map $\mathcal P(K)\to \mathcal P(L)$ is cellular.  Moreover, $\CW(f)\colon K=\CW(\mathcal P(K))\to \CW(\mathcal P(L))=L$ is the map of geometric realizations induced by the simplicial map $f$. So simplicial complexes with simplicial maps can be viewed either as a subcategory of CW posets and cellular maps, or a subcategory of regular CW complexes with regular cellular maps.

\begin{Rmk}
To summarize we have a diagram of functors
\begin{equation*}
    \xymatrix{%
        \bigg\{
            \substack{\text{\normalsize{regular CW complexes,}} \\
                      \text{\normalsize{regular cellular mappings}}}
        \bigg\}
        \ar@<1ex>[r]^{\mathcal P}
        & \ar@<1ex>[l]^{\CW}
        \bigg\{
            \text{\normalsize{CW posets, cellular maps}}
        \bigg\}
    }
\end{equation*}
where the functors induce bijections of isomorphism classes of objects but the left hand category has more morphisms.  The category of simplicial complexes embeds in both categories.
\end{Rmk}

We now state some further properties of cellular maps that will be used in our study of left regular bands. The first shows that the support homomorphism is cellular.

\begin{Prop}\label{p:supportiscellular}
If $B$ is a left regular band and $\sigma\colon B\to \Lambda(B)$ is the support map, then $\sigma$ is cellular.  Moreover, $a<b$ implies $\sigma(a)<\sigma(b)$.
\end{Prop}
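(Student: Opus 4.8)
The plan is to verify in turn the three ingredients of the statement: that $\sigma$ is order-preserving (needed even to call $\sigma$ a cellular map), that it satisfies the lifting property in the definition of a cellular map, and the strict monotonicity in the ``moreover'' clause. Throughout I would use that the natural order on $B$ is $a\le b\iff ba=a$, that the order on the meet-semilattice $\Lambda(B)$ is inclusion of left ideals (its meet being intersection), and above all the identity $Ba\cap Bb=Bab$ of Proposition~\ref{p:supp.lattice}. Order-preservation is then immediate: if $a\le b$, then $ba=a$, so $Ba=B(ba)=Bb\cap Ba$ by Proposition~\ref{p:supp.lattice}, whence $Ba\subseteq Bb$, that is, $\sigma(a)\le\sigma(b)$.

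The main work is the lifting condition: given $a\in B$ and $X\in\Lambda(B)$ with $X\le\sigma(a)=Ba$, I must exhibit $a'\le a$ in $B$ with $\sigma(a')=X$. Writing $X=Bc$ for some $c\in B$, the hypothesis reads $Bc\subseteq Ba$. The one real idea is to take $a'=ac$. Then $a(ac)=ac$ shows $a'\le a$, while Proposition~\ref{p:supp.lattice} gives $\sigma(a')=B(ac)=Ba\cap Bc=Bc=X$, where the last equality uses $Bc\subseteq Ba$. This is the entire content of cellularity: everything is forced once one guesses the representative $ac$ and notes that the intersection collapses because $Bc$ already lies inside $Ba$.

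Finally, for strictness, suppose $a<b$ yet $\sigma(a)=\sigma(b)$. As recorded earlier, $Ba=Bb$ is equivalent to $a\eL b$, i.e.\ to $ab=a$ and $ba=b$ holding simultaneously. But $a<b$ already gives $ba=a$, and combining this with $ba=b$ forces $a=b$, contradicting $a\ne b$. Hence $\sigma(a)\ne\sigma(b)$, and together with order-preservation this yields $\sigma(a)<\sigma(b)$. I do not expect any genuine obstacle here; the proposition reduces entirely to Proposition~\ref{p:supp.lattice} together with the $\eL$-description of the fibers of $\sigma$, the only subtlety being to keep the inclusion direction straight (namely that $X\le\sigma(a)$ translates to $Bc\subseteq Ba$, so that the lift $ac$ sits \emph{below} $a$).
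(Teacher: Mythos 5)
Your proof is correct and follows essentially the same route as the paper's: the lift of $X=Bc\leq Ba$ is the product $ac$ of the given element with a representative of $X$, with $\sigma(ac)=Ba\cap Bc=X$ by Proposition~\ref{p:supp.lattice}, and strictness comes from the observation that $Ba=Bb$ together with $a\leq b$ forces $a=b$. The explicit check of order-preservation is a harmless addition the paper leaves implicit.
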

\begin{proof}
The second statement follows because $a\leq b$ and $\sigma(a)=\sigma(b)$ implies $ba=a$ and $ba=b$, whence $a=b$.  To see that $\sigma$ is cellular, let $X\leq \sigma(b)$.  Choose $a$ with $Ba=X$.  Then $\sigma(ba)=\sigma(b)\wedge X=X$ and $ba\leq b$.
\end{proof}

\begin{Cor}\label{c:preservesgraded}
Let $f\colon P\to Q$ be a surjective cellular map of posets and suppose, furthermore, that $\sigma<\sigma'$ implies that $f(\sigma)<f(\sigma')$. Then $P$ is graded if and only if $Q$ is graded and, moreover, if this is the case then $f$ preserves ranks of intervals.
\end{Cor}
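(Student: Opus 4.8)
The plan is to make the dimension function the engine of the entire statement, since the extra hypothesis that $\sigma<\sigma'$ forces $f(\sigma)<f(\sigma')$ means that $f$ neither collapses nor shortens strict chains. First I would prove the \emph{unconditional} identity $\dim f(\sigma)=\dim\sigma$ for every $\sigma\in P$. For $\dim f(\sigma)\geq\dim\sigma$, take a longest chain $\sigma_0<\cdots<\sigma_m=\sigma$ in $P_{\leq\sigma}$, so $m=\dim\sigma$; applying $f$ and using strict monotonicity yields a strict chain $f(\sigma_0)<\cdots<f(\sigma_m)=f(\sigma)$ of the same length in $Q_{\leq f(\sigma)}$. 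For the reverse inequality, take a longest chain $\tau_0<\cdots<\tau_n=f(\sigma)$ in $Q_{\leq f(\sigma)}$, so $n=\dim f(\sigma)$, and lift it through $\sigma$ by Lemma~\ref{l:chainlifting} to a chain $\sigma_0<\cdots<\sigma_n=\sigma$ in $P_{\leq\sigma}$. Together these give $\dim f(\sigma)=\dim\sigma$. This is the one place where both defining properties of $f$ are used together, and everything else flows from it.

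Granting this identity, the ``moreover'' clause is immediate: if $P$ and $Q$ are graded and $\sigma<\tau$, then $\rk[\sigma,\tau]=\dim\tau-\dim\sigma=\dim f(\tau)-\dim f(\sigma)=\rk[f(\sigma),f(\tau)]$, so $f$ preserves ranks of intervals. Hence the real content is the equivalence of gradedness, which I would reduce to the behavior of $\dim$ on covering relations through the standard fact that a finite poset is graded exactly when $\dim\sigma'=\dim\sigma+1$ for every cover $\sigma\lessdot\sigma'$. (One direction appends a cover to a longest chain below $\sigma$ and invokes purity at $\sigma'$; the other inducts up a maximal chain, along which $\dim$ starts at $0$ at a minimal element and rises by $1$ at each step, so its length equals $\dim$ of its top.)

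For the implication that $P$ graded forces $Q$ graded I would take a cover $\tau\lessdot\tau'$ in $Q$, use surjectivity to pick $\sigma'$ with $f(\sigma')=\tau'$ and cellularity to pick $\sigma\leq\sigma'$ with $f(\sigma)=\tau$, so that $\sigma<\sigma'$. Strict monotonicity then shows $\sigma\lessdot\sigma'$ is a cover: any $\mu$ with $\sigma<\mu<\sigma'$ would satisfy $f(\sigma)<f(\mu)<f(\sigma')$, contradicting that $\tau\lessdot\tau'$ is a cover. Since $P$ is graded, $\dim\sigma'=\dim\sigma+1$, and the dimension identity transfers this to $\dim\tau'=\dim\tau+1$, so $Q$ is graded.

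The hard part, and where I expect the main obstacle, is the reverse implication that $Q$ graded forces $P$ graded. By the reduction it suffices to show each cover $\sigma\lessdot\sigma'$ of $P$ has $\dim\sigma'=\dim\sigma+1$, which (as $Q$ is graded and $\dim$ is preserved) amounts to showing that the image $f(\sigma)<f(\sigma')$ is itself a cover of $Q$. Here cellularity cuts the wrong way: if some $\tau'$ satisfied $f(\sigma)<\tau'<f(\sigma')$, then lifting $\tau'$ below $\sigma'$ produces $\mu<\sigma'$ with $f(\sigma)<f(\mu)<f(\sigma')$, and all one can extract is that $\mu$ is \emph{incomparable} to $\sigma$ with $\dim\sigma<\dim\mu<\dim\sigma'$, which does not by itself contradict $\sigma\lessdot\sigma'$ being a cover. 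Ruling out such an intermediate $\mu$ is the crux, and the bare hypotheses of the statement do not obviously suffice for it; I would therefore lean on the additional rigidity present in the intended applications, where $P$ is the face poset of a regular CW complex and hence a \emph{thin} graded poset, using thinness together with the dimension identity to force $f(\sigma)\lessdot f(\sigma')$ and thereby close the argument.
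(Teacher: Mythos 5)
Your handling of the direction the paper actually needs is correct, and it is in substance a cover-relation reformulation of the paper's own argument: the paper combines the same two ingredients (strict monotonicity preserves lengths of chains; Lemma~\ref{l:chainlifting} lifts chains through a chosen preimage) but phrases the conclusion as a comparison of facets of $\Delta(P_{\leq p})$ and $\Delta(Q_{\leq f(p)})$ rather than of covering relations. Your unconditional identity $\dim f(\sigma)=\dim\sigma$, the deduction that ranks of intervals are preserved, and the argument that $P$ graded forces $Q$ graded are all sound.

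Your suspicion about the converse is not a failure to find the right trick: the implication ``$Q$ graded implies $P$ graded'' is actually \emph{false} under the stated hypotheses. Take $P=\{a,b,c,d\}$ with $a<b<d$ and $c<d$ (and $c$ incomparable to $a$ and $b$), let $Q$ be the chain $\alpha<\beta<\delta$, and set $f(a)=f(c)=\alpha$, $f(b)=\beta$, $f(d)=\delta$. One checks $f(P_{\leq \sigma})=Q_{\leq f(\sigma)}$ for every $\sigma$, so $f$ is surjective, cellular and strictly order-preserving; $Q$ is graded, yet $\Delta(P_{\leq d})$ has facets $\{a,b,d\}$ and $\{c,d\}$ of different dimensions, so $P$ is not. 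The paper's proof founders at exactly the point you isolated: it asserts that the facets of $\Delta(Q_{\leq q})$ are \emph{precisely} the images of the facets of $\Delta(P_{\leq p})$, whereas only the containment ``every facet of $\Delta(Q_{\leq q})$ is the image of some facet of $\Delta(P_{\leq p})$'' is valid in general (here $f(\{c,d\})=\{\alpha,\delta\}$ is not a facet of $\Delta(Q_{\leq\delta})$). This does no damage downstream, since every invocation of the corollary in the paper runs in the true direction: $P$ is a CW poset, hence graded, and one concludes that $Q$ is graded with ranks preserved. For the same reason your proposed repair via thinness is moot rather than wrong: in the intended applications $P$ is graded by hypothesis, so the problematic implication is vacuous there, and no property of CW posets can rescue the biconditional as a statement about arbitrary posets.
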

\begin{proof}
Let $p\in P$ and $q=f(p)$.
Because $f$ preserves strict inequalities, it is immediate that the simplicial map $f\colon \Delta(P_{\leq p})\to \Delta(Q_{\leq q})$ preserves dimensions of simplices.  Also Lemma~\ref{l:chainlifting} implies that each simplex of $\Delta(Q_{\leq q})$ is the image of a simplex of $\Delta(P_{\leq p})$. It follows that the facets of $\Delta(Q_{\leq q})$ are precisely the images under $f$ of the facets of $\Delta(P_{\leq p})$.  We conclude $\Delta(P_{\leq p})$ is pure if and only if $\Delta(Q_{\leq q})$ is pure.  The result now follows.
\end{proof}

Recall that a topological space is \emph{acyclic}\index{acyclic} if its reduced homology vanishes over $\mathbb Z$. Contractible spaces are of course acyclic.  If $X$ is acyclic, then the reduced homology of $X$ vanishes over any coefficient ring by the universal coefficient theorem.  If $P$ is a poset, we will say that $P$ has a topological property (e.g., contractibility, acyclicity, connectivity) if $\|\Delta(P)\|$ has that property.

\begin{Prop}\label{retract}
Let $f\colon P\to P$ be an idempotent cellular mapping on a poset.  Then $f(P)$ is a lower set and hence if $f(p)=p$, then $f(q)=q$ for all $q\leq p$.
If $P$ is a (connected/contractible/acyclic) CW poset then so is $f(P)$.
\end{Prop}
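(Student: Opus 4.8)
The plan is to separate the combinatorial content (which uses cellularity) from the topological content (which uses only idempotency and order-preservation). \textbf{For the first assertion}, that $f(P)$ is a lower set, I would invoke the third characterization of cellularity recorded just before Proposition~\ref{faceposetasfunctor}: a map is cellular precisely when it carries lower sets to lower sets. Since $P$ is a lower set of itself, $f(P)$ is a lower set. (Directly: if $t\le f(\sigma)$, then cellularity yields $\sigma'\le\sigma$ with $f(\sigma')=t$, so $t\in f(P)$.) \textbf{For the ``hence''}, I would first note that idempotency makes $f(P)$ exactly the fixed-point set of $f$: any $s=f(r)\in f(P)$ satisfies $f(s)=f(f(r))=f(r)=s$. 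Thus if $f(p)=p$ and $q\le p$, then $p\in f(P)$, and since $f(P)$ is a lower set we get $q\in f(P)$, whence $f(q)=q$.

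\textbf{The key observation for the topological claims} is that $f$, being idempotent and order-preserving, exhibits $f(P)$ as a retract of $P$. Writing $r\colon P\to f(P)$ for $f$ with its codomain restricted to the image and $i\colon f(P)\hookrightarrow P$ for the inclusion, both maps are order-preserving and $r\circ i=\mathrm{id}_{f(P)}$ by the fixed-point description. Applying $\Delta$ and geometric realization produces continuous maps with $\|\Delta(r)\|\circ\|\Delta(i)\|=\mathrm{id}$, so $\|\Delta(f(P))\|$ is a topological retract of $\|\Delta(P)\|$; in particular $\|\Delta(r)\|$ is onto. Each of the three properties then transfers along the retraction: connectedness, since $\|\Delta(f(P))\|$ is a nonempty continuous surjective image of the connected space $\|\Delta(P)\|$; contractibility, since composing a null-homotopy $\mathrm{id}_{\|\Delta(P)\|}\simeq\text{const}$ with $\|\Delta(r)\|$ and $\|\Delta(i)\|$ shows that the identity $\|\Delta(r)\|\circ\|\Delta(i)\|$ of $\|\Delta(f(P))\|$ is null-homotopic; and acyclicity, since $\|\Delta(r)\|_*\circ\|\Delta(i)\|_*=\mathrm{id}$ realizes $\til H_*(\Delta(f(P)))$ as a direct summand of $\til H_*(\Delta(P))=0$.

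\textbf{Finally}, the CW-poset structure is free: $f(P)$ is a nonempty lower set of the CW poset $P$, and a nonempty lower set of a CW poset is again a CW poset, as recalled in our discussion of CW posets. Combining this with the previous paragraph yields the stated conclusion for each of the three adjectives. The proof is short, and the only real insight is the \textbf{second paragraph's reduction to a retraction}; the potential pitfall is overlooking that cellularity plays no role there and is needed only to guarantee that $f(P)$ is a lower set, and hence a CW poset.
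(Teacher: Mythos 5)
Your proof is correct and follows essentially the same route as the paper's: the image of a lower set under a cellular map is a lower set, idempotency identifies $f(P)$ with the fixed-point set, and the topological properties transfer because $f(P)$ is a retract of $P$. The only cosmetic difference is that you realize the retraction on $\|\Delta(-)\|$ directly from the order-preserving maps $i$ and $r$, whereas the paper phrases it via the functor $\CW$ (writing $\CW(f(P))=\CW(f)(\CW(P))$ as a retract of $\CW(P)$); since $\CW(P)$ has underlying space $\|\Delta(P)\|$, these are the same argument, and your observation that cellularity is needed only for the lower-set step is accurate.
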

\begin{proof}
The image $f(P)$ of the lower set $P$ under a cellular map is a lower set.  Hence, if $f(p)=p$ and $q\leq p$, then $q\in f(P)$ and thus $f(q)=q$ because $f$ is idempotent. If $P$ is a CW poset, then the subcomplex $\CW(f(P))=\CW(f)(\CW(P))$ is a retract of $\CW(P)$ and the result follows.
\end{proof}

The following trivial observation will be useful later.

\begin{Prop}\label{p:maximumCWposet}
Let $P$ be a CW poset with a maximum.  Then $\Sigma(P)$ is a regular CW decomposition of a $\dim \Delta(P)$-ball.  Conversely, if $Q$ is the face poset of a regular CW decomposition of a $\dim \Delta(Q)$-sphere, then $P=Q\cup \{\wh 1\}$ (where $\wh 1$ is an adjoined maximum) is the face poset of a regular CW decomposition of a $\dim \Delta(P)$-ball.
\end{Prop}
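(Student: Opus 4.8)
The plan is to derive both assertions from the properties of the functor $\CW$ and the definition of a CW poset established above.

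For the first assertion, recall that $\CW(P)$ is a regular CW complex with underlying space $\|\Delta(P)\|$ and that $P\cong \mathcal P(\CW(P))$. Writing $\wh 1$ for the maximum of $P$, we have $P=P_{\leq \wh 1}$, so the underlying space is $\|\Delta(P)\|=\|\Delta(P_{\leq \wh 1})\|=\ov e_{\wh 1}$. Since $P$ is a CW poset, $\|\Delta(P_{<\wh 1})\|$ is a $(\dim \wh 1-1)$-sphere and, as recorded just before the definition of CW poset, its cone $\|\Delta(P_{\leq \wh 1})\|$ is a topological $\dim \wh 1$-ball. Finally $\dim \wh 1=\dim \Delta(P_{\leq \wh 1})=\dim \Delta(P)$, so $\CW(P)$ is a regular CW decomposition of a $\dim \Delta(P)$-ball, as claimed.

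For the converse, the strategy is to show that $P=Q\cup\{\wh 1\}$ is itself a CW poset with maximum $\wh 1$ and then to invoke the first assertion together with $P\cong \mathcal P(\CW(P))$. I first check that $P$ is graded, i.e.\ that $\Delta(P_{\leq p})$ is pure for every $p$. For $p\in Q$ this is immediate since $P_{\leq p}=Q_{\leq p}$ and $Q$, being the face poset of a regular CW complex, is graded. For $p=\wh 1$ we have $\Delta(P)=\Delta(Q)\ast \wh 1$, the cone on $\Delta(Q)$, which is pure exactly when $\Delta(Q)$ is pure. This is where the sphere hypothesis enters: writing $X$ for the regular CW complex with $\mathcal P(X)=Q$ and $X\cong S^{d}$, $d=\dim \Delta(Q)$, every maximal element $\sigma$ of $Q$ yields an open cell $e_\sigma$ that is open in $X$ (its complement is the union of the remaining closed cells, hence closed in the finite complex $X$); as $e_\sigma\cong \mathbb R^{\dim \sigma}$ is then an open subset of $S^d$, invariance of domain forces $\dim \sigma=d$. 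Thus all facets of $\Delta(Q)$ have dimension $d$, so $\Delta(Q)$—and hence its cone $\Delta(P)$—is pure, and $P$ is graded.

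It then remains to verify the sphere condition $\|\Delta(P_{<\sigma})\|\cong S^{\dim \sigma-1}$ for each $\sigma\in P$. For $\sigma\in Q$ we have $P_{<\sigma}=Q_{<\sigma}$ and $P_{\leq\sigma}=Q_{\leq\sigma}$ (so $\dim \sigma$ agrees whether computed in $P$ or in $Q$), and the condition is inherited from $Q$ being a CW poset. For $\sigma=\wh 1$ we have $P_{<\wh 1}=Q$, whence $\|\Delta(P_{<\wh 1})\|=\|\Delta(Q)\|\cong S^{d}$; since adjoining a maximum lengthens every maximal chain by one, $\dim \wh 1=\dim \Delta(P)=d+1$, so this is precisely a $(\dim \wh 1-1)$-sphere. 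Hence $P$ is a CW poset with maximum $\wh 1$, and the first assertion shows that $\CW(P)$ is a regular CW decomposition of a $\dim \Delta(P)$-ball whose face poset is $P$. I expect the only genuine subtlety to be the purity of $\Delta(Q)$: gradedness of $Q$ alone does not force all maximal cells to be top-dimensional, and the invariance-of-domain argument (equivalently, the fact that a regular CW decomposition of a closed manifold is a pseudomanifold) is what rules out stray low-dimensional facets. Everything else is bookkeeping with the already-established identities $\ov e_a=\|\Delta(P_{\leq a})\|$, $P_{<\wh 1}=Q$, and $\Delta(P)=\Delta(Q)\ast \wh 1$.
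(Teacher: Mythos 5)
Your proof is correct and follows essentially the same route as the paper's: the forward direction realizes $\|\Delta(P)\|$ as the cone on the sphere $\|\Delta(P_{<\wh 1})\|$, and the converse checks that $Q\cup\{\wh 1\}$ is a CW poset and then invokes the forward direction. The only difference is that you explicitly verify purity of $\Delta(Q)$ (via invariance of domain) so that $P$ is graded — a point the paper's proof leaves implicit — and that verification is a correct and worthwhile addition.
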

\begin{proof}
Suppose first that $P$ is a CW poset with maximum $\wh 1$.  Then by definition $\|\Delta(P_{<\wh 1})\|$ is homeomorphic to a $(\dim \Delta(P)-1)$-sphere and so $\|\Delta(P)\|=\|\Delta(P_{<\wh 1})\ast \wh 1\|$ is homeomorphic to a $\dim \Delta(P)$-ball, being a cone on a sphere.  Suppose now that $Q$ is the face poset of a regular CW decomposition of a $\dim \Delta(Q)$-sphere and $P=Q\cup \{\wh 1\}$.  Then for $q\in Q$, we have $P_{<q}=Q_{<q}$ and $P_{<\wh 1}=Q$ and so $P$ is a CW poset.  By the above $P$ is the face poset of a $\dim \Delta(P)$-ball.
\end{proof}

The following propositions will also be helpful.

\begin{Prop}\label{p:joinsofcw}
Suppose that $P,Q$ are CW posets with \[\|\Delta(P)\|\cong S^{\dim(\Delta(P))}.\]  Then $P\ast Q$ is a CW poset.  In particular, the suspension $\mathsf S(Q)$ of $Q$ is a CW poset.
\end{Prop}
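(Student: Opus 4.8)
The plan is to verify the two defining conditions of a CW poset for $P\ast Q$ directly, handling elements of $P$ and elements of $Q$ separately and exploiting the identity $\Delta(P\ast Q)=\Delta(P)\ast\Delta(Q)$ together with the homeomorphisms $\|K\ast L\|\cong\|K\|\ast\|L\|$ and $S^n\ast S^m\cong S^{n+m+1}$ recalled above. The key combinatorial observation is that lower sets in the join split cleanly: for $\sigma\in P$ one has $(P\ast Q)_{<\sigma}=P_{<\sigma}$ and $(P\ast Q)_{\le\sigma}=P_{\le\sigma}$ (since no element of $Q$ lies below an element of $P$), whereas for $\sigma\in Q$ one has $(P\ast Q)_{<\sigma}=P\ast Q_{<\sigma}$ and $(P\ast Q)_{\le\sigma}=P\ast Q_{\le\sigma}$ (since all of $P$ lies below $\sigma$). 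Because the dimension of a join of simplicial complexes satisfies $\dim(K\ast L)=\dim K+\dim L+1$, the second pair of identities yields $\dim\sigma=\dim P+\dim_Q\sigma+1$ for $\sigma\in Q$, where $\dim_Q\sigma$ denotes the dimension of $\sigma$ computed in $Q$; for $\sigma\in P$ the dimension is unchanged. Here I read $\dim P$ as $\dim\Delta(P)$, consistent with the hypothesis that $\|\Delta(P)\|$ is a top-dimensional sphere.

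With this bookkeeping in hand, the sphere condition falls out by cases. If $\sigma\in P$, then $\|\Delta((P\ast Q)_{<\sigma})\|=\|\Delta(P_{<\sigma})\|\cong S^{\dim\sigma-1}$ because $P$ is a CW poset. If $\sigma\in Q$, then I would compute
\[
\|\Delta((P\ast Q)_{<\sigma})\|\cong\|\Delta(P)\|\ast\|\Delta(Q_{<\sigma})\|\cong S^{\dim P}\ast S^{\dim_Q\sigma-1}\cong S^{\dim P+\dim_Q\sigma},
\]
using the hypothesis $\|\Delta(P)\|\cong S^{\dim P}$, the CW poset condition for $Q$, and the join-of-spheres formula; by the dimension count this is exactly $S^{\dim\sigma-1}$. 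This is the step where the full strength of the hypothesis that $\|\Delta(P)\|$ is the \emph{whole} sphere $S^{\dim P}$ is used, and it is what forces the join to come out as a sphere of the correct dimension.

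It remains to check that $P\ast Q$ is graded, which I expect to be the main obstacle. Gradedness asks that $\Delta((P\ast Q)_{\le p})$ be pure for every $p$. For $p\in P$ this complex is $\Delta(P_{\le p})$, pure because $P$ is graded; for $p\in Q$ it is $\Delta(P)\ast\Delta(Q_{\le p})$, and since a join of two nonempty pure complexes is again pure, it suffices to know that $\Delta(P)$ itself is pure. This is precisely where the hypothesis is needed a second time: $\|\Delta(P)\|\cong S^{\dim P}$ is a closed manifold, and any simplicial complex triangulating a closed $n$-manifold is pure of dimension $n$, since a facet of dimension $k<n$ would produce relative-interior points whose neighborhoods are homeomorphic to $\mathbb R^k$ rather than $\mathbb R^n$. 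Thus $\Delta(P)$ is pure of dimension $\dim P$, its join with the pure complex $\Delta(Q_{\le p})$ is pure, and $P\ast Q$ is graded, completing the verification that it is a CW poset.

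Finally, I would deduce the statement about suspensions by specializing $P$ to the two-element antichain $D$: here $\Delta(D)$ consists of two points, so $\|\Delta(D)\|\cong S^0=S^{\dim D}$ and $D$ is trivially a CW poset satisfying the hypothesis. Since $\mathsf S(Q)=D\ast Q$ by definition, the case already proved applies and shows $\mathsf S(Q)$ is a CW poset.
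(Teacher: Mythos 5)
Your proof is correct and follows essentially the same route as the paper's: split into the cases $\sigma\in P$ and $\sigma\in Q$, use the decompositions $(P\ast Q)_{<\sigma}=P_{<\sigma}$ and $(P\ast Q)_{<\sigma}=P\ast Q_{<\sigma}$ respectively, and apply the formula $S^m\ast S^n\cong S^{m+n+1}$, with the suspension statement obtained by taking $P$ to be the two-element antichain. Your explicit verification of gradedness via purity of $\Delta(P)$ as a triangulated closed manifold is a detail the paper leaves implicit, but it does not change the argument.
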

\begin{proof}
Clearly if $p\in P$, then $(P\ast Q)_{<p}=P_{<p}$ and hence has order complex a sphere of the appropriate dimension. If $q\in Q$, then $(P\ast Q)_{\leq q} = P\ast Q_{\leq q}$ has dimension $\dim \Delta(P)+\dim \Delta(Q_{\leq q})+1$.  On the other hand, $(P\ast Q)_{<q} = P\ast Q_{<q}$ and so $\|\Delta((P\ast Q)_{<q})\|$ is a sphere of the desired dimension from the formula $S^m\ast S^n=S^{m+n+1}$. The final statement is the special case when $\|\Delta(P)\|\cong S^0$.
\end{proof}

Recall that if $P,Q$ are posets, then their direct product is a poset by putting $(p,q)\leq (p',q')$ if $p\leq p'$ and $q\leq q'$.

\begin{Prop}\label{p:productofcw}
If $P,P'$ are  CW posets, then so is $P\times P'$.
\end{Prop}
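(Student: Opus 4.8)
The plan is to realize $P\times P'$ as the face poset of an explicit regular CW complex, exploiting the functor $\CW$ developed earlier in this section. Since the excerpt has established that the face poset $\mathcal P(X)$ of any regular CW complex $X$ is a CW poset, it suffices to produce a regular CW complex whose face poset is order-isomorphic to $P\times P'$. First I would form the regular CW complexes $X=\CW(P)$ and $X'=\CW(P')$, which satisfy $\mathcal P(X)\cong P$ and $\mathcal P(X')\cong P'$, and consider the product $X\times X'$ equipped with its product cell structure, whose open cells are the products $e\times e'$ of an open cell $e$ of $X$ with an open cell $e'$ of $X'$.

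The main step is to verify that $X\times X'$ is again a finite regular CW complex. Since $X$ and $X'$ are finite, they are locally compact, so the CW topology on $X\times X'$ agrees with the product topology and $X\times X'$ is a finite CW complex with the product cell structure. For regularity I would argue as follows: if $\varphi\colon E^m\to X$ and $\varphi'\colon E^n\to X'$ are the characteristic maps of $e$ and $e'$ (homeomorphisms onto $\overline e$ and $\overline{e'}$, by regularity of $X$ and $X'$), then, after fixing a homeomorphism $E^{m+n}\cong E^m\times E^n$, the composite $E^{m+n}\to \overline e\times\overline{e'}$ given by $\varphi\times\varphi'$ is a continuous injection from a compact space into a Hausdorff space, hence a homeomorphism onto its image $\overline{e\times e'}$. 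Thus every closed cell of $X\times X'$ is attached by a homeomorphism, so $X\times X'$ is a regular CW complex.

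Next I would identify the face poset. Because $\overline{e\times e'}=\overline e\times\overline{e'}$, one has $e_1\times e_1'\subseteq\overline{e_2\times e_2'}$ if and only if $e_1\subseteq\overline{e_2}$ and $e_1'\subseteq\overline{e_2'}$; therefore the assignment $(e,e')\mapsto e\times e'$ gives an order-isomorphism $\mathcal P(X\times X')\cong \mathcal P(X)\times\mathcal P(X')\cong P\times P'$. Invoking the cited fact that the face poset of a regular CW complex is a CW poset then yields the claim. As a byproduct this also records that $\dim(\sigma,\sigma')=\dim\sigma+\dim\sigma'$ in $P\times P'$, and that $\|\Delta((P\times P')_{<(\sigma,\sigma')})\|$ is the boundary sphere of the $(\dim\sigma+\dim\sigma')$-ball $\CW(P_{\leq\sigma})\times\CW(P'_{\leq\sigma'})$, which is exactly the defining sphere condition for a CW poset.

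The only genuinely delicate point is the topology of the product, namely the standard fact that for finite (more generally, locally compact) CW complexes the product of cell structures is again a CW complex whose cells are products of cells; everything else is formal once regularity of the product attaching maps is checked and the closure identity $\overline{e\times e'}=\overline e\times\overline{e'}$ is used. I would expect to handle this by citing the product CW complex results in \cite{Fritsch,Lundell} rather than reproving them, so that the argument reduces to the two routine verifications above.
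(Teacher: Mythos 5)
Your proof is correct, but it takes a genuinely different route from the paper's. The paper argues entirely within poset topology: it observes that $\dim \Delta((P\times P')_{\leq (p,p')})=r+s$ and then invokes the standard formula $\|\Delta((P\times P')_{<(p,p')})\|\cong \|\Delta(P_{<p})\ast \Delta(P'_{<p'})\|\cong S^{r-1}\ast S^{s-1}\cong S^{r+s-1}$ (citing Bj\"orner's survey), which is exactly the defining sphere condition --- a three-line computation with no geometric realization of cells. You instead pass through the functor $\CW$, form the product regular CW complex $\CW(P)\times\CW(P')$, verify regularity of the product characteristic maps, and identify its face poset with $P\times P'$. Both arguments are sound. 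Yours is longer and imports the standard-but-nontrivial facts that a product of finite CW complexes carries the product cell structure and that the image of $E^{m+n}\setminus S^{m+n-1}$ under your composite characteristic map is precisely $e\times e'$ (which needs invariance of the manifold boundary under the chosen homeomorphism $E^{m+n}\cong E^m\times E^n$); in exchange it proves the slightly stronger statement $\CW(P\times P')\cong \CW(P)\times\CW(P')$ as regular CW complexes, which the paper's purely combinatorial argument does not directly give. The paper's route is the more economical one here because the join formula it cites does all the topological work in a single step.
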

\begin{proof}
Suppose that $(p,p')\in P\times P'$, $\dim \Delta(P_{\leq p})=r$ and $\dim \Delta(P'_{\leq p'})=s$. Clearly $\dim \Delta((P\times P')_{\leq (p,p')})=r+s$.    A general result in poset topology~\cite[Equation~(9.7)]{bjornersurvey} says that \[\|\Delta((P\times P')_{<(p,p')})\|\cong \|\Delta(P_{<p})\ast \Delta(P'_{<p'})\|\cong S^{r-1}\ast S^{s-1}\cong S^{r+s-1}.\]  Thus $P\times P'$ is a CW poset.
\end{proof}

\subsection{Oriented interval greedoids}\label{ss:OIGs}
In 2008, Billera, Hsiao, and Provan introduced a construction that associates
with every antimatroid (see below) a polytopal subdivision of a sphere~\cite{BilleraHsiaoProvan2008}.
This result is structurally similar to the sphericity theorem for oriented
matroids, which associates a regular cell decomposition of the sphere with
every oriented matroid (as already discussed in
Section~\ref{ssec:hyperplane-arrangements-and-oriented-matroids}).
The second author and Thomas developed a theory of ``oriented interval greedoids,'' akin
to the theory of ``oriented matroids,'' in order to unify these results~\cite{SaliolaThomas}. Their starting point was the observation that both
complexes can be endowed with the structure of a left regular band monoid.
We review the results here.

\subsubsection{Motivation from oriented matroids}

In order to motivate the definition of an oriented interval greedoid,
we begin by recasting the definition of a covector for an oriented matroid.

Let $\mathcal M=(E,\mathscr F)$ be a matroid, where we retain the notation of Section~\ref{ss:matroid}.
A note on terminology is in order. In the matroid
literature, the notions of ``closed set'' and ``flat'' coincide, but these
admit different generalizations for greedoids.
Each notion gives rise to a poset (with the former a lattice), which turn out
to be isomorphic to each other in the case of interval
greedoids~\cite[Theorem~3.6]{HomotopyPropertiesofGreedoids}.
In particular, the greedoid notion of flat does not conform with the matroid
notion of flat (cf.~Section~\ref{ss:matroid}).
To distinguish between these notions, we will use the term
``greedoid flat'' for flats in the sense of greedoid theory.

Introduce an equivalence relation on $\FeasibleSets$ by declaring
$X \sim Y$ if and only if $\ov X=\ov Y$.
The equivalence classes for this relation, denoted by $[X]$, are called the
\emph{greedoid flats}\index{greedoid flat} of the matroid.
The set $\Phi$ of all greedoid flats of the matroid admits a partial order induced by
inclusion: $[Y] \leq [X]$ if and only if there exists $Z \in E \setminus Y$ with $Y \cup
Z \in \FeasibleSets$ and $Y \cup Z \sim X$.
In particular, we have $[Y] \leq [X]$ whenever $Y \subseteq X$.

For $X \in \FeasibleSets$, let $\xi(X)$ be the union of all subsets in $[X]$:
\begin{equation*}
    \xi(X)= \bigcup_{Y \sim X} Y.
\end{equation*}
For matroids without loops (recall that a loop is an element that does not
belong to any set in $\FeasibleSets$), the sets $\xi(X)$ are the
\emph{flats}\index{flat} of the matroid (but will be called closed sets in the greedoid context). In this case,
the elements of $\xi(X)$ are those elements that can be added to $X$ without
increasing its rank.
Define $\Gamma(X)$ to be the set of  $e\in E$ such that $X\cup \{e\}\in \FeasibleSets$.

As briefly pointed out in
Section~\ref{ssec:hyperplane-arrangements-and-oriented-matroids},
there is a matroid underlying every oriented matroid. For convenience, assume that the oriented matroid contains no loops.
It is readily obtained by passing from the set of covectors
of the oriented matroid to their zero sets:
\begin{equation*}
    x \longmapsto Z(x) = \{e \in E \mid x_e = 0\}.
\end{equation*}
These are precisely the flats of a unique matroid (called the
\emph{underlying matroid}\index{underlying matroid} of the oriented matroid).
With this terminology and notation, it is straightforward to see that
for every covector $x$ of an oriented matroid, there exists a greedoid flat $[X]$
of the underlying matroid satisfying:
\begin{itemize}
    \item
        $x_e = 0$ for each $e \in \xi(X)$
        (i.e., $x_e = 0$ if and only if $e\in \ov X$);

    \item
        $x_e \in \{+, -\}$ for all $e \in \Gamma(X)$
        (i.e., $x_e = \pm$ if and only if $X \cup \{e\}$ is independent).
\end{itemize}
The notion of a covector for an interval greedoid is based on these properties.

\subsubsection{Interval greedoids}

Let $E$ denote a finite set and $\FeasibleSets$ a set of subsets of $E$.
An \emph{interval greedoid}\index{interval greedoid} is a pair $(E,\FeasibleSets)$ satisfying the
following properties for all $X,Y,Z\in\FeasibleSets$:
\begin{itemize}
    \item[(IG1)]
        If $X\neq\varnothing$, then there exists an $x \in X$ such that
        $X \setminus \{x\} \in \FeasibleSets$.
    \item[(IG2)]
        If $|X| > |Y|$, then there exists an $x \in X \setminus Y$ such that $Y \cup
        x \in \FeasibleSets$.
    \item[(IG3)]
        If $X \subseteq Y \subseteq Z$ and $e \in E \setminus Z$ with $X \cup e \in
        \FeasibleSets$ and $Z \cup e \in \FeasibleSets$, then $Y \cup e \in
        \FeasibleSets$.
\end{itemize}
A \emph{loop}\index{loop} in an interval greedoid $(E, \FeasibleSets)$ is an
element $e \in E$ that belongs to no subset in $\FeasibleSets$.
We will restrict our attention to interval greedoids that do not contain loops.

Every matroid is an interval greedoid. Further examples of interval greedoids
will be given below (in particular, antimatroids/convex geometries).

Let $(E, \FeasibleSets)$ be an interval greedoid. For $X \in \FeasibleSets$,
let $\FeasibleSets/X$ denote the collection of subsets that can be added to $X$
with the result still in $\FeasibleSets$:
\begin{gather*}
    \FeasibleSets/X = \left\{ Y \subseteq E \setminus X \mid Y \cup X \in \FeasibleSets \right\}.
\end{gather*}
Similarly, the set of \emph{continuations}\index{continuations} $\Gamma(X)$ of
$X \in \FeasibleSets$ is the set of elements in $E$ that can be added to $X$
with the result still in $\FeasibleSets$:
\begin{gather*}
    \Gamma(X) = \{ x \in E \setminus X \mid X \cup \{x\} \in \FeasibleSets\}.
\end{gather*}

Define an equivalence relation on $\FeasibleSets$ by declaring
$X \sim Y$ if and only if $\FeasibleSets/X = \FeasibleSets/Y$.
For interval greedoids, $\FeasibleSets/X = \FeasibleSets/Y$ if and only if
$\Gamma(X) = \Gamma(Y)$.
The equivalence classes of this relation are called \emph{greedoid flats}\index{greedoid flat}\index{flat!greedoid}
and we denote the equivalence class of $X$ by $[X]$.
The set of greedoid flats $\Phi$ is a poset with partial order induced by
inclusion, analogously to the matroid case. In fact, $\Phi$ is a \emph{semimodular lattice}\index{semimodular!lattice}\index{lattice!semimodular}~\cite[Theorem~5.3]{bjorner2}, that is, a lattice satisfying axioms (G2) and (G3) of Definition~\ref{d:geom.lattice}.
In particular, the minimal element of $\Phi$ is $[\varnothing]$, and $[Y] \leq
[X]$ whenever $Y \subseteq X$.
For a greedoid flat $[X]$, define
\begin{equation*}
    \xi(X) = \bigcup_{\substack{Y \in \FeasibleSets \\ \Gamma(Y) = \Gamma(X)}} Y.
\end{equation*}
The sets of the form $\xi(X)$ are the \emph{closed sets}\index{closed set} of the interval greedoid.

\subsubsection{Oriented interval greedoids}
A greedoid flat $[X]$ partitions the ground set $E$ into three subsets (possibly empty):
\begin{itemize}
    \item
        $\Gamma(X)$, the set of continuations of $X$;
    \item
        $\xi(X)$, the set of elements that lie on the greedoid flat; and
    \item
        the set of elements that belong to neither $\Gamma(X)$ nor $\xi(X)$.
\end{itemize}
A \emph{covector with underlying greedoid flat $[X]$}\index{covector!oriented interval greedoid} is any vector
$x \in \{0, +, -, 1\}^E$ satisfying:
\begin{itemize}
    \item
        $x_e \in \{+, -\}$ if $e \in \Gamma(X)$;
    \item
        $x_e = 0$ if $x \in \xi(X)$;
    \item
        $x_e = 1$ if $x \notin \Gamma(X) \cup \xi(X)$.
\end{itemize}
Let $x\mapsto \Flat(x)$ denote the map that sends a covector to its
underlying greedoid flat.
For two covectors $x$ and $y$ of $(E,\FeasibleSets)$, define their
\emph{separation set}\index{separation set} by
\begin{gather*}
\SeparationSet(x,y) = \left\{e \in E \mid x_e = -y_e \in \{+,-\}\right\}.
\end{gather*}

To define the \emph{composition of covectors}\index{composition!of covectors}, we first need an auxiliary
operation.
For $x, y \in \{0, +, -, 1\}^E$, define
\begin{align*}
    (x \star y)_e =
    \begin{cases}
        y_e, & \text{if } y_e < x_e, \\
        x_e, & \text{otherwise},
    \end{cases}
\end{align*}
where the symbols $0,+,-,1$ are partially ordered so that $1$ and $0$ are the
(unique) minimal and maximal elements, respectively, and $+$ and $-$ are
incomparable. See Figure~\ref{f:Hasse.OIG} for the Hasse diagram of $\{0,+,-,1\}$.  Note that we use the opposite ordering convention of~\cite{SaliolaThomas}.
\begin{figure}[tbhp]
\begin{center}
\begin{tikzpicture}
\node  	(A) 								{$0$};
\node  	(B) [below left =1cm of A] 		{$+$};
\node  	(C) [below right=1cm of A] 	{$-$};
\node 	(D) [below=2cm of A]			{$1$};
\draw (A)--(B);
\draw (A)--(C);
\draw (B)--(D);
\draw (C)--(D);
\end{tikzpicture}
\end{center}
\caption{The Hasse diagram of $\{0,+,-,1\}$\label{f:Hasse.OIG}}
\end{figure}
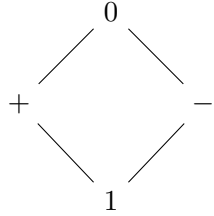
Then the composition $x \circ y$ of two covectors $x$ and $y$ with underlying
greedoid flats $[X]$ and $[Y]$, respectively, is defined as
\begin{align*}
    (x \circ y)_e =
    \begin{cases}
        (x \star y)_e, & \text{if } e \in \Gamma(W) \cup \xi(W), \\
        \hfill 1, & \text{otherwise}.
    \end{cases}
\end{align*}
where $W$ is maximal among the sets in $\FeasibleSets$ that are contained in
$\xi(X) \cap \xi(Y)$.
(Note that, unlike for oriented matroids, this operation is not necessarily
computed componentwise. See Figure~\ref{f:ProductOfCovectorsInAConvexGeometry}
and the surrounding discussion.)

Since we are restricting our attention to interval greedoids without loops, the
all zero covector is the identity element for this operation.
(If the interval greedoid admits loops, then the identity element is the
covector $w$ with $w_e = 1$ if $e$ is a loop and $w_e = 0$, otherwise.)

An \emph{oriented interval greedoid}\index{oriented interval greedoid} is a triple $(E, \FeasibleSets, \OIG)$,
where $(E,\FeasibleSets)$ is an interval greedoid and $\OIG$ is a set of
covectors of $(E,\FeasibleSets)$ satisfying the following axioms.
\begin{itemize}
    \item[(OG1)]
        The map $\Flat\colon \OIG \to \Phi$ is surjective,
        where $\Phi$ is the lattice of greedoid flats of $(E, \FeasibleSets)$.

    \item[(OG2)]
        If $x \in \OIG$, then $-x \in \OIG$ (where $-1=1$).

    \item[(OG3)]
        If $x, y \in \OIG$, then $x \circ y \in \OIG$.

    \item[(OG4)]
        If $x,y\in\OIG$, $e\in\SeparationSet(x,y)$ and
        $(x\circ y)_e \neq1$, then there exists $z\in\OIG$ such
        that $z_e = 0$ and for all $f\notin\SeparationSet(x,y)$,
        if $(x \circ y)_f \neq 1$,
        then $z_f = (x \circ y)_f = (y \circ x)_f$.
\end{itemize}

These axioms are modeled on the covector axioms for oriented matroids; compare
with Definition~\ref{defn:oriented-matroids}.
Moreover, if $(E, \FeasibleSets)$ is a matroid without loops, then this notion
coincides with that of an oriented matroid.
More precisely, $\mathcal L$ is the collection of covectors of an oriented
matroid with underlying matroid $(E,\FeasibleSets)$ if and only if $(E, \FeasibleSets,
\mathcal L)$ is an oriented interval greedoid~\cite[Theorem~3.12]{SaliolaThomas}.
Similarly, if the underlying interval greedoid is an antimatroid (see below),
then this recovers the Billera--Hsiao--Provan complex~\cite[Theorem~3.14]{SaliolaThomas}.
Other examples of oriented interval greedoids include complexified hyperplane
arrangements~\cite[Theorem~3.20]{SaliolaThomas}.

The covectors of an oriented interval greedoid form a left regular band
monoid.

\begin{Prop}
    Let $\OIG$ be the set of covectors for an oriented interval greedoid on an
    interval greedoid without loops $(E, \FeasibleSets)$.
\begin{enumerate}
\item  $\OIG$ equipped with the composition of covectors forms a left regular
    band monoid with identity element equal to the all zero covector.
\item The support lattice $\Lambda(\OIG)$ is isomorphic to the lattice of greedoid flats
    $\Phi$ of $(E, \FeasibleSets)$ and the support map corresponds to
    $\Flat\colon \OIG \to \Phi$.
\end{enumerate}
\end{Prop}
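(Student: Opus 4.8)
The plan is to factor all of the work through the flat map $\Flat\colon\OIG\to\Phi$ and the auxiliary operation $\star$, exploiting the characterization of left regular bands by a semilattice-valued homomorphism with left-zero fibers (condition~(4) of the theorem characterizing left regular bands above). The organizing idea is that a covector $x\in\OIG$ \emph{is} precisely the datum of its greedoid flat $\Flat(x)=[X]$, which fixes the \emph{support region} $R(X):=\Gamma(X)\cup\xi(X)$ on which $x\neq 1$, together with the sign vector $x|_{R(X)}$ (equal to $0$ on $\xi(X)$ and $\pm$ on $\Gamma(X)$). First I would isolate the purely combinatorial content of $\star$: checking cases on $\{0,+,-,1\}$ shows that $\star$ is exactly the multiplication of the three-element left regular band $L=\{0,+,-\}$ with a two-sided zero $1$ adjoined, so $(\{0,+,-,1\}^E,\star)$ is a left regular band, computed componentwise, satisfying $x\star x=x$, $x\star y\star x=x\star y$, associativity, and $x\star\mathbf 0=x=\mathbf 0\star x$ for the all-zero vector $\mathbf 0$. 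I record in particular the \emph{left bias} of $\star$: if $x_e,y_e\in\{+,-\}$ then $(x\star y)_e=x_e$, since $+$ and $-$ are incomparable.

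The one genuinely greedoid-theoretic input, which I expect to be the main obstacle, is that $\Flat$ is a meet-homomorphism: $\Flat(x\circ y)=\Flat(x)\wedge\Flat(y)$ in the semimodular lattice $\Phi$. By definition $\Flat(x\circ y)=[W]$ with $W$ maximal feasible inside $\xi(X)\cap\xi(Y)$, so this reduces to the structural facts that greedoid flats are order-isomorphic to closed sets under inclusion and that the meet of $[X]$ and $[Y]$ is carried by the largest feasible set inside $\xi(X)\cap\xi(Y)$. Alongside this I need the monotonicity $[V]\le[W]\Rightarrow R(V)\subseteq R(W)$ of the support region. I would extract both from the theory of interval greedoids in~\cite{bjorner2,SaliolaThomas}; this is where the non-componentwise definition of $\circ$ is reconciled with the lattice structure of $\Phi$.

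With these in hand every axiom falls out uniformly. For associativity, both $(x\circ y)\circ z$ and $x\circ(y\circ z)$ have flat $[X]\wedge[Y]\wedge[Z]=:[V]$; on $R(V)$ each equals the $\star$-product $x\star y\star z$ (the monotonicity $R(V)\subseteq R(W)$ guarantees that the intermediate truncations to $1$ are invisible on $R(V)$) and each equals $1$ off $R(V)$, so the two agree; closure under $\circ$ is~(OG3). The same template yields $x\circ x=x$ and $x\circ y\circ x=x\circ y$ from $x\star x=x$, $x\star y\star x=x\star y$ and the idempotence and commutativity of $\wedge$, so $\OIG$ is a left regular band. It is a monoid with identity $\mathbf 0$: the flat of $\mathbf 0$ is the top of $\Phi$ (which lies in $\OIG$ by surjectivity~(OG1), and $\xi=E$ forces all coordinates to $0$), the top is the identity for $\wedge$, and $\mathbf 0$ is the identity for $\star$, whence $\mathbf 0\circ x=x=x\circ\mathbf 0$. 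Finally, if $\Flat(x)=\Flat(y)$ then $x\circ y$ has flat $[X]$ and support region $R(X)$, equals $0$ on $\xi(X)$ and, by the left bias of $\star$, equals $x_e$ on $\Gamma(X)$, so $x\circ y=x$; this is the left-zero fiber condition, and it also proves~(1) directly through the characterization theorem. This establishes part~(1).

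For part~(2), the computation just made shows $\Flat(x)=\Flat(y)$ implies $x\circ y=x$ and $y\circ x=y$, i.e.\ $x\eL y$; conversely $x\circ y=x$ forces $\Flat(x)=\Flat(x)\wedge\Flat(y)\le\Flat(y)$, and symmetrically, so $x\eL y$ implies $\Flat(x)=\Flat(y)$. Hence $\Flat$ and the support map $\sigma\colon\OIG\to\Lambda(\OIG)$ induce the same congruence on $\OIG$; as both are surjective semilattice homomorphisms (the former by~(OG1)), the induced map $\Lambda(\OIG)\to\Phi$ is an isomorphism of semilattices carrying $\sigma$ to $\Flat$, which is exactly the assertion of~(2).
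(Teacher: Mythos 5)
The paper offers no proof of this proposition: it sits in a subsection explicitly devoted to reviewing the results of Saliola and Thomas, and the statement is simply imported from~\cite{SaliolaThomas}. So there is no in-paper argument to compare against, and your proposal has to be judged on its own terms. As an architecture it is sound, and the verifications you actually carry out are correct: the componentwise analysis of $\star$ (that $\{0,+,-,1\}$ is the left regular band $L$ with a two-sided zero adjoined, with the left bias $(x\star y)_e=x_e$ when both entries are signs) is right, and, granting your two lemmas --- (a) $\Flat(x\circ y)=\Flat(x)\wedge\Flat(y)$ and (b) $[V]\le[W]$ implies $\Gamma(V)\cup\xi(V)\subseteq\Gamma(W)\cup\xi(W)$ --- the derivations of associativity, of $x\circ x=x$ and $x\circ y\circ x=x\circ y$, of the identity element, and of part (2) via the equality of the kernel congruences of $\sigma$ and $\Flat$ all check out. (Your identification of the top of $\Phi$ with the all-zero covector is also fine: every maximal feasible set has empty $\Gamma$, so they form a single greedoid flat, and looplessness plus the exchange axiom put every element of $E$ in some basis, whence $\xi=E$ there.) The caveat is that (a) and (b) are exactly where the greedoid theory lives, and you defer both to the literature; moreover (a), as you use it, secretly contains a third claim, namely that the displayed formula for $x\circ y$ really produces a covector whose underlying greedoid flat is $[W]$ --- equivalently, that $(x\star y)_e$ equals $0$ precisely on $\xi(W)$ and a sign precisely on $\Gamma(W)$ --- since axiom (OG3) only guarantees membership of $x\circ y$ in $\OIG$, not the value of $\Flat(x\circ y)$. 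So what you have is a correct and cleanly organized reduction of the proposition to two or three structural lemmas about interval greedoids proved in~\cite{SaliolaThomas}; that is more than the paper itself supplies, but it is not yet a self-contained proof.
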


Since $\OIG$ is a left regular band monoid, it is a poset
with respect to the $\R$-order, as we have been considering previously. The order is characterized by
$x \geq y$ if and only if $x \circ y = y$, or equivalently
$x \geq y$ if and only if $x_e \geq y_e$ for all $e \in E$~\cite[Lemma~3.7]{SaliolaThomas}.
The poset $\OIG^{op}\setminus \{0\}$ obtained from $\OIG$ by considering the \emph{opposite}
of the $\R$-order and removing the identity is the face poset of a regular CW complex.   The following result is~\cite[Theorem~6.1]{SaliolaThomas}, where the reader is referred to~\cite[Chapter~4]{OrientedMatroids1999} for the notions of shellable and piecewise linear regular CW decompositions of a sphere.

\begin{Thm}\label{t:oig.sphere}
    Let $\OIG$ denote the set of covectors of an oriented interval
    greedoid. Then $\OIG^{op}\setminus \{0\}$ is isomorphic to the face poset of
    a shellable (hence piecewise linear) regular CW decomposition of a sphere.
\end{Thm}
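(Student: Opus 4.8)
The plan is to realize $P:=\OIG^{op}\setminus\{0\}$ as the face poset of a regular CW sphere by verifying the standard combinatorial criterion for such posets: if the doubly augmented poset $\wh P:=P\cup\{\wh 0,\wh 1\}$ is \emph{graded}, \emph{thin} (every length-two interval contains exactly two intermediate elements), and \emph{shellable}, then $P$ is the face poset of a shellable regular CW decomposition of a sphere, which is moreover piecewise linear by the Danaraj--Klee theorem; see~\cite[Section~4.7]{OrientedMatroids1999}. The problem thereby splits into three tasks, and essentially all of the difficulty concentrates in shellability.

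The grading and thinness are the routine part. For grading, recall that the support map here is $\Flat\colon\OIG\to\Phi$ and that $\Lambda(\OIG)\cong\Phi$ is a semimodular, hence graded, lattice. Arguing as in Proposition~\ref{p:supportiscellular}, the map $\Flat$ is cellular and strictly order-preserving, so Corollary~\ref{c:preservesgraded} transfers the grading of $\Phi$ to $\OIG$, and hence to $P$; in particular every tope has dimension $r-1$ where $r=\rk\Phi$, and the cocircuits are exactly the atoms (the $0$-cells) of $P$. For thinness, a codimension-one cell is a covector $x$ possessing a single free coordinate; resolving that coordinate to $+$ or to $-$ produces the only two topes lying above $x$, with (OG2)--(OG3) guaranteeing that both lie in $\OIG$ and (OG4) forbidding a third. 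The same local analysis applied to an arbitrary length-two interval of $\wh P$ exhibits it as a diamond.

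The heart of the argument is shellability, which I would obtain by producing a recursive coatom ordering of $\wh P$ following the oriented-matroid blueprint of a \emph{tope shelling}. The coatoms of $\wh P$ are the topes. Fix a base tope $B$ and order the topes $T_1,\dots,T_t$ by a linear extension of the tope poset based at $B$, e.g.\ by nondecreasing $|\SeparationSet(B,T_j)|$. One must then show, for each $j$, that the facets of the cell $\ov{T_j}$ already present in $\ov{T_1}\cup\cdots\cup\ov{T_{j-1}}$ constitute the initial segment of a recursive coatom ordering of the boundary sphere $\bd e_{T_j}=\|\Delta(P_{<T_j})\|$, and that this boundary sphere is itself recursively coatom-ordered. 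The recursive form of the criterion reduces each boundary sphere to a lower-dimensional instance, handled by induction on rank; contractions of oriented interval greedoids are again oriented interval greedoids of strictly smaller rank (paralleling the minor theory of Proposition~\ref{p:Com.minor} for COMs), so Theorem~\ref{t:oig.sphere} in lower rank supplies the required local shellings. The strong-elimination axiom (OG4) is what identifies the already-crossed facets of $\ov{T_j}$ as precisely the cocircuits separating $T_j$ from $B$.

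The main obstacle is exactly this verification of the shelling-intersection condition. In the oriented-matroid case it rests on composition being computed componentwise, so that separation sets behave additively as one sweeps from $B$ across the topes; here, as the text emphasizes, $x\circ y$ is \emph{not} componentwise, since the coordinates on which $\star$ acts are governed by the greedoid flat $W$ maximal inside $\xi(X)\cap\xi(Y)$. Controlling how this nonlocal composition interacts with the separation-order of topes---ensuring that the passage from $T_{j-1}$ to $T_j$ adjoins exactly one new facet and that the crossed facets always form a shellable initial segment of $\bd e_{T_j}$---is the delicate combinatorial core, and it is here that the interval-greedoid exchange axiom (IG3) together with (OG4) must be brought to bear.
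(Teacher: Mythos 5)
There is a genuine gap: your proposal is a proof \emph{plan}, not a proof. You correctly identify the standard route (show that $\wh P=P\cup\{\wh 0,\wh 1\}$ is graded, thin, and shellable, then invoke Bj\"orner's criterion together with Danaraj--Klee to get a shellable PL regular CW sphere), and you correctly locate where all the difficulty lives, namely in the shellability of a tope ordering. But you then stop exactly there: the verification that the facets of $\ov{T_j}$ already covered by $\ov{T_1}\cup\cdots\cup\ov{T_{j-1}}$ form a shellable initial segment of a recursive coatom ordering of $\bd\ov{T_j}$ is asserted to be ``the delicate combinatorial core'' where (IG3) and (OG4) ``must be brought to bear,'' without actually carrying out that verification. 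Since composition of covectors in an oriented interval greedoid is not computed componentwise (the coordinates affected depend on the greedoid flat $W$ maximal inside $\xi(X)\cap\xi(Y)$), the oriented-matroid separation-set bookkeeping does not transfer automatically, and this is precisely the step that cannot be waved at. The thinness claim is likewise only checked for intervals directly below topes; general length-two intervals of $\wh P$ (including those of the form $[\wh 0,a]$ and those lying strictly inside $P$) need their own argument, which you do not supply.

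For comparison: the paper does not prove this statement at all --- it is quoted verbatim as~\cite[Theorem~6.1]{SaliolaThomas}, and the reader is sent to~\cite[Chapter~4]{OrientedMatroids1999} for the relevant notions. So the honest assessment is that your outline reconstructs the expected architecture of the Saliola--Thomas proof (which does run through recursive coatom orderings, induction on rank via contractions, and reduction to the oriented-matroid and antimatroid sphericity theorems in the extreme cases), but as written it would not be accepted as a proof, because the one nontrivial lemma it depends on is exactly the one left open.
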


Here is an example extracted from~\cite[Example~6.2]{SaliolaThomas}, which
we illustrate in Figures~\ref{fig:oig-lattice-of-greedoid flats},~\ref{fig:oig-sphere}
and~\ref{fig:oig-sphere-poset}.
Consider the interval greedoid on the ground set $E=\{a,b,c,e\}$
defined by
\begin{equation}
    \label{eqn:feasible-sets-of-oig-example}
    \FeasibleSets=\{\emptyset\} \cup F \cup
    \{X\cup \{e\} \mid X\in F\},
\end{equation}
where $F=\{\{a\},\{b\},\{c\},\{a,b\},\{a,c\},\{b,c\}\}$.
The lattice of greedoid flats $\Phi$ of this interval greedoid is depicted in
Figure~\ref{fig:oig-lattice-of-greedoid flats}.
\begin{figure}[htpb]
    \centering
    \begin{tikzpicture}[>=latex,line join=bevel]
      \node (node_8) at (107.5bp,114.5bp) [draw,draw=none] { $\Big\{ \{a,b\}, \{a,c\}, \{b, c\} \Big\}$};
      \node (node_7) at (19.5bp,114.5bp) [draw,draw=none]  { $\Big\{ \{c, e\} \Big\}$};
      \node (node_6) at (83.5bp,61.5bp) [draw,draw=none]   { $\Big\{ \{c\} \Big\}$};
      \node (node_5) at (195.5bp,114.5bp) [draw,draw=none] { $\Big\{ \{a, e\} \Big\}$};
      \node (node_4) at (179.5bp,61.5bp) [draw,draw=none]  { $\Big\{ \{b\} \Big\}$};
      \node (node_3) at (131.5bp,61.5bp) [draw,draw=none]  { $\Big\{ \{a\} \Big\}$};
      \node (node_2) at (252.5bp,114.5bp) [draw,draw=none] { $\Big\{ \{b, e\} \Big\}$};
      \node (node_1) at (131.5bp,8.5bp) [draw,draw=none]   { $\Big\{ \varnothing \Big\}$};
      \node (node_0) at (151.5bp,167.5bp) [draw,draw=none] { $\Big\{ \{a, b, e\}, \{a, c, e\}, \{b, c, e\} \Big\}$};
      \draw [black] (node_1) ..controls (145.6bp,24.483bp) and (156.55bp,36.114bp)  .. (node_4);
      \draw [black] (node_4) ..controls (157.71bp,77.935bp) and (139.91bp,90.539bp)  .. (node_8);
      \draw [black] (node_8) ..controls (120.36bp,130.41bp) and (130.26bp,141.88bp)  .. (node_0);
      \draw [black] (node_6) ..controls (64.32bp,77.784bp) and (48.914bp,90.061bp)  .. (node_7);
      \draw [black] (node_2) ..controls (221.26bp,131.27bp) and (194.81bp,144.63bp)  .. (node_0);
      \draw [black] (node_5) ..controls (182.64bp,130.41bp) and (172.74bp,141.88bp)  .. (node_0);
      \draw [black] (node_1) ..controls (131.5bp,23.805bp) and (131.5bp,34.034bp)  .. (node_3);
      \draw [black] (node_3) ..controls (150.68bp,77.784bp) and (166.09bp,90.061bp)  .. (node_5);
      \draw [black] (node_1) ..controls (117.4bp,24.483bp) and (106.45bp,36.114bp)  .. (node_6);
      \draw [black] (node_4) ..controls (201.59bp,77.935bp) and (219.63bp,90.539bp)  .. (node_2);
      \draw [black] (node_3) ..controls (124.66bp,77.031bp) and (119.63bp,87.72bp)  .. (node_8);
      \draw [black] (node_7) ..controls (61.009bp,131.54bp) and (97.114bp,145.49bp)  .. (node_0);
      \draw [black] (node_6) ..controls (90.338bp,77.031bp) and (95.368bp,87.72bp)  .. (node_8);
    \end{tikzpicture}
    \caption{The lattice of greedoid flats $\Phi$ of the interval greedoid defined
        on the ground set $E = \{a, b, c, e\}$ by
        \eqref{eqn:feasible-sets-of-oig-example}.}
    \label{fig:oig-lattice-of-greedoid flats}
\end{figure}
Next, consider a two-dimensional sphere: draw three great circles on it, each
passing through the north and south poles, and mark the points where each great
circle crosses the equator (see Figure~\ref{fig:oig-sphere}).
\begin{figure}[htpb]
    \centering

    \tikzset{%
      >=latex, 
      inner sep=1pt,%
      outer sep=3pt,%
      mark coordinate/.style={inner sep=0pt, outer sep=0pt, minimum size=5pt, fill=black, circle, draw=gray},%
      coordinate label front/.style={draw=gray, fill=white, fill opacity=0.8, draw opacity=0.8, text opacity=1, text=black, rounded corners=1pt},%
      coordinate label back/.style={draw=gray, fill=white, fill opacity=0.3, draw opacity=0.3, text opacity=0.5, text=black, rounded corners=1pt},%
    }

    \resizebox{.75\linewidth}{!}{%
    \begin{tikzpicture}


    \def\sphereradius{3}
    \def\elevationangle{20}


    \makeatletter
    \pgfdeclareradialshading[tikz@ball]{ball}{\pgfqpoint{-10bp}{10bp}}{%
     color(0bp)=(tikz@ball!30!white);
     color(9bp)=(tikz@ball!75!white);
     color(18bp)=(tikz@ball!90!black);
     color(25bp)=(tikz@ball!70!black);
     color(50bp)=(black)}
    \makeatother

    \fill[ball color=white] (0,0) circle (\sphereradius);

    \pgfmathsetmacro\H{\sphereradius*cos(\elevationangle)} 


    \DrawLatitudeCircle[\sphereradius]{0}


    \DrawLongitudeCircleBack[\sphereradius]{0}
    \DrawLongitudeCircleBack[\sphereradius]{60}
    \DrawLongitudeCircleBack[\sphereradius]{120}

    \DrawLongitudeCircleFront[\sphereradius]{0}
    \DrawLongitudeCircleFront[\sphereradius]{60}
    \DrawLongitudeCircleFront[\sphereradius]{120}


    \coordinate[mark coordinate] (N) at (0, \H);
    \coordinate[mark coordinate, fill=black!50] (S) at (0,-\H);

    \coordinate[mark coordinate] (P1) at (-3, -0);
    \coordinate[mark coordinate] (P2) at (-1.50, -0.89);
    \coordinate[mark coordinate] (P3) at ( 1.50, -0.89);

    \coordinate[mark coordinate, fill=black   ] (P1') at ( 3   , 0   );
    \coordinate[mark coordinate, fill=black!50] (P2') at ( 1.50, 0.89);
    \coordinate[mark coordinate, fill=black!50] (P3') at (-1.50, 0.89);


    \def\pointlabelsize{\scriptscriptstyle}

    \node[coordinate label front, above, draw=ForestGreen, text=ForestGreen] at (N)   {$\pointlabelsize 000+$};
    \node[coordinate label front, left , draw=ForestGreen, text=ForestGreen] at (P1)  {$\pointlabelsize 0-+0$};
    \node[coordinate label front, left , draw=ForestGreen, text=ForestGreen] at (P2)  {$\pointlabelsize +0+0$};
    \node[coordinate label front, right, draw=ForestGreen, text=ForestGreen] at (P3)  {$\pointlabelsize ++00$};
    \node[coordinate label front, right, draw=ForestGreen, text=ForestGreen] at (P1') {$\pointlabelsize 0+-0$};
    \node[coordinate label front, right, draw=ForestGreen, text=ForestGreen] at (P2') {$\pointlabelsize -0-0$};
    \node[coordinate label front, left , draw=ForestGreen, text=ForestGreen] at (P3') {$\pointlabelsize --00$};
    \node[coordinate label front, below, draw=ForestGreen, text=ForestGreen] at (S)   {$\pointlabelsize 000-$};

    \def\regionlabelsize{\scriptstyle}

    \def\xcd{0}
    \def\ycd{0.8}
    \node[coordinate label front, text=BrickRed, draw=BrickRed] at (\xcd,  -\ycd) {$\regionlabelsize +++1$};
    \node[coordinate label back , text=BrickRed, draw=BrickRed] at (\xcd,   \ycd) {$\regionlabelsize ---1$};

    \def\xcd{2.2}
    \def\ycd{0.35}
    \node[coordinate label front, text=BrickRed, draw=BrickRed] at (-\xcd, -\ycd) {$\regionlabelsize +-+1$};
    \node[coordinate label front, text=BrickRed, draw=BrickRed] at ( \xcd, -\ycd) {$\regionlabelsize ++-1$};
    \node[coordinate label back , text=BrickRed, draw=BrickRed] at ( \xcd,  \ycd) {$\regionlabelsize -+-1$};
    \node[coordinate label back , text=BrickRed, draw=BrickRed] at (-\xcd,  \ycd) {$\regionlabelsize --+1$};

    \def\edgelabelsize{\scriptscriptstyle}

    \def\xcd{1.0}
    \def\ycd{-1.5}
    \node[coordinate label front, text=RoyalBlue, draw=RoyalBlue] at ( \xcd, -\ycd) {$\edgelabelsize ++0+$};
    \node[coordinate label back , text=RoyalBlue, draw=RoyalBlue] at (-\xcd,  \ycd) {$\edgelabelsize --0-$};
    \node[coordinate label front, text=RoyalBlue, draw=RoyalBlue] at (-\xcd, -\ycd) {$\edgelabelsize +0++$};
    \node[coordinate label back , text=RoyalBlue, draw=RoyalBlue] at ( \xcd,  \ycd) {$\edgelabelsize -0--$};

    \def\xcd{2.1}
    \def\ycd{2.0}
    \node[coordinate label front, text=RoyalBlue, draw=RoyalBlue] at (-\xcd,  \ycd) {$\edgelabelsize 0-++$};
    \node[coordinate label back , text=RoyalBlue, draw=RoyalBlue] at ( \xcd, -\ycd) {$\edgelabelsize 0+--$};

    \node[coordinate label front, text=RoyalBlue, draw=RoyalBlue] at ( \xcd,  \ycd) {$\edgelabelsize 0+-+$};
    \node[coordinate label back , text=RoyalBlue, draw=RoyalBlue] at (-\xcd, -\ycd) {$\edgelabelsize 0-+-$};

    \def\xcd{1.25}
    \def\ycd{2.25}
    \node[coordinate label front, text=RoyalBlue, draw=RoyalBlue] at (-\xcd, -\ycd) {$\edgelabelsize +0+-$};
    \node[coordinate label back , text=RoyalBlue, draw=RoyalBlue] at ( \xcd,  \ycd) {$\edgelabelsize -0-+$};

    \node[coordinate label front, text=RoyalBlue, draw=RoyalBlue] at ( \xcd, -\ycd) {$\edgelabelsize ++0-$};
    \node[coordinate label back , text=RoyalBlue, draw=RoyalBlue] at (-\xcd,  \ycd) {$\edgelabelsize --0+$};

    \end{tikzpicture}%
    }

    \caption{A regular CW decomposition of a sphere corresponding to an oriented interval greedoid
        on the interval greedoid defined in \eqref{eqn:feasible-sets-of-oig-example}. The cells are labelled by the
        covectors of the oriented interval greedoid. See
        Figure~\ref{fig:oig-sphere-poset} for its (opposite) face poset.}
    \label{fig:oig-sphere}
\end{figure}
The result is a regular CW decomposition of a sphere with $6$ two-cells, $12$ one-cells, and
$8$ zero-cells, whose cells correspond to a collection of covectors for an
oriented interval greedoid structure for $(E,\FeasibleSets)$
(see Figure~\ref{fig:oig-sphere-poset}).
\begin{figure}[htpb]
    \centering
    \resizebox{.8\linewidth}{!}{%
    \begin{tikzpicture}[>=latex,line join=bevel,]
      \node (node_26) at (267.0bp,157.0bp) [draw,text=black]{$0000$};

      \node (node_24) at (290.0bp,107.0bp) [draw,color=ForestGreen,fill=none]{$000+$};
      \node (node_25) at (244.0bp,107.0bp) [draw,color=ForestGreen,fill=none]{$000-$};
      \node (node_23) at (60.0bp,107.0bp) [draw,color=ForestGreen,fill=none]{$0-+0$};
      \node (node_20) at (474.0bp,107.0bp) [draw,color=ForestGreen,fill=none]{$0+-0$};
      \node (node_8) at (152.0bp,107.0bp) [draw,color=ForestGreen,fill=none]{$+0+0$};
      \node (node_4) at (336.0bp,107.0bp) [draw,color=ForestGreen,fill=none]{$++00$};
      \node (node_17) at (382.0bp,107.0bp) [draw,color=ForestGreen,fill=none]{$-0-0$};
      \node (node_14) at (198.0bp,107.0bp) [draw,color=ForestGreen,fill=none]{$--00$};

      \node (node_22) at (14.0bp,57.0bp) [draw,color=RoyalBlue,fill=none]{$0-+-$};
      \node (node_21) at (60.0bp,57.0bp) [draw,color=RoyalBlue,fill=none]{$0-++$};
      \node (node_7) at (106.0bp,57.0bp) [draw,color=RoyalBlue,fill=none]{$+0+-$};
      \node (node_6) at (152.0bp,57.0bp) [draw,color=RoyalBlue,fill=none]{$+0++$};
      \node (node_3) at (290.0bp,57.0bp) [draw,color=RoyalBlue,fill=none]{$++0-$};
      \node (node_2) at (336.0bp,57.0bp) [draw,color=RoyalBlue,fill=none]{$++0+$};
      \node (node_19) at (474.0bp,57.0bp) [draw,color=RoyalBlue,fill=none]{$0+--$};
      \node (node_18) at (520.0bp,57.0bp) [draw,color=RoyalBlue,fill=none]{$0+-+$};
      \node (node_16) at (382.0bp,57.0bp) [draw,color=RoyalBlue,fill=none]{$-0--$};
      \node (node_15) at (428.0bp,57.0bp) [draw,color=RoyalBlue,fill=none]{$-0-+$};
      \node (node_13) at (198.0bp,57.0bp) [draw,color=RoyalBlue,fill=none]{$--0-$};
      \node (node_12) at (244.0bp,57.0bp) [draw,color=RoyalBlue,fill=none]{$--0+$};

      \node (node_9) at (451.0bp,7.0bp) [draw,color=BrickRed,fill=none]{$-+-1$};
      \node (node_5) at (60.0bp,7.0bp) [draw,color=BrickRed,fill=none]{$+-+1$};
      \node (node_1) at (370.0bp,7.0bp) [draw,color=BrickRed,fill=none]{$++-1$};
      \node (node_0) at (195.0bp,7.0bp) [draw,color=BrickRed,fill=none]{$+++1$};
      \node (node_11) at (284.0bp,7.0bp) [draw,color=BrickRed,fill=none]{$---1$};
      \node (node_10) at (106.0bp,7.0bp) [draw,color=BrickRed,fill=none]{$--+1$};

      \draw [black,-] (node_19) ..controls (456.95bp,62.385bp) and (453.35bp,63.251bp)  .. (450.0bp,64.0bp) .. controls (372.1bp,81.388bp) and (349.47bp,81.543bp)  .. (node_25);
      \draw [black,-] (node_11) ..controls (322.1bp,20.699bp) and (374.21bp,38.07bp)  .. (node_15);
      \draw [black,-] (node_13) ..controls (198.0bp,69.947bp) and (198.0bp,80.897bp)  .. (node_14);
      \draw [black,-] (node_3) ..controls (302.07bp,70.591bp) and (314.09bp,83.133bp)  .. (node_4);
      \draw [black,-] (node_16) ..controls (382.0bp,69.947bp) and (382.0bp,80.897bp)  .. (node_17);
      \draw [black,-] (node_9) ..controls (445.17bp,20.162bp) and (439.68bp,31.632bp)  .. (node_15);
      \draw [black,-] (node_1) ..controls (361.23bp,20.377bp) and (352.73bp,32.377bp)  .. (node_2);
      \draw [black,-] (node_15) ..controls (390.7bp,70.972bp) and (342.34bp,87.794bp)  .. (node_24);
      \draw [black,-] (node_0) ..controls (232.57bp,20.789bp) and (283.03bp,37.968bp)  .. (node_2);
      \draw [black,-] (node_0) ..controls (170.32bp,21.313bp) and (143.56bp,35.744bp)  .. (node_7);
      \draw [black,-] (node_18) ..controls (502.95bp,62.385bp) and (499.35bp,63.251bp)  .. (496.0bp,64.0bp) .. controls (418.1bp,81.388bp) and (395.47bp,81.543bp)  .. (node_24);
      \draw [black,-] (node_23) ..controls (108.24bp,119.19bp) and (200.29bp,140.53bp)  .. (node_26);
      \draw [black,-] (node_19) ..controls (474.0bp,69.947bp) and (474.0bp,80.897bp)  .. (node_20);
      \draw [black,-] (node_5) ..controls (60.0bp,19.947bp) and (60.0bp,30.897bp)  .. (node_21);
      \draw [black,-] (node_9) ..controls (469.72bp,21.021bp) and (489.34bp,34.673bp)  .. (node_18);
      \draw [black,-] (node_17) ..controls (349.5bp,121.56bp) and (313.28bp,136.68bp)  .. (node_26);
      \draw [black,-] (node_10) ..controls (93.933bp,20.591bp) and (81.915bp,33.133bp)  .. (node_21);
      \draw [black,-] (node_0) ..controls (221.49bp,21.385bp) and (250.44bp,36.01bp)  .. (node_3);
      \draw [black,-] (node_15) ..controls (415.93bp,70.591bp) and (403.91bp,83.133bp)  .. (node_17);
      \draw [black,-] (node_25) ..controls (249.83bp,120.16bp) and (255.32bp,131.63bp)  .. (node_26);
      \draw [black,-] (node_24) ..controls (284.17bp,120.16bp) and (278.68bp,131.63bp)  .. (node_26);
      \draw [black,-] (node_22) ..controls (26.067bp,70.591bp) and (38.085bp,83.133bp)  .. (node_23);
      \draw [black,-] (node_20) ..controls (425.76bp,119.19bp) and (333.71bp,140.53bp)  .. (node_26);
      \draw [black,-] (node_3) ..controls (277.93bp,70.591bp) and (265.91bp,83.133bp)  .. (node_25);
      \draw [black,-] (node_1) ..controls (399.16bp,21.457bp) and (431.27bp,36.278bp)  .. (node_19);
      \draw [black,-] (node_11) ..controls (260.21bp,21.277bp) and (234.53bp,35.611bp)  .. (node_13);
      \draw [black,-] (node_21) ..controls (60.0bp,69.947bp) and (60.0bp,80.897bp)  .. (node_23);
      \draw [black,-] (node_2) ..controls (323.93bp,70.591bp) and (311.91bp,83.133bp)  .. (node_24);
      \draw [black,-] (node_5) ..controls (47.933bp,20.591bp) and (35.915bp,33.133bp)  .. (node_22);
      \draw [black,-] (node_18) ..controls (507.93bp,70.591bp) and (495.91bp,83.133bp)  .. (node_20);
      \draw [black,-] (node_5) ..controls (85.585bp,21.349bp) and (113.43bp,35.877bp)  .. (node_6);
      \draw [black,-] (node_6) ..controls (152.0bp,69.947bp) and (152.0bp,80.897bp)  .. (node_8);
      \draw [black,-] (node_4) ..controls (317.28bp,121.02bp) and (297.66bp,134.67bp)  .. (node_26);
      \draw [black,-] (node_10) ..controls (131.59bp,21.349bp) and (159.43bp,35.877bp)  .. (node_13);
      \draw [black,-] (node_10) ..controls (143.3bp,20.972bp) and (191.66bp,37.794bp)  .. (node_12);
      \draw [black,-] (node_5) ..controls (72.067bp,20.591bp) and (84.085bp,33.133bp)  .. (node_7);
      \draw [black,-] (node_1) ..controls (409.1bp,20.513bp) and (464.54bp,38.254bp)  .. (node_18);
      \draw [black,-] (node_13) ..controls (210.07bp,70.591bp) and (222.09bp,83.133bp)  .. (node_25);
      \draw [black,-] (node_7) ..controls (143.3bp,70.972bp) and (191.66bp,87.794bp)  .. (node_25);
      \draw [black,-] (node_12) ..controls (256.07bp,70.591bp) and (268.09bp,83.133bp)  .. (node_24);
      \draw [black,-] (node_11) ..controls (273.57bp,20.52bp) and (263.27bp,32.88bp)  .. (node_12);
      \draw [black,-] (node_7) ..controls (118.07bp,70.591bp) and (130.09bp,83.133bp)  .. (node_8);
      \draw [black,-] (node_6) ..controls (189.3bp,70.972bp) and (237.66bp,87.794bp)  .. (node_24);
      \draw [black,-] (node_12) ..controls (231.93bp,70.591bp) and (219.91bp,83.133bp)  .. (node_14);
      \draw [black,-] (node_11) ..controls (311.4bp,21.421bp) and (341.46bp,36.144bp)  .. (node_16);
      \draw [black,-] (node_16) ..controls (344.7bp,70.972bp) and (296.34bp,87.794bp)  .. (node_25);
      \draw [black,-] (node_14) ..controls (216.72bp,121.02bp) and (236.34bp,134.67bp)  .. (node_26);
      \draw [black,-] (node_9) ..controls (432.28bp,21.021bp) and (412.66bp,34.673bp)  .. (node_16);
      \draw [black,-] (node_9) ..controls (456.83bp,20.162bp) and (462.32bp,31.632bp)  .. (node_19);
      \draw [black,-] (node_10) ..controls (80.415bp,21.349bp) and (52.569bp,35.877bp)  .. (node_22);
      \draw [black,-] (node_1) ..controls (348.06bp,21.164bp) and (324.68bp,35.195bp)  .. (node_3);
      \draw [black,-] (node_22) ..controls (30.673bp,62.488bp) and (33.943bp,63.301bp)  .. (37.0bp,64.0bp) .. controls (117.81bp,82.47bp) and (139.1bp,81.942bp)  .. (220.0bp,100.0bp) .. controls (220.1bp,100.02bp) and (220.21bp,100.05bp)  .. (node_25);
      \draw [black,-] (node_0) ..controls (183.72bp,20.591bp) and (172.49bp,33.133bp)  .. (node_6);
      \draw [black,-] (node_8) ..controls (184.5bp,121.56bp) and (220.72bp,136.68bp)  .. (node_26);
      \draw [black,-] (node_2) ..controls (336.0bp,69.947bp) and (336.0bp,80.897bp)  .. (node_4);
      \draw [black,-] (node_21) ..controls (76.673bp,62.488bp) and (79.943bp,63.301bp)  .. (83.0bp,64.0bp) .. controls (163.81bp,82.47bp) and (185.1bp,81.942bp)  .. (266.0bp,100.0bp) .. controls (266.1bp,100.02bp) and (266.21bp,100.05bp)  .. (node_24);
    \end{tikzpicture}%
    }
    \caption{The underlying poset of the left regular band of covectors from
        Figure~\ref{fig:oig-sphere}. It is isomorphic to the opposite poset of
        the face poset (including an empty face) of the regular CW decomposition of the sphere in
        Figure~\ref{fig:oig-sphere}.}
    \label{fig:oig-sphere-poset}
\end{figure}
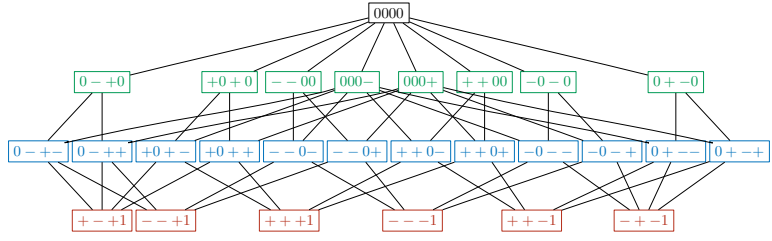

\subsubsection{Contraction and deletion of oriented interval greedoids}

Let $(E, \FeasibleSets)$ be an interval greedoid with lattice of greedoid flats $\Phi$.
The \emph{contraction}\index{contraction} of $(E, \FeasibleSets)$ along $X \in \FeasibleSets$ is
the interval greedoid $(\bigcup_{Y \in \FeasibleSets/X} Y , \FeasibleSets/X)$,
where
\begin{gather*}
    \FeasibleSets/X = \left\{ Y \subseteq E \setminus X \mid Y \cup X \in \FeasibleSets \right\}.
\end{gather*}
Let $\Phi/X$, $\Gamma/X$ and $\xi/X$ denote the corresponding notions in the
contraction.

Fix $X \in \FeasibleSets$. Let $y$ be a covector of $(E, \FeasibleSets)$ such
that $\Flat(y) \geq [X]$ in $\Phi$. Then there exists $Y \in \FeasibleSets/X$
such that $\Flat(y) = [X \cup Y]$. Define the \emph{contraction}\index{contraction} of $y$ along $X$
as
\begin{align}
    \label{e:CovectorOfContraction}
    \con_X(y)_e =
    \begin{cases}
        y_e, & \text{if } e \in (\Gamma/X)(Y), \\
        0, & \text{if } e \in (\xi/X)(Y), \\
        1, & \text{otherwise.}
    \end{cases}
\end{align}

If $\OIG$ is the set of covectors of an oriented interval greedoid on $(E,
\FeasibleSets)$, then
\begin{gather*}
    \OIG/X = \left\{ \con_X(y) \mid y \in \OIG \text{~and~} \Flat(y) \geq [X] \right\}
\end{gather*}
defines an oriented interval greedoid over the contraction of $(E,
\FeasibleSets)$ by $X$~\cite[Proposition~4.3]{SaliolaThomas}.
By~\cite[Proposition~4.4]{SaliolaThomas},
$\OIG/X$ is isomorphic to the contraction of $\OIG$ to $[X]$ (in the
sense of left regular bands) via the monoid homomorphism
\begin{equation*}
    \con_X\colon \OIG_{\geq [X]}\to \OIG/X.
\end{equation*}

We now describe the deletion operation on oriented interval greedoids.
Let $W$ be an arbitrary subset of $E$.
The \emph{deletion}\index{deletion} of $(E, \FeasibleSets)$ to $W$
is the interval greedoid $(W, \FeasibleSets|_W)$
defined by
\begin{gather*}
    \FeasibleSets|_W = \left\{ X \in \FeasibleSets \mid X \subseteq W \right\}.
\end{gather*}
Let $\Phi|_W$, $\Gamma|_W$ and $\xi|_W$ denote the corresponding notions for
the deletions.

Fix $W \subseteq E$.
Let $y$ be a covector of $(E, \FeasibleSets)$ with $\Flat(y) = [Y]$.
Define the \emph{deletion}\index{deletion} of $y$ to $W$ as
\begin{gather*}
    \res_W(y)_w =
    \begin{cases}
        y_w, & \text{if } w \in \Gamma|_W(Y), \\
        0, & \text{if } w \in \xi|_W(Y), \\
        1, & \text{otherwise}.
    \end{cases}
\end{gather*}
If $\OIG$ is the set of covectors of an oriented interval greedoid on $(E,
\FeasibleSets)$ and $W \subseteq E$, then define the \emph{deletion}\index{deletion} of
$\OIG$ to $W$ to be
\begin{gather*}
    \OIG|_W = \left\{ \res_W(y) \mid y \in \OIG \right\}.
\end{gather*}
It is not true in general that deletions of oriented interval
greedoids to arbitrary subsets $W \subseteq E$ are again
oriented interval greedoids.
However, it is true for deletion to $\Gamma(\varnothing)$,
in which case the deletion is an oriented matroid~\cite[Proposition~4.12]{SaliolaThomas};
and for deletion to $\xi(X)$ for any $X \in \FeasibleSets$~\cite[Theorem~4.15]{SaliolaThomas},
in which case
\begin{equation*}
    \res_{\xi(X)}\colon x \OIG\to \OIG|_{\xi(X)}
\end{equation*}
is a monoid isomorphism
for any covector $x$ with $\Flat(x) = [X]$,
with inverse $\res_{\xi(X)}(y) \mapsto x \circ y$
for all $y \in \OIG$~\cite[Proposition~4.16]{SaliolaThomas}.

\subsubsection{Oriented interval greedoids from convex geometries}

Convex geometries give rise to a special class of interval greedoids called
antimatroids. In this setting the covectors and the composition of covectors
admit a very nice geometric description. We begin by defining convex
geometries.

Just as matroids can be viewed as an abstraction of linear independence of
vectors in $\mathbb R^d$, convex geometries can be viewed as an abstraction of
convexity of vectors in $\mathbb R^d$.
In the following, think of $E$ as a finite subset of $\mathbb R^d$;
of $\tau$ as the convex hull operator $\tau(A) = \conv(A) \cap E$ for $A\subseteq E$;
and of $\ext(A)$ as the set of extreme points of the convex hull of $A$.

A \emph{convex geometry}\index{convex geometry} is a pair $(E, \tau)$ with $E$ a finite
set and $\tau\colon 2^E \to 2^E$ a non-decreasing, order preserving and
idempotent function, satisfying the following \emph{anti-exchange}\index{anti-exchange}
property:
\begin{itemize}
    \item[(AE)]
        If $x,y \not\in \tau(X), x \neq y$, and $y \in \tau(X \cup x)$, then $x
        \not\in \tau(X \cup y)$.
\end{itemize}
The subsets $A \subseteq E$ satisfying $\tau(A) = A$ are called
\emph{closed sets}\index{closed set}.
The \emph{extreme points}\index{extreme points} $\ext(A)$ of $A \subseteq E$ are the points $x \in A$
satisfying $x \not\in \tau(A \setminus x)$.

If $(E,\tau)$ is a convex geometry, then $(E, \FeasibleSets)$ is an interval
greedoid, where $\FeasibleSets$ consists of the complements of the closed sets.
These interval greedoids are called \emph{antimatroids}\index{antimatroids} or \emph{upper interval
greedoids}.
If $X \in \FeasibleSets$, then $E \setminus X$ is a closed set of the convex
geometry and we define $\Gamma(X) = \ext(E \setminus X)$.

A covector for an antimatroid admits a nice geometric description in terms of
the corresponding convex geometry.
Let $X \in \FeasibleSets$. Then $X$ corresponds to the closed
set $C = E \setminus X$ in the convex geometry.
A covector with underlying greedoid flat $[X]$
assigns $+$ or $-$ to the extreme points of $C$,
assigns $0$ to the points of the exterior of $C$,
and assigns $1$ to the non-extreme points contained in $C$.
See Figure~\ref{f:TwoCovectorsInAConvexGeometry}.

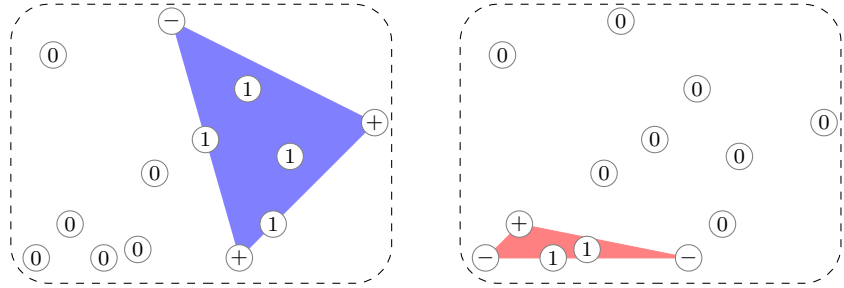
\begin{figure}[!ht]
\centering
\begin{tikzpicture}[scale=0.45]
    \tikzset{%
      >=latex,%
      line join=bevel,%
      inner sep=1pt,%
      outer sep=3pt,%
      mark point/.style={inner sep=0pt, outer sep=0pt, minimum size=10pt, fill=white, circle, draw=gray, font=\scriptsize},%
    }
    \draw[color=black, fill=none, rounded corners=5mm, dashed] (5,-0.75) -- (10.5,-0.75) -- (10.5,7.5) -- (-0.75,7.5) -- (-0.75,-0.75) -- (5,-0.75);
    \filldraw[color=blue!50] (6,0) -- (10,4) -- (4,7);
    \draw (6,0) node[mark point] {$+$};
    \draw (7,1) node[mark point] {$1$};
    \draw (10,4) node[mark point] {$+$};
    \draw (4,7) node[mark point] {$-$};
    \draw (5,3.5) node[mark point] {$1$};
    \draw (6.25,5) node[mark point] {$1$};
    \draw (7.5,3) node[mark point] {$1$};
    \draw (0,0) node[mark point] {$0$};
    \draw (2,0) node[mark point] {$0$};
    \draw (3,0.25) node[mark point] {$0$};
    \draw (1,1) node[mark point] {$0$};
    \draw (3.5,2.5) node[mark point] {$0$};
    \draw (0.5,6) node[mark point] {$0$};
\end{tikzpicture}
\qquad
\begin{tikzpicture}[scale=0.45]
    \tikzset{%
      >=latex,%
      line join=bevel,%
      inner sep=1pt,%
      outer sep=3pt,%
      mark point/.style={inner sep=0pt, outer sep=0pt, minimum size=10pt, fill=white, circle, draw=gray, font=\scriptsize},%
    }
    \draw[color=black, fill=none, rounded corners=5mm, dashed] (5,-0.75) -- (10.5,-0.75) -- (10.5,7.5) -- (-0.75,7.5) -- (-0.75,-0.75) -- (5,-0.75);
    \filldraw[color=red!50] (0,0) -- (6,0) -- (1,1);
    \draw (0,0) node[mark point]     { $-$};
    \draw (0.5,6) node[mark point]   { $0$};
    \draw (1,1) node[mark point]     { $+$};
    \draw (10,4) node[mark point]    { $0$};
    \draw (2,0) node[mark point]     { $1$};
    \draw (3,0.25) node[mark point]  { $1$};
    \draw (3.5,2.5) node[mark point] { $0$};
    \draw (4,7) node[mark point]     { $0$};
    \draw (5,3.5) node[mark point]   { $0$};
    \draw (6,0) node[mark point]     { $-$};
    \draw (6.25,5) node[mark point]  { $0$};
    \draw (7,1) node[mark point]     { $0$};
    \draw (7.5,3) node[mark point]   { $0$};
\end{tikzpicture}
\caption{Covectors of an antimatroid: extreme points of a closed set are
    labelled by $+$ or $-$; non-extreme interior points are labelled $1$;
    and exterior points are labelled $0$.}
\label{f:TwoCovectorsInAConvexGeometry}
\end{figure}

The composition $x \circ y$ of two covectors $x$ and $y$ is computed as
follows. (Figure~\ref{f:ProductOfCovectorsInAConvexGeometry} depicts the
product of the two covectors in Figure~\ref{f:TwoCovectorsInAConvexGeometry}.)
Consider the convex hull of the union of the corresponding closed sets.
The exterior points are labelled $0$.
The non-extreme interior points are labelled $1$.
If $e$ is an extreme point, then it is an extreme point for $x$ or for $y$
(possibly both). If it is an extreme point for $x$, then $e$ takes the sign
$x_e$; otherwise, it takes the sign $y_e$.

This example exemplifies that the product on covectors is not always computed
componentwise. It is possible that $x_e = y_e = 0$ yet $(x \circ y)_e \neq 0$.
This happens if $e$ is exterior to both closed sets, yet it is interior to the
convex hull of the union of the closed sets.
Similarly, extreme points of the closed sets can become interior to the convex
hull of their union.

\begin{figure}[!ht]
\centering
\begin{tikzpicture}[scale=0.45]
    \tikzset{%
      >=latex,%
      line join=bevel,%
      inner sep=1pt,%
      outer sep=3pt,%
      mark point/.style={inner sep=0pt, outer sep=0pt, minimum size=10pt, fill=white, circle, draw=gray, font=\scriptsize},%
    }
    \draw[color=black, fill=none, rounded corners=5mm, dashed] (5,-0.75) -- (10.5,-0.75) -- (10.5,7.5) -- (-0.75,7.5) -- (-0.75,-0.75) -- (5,-0.75);
    \filldraw[color=blue!50!red!50] (6,0) -- (10,4) -- (4,7) -- (0,0);
    \draw (0,0) node[mark point]     { $-$};
    \draw (0.5,6) node[mark point]   { $0$};
    \draw (1,1) node[mark point]     { $1$};
    \draw (10,4) node[mark point]    { $+$};
    \draw (2,0) node[mark point]     { $1$};
    \draw (3,0.25) node[mark point]  { $1$};
    \draw (3.5,2.5) node[mark point] { $1$};
    \draw (4,7) node[mark point]     { $-$};
    \draw (5,3.5) node[mark point]   { $1$};
    \draw (6,0) node[mark point]     { $+$};
    \draw (6.25,5) node[mark point]  { $1$};
    \draw (7,1) node[mark point]     { $1$};
    \draw (7.5,3) node[mark point]   { $1$};
\end{tikzpicture}
\qquad
\begin{tikzpicture}[scale=0.45]
    \tikzset{%
      >=latex,%
      line join=bevel,%
      inner sep=1pt,%
      outer sep=3pt,%
      mark point/.style={inner sep=0pt, outer sep=0pt, minimum size=10pt, fill=white, circle, draw=gray, font=\scriptsize},%
    }
    \draw[color=black, fill=none, rounded corners=5mm, dashed] (5,-0.75) -- (10.5,-0.75) -- (10.5,7.5) -- (-0.75,7.5) -- (-0.75,-0.75) -- (5,-0.75);
    \filldraw[color=red!50!blue!50] (6,0) -- (10,4) -- (4,7) -- (0,0);
    \draw (0,0) node[mark point]     { $-$};
    \draw (0.5,6) node[mark point]   { $0$};
    \draw (1,1) node[mark point]     { $1$};
    \draw (10,4) node[mark point]    { $+$};
    \draw (2,0) node[mark point]     { $1$};
    \draw (3,0.25) node[mark point]  { $1$};
    \draw (3.5,2.5) node[mark point] { $1$};
    \draw (4,7) node[mark point]     { $-$};
    \draw (5,3.5) node[mark point]   { $1$};
    \draw (6,0) node[mark point]     { $-$};
    \draw (6.25,5) node[mark point]  { $1$};
    \draw (7,1) node[mark point]     { $1$};
    \draw (7.5,3) node[mark point]   { $1$};
\end{tikzpicture}
\caption{The products $x \circ y$ (left) and $y \circ x$ (right) of the
    covectors $x$ and $y$ depicted in Figure~\ref{f:TwoCovectorsInAConvexGeometry}.}
\label{f:ProductOfCovectorsInAConvexGeometry}
\end{figure}
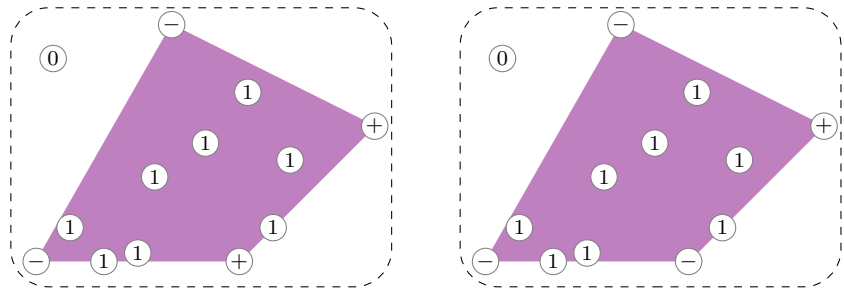

The set of all covectors of an antimatroid forms an oriented interval greedoid~\cite[Theorem~3.14]{SaliolaThomas}.
In fact, this is the only oriented interval greedoid structure on an
antimatroid~\cite[Proposition~3.15]{SaliolaThomas}.
This collection of covectors is the central object of study in the work of
Billera, Hsiao and Provan~\cite{BilleraHsiaoProvan2008}.
To state their results, we recall some definitions from~\cite{BilleraHsiaoProvan2008}.

Define a poset $Q_L$ as follows. Let $L$ denote the lattice of closed
sets of the convex geometry ordered by inclusion. The elements of $Q_L$ are
equivalence classes of pairs $(A, \varepsilon)$ with $A \in L$ and
$\varepsilon\colon E \to \{+, -\}$, where $(A, \alpha) \sim (B, \beta)$ if $A = B$
and $\alpha|_{\ext(A)} = \beta|_{\ext(B)}$.
The partial order on $Q_L$ is defined as $(A, \alpha) \leq (B, \beta)$
if and only if $A \subseteq B$ and $\alpha$ and $\beta$ agree on $\ext(A) \cap \ext(B)$.
Note that $Q_L$ admits a minimal element corresponding to the pair
$(\varnothing, \varepsilon)$, where $\varepsilon$ is the empty map.
$Q_L$ is also has a maximal element $\wh 1$ adjoined.

Elements of $Q_L$ are in bijection with covectors of the oriented interval
greedoid on the antimatroid. To see this, note that elements of $Q_L$ determine
covectors, and vice-versa.
Indeed, the equivalence class of $(A, \alpha)$ is determined by $A$ and
$\alpha|_{\ext(A)}$, so one obtains a covector $\wh\alpha$ by declaring
$\wh\alpha_e = \alpha(e)$ if $e \in \ext(A)$, $\wh\alpha_e = 1$ if $e \in
A \setminus \ext(A)$ and $\wh\alpha_e = 0$ if $e \notin A$.
Conversely, starting with a covector $x$, consider the closed set
$A = \{e \in E\mid x_e \neq 0\}$ and any map $\alpha\colon E \to \{+,-\}$
satisfying $\alpha(e) = x_e$ on $\ext(A)$.

The partial order on $Q_L$ is opposite to the partial order on $\OIG$.
Indeed, if $A \subseteq B$ and $\alpha$ and $\beta$ agree on $\ext(A) \cap
\ext(B)$, then the corresponding covectors $\wh\alpha$ and $\wh\beta$
satisfy $\wh\alpha_e \geq \wh\beta_e$ for all $e \in E$:
\begin{itemize}
    \item
        if $e \notin A$, then $\wh\alpha_e = 0 \geq \wh\beta_e$;
    \item
        if $e \in \ext(A) \cap \ext(B)$, then
        $\wh\alpha_e = \wh\beta_e$ by assumption;
    \item
        if $e \in \ext(A) \setminus \ext(B)$, then
        $\wh\alpha_e \in \{+, -\}$ and $\wh\beta_e = 1$,
        so $\wh\alpha_e \geq \wh\beta_e$;
    \item
        if $e \in A \setminus \ext(A)$, then $e \notin B \setminus \ext(B)$
        and $\wh\alpha_e = 1 = \wh\beta_e$.
\end{itemize}

Billera, Hsiao and Provan prove that $Q_L \setminus \{\wh 0, \wh 1\}$ is the
face poset of a regular CW decomposition of a sphere. It follows that $\OIG \setminus \{0\}$
is the opposite poset of a face poset of a regular CW decomposition of a sphere.

\subsection{The topology of left regular bands}
In this subsection, we describe the topology of the various families of left regular bands that we have been considering (where by the topology of a left regular band, we mean its topology as a poset).  Note that if $B$ is a left regular band monoid and also a CW poset, then it is, in fact, the face poset of a regular CW decomposition of a ball by Proposition~\ref{p:maximumCWposet} (since the identity is a maximum).  This need not be the case for left regular bands without an identity element.

\begin{Prop}\label{p:topology}
The following hold.
\begin{enumerate}
\item If $\mathcal L$ is an oriented matroid, then $\mathcal L_{\geq X}$ is the face poset of a regular cell decomposition of a ball for each $X\in \Lambda(\mathcal L)$.
\item If $(E,\mathcal L,g)$ is an affine oriented matroid and $X\in\Lambda(\mathcal L^+(g))$, then $\mathcal L^+(g)_{\geq X}$ is the face poset of a regular CW decomposition of a ball.
\item Let $(E,\mathcal L)$ be a COM and $X\in \Lambda(\mathcal L)$.  Then $\mathcal L_{\geq X}$ is the face poset of a contractible regular CW complex.
\item If $\mathcal A$ is a complex hyperplane arrangement, then $\FFF(\mathcal A)_{\geq X}$ is the face poset of a regular CW decomposition of a ball for each $X\in \Lambda(\FFF(\mathcal A))$.
\item If $(E,\FeasibleSets,\OIG)$ is an oriented interval greedoid and $X\in \Lambda(\OIG)$, then $\OIG_{\geq X}$ is the face poset of a regular CW decomposition of a ball.
\item If $\Gamma=(V,E)$ is a graph, then $\Lambda(B(\Gamma))\cong (P(V),\cup)$.  If $Y\subseteq X\subseteq V$ and $e_Y\in B(\Gamma)$ has support $Y$, then $\|\Delta(\bd e_YB(\Gamma)_{\geq X})\|$ is homotopy equivalent to $\|\Cliq(\Gamma[X\setminus Y])\|$.
\end{enumerate}
\end{Prop}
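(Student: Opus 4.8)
The first assertion, $\Lambda(B(\Gamma))\cong(P(V),\cup)$, was already established in the discussion of free partially commutative left regular bands, so the content lies in the homotopy equivalence. The plan is to reduce this to a statement about a single free partially commutative left regular band and then apply Rota's cross-cut theorem (Theorem~\ref{t:crosscut}). For the reduction: since $Y\subseteq X$, the element $e_Y$ lies in the contraction $B(\Gamma)_{\ge X}\cong B(\Gamma[X])$, and inside this monoid $e_Y$ has support $Y\subseteq X$; applying the ``deletion'' fact $bB(\Gamma')\cong B(\Gamma'[V'\setminus c(w)])$ with $\Gamma'=\Gamma[X]$ gives $e_YB(\Gamma)_{\ge X}\cong B(\Gamma[X][X\setminus Y])=B(\Gamma[X\setminus Y])$ as left regular band monoids, carrying $e_Y$ to the identity. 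Writing $H=\Gamma[X\setminus Y]$, the boundary $\bd e_YB(\Gamma)_{\ge X}$ is thus poset-isomorphic to $B(H)\setminus\{1\}$, and it suffices to prove the general lemma that $\|\Delta(B(H)\setminus\{1\})\|\simeq\|\Cliq(H)\|$ for every finite graph $H$ (the degenerate case $X\setminus Y=\emptyset$ gives two empty complexes and is handled separately).

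To prove the lemma I would set $P=B(H)\setminus\{1\}$ and apply Theorem~\ref{t:crosscut}, using the standard heap/prefix model in which an element of $B(H)$ is recorded by its support $S$ together with the induced acyclic orientation of $\overline H[S]$ (equivalently a partial order $\prec$ on $S$ whose comparabilities are exactly the non-edges of $H$ inside $S$), with $a\le b$ iff $b$ is the restriction of $a$ to a $\prec_a$-downset. In this model $1$ is the maximum, and an element is maximal in $P$ (that is, covered by $1$) exactly when its support is a single vertex, so $\mathscr M(P)$ is identified with the vertex set of $H$. Distinct vertices $v_1,\dots,v_k$ admit a common lower bound in $P$ iff some element makes all the $v_i$ simultaneously $\prec$-minimal, which forces them pairwise incomparable, i.e.\ pairwise adjacent in $H$; conversely a clique has its (commuting) product as such a lower bound. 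Hence the subsets of $\mathscr M(P)$ with a common lower bound are precisely the cliques of $H$, so the complex $K$ produced by Theorem~\ref{t:crosscut} is exactly $\Cliq(H)$.

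The hard part, and the step I would treat most carefully, is verifying the hypothesis of Theorem~\ref{t:crosscut}: that every subset of $P$ with a common lower bound has a meet. I would argue entirely in the prefix model. If $\{a_i\}$ share a common lower bound $c$, then each $a_i$ is the restriction of $c$ to a $\prec_c$-downset $S_{a_i}$, the union $U=\bigcup_iS_{a_i}$ is again a $\prec_c$-downset, and the restriction $e$ of $c$ to $U$ is the desired meet. The subtle point is that for an arbitrary common lower bound $d$ one must check that the orientations of $c$ and of $d$ agree on $U$; this follows because for a non-commuting pair $u\in S_{a_i}$, $w\in U\setminus S_{a_i}$ the downset condition forces $u\prec w$ in both $c$ and $d$, whence $e$ is a prefix of $d$ and $d\le e$. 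Since each $a_i<1$, the meet $e$ lies in $P$, so the hypothesis holds. This meet property is genuinely special to $B(H)$ — in a general left regular band neither $ab$ nor $ba$ need be a common lower bound of $a$ and $b$ — which is exactly why the combinatorial model is needed rather than an abstract argument. With all three ingredients in place, Theorem~\ref{t:crosscut} yields $\|\Delta(P)\|\simeq\|K\|=\|\Cliq(H)\|$, completing the proof.
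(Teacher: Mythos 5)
There is a genuine gap: the proposition has six parts, and your proposal proves only part (6). Parts (1)--(5), concerning oriented matroids, affine oriented matroids, COMs, complex hyperplane arrangements and oriented interval greedoids, are not addressed at all. In the paper each of these is handled by the same two-step pattern: first use closure of the relevant class under contraction (so that $\mathcal L_{\geq X}$ is again an object of the same class, reducing to the case $X=\wh 0$), and then invoke the appropriate topological representation theorem from the literature --- the sphericity theorem for oriented matroids (\cite[Corollary~4.3.4]{OrientedMatroids1999}), \cite[Corollary~4.5.8]{OrientedMatroids1999} for affine oriented matroids, the contractibility result of \cite[Section~11]{COMS} together with Proposition~\ref{p:Com.minor} for COMs, the PL sphere results of \cite{oldpaper} and \cite[Proposition~4.7.26]{OrientedMatroids1999} for complex arrangements, and Theorem~\ref{t:oig.sphere} for oriented interval greedoids --- finally passing from a sphere to a ball by adjoining the maximum via Proposition~\ref{p:maximumCWposet}. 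None of these statements can be derived from the cross-cut machinery you set up, so without them the proposition is simply unproved in five of its six cases.

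For part (6) itself your argument is correct and follows essentially the same route as the paper: reduce via $e_YB(\Gamma)_{\geq X}\cong B(\Gamma[X\setminus Y])$, identify the maximal elements of $\bd B(H)$ with the vertices of $H$ and the subsets with a common lower bound with the cliques, and apply Rota's cross-cut theorem. The one place you diverge is the verification that subsets with a common lower bound have meets: you check this by hand in the heap/prefix model (tracking downsets and the agreement of orientations), whereas the paper deduces it in one line from the fact that left stabilizers in $B(\Gamma)$ are commutative, hence semilattices. Your verification is sound and makes the combinatorics explicit, but it buys nothing the stabilizer argument does not already give.
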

\begin{proof}
For (1), we observe that $\mathcal L_{\geq X}$ is the set of covectors of a contraction of $\mathcal L$ and so it suffices to prove that $\mathcal L$ is a CW poset for any oriented matroid.  From~\cite[Corollary~4.3.4]{OrientedMatroids1999}, we have that $\mathcal L\setminus \{0\}$ is the face poset of a regular CW decomposition of a sphere.  Thus $\mathcal L$ is the face poset of a regular CW decomposition of a ball by Proposition~\ref{p:maximumCWposet}.

Similarly to (1), each contraction of an affine oriented matroid is again an affine oriented matroid and so to prove (2), we just need that $\mathcal L^+(g)$ is the face poset of a regular CW decomposition of a ball.  But this is the content of~\cite[Corollary~4.5.8]{OrientedMatroids1999}.

To prove (3), again we can use contractions to reduce to proving that $\mathcal L$ is a contractible CW poset by Proposition~\ref{p:Com.minor}.  But this is proved in~\cite[Section~11]{COMS}.  To see that $\mathcal L$ is the face poset of a regular cell complex, one just uses that each $x\mathcal L=\mathcal L_{\leq x}$ with $x\in \mathcal L$ is isomorphic to the monoid of covectors of an oriented matroid by Proposition~\ref{p:Com.minor} and hence $\mathcal L_{<x}$ is the face poset of a regular CW decomposition of a sphere by~\cite[Corollary~4.3.4]{OrientedMatroids1999}.

Item (4) is the most complicated case because contractions of complex hyperplane face monoids need not again be complex hyperplane face monoids.  It is shown in the proof of~\cite[Proposition~4.21]{oldpaper} that if $X\in \Lambda(\FFF(\mathcal A))$, then $\FFF(\mathcal A)_{\geq X}^{op}\setminus \{0\}$ is the face poset of a PL (piecewise linear) regular CW decomposition of a sphere (note that in~\cite{oldpaper} we followed the convention that regular CW complexes have empty faces).  Thus $\FFF(\mathcal A)_{\geq X}\setminus \{0\}$ is the face poset of a regular CW decomposition of a sphere by~\cite[Proposition~4.7.26]{OrientedMatroids1999} and so $\FFF(\mathcal A)_{\geq X}$ is the face poset of a regular CW decomposition of a ball by Proposition~\ref{p:maximumCWposet}.

To prove (5), we use closure of oriented interval greedoids under contraction to reduce to proving that $\OIG$ is the face poset of a regular CW decomposition of a ball.  By Theorem~\ref{t:oig.sphere}, $\OIG^{op}\setminus \{0\}$ is the face poset of a PL regular CW decomposition of a sphere and hence $\OIG\setminus \{0\}$ is the face poset of a regular CW decomposition of a sphere by~\cite[Proposition~4.7.26]{OrientedMatroids1999}.  Therefore, $\OIG$ is  the face poset of a regular CW decomposition of a ball by Proposition~\ref{p:maximumCWposet}.

\nomenclature[L, 09]{$\Delta(B)$}{order complex of the poset structure on $B$}%
The final item is proved in the course of the proof of~\cite[Theorem~4.16]{oldpaper}.  As $e_YB(\Gamma_{\geq X})\cong B(\Gamma[X\setminus Y])$, the key point is to show that $\Delta(\bd B(\Gamma))$ is homotopy equivalent to $\Cliq(\Gamma)$.  We sketch the argument from~\cite[Theorem~4.16]{oldpaper}, where the reader can find the details.  We first observe that any set of elements of $B(\Gamma)$ with a common lower bound has a meet. This is because the set of upper bounds of an element $b\in B(\Gamma)$, which is the left stabilizer of $b$, is a commutative submonoid and hence a meet semilattice.  Indeed, if $b$ corresponds to an acyclically oriented subgraph $\Phi$ of the complement of $\Gamma$, then the left stabilizer of $b$ is generated by all vertices that can appear first in a topological sorting of the partial order induced by $\Phi$.  These vertices must then form a clique in $\Gamma$ and hence commute.   The maximal elements of $\bd B(\Gamma)$ are the elements of $V$.  A collection of elements of $V$ have a common lower bound if and only if they mutually commute, which occurs if and only if they form a clique in $\Gamma$.  Rota's cross-cut theorem (Theorem~\ref{t:crosscut}) then provides the desired homotopy equivalence.
\end{proof}

A central notion in this paper is that of a CW left regular band.  We say that a left regular band $B$ is a
\emph{CW left regular band}\index{CW left regular band} if $B_{\geq X}$ is a CW poset for each $X\in \Lambda(B)$.  We say that $B$ is a connected CW left regular band if each $B_{\geq X}$ is the face poset of a connected regular CW complex.  Important examples include face semigroups of hyperplane arrangements and the set of covectors of an (affine) oriented matroid, a COM or an oriented interval greedoid.  Face monoids of complex hyperplane arrangements form another example.  On the other hand, free left regular band monoids, and more generally, free partially commutative left regular bands are not CW left regular bands.

\nomenclature[L, 19]{$\mathfrak P$}{a class of posets closed under isomorphism; used in defining a $\mathfrak P$-left regular band}%
More generally, let $\mathfrak P$ be a class of posets closed under isomorphism, e.g., CW posets, connected posets or contractible posets.  We say that a left regular band $B$ is a \emph{$\mathfrak P$-left regular band}\index{$\mathfrak P$-left regular band} if $B_{\geq X}\in \mathfrak P$ for all $X\in \Lambda(B)$. For example, the set of covectors of an (affine) oriented matroid is a contractible CW left regular band. A \emph{spherical left regular band}\index{spherical left regular band} will then mean a left regular band  $B$ such that $B_{\geq X}$ is the face poset of a regular CW decomposition of a $\dim \Delta(B_{\geq X})$-sphere for all $X\in \Lambda(B)$.  For example, if $\mathcal A$ is a real or complex hyperplane arrangement, then $\FFF(\mathcal A)\setminus \{0\}$ is a spherical left regular band, as is $\mathcal L\setminus \{0\}$ for any oriented matroid $(E,\mathcal L)$.

The next two propositions give us easy ways to create new CW left regular bands from old ones.
\begin{Prop}
If $B,B'$ are (connected/contractible) CW left regular bands, then so is $B\times B'$.
\end{Prop}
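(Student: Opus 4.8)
The plan is to reduce everything to the corresponding statement for CW posets, Proposition~\ref{p:productofcw}, after identifying the contractions of $B\times B'$ explicitly. First I would record that $B\times B'$ is again a left regular band, since the defining identities $x^2=x$ and $xyx=xy$ hold coordinatewise. Next I would compute the support semilattice and support map of the product. The principal left ideal of $B\times B'$ generated by $(a,a')$ is $(B\times B')(a,a')=Ba\times B'a'$, so the support map is $\sigma(a,a')=(\sigma(a),\sigma(a'))$ and, because intersections of such ideals are taken coordinatewise, $\Lambda(B\times B')=\Lambda(B)\times\Lambda(B')$ with the product order. In particular every element of $\Lambda(B\times B')$ has the form $(X,X')$ with $X\in\Lambda(B)$ and $X'\in\Lambda(B')$.

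The crucial step is then to identify the contraction. For $(X,X')\in\Lambda(B\times B')$ the support condition $\sigma(a,a')\geq(X,X')$ splits into $\sigma(a)\geq X$ and $\sigma(a')\geq X'$, so
\[(B\times B')_{\geq(X,X')}=\{(a,a')\mid \sigma(a)\geq X,\ \sigma(a')\geq X'\}=B_{\geq X}\times B'_{\geq X'}.\]
Moreover the $\mathscr R$-order on $B\times B'$ restricts to the product partial order on the right-hand side, since $(a,a')\leq(b,b')$ holds if and only if $a\leq b$ and $a'\leq b'$. Because $B$ and $B'$ are CW left regular bands, $B_{\geq X}$ and $B'_{\geq X'}$ are CW posets, and Proposition~\ref{p:productofcw} then immediately yields that $(B\times B')_{\geq(X,X')}$ is a CW poset. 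As $(X,X')$ was arbitrary, $B\times B'$ is a CW left regular band.

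For the parenthetical connected and contractible refinements I would invoke the standard homeomorphism $\|\Delta(P\times Q)\|\cong\|\Delta(P)\|\times\|\Delta(Q)\|$, which follows from the facts that the nerve of a product of posets is the product of the nerves and that geometric realization commutes with finite products. Applying this with $P=B_{\geq X}$ and $Q=B'_{\geq X'}$, if both factors are connected (respectively contractible) then so is their topological product, hence so is the contraction $(B\times B')_{\geq(X,X')}$. The main obstacle is essentially bookkeeping: one must verify the contraction identity and confirm that the order on the product left regular band is exactly the product poset order required by Proposition~\ref{p:productofcw}; the topological input needed for the connected and contractible cases is classical and requires no new argument.
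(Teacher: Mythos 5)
Your proof is correct and follows essentially the same route as the paper's: identify $\Lambda(B\times B')\cong\Lambda(B)\times\Lambda(B')$ and $(B\times B')_{\geq(X,X')}=B_{\geq X}\times B'_{\geq X'}$, invoke Proposition~\ref{p:productofcw} for the CW poset claim, and use $\|\Delta(P\times Q)\|\cong\|\Delta(P)\|\times\|\Delta(Q)\|$ for the connected/contractible refinements. No gaps.
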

\begin{proof}
First note that $\Lambda(B\times B')\cong \Lambda(B)\times \Lambda(B')$ and that if $(X,Y)\in \Lambda(B\times B')$, then $(B\times B')_{\geq (X,Y)}=B_{\geq X}\times B_{\geq Y}$.  Thus it suffices to show that the poset $B\times B'$ is a (connected/contractible) CW poset.  It is a CW poset by Proposition~\ref{p:productofcw}.  As products of connected/contractible spaces are connected/contractible and $\|\Delta(B\times B')\|\cong \|\Delta(B)\|\times \|\Delta(B')\|$ (cf.~\cite[Equation~(9.6)]{bjornersurvey}), the result follows.
\end{proof}

\begin{Prop}\label{p:joinsuspendCWLRB}
If $B'$ is a spherical left regular band and $B$ is a connected CW left regular band (monoid), then $B'\ast B$ is a connected CW left regular band (monoid) with support lattice $\Lambda(B')\ast \Lambda(B)$.  In particular, $\mathsf S(B)$ is a connected CW left regular band (monoid) with support lattice $\Lambda(B)\cup \{-\infty\}$ where $-\infty$ is an external minimum.
\end{Prop}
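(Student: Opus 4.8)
The plan is to reduce everything to a computation of the contractions of $B'\ast B$ and a single application of Proposition~\ref{p:joinsofcw}. First I would collect the structural facts already in hand: $B'\ast B$ is a left regular band, it is a monoid exactly when its second factor $B$ is (with identity $1_B$), and by the discussion preceding the statement $\Lambda(B'\ast B)=\Lambda(B')\ast\Lambda(B)$ as a join of semilattices. Using Proposition~\ref{p:support.lat.sub} to identify the supports of elements of each factor, together with the fact (Proposition~\ref{p:supportiscellular}) that the support map is strictly order preserving, one sees that the support of an element of $B'$ lands in the $\Lambda(B')$-part and that of an element of $B$ in the $\Lambda(B)$-part, with every element of $\Lambda(B')$ strictly below every element of $\Lambda(B)$. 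This is precisely the poset join $\Lambda(B')\ast\Lambda(B)$, giving the asserted support lattice.

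The heart of the argument is to identify $(B'\ast B)_{\geq Z}$ for each $Z\in\Lambda(B'\ast B)=\Lambda(B')\cup\Lambda(B)$, in two cases. If $Z\in\Lambda(B)$, then no element of $B'$ has support $\geq Z$, since its support lies strictly below $Z$; hence $(B'\ast B)_{\geq Z}=B_{\geq Z}$, which is a connected CW poset because $B$ is a connected CW left regular band. If instead $Z\in\Lambda(B')$, then every element of $B$ has support $\geq Z$ (all of the $\Lambda(B)$-part sits above $Z$), while the elements of $B'$ with support $\geq Z$ are exactly those of $B'_{\geq Z}$; because the cross relations are inherited from the ambient join, this identifies $(B'\ast B)_{\geq Z}=B'_{\geq Z}\ast B$.

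It then remains to show $B'_{\geq Z}\ast B$ is a connected CW poset. Here I invoke the hypothesis that $B'$ is \emph{spherical}: the contraction $B'_{\geq Z}$ is the face poset of a regular CW decomposition of a $\dim\Delta(B'_{\geq Z})$-sphere, so in particular it is a CW poset with $\|\Delta(B'_{\geq Z})\|\cong S^{\dim(B'_{\geq Z})}$. Since $B=B_{\geq\wh 0}$, where $\wh 0$ is the minimum of the finite meet semilattice $\Lambda(B)$, is itself a CW poset, Proposition~\ref{p:joinsofcw}, applied with the spherical factor $B'_{\geq Z}$ in the bottom position, shows $B'_{\geq Z}\ast B$ is a CW poset. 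It is connected because $\|\Delta(B'_{\geq Z}\ast B)\|\cong\|\Delta(B'_{\geq Z})\|\ast\|\Delta(B)\|$ is a join of two nonempty spaces, hence path connected. Thus in both cases every contraction of $B'\ast B$ is a connected CW poset, so $B'\ast B$ is a connected CW left regular band, and it is a monoid precisely when $B$ is.

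The suspension statement is then the special case $B'=\partial L$: here $\partial L=\{+,-\}$ is a $0$-sphere and $\Lambda(\partial L)$ is a single point, so $\partial L$ is spherical, and $\Lambda(\partial L)\ast\Lambda(B)$ is $\Lambda(B)$ with an adjoined minimum $-\infty$. The only genuine difficulty is careful bookkeeping: correctly ordering the two halves of the support lattice, and verifying that the sphericity hypothesis required by Proposition~\ref{p:joinsofcw} is supplied by the bottom factor $B'_{\geq Z}$ (via the spherical hypothesis on $B'$) rather than by $B$. Once the two forms of the contractions are pinned down, the cited propositions complete the proof with no further work.
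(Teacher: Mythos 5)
Your proposal is correct and follows essentially the same route as the paper: identify $\Lambda(B'\ast B)$ with $\Lambda(B')\ast\Lambda(B)$, compute the contractions in the two cases $(B'\ast B)_{\geq X}=B_{\geq X}$ for $X\in\Lambda(B)$ and $(B'\ast B)_{\geq X}=B'_{\geq X}\ast B$ for $X\in\Lambda(B')$, and apply Proposition~\ref{p:joinsofcw} together with the connectedness of a join. The extra bookkeeping you carry out (where the sphericity hypothesis enters, and the identification of the support semilattice) is exactly what the paper's shorter proof treats as clear.
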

\begin{proof}
It is clear that $\Lambda(B'\ast B)\cong \Lambda(B')\ast \Lambda(B)$ and we identify these two semilattices.  If $X\in \Lambda(B)$, then $(B'\ast B)_{\geq X}=B_{\geq X}$ and hence is a connected CW poset.  If $X\in \Lambda(B')$, then $(B'\ast B)_{\geq X} = B'_{\geq X}\ast B$ and hence is a CW poset by Proposition~\ref{p:joinsofcw}.  Moreover, the join of a sphere with a connected space is always connected.  The monoid statements are clear.
\end{proof}

In particular, if $\mathcal A,\mathcal A'$ are real or complex hyperplane arrangements, then $(\FFF(\mathcal A)\setminus \{0\})\ast \FFF(\mathcal A')$ is a connected CW left regular band.
We now give another example of a family of connected CW left regular band monoids, which is essentially from~\cite{complexstrat}.

\begin{Example}[Ladders]\label{ladders}
\nomenclature[L, 14]{$L_n$}{ladders; the $n$-fold suspension of the left regular band $L$}%
Let $L_n=\mathsf S^{n}(\{0\})$ be the $n$-fold suspension of the trivial left regular band.  So $L_1=L$ and $L_2=\til L$.  It is easy to see that
$L_n\cong\{0,\pm 1,\ldots, \pm n\}$ with the product given by
\[xy = \begin{cases} y, &\text{if}\ |x|<|y|\\ x, & \text{if}\ |x|\geq |y|.\end{cases}\]   Observe that $\Lambda(L_n)\cong \{0,\ldots, n\}$ ordered by $\geq$ and the support map is given by $\sigma(x)=|x|$.  We call the $L_n$ \emph{ladders}\index{ladders} because the Hasse diagram of $L_n\setminus \{0\}$ is a ladder with $n$ rungs.  The partial order on $L_n$ is given by $x<y$ if and only if $|x|>|y|$. It follows from Proposition~\ref{p:joinsuspendCWLRB} that $L_n$ is a CW poset of dimension $n$ and hence $\|\Delta(L_n)\|$ is homeomorphic to a closed $n$-ball.  See Figure~\ref{f:ladder} for the Hasse diagram and the corresponding cell decomposition of the ball for $n=3$.
\begin{figure}[tbhp]
\begin{center}
\begin{tikzpicture}
\begin{scope}[xshift=-2cm,shorten >=1pt,%
auto,semithick,
inner sep=1pt,bend angle=45].
\tikzstyle{every node}=[font=\footnotesize]
\node (G) at (0,0) {$0$};
\node (E) [below left of=G] {$1$};
\node (F) [below right of=G] {$-1$};
\node (C) [below of=E] {$2$};
\node (D) [below of=F] {$-2$};
\node (A) [below of=C] {$3$};
\node (B) [below of=D] {$-3$};
\draw [red,  thick] (A) -- (C);
\draw [blue, thick] (A) -- (D);
\draw [red,  thick] (B) -- (C);
\draw [blue, thick] (B) -- (D);
\draw [red,  thick] (C) -- (E);
\draw [blue, thick] (C) -- (F);
\draw [red,  thick] (D) -- (E);
\draw [blue, thick] (D) -- (F);
\draw [red,  thick] (E) -- (G);
\draw [blue, thick] (F) -- (G);
\end{scope}
\begin{scope}[xshift=2cm,yshift=-1.5cm,inner sep=1pt,vertices/.style={draw, fill=black, circle, inner sep=1pt}]
\node[vertices] (A) at (-1,0) {};
\node[vertices] (B) at (1,0) {};
    \draw (-1,0) arc (180:360:1cm and 0.5cm);
    \draw[dashed] (-1,0) arc (180:0:1cm and 0.5cm);
    \draw (0,0) circle (1cm);
    \shade[ball color=blue!10!white,opacity=0.20] (0,0) circle (1cm);
\end{scope}
\end{tikzpicture}
\end{center}
\caption{The Hasse diagram of $L_3$ and corresponding cell decomposition of the $3$-ball\label{f:ladder}}
\end{figure}
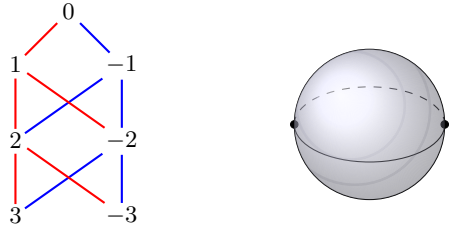
\end{Example}

The following lemma provides an alternative characterization of connectedness for left regular bands.

\begin{Lemma}\label{l:Franco.graph}
Let $B$ be a left regular band and $X\in \Lambda(B)$.  Define a graph $\Gamma(X)$ as follows.  The vertex set of $\Gamma(X)$ is  $L_X=\{b\in B\mid Bb=X\}$.  Two vertices $x,y\in L_X$ are adjacent if they have a common upper bound, that is, $bx=x$ and $by=y$ for some $b\in B$ (necessarily belonging to $B_{\geq X}$).  Then $\Delta(B_{\geq X})$ is connected if and only if $\Gamma(X)$ is connected.
\end{Lemma}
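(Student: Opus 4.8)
The plan is to compare the connectivity of the order complex $\Delta(B_{\geq X})$ with that of $\Gamma(X)$ by passing through the comparability graph of the poset $B_{\geq X}$. For any finite poset $P$, the space $\|\Delta(P)\|$ is connected if and only if the $1$-skeleton of $\Delta(P)$ is connected, and this $1$-skeleton is exactly the comparability graph $G$ of $P$ (vertices the elements of $P$, edges the comparable pairs). So it suffices to prove that $G$, for $P=B_{\geq X}$, is connected if and only if $\Gamma(X)$ is connected.

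First I would record that $L_X$ is the set of minimal elements of $B_{\geq X}$ and, more usefully, that every element of $B_{\geq X}$ dominates one of them. Indeed, $X=Bc\in\Lambda(B)$ forces $c\in L_X$, so $L_X\neq\varnothing$; and given $p\in B_{\geq X}$ and any fixed $a\in L_X$, the element $pa$ satisfies $pa\leq p$ and $\sigma(pa)=\sigma(p)\wedge X=X$ since $\sigma(p)\geq X$ and $\sigma$ is a homomorphism to $\Lambda(B)$ (this is just the cellularity of $\sigma$ from Proposition~\ref{p:supportiscellular}). Hence $pa\in L_X$ and $p\geq pa$, so in $G$ every vertex is comparable to, and therefore in the same component as, some vertex of $L_X$. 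Consequently $G$ is connected if and only if all of $L_X$ lies in a single component of $G$, and the problem reduces to matching the components of $G$ meeting $L_X$ with the components of $\Gamma(X)$.

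For this remaining equivalence I would argue the two inclusions directly. If $x,y\in L_X$ are adjacent in $\Gamma(X)$, a common upper bound $b$ yields the $G$-path $x\leq b\geq y$, so $\Gamma(X)$-adjacency implies lying in the same $G$-component; thus the $\Gamma(X)$-component of a vertex is contained in its $G$-component. Conversely, given a $G$-walk $x=p_0,p_1,\dots,p_n=y$ between elements of $L_X$, I would choose for each $p_i$ a minimal $m_i\in L_X$ with $m_i\leq p_i$ (taking $m_0=x$ and $m_n=y$). For each consecutive comparable pair, say $p_i\leq p_{i+1}$ (the reverse case is symmetric), the larger element $p_{i+1}$ dominates both $m_i$ and $m_{i+1}$, so it is a common upper bound and $m_i,m_{i+1}$ are adjacent in $\Gamma(X)$ (or equal). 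Hence $m_0,\dots,m_n$ is a walk in $\Gamma(X)$ from $x$ to $y$. Combining the two directions shows that the partition of $L_X$ into $G$-components coincides with its partition into $\Gamma(X)$-components, which gives the theorem.

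I do not expect a serious obstacle: the argument is elementary once the two structural facts are in hand, namely that connectivity of an order complex is detected by its comparability graph, and that every element of $B_{\geq X}$ lies above a member of $L_X$. The one point needing care is the bookkeeping in the converse, verifying that a single comparability edge always furnishes a common upper bound for the chosen minimal elements below its endpoints; this is precisely where the domination fact is used, and it is the step I would write out most explicitly.
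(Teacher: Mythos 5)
Your proof is correct and follows essentially the same route as the paper: both directions rest on the two facts that every element of $B_{\geq X}$ lies above an element of $L_X$ (via right multiplication by a fixed element of $L_X$) and that a comparability edge supplies a common upper bound. The only difference is cosmetic: in the converse the paper first normalizes a shortest edge path to a zigzag and then multiplies the valley elements by $y$, whereas you assign a minimal element below each vertex of the walk directly, which is a slightly cleaner way to do the same bookkeeping.
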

\begin{proof}
Suppose first that $\Gamma(X)$ is connected.  Fix $e_X\in L_X$.  First note that If $a\in B_{\geq X}\setminus L_X$, then $\{a,ae_X\}$ is an edge from $a$ to $ae_X$ in $\Delta(B_{\geq X})$.  It thus suffices to show that any two elements of $L_X$ are connected in $\Delta(B_{\geq X})$.   If $x,y\in L_X$ are adjacent and $bx=x$, $by=y$ with $b\in B$, then $\{x,b\}$, $\{b,y\}$ is an edge path from $x$ to $y$ in $\Delta(B_{\geq X})$.  We now deduce the connectivity of $\Delta(B_{\geq X})$ from the connectivity of $\Gamma(X)$.

Assume that $\Delta(B_{\geq X})$ is connected and that $x,y\in L_X$. Then there is an edge path from $x$ to $y$ in $\Delta(B_{\geq X})$.  Consider a shortest edge path.  If the path contains consecutive edges $\{a,b\}$ and $\{b,c\}$ where $a<b<c$ or $a>b>c$, then these two edges can be replaced by the single edge $\{a,c\}$, resulting in a shorter path.  As $x,y$ are minimal elements of $B_{\geq X}$, it follows that any minimal length edge path from $x$ to $y$ in $\Delta(B_{\geq X})$ is of the form
\[\{x_0,x_1\}\{x_1,x_2\}\cdots \{x_{n-2},x_{n-1}\}\{x_{n-1},x_n\}\] with $x_0=x$, $x_n=y$ and
\[x_0<x_1>x_2<x_3>\cdots >x_{n-2}<x_{n-1}>x_n.\]  As $a\geq ay\in L_X$ for all $a\in B_{\geq X}$, we deduce that
\[x_0<x_1>x_2y<x_3>\cdots >x_{n-2}y<x_{n-1}>x_n\] and so $x\sim x_2y\sim x_4y\sim\cdots \sim x_{n-2}y\sim y$ where $\sim$ denotes the adjacency relation of $\Gamma(X)$.  This establishes that $\Gamma(X)$ is connected.
\end{proof}

The graph constructed in Lemma~\ref{l:Franco.graph} is implicit in~\cite{Saliola} and is closely related to tope graphs of oriented matroids~\cite{OrientedMatroids1999} and COMs~\cite{COMS}.  We use the lemma to prove that strong elimination systems give rise to connected left regular bands.

\begin{Prop}\label{p:se.conn}
Let $(E,\mathcal L)$ be a strong elimination system.  Then $\mathcal L$ is a connected left regular band.
\end{Prop}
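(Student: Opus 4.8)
The plan is to reduce the statement, via Lemma~\ref{l:Franco.graph}, to a connectivity statement about a single graph, and then to prove that graph is connected by an induction on separation sets powered by the strong elimination axiom.

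First I would record that $\mathcal{L}$ is genuinely a left regular band: by (OM2) it is a subsemigroup of $L^E$, and $L^E$ is a left regular band, so the same identities hold in $\mathcal{L}$. Recalling that a left regular band is \emph{connected} precisely when each of its contractions is connected, it suffices to fix $X\in\Lambda(\mathcal{L})$ and show that $\Delta(\mathcal{L}_{\geq X})$ is connected. By Lemma~\ref{l:Franco.graph} this is equivalent to connectivity of the graph $\Gamma(X)$ whose vertices are $L_X=\{b\in\mathcal{L}\mid \sigma(b)=X\}$ and whose edges join pairs with a common upper bound. Since for covectors $\sigma(x)=\sigma(y)$ iff $xy=x$ and $yx=y$ iff $Z(x)=Z(y)$, the set $L_X$ is exactly the covectors with a fixed zero set $Z_0$; off $Z_0$ every such covector takes values in $\{+,-\}$, and $S(x,y)$ records precisely where two of them disagree.

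The heart of the argument is to prove $\Gamma(X)$ connected by induction on $|S(x,y)|$ for $x,y\in L_X$. When $|S(x,y)|=0$ the two covectors share the zero set $Z_0$ and agree off it, so $x=y$. For the inductive step I would choose $e\in S(x,y)$ and apply (OM3) to produce $z\in\mathcal{L}$ with $z_e=0$ and $z_f=(xy)_f=(yx)_f$ for all $f\notin S(x,y)$; in particular $z$ vanishes on $Z_0$, so $z\in\mathcal{L}_{\geq X}$. I then form $zx$ and $zy$. A coordinatewise check shows $Z(zx)=Z(zy)=Z_0$, so $zx,zy\in L_X$, and since $z\cdot zx=zx$ and $z\cdot zy=zy$, the element $z$ is a common upper bound, giving an edge $zx\sim zy$ in $\Gamma(X)$. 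Crucially, $zx$ agrees with $x$ at $e$ and off $S(x,y)$, so $S(x,zx)\subseteq S(x,y)\setminus\{e\}$; symmetrically $S(y,zy)\subseteq S(x,y)\setminus\{e\}$. Both separation sets are strictly smaller, so the induction hypothesis connects $x$ to $zx$ and $y$ to $zy$, and concatenating these paths with the edge $zx\sim zy$ connects $x$ to $y$.

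The one genuine obstacle is that (OM3) gives no control over $z$ on $S(x,y)\setminus\{e\}$, so $z$ itself need not be an upper bound of $x$ or of $y$ and need not lie in $L_X$. The device that overcomes this is right multiplication by $x$ (respectively $y$): it simultaneously refills the coordinates so as to return to the zero set $Z_0$ (landing back in $L_X$) and ensures that the uncontrolled coordinates of $z$ can only either agree with or be overwritten by $x$ (respectively $y$), so the separation set can only shrink and, by the choice $z_e=0$, necessarily loses $e$. Once $\Gamma(X)$ is shown connected for every $X\in\Lambda(\mathcal{L})$, Lemma~\ref{l:Franco.graph} yields connectivity of every contraction $\mathcal{L}_{\geq X}$, which is exactly the assertion that $\mathcal{L}$ is a connected left regular band.
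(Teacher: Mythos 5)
Your proof is correct and follows essentially the same route as the paper: reduce to connectivity of the graph $\Gamma(X)$ via Lemma~\ref{l:Franco.graph}, then induct on $|S(x,y)|$, using (OM3) to produce $z$ and passing to $zx$ and $zy$, which lie in $L_X$, have $z$ as a common upper bound, and have strictly smaller separation sets from $x$ and $y$ respectively. Your closing remarks on why right multiplication by $x$ and $y$ is the key device accurately capture the point of the argument.
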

\begin{proof}
Let $X\in \Lambda(\mathcal L)$.  We prove that the graph $\Gamma(X)$ is connected.  Let $x,y\in L_X$.  We prove that $x,y$ are in the same component of $\Gamma(X)$ by induction on $|S(x,y)|$.  As $Z(x)=Z(y)$, if $S(x,y)=\emptyset$, then $x=y$ and there is nothing to prove.  So assume that if $v,w\in L_X$ with $|S(v,w)|<|S(x,y)|$, then $v,w$ are in the same component of $\Gamma(X)$.  Let $e\in S(x,y)$.  By (OM3), we can find $z\in \mathcal L$ such that $z_e=0$ and $z_f=(xy)_f=(yx)_f$ for $f\in E\setminus S(x,y)$.  Notice that since $Z(x)=Z(y)$, this implies that $z\in \mathcal L_{\geq X}$.  Then we have that $zx,zy\in L_X$, $(zx)_f=x_f=y_f=(zy)_f$ for $f\in E\setminus S(x,y)$ and $(zx)_e=x_e$, $(zy)_e=y_e$.  Therefore, $|S(x,zx)|<|S(x,y)|$ and $|S(y,zy)|<|S(x,y)|$, whence we may assume by induction that $x,zx$ are in the same component of $\Gamma(X)$ and that $y,zy$ are in the same component of $\Gamma(X)$.  But $z$ is a common upper bound of $zx,zy$ and so $zx$ and $zy$ are adjacent in $\Gamma(X)$.  We conclude that $x,y$ are in the same component of $\Gamma(X)$, completing the proof.
\end{proof}

\subsection{CAT(0) cube complexes}
Our next goal is to show that the face poset of every finite CAT(0) cube complex has the structure of a connected CW left regular band. We refer the reader to~\cite{wisebook} for a survey of important developments in the theory of groups acting on CAT(0) cube complexes.  In particular, they have played an important rule in Agol's solution to Thurston's virtual fibering and virtual Haken conjectures~\cite{agol}, using heavily earlier work of Wise~\cite{wisebook}.
We follow here Abramenko and Brown~\cite[Appendix~A.2]{Brown:book2} for cube complexes and the book of Bridson and Haefliger~\cite{Bridson} for CAT(0) metric spaces.  The unpublished monograph of Roller~\cite{roller} is a good reference for combinatorially minded people on CAT(0) cube complexes. See also~\cite{chepoibandelt} for a more graph theoretic point of view.  Independently, it is shown in~\cite{COMS} that CAT(0) cube complexes are lopsided systems (and hence left regular bands) and that certain, more general, CAT(0) zonotopal complexes are COMs.  We have decided to leave here our original presentation, which was written and spoken about at various conferences in 2014--2015, and we shall discuss these further developments in the next subsection.

A (finite) \emph{cube complex}\index{cube complex} is a regular CW complex $K$ such that:
\begin{enumerate}
\item for each open $n$-cell $e$ of $K$, the subcomplex carried by $\ov e$ is isomorphic to the standard $n$-cube (with its usual cell structure with face poset $L^n$);
\item the intersection of two closed cells, if non-empty, is again a closed cell.
\end{enumerate}

Notice the second condition implies that if $C$ is a cube all of whose vertices belong to a cube $C'$, then $C\subseteq C'$.

For example, if $K$ is a simplicial complex, put $\wh {\mathcal P(K)}=\mathcal P(K)\cup \{\emptyset\}$.  Let \[\mathcal C(K)=\|\Delta(\wh{\mathcal P(K)})\|\cong \|K\ast v\|\] where $v\notin K$.  Each closed interval $[a,b]$ of $\wh{\mathcal P}(K)$ is a boolean lattice and hence $\|\Delta([a,b])\|$ is homeomorphic to a cube (with the Hasse diagram as the $1$-skeleton).  We take as our set of cubes for $C(K)$ the cubes $\|\Delta([a,b])\|$ to give $C(K)$ the structure of a cube complex.

If $K$ is a cube complex and $v$ is a vertex of $K$, then the \emph{link}\index{link} $\link_K(v)$ of $v$ is the simplicial complex with vertices all edges of $K$ incident on $v$ and where a set of edges incident on $v$ forms a simplex if the edges in question belong to a common cube.  A \emph{CAT(0) cube complex}\index{CAT(0)!cube complex}\index{cube complex!CAT(0)} is a simply connected cube complex such that the link of each vertex is a flag complex.  By a theorem of Gromov~\cite[Theorem~II.5.20]{Bridson} this is equivalent to $K$ carrying a CAT(0) metric in which each $n$-cube $C$ has a characteristic map which is simultaneously an isomorphism of CW complexes and an isometry from the $n$-cube $I^n=[0,1]^n$ equipped with its Euclidean metric to $C$ (and so in particular each edge length has  $1$).  Let us briefly recall the definition and basic properties of a CAT(0) metric space.  Details can be found~\cite[Section~II.1]{Bridson}.

If $x,y$ are points of a metric space $(X,d)$, then a \emph{geodesic}\index{geodesic} from $x$ to $y$ is a continuous map $\gamma\colon [0,\ell]\to X$ with $\gamma(0)=x$, $\gamma(\ell)=y$ and \[d(\gamma(t_1),\gamma(t_2))=|t_1-t_2|\] for all $t_1,t_2\in [0,\ell]$ (and so, in particular, $\ell=d(x,y)$).  A \emph{geodesic metric space}\index{geodesic!metric space}\index{metric space!geodesic} is a metric space $(X,d)$ in which any two points are connected by a geodesic.

A \emph{geodesic triangle}\index{geodesic!triangle} $\Delta$ in $(X,d)$ consists of three points $x,y,z\in X$ and geodesics $\gamma_{xy}$, $\gamma_{xz}$ and $\gamma_{yz}$, where $\gamma_{ab}$ is a geodesic from $a$ to $b$.  A \emph{comparison triangle}\index{comparison triangle} for $x,y,z$ is a triangle $\Delta'$ in the Euclidean plane $(\mathbb R^2,\|\cdot \|_2)$ with vertices $x',y',z'$ with $d(a',b')=d(a,b)$ for $a,b\in \{x,y,z\}$ (such a triangle always exists and is unique up to isometry of labelled triangles). One says that a geodesic metric space $X$ is a \emph{CAT(0) metric space}\index{CAT(0)!metric space}\index{metric space!CAT(0)} if $\Delta$ is thinner than $\Delta'$ for all geodesic triangles $\Delta$ in $X$.  Formally speaking, this means that if $a,b,c\in \{x,y,z\}$, if $p=\gamma_{ab}(t)$, $q=\gamma_{ac}(t')$ and if $p',q'$ are the points of the edges $[a',b']$ and $[a',c']$, respectively, with $d(a,p)=\|a'-p'\|_2$ and $d(a,q)=\|a'-q'\|_2$, then $d(p,q)\leq \|p'-q'\|_2$; see Figure~\ref{f:CAT0ineq}.
\begin{figure}[tbhp]
\begin{center}
\begin{tikzpicture}[vertices/.style={draw, fill=black, circle, inner sep=1pt}, bend angle =30]
\begin{scope}[xshift=-3cm]
\node[vertices,label=left:{$x$}]        (A) at (0,0)						{};
\node[vertices,label=above:{$y$}]  	    (B) at (2,2)         		        {};
\node[vertices,label=below:{$z$}]       (C) at (3,-1) 	                    {};
\node[vertices,inner sep=.5pt,label=right:{$p$}]       (D) at (1,.44)                        {};
\node[vertices,inner sep=.5pt,label=right:{$q$}]       (E) at (1.5,0)                        {};
\path (A) edge[bend right] node [above left] {$\gamma_{xy}$}   (B)
      (A) edge[bend left]  node [below left] {$\gamma_{xz}$}   (C)
      (B) edge[bend right]  node [right] {$\gamma_{yz}$}   (C)
      (D) edge[dashed]                                     (E);
\end{scope}
\begin{scope}[xshift=3cm]
\node[vertices,label=left:{$x'$}]        (A) at (0,0)						{};
\node[vertices,label=above:{$y'$}]  	    (B) at (2,2)         		        {};
\node[vertices,label=below:{$z'$}]       (C) at (3,-1) 	                    {};
\node[vertices,inner sep=.5pt,label=right:{$p'$}]       (D) at (1,1)                        {};
\node[vertices,inner sep=.5pt,label=right:{$q'$}]       (E) at (1.5,-.5)    {};
\path (A) edge    (B)
      (A) edge    (C)
      (B) edge    (C)
      (D) edge[dashed]                                     (E);
\end{scope}
\end{tikzpicture}
\end{center}
\caption{Comparison triangle for a CAT(0) space\label{f:CAT0ineq}}
\end{figure}
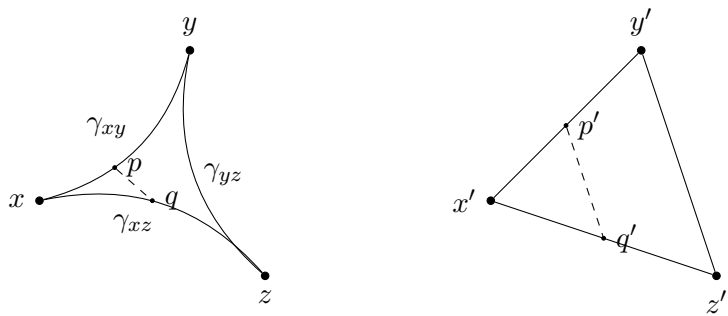
CAT(0) metric spaces were popularized by Gromov and  have played an important role in geometric group theory over the past 20 years.

The following theorem encompasses part of~\cite[Proposition~II.1.4]{Bridson} and~\cite[Corollary~II.1.5]{Bridson}.

\begin{Thm}
Let $(X,d)$ be a CAT(0) metric space.
\begin{enumerate}
\item $X$ is contractible.
\item For any points $x,y\in X$, there is a unique geodesic from $x$ to $y$.
\end{enumerate}
\end{Thm}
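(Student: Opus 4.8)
The plan is to establish part (2) first, since the unique geodesics it provides are the raw material for the contraction in part (1). For uniqueness, suppose $\gamma_{xy}$ and $\gamma_{xy}'$ are two geodesics from $x$ to $y$, both parametrized on $[0,\ell]$ with $\ell=d(x,y)$. I would apply the CAT(0) inequality to the degenerate geodesic triangle $\Delta$ with vertices $x,y,y$ whose two sides from $x$ to $y$ are $\gamma_{xy}$ and $\gamma_{xy}'$ and whose third side is the constant path at $y$. Its comparison triangle $\Delta'$ satisfies $\|y'-z'\|_2=d(y,y)=0$, so it degenerates to the segment $[x',y']$. For each $t$, the comparison points of $\gamma_{xy}(t)$ and $\gamma_{xy}'(t)$ both lie on $[x',y']$ at distance $t$ from $x'$, hence coincide, and thinness of $\Delta$ forces $d(\gamma_{xy}(t),\gamma_{xy}'(t))\le 0$; thus $\gamma_{xy}=\gamma_{xy}'$. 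Existence of geodesics is built into the hypothesis, since a CAT(0) space is geodesic by definition.

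The technical heart of part (1) is the convexity of the metric: for any two geodesics $\gamma,\gamma'\colon[0,1]\to X$ (reparametrized affinely), the function $t\mapsto d(\gamma(t),\gamma'(t))$ is convex, and in fact
\[
  d(\gamma(t),\gamma'(t))\le (1-t)\,d(\gamma(0),\gamma'(0))+t\,d(\gamma(1),\gamma'(1)).
\]
I would prove this in two stages. First, when $\gamma$ and $\gamma'$ share their initial point $p$, I apply the CAT(0) inequality to the triangle with vertices $p,\gamma(1),\gamma'(1)$: in the Euclidean comparison triangle the comparison points of $\gamma(t)$ and $\gamma'(t)$ are $\bar p+t(\overline{\gamma(1)}-\bar p)$ and $\bar p+t(\overline{\gamma'(1)}-\bar p)$, whose Euclidean distance is exactly $t\,d(\gamma(1),\gamma'(1))$; thinness then gives $d(\gamma(t),\gamma'(t))\le t\,d(\gamma(1),\gamma'(1))$. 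For the general case I interpolate through the geodesic $\sigma$ from $\gamma(0)$ to $\gamma'(1)$: applying the shared-endpoint estimate to the pair $(\gamma,\sigma)$ (common start $\gamma(0)$) and to the pair $(\sigma,\gamma')$ (common end $\gamma'(1)$), and combining with the triangle inequality $d(\gamma(t),\gamma'(t))\le d(\gamma(t),\sigma(t))+d(\sigma(t),\gamma'(t))$, yields the displayed affine bound.

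With these tools in hand, part (1) follows by exhibiting an explicit contraction. Fix a basepoint $x_0\in X$ and, for each $x\in X$, let $\gamma_x\colon[0,1]\to X$ be the unique geodesic from $x_0$ to $x$ guaranteed by part (2), reparametrized so that $\gamma_x(0)=x_0$ and $\gamma_x(1)=x$. Define $H\colon X\times[0,1]\to X$ by $H(x,t)=\gamma_x(t)$; then $H(\,\cdot\,,0)\equiv x_0$ and $H(\,\cdot\,,1)=\mathrm{id}_X$, so it remains only to check that $H$ is continuous. Here the convexity bound does the work: since $\gamma_x$ and $\gamma_y$ share the endpoint $x_0$, the shared-endpoint estimate gives $d(H(x,t),H(y,t))\le t\,d(x,y)$, while $d(H(x,t),H(x,s))=|t-s|\,d(x_0,x)$ because $\gamma_x$ is a geodesic; a triangle-inequality combination of these two estimates yields joint continuity of $H$ on $X\times[0,1]$. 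Hence $X$ is contractible.

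The main obstacle is the convexity lemma of the second paragraph; everything else is a formal consequence once it is available. The delicate point is the passage from the shared-endpoint case to the general case, where one must choose the intermediate geodesic $\sigma$ and verify that the two comparison estimates recombine with the correct affine weights $(1-t)$ and $t$. Some care is also needed in the contraction step to confirm that the two one-variable Lipschitz estimates for $H$ genuinely assemble into joint continuity, but this is routine.
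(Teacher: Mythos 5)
Your proof is correct. The paper does not prove this theorem at all---it is quoted directly from Bridson--Haefliger (Proposition~II.1.4 and Corollary~II.1.5 of \cite{Bridson})---and your argument (the degenerate comparison triangle with vertices $x,y,y$ for uniqueness, the convexity of the metric obtained by interpolating through the geodesic from $\gamma(0)$ to $\gamma'(1)$, and the geodesic contraction $H(x,t)=\gamma_x(t)$ made continuous by the estimate $d(H(x,t),H(y,t))\le t\,d(x,y)$) is exactly the standard proof given in that reference.
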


A non-empty subspace $C$ of a geodesic metric space $(X,d)$ is \emph{convex}\index{convex} if it contains all geodesics between points of $C$. In particular, if $(X,d)$ is CAT(0), then $C$ is convex if and only if it contains the unique geodesic between any two of its points.  A convex subspace of a CAT(0) metric space is CAT(0) in the induced metric, cf.~\cite[Example~II.1.15]{Bridson}.  Intersections of convex subspaces, if non-empty, are again convex.

\begin{Example}[CAT(0) cone]
Let $K$ be a simplicial complex. The reader should check that the cube complex $\mathcal C(K)$ constructed above is CAT(0)  if and only if $K$ is a flag complex.  In this case, $\mathcal C(K)$ is called the \emph{CAT(0) cone}\index{CAT(0)!cone}\index{cone!CAT(0)} on $K$.
\end{Example}

A graph is a CAT(0) cube complex (consisting of $0$-cubes and $1$-cubes) if and only if it is a tree. A simply connected square complex (pure $2$-dimensional cube complex)  is CAT(0) if and only if each internal vertex is surrounded by at least $4$ squares because a graph is a flag complex if and only if it is triangle-free and the link of a vertex in a square complex is a graph. See Figure~\ref{f:cat0squarecomplex} for a CAT(0) square complex whose unique internal vertex has a link which is a $5$-cycle.
\begin{figure}[htbp]
\begin{center}
\begin{tikzpicture}[vertices/.style={draw, fill=black, circle, inner sep=1pt}]
\draw[fill=gray!20] (-1,1)--(0,1)--(0,0)--(-1,0)--(-1,1)--cycle;
\draw[fill=gray!20] (0,1)--(1,1)--(1,0)--(0,0)--(0,1)--cycle;
\draw[fill=gray!20] (-1,0)--(0,0)--(-.5,-1)--(-1.5,-1)--(-1,0)--cycle;
\draw[fill=gray!20] (0,0)--(1,0)--(1.5,-1)--(.5,-1)--(0,0)--cycle;
\draw[fill=gray!20] (0,0)--(-.5,-1)--(0,-2)--(.5,-1)--(0,0)--cycle;
\node[vertices] at (-1,1)   (A){};
\node[vertices] at (0,1)    (B){};
\node[vertices] at (1,1)    (C){};
\node[vertices] at (-1,0)   (D){};
\node[vertices] at (0,0)    (E){};
\node[vertices] at (1,0)    (F){};
\node[vertices] at (-1.5,-1)   (G){};
\node[vertices] at (-.5,-1)    (H){};
\node[vertices] at (0,-2)    (I){};
\node[vertices] at (.5,-1)    (J){};
\node[vertices] at (1.5,-1)    (K){};
\draw (A)--(B)--(E)--(D)--(A)--cycle;
\draw (B)--(C)--(F)--(E)--(B)--cycle;
\draw (D)--(E)--(H)--(G)--(D)--cycle;
\draw (E)--(F)--(K)--(J)--(E)--cycle;
\draw (E)--(H)--(I)--(J)--(E)--cycle;
\draw [dashed,semithick,color=red] (0,.5)--(-.5,0)--(-0.25,-.5)--(.25,-.5)--(.5,0)--(0,.5)--cycle;
\end{tikzpicture}
\end{center}
\caption{A CAT(0) square complex with a $5$-cycle link\label{f:cat0squarecomplex}}
\end{figure}
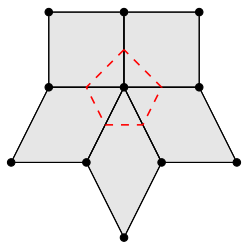

\begin{Example}[Phylogenetic Tree Space]
In~\cite{treespace}, Billera, Holmes and Vogtman introduced a CAT(0) cube complex structure on the space of phylogenetic trees called \emph{phylogenetic tree space}\index{phylogenetic tree space}.  Speyer and Sturmfels~\cite{tropgrass} later showed that phylogenetic tree space is the tropical Grassmannian.  This seems to be the most commonly studied CAT(0) cube complex outside of geometric group theory.  Strictly speaking, phylogenetic tree space is not a finite cube complex, and so we shall work with the truncation of this complex in which the edge lengths of interior edges of phylogenetic trees are at most $1$.

Following the formalism of~\cite{treespace}, the \emph{phylogenetic tree space}\index{phylogenetic tree space} $\mathcal T_n$ will be defined as follows.  We consider labelled $n$-trees with $n\geq 2$.  These are trees with $n+1$ leaves labelled by $\{0,\ldots, n\}$ and each non-leaf has valence at least $3$.  We think of the leaf labelled by $0$ as the root.  See Figure~\ref{f:3trees} for the case $n=3$.
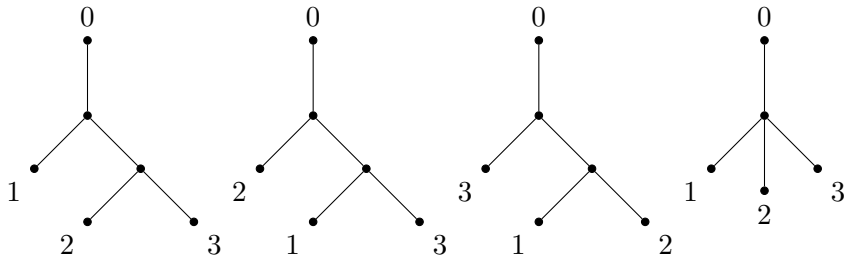
\begin{figure}[htbp]
\begin{center}
\begin{tikzpicture}[vertices/.style={draw, fill=black, circle, inner sep=1pt}]
\begin{scope}[xshift=-4.5cm]
\node[vertices,label=above:{$0$}]                   (A) {};
\node[vertices] [below of=A]                        (B) {};
\node[vertices,label=below left:{$1$}] [below left of=B] (C) {};
\node[vertices]                        [below right of=B] (D) {};
\node[vertices,label=below left:{$2$}] [below left of=D] (E) {};
\node[vertices,label=below right:{$3$}][below right of=D] (F) {};
\draw (A)--(B);
\draw (B)--(C);
\draw (B)--(D);
\draw (D)--(E);
\draw (D)--(F);
\end{scope}
\begin{scope}[xshift=-1.5cm]
\node[vertices,label=above:{$0$}]                   (A) {};
\node[vertices] [below of=A]                        (B) {};
\node[vertices,label=below left:{$2$}] [below left of=B] (C) {};
\node[vertices]                        [below right of=B] (D) {};
\node[vertices,label=below left:{$1$}] [below left of=D] (E) {};
\node[vertices,label=below right:{$3$}][below right of=D] (F) {};
\draw (A)--(B);
\draw (B)--(C);
\draw (B)--(D);
\draw (D)--(E);
\draw (D)--(F);
\end{scope}
\begin{scope}[xshift=1.5cm]
\node[vertices,label=above:{$0$}]                   (A) {};
\node[vertices] [below of=A]                        (B) {};
\node[vertices,label=below left:{$3$}] [below left of=B] (C) {};
\node[vertices]                        [below right of=B] (D) {};
\node[vertices,label=below left:{$1$}] [below left of=D] (E) {};
\node[vertices,label=below right:{$2$}][below right of=D] (F) {};
\draw (A)--(B);
\draw (B)--(C);
\draw (B)--(D);
\draw (D)--(E);
\draw (D)--(F);
\end{scope}
\begin{scope}[xshift=4.5cm]
\node[vertices,label=above:{$0$}]                   (A) {};
\node[vertices] [below of=A]                        (B) {};
\node[vertices,label=below left:{$1$}] [below left of=B] (C) {};
\node[vertices,label=below:{$2$}]      [below of=B] (D) {};
\node[vertices,label=below right:{$3$}] [below right of=B] (E) {};
\draw (A)--(B);
\draw (B)--(C);
\draw (B)--(D);
\draw (B)--(E);
\end{scope}
\end{tikzpicture}
\end{center}
\caption{The four labelled $3$-trees\label{f:3trees}}
\end{figure}

By an \emph{interior edge}\index{interior edge} of a labelled $n$-tree, we mean an edge not incident on a leaf.   A \emph{metric}\index{metric} labelled $n$-tree is one whose interior edges are assigned positive lengths in the interval $(0,1]$.  (Note that in~\cite{treespace} arbitrary positive lengths are allowed.) See Figure~\ref{f:metrictree}.
\begin{figure}[htbp]
\begin{center}
\begin{tikzpicture}[vertices/.style={draw, fill=black, circle, inner sep=1pt}]
\node[vertices,label=above:{$0$}]                   (A) {};
\node[vertices] [below of=A]                        (B) {};
\node[vertices,label=below left:{$1$}] [below left of=B] (C) {};
\node[vertices]                        [below right of=B] (D) {};
\node[vertices,label=below left:{$2$}] [below left of=D] (E) {};
\node[vertices,label=below right:{$3$}][below right of=D] (F) {};
\path (A) edge (B)
      (B) edge (C)
      (B) edge node [above right] {$\frac{2}{3}$} (D);
\draw (D)--(E);
\draw (D)--(F);
\end{tikzpicture}
\end{center}
\caption{A metric tree\label{f:metrictree}}
\end{figure}
There is an obvious notion of isomorphism of $n$-labelled metric trees as graph isomorphisms preserving the labellings and the edge lengths.  We shall identify isomorphic metric trees.

It will be convenient to allow $n$-labelled trees to have interior edges of length $0$ and to identify such a tree as being isomorphic to the metric tree obtained by contracting to a point each edge of length $0$.   In this way, each metric tree can be represented by a trivalent tree (all inner nodes have degree $3$) with $n-2$ interior edges with lengths in the interval $[0,1]$. See Figure~\ref{f:degeneratetree}.
\begin{figure}[htbp]
\begin{center}
\begin{tikzpicture}[vertices/.style={draw, fill=black, circle, inner sep=1pt}]
\begin{scope}[xshift=-3cm]
\node[vertices,label=above:{$0$}]                   (A) {};
\node[vertices] [below of=A]                        (B) {};
\node[vertices,label=below left:{$1$}] [below left of=B] (C) {};
\node[vertices]                        [below right of=B] (D) {};
\node[vertices,label=below left:{$2$}] [below left of=D] (E) {};
\node[vertices,label=below right:{$3$}][below right of=D] (F) {};
\node                                  [right=3cm of B] {$\cong$};
\draw (A)--(B);
\draw (B)--(C);
\path (B) edge node [above right] {$0$} (D);
\draw (D)--(E);
\draw (D)--(F);
\end{scope}
\begin{scope}[xshift=3cm]
\node[vertices,label=above:{$0$}]                   (A) {};
\node[vertices] [below of=A]                        (B) {};
\node[vertices,label=below left:{$1$}] [below left of=B] (C) {};
\node[vertices,label=below:{$2$}]      [below of=B] (D) {};
\node[vertices,label=below right:{$3$}] [below right of=B] (E) {};
\draw (A)--(B);
\draw (B)--(C);
\draw (B)--(D);
\draw (B)--(E);
\end{scope}
\end{tikzpicture}
\end{center}
\caption{A degenerate metric tree\label{f:degeneratetree}}
\end{figure}
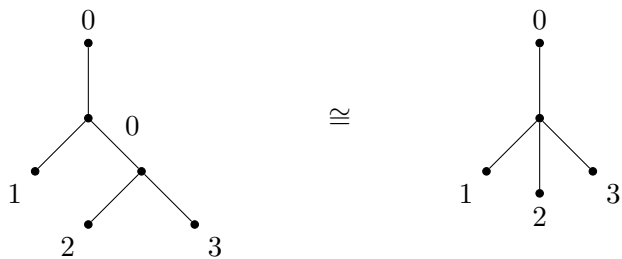
Let us say an $n$-labelled tree $T'$ is a \emph{degeneration}\index{degeneration} of an $n$-labelled tree $T$ if it is obtained from $T$ by contracting some number of interior edges to a point.  For example, the $n$-labelled tree with no interior edges is a degeneration of every $n$-labelled tree.

Fix a trivalent $n$-labelled tree $T$.  Then we can identify the set $C_T$ of isomorphism classes of metric trees obtained from assigning lengths from $[0,1]$ to the $n-2$ interior edges with the cube $[0,1]^{n-2}$. The proper faces of $C_T$ come from allowing some interior edges to have length $0$ and correspond bijectively to $n$-labelled trees which are degenerations of $T$.  If $T,T'$ are two trivalent trees with a common degeneration, then we isometrically glue together the faces of $C_T$ and $C_{T'}$ corresponding to their common degenerations. The resulting cube complex $\mathcal T_n$ is CAT(0). The metric $n$-labelled tree with no interior edges is a cone point for $\mathcal T_n$. See Figure~\ref{f:T3} for $\mathcal T_3$.
\begin{figure}[htbp]
\begin{center}
\begin{tikzpicture}[vertices/.style={draw, fill=black, circle, inner sep=1pt}]
\node[vertices]                   (A) {};
\node[vertices] [left of=A]       (B) {};
\node[vertices] [above right of=A](C) {};
\node[vertices] [below right of=A](D) {};
\draw (A)--(B);
\draw (A)--(C);
\draw (A)--(D);
\end{tikzpicture}
\end{center}
\caption{Phylogenetic tree space $\mathcal T_3$\label{f:T3}}
\end{figure}
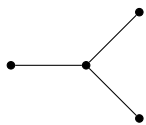

In fact, $\mathcal T_n$ is the CAT(0) cone on the following flag complex.  Call a partition of $\{0,\ldots, n\}$ into two subsets \emph{thick}\index{thick} if both parts have at least two elements.  If $T$ is an $n$-labelled tree and $e$ is an interior edge of $T$, then $T-\{e\}$ has two connected components which induce a thick partition on $\{0,\ldots, n\}$. Define a graph with vertices the thick partitions. If $\{A,B\}$ and $\{A',B'\}$ are thick partitions, then we connect them by an edge if \[A\subseteq A'\ \text{or}\ A\subseteq B'\ \text{or}\ B\subseteq A'\ \text{or}\ B\subseteq B'.\]  Let $L_n$ be the clique complex of this graph.  Then the simplices of $L_n$ are in bijection with $n$-labelled trees.  Given an $n$-labelled tree $T$, one obtains a simplex by considering all thick partitions obtained by removing some interior edge from $T$.  The facets of $L_n$ are in bijection with the trivalent trees.  It is then easy to check that $\mathcal T_n$ is the CAT(0) cone on $L_n$ and that $L_n$ is the link of the origin. See~\cite{treespace,tropgrass} for details.
\end{Example}

Fundamental to the theory of CAT(0) cube complexes is the notion of a hyperplane.

A \emph{midcube}\index{midcube} of $I^n$ is the intersection of $I^n$ with one of the affine hyperplanes \[H_i=\left\{x\in \mathbb R^n\mid x_i=\frac{1}{2}\right\}.\]   If $e$ is an edge of $I^n$, then the midcube dual to $e$ is the intersection of $I^n$ with the perpendicular bisector of $e$.  See Figure~\ref{f:midcube}.
\begin{figure}[htbp]
\begin{center}
\begin{tikzpicture}[x  = {(0.5cm,0.5cm)},
                    y  = {(0.95cm,-0.25cm)},
                    z  = {(0cm,0.9cm)}]
\begin{scope}[canvas is yz plane at x=-1]
  \shade[left color=blue!50,right color=blue!20] (-1,-1) rectangle (1,1);
\end{scope}
\begin{scope}[canvas is xz plane at y=1]
  \shade[right color=blue!70,left color=blue!20] (-1,-1) rectangle (1,1);
\end{scope}
\begin{scope}[canvas is yx plane at z=1]
  \shade[top color=blue!80,bottom color=blue!20] (-1,-1) rectangle (1,1);
\end{scope}
\begin{scope}[canvas is xz plane at y=0]
  \shade[bottom color=red!50,top color=red!20] (-1,-1) rectangle (1,1);
\end{scope}
\end{tikzpicture}
\end{center}
\caption{A midcube\label{f:midcube}}
\end{figure}

Midcubes of an $n$-cube of a CAT(0) cube complex can be defined via the characteristic map identifying it isometrically with $I^n$.  Let $X$ be a CAT(0) cube complex and let $\sim$ be the smallest equivalence relation on the edges of $X$ such that opposite edges of any square of $X$ are equivalent. If $e$ is an edge of a CAT(0) cube complex, then the \emph{hyperplane}\index{hyperplane} $H_e$ of $X$ dual to $e$ is the union of all midcubes dual to edges in the $\sim$-equivalence class of $e$. Equivalently, a hyperplane is a maximal connected subspace which is a union of midcubes. See~\cite{wisebook,roller} for details.  Figure~\ref{f:hyperplane} depicts a hyperplane in a CAT(0) cube complex.
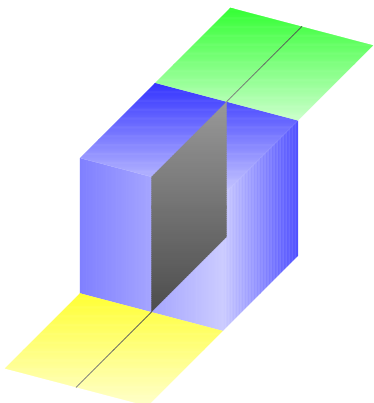
\begin{figure}[htbp]
\begin{center}
\begin{tikzpicture}[x  = {(0.5cm,0.5cm)},
                    y  = {(0.95cm,-0.25cm)},
                    z  = {(0cm,0.9cm)}]
\begin{scope}[canvas is yz plane at x=-1]
  \shade[left color=blue!50,right color=blue!20] (-1,-1) rectangle (1,1);
\end{scope}
\begin{scope}[canvas is xz plane at y=1]
  \shade[right color=blue!70,left color=blue!20] (-1,-1) rectangle (1,1);
\end{scope}
\begin{scope}[canvas is yx plane at z=1]
  \shade[top color=blue!80,bottom color=blue!20] (-1,-1) rectangle (1,1);
\end{scope}
\begin{scope}[canvas is xz plane at y=0]
  \shade[bottom color=black!70,top color=black!40] (-1,-1) rectangle (1,1);
\end{scope}
\begin{scope}[canvas is yx plane at z=1]
  \shade[top color=green!80,bottom color=green!20] (-1,1) rectangle (1,3);
\end{scope}
\begin{scope}[canvas is yx plane at z=-1]
  \shade[top color=yellow!80,bottom color=yellow!20] (-1,-3) rectangle (1,-1);
\end{scope}
\begin{scope}[canvas is yx plane at z=-1]
  \shade[top color=black!70,bottom color=black!50] (-.006,-3) rectangle (.006,-1);
\end{scope}
\begin{scope}[canvas is yx plane at z=1]
  \shade[bottom color=black!50,top color=black!70] (-.006,1) rectangle (.006,3);
\end{scope}
\end{tikzpicture}
\end{center}
\caption{A hyperplane in a CAT(0) cube complex\label{f:hyperplane}}
\end{figure}
Figure~\ref{f:more.hyper} displays all the hyperplanes for the CAT(0) cube complex in Figure~\ref{f:cat0squarecomplex}.
\begin{figure}[htbp]
\begin{center}
\begin{tikzpicture}[vertices/.style={draw, fill=black, circle, inner sep=1pt}]
\draw[fill=gray!20] (-1,1)--(0,1)--(0,0)--(-1,0)--(-1,1)--cycle;
\draw[fill=gray!20] (0,1)--(1,1)--(1,0)--(0,0)--(0,1)--cycle;
\draw[fill=gray!20] (-1,0)--(0,0)--(-.5,-1)--(-1.5,-1)--(-1,0)--cycle;
\draw[fill=gray!20] (0,0)--(1,0)--(1.5,-1)--(.5,-1)--(0,0)--cycle;
\draw[fill=gray!20] (0,0)--(-.5,-1)--(0,-2)--(.5,-1)--(0,0)--cycle;
\node[vertices] at (-1,1)   (A){};
\node[vertices] at (0,1)    (B){};
\node[vertices] at (1,1)    (C){};
\node[vertices] at (-1,0)   (D){};
\node[vertices] at (0,0)    (E){};
\node[vertices] at (1,0)    (F){};
\node[vertices] at (-1.5,-1)   (G){};
\node[vertices] at (-.5,-1)    (H){};
\node[vertices] at (0,-2)    (I){};
\node[vertices] at (.5,-1)    (J){};
\node[vertices] at (1.5,-1)    (K){};
\draw (A)--(B)--(E)--(D)--(A)--cycle;
\draw (B)--(C)--(F)--(E)--(B)--cycle;
\draw (D)--(E)--(H)--(G)--(D)--cycle;
\draw (E)--(F)--(K)--(J)--(E)--cycle;
\draw (E)--(H)--(I)--(J)--(E)--cycle;
\draw [dashed,semithick,color=red] (-.5,1)--(-.5,0)--(-1.05,-1);
\draw [dashed,semithick,color=red] (.5,1)--(.5,0)--(1.05,-1);
\draw [dashed,semithick,color=blue] (-1,.5)--(1,.5);
\draw [dashed,semithick,color=blue] (-1.25,-.5)--(-.25,-.5)--(.25,-1.525);
\draw [dashed,semithick,color=orange] (1.25,-.5)--(.25,-.5)--(-.25,-1.525);
\end{tikzpicture}
\end{center}
\caption{The hyperplanes from the CAT(0) cube complex in Figure~\ref{f:cat0squarecomplex}\label{f:more.hyper}}
\end{figure}
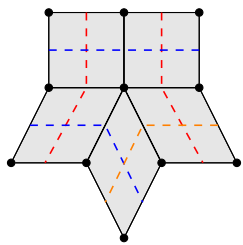

We need the following fundamental facts about CAT(0) cube complexes. See~\cite[Theorem~2.13]{wisebook}.

\begin{Thm}\label{t:cat0geometry}
Let $K$ be a CAT(0) cube complex.
\begin{enumerate}
\item Each hyperplane of $K$ is convex and hence a CAT(0) cube complex (whose cubes are the faces of the midcubes that comprise it).
\item If $C$ is an $n$-cube of $K$, then each of the $n$-midcubes of $C$ lie on different hyperplanes of $K$.
\item If $H$ is a hyperplane, then $K\setminus H$ has two connected components called \emph{half-spaces}\index{half-spaces}.
\end{enumerate}
\end{Thm}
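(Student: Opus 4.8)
The plan is to build everything on the elementary combinatorics of \emph{dual curves} in disc diagrams, following Sageev and Wise \cite{wisebook,roller}. Since $K$ is simply connected, every edge-loop in its $1$-skeleton bounds a \emph{disc diagram}, i.e.\ a combinatorial map $\varphi\colon D\to K$ from a square complex $D$ homeomorphic to a disc; among all diagrams bounding a given loop I would fix one of minimal area. A \emph{dual curve} in $D$ is a maximal chain of midcubes glued across opposite edges of squares, a combinatorial $1$-manifold transverse to the edges it meets. The technical heart of the whole theorem is the pair of lemmas, valid in minimal-area diagrams precisely because the vertex links of $K$ are flag, that (a) no dual curve crosses itself and (b) no two dual curves cross more than once (no ``bigons''). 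I would prove these by the standard surgery argument: a self-crossing or a bigon lets one excise a subdiagram and reglue to strictly reduce area, while a ``cornersquare'' (two dual curves emanating from adjacent boundary edges into a single square) is forbidden by flagness of the link.

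From these lemmas the structural facts follow. First, \emph{embedding}: a hyperplane $H$ meets every cube of $K$ in at most one midcube, since two midcubes of a single cube $C$ lying on $H$ could be joined through the carrier and closed into a loop whose minimal disc diagram would contain a self-crossing or a bigon, contradicting (a)--(b). This is exactly statement (2): the $n$ midcubes of $C=I^n$ are dual to edges in the $n$ distinct coordinate directions, and if two of them lay on a common hyperplane then $C$ would meet that hyperplane in two midcubes, violating embedding; hence they lie on $n$ different hyperplanes. Embedding also shows that the midcubes comprising $H$, together with their faces, assemble into a genuine cube complex, and that the \emph{carrier} $N(H)$ (the union of the closed cubes meeting $H$) is \emph{two-sided}, i.e.\ isomorphic as a cube complex to $H\times[-\tfrac12,\tfrac12]$, with fibers $\{x\}\times[-\tfrac12,\tfrac12]$ the edges dual to $H$.

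For convexity (statement (1)) I would argue metrically. Each midcube is an affine slice $\{x_i=\tfrac12\}$ of a Euclidean cube, hence convex in it, so $H$ is \emph{locally} convex; the flag condition rules out reentrant corners where several midcubes meet, giving local convexity at every point. A connected, complete, locally convex subset of a CAT(0) space is convex (a local geodesic in $H$ is then a local geodesic in $K$, hence a global geodesic, by the uniqueness of geodesics recorded earlier) \cite{Bridson}; as $H$ is compact and connected this applies. Being convex in a CAT(0) space, $H$ is itself CAT(0) in the induced metric (as stated in the excerpt), and since its cubes are Euclidean $(n-1)$-cubes, Gromov's theorem identifies it as a CAT(0) cube complex, its contractibility supplying simple connectivity and its links being flag. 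The carrier $N(H)$ is convex by the same reasoning.

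Finally, \emph{separation} (statement (3)). The decisive consequence of (b) is that a combinatorial geodesic in the $1$-skeleton crosses each hyperplane at most once. Declaring two vertices equivalent when joined by an edge-path using no edge dual to $H$, I would fix a dual edge $e=[a,b]$ and use the crossing-at-most-once property together with the two-sidedness $N(H)\cong H\times[-\tfrac12,\tfrac12]$ and the connectedness of $H$ to show that every vertex is joined to $a$ or to $b$ by an $H$-avoiding path (the two boundary copies $H\times\{\pm\tfrac12\}$ are each $H$-avoidingly connected); hence there are at most two classes, each spanning a connected subcomplex of $K\setminus H$. The same two boundary copies lie in different classes, so there are \emph{exactly} two. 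The main obstacle throughout is the minimal-area surgery establishing lemmas (a)--(b); once dual curves are known to behave as embedded, pairwise non-bigon $1$-manifolds, embedding, two-sidedness, convexity and separation are all routine bookkeeping on top of them.
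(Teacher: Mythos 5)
The paper does not prove this theorem at all: it is quoted as a known fact with a pointer to \cite[Theorem~2.13]{wisebook}, so there is no in-paper argument to compare against. Your proposal is a faithful reconstruction of the standard proof in that cited source (Sageev's hyperplane theory as organized by Wise): minimal-area disc diagrams, the no-self-crossing and no-bigon lemmas for dual curves, embeddedness and two-sidedness of hyperplanes, local-to-global convexity in a CAT(0) space, and separation via the crossing-at-most-once property. The logical skeleton is sound and each step you invoke is a genuine lemma of that theory. Two small points where the sketch is thinner than a complete proof: first, the surgery arguments behind ``no self-crossings'' and ``no bigons'' are, as you say yourself, the technical heart, and flagness of links enters there in a specific way (the cornsquare/hexagon moves), not merely as a slogan; second, in the separation step the claim that the two boundary copies $H\times\{\pm\tfrac12\}$ of the carrier lie in \emph{different} components of $K\setminus H$ needs an explicit argument (e.g.\ that any edge-path from $a$ to $b$ crosses $H$ an odd number of times, or that a geodesic crosses each hyperplane at most once), which is again a consequence of the no-bigon lemma rather than a free observation. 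With those two points filled in, your route is exactly the one the paper is implicitly relying on.
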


Although we do not use it, we mention the following important theorem obtained, independently by Chepoi~\cite{chepoi} and Roller~\cite{roller}. We recall that a \emph{median graph}\index{median graph} is a connected graph $\Gamma$ such that if $x,y,z$ are vertices of $\Gamma$ and $\gamma_{ab}$ are geodesic edge paths from $a$ to $b$ for $a,b\in \{x,y,z\}$, then $|\gamma_{xy}\cap \gamma_{xz}\cap \gamma_{yz}|=1$.  For example, the hypercube graph is a median graph, as is the Hasse diagram of any distributive lattice.

\begin{Thm}\label{catcube1skel}
A CAT(0) cube complex $K$ is uniquely determined by its $1$-skeleton, which is a median graph. Namely, the cubes of $K$ are obtained by filling in with an $n$-cube each induced subgraph of $K^1$ which is isomorphic to the $1$-skeleton of an $n$-cube.

Conversely, if $\Gamma$ is a median graph, then the cube complex obtained by filling in $1$-skeleta of induced hypercube subgraphs is a CAT(0) cube complex.
\end{Thm}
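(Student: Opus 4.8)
The plan is to route both directions through the combinatorics of hyperplanes and half-spaces supplied by Theorem~\ref{t:cat0geometry}. Write $V$ for the vertex set of $K$ and, for each hyperplane $H$, let $H^+,H^-$ denote the two half-spaces of $K\setminus H$ given by Theorem~\ref{t:cat0geometry}(3); the first thing to record is that each such half-space is \emph{convex}, a consequence of the convexity of $H$ in the CAT(0) metric together with the fact that a geodesic cannot cross a fixed hyperplane twice. Each edge of $K$ is dual to exactly one hyperplane, and traversing that edge is the only way to switch sides of $H$; to a vertex $v$ I associate the orientation $\sigma_v$ sending each $H$ to the half-space containing $v$. I would then identify the edge-path metric of $K^1$ with the \emph{separation} metric: $d_{K^1}(u,v)$ equals the number of hyperplanes separating $u$ from $v$. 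The inequality $\geq$ is clear, since any edge path must cross each separating hyperplane at least once, and equality follows because convexity of the half-spaces forbids a geodesic edge path from crossing a hyperplane more than once. In particular $\sigma_u=\sigma_v$ forces $d_{K^1}(u,v)=0$, so $v\mapsto\sigma_v$ is injective. To produce the median of three vertices $x,y,z$, define the majority orientation $\mu(H)$ to be the half-space of $H$ containing at least two of $x,y,z$; note that $\mu$ agrees with all three off the finitely many hyperplanes separating some pair. The crux is to show $\mu=\sigma_m$ for an (automatically unique) vertex $m$, which enters via a Helly-type property of half-spaces in a CAT(0) cube complex: the chosen half-spaces pairwise intersect, and pairwise-intersecting convex subcomplexes have a common vertex. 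The resulting $m$ lies on a geodesic between each pair, and uniqueness is forced because any common geodesic vertex must realize the majority side of every hyperplane. Thus $K^1$ is a median graph.

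Next I would reconstruct $K$ from $K^1$. Because intersections of closed cells are cells and any cube containing the vertices of a cube $C$ contains $C$ (the remark following the definition of a cube complex), each cube is determined by its vertex set, and that vertex set induces in $K^1$ a subgraph isomorphic to the $1$-skeleton of a cube. Conversely, using the median property one checks that every induced copy of a hypercube $1$-skeleton in $K^1$ already bounds a cube of $K$. Hence the cubes of $K$ are exactly the fillings of the induced hypercube subgraphs of $K^1$, so $K$ is determined by $K^1$.

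For the converse, given a median graph $\Gamma$ I form $X(\Gamma)$ by gluing an $n$-cube onto each induced subgraph isomorphic to the $1$-skeleton of an $n$-cube. That this is a cube complex---in particular that the intersection of two cubes is a cube---follows because the relevant vertex sets are convex subgraphs of $\Gamma$ and intersections of convex subgraphs of a median graph are convex. By Gromov's criterion (\cite[Theorem~II.5.20]{Bridson}) it then suffices to verify that $X(\Gamma)$ is simply connected and that each vertex link is a flag complex. Flagness is a local cube condition: if neighbors $u_1,\dots,u_k$ of a vertex $v$ pairwise span squares, the median operation supplies the missing vertices and assembles a genuine $k$-cube, so the corresponding simplices of $\link(v)$ fill in. For simple connectivity I would build $X(\Gamma)$ up by adjoining vertices in order of increasing distance from a fixed basepoint; the median property guarantees that the subcomplex onto which each new vertex attaches is itself a cube, hence contractible, so each attachment is a homotopy equivalence and $X(\Gamma)$ is contractible, \emph{a fortiori} simply connected. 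Combining the two directions recovers the bijection, which is the theorem of Chepoi~\cite{chepoi} and Roller~\cite{roller}.

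The two genuinely global steps are the Helly/realizability argument for the median in the forward direction and the contractibility argument for $X(\Gamma)$ in the converse, and I expect the latter to be the main obstacle. Establishing simple connectivity requires a carefully ordered build-up in which one must show that the attaching subcomplex for each newly added vertex is a cube---equivalently, that the set of its already-present smaller neighbors is the vertex set of a cube---and controlling this descent is precisely the combinatorial content that singles out median graphs among all graphs; every use of convexity, the flag condition, and the cube-reconstruction step ultimately leans on this single median identity.
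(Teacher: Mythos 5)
First, a point of order: the paper does not prove this theorem. It is stated immediately after the sentence ``Although we do not use it, we mention the following important theorem obtained, independently by Chepoi and Roller,'' and is cited to \cite{chepoi} and \cite{roller} without argument. So there is no in-paper proof to compare against; what follows is an assessment of your sketch on its own terms. Your overall route --- vertex orientations $\sigma_v$, the identification of the edge-path metric with the wall-separation metric, injectivity of $v\mapsto\sigma_v$, the majority orientation for the median, and Gromov's link condition plus a level-by-level contractibility argument for the converse --- is the standard and correct architecture for the Chepoi--Roller theorem.

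There are, however, two places where the real work is asserted rather than done. The first is the realization of the majority orientation $\mu$ by a vertex. You appeal to ``pairwise-intersecting convex subcomplexes have a common vertex.'' The paper records the Helly property only for hyperplanes, and Helly for convex subcomplexes is not a consequence of the CAT(0) metric (three pairwise-intersecting convex sets in the Euclidean plane need not meet); in the literature it is usually \emph{derived from} the median property or proved by exactly the wall-flipping argument you are trying to bypass, so as written this step is close to circular. The standard repair is direct: among all vertices, choose $w$ minimizing the number of hyperplanes on which $\sigma_w$ disagrees with $\mu$ (finite, since $\mu$ agrees with $\sigma_x$ off the hyperplanes separating pairs of $x,y,z$), and show that if this number is positive then some disagreeing hyperplane is dual to an edge at $w$, so that crossing that edge strictly decreases the count. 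The second gap is the claim that ``every induced copy of a hypercube $1$-skeleton in $K^1$ already bounds a cube of $K$,'' which is the entire content of the reconstruction direction and is dispatched with ``one checks.'' It needs at minimum: (i) every induced $4$-cycle of $K^1$ bounds a square, which one gets by showing the two hyperplanes dual to the two edges at a corner of the $4$-cycle must cross (using $d_p(v,w)=|S(v,w)|$ and Theorem~\ref{t:cat0geometry}); and (ii) the flag condition on links to assemble the higher cubes. Finally, a small but real slip in the converse: the subcomplex along which a new vertex $v$ is attached is not ``a cube'' but the union of the facets of the downward cube at $v$ that do not contain $v$; it is still contractible (it deformation retracts onto the vertex opposite $v$), and one should also note that median graphs are bipartite so each cube has a unique distance-maximal vertex, which is what makes the level-by-level filtration well defined.
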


Let $K$ be a CAT(0) cube complex with hyperplanes $H_1,\ldots, H_n$.  We define the \emph{intersection semilattice}\index{intersection!semilattice} $\mathcal L(K)$ to consist of all non-empty intersections $H_{i_1}\cap\cdots\cap H_{i_k}$ of hyperplanes, where if $k=0$, then we interpret the intersection to be $K$.  We order it by reverse inclusion.  Note that the semilattice operation takes two elements $X,Y$ of $\mathcal L(K)$ and brings it to the intersection of all hyperplanes containing both $X,Y$ (with the empty intersection being $K$). Hyperplanes in CAT(0) cube complexes are convex in the CAT(0) metric by Theorem~\ref{t:cat0geometry} and hence each element  $X\in \mathcal L(K)$ is convex and therefore a CAT(0) cube complex in its own right.  The cubes of $X$ are exactly the non-empty intersections of cubes of $K$ with the subspace $X$.  One can recover the set $A=\{H_{i_1},\ldots, H_{i_k}\}$ of hyperplanes from the vertices of $X$, namely each vertex is the midpoint of a $k$-cube whose midcubes are precisely its intersections with the elements of $A$.  In other words, each element $X$ of $\mathcal L(K)$ can be uniquely expressed as an intersection of hyperplanes. In general, if $C$ is an $n$-cube of $K$ with $C\cap X\neq \emptyset$, then $C\cap X$ is an $(n-k)$-cube of $X$.

The set of hyperplanes in a CAT(0) cube complex satisfies the \emph{Helly property}\index{Helly property}: any collection of hyperplanes with pairwise non-empty intersections has non-empty intersection; see~\cite[Page~143]{chepoi} for a proof.  Recall that if $\mathcal F=\{X_1,\ldots, X_n\}$ is a collection of subsets of a set $X$, then the \emph{nerve}\index{nerve} of $\mathcal F$ is the simplicial complex $\mathcal N(\mathcal F)$ with vertex set $\mathcal F$ and where $\{X_{i_0},\ldots, X_{i_q}\}$ is a $q$-simplex if $X_{i_0}\cap \cdots\cap X_{i_q}\neq \emptyset$.  Clearly, $\mathcal N(\mathcal F)$ is a flag complex if and only if $\mathcal F$ has the Helly property.

\begin{Prop}\label{p:intersectcat0faceposet}
Let $K$ be a finite CAT(0) cube complex with set $\mathcal H$ of hyperplanes. Then the intersection semilattice $\mathcal L(K)$ can be identified with the face poset of the nerve $\mathcal N(\mathcal H)$ of $\mathcal H$, including an empty face.   Moreover, $\mathcal N(\mathcal H)$ is a flag complex and every finite flag complex comes about this way.
\end{Prop}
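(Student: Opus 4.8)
The plan is to produce an explicit order isomorphism for the first assertion, read off the flag property from the Helly property, and then realize an arbitrary finite flag complex using the CAT(0) cone construction of the CAT(0) cone Example. For the identification of $\mathcal L(K)$ with the face poset of $\mathcal N(\mathcal H)$ (with empty face), I would define $\Psi\colon \mathcal L(K)\to \mathcal N(\mathcal H)$ by $\Psi(X)=\{H\in\mathcal H\mid X\subseteq H\}$ and $\Psi(K)=\emptyset$, with candidate inverse $\Phi(A)=\bigcap_{H\in A}H$ and $\Phi(\emptyset)=K$. First I would check $\Psi$ lands in the face poset: since every $X\in\mathcal L(K)$ is non-empty, $\bigcap\Psi(X)\supseteq X\neq\emptyset$, so $\Psi(X)$ is a simplex of $\mathcal N(\mathcal H)$ (the empty face exactly when $X=K$). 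The composite $\Phi\circ\Psi=\mathrm{id}$ is immediate because $X$ is itself an intersection of hyperplanes, whence $\bigcap\{H\mid X\subseteq H\}=X$.

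The reverse composite $\Psi\circ\Phi=\mathrm{id}$ is precisely the uniqueness of the expression of an element of $\mathcal L(K)$ as an intersection of hyperplanes, established in the paragraph preceding the proposition (recovering the hyperplane set $A$ from the midcubes of the cube carrying a vertex of $X$ forces $\{H\mid \bigcap A\subseteq H\}=A$). Thus $\Psi$ is a bijection. It is an order isomorphism onto the face poset ordered by inclusion: if $X\supseteq Y$ in $\mathcal L(K)$ (i.e.\ $X\leq Y$ in the reverse-inclusion order) and $X\subseteq H$, then $Y\subseteq X\subseteq H$, so $\Psi(X)\subseteq\Psi(Y)$; conversely $\Psi(X)\subseteq\Psi(Y)$ gives $\bigcap\Psi(X)\supseteq\bigcap\Psi(Y)$, that is $X\supseteq Y$, using $\bigcap\Psi(X)=X$. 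This yields the first assertion. The flag property is then immediate from facts already in hand: it was noted that a nerve $\mathcal N(\mathcal F)$ is flag if and only if $\mathcal F$ has the Helly property, and the set of hyperplanes of a CAT(0) cube complex was shown to have the Helly property.

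For the converse, which I expect to be the main work, given a finite flag complex $\Delta$ on vertex set $V$ I would take $K=\mathcal C(\Delta)=\|\Delta(\wh{\mathcal P(\Delta)})\|$, the CAT(0) cone, which is a finite CAT(0) cube complex precisely because $\Delta$ is flag. Its cubes are the $\|\Delta([a,b])\|$ for intervals of $\wh{\mathcal P(\Delta)}=\mathcal P(\Delta)\cup\{\emptyset\}$, each such interval being boolean. I would first show the hyperplanes of $K$ are in bijection with $V$: an edge is a cover $a\lessdot a\cup\{p\}$, which I label by its direction $p\in V$; opposite edges of a square $[a,a\cup\{p,q\}]$ share a direction, so each $\sim$-class lies within a single direction, while a downward induction on $|a|$ (splitting off the square $[a\setminus\{x\},\,a\cup\{p\}]$ for $x\in a$) shows every direction-$p$ edge is $\sim$-equivalent to $[\emptyset,\{p\}]$. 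Hence the hyperplanes are exactly $\{H_p\}_{p\in V}$. It then remains to verify $H_{p_0}\cap\cdots\cap H_{p_q}\neq\emptyset$ if and only if $\{p_0,\dots,p_q\}$ is a simplex of $\Delta$: a point of the intersection lies in the interior of a unique cube $[a,b]$, and $H_{p_i}$ meets that interior only when $p_i\in b\setminus a$, forcing $\{p_0,\dots,p_q\}\subseteq b$ to be a face; conversely the barycenter of $[\emptyset,\{p_0,\dots,p_q\}]$ lies in all the $H_{p_i}$ whenever $\{p_0,\dots,p_q\}$ is a face. Therefore $\mathcal N(\mathcal H)\cong\Delta$.

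The hardest step is this realization. The two structural ingredients there are the bijection between hyperplanes and vertices, which hinges on the induction showing that a full direction class is a single $\sim$-equivalence class of edges of $\mathcal C(\Delta)$, and the intersection criterion, which requires pinning down exactly which hyperplanes pass through the interior of a given cube of the cone. By contrast, the first two assertions are essentially bookkeeping layered on top of the unique-representation statement and the Helly property, both already available.
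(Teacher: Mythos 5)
Your proof is correct and follows the same route as the paper: the identification $X\mapsto\{H\in\mathcal H\mid X\subseteq H\}$ rests on the unique expression of each element of $\mathcal L(K)$ as an intersection of hyperplanes established just before the proposition, and the flag property comes from the Helly property, exactly as in the paper's proof. The only difference is that the paper dismisses the converse with ``the reader should verify'' that the CAT(0) cone on a flag complex works, whereas you actually supply that verification (hyperplanes of $\mathcal C(\Delta)$ biject with the vertices via your direction-class induction on squares, and the intersection criterion identifies the nerve with $\Delta$), and your verification is sound.
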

\begin{proof}
We already saw above that each element $X$ of $\mathcal L(K)$ can be uniquely expressed as an intersection of a subset $A_X$ of hyperplanes.  The mapping $X\mapsto A_X$ gives an isomorphism of $\mathcal L(K)$ with $\mathcal P(\mathcal N(\mathcal H))\cup \{\emptyset\}$ (sending $K$ to the empty face).  Since $\mathcal H$ has the Helly property, $\mathcal N(\mathcal H)$ is a flag complex.  Conversely, the reader should verify that if $X$ if a flag complex and $K$ is the CAT(0) cone on $X$, then $\mathcal N(\mathcal H)$ is isomorphic to $X$ (where $\mathcal H$ is the set of hyperplanes of $K$).
\end{proof}

One can consider the vertex set $K^0$ of a CAT(0) cube complex as a metric space with the path metric $d_p$.  The distance $d_p(v,w)$ between two vertices $v,w$ is the length of a shortest edge path between them in the $1$-skeleton $K^1$.  Let $S(v,w)$ be the set of hyperplanes $H$ separating $v$ and $w$, meaning that $v$ and $w$ are in different components of $K\setminus H$.  Then it is known that \[d_p(v,w)=|S(v,w)|.\]  In particular, two vertices are connected by an edge if and only if they are separated by exactly one hyperplane, which is necessarily the hyperplane dual to that edge. See~\cite[Remark~2.14]{wisebook} or~\cite{roller}.

We shall need the following lemma stating that a cube can be separated from each vertex outside of it by a hyperplane.

\begin{Lemma}\label{separatefromcube}
Let $K$ be a CAT(0) cube complex, let $v$ be a vertex of $K$ and let $C$ be a cube of $K$ not containing $v$. Then there is a hyperplane $H$ separating $v$ from $C$, that is, $v$ and $C$ are contained in different connected components of $K\setminus H$.
\end{Lemma}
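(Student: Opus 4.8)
The plan is to argue entirely with the separation-set combinatorics recorded just above, rather than with the CAT(0) metric directly. For vertices $a,b$ write $S(a,b)$ for the set of hyperplanes separating $a$ from $b$, so that $|S(a,b)|=d_p(a,b)$, and record the elementary identity $S(a,b)=S(a,c)\mathbin{\triangle}S(c,b)$ (symmetric difference), valid because each hyperplane $H$ has exactly two half-spaces, so the side of $H$ containing a vertex is a $\{0,1\}$-valued function and $H\in S(a,b)$ precisely when this function differs at $a$ and $b$. Let $\mathcal I$ denote the set of ``internal'' hyperplanes of $C$, i.e. those dual to an edge of $C$. First I would choose a vertex $w$ of $C$ minimizing $d_p(v,w)$ over all vertices of $C$; since $v$ is not a vertex of $C$ we have $w\neq v$, so $d_p(v,w)\geq 1$. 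Taking a shortest edge path from $v$ to $w$, let $e$ be its first edge and $H$ the hyperplane dual to $e$, so that $H\in S(v,w)$.

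The key step, and the main obstacle, is a descent argument showing $H\notin\mathcal I$. Suppose to the contrary that $H$ is dual to an edge of $C$, and let $w'$ be the neighbour of $w$ in $C$ across $H$; by the stated characterization of adjacency, $S(w,w')=\{H\}$. Since $H\in S(v,w)$ and $H\in S(w,w')$, the vertex $w'$ lies on the same side of $H$ as $v$, so $H\notin S(v,w')$ and hence $S(v,w')=S(v,w)\mathbin{\triangle}\{H\}=S(v,w)\setminus\{H\}$. This gives $d_p(v,w')=d_p(v,w)-1$ with $w'\in C$, contradicting the minimality of $w$. Therefore $H\notin\mathcal I$.

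Next I would show that $H$ separates $v$ from every vertex $u$ of $C$. For such $u$ the hyperplanes separating $u$ from $w$ are internal to $C$, that is $S(w,u)\subseteq\mathcal I$: indeed $u$ and $w$ are joined inside $C^1$ by a monotone path of length equal to the number of coordinate directions in which they differ, and along this path exactly those coordinate hyperplanes of $C$ are crossed, each once, so (using that the midcube hyperplanes of a cube are distinct, Theorem~\ref{t:cat0geometry}(2)) these are precisely the elements of $S(w,u)$, all lying in $\mathcal I$. Since $H\in S(v,w)=S(v,u)\mathbin{\triangle}S(w,u)$ and $H\notin S(w,u)$, we conclude $H\in S(v,u)$. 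Thus every vertex of $C$ lies in the half-space of $K\setminus H$ not containing $v$.

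Finally I would promote this to the whole cube. If $H$ met $C$, then by Theorem~\ref{t:cat0geometry} the intersection would be a midcube and $H$ would be dual to an edge $\{a,b\}$ of $C$; but then $a$ and $b$ would lie on opposite sides of $H$, contradicting that all vertices of $C$ lie on a single side. Hence $H\cap C=\varnothing$, and since $C$ is connected it lies entirely in the half-space opposite to $v$, exhibiting $H$ as a hyperplane separating $v$ from $C$. Everything hinges on the descent step ruling out $H\in\mathcal I$; once the first hyperplane crossed on the way to the nearest vertex of $C$ is known to be external to $C$, the remaining inclusions follow formally from the symmetric-difference identity.
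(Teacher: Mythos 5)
Your proof is correct and follows essentially the same route as the paper's: choose a vertex $w$ of $C$ at minimal $d_p$-distance from $v$, take a hyperplane $H\in S(v,w)$, and derive a contradiction with minimality from the neighbour of $w$ across $H$ inside $C$ if $H$ were dual to an edge of $C$. Your symmetric-difference bookkeeping and the extra verification that every vertex of $C$ lies on the far side of $H$ are harmless elaborations of the same argument; the paper passes directly from $H\cap C=\emptyset$ to the conclusion using connectedness of $C$.
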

\begin{proof}
Let $w$ be a vertex of $C$ at a minimal distance from $v$ with respect to the path metric $d_p$.  Then $|S(v,w)|=d_p(v,w)\geq 1$.  Let $H\in S(v,w)$.  Suppose first that $H\cap C\neq \emptyset$.  Then there is a unique edge $e$ of $C$ dual to $H$ and incident on $w$. Let $w'$ be the other endpoint of $e$.  Then $w'$ is in the same component of $K\setminus H$ as $v$ and so $H\notin S(v,w')$. If $H'\in S(v,w')$, then because $S(w,w')=\{H\}$, we must have that $w,w'$ are in the same component of $K\setminus H'$ and hence $H'\in S(v,w)$. We conclude that $S(v,w')\subseteq S(v,w)\setminus \{H\}$ and so $d_p(v,w')<d_p(v,w)$, a contradiction. Thus $H\cap C=\emptyset$.  Since $C$ is connected and $w$ is in a different component of $K\setminus H$ than $v$, we conclude that $H$ separates $v$ and $C$.
\end{proof}

Another important fact is that every finite CAT(0) cube complex embeds as a subcomplex of a hypercube; this was first proved by Bandelt in the language of median graphs~\cite{Bandelt}.  This is the third item of the following theorem.  We provide a proof here using covectors for both completeness, and to make the semigroup structure transparent.
The final two items of the next theorem are new; the last item was obtained jointly with Daniel Wise.

\begin{Thm}\label{embedinhypercube}
Let $K$ be a finite CAT(0) cube complex with hyperplanes $\{H_1,\ldots, H_d\}$.  Fix, for each $H_i$, positive and negative half-spaces $H_i^+$ and $H_i^-$ of $K\setminus H_i$.   To each cube $C$ of $K$ assign the covector $\tau(C)\in L^d$ given by
\[\tau(C)_i = \begin{cases} +, & \text{if}\ C\subseteq H_i^+\\
                            -, & \text{if}\ C\subseteq H_i^-\\
                            0, & \text{if}\ C\cap H_i\neq \emptyset.\end{cases}\]
Let $\FFF(K)$ be the set of all $\tau(C)$ such that $C$ is a cube of $K$.
Then the following hold.
\begin{enumerate}
\item $\tau\colon \mathcal P(K)\to L^d$ is an order embedding.
\item $\FFF(K)$ is a right ideal in $L^d$.
\item $K$ is isomorphic to a subcomplex of $[-1,1]^d$.
\item $\Lambda(\FFF(K))\cong \mathcal L(K)$.
\item If $X\in \mathcal L(K)$, then $\FFF(K)_{\geq X}\cong \FFF(X)$.
\end{enumerate}
In particular, $\FFF(K)$ is a connected CW left regular band.
\end{Thm}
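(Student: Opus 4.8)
The plan is to treat parts (1)--(5) as the technical core and to deduce the final sentence from them by a short assembly argument. Granting those items, I would argue as follows. By part (2) the set $\FFF(K)$ is a right ideal of $L^d$, so in particular $\FFF(K)\cdot\FFF(K)\subseteq\FFF(K)$ and $\FFF(K)$ is a subsemigroup of the left regular band $L^d$, hence is itself a left regular band. To upgrade this to \emph{connected CW}, I must show that every contraction $\FFF(K)_{\geq X}$, with $X\in\Lambda(\FFF(K))$, is a connected CW poset. Part (4) lets me identify $X$ with an element of the intersection semilattice $\mathcal L(K)$, which is a convex subcomplex and therefore a finite CAT(0) cube complex in its own right; part (5) then gives $\FFF(K)_{\geq X}\cong\FFF(X)$, and part (1) applied to $X$ in place of $K$ identifies $\FFF(X)$ with the face poset $\mathcal P(X)$ of the regular CW complex $X$. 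Thus $\FFF(K)_{\geq X}\cong\mathcal P(X)$ is a CW poset, and since $\|\Delta(\mathcal P(X))\|\cong X$ is contractible (every CAT(0) space is contractible), it is in particular connected. As this holds for all $X$, $\FFF(K)$ is a connected CW left regular band. (Connectedness could alternatively be obtained by noting that $\FFF(K)$ is a lopsided system and invoking Proposition~\ref{p:se.conn}, but the contractibility route is self-contained from the five parts.)

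The geometric inputs for the core parts are Theorem~\ref{t:cat0geometry} (hyperplanes are convex, the midcubes of a single cube lie on distinct hyperplanes, and each hyperplane complement has exactly two half-spaces), the identity $d_p(v,w)=|S(v,w)|$ on the $1$-skeleton, and Lemma~\ref{separatefromcube}. For part (1) I would first observe that a vertex $v$ has $\tau(v)\in\{+,-\}^d$ with $\tau(v)_i\ne\tau(w)_i$ exactly when $H_i$ separates $v$ and $w$; hence $\tau$ is injective on vertices because $\tau(v)=\tau(w)$ forces $d_p(v,w)=0$. Since the midcubes of a $k$-cube $C$ lie on exactly $k$ distinct hyperplanes, the zero set $Z(\tau(C))=\{i\mid C\cap H_i\ne\emptyset\}$ has size $\dim C$, and the vertices of $C$ are precisely those $v$ with $\tau(v)_i=\tau(C)_i$ for all $i\notin Z(\tau(C))$; as a cube is determined by its vertices, $\tau$ is injective. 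Forward order preservation is immediate, and the reverse direction uses the same characterisation: $\tau(C)\le\tau(C')$ forces $Z(\tau(C))\subseteq Z(\tau(C'))$ together with agreement off $Z(\tau(C'))$, so every vertex of $C$ is a vertex of $C'$ and $C$ is a face of $C'$.

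Part (2) is the combinatorial heart. Using that $\ov C$ has face poset $L^k$ with $k=\dim C$, and that its $k$ midcubes sit on the hyperplanes indexed by $Z(\tau(C))$, I would show that the faces $F$ of $C$ are exactly the cubes with $\tau(F)$ agreeing with $\tau(C)$ off $Z(\tau(C))$ and with $\tau(F)|_{Z(\tau(C))}$ exhausting all of $L^{Z(\tau(C))}$ (by injectivity of $\tau$ and a count of faces). Since $(\tau(C)\,y)_i$ equals $\tau(C)_i$ when $\tau(C)_i\ne0$ and equals $y_i$ when $i\in Z(\tau(C))$, the product $\tau(C)\,y$ is realised by the face of $C$ whose restriction to $Z(\tau(C))$ is $y|_{Z(\tau(C))}$; hence $\tau(C)\,y\in\FFF(K)$. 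Part (3) is then formal: a right ideal of $L^d$ is downward closed (if $x\in\FFF(K)$ and $x'\le x$ then $x'=xx'\in\FFF(K)$), so $\FFF(K)$ is a lower set of the face poset $L^d$ of $[-1,1]^d$, hence the face poset of a subcomplex isomorphic to $\CW(\FFF(K))\cong\CW(\mathcal P(K))\cong K$.

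For part (4) the point is that the support map of $\FFF(K)$ is exactly $\tau(C)\mapsto Z(\tau(C))$: two covectors are $\mathscr{L}$-equivalent iff their zero sets coincide, and $Z$ carries the product to intersection of zero sets, so $A\mapsto\bigcap_{i\in A}H_i$ gives $\Lambda(\FFF(K))\cong\mathcal L(K)$ via the unique-intersection description in Proposition~\ref{p:intersectcat0faceposet}. For part (5) I would identify $\FFF(K)_{\geq X}$ with the cubes $C$ whose centre lies in $X$ (equivalently $Z(\tau(C))\supseteq A$, where $A$ indexes the hyperplanes containing $X$), and verify that $C\mapsto C\cap X$ is a bijection onto the cubes of $X$ under which restriction of $\tau(C)$ to the coordinates indexing the hyperplanes of $X$ becomes $\tau_X(C\cap X)$; this restriction is a semigroup homomorphism and realises the contraction isomorphism. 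I expect the main obstacle to be precisely this identification in part (5): one must check carefully that the hyperplanes of the subcomplex $X$ are the traces $H_j\cap X$ of the hyperplanes of $K$ cutting $X$, that each cube of $X$ comes from a unique cube of $K$ centred in $X$, and that side-assignments survive restriction --- all resting on the convexity and Helly properties recorded in Theorem~\ref{t:cat0geometry} and Proposition~\ref{p:intersectcat0faceposet}.
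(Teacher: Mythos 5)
Your proposal is correct and follows essentially the same route as the paper's proof: both parts hinge on Lemma~\ref{separatefromcube} for the order embedding, on the $3^{\dim C}$ face count (via Theorem~\ref{t:cat0geometry}) for the right-ideal/lower-set property, on identifying the support map with the zero-set map for (4), and on the restriction $C\mapsto C\cap X$ for (5). The minor differences (routing (1) through vertices and $d_p(v,w)=|S(v,w)|$, and deducing connectedness from CAT(0) contractibility) do not change the argument in any essential way.
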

\begin{proof}
Clearly if $C$ is a face of $C'$, then $\tau(C)\leq \tau(C')$.  Suppose that $\tau(C)\leq \tau(C')$.
Let $v$ be a vertex of $C$.  If $v\notin C'$, then there is a hyperplane $H_i$ separating $v$ from $C'$ by Lemma~\ref{separatefromcube}.  But then $\tau(C')_i=-\tau(v)_i\neq 0$ and $\tau(v)_i\leq \tau(C)_i\leq \tau(C')_i$, a contradiction.  Thus each vertex of $C$ is a vertex of $C'$, whence $C\subseteq C'$. Therefore, $\tau$ is an order embedding.

Suppose that $C$ is an $n$-cube of $K$.  Then exactly $n$ hyperplanes intersect $C$ and they intersect it in its midcubes by Theorem~\ref{t:cat0geometry}. Thus $\tau(C)$ has exactly $n$ zeroes and therefore $3^n$ faces.  But $C$ also has $3^n$ faces and $\tau$ is an order embedding.  Thus every face of $\tau(C)$ is in the image of $\tau$ and so $\FFF(K)$ is a lower set, that is, a right ideal of $L^d$.

Since $\mathcal P(K)$ is order isomorphic to a lower set of $L^d=\mathcal P([-1,1]^d)$, it follows that $K$ is isomorphic to a subcomplex of $[-1,1]^d$ because regular cell complexes are determined up to isomorphism by their face posets.

Because $\FFF(K)$ is a right ideal in $L^d$, it follows that $\sigma\colon \FFF(K)\to \Lambda(\FFF(K))$ can be identified with the zero set map $Z\colon \FFF(K)\to Z(\FFF(K))$.  If $A\subseteq \{1,\ldots, n\}$, then $A=Z(\tau(C))$ if and only if $A=\{i\mid H_i\cap C\neq \emptyset\}$ and hence $\bigcap_{i\in A}H_i\in \mathcal L(K)$.  Conversely, if $X=\bigcap_{i\in A}H_i\in \mathcal L(K)\neq \emptyset$ choose a vertex $v$ of $X$. By definition of the cube complex structure on $X$, there is a $|A|$-cube $C$ of $K$ with $C\cap X=\{v\}$.  Then $Z(\tau(C))=A$.  Thus $\Lambda(\FFF(K))$ can be identified with $\mathcal L(K)$ as a meet semilattice.

If $A\in \Lambda(\FFF(K))$ and $X=\bigcap_{i\in A}H_i\in \mathcal L(K)\neq \emptyset$ are as above, then the cubes $C$ with $A\subseteq Z(\tau(C))$ are exactly the cubes $C$ of $K$ meeting $X$.  But the cubes of $X$ are precisely the non-empty intersections of cubes of $K$ with $X$.  Thus the map $\tau(C)\mapsto C\cap X$ gives an order isomorphism between $\FFF(K)_{\geq A}$ and the face poset $\mathcal P(X)$ of $X$. In particular, $\FFF(K)_{\geq A}$ is a connected CW poset.  But more is true.  The hyperplanes of $X$ are precisely the $H'_i=H_i\cap X$ with $i\notin A$ and so if we choose the positive (respectively, negative) half-space of $H'_i$ to be $H_i^+\cap X$ (respectively, $H_i^-\cap X$) and let $\tau'\colon \mathcal P(X)\to L^{[n]\setminus A}$ be the corresponding embedding, then the map $\tau(C)\mapsto \tau'(C\cap X)$ is a semigroup isomorphism between $\FFF(K)_{\geq A}$ and $\FFF(X)$.
\end{proof}

We call $\FFF(K)$ the \emph{face semigroup}\index{face semigroup} of  the CAT(0) cube complex $K$.  We shall identify $\FFF(K)$ with the set of faces of $K$.  The reader should verify that the semigroup structure does not depend on the choice of orientation for the hyperplanes.  In fact, there is the following geometric interpretation of the product.  If $C,C'$ are cubes, then $C\cdot C'$ is the face of $C$ on which the mapping $x\mapsto d_p(x,C')$ is minimized.

\subsection{CAT(0) zonotopal complexes}
By a \emph{Coxeter zonotope}\index{Coxeter zonotope} we mean a zonotope (called an \emph{even polyhedron}\index{even polyhedron} in~\cite{HaglundPaulin}) that is symmetric around the perpendicular bisector, or \emph{mid-hyperplane}\index{mid-hyperplane}, of each edge.  A zonotope is combinatorially isomorphic to a Coxeter zonotope if and only if it is a $W$-permutohedron for a finite Coxeter group $W$, that is, the zonotope dual to a Coxeter hyperplane arrangement $\mathcal A_W$.  Of course, the $n$-cube is the Coxeter zonotope corresponding to the finite right-angled Coxeter group $(\mathbb Z/2\mathbb Z)^n$. See~\cite{COMS,HaglundPaulin} for details.

By a \emph{zonotopal complex}\index{zonotopal complex}, we mean a polyhedral complex whose cells are zonotopes.  So it is a collection of zonotopes closed under taking faces and such that the intersection of any two zonotopes in the collection is a common face of them both (if non-empty). A \emph{CAT(0) zonotopal complex}\index{CAT(0)!zonotopal complex}\index{zonotopal complex!CAT(0)} is a zonotopal complex equipped with a CAT(0) metric. If $K$ is a CAT(0) zonotopal complex, each of whose zonotopes is isometric to a Coxeter zonotope, then one can define a hyperplane in $K$ similarly to the CAT(0) cube complex case.  Namely, consider the equivalence relation on the set of edges generated  by pairs of edges that are on opposite sides of a two-dimensional face (two-dimensional zonotopes always have an even number of sides).  The union of all mid-hyperplanes from an equivalence class of edges gives a hyperplane in $K$ and one has that hyperplanes do not self-intersect and have many of the same properties of hyperplanes  in CAT(0) cube complexes such as having exactly two connected components in their complement and being convex.  They are CAT(0) zonotopal complexes in their own right with Coxeter zonotopes as cells.  See~\cite[Section~11]{COMS} for details.

If $K$ is a CAT(0) zonotopal complex with Coxeter zonotopes as cells, then one can define a collection of covectors $\mathcal L\subseteq L^E$, where $E$ is the set of hyperplanes (which we assume have an orientation selected for their two half-spaces), via the same recipe we used for CAT(0) cube complexes. If $Z$ is a face of $K$, we assign it the covector $\tau(Z)$ with
\[\tau(Z)_e = \begin{cases} 0, & \text{if}\ e\cap Z\neq \emptyset\\ +, & \text{if}\ Z\subseteq e^+\\ -, &\text{if}\ Z\subseteq e^-\end{cases}\] where $e^+$ and $e^-$ denote the positive and negative half-spaces associated with the hyperplane $e$.  The following beautiful theorem was proved in~\cite[Section~11]{COMS}.

\begin{Thm}\label{t:cat(0).zono}
If $K$ is a CAT(0) zonotopal complex whose cells are Coxeter zonotopes, then the collection of covectors defined above provides the face poset of $K$ with the structure of a COM. Moreover, each contraction of $K$ is isomorphic to the semigroup of covectors of an intersection of hyperplanes, which is again a CAT(0) zonotopal complex whose cells are Coxeter zonotopes.
\end{Thm}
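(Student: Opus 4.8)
The plan is to run the argument of Theorem~\ref{embedinhypercube} together with Lemma~\ref{separatefromcube} almost verbatim, replacing ``cube'' by ``Coxeter zonotope'' and feeding in two inputs. Locally, each closed cell $\ov Z$ is a Coxeter zonotope, so its faces form the realizable oriented matroid $\FFF(\mathcal A_W)$ of the corresponding Coxeter arrangement; crucially this oriented matroid is \emph{closed under negation}, since zonotopes are centrally symmetric. Globally, $K$ is CAT(0), so by the zonotopal analogue of Theorem~\ref{t:cat0geometry} (recorded in the preceding subsection) each hyperplane $H_e$ and each half-space $e^{\pm}$ is convex and geodesics between points of $K$ are unique. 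Write $\mathcal L=\FFF(K)=\{\tau(Z)\}\subseteq L^E$ and, for a cell $Z$, let $E_Z=\{e\in E\mid H_e\cap \ov Z\neq\emptyset\}$ be its zero set, so that the restriction of $\tau$ to the faces of $\ov Z$, read in the coordinates $E_Z$, is precisely the oriented matroid of $\ov Z$.

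First I would show $\tau\colon \mathcal P(K)\to L^E$ is an order embedding. The geometric heart is the exact analogue of Lemma~\ref{separatefromcube}: any closed cell $\ov Z$ can be separated from a vertex $v\notin\ov Z$ by a hyperplane, proved by taking the vertex $w$ of $\ov Z$ minimizing the path distance to $v$ and rerunning that minimality argument. With separation available, the deduction that $\tau(Z)\le\tau(Z')$ forces $Z\le Z'$ is copied from Theorem~\ref{embedinhypercube}. Next I would establish that cells are gated and that the gate projection respects hyperplanes, yielding the analogue of the cube-complex product formula $\tau(Z)\tau(Z')=\tau(\mathrm{gate}_{\ov Z}(Z'))$; this gives (OM2), because convexity of the half-spaces forces the gate of $Z'$ in $\ov Z$ onto the $Z'$-side of every hyperplane cutting $\ov Z$. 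Face symmetry is then nearly free: since $\tau(Z)_e=0$ for $e\in E_Z$, the product formula gives $(\tau(Z)\tau(Z'))|_{E_Z}=\tau(Z')|_{E_Z}$, so $\tau(Z')|_{E_Z}$ is a covector of the oriented matroid of $\ov Z$. As that oriented matroid is closed under negation, $-\tau(Z')|_{E_Z}$ is also a covector of $\ov Z$, and padding it with the signs of $\tau(Z)$ off $E_Z$ produces a genuine face $Z''\le Z$ with $\tau(Z'')=\tau(Z)(-\tau(Z'))$. Hence $x(-y)\in\mathcal L$, which is (FS).

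For strong elimination, set $x=\tau(Z)$, $y=\tau(Z')$ and $e\in S(x,y)$. Replacing $Z,Z'$ by the gate faces $G=\mathrm{gate}_{\ov Z}(Z')$ and $G'=\mathrm{gate}_{\ov Z'}(Z)$, whose covectors are $xy$ and $yx$, I have $G\subseteq e^{+}$ and $G'\subseteq e^{-}$ (say). The unique geodesic between generic interior points $p\in G$ and $q\in G'$ must cross $H_e$ at a point $r$; letting $Z''$ be the carrier cell of $r$ gives $\tau(Z'')_e=0$. For $f\notin S(x,y)$ one checks the identity $(xy)_f=(yx)_f=:c_f$ directly in $L^E$; when $c_f\neq 0$ both $p$ and $q$ lie in the convex half-space $e_f^{c_f}$, hence so does $r$, which should force $\tau(Z'')_f=c_f$ and produce the witness $z=\tau(Z'')$. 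The main obstacle is exactly the last implication: guaranteeing that the carrier $Z''$ is cut by $H_e$ and by no extraneous hyperplane $H_f$ with $f\notin S(x,y)$. This is a transversality/genericity statement about where a CAT(0) geodesic meets the hyperplane arrangement of $K$, and is where the real work lies; it ought to follow by perturbing $p,q$ inside $G,G'$ so the geodesic crosses hyperplanes one at a time and lands its $H_e$-crossing in a codimension-one cell, but making this rigorous (the combinatorial version for cubes being well documented) is the delicate step.

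For the ``moreover'' statement I would first identify $\Lambda(\mathcal L)$ with the intersection semilattice $\mathcal L(K)$ via the support/zero-set map, as in item~(4) of Theorem~\ref{embedinhypercube}: supports are zero sets, and $\{i\mid H_i\cap\ov Z\neq\emptyset\}$ is realized exactly when $\bigcap_i H_i\neq\emptyset$. Fix $X=\bigcap_{i\in A}H_i\in\mathcal L(K)$. Being an intersection of convex hyperplanes, $X$ is convex, hence a CAT(0) zonotopal complex whose cells are the nonempty intersections $\ov Z\cap X$; these are again Coxeter zonotopes, since slicing a Coxeter zonotope by its own mid-hyperplanes produces Coxeter zonotopes. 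Finally, copying item~(5) of Theorem~\ref{embedinhypercube}, the assignment $\tau(Z)\mapsto \ov Z\cap X$ together with the coordinate restriction $L^E\to L^{E\setminus A}$ is a semigroup isomorphism $\mathcal L_{\ge X}\cong\FFF(X)$, identifying the contraction with the face semigroup of an intersection of hyperplanes, which is itself such a complex. (Alternatively, that $\mathcal L_{\ge X}$ is a COM is automatic from Proposition~\ref{p:Com.minor} once the COM structure is in place, but the point here is the explicit geometric realization.)
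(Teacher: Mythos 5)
Be aware first that the paper contains no proof of this statement: the theorem is quoted from~\cite[Section~11]{COMS}, and even in the cube-complex case Theorem~\ref{embedinhypercube} only establishes that $\FFF(K)$ is a right ideal of $L^d$ (hence a left regular band), with the verification of (OM3) again deferred to~\cite{COMS}. So your proposal is attempting genuinely new ground relative to this text, and it has a real gap exactly where you suspect, namely in the construction of the strong-elimination witness.

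Concretely: let $r$ be the point where the geodesic from $p\in G$ to $q\in G'$ meets $H_e$, and let $Z''$ be its carrier cell. For $f\notin S(x,y)$ with $c_f:=(xy)_f\neq 0$, convexity of the half-space only gives $r\in e_f^{c_f}$, i.e.\ $r\notin H_f$; it does not prevent the open cell $Z''$ containing $r$ from being \emph{crossed} by $H_f$, in which case $\tau(Z'')_f=0\neq c_f$ and the witness fails. (Picture $r$ landing in the interior of a hexagonal $2$-cell that $H_f$ cuts through, with $r$ on the $c_f$ side of $H_f$ inside that cell.) Since it is the cell structure around $r$, and not the position of $r$, that determines the covector, no perturbation of $p,q$ within $G,G'$ can repair this by itself; you need either a combinatorial post-processing of $Z''$ or a non-geodesic argument, which is the route taken in~\cite{COMS}. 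There is also the unaddressed case $c_f=0$, where you must force $r\in H_f$ (e.g.\ by choosing $p,q$ in $G\cap\bigcap_{c_f=0}H_f$ and $G'\cap\bigcap_{c_f=0}H_f$), and this interacts with the genericity you want for the other coordinates. Finally, the inputs you rely on for (OM2) and (FS) — gatedness of cells, convexity of half-spaces, and the separation lemma in the zonotopal setting — are true but are themselves only asserted in this paper with a pointer to~\cite[Section~11]{COMS}, so they should be flagged as imported rather than proved.
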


We remark that the support semilattice for a CAT(0) zonotopal complex with Coxeter zonotope cells is more complicated than in the cube complex case because the intersection of a zonotope with a subset of hyperplanes does not determine the subset.  For example, the regular hexagon is the Coxeter zonotope associated to the symmetric group $S_3$.  There are $3$ hyperplanes, but any two of them have the same intersection point: the center of the hexagon.

Note that in the case of a CAT(0) cube complex $K$, the covectors we obtain form a right ideal in $L^n$, where $n$ is the number of hyperplanes, and so the COM we obtain is in fact a lopsided system.

\section{Algebras}\label{s:algebraprelim}
We introduce the necessary ring theory in order to talk about the representation theory of left regular bands.  The representation theory of a left regular band is that of its semigroup ring.  Much of the modern theory of finite dimensional algebras, e.g. quivers and quiver presentations, etc., is predicated upon the algebra being unital.  Many of the examples of left regular bands that we have been considering are not monoids.  This means we have to introduce semigroup algebras that are not, \textit{a priori}, unital.  Thus, we begin by discussing the notion of the Jacobson radical for rings that do not necessarily have a unit and developing some of the basic theory.  We then recall some standard notions and results about finite dimensional algebras including quivers, basic algebras, graded algebras and Koszul algebras.  Afterward, we begin the detailed study of a left regular band algebra and, in particular, characterize the property of having a unital algebra as the topological property of being a connected left regular band.  For connected left regular bands, we describe the simple modules and projective indecomposable modules and compute a complete set of primitive idempotents, as well as the Cartan matrix.  This generalizes the second author's results in the case of left regular band monoids~\cite{Saliola}.

\subsection{Rings and radicals}
\nomenclature[R, 01]{$\rad(R)$}{Jacobson radical of the ring $R$}%
Let $R$ be a ring, not necessarily unital.  We recall the definition of the Jacobson radical of $R$~\cite{rowen}.  A left ideal $L$ of $R$ is said to be \emph{modular}\index{modular} if there is an element $a\in R$ such that $r-ra\in L$ for all $r\in R$.  Note that if $R$ has a right identity, in particular, if $R$ is unital, then every left ideal is modular (take $a$ to be the right identity).  By a \emph{maximal}\index{maximal} modular left ideal is meant a maximal proper modular left ideal. One can define (maximal) modular right ideals analogously. The \emph{(Jacobson) radical}\index{(Jacobson) radical} $\rad(R)$ is the intersection of all maximal modular left ideals. It is a two-sided ideal and it is also the intersection of all maximal modular right ideals. If $R$ has a right identity (e.g.\ if $R$ is unital), then $\rad(R)$ is the intersection of all maximal left ideals of $R$ or, equivalently is the intersection of all annihilators of simple left $R$-modules.  Every nilpotent ideal of $R$ is contained in the radical.   A ring $R$ is called \emph{semiprimitive}\index{semiprimitive} if $\rad(R)=0$.  If $R$ is semiprimitive and finite dimensional over a field, then $R$ is automatically unital and semisimple~\cite{rowen}.  A commutative ring with unit is semiprimitive if and only if it is a subdirect product of fields.

In this text, module unmodified shall always means left module. If we want to discuss right modules we shall explicitly say so. Sometimes, we shall write ``left module'' for emphasis.  A module over a unital ring is called \emph{unitary}\index{unitary} if $1M=M$.

Let us remark that if $R$ is a ring with a right identity $e$, then $R$ is a projective left module, as is any direct summand in $R$. Indeed, $\Hom_R(R,M)\cong eM$ for any $R$-module $M$ and the functor $M\mapsto eM$ is obviously exact.

\subsection{Finite dimensional algebras}\label{ss:finite.dim}
The reader is referred to~\cite{assem,benson,AuslanderReiten,curtis} for background on the modern theory of finite dimensional algebras. A reference for homological algebra is~\cite{CartanEilenberg}. We summarize here what we need for this paper. For this subsection, we fix a field $\Bbbk$ and all $\Bbbk$-algebras are assumed unital  and finite dimensional (except path algebras of non-acyclic quivers).  All modules will be assumed unitary. A $\Bbbk$-algebra $A$ is said to be \emph{split}\index{split} if its semisimple quotient $A/\rad(A)$ is isomorphic to a direct product of matrix algebras over $\Bbbk$ or, equivalently, each simple $A$-module is absolutely irreducible. In particular, if $\Bbbk$ is algebraically closed, then all finite dimensional $\Bbbk$-algebras are split.

A $\Bbbk$-algebra is \emph{basic}\index{basic} if $A/\rad(A)$ is isomorphic to a direct product of division rings.  Consequently, $A$ is split basic if and only if $A/\rad(A)\cong \Bbbk^n$ for some $n\geq 1$ or, in other words, $A$ is split basic if and only if every simple $A$-module is one-dimensional.  Every split finite dimensional algebra over $\Bbbk$ is Morita equivalent to a unique (up to isomorphism) split basic algebra. Recall that two algebras are \emph{Morita equivalent}\index{Morita equivalent} if their module categories are equivalent.  Therefore, the representation theory of finite dimensional algebras over an algebraically closed field can be reduced to the case of (split) basic algebras.

An $A$-module $M$ is \emph{indecomposable}\index{indecomposable} if it cannot be expressed as a direct sum of proper submodules. The Krull-Schmidt theorem guarantees that every finite dimensional $A$-module can be expressed as a direct sum of indecomposables and that the number of indecomposable summands isomorphic to a given indecomposable module is independent of the decomposition.  In particular, the regular $A$-module can be expressed as
\begin{equation}\label{eq:dec.proj}
A=P_1\oplus\cdots\oplus P_r
\end{equation}
 where the $P_i$ are projective indecomposable modules and, moreover, each projective indecomposable $A$-module is isomorphic to some $P_i$.  Furthermore, we can find a complete set of orthogonal primitive idempotents $e_1,\ldots, e_r$ such that $P_i=Ae_i$ for $i=1,\ldots, r$ and all complete sets of orthogonal primitive idempotents arise in this way.  Moreover, any two complete sets of orthogonal primitive idempotents are conjugate by an element of the group of units of the algebra.  Recall that a non-zero idempotent $e\in A$ is \emph{primitive}\index{primitive} if $e=e_1+e_2$ with $e_1,e_2$ orthogonal idempotents implies $e_1=0$ or $e_2=0$.

\nomenclature[R, 02]{$\rad(M)$}{radical of the module $M$}%
The \emph{radical}\index{radical} $\rad(M)$ of a finite dimensional  $A$-module $M$ is the intersection of all maximal submodules of $M$.  One has the equality \[\rad(M)=\rad(A)M.\]  The quotient $M/\rad(M)$ is semisimple and is called the \emph{top}\index{top} of $M$.  Recall that a module is \emph{semisimple}\index{semisimple} if it is a direct sum of simple modules.  An important fact about finite dimensional algebras is that if $P$ is a projective indecomposable module, then $P/\rad(P)$ is simple.  Moreover, if $P$ and $Q$ are projective indecomposable modules, then $P\cong Q$ if and only if $P/\rad(P)\cong Q/\rad(Q)$.  In addition, if \eqref{eq:dec.proj} is a decomposition of $A$ into projective indecomposables, then \[A/\rad(A)\cong P_1/\rad(P_1)\oplus\cdots \oplus P_r/\rad(P_r)\] and every simple $A$-module is isomorphic to one of the form $P_i/\rad(P_i)$.  Consequently, there is a bijection between isomorphism classes of projective indecomposable modules and simple modules.

 We recall that a surjective homomorphism $\p\colon P\to M$ from a finite dimensional projective $A$-module $P$ to a module $M$ is a \emph{projective cover}\index{projective cover} if $\ker \p\subseteq \rad(P)$.  An injective homomorphism $\p\colon M\to I$ where $I$ is a finite dimensional injective module is called an \emph{injective envelope}\index{injective!envelope} (or \emph{injective hull}\index{injective!hull}) if every non-zero submodule of $I$ intersects $\p(M)$ non-trivially. Each projective indecomposable module is the projective cover of its simple top and each injective indecomposable module is the injective envelope of its simple socle.  Recall that the \emph{socle}\index{socle} of a module is its largest semisimple submodule.

Let $e_1,\ldots, e_r$ be a complete set of orthogonal primitive idempotents.  Then each $A$-module $I_j=\Hom_{\Bbbk}(e_jA,\Bbbk)$ is an injective indecomposable module with simple socle isomorphic to $(A/\rad{A})e_j$.  Moreover, $I_j\cong I_k$ if and only if $(A/\rad{A})e_j\cong (A/\rad{A})e_k$ and every injective indecomposable $A$-module is isomorphic to one of the form $I_j$ with $1\leq j\leq r$.  Thus isomorphism classes of injective indecomposable modules and simple modules are also in bijection.

\nomenclature[R, 03]{$\gldim A$}{global dimension of the algebra $A$}%
A $\Bbbk$-algebra $A$ is said to be \emph{hereditary}\index{hereditary} if each of its left ideals is a projective module or, equivalently, submodules of projective modules are projective.  Alternatively, hereditary algebras are algebras of global dimension at most one. Recall that the \emph{projective dimension}\index{projective dimension} of a module $M$ is the shortest length of a projection resolution \[P_\bullet\to M = \cdots\to P_q\to P_{q-1}\to\cdots\to P_1\to P_0\to M.\] The \emph{global dimension}\index{global dimension} $\gldim A$ of $A$ is the supremum of the projective dimensions of all $A$-modules.  When $A$ is finite dimensional, $\gldim A$ is the maximum projective dimension of a simple $A$-module~\cite[Theorem~A.4.8]{assem}.  Alternatively, $\gldim A$ is the largest $n$ such that $\Ext_A^n(S,S')\neq 0$ for some simple $A$-modules $S,S'$ (when $\dim A<\infty$), cf.~\cite[Corollary~16.2]{repbook}.

\subsection{Quivers and basic algebras}
\nomenclature[Q, 01]{$Q$}{quiver}%
\nomenclature[Q, 02]{$Q^{op}$}{opposite quiver of the quiver $Q$}%
A \emph{quiver}\index{quiver} $Q$ is a directed graph (possibly with loops and multiple edges), always assumed finite here. The vertex set is denoted $Q_0$ and the edge set $Q_1$. More generally, $Q_n$ will denote the set of (directed) paths of length $n$ with $n\geq 0$ (where the distinction between empty paths and vertices is blurred). Denote by $Q^{op}$ the quiver obtained from $Q$ by reversing the orientation of each arrow.
A quiver is \emph{acyclic}\index{acyclic}\index{quiver!acyclic} if it has no directed cycles.

\nomenclature[Q, 03]{$\Bbbk Q$}{path algebra of the quiver $Q$ with coefficients in $\Bbbk$}%
\nomenclature[Q, 04]{$\varepsilon_v$}{trivial path at the vertex $v$ of a quiver}%
The \emph{path algebra}\index{path algebra} $\Bbbk Q$ of $Q$ has a basis consisting of all paths in $Q$ (including an empty path $\varepsilon_v$ at each vertex) with the product induced by concatenation (where undefined concatenations are set equal to zero).  Here we compose paths from right to left like category theorists: so if $p\colon v\to w$ and $q\colon w\to z$ are paths, then there composition is denoted $qp$.  Note that $\Bbbk Q=\bigoplus_{n\geq 0} \Bbbk Q_n$. The identity of $\Bbbk Q$ is the sum of all empty paths.

For example, if $Q$ consists of a single vertex with $n$ loops, then $\Bbbk Q$ is the free algebra on $n$ generators.  If $Q$ is a directed path with $n$ nodes, then $\Bbbk Q$ is isomorphic to the algebra of $n\times n$ upper triangular matrices over $\Bbbk$.

\nomenclature[R, 04]{$Z(A)$}{center of the algebra $A$}%
If $A$ is a finite dimensional algebra, then the primitive idempotents of the center $Z(A)$ are called \emph{central primitive idempotents}\index{central primitive idempotents}.
The central primitive idempotents of $\Bbbk Q$ are precisely the sums of empty paths ranging over a connected component of $Q$.  If $f$ is a central idempotent, then $Af=fAf$ is a unital $\Bbbk$-algebra with identity $f$. There is a unique complete set of orthogonal central primitive idempotents $f_1,\ldots, f_n$ and, moreover, \[A\cong Af_1\times \cdots \times Af_n\] as a $\Bbbk$-algebra.  The algebras $Af_j$ are called the \emph{blocks}\index{blocks} of $A$.

\nomenclature[Q, 05]{$(Q, I)$}{bound quiver, where $I$ is an admissible ideal of $\Bbbk Q$}%
An important theorem of Gabriel is that a split basic algebra is hereditary if and only if it is isomorphic to $\Bbbk Q$ for some acyclic quiver $Q$. More generally, Gabriel showed that every split basic algebra has a quiver presentation.  A \emph{bound quiver}\index{bound quiver} $(Q,I)$ consists of a quiver $Q$ and an admissible ideal $I$ of $\Bbbk Q$.  To define an admissible ideal, let $J=\bigoplus_{n\geq 1}\Bbbk Q_n$ be the \emph{arrow ideal}\index{arrow ideal} of $\Bbbk Q$, that is, the ideal generated by all paths of length one.  Then an ideal $I$ is \emph{admissible}\index{admissible} if $J^n\subseteq I\subseteq J^2$ for some $n\geq 2$. The algebra $A=\Bbbk Q/I$ will be a finite dimensional split basic $\Bbbk$-algebra.   Note that $\rad(A) = J/I$ and the cosets $\varepsilon_v+I$ with $v\in Q_0$ form a complete set of orthogonal primitive idempotents. In particular, if $Q$ is acyclic, then $\{0\}$ is admissible,  $\rad(\Bbbk Q)=J$ and the empty paths $\varepsilon_v$ with $v\in Q_0$ form a complete set of orthogonal primitive idempotents of $\Bbbk Q$.  Indeed, if $Q$ is acyclic with $n$ vertices, then no path in $Q$ has length greater than $n-1$ and so $J^n=0$.

Conversely, every finite dimensional split basic algebra $A$ is the algebra of a bound quiver for a unique quiver $Q(A)$ (the admissible ideal $I$ is not unique).  The vertices of $Q(A)$ are the (isoclasses of) simple $A$-modules. If $S,S'$ are simple $A$-modules, then the number of directed edges $S\to S'$ in $Q(A)$ is $\dim_\Bbbk \Ext^1_A(S,S')$.

To explain how the admissible ideal $I$ is obtained, fix a complete set $E$ of orthogonal primitive idempotents for $A$ and let $e_S\in E$ be the idempotent with $S\cong (A/\rad(A))e_S$. One has that \[\Ext^1_A(S,S')\cong e_{S'}[\rad(A)/\rad^2(A)]e_S\] as $\Bbbk$-vector spaces.  See~\cite[Proposition~2.4.3]{benson}. A quick sketch is as follows.  Since the projection $Ae_S\to S$ induces an isomorphism $\Hom_A(S,S')\to \Hom_A(Ae_S,S')$ (because $S'$ is simple and $Ae_S$ has simple top $S$), the exact sequence
\[0\to \rad(A)e_S\to Ae_S\to S\to 0\] implies, via the long exact sequence for $\Ext$, that
\begin{align*}
\Ext^1_A(S,S') &\cong \Hom_A(\rad(A)e_S,S')\\ &\cong \Hom_{A/\rad(A)}([\rad(A)/\rad^2(A)]e_S,S')\\ &\cong \Hom_{A/\rad(A)}(S',[\rad(A)/\rad^2(A)]e_S)\\ &\cong \Hom_A(Ae_{S'},[\rad(A)/\rad^2(A)]e_S)\\ &\cong  e_{S'}[\rad(A)/\rad^2(A)]e_S.
\end{align*}

One can define a homomorphism $\Bbbk Q(A)\to A$  by sending the empty path at $S$ to $e_S$ and by choosing a bijection of the set of  arrows $S\to S'$ with a subset of $e_{S'}\rad(A)e_S$ mapping to a basis of $e_{S'}[\rad(A)/\rad^2(A)]e_S$. The map is extended to paths of length greater than one in the obvious way.  Such a homomorphism is automatically surjective and the kernel will be an admissible ideal.  See~\cite[Proposition~4.1.7]{benson} for details (where the result is stated for algebraically closed fields but just uses that the algebra is split).  The main idea is that because $A$ is split basic, the homomorphism we have just defined is surjective modulo $\rad^2(A)$ and hence surjective by~\cite[Proposition~1.2.8]{benson}; the kernel is admissible by construction.

We remark that if there is no path of length greater than one from $S$ to $S'$ in $Q(A)$, then $e_{S'}\rad^2(A)e_S=0$ because $\rad(A)=J/I$. Similarly, if there is no path at all from $S$ to $S'$ through $S''$, then \[e_{S'}\rad(A)e_{S''}\rad(A)e_S=e_{S'}(J/I)e_{S''}(J/I)e_S=0.\]  Finally, we remark that if $S\neq S'$, then $e_{S'}Ae_S=e_{S'}\rad(A)e_S$, cf.~\cite[Lemma~2.12]{assem} or~\cite[Proposition~2.4.3]{benson}.

\nomenclature[Q, 06]{$(Q, R)$}{quiver presentation, where $R$ is a system of relations}%
A \emph{system of relations}\index{system of relations} for an admissible ideal $I$ is a finite subset \[R\subseteq \bigcup_{v,w\in Q_0} \varepsilon_wI\varepsilon_v\] generating $I$ as an ideal. Every admissible ideal admits a system of relations $R$ and the pair $(Q,R)$ is called a \emph{quiver presentation}\index{quiver!presentation} of $A$.  A system of relations $R$ is called \emph{minimal}\index{minimal} if no proper  subset of $R$ generates the ideal $I$.  The following result, giving a homological interpretation of the number of elements of a minimal generating set, encapsulates~\cite[1.1 and 1.2]{Bongartz}.  We include a proof for completeness both because it will play an essential role in this text and to make things accessible for the diverse audience we are targeting (the exposition in the original reference is aimed at specialists in finite dimensional algebras).

\begin{Thm}[Bongartz]\label{t:bongartz}
Let $(Q,I)$ be a bound quiver with $Q$ acyclic and let $A=\Bbbk Q/I$.  For $v\in Q_0$, let $\Bbbk_v$ be the simple $A$-module $(A/\rad(A))\varepsilon_v$.  Then, for any minimal system of relations $R$, one has that the cardinality of $R\cap \varepsilon_wI\varepsilon_v$ is $\dim_\Bbbk\Ext^2_A(\Bbbk_v,\Bbbk_w)$.
\end{Thm}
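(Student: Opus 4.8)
The plan is to read off $\Ext^2_A(\Bbbk_v,\Bbbk_w)$ from a minimal projective resolution of the simple module $\Bbbk_v$ and then to identify the second syzygy with the space of relations. Recall that $P_x=A\varepsilon_x$ is the projective cover of $\Bbbk_x$, that $\Hom_A(P_x,\Bbbk_w)$ is $\Bbbk$ if $x=w$ and $0$ otherwise, and that for a minimal projective resolution $\cdots\to P_2\to P_1\to P_0\to\Bbbk_v\to 0$ the differentials have image inside the radical, so applying $\Hom_A(-,\Bbbk_w)$ kills them. Hence $\dim_\Bbbk\Ext^2_A(\Bbbk_v,\Bbbk_w)$ equals the multiplicity of $P_w$ as a summand of $P_2$, which in turn equals $\dim_\Bbbk \varepsilon_w\operatorname{top}(\Omega^2)$, where $\Omega^2=\ker(P_1\to P_0)$ is the second syzygy and $\operatorname{top}(M)=M/\rad(A)M$. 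Here $P_0=A\varepsilon_v$ and $P_1=\bigoplus_{\alpha}A\varepsilon_{t(\alpha)}$, the sum over arrows $\alpha$ of $Q$ with source $v$, with $P_1\to P_0$ sending the generator of the $\alpha$-summand to the class of $\alpha$ (this is exactly the projective cover of $\rad(P_0)$, consistent with the fact that $\dim\varepsilon_x(J/J^2)\varepsilon_v$ counts arrows $v\to x$).

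The technical heart is to identify $\operatorname{top}(\Omega^2)$ with the relation space. Write $\Lambda=\Bbbk Q$, a finite-dimensional algebra because $Q$ is acyclic, with arrow ideal $J=\rad(\Lambda)$ and $A=\Lambda/I$. Over the hereditary algebra $\Lambda$ the map $\tilde d\colon\bigoplus_\alpha\Lambda\varepsilon_{t(\alpha)}\to J\varepsilon_v$, $\varepsilon_{t(\alpha)}\mapsto\alpha$, is an isomorphism of left $\Lambda$-modules, since every path of positive length out of $v$ factors uniquely as a path out of $t(\alpha)$ times its initial arrow $\alpha$. Reducing modulo $I$ recovers the map $P_1\to P_0$ above, and a direct computation of its kernel (using that $\tilde d$ carries $\bigoplus_\alpha I\varepsilon_{t(\alpha)}$ onto $IJ\varepsilon_v$ and $\tilde d^{-1}(I\varepsilon_v)$ onto $I\varepsilon_v$) yields an isomorphism of left $A$-modules
\[\Omega^2\;\cong\;I\varepsilon_v/IJ\varepsilon_v.\]
One checks $I$ annihilates the right-hand side because $I\subseteq J$ forces $I\cdot I\varepsilon_v\subseteq IJ\varepsilon_v$. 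Applying $\rad(A)=J/I$ and passing to the top then gives
\[\operatorname{top}(\Omega^2)\;\cong\;I\varepsilon_v/(IJ+JI)\varepsilon_v=\bigl(I/(IJ+JI)\bigr)\varepsilon_v,\]
whence $\dim_\Bbbk\Ext^2_A(\Bbbk_v,\Bbbk_w)=\dim_\Bbbk\varepsilon_w\bigl(I/(IJ+JI)\bigr)\varepsilon_v$.

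It remains to match the right-hand side with the relations. View $I$ as a $\Lambda$-bimodule; its radical, as a module over $\Lambda\otimes\Lambda^{op}$, is exactly $IJ+JI$, so $I/(IJ+JI)$ is a semisimple bimodule over $\Lambda/J\cong\Bbbk^{Q_0}$ and decomposes as $\bigoplus_{v,w}\varepsilon_w(I/(IJ+JI))\varepsilon_v$. By the Nakayama lemma for finitely generated bimodules, a subset of $I$ generates $I$ as a two-sided ideal if and only if its image spans $I/(IJ+JI)$, and a minimal such set maps to a basis. Since the relations in $R$ are homogeneous for the idempotents, $R\subseteq\bigcup_{v,w}\varepsilon_wI\varepsilon_v$, the minimal system $R$ restricts in each bidegree to a basis of $\varepsilon_w(I/(IJ+JI))\varepsilon_v$; thus $|R\cap\varepsilon_wI\varepsilon_v|=\dim_\Bbbk\varepsilon_w(I/(IJ+JI))\varepsilon_v$, which combined with the previous paragraph proves the theorem.

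The main obstacle I anticipate is the syzygy computation of the second paragraph: one must keep careful track of left versus right multiplication (so that the asymmetric quotient $I\varepsilon_v/IJ\varepsilon_v$ and the symmetric $I/(IJ+JI)$ appear in the right places) and verify that the $\Lambda$-module isomorphisms genuinely descend to $A$-module isomorphisms after reducing modulo $I$.
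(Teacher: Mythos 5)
Your proof is correct and follows essentially the same route as the paper's: both halves pass through the identification of the two quantities in question with $\dim_\Bbbk \varepsilon_w\left[I/(JI+IJ)\right]\varepsilon_v$. The only differences are presentational --- you read off $\Ext^2$ from the minimal projective resolution and its second syzygy where the paper dimension-shifts along the four-term exact sequence $0\to(I/IJ)\varepsilon_v\to(J/IJ)\varepsilon_v\to A\varepsilon_v\to\Bbbk_v\to 0$, and you package the relation count as a bimodule Nakayama argument (correctly restricted, via homogeneity, to the individual bidegrees, which is needed since the enveloping algebra is not local) where the paper proves linear independence by hand using nilpotence of $J$.
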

\begin{proof}
For $v,w\in Q_0$, let $R(v,w)=R\cap \varepsilon_wI\varepsilon_v$.  Let $J$ be the arrow ideal of $\Bbbk Q$. We claim that
\begin{equation}\label{eq:relation.dim}
|R(v,w)|=\dim_{\Bbbk} \varepsilon_w\left[I/(JI+IJ)\right]\varepsilon_v.
\end{equation}
First we prove that the cosets of the elements of $R(v,w)$ are linearly independent in $\varepsilon_w\left[I/(JI+IJ)\right]\varepsilon_v$.  Suppose that this is not the case. Then there exists $\alpha_0\in R(v,w)$ such
\begin{equation}\label{eq:fun.equation}
\alpha_0\in \sum_{\alpha\in R(v,w)\setminus \{\alpha_0\}}\Bbbk \alpha+JI+IJ.
\end{equation}
 Let $K$ be the ideal generated by $R\setminus \{\alpha_0\}$ and let $I_0$ be the ideal generated by $\alpha_0$.  Then we deduce from \eqref{eq:fun.equation} that $\alpha_0\in K+JI_0+I_0J$ and so $I_0\subseteq K+JI_0+I_0J$.  We claim that, for all $n\geq 1$,
 \[I_0\subseteq K+\sum_{i=0}^nJ^{n-i}I_0J^{i}\]
  where $J^0$ is interpreted as $\Bbbk Q$.  The base case $n=1$ is already handled.  Assume it is true for $n$.  Then we have
 \begin{align*}
 I_0&\subseteq K+JI_0+I_0J\subseteq K+JK+J\sum_{i=0}^nJ^{n-i}I_0J^{i}+KJ+\sum_{i=0}^nJ^{n-i}I_0J^{i}J\\ &\subseteq K+\sum_{i=0}^{n+1}J^{n+1-i}I_0J^{i}.
 \end{align*}
As $J$ is nilpotent (since $Q$ is a acyclic) and $J^{n-i}I_0J^{i}\subseteq J^n$, we conclude that $I_0\subseteq K$.  This contradicts the minimality of $R$.  Therefore, the cosets of the elements of $R(v,w)$ are linearly independent in $\varepsilon_w\left[I/(JI+IJ)\right]\varepsilon_v$.   Next we show that they span this $\Bbbk$-vector space.

Let $r\in \varepsilon_wI\varepsilon_v$.  Then \[r=\sum_{\alpha\in R}\sum_{i=1}^{n_{\alpha}} c_{\alpha,i}u_{\alpha,i}\alpha v_{\alpha,i}\] with the $c_{\alpha,i}\in \Bbbk$ and the $u_{\alpha,i}$ and $v_{\alpha,i}$ paths.  If $u_{\alpha,i}$ or $v_{\alpha,i}$ is non-empty, then $u_{\alpha,i}\alpha v_{\alpha,i}\in \varepsilon_w(JI+IJ)\varepsilon_v$.  It follows that \[r+\varepsilon_w(JI+IJ)\varepsilon_v=\sum_{\alpha\in R(v,w)}c_{\alpha}\alpha+\varepsilon_w(JI+IJ)\varepsilon_v\] for some $c_{\alpha}\in \Bbbk$.  This completes the proof of \eqref{eq:relation.dim}.

It remains to prove that \[\dim_\Bbbk\Ext^2_A(\Bbbk_v,\Bbbk_w)=\dim_\Bbbk \varepsilon_w\left[I/(JI+IJ)\right]\varepsilon_v.\]  Consider the exact sequence of $\Bbbk Q/I$-modules
\[\xymatrix{0\ar[r] & (I/IJ)\varepsilon_v\ar[r]^h & (J/IJ)\varepsilon_v\ar[r]^j & (\Bbbk Q/I)\varepsilon_v\ar[r]^k & \Bbbk_v\ar[r] & 0}\] where $h(x+IJ)=x+IJ$, $j(x+IJ)=x+I$ and $k(x+I)=x+J/I$.
We write $\source(e)$ for the source of an edge $e\in Q_1$ and $\target(e)$ for the target.  Then there is an isomorphism of $\Bbbk Q$-modules \[\bigoplus_{\{e\in Q_1\mid \source(e)=v\}}\Bbbk Q\varepsilon_{\target(e)}\to J\varepsilon_v\] that takes $p\in \Bbbk Q\varepsilon_{\target(e)}$ to $pe$.  Thus \[(J/IJ)\varepsilon_v\cong \Bbbk Q/I\otimes_{\Bbbk Q} J\varepsilon_v\cong \bigoplus_{\{e\in Q_1\mid \source(e)=v\}}(\Bbbk Q/I)\varepsilon_{\target(e)}\] is a projective $\Bbbk Q/I$-module.

Let $M\cong (J/I)\varepsilon_v$ be the cokernel of $h$. Then we have exact sequences
\begin{gather*}
\xymatrix{0\ar[r] & (I/IJ)\varepsilon_v\ar[r]^h & (J/IJ)\varepsilon_v\ar[r] &M\ar[r] & 0}\\
\xymatrix{0\ar[r] & M\ar[r] & (\Bbbk Q/I)\varepsilon_v\ar[r] & \Bbbk_v\ar[r] & 0}
\end{gather*}
of $\Bbbk Q/I$-modules with the middle terms projective modules.    The long exact sequence for the $\Ext$-functor then yields that
\[\Ext^2_{\Bbbk Q/I}(\Bbbk_v,\Bbbk_w)\cong \Ext^1_{\Bbbk Q/I}(M,\Bbbk_w)\cong \mathop{\mathrm{coker}} h^*\] where $h^*\colon \Hom_{\Bbbk Q/I}((J/IJ)\varepsilon_v,\Bbbk_w)\to \Hom_{\Bbbk Q/I}((I/IJ)\varepsilon_v,\Bbbk_w)$ is the homomorphism induced by $h$.  But if $f\colon (J/IJ)\varepsilon_v\to \Bbbk_w$ is a homomorphism, then because $I$ is admissible, and hence $I\subseteq J^2$, we conclude that $fh((I/IJ)\varepsilon_v)\subseteq f((J^2/IJ)\varepsilon_v)\subseteq (J/I)f((J/IJ)\varepsilon_v)\subseteq (J/I)\Bbbk_w=0$ as $J/I=\rad(\Bbbk Q/I)$.   We conclude that $h^*=0$ and so \[\mathop{\mathrm{coker}} h^*\cong \Hom_{\Bbbk Q/I}((I/IJ)\varepsilon_v,\Bbbk_w)\cong \Hom_{\Bbbk Q/J}([I/(JI+IJ)]\varepsilon_v,\Bbbk_w).\]  As $\Bbbk Q/J$ is split semisimple, it follows that
$\Hom_{\Bbbk Q/J}([I/(JI+IJ)]\varepsilon_v,\Bbbk_w)\cong \Hom_{\Bbbk Q/J}(\Bbbk_w,[I/(JI+IJ)]\varepsilon_v)\cong \varepsilon_w\left[I/(JI+IJ)\right]\varepsilon_v$.  We conclude that $\Ext^2_{A}(\Bbbk_v,\Bbbk_w)\cong  \varepsilon_w\left[I/(JI+IJ)\right]\varepsilon_v$, completing the proof.
\end{proof}

Obtaining a quiver presentation of a finite dimensional algebra is a crucial step in order to be able to apply the apparatus developed by modern representation theorists.

In~\cite[Theorem~3.10]{oldpaper}, the authors showed that for any acyclic quiver $Q$, there is a left regular band monoid $B_Q$ with $\Bbbk B_Q\cong \Bbbk Q$ for any field $\Bbbk$; in other words, the representation theory of acyclic quivers (i.e., split (basic) hereditary algebras) is a special case of the representation theory of left regular bands!

\subsection{Gradings, quadratic algebras and Koszul algebras}
In this subsection we recall the notions of quadratic and Koszul algebras.  Motivation for these notions can be found in~\cite{beilinson,MartinezVillaIntroKoszul,GreenVilla}.
We continue to hold fixed a field $\Bbbk$ and assume that all $\Bbbk$-algebras are unital.
A $\Bbbk$-algebra $A$ is said to be \emph{graded}\index{graded!algebra}\index{algebra!graded} if it has a vector space decomposition $A=\bigoplus_{n\geq 0} A_n$ such that $A_iA_j\subseteq A_{i+j}$. An ideal $I$ of $A$ is \emph{homogeneous}\index{homogeneous ideal}\index{ideal!homogeneous} if it is generated as an ideal by the elements of $I\cap A_i$, for $i\geq 0$.  In this case, $A/I=\bigoplus_{n\geq 0} (A_n+I)/I$ is a grading.
Note that $I=\bigoplus_{n\geq 1} A_i$ is a homogeneous ideal and $A/I\cong A_0$ with the grading trivial (i.e., concentrated in degree $0$).  In particular, we can view $A_0$ as an $A$-module. A \emph{graded $A$-module}\index{graded!module}\index{module!graded} is an $A$-module $M=\bigoplus_{n\geq 0} M_n$ such that $A_iM_j\subseteq M_{i+j}$ for all $i,j$.  We say that $M$ is generated in degree $i$ if $M_j=0$ for $j<i$ and $M_j=A_{j-i}M_i$ for $j\geq i$. An $A$-module homomorphism $\p\colon M\to N$ between graded $A$-modules has \emph{degree}\index{degree} $d$ if $\p(M_i)\subseteq N_{i+d}$ for all $i\geq 0$.

The path algebra $\Bbbk Q=\bigoplus_{n\geq 0}\Bbbk Q_n$ of a quiver is naturally graded by path length.  That is, the homogeneous component of degree $n\geq 0$ is the $\Bbbk$-span of the set $Q_n$ of paths of length $n$.  The arrow ideal \[J=\bigoplus_{n\geq 1}\Bbbk Q_n\] is graded and $\Bbbk^{|Q_0|}\cong \Bbbk Q_0\cong \Bbbk Q/J$.
If $I$ is a homogeneous admissible ideal, then $A=\Bbbk Q/I$ is a graded finite dimensional $\Bbbk$-algebra.

A graded $\Bbbk$-algebra $A$ is \emph{quadratic}\index{quadratic!algebra}\index{algebra!quadratic} if $A_0$ is semisimple and $A$ is generated by $A_1$ over $A_0$ with relations of degree $2$. Rather than define this formally at this level of generality, we stick to the case of path algebras factored by homogeneous ideals.  If $I$ is a homogeneous admissible ideal of $\Bbbk Q$ that is generated as an ideal by elements in degree $2$ (i.e., by elements which are linear combinations of paths of length $2$), then $A=\Bbbk Q/I$ is called a \emph{quadratic algebra}\index{quadratic!algebra}\index{algebra!quadratic}.  We now wish to define, restricted to this setting, the quadratic dual.  Our presentation follows that of~\cite{MartinezVillaIntroKoszul} and~\cite{GreenVilla}. Define a non-degenerate bilinear form \[\langle \cdot,\cdot\rangle \colon \Bbbk Q_2\times \Bbbk Q^{op}_2\to \Bbbk\] by putting, for paths $p,q$ in $Q_2$ and $Q_2^{op}$, respectively, \[\langle p,q\rangle = \begin{cases} 1, & \text{if}\ q=p^{\circ}\\ 0, & \text{else}\end{cases}\] where $p^{\circ}$ is the path in $Q^{op}$ obtained from $p$ by reversing all the edges and their order. Let $I_2=I\cap \Bbbk Q_2$ and let $I_2^\perp\subseteq \Bbbk Q^{op}_2$ be the orthogonal complement of $I_2$.  Then $A^!=\Bbbk Q^{op}/\langle I_2^{\perp}\rangle$ is called the \emph{quadratic dual}\index{quadratic!dual}\index{dual!quadratic} of $A$.

Suppose that $A$ is a graded $\Bbbk$-algebra.  Then a graded $A$-module $M$ has a \emph{linear resolution}\index{linear resolution}\index{resolution!linear} if it is generated in degree $0$ and there is a projective resolution $P_\bullet\to M$ such that each $P_i$ is a graded $A$-module generated in degree $i$ for all $i\geq 0$ and the maps in the resolution all have degree $0$. One says that $A$ is a \emph{Koszul algebra}\index{Koszul!algebra}\index{algebra!Koszul} if $A_0$ is a semisimple $\Bbbk$-algebra and $A_0$, considered as a graded $A$-module concentrated in degree $0$, has a linear resolution.  In the case $A=\Bbbk Q/I$ for a homogeneous admissible ideal, one has that $A_0\cong \Bbbk^{|Q_0|}$ is a semisimple algebra.  Also observe that in this case $A_0$ admits a linear resolution if and only if each simple $A$-module (viewed as a graded module concentrated in degree $0$) has a linear resolution.  A $\Bbbk$-algebra $A$ admits at most one grading (up to isomorphism of graded algebras) making it Koszul.

\nomenclature[R, 06]{$\Ext(A)$}{$\Ext$-algebra of the algebra $A$}%
Koszul algebras are always quadratic~\cite{beilinson} but the converse is false. The quadratic dual of a Koszul algebra is also called its \emph{Koszul dual}\index{Koszul!dual}\index{dual!Koszul}.  If $A$ is a graded $\Bbbk$-algebra, the \emph{$\Ext$-algebra}\index{$\Ext$-algebra} of $A$ is the graded $\Bbbk$-algebra \[\Ext(A)=\bigoplus_{n\geq 0}\Ext_A^n(A_0,A_0)\] with multiplication given by the Yoneda composition.  If $A$ is a Koszul algebra, then $A^!\cong \Ext(A)^{op}$ and $\Ext(\Ext(A))\cong A$. See~\cite{beilinson} for details.

\subsection{The algebra of a left regular band}\label{s:lrbalgebra}
A general reference for algebras of finite monoids is~\cite{repbook}.  Here we shall have to consider semigroup algebras.  See~\cite[Chapter~5]{CP} or~\cite{oknisemigroupalgebra} for semigroup algebras.

Fix a (finite) left regular band $B$ and a commutative ring with unit $\Bbbk$ for this subsection. The semigroup algebra $\Bbbk B$ is the free $\Bbbk$-module with basis $B$ and multiplication extending that of $B$.  Formally, the product is given by
\[\sum_{b\in B}c_bb\cdot \sum_{b\in B}d_bb= \sum_{b,b'\in B}c_bd_{b'}bb'.\]  Note that $\Bbbk B$ is not in general unital.  The most elementary example comes from \emph{left zero semigroups}\index{left zero semigroup}. If $X$ is a finite set with at least two elements and we make $X$ into a left regular band via the \emph{left zero multiplication}\index{left zero multiplication}, that is, by setting $xy=x$ for all $x\in X$, then $\Bbbk X$ is not unital. Indeed, for each $x\in X$, one has that  \[\sum_{y\in X}c_yy\cdot x= \sum_{y\in X}c_yy\] and hence each element of $X$ is a right identity for $\Bbbk X$.  A unital ring cannot have multiple right identities.  It turns out, more generally, that $\Bbbk B$ does have a right identity (not necessarily unique) for each left regular band $B$.  However, we shall see that $\Bbbk B$ can be unital even when $B$ is not a monoid. In particular, we shall characterize when $\Bbbk B$ is unital in terms of the poset structure on $B$.

The \emph{trivial module}\index{trivial module} for $\Bbbk B$ is the module $\Bbbk$ with trivial action $bk=k$ for all $b\in B$ and $k\in \Bbbk$.  Trivial modules play an important role in monoid cohomology.

We first recall some basic results on the structure of the semigroup algebra $\Bbbk B$.  These results can be found in~\cite{Saliola} for the case when $\Bbbk$ is a field and in~\cite{oldpaper} for general commutative rings under the assumption that $B$ is a monoid.  For completeness we reprove the results here for arbitrary finite left regular bands (not necessarily monoids).

\subsubsection{Orthogonal idempotents}
\label{sssec:orthogonal-idempotents}
Let us recall the construction of a set of orthogonal idempotents from~\cite{Saliola}. They from a complete set of orthogonal idempotents when $B$ is a left regular band monoid; in general, they just sum up to a right identity.   When $\Bbbk$ is a field and $\Bbbk B$ is unital, they are primitive idempotents.
Fix, for each $X\in \Lambda(B)$, an element $e_X$ with $X=Be_X$.  Denoting the minimum element of $\Lambda(B)$ by $\wh 0$,  define $\eta_X$
recursively by $\eta_{\wh 0} = e_{\wh 0}$ and, for $X>\wh 0$,
\begin{equation}\label{defidempotents}
\eta_X =e_X-\sum_{Y<X}e_X\eta_Y.
\end{equation}
Notice that, by induction, one can write
\begin{align*}
    \eta_X=\sum_{b\in B} c_bb
\end{align*}
with the $c_b$ integers such that $e_X\geq b$ for all $b$ with $c_b\neq 0$ and
the coefficient of $e_X$ in $\eta_X$ is $1$.  Also note that by construction
\begin{equation}\label{eq:eXformula}
e_X=\sum_{Y\leq X}e_X\eta_Y.
\end{equation}

The following results are proved in Lemma~4.1 and Theorem~4.2 of~\cite{Saliola}
when $\Bbbk$ is a field and $B$ is a monoid.

\begin{Thm}\label{primidempotentprops}
The elements $\{\eta_X\}_{X\in \Lambda(B)}$ enjoy the following properties.
\begin{enumerate}
\item If $a\in B$ and $X\in \Lambda(B)$ are such that $a\notin B_{\geq X}$, then
    $a\eta_X=0$.
\item One has that $\{\eta_X\mid X\in \Lambda(B)\}$ is a set of orthogonal idempotents and that
    \begin{align*}
        \eta=\sum_{X\in \Lambda(B)} \eta_X
    \end{align*}
is a right identity for $\Bbbk B$. In particular, if $\Bbbk B$ is unital (for instance, if $B$ is a monoid), then $\eta=1$ and $\{\eta_X\}_{X\in \Lambda(B)}$ is a complete set of orthogonal idempotents.
\item If $\Bbbk$ is a field and $\Bbbk B$ is unital, then each $\eta_X$, with $X\in \Lambda(B)$, is a primitive idempotent.
\end{enumerate}
\end{Thm}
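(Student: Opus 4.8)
\emph{Overview and a preliminary identity.} My plan is to prove the three assertions in the order stated, after first isolating one elementary identity that drives parts (1) and (2). Throughout I write $\sigma\colon B\to\Lambda(B)$ for the support map, and I use that it is a semigroup homomorphism with $\sigma(ab)=\sigma(a)\wedge\sigma(b)$, that it is order preserving by Proposition~\ref{p:supportiscellular}, and that two $\mathscr L$-equivalent idempotents $e,f$ satisfy $ef=e$ and $fe=f$. The preliminary identity is: for every $b\in B$, setting $W=\sigma(b)$,
\[
b=\sum_{Y\le W}b\,\eta_Y .
\]
To see this, note $\sigma(b)=W=\sigma(e_W)$, so $b$ and $e_W$ are $\mathscr L$-equivalent and hence $be_W=b$; multiplying the identity $e_X=\sum_{Y\le X}e_X\eta_Y$ recorded just after~\eqref{defidempotents} (with $X=W$) on the left by $b$ then yields the claim. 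This uses only the construction of the $\eta_Y$, so it is available for everything below.

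\emph{Part (1).} I would prove $a\eta_X=0$ whenever $\sigma(a)\not\ge X$ by induction on $X\in\Lambda(B)$. The base case $X=\wh 0$ is vacuous, since $B_{\ge\wh 0}=B$. For the inductive step, using \eqref{defidempotents} write $a\eta_X=b-\sum_{Y<X}b\,\eta_Y$ with $b=ae_X$; here $\sigma(b)=\sigma(a)\wedge X=:Z$, and $Z<X$ because $\sigma(a)\not\ge X$. By the induction hypothesis each $b\eta_Y$ with $Y<X$ vanishes unless $Y\le\sigma(b)=Z$, so the sum collapses to $\sum_{Y\le Z}b\eta_Y$, which equals $b$ by the preliminary identity. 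Hence $a\eta_X=b-b=0$.

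\emph{Part (2).} From part (1) and the fact (recorded after~\eqref{defidempotents}) that every basis element $b$ occurring in $\eta_X$ satisfies $b\le e_X$, hence $\sigma(b)\le X$, I obtain the ``general vanishing'' $\eta_X\eta_Y=0$ whenever $Y\not\le X$: each term $b\eta_Y$ of $\eta_X\eta_Y$ is zero unless $Y\le\sigma(b)\le X$. The right-identity statement is then immediate: for $b\in B$ with $W=\sigma(b)$, part (1) kills every $b\eta_Y$ with $Y\not\le W$, so $b\eta=\sum_Y b\eta_Y=\sum_{Y\le W}b\eta_Y=b$ by the preliminary identity; by linearity $\eta$ is a right identity, and if $\Bbbk B$ is unital then $1=1\eta=\eta$. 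Orthogonality and idempotency I would package as the single statement $P(X)\colon\ \eta_X\eta_Y=\delta_{XY}\eta_X$ for all $Y$, proved by induction on $X$: the case $Y\not\le X$ is the general vanishing, while for $Y\le X$ one expands $\eta_X=e_X-\sum_{Z<X}e_X\eta_Z$ and applies $P(Z)$ to each $\eta_Z\eta_Y$, so that $\sum_{Z<X}e_X\eta_Z\eta_Y$ equals $e_X\eta_Y$ when $Y<X$ and $0$ when $Y=X$, giving $\eta_X\eta_Y=0$ respectively $\eta_X\eta_X=e_X\eta_X=\eta_X$.

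\emph{Part (3), the main obstacle.} Here I would pass to the semisimple quotient, which is where the genuine content lies. The structure theory of left regular band algebras recalled in this section (see~\cite{Saliola}) gives $\rad(\Bbbk B)=\ker(\Bbbk\sigma)$ together with isomorphisms $\Bbbk B/\rad(\Bbbk B)\cong\Bbbk\Lambda(B)\cong\Bbbk^{\,|\Lambda(B)|}$, the algebra of the semilattice $\Lambda(B)$; in particular $\Bbbk B$ is split basic with exactly $|\Lambda(B)|$ simple modules. By part (2) the images $\ov{\eta_X}$ form $|\Lambda(B)|$ nonzero orthogonal idempotents summing to $1$ in the commutative semisimple algebra $\Bbbk^{\,|\Lambda(B)|}$; such a family must be precisely the standard coordinate idempotents, so each $\ov{\eta_X}$ is primitive in $\Bbbk B/\rad(\Bbbk B)$. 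Finally primitivity lifts along the radical: a decomposition $\eta_X=e_1+e_2$ into orthogonal idempotents projects to $\ov{\eta_X}=\ov{e_1}+\ov{e_2}$, and primitivity of $\ov{\eta_X}$ forces some $\ov{e_i}=0$, i.e.\ $e_i\in\rad(\Bbbk B)$; since the radical contains no nonzero idempotent, $e_i=0$. Thus each $\eta_X$ is primitive. Everything apart from the structural input $\Bbbk B/\rad(\Bbbk B)\cong\Bbbk\Lambda(B)$ is the elementary bookkeeping of parts (1)--(2).
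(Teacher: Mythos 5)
Your proposal is correct. Parts (1) and (2) follow essentially the same route as the paper: the same induction over $\Lambda(B)$ for the annihilation statement (your ``preliminary identity'' $b=\sum_{Y\le \sigma(b)}b\eta_Y$ is exactly the combination $be_W=b$ and $e_W=\sum_{Y\le W}e_W\eta_Y$ that the paper uses in the form $ae_Xe_Z\sum_{Y\le Z}\eta_Y=ae_Xe_Z$), and the same expansion of $\eta_X\eta_Y$ for orthogonality and idempotency, which you merely repackage as a single induction $P(X)$. Part (3) is where you genuinely diverge. The paper defers primitivity to Theorem~\ref{Schutz}: it identifies $\Bbbk B\eta_X$ with the Sch\"utzenberger representation $\Bbbk L_X$, computes $\mathrm{End}_{\Bbbk B}(\Bbbk L_X)\cong\Bbbk$ (Proposition~\ref{p:end.schutz}), and concludes indecomposability, hence primitivity. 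You instead pass to the semisimple quotient $\Bbbk B/\rad(\Bbbk B)\cong\Bbbk\Lambda(B)\cong\Bbbk^{|\Lambda(B)|}$, observe that $|\Lambda(B)|$ nonzero orthogonal idempotents in $\Bbbk^{|\Lambda(B)|}$ must be the coordinate idempotents, and lift primitivity along the nilpotent radical. This is legitimate and non-circular: the inputs you need ($\ker\sigma$ nilpotent, Solomon's theorem, hence $\rad(\Bbbk B)=\ker\sigma$) are Theorem~\ref{t:nilkernel}, Theorem~\ref{t:solomon} and Corollary~\ref{c:jac.radical}, none of which depend on the present theorem. The trade-off is that your route is the standard ``count idempotents in the semisimple quotient and lift'' argument and requires knowing the radical, whereas the paper's route buys an explicit model of the projective indecomposable $\Bbbk B\eta_X\cong\Bbbk L_X$, which it needs later in any case; also note the paper's argument via $\mathrm{End}_{\Bbbk B}(\Bbbk L_X)\cong\Bbbk$ gives indecomposability of $\Bbbk L_X$ over any commutative base ring with no nontrivial idempotents, not just fields. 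One tiny point to make explicit in your writeup: the images $\ov{\eta_X}$ are nonzero because each $\eta_X$ is a nonzero idempotent (the coefficient of $e_X$ is $1$) and a nilpotent ideal contains no nonzero idempotent --- you use this fact only in the lifting step, but it is also what guarantees you have $|\Lambda(B)|$ \emph{nonzero} idempotents downstairs.
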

\begin{proof}
We prove the first item by induction on $X$.  If $X=\wh 0$, then there is nothing to prove.  So assume that it is true for $Y<X$ and we prove it for $X$.  Let $a\in B\setminus B_{\geq X}$ and put $Z=X\wedge Ba=Be_X\cap Ba=Bae_X$.  Then we have that
\begin{align*}
a\eta_X &= ae_X-\sum_{Y<X}ae_X\eta_Y\\ &= ae_X-\sum_{Y\leq Z}ae_X\eta_Y-\sum_{Y<X, Y\nleq Ba}ae_Xa\eta_Y\\
&= ae_X-ae_X\sum_{Y\leq Z}\eta_Y
\end{align*}
where the last equality uses the induction hypothesis.  As $Bae_X=Z=Be_Z$, we conclude that $ae_X=ae_Xe_Z$ and so
\[a\eta_X=ae_X-ae_X\sum_{Y\leq Z}\eta_Y = ae_Xe_Z-ae_Xe_Z\sum_{Y\leq Z}\eta_Y = ae_Xe_Z-ae_Xe_Z=0\] because, by \eqref{eq:eXformula},
\[e_Z\sum_{Y\leq Z}\eta_Y=e_Z.\]  This completes the proof of the first item.

For the second item, we first verify that $\eta_X$ is idempotent.  Indeed,
\[\eta_X^2=e_X\eta_X-\sum_{Y<X}e_X\eta_Y\eta_X=e_X\eta_X=\eta_X\] by the first item because each element $a\in B$ with non-zero coefficient in $e_X\eta_Y$ for $Y<X$ satisfies $Ba\nsupseteq X$.

Next we check that $\eta_X\eta_Z=0$ if $X\neq Z$.  First suppose that $Z\nleq X$.  As every element of $B$ with a non-zero coefficient in $\eta_X$ belongs to $e_XB\subseteq Be_X$, we conclude that $\eta_X\eta_Z=0$ by the first item.  Thus we are reduced to the case that $Z<X$.  We proceed by induction.  Assume that $\eta_Y\eta_Z=0$ for all $Y$ with $Z<Y<X$.  Then we have that
\[\eta_X\eta_Z= e_X\eta_Z-\sum_{Y<X}e_X\eta_Y\eta_Z=e_X\eta_Z-e_X\eta_Z=0\] because $\eta_Z$ is idempotent, $\eta_Y\eta_Z=0$ if $Z\nleq Y$ by the first case and $\eta_Y\eta_Z=0$ if $Z<Y<X$ by induction.  This completes the proof that $\{\eta_X\mid X\in \Lambda(B)\}$ is an orthogonal set of idempotents.  It remains to verify that \[\eta=\sum_{X\in \Lambda(B)}\eta_X\] is a right identity for $\Bbbk B$.
Indeed, by the first item, if $b\in B$ and $X=Bb$, then $be_X=b$ and
\[b\eta=\sum_{Y\leq X}b\eta_Y=be_X\sum_{Y\leq X}\eta_Y=be_X=b\] by \eqref{eq:eXformula}, as required.

The final item will be proved in Theorem~\ref{Schutz} below.
\end{proof}

\begin{Cor}\label{c:regularisproj}
If $B$ is a left regular band, then $\Bbbk B$ is a projective $\Bbbk B$-module as are its direct summands.
\end{Cor}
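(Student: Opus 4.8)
The plan is to deduce the statement immediately from the existence of a right identity for $\Bbbk B$, which was the substantive content of Theorem~\ref{primidempotentprops}. Concretely, part~(2) of that theorem provides the element $\eta=\sum_{X\in\Lambda(B)}\eta_X$, which is a right identity for $\Bbbk B$ over any commutative ring $\Bbbk$ with unit. (No hypothesis on $\Bbbk$ is needed, since the $\eta_X$ are defined by \eqref{defidempotents} with integer coefficients.) Thus it suffices to invoke the general principle, recorded in the remark earlier in this section, that a ring with a right identity is projective as a left module over itself.

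I would spell out that principle as follows. Writing $R=\Bbbk B$ and $e=\eta$, the assignment $f\mapsto f(e)$ furnishes a natural isomorphism $\Hom_R(R,M)\cong eM$ for every left $R$-module $M$: any $R$-module map $f\colon R\to M$ satisfies $f(r)=f(re)=rf(e)$ because $e$ is a right identity, so $f$ is determined by $f(e)\in eM$, and conversely each $m\in eM$ gives the map $r\mapsto rm$. Since $e$ is idempotent, the functor $M\mapsto eM$ is exact; hence $\Hom_R(R,-)$ is exact, which is precisely the statement that $R$ is a projective left $R$-module. Finally, a direct summand of a projective module is projective, so every direct summand of $\Bbbk B$ is projective as well.

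There is essentially no obstacle to overcome here: the entire difficulty lies in constructing the orthogonal idempotents $\eta_X$ and proving that their sum is a right identity, which is already accomplished in Theorem~\ref{primidempotentprops}. The corollary is a purely formal consequence, and the only point worth checking carefully is that both the identity $f(r)=rf(e)$ and the exactness of $M\mapsto eM$ require nothing beyond $\eta$ being a right identity and an idempotent, and that these properties hold over an arbitrary commutative base ring.
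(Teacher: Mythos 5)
Your argument is correct and is essentially identical to the paper's proof: both invoke the right identity $\eta$ from Theorem~\ref{primidempotentprops}, use the isomorphism $\Hom_{\Bbbk B}(\Bbbk B,M)\cong \eta M$, and conclude by exactness of $M\mapsto \eta M$. The only difference is that you spell out the verification of the isomorphism, which the paper leaves implicit.
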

\begin{proof}
We have $\Bbbk B=\Bbbk B\eta$, with $\eta$ as in Theorem~\ref{primidempotentprops}, and so $\Hom_{\Bbbk B}(\Bbbk B,M)\cong \eta M$ for any left $\Bbbk B$-module $M$. As the functor $M\mapsto \eta M$ is obviously exact, this completes the proof.
\end{proof}

The following is~\cite[Corollary~4.4]{Saliola} for fields and is in~\cite{oldpaper} for the general case, but with $B$ a monoid.

\begin{Cor}\label{basisofidempotents}
The set $\{b\eta_{\sigma(b)}\mid b\in B\}$ is a basis of idempotents for $\Bbbk B$.
\end{Cor}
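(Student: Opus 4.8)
The plan is to establish two things: that each $b\eta_{\sigma(b)}$ is idempotent, and that the family $\left(b\eta_{\sigma(b)}\right)_{b\in B}$ is unitriangular with respect to the $\mathscr R$-order on $B$, hence a basis. Throughout I write $X=\sigma(b)=Bb$ and use the fact, recorded just before Theorem~\ref{primidempotentprops}, that $\eta_X=\sum_{c} n_c\, c$ with $n_c\in\mathbb Z$, with $c\leq e_X$ for every $c$ with $n_c\neq 0$, and with the coefficient of $e_X$ equal to $1$. Since $\sigma$ is order preserving and $c\leq e_X$, the equality $\sigma(c)=\sigma(e_X)=X$ would force $c=e_X$ by Proposition~\ref{p:supportiscellular}; thus $e_X$ is the unique term of $\eta_X$ of support $X$, and every other term $c$ satisfies $\sigma(c)<X$.

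Before either step I would record the identity $e_Xb=e_X$ for every $b\in B_{\geq X}$. Indeed $e_X(e_Xb)=e_Xb$ gives $e_Xb\leq e_X$, while $\sigma(e_Xb)=X\wedge\sigma(b)=X=\sigma(e_X)$ because $\sigma(b)\geq X$; comparability together with equal support yields $e_Xb=e_X$, exactly as in the proof of Proposition~\ref{p:supportiscellular}. In particular $be_X=b$ when $\sigma(b)=X$, since then $b$ and $e_X$ are $\mathscr L$-equivalent.

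For idempotency the key lemma is that $\eta_Xb\eta_X=\eta_X$ for all $b\in B_{\geq X}$. Right-multiplying the defining recursion~\eqref{defidempotents} by $b$ and using $e_Xb=e_X$ gives $\eta_Xb=e_X-\sum_{Y<X}e_X\eta_Y b$, so
\[
\eta_Xb\eta_X=e_X\eta_X-\sum_{Y<X}e_X\eta_Y b\eta_X=\eta_X-\sum_{Y<X}e_X\eta_Y b\eta_X,
\]
where $e_X\eta_X=\eta_X$ because every term $c$ of $\eta_X$ satisfies $e_Xc=c$. Now each term of $\eta_Yb$ has the form $c'b$ with $c'\leq e_Y$, whence $\sigma(c'b)=\sigma(c')\wedge\sigma(b)\leq Y<X$; such an element lies outside $B_{\geq X}$, so $(c'b)\eta_X=0$ by part~(1) of Theorem~\ref{primidempotentprops}. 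Thus every summand $\eta_Yb\eta_X$ vanishes and $\eta_Xb\eta_X=\eta_X$. Taking $b$ with $\sigma(b)=X$ then yields $(b\eta_X)^2=b\,(\eta_Xb\eta_X)=b\eta_X$.

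It remains to see that the $b\eta_{\sigma(b)}$ form a basis. Expanding $b\eta_X=\sum_c n_c(bc)$, the term $c=e_X$ contributes $be_X=b$ with coefficient $1$. For any other term $c$ we have $c\leq e_X$ with $\sigma(c)<X$, so $bc\leq b$ by Lemma~\ref{l:left.act.order}, while $\sigma(bc)=\sigma(c)<X=\sigma(b)$ shows $bc\neq b$; hence $bc<b$ strictly in the $\mathscr R$-order. Therefore $b\eta_{\sigma(b)}=b+\sum_{b'<b}\lambda_{b'}b'$ with $\lambda_{b'}\in\mathbb Z$. Ordering $B$ by a linear extension of its $\mathscr R$-order, the matrix expressing $\left(b\eta_{\sigma(b)}\right)_{b\in B}$ in the basis $B$ is unitriangular with integer entries, hence invertible over any commutative ring with unit. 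Consequently $\left\{b\eta_{\sigma(b)}\mid b\in B\right\}$ consists of $|B|$ linearly independent idempotents and is a basis of $\Bbbk B$. I expect the idempotency lemma---and in particular the observation $e_Xb=e_X$ that makes the recursion telescope---to be the crux; the basis step is then a routine unitriangularity argument.
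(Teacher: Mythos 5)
Your proof is correct, and the basis half (leading term $be_{\sigma(b)}=b$ plus strictly smaller terms, hence a unitriangular change of basis) is exactly the paper's argument. Where you diverge is the idempotency step. You prove the sandwich identity $\eta_X b\eta_X=\eta_X$ for all $b\in B_{\geq X}$ by unwinding the recursion \eqref{defidempotents} and invoking the annihilation property $a\eta_X=0$ for $a\notin B_{\geq X}$; this is valid, but it is considerably more work than needed. The paper instead observes that the left regular band identity $xyx=xy$ linearizes: for any $b\in B$ and any $z\in\Bbbk B$ one has $bzb=bz$ (apply $bcb=bc$ termwise), so $\bigl(b\eta_{\sigma(b)}\bigr)^2=(b\eta_{\sigma(b)}b)\eta_{\sigma(b)}=b\eta_{\sigma(b)}^2=b\eta_{\sigma(b)}$, using only the idempotence of $\eta_{\sigma(b)}$ already established in Theorem~\ref{primidempotentprops}. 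So the step you call the crux is a one-line consequence of the defining identities, with no need for the recursion, the identity $e_Xb=e_X$, or the support bookkeeping. That said, your sandwich lemma is a genuinely stronger statement (it shows $\eta_X$ absorbs conjugation by anything in $B_{\geq X}$, not just by elements $\mathscr L$-equivalent to $e_X$), and your derivation of it is sound; it just is not the economical route to this particular corollary.
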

\begin{proof}
Clearly, $b\eta_{\sigma(b)}b\eta_{\sigma(b)}=b\eta_{\sigma(b)}^2=b\eta_{\sigma(b)}$ by the left regular band axiom and idempotence of $\eta_{\sigma(b)}$.  As $be_{\sigma(b)}=b$, it follows that \[b\eta_{\sigma(b)}=b+\sum_{a\in \bd{bB}}c_aa\] and so the mapping $\Bbbk B\to \Bbbk B$ induced by $b\mapsto b\eta_{\sigma(b)}$ has matrix which is unipotent upper triangular with respect to any linear extension of the natural partial order on $B$.  The result follows.
\end{proof}

\subsubsection{Sch\"utzenberger representations}
\label{ss:Schutz}
\nomenclature[L, 15]{$L_X$}{left Sch\"utzenberger representation associated with an element $X \in \Lambda(B)$}%
We recall the classical (left) \emph{Sch\"utz\-en\-ber\-ger representation}\index{Sch\"utzenberger representation}\index{representation!Sch\"utzenberger} associated
to an element $X\in \Lambda(B)$.  We retain the notation of the previous subsection.  Let $L_X=\sigma\inv (X)$.  It is the
$\mathscr L$-class of $e_X$ in the sense of semigroup theory~\cite{CP,repbook} (see~\cite[Appendix~A]{qtheor}), that is, it consists of all elements that generate the same principal left ideal of $B$ as $e_X$.  Define a
$\Bbbk B$-module structure on $\Bbbk L_X$ by putting, for $a\in B$ and $b\in L_X$,
\begin{align*}
    a\cdot b =
    \begin{cases}
        ab & \text{if}\ \sigma(a)\geq X\\
        0 & \text{else.}
    \end{cases}
\end{align*}
Let us compute the endomorphism ring of $\Bbbk L_X$.

\begin{Prop}\label{p:end.schutz}
Let $\Bbbk$ be a commutative ring with unit and $B$ a left regular band. Let $X\in \Lambda(B)$.  Then $\mathrm{End}_{\Bbbk B}(\Bbbk L_X)\cong \Bbbk$.  Consequently, if $\Bbbk$ is a field, then $\Bbbk L_X$ is indecomposable.
\end{Prop}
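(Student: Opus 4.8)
The plan is to exploit the fact that $\Bbbk L_X$ is a cyclic $\Bbbk B$-module generated by the idempotent $e_X$, together with the observation that $e_X$ collapses every basis element to itself under the module action. First I would record the relevant multiplicative relations. Since $B$ is a band, every element is idempotent, and each $b\in L_X$ satisfies $Bb=X=Be_X$, so $b$ and $e_X$ are $\mathscr L$-equivalent idempotents; by the characterization of $\mathscr L$-equivalence of idempotents recalled in Section~\ref{s:lrbs}, this gives $e_Xb=e_X$ and $be_X=b$. Because $\sigma(b)=X\geq X$ for every $b\in L_X$, the module action of such a $b$ is ordinary left multiplication; in particular $b\cdot e_X=be_X=b$ and $e_X\cdot c=e_Xc=e_X$ for all $b,c\in L_X$.

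The relation $b\cdot e_X=b$ shows that $\Bbbk L_X$ is generated as a $\Bbbk B$-module by the single element $e_X$. Consequently any endomorphism $\varphi\in \mathrm{End}_{\Bbbk B}(\Bbbk L_X)$ is completely determined by its value $\varphi(e_X)$, via $\varphi(b)=\varphi(b\cdot e_X)=b\cdot \varphi(e_X)$.

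Next I would pin down $\varphi(e_X)$. Write $\varphi(e_X)=\sum_{c\in L_X}\lambda_c c$ with $\lambda_c\in \Bbbk$. Since $e_X$ is idempotent and $\sigma(e_X)=X$, we have $e_X\cdot e_X=e_X$, so $\varphi(e_X)=\varphi(e_X\cdot e_X)=e_X\cdot \varphi(e_X)$. The crux is the collapsing identity $e_X\cdot c=e_X$ for every $c\in L_X$, which yields
\[
e_X\cdot \varphi(e_X)=\sum_{c\in L_X}\lambda_c\,(e_X\cdot c)=\Big(\sum_{c\in L_X}\lambda_c\Big)e_X.
\]
Setting $\mu=\sum_{c\in L_X}\lambda_c$, this gives $\varphi(e_X)=\mu e_X$, and therefore $\varphi(b)=b\cdot \mu e_X=\mu b$ for all $b\in L_X$. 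Thus $\varphi$ is scalar multiplication by $\mu$.

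Finally I would assemble the isomorphism: the map $\Bbbk\to \mathrm{End}_{\Bbbk B}(\Bbbk L_X)$ sending $\mu$ to scalar multiplication by $\mu$ is a ring homomorphism (scalars are central), it is surjective by the previous paragraph, and it is injective because $e_X\in L_X$ is a basis vector, so $\mu e_X=0$ forces $\mu=0$. Hence $\mathrm{End}_{\Bbbk B}(\Bbbk L_X)\cong \Bbbk$. When $\Bbbk$ is a field, $\Bbbk$ has no idempotents other than $0$ and $1$, so neither does the endomorphism ring, and a module whose endomorphism ring has only trivial idempotents is indecomposable; this gives the stated consequence. The only step demanding any care is the identity $e_Xc=e_X$, which is precisely where the $\mathscr L$-class structure of $L_X$ is used; everything else is formal.
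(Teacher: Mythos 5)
Your proof is correct and follows essentially the same route as the paper: both arguments reduce an endomorphism $\varphi$ to its value on the cyclic generator $e_X$ and then use the $\mathscr L$-class identity $ab=a$ for $a,b\in L_X$ to show $\varphi$ acts by the scalar $\sum_c\lambda_c$. The only cosmetic difference is that you first collapse $\varphi(e_X)$ to $\mu e_X$ via $e_X\cdot c=e_X$ before applying $b$, whereas the paper applies $a$ directly to $\sum_b c_b b$; the computation is the same.
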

\begin{proof}
Fix $e_X$ with $Be_X=X$, as usual.  Let $\lambda\colon \Bbbk L_X\to \Bbbk$ be the $\Bbbk$-module homomorphism defined by $\lambda(a)=1$ for $a\in L_X$ and
define \[\Psi\colon \mathrm{End}_{\Bbbk B}(\Bbbk L_X)\to \Bbbk\] by $\Psi(\p) = \lambda(\p(e_X))$.  We claim that $\p(a)=\Psi(\p)a$ for all $a\in L_X$.  The proposition will follow from this and the observation that $\Psi(c1_{\Bbbk L_X})=c$ for $c\in \Bbbk$.
Indeed, suppose that $\p(e_X) = \sum_{b\in L_X}c_bb$. If $a\in L_X$, then we have that $ab=a$ for all $b\in L_X$.  Therefore, we have that
\[\p(a)=\p(ae_X)=a\p(e_X)=\sum_{b\in L_X}c_bab=\left(\sum_{b\in L_X}c_b\right)a=\Psi(\p)a.\]  This establishes that $\mathrm{End}_{\Bbbk B}(\Bbbk L_X)\cong \Bbbk$.

The second statement follows from the first because a field has no non-trivial idempotents.
\end{proof}

The next theorem was proved in~\cite{Saliola} for $\Bbbk$ a field and over a general base commutative ring in~\cite{oldpaper} under the assumption that $B$ has an identity.

\begin{Thm}\label{Schutz}
Let $B$ be a left regular band and $\Bbbk$ a commutative ring with unit.  Then the
Sch\"utzenberger representation $\Bbbk L_X$ is isomorphic to $\Bbbk B\eta_X$, for $X\in \Lambda(X)$ via the mapping $b\mapsto b\eta_X$ and, consequently, is a projective module.  Moreover,
\begin{equation}\label{eq:direct.sum.schutz}
    \Bbbk B\cong \bigoplus_{X\in \Lambda(B)} \Bbbk L_X.
\end{equation}
If $\Bbbk$ is a field and $\Bbbk B$ is unital, then this is the decomposition of $\Bbbk B$ into projective
indecomposables and hence each $\eta_X$ is a primitive idempotent.
\end{Thm}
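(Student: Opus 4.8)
The plan is to first establish the isomorphism $\Bbbk L_X\cong \Bbbk B\eta_X$ and then assemble the direct sum decomposition from the orthogonality of the idempotents. I would define a $\Bbbk$-linear map $\varphi\colon \Bbbk L_X\to \Bbbk B$ on the basis $L_X$ by $\varphi(b)=b\eta_X$. To see that $\varphi$ is a $\Bbbk B$-module homomorphism, I would check the two cases of the module action on a basis element $b\in L_X$ against $a\in B$. When $\sigma(a)\geq X$ we have $\sigma(ab)=\sigma(a)\wedge X=X$, so $ab\in L_X$ and $\varphi(a\cdot b)=\varphi(ab)=ab\eta_X=a\varphi(b)$. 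When $\sigma(a)\not\geq X$, the module action gives $a\cdot b=0$, while $a\varphi(b)=ab\eta_X$; here $\sigma(ab)=\sigma(a)\wedge X<X$ so $ab\notin B_{\geq X}$, whence $ab\eta_X=0$ by Theorem~\ref{primidempotentprops}(1). Thus $\varphi$ is a module map.

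Next I would prove $\varphi$ is injective with image exactly $\Bbbk B\eta_X$. Injectivity is immediate from Corollary~\ref{basisofidempotents}: the set $\{b\eta_{\sigma(b)}\mid b\in B\}$ is a basis of $\Bbbk B$, and for $b\in L_X$ we have $\sigma(b)=X$, so $\varphi(L_X)=\{b\eta_X\mid b\in L_X\}$ is a subset of this basis and hence $\Bbbk$-linearly independent. For the image, note first that $e_X\eta_X=e_X^2-\sum_{Y<X}e_X^2\eta_Y=e_X-\sum_{Y<X}e_X\eta_Y=\eta_X$, using $e_X^2=e_X$. Now for arbitrary $b\in B$: if $\sigma(b)\not\geq X$ then $b\eta_X=0$ by Theorem~\ref{primidempotentprops}(1); if $\sigma(b)\geq X$ then $be_X\in L_X$ and $(be_X)\eta_X=b(e_X\eta_X)=b\eta_X$, so $b\eta_X\in\varphi(\Bbbk L_X)$. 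Hence $\Bbbk B\eta_X=\mathrm{span}\{b\eta_X\mid b\in B\}=\varphi(\Bbbk L_X)$, and $\varphi$ restricts to the asserted isomorphism $\Bbbk L_X\cong\Bbbk B\eta_X$ given by $b\mapsto b\eta_X$.

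For the direct sum decomposition I would use that $\{\eta_X\}_{X\in\Lambda(B)}$ is an orthogonal set of idempotents summing to the right identity $\eta$ (Theorem~\ref{primidempotentprops}(2)). Since $\eta$ is a right identity, $v=v\eta$ for all $v\in\Bbbk B$, so $\Bbbk B=\Bbbk B\eta=\sum_X\Bbbk B\eta_X$; orthogonality makes this sum direct, since right-multiplying a relation $\sum_X v_X\eta_X=0$ by $\eta_Y$ isolates $v_Y\eta_Y=0$. Combining with the previous step yields $\Bbbk B\cong\bigoplus_{X\in\Lambda(B)}\Bbbk L_X$. In particular each $\Bbbk B\eta_X$ is a direct summand of $\Bbbk B$ and hence projective by Corollary~\ref{c:regularisproj}, so $\Bbbk L_X$ is projective.

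Finally, suppose $\Bbbk$ is a field and $\Bbbk B$ is unital, so that $\eta=1$ and $\{\eta_X\}$ is a complete set of orthogonal idempotents. By Proposition~\ref{p:end.schutz}, $\mathrm{End}_{\Bbbk B}(\Bbbk L_X)\cong\Bbbk$, a field, which has no idempotents other than $0$ and $1$; hence each $\Bbbk L_X\cong\Bbbk B\eta_X$ is indecomposable, and the decomposition above is the decomposition of $\Bbbk B$ into projective indecomposables. Since an idempotent of $\Bbbk B$ is primitive exactly when the left ideal it generates is indecomposable as a module, each $\eta_X$ is a primitive idempotent. I expect the only delicate point to be the verification that $\varphi$ is a module map in the vanishing case, which hinges on applying Theorem~\ref{primidempotentprops}(1) to the product $ab$ rather than to $a$ alone; everything else is bookkeeping with the orthogonal idempotents already constructed.
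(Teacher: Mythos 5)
Your proposal is correct and follows essentially the same route as the paper: the same map $b\mapsto b\eta_X$, injectivity via Corollary~\ref{basisofidempotents}, surjectivity via $b\eta_X=(be_X)\eta_X$ with $be_X\in L_X$, the vanishing case of the module-map check via Theorem~\ref{primidempotentprops}(1) applied to the product, the direct sum from the orthogonal idempotents summing to the right identity, and indecomposability from Proposition~\ref{p:end.schutz}. Your explicit verification that $e_X\eta_X=\eta_X$ is a small detail the paper leaves implicit, but it is the same argument.
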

\begin{proof}
Fix $X\in \Lambda(B)$.  Define a $\Bbbk$-linear map $\p\colon \Bbbk L_X\to \Bbbk B\eta_X$ by $\p(b)=b\eta_X$ for $b\in L_X$.  The mapping $\p$ is injective as a consequence of Corollary~\ref{basisofidempotents}.  It is surjective because if $a\in B$, then $a\eta_X=0$ unless $Ba\supseteq X$ by Theorem~\ref{primidempotentprops}.  But if $X=Be_X$ and $Ba\supseteq X$, then $a\eta_X=ae_X\eta_X$ and $ae_X\in L_X$.  Thus $a\eta_X=\p(ae_X)$ and so $\p$ is surjective.  We check that $\p$ is $\Bbbk B$-linear. If $a\in L_X$ and $b\in B$ with $Bb\supseteq X$, then \[\p(b\cdot a) = (ba)\eta_X=b(a\eta_X)=b\p(a).\]  If $Bb\nsupseteq X$, then $ba\eta_X=0$ by Theorem~\ref{primidempotentprops} and so \[\p(b\cdot a)=\p(0)=0=b(a\eta_X)=b\p(a).\] This establishes that $\Bbbk B\eta_X\cong \Bbbk L_X$.

To prove \eqref{eq:direct.sum.schutz}, we recall that \[\eta=\sum_{X\in \Lambda(B)}\eta_X\] is a right identity and the $\eta_X$ with $X\in \Lambda(B)$ form a set of orthogonal idempotents.  Thus \eqref{eq:direct.sum.schutz} follows from
\[\Bbbk B=\Bbbk B\eta=\bigoplus_{X\in \Lambda(B)}\Bbbk B\eta_X\]
and the first part of the theorem.  The final statement is a consequence of Proposition~\ref{p:end.schutz} because $\Bbbk B\eta_X\cong \Bbbk L_X$ is indecomposable.
\end{proof}

If $X\in \Lambda(B)$, then there is a $\Bbbk$-algebra homomorphism $\rho_X\colon \Bbbk B\to
\Bbbk B_{\geq X}$ given by
\begin{align*}
    \rho_X(b) =
    \begin{cases}
        b & \text{if}\ \sigma(b)\geq X\\
        0 & \text{else}
    \end{cases}
\end{align*}
for $b\in B$.
This homomorphism allows us to consider any $\Bbbk B_{\geq X}$-module $M$ as a
$\Bbbk B$-module via the action $b \cdot m = \rho_X(b) m$ for all $b \in B$ and $m
\in M$.  Denote by $\Bbbk_{X}$ the trivial $\Bbbk B_{\geq X}$-module, viewed as a $\Bbbk B$-module.  It is known that if $\Bbbk$ is a field, then the $\Bbbk_X$ with $X\in \Lambda(B)$ form a complete set of non-isomorphic simple $\Bbbk B$-modules and the augmentation map $\lambda_X\colon \Bbbk L_X\to \Bbbk_X$ is the projective cover~\cite{Saliola}; in fact, this is an immediate consequence of Theorem~\ref{Schutz} because the $\Bbbk L_X$ are a complete set of projective indecomposable modules and hence have simple tops.  Let us state this as a corollary.

\begin{Cor}\label{c:simples}
Let $\Bbbk$ be a field and $B$ a left regular band such that $\Bbbk B$ is unital.  Then the simple $\Bbbk B$-modules $\Bbbk_X$ with $X\in \Lambda(B)$ form a complete set of representatives of the isomorphism classes of simple $\Bbbk B$-modules and the mapping $\p_X\colon \Bbbk L_X\to \Bbbk_X$ given by $\p_X(b)=1$ for $b\in L_X$ is the projective cover.
\end{Cor}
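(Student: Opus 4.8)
The plan is to deduce everything from Theorem~\ref{Schutz} together with the standard facts about finite dimensional algebras recalled in Section~\ref{s:algebraprelim}. Since $\Bbbk$ is a field and $\Bbbk B$ is unital, $\Bbbk B$ is a finite dimensional unital $\Bbbk$-algebra, and Theorem~\ref{Schutz} presents $\Bbbk B\cong\bigoplus_{X\in\Lambda(B)}\Bbbk L_X$ (with $\Bbbk L_X\cong\Bbbk B\eta_X$) as the decomposition of the regular module into projective indecomposables. By the general theory, each $\Bbbk L_X$ then has a simple top $\Bbbk L_X/\rad(\Bbbk L_X)$, the canonical surjection onto this top is a projective cover, and every simple $\Bbbk B$-module is isomorphic to exactly one such top. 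So it remains only to identify the top of $\Bbbk L_X$ with $\Bbbk_X$ and to check that the $\Bbbk_X$ are pairwise non-isomorphic.

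First I would verify that $\p_X\colon\Bbbk L_X\to\Bbbk_X$, $b\mapsto 1$, is a surjective homomorphism of $\Bbbk B$-modules. Recall $\Bbbk_X$ is one dimensional and $b\in B$ acts on it as $1$ when $\sigma(b)\geq X$ and as $0$ otherwise, while $b$ acts on $\Bbbk L_X$ by $b\cdot a=ba$ when $\sigma(b)\geq X$ and by $0$ otherwise. When $\sigma(b)\geq X$ we have $\sigma(ba)=\sigma(b)\wedge X=X$ by Proposition~\ref{p:supp.lattice}, so $ba\in L_X$ and $\p_X(b\cdot a)=1=b\cdot\p_X(a)$; when $\sigma(b)\not\geq X$ both sides are $0$. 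Hence $\p_X$ is a module map, evidently surjective onto the simple module $\Bbbk_X$.

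Since $\Bbbk L_X$ is projective indecomposable, $\rad(\Bbbk L_X)$ is its unique maximal submodule. As $\Bbbk_X$ is simple, $\ker\p_X$ is a maximal submodule, and therefore $\ker\p_X=\rad(\Bbbk L_X)$. Consequently $\p_X$ is the projective cover and $\Bbbk_X\cong\Bbbk L_X/\rad(\Bbbk L_X)$ is the top of $\Bbbk L_X$; in particular every simple $\Bbbk B$-module is isomorphic to some $\Bbbk_X$. To see the $\Bbbk_X$ are pairwise non-isomorphic, I would note that the one dimensional module $\Bbbk_X$ is determined by the character $\Bbbk B\to\Bbbk$ sending $b$ to $1$ precisely when $\sigma(b)\geq X$, i.e. on the set $B_{\geq X}$. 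For distinct $X,Y\in\Lambda(B)$ one has $B_{\geq X}\neq B_{\geq Y}$: choosing $a,b$ with $\sigma(a)=X$ and $\sigma(b)=Y$ (possible since $\sigma$ is onto), the equality $B_{\geq X}=B_{\geq Y}$ would force $a\in B_{\geq Y}$ and $b\in B_{\geq X}$, hence $X\geq Y$ and $Y\geq X$, contradicting $X\neq Y$. Thus the defining characters differ, $\Bbbk_X\not\cong\Bbbk_Y$, and the $\Bbbk_X$ form a complete set of representatives of the simple $\Bbbk B$-modules.

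The only step that genuinely requires the machinery of Section~\ref{s:algebraprelim} is the identification $\ker\p_X=\rad(\Bbbk L_X)$, which rests on the fact that a projective indecomposable over a finite dimensional algebra has a simple top, equivalently a unique maximal submodule; everything else is a direct computation with the Sch\"utzenberger action. I therefore expect no real obstacle beyond bookkeeping, which is why the authors describe this as an immediate consequence of Theorem~\ref{Schutz}.
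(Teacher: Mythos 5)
Your proof is correct and takes essentially the same route as the paper, which derives the corollary as an immediate consequence of Theorem~\ref{Schutz}: the $\Bbbk L_X$ form a complete set of projective indecomposable modules and hence have simple tops. Your write-up simply fills in the routine verifications the paper leaves implicit (that $\p_X$ is a $\Bbbk B$-module map, that $\ker\p_X=\rad(\Bbbk L_X)$, and that the one-dimensional modules $\Bbbk_X$ are pairwise non-isomorphic because the sets $B_{\geq X}$ are distinct).
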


The fact that the $\Bbbk_X$ are the simple $\Bbbk B$-modules over a field can be deduced also from the classical representation theory of finite semigroups, cf.~\cite{myirreps}, \cite[Chapter~5]{CP} or~\cite[Chapter~5]{repbook}.

 If $Y\geq X$, then $\Bbbk L_Y$ is a
$\Bbbk B_{\geq X}$-module and is the projective indecomposable module for $\Bbbk B_{\geq
X}$ corresponding to $Y\in  \Lambda(B)_{\geq X}\cong \Lambda(B_{\geq X})$.  We, therefore, obtain the following corollary of Theorem~\ref{Schutz}.

\begin{Cor}\label{niceprojectives}
If $X\in \Lambda(B)$, then $\Bbbk B_{\geq X}$ is a projective $\Bbbk B$-module and the
decomposition
\begin{align*}
    \Bbbk B\cong \Bbbk B_{\geq X}\oplus \bigoplus_{Y\ngeq X}\Bbbk L_Y
\end{align*}
holds. Consequently, any projective $\Bbbk B_{\geq X}$-module is a projective
$\Bbbk B$-module (via $\rho_X$).
\end{Cor}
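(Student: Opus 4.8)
The plan is to deduce everything from Theorem~\ref{Schutz} applied \emph{both} to $B$ and to the contraction $B_{\geq X}$, by matching their Sch\"utzenberger modules across the homomorphism $\rho_X$. First I would record the bookkeeping. By Proposition~\ref{p:support.lat.sub} the support map identifies $\Lambda(B_{\geq X})$ with $\Lambda(B)_{\geq X}$, and for $Y\geq X$ the $\mathscr L$-class $L_Y=\sigma\inv(Y)$ is literally the same subset of $B$ whether it is computed in $B$ or in $B_{\geq X}$: every $b$ with $Bb=Y$ has $\sigma(b)=Y\geq X$ and so lies in $B_{\geq X}$, and for $b,b'\in B_{\geq X}$ one has $Bb=Bb'$ if and only if $B_{\geq X}b=B_{\geq X}b'$. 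Thus the two algebras share the same index set $\{Y\mid Y\geq X\}$ and the same underlying free $\Bbbk$-modules $\Bbbk L_Y$.

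The key step is to compare module structures. Applying Theorem~\ref{Schutz} to $B_{\geq X}$ gives $\Bbbk B_{\geq X}\cong\bigoplus_{Y\geq X}\Bbbk L_Y$ as $\Bbbk B_{\geq X}$-modules, where (under the identification above) $\Bbbk L_Y$ carries the Sch\"utzenberger action $a\cdot b=ab$ if $\sigma(a)\geq Y$ and $a\cdot b=0$ otherwise, for $a\in B_{\geq X}$. I would then pull this back to $\Bbbk B$ along $\rho_X$ and check that the resulting $\Bbbk B$-module is \emph{exactly} the Sch\"utzenberger module $\Bbbk L_Y$ for $B$: for $a\in B$ with $\sigma(a)\geq X$ we have $\rho_X(a)=a$, so the action is unchanged, while for $\sigma(a)\ngeq X$ we have $\rho_X(a)=0$; and since $Y\geq X$, the condition $\sigma(a)\ngeq X$ forces $\sigma(a)\ngeq Y$, so the $B$-Sch\"utzenberger action on $\Bbbk L_Y$ would also vanish there. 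Hence the two $\Bbbk B$-actions coincide, and functoriality of restriction of scalars along $\rho_X$ upgrades the $\Bbbk B_{\geq X}$-module isomorphism to a $\Bbbk B$-module isomorphism $\Bbbk B_{\geq X}\cong\bigoplus_{Y\geq X}\Bbbk L_Y$.

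With this in hand the decomposition is immediate. Theorem~\ref{Schutz} gives $\Bbbk B\cong\bigoplus_{Y\in\Lambda(B)}\Bbbk L_Y$, and splitting the index set as $\{Y\geq X\}\sqcup\{Y\ngeq X\}$ yields
\[
\Bbbk B\cong\Big(\bigoplus_{Y\geq X}\Bbbk L_Y\Big)\oplus\bigoplus_{Y\ngeq X}\Bbbk L_Y\cong \Bbbk B_{\geq X}\oplus\bigoplus_{Y\ngeq X}\Bbbk L_Y,
\]
which is the asserted formula. In particular $\Bbbk B_{\geq X}$ is a direct summand of $\Bbbk B$, and $\Bbbk B$ is projective by Corollary~\ref{c:regularisproj}, so $\Bbbk B_{\geq X}$ is a projective $\Bbbk B$-module. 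For the final assertion, any projective $\Bbbk B_{\geq X}$-module is a direct summand of a free $\Bbbk B_{\geq X}$-module; restricting along $\rho_X$ (which preserves arbitrary direct sums and summands) realizes it as a summand of a direct sum of copies of the now-projective $\Bbbk B$-module $\Bbbk B_{\geq X}$, hence as a projective $\Bbbk B$-module.

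The one point that needs genuine care, and the only place the argument could go wrong, is the module-structure identification in the second step: one must verify that restricting scalars along $\rho_X$ does not alter the support comparison $\sigma(a)\geq Y$ defining the Sch\"utzenberger action. This is precisely where the hypothesis $Y\geq X$ enters, guaranteeing that the elements killed by $\rho_X$ are already annihilated by the ambient $B$-action. Everything else is formal once Theorem~\ref{Schutz} is available for both $B$ and $B_{\geq X}$, so no appeal to the explicit idempotents $\eta_Y$ or to any rank/determinant argument is required.
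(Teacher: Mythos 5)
Your proposal is correct and follows essentially the same route as the paper, which derives the corollary from Theorem~\ref{Schutz} applied to both $B$ and $B_{\geq X}$ after observing that for $Y\geq X$ the Sch\"utzenberger module $\Bbbk L_Y$ is simultaneously the projective indecomposable for $\Bbbk B_{\geq X}$ and (via $\rho_X$) for $\Bbbk B$. The only difference is that you spell out the verification that the two module structures on $\Bbbk L_Y$ agree, which the paper leaves implicit.
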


As a corollary, it follows that we can compute projective resolutions for $\Bbbk B_{\geq X}$-modules, and compute $\Ext$ between them, over either $\Bbbk B$ or
$\Bbbk B_{\geq X}$.

\begin{Cor}\label{computeExt}
Let $X\in \Lambda(B)$ and let $M,N$ be $\Bbbk B_{\geq X}$-modules.  Any projective resolution $P_\bullet\to M$ of $M$ as a $\Bbbk B_{\geq X}$-module is also a projective resolution as a $\Bbbk B$-module. Consequently,
\begin{align*}
    \Ext^n_{\Bbbk B}(M,N)\cong \Ext^n_{\Bbbk B_{\geq X}}(M,N)
\end{align*}
for all $n\geq 0$.
\end{Cor}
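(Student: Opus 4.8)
The plan is to deduce everything from Corollary~\ref{niceprojectives} together with the fact that the algebra homomorphism $\rho_X\colon \Bbbk B\to \Bbbk B_{\geq X}$ is \emph{surjective}. The whole argument is a bookkeeping exercise about restriction of scalars along $\rho_X$, so I expect no serious obstacle; the only points requiring care are that $\Bbbk B$ need not be unital (which is already dealt with by the existence of the right identity $\eta$ and Corollary~\ref{c:regularisproj}) and that one must check that passing between the two module categories along $\rho_X$ preserves both exactness and the relevant $\Hom$-spaces.

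First I would show that a projective resolution $P_\bullet\to M$ of $M$ over $\Bbbk B_{\geq X}$ remains a projective resolution over $\Bbbk B$ after restriction along $\rho_X$. Exactness is immediate: restriction of scalars along the ring homomorphism $\rho_X$ does not change the underlying sequence of $\Bbbk$-modules nor the maps (it only reinterprets the action via $b\cdot m=\rho_X(b)m$), so the complex is exact as a sequence of $\Bbbk B$-modules precisely because it is exact as a sequence of $\Bbbk B_{\geq X}$-modules. Projectivity of each term is exactly the last assertion of Corollary~\ref{niceprojectives}: any projective $\Bbbk B_{\geq X}$-module is projective as a $\Bbbk B$-module (via $\rho_X$). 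Hence $P_\bullet\to M$ is a genuine projective resolution of $M$ over $\Bbbk B$, which proves the first statement of the corollary.

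Next I would compute $\Ext$ on both sides using this single resolution. Since $P_\bullet\to M$ is projective over both $\Bbbk B_{\geq X}$ and $\Bbbk B$, we have
\[
\Ext^n_{\Bbbk B_{\geq X}}(M,N)\cong H^n\bigl(\Hom_{\Bbbk B_{\geq X}}(P_\bullet,N)\bigr),
\qquad
\Ext^n_{\Bbbk B}(M,N)\cong H^n\bigl(\Hom_{\Bbbk B}(P_\bullet,N)\bigr).
\]
The key observation is that these two cochain complexes coincide term by term. Indeed, for $\Bbbk B_{\geq X}$-modules $P$ and $N$ regarded as $\Bbbk B$-modules via $\rho_X$, a $\Bbbk$-linear map $f\colon P\to N$ is $\Bbbk B$-linear if and only if it is $\Bbbk B_{\geq X}$-linear: one direction is clear since $b\cdot m=\rho_X(b)m$, and the converse uses surjectivity of $\rho_X$, writing each element of $\Bbbk B_{\geq X}$ as $\rho_X(b)$ and checking $f(\rho_X(b)m)=\rho_X(b)f(m)$. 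Therefore $\Hom_{\Bbbk B}(P,N)=\Hom_{\Bbbk B_{\geq X}}(P,N)$ as $\Bbbk$-modules.

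Finally I would conclude. Applying the previous identity to each $P_i$ shows that the cochain complexes $\Hom_{\Bbbk B}(P_\bullet,N)$ and $\Hom_{\Bbbk B_{\geq X}}(P_\bullet,N)$ are literally equal, with the same coboundary maps. Taking cohomology gives $\Ext^n_{\Bbbk B}(M,N)\cong \Ext^n_{\Bbbk B_{\geq X}}(M,N)$ for all $n\geq 0$, as desired. The only genuinely nontrivial ingredient is Corollary~\ref{niceprojectives}, which supplies the direct-sum decomposition $\Bbbk B\cong \Bbbk B_{\geq X}\oplus\bigoplus_{Y\ngeq X}\Bbbk L_Y$ guaranteeing that projectivity is preserved under restriction of scalars; everything else is the standard behavior of $\Hom$ and exactness under a surjective change of rings.
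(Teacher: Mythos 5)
Your proof is correct and follows exactly the route the paper intends: the paper leaves this corollary without an explicit proof, deriving it directly from Corollary~\ref{niceprojectives}, and your argument (exactness and $\Hom$-spaces preserved by restriction along the surjection $\rho_X$, projectivity preserved by Corollary~\ref{niceprojectives}) is the standard bookkeeping the authors had in mind.
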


\subsubsection{The Jacobson radical}\label{ss:jacobson}
In this subsection we compute the Jacobson radical of the algebra of a left regular band over a semiprimitive commutative ring with unit $\Bbbk$. Munn characterized more generally the radical of a band over any base ring~\cite{MunnJacobson}.

\nomenclature[P, 30]{$I(P; \Bbbk)$}{incidence algebra of the finite poset $P$ with coefficients in $\Bbbk$}%
First we need Solomon's theorem describing the algebra of a finite semilattice~\cite{Burnsidealgebra}.  We include a proof for completeness.  The \emph{incidence algebra}\index{incidence algebra} $I(P;\Bbbk)$ of a finite poset $P$ is the $\Bbbk$-algebra with $\Bbbk$-basis consisting all pairs $(\sigma,\tau)$ with $\sigma\leq \tau$ and whose product on basis vectors is given by \[(\sigma,\tau)(\alpha,\beta)= \begin{cases}(\alpha,\tau), &\text{if}\ \beta=\sigma\\ 0, &\text{else.} \end{cases}\]  If $\Bbbk$ is field, then it is easy to see that $I(P;\Bbbk) \cong \Bbbk Q/\langle R\rangle$ where $Q$ is the Hasse diagram of $P$ and $R$ is the set of all differences $p-q$ of parallel paths in $Q$.

We can identify elements of $I(P;\Bbbk)$ with mappings $f\colon P\times P\to \Bbbk$ with $f(p,q)=0$ whenever $p\nleq q$.  The mapping $f$ corresponds to the element
\[\sum_{p\leq q} f(p,q)\cdot (p,q)\] of $I(P;\Bbbk)$.  The identity element $\sum_{p\in P}(p,p)$ of $I(P;\Bbbk)$  then corresponds to the Kronecker  function $\delta$. The product is given by \[(f\cdot g)(p,q) = \sum_{p\leq x\leq q}f(x,q)g(p,x)\] for $p\leq q$.   An element $f\in I(P;\Bbbk)$ is invertible if and only if $f(p,p)$ is a unit of $\Bbbk$ for all $p\in P$.  The inverse of $f$ is given recursively by
\begin{equation}\label{eq:inv.inc.alg}
\begin{split}
f^{-1}(p,p) &= f(p,p)^{-1}\\
f^{-1}(p,q) &= -f(q,q)^{-1}\sum_{p\leq r<q}f(r,q)f^{-1}(p,r),\quad \text{for}\ p<q.
\end{split}
\end{equation}
The \emph{zeta function}\index{zeta function} of $P$ is the element $\zeta\in I(P;\Bbbk)$ given by
\[\zeta(p,q) = \begin{cases} 1, & \text{if}\ p\leq q\\ 0, & \text{else.}\end{cases}.\] It is invertible over any base ring $\Bbbk$ and has inverse the \emph{M\"obius function}\index{M\"obius function} $\mu$ of $P$.  Normally, one considers the M\"obius function to be the inverse of $\zeta$ in $I(P;\mathbb Z)$ as the general case is then a specialization.
\nomenclature[P, 24]{$\mu$}{M\"obius function of a poset}%
Details on incidence algebras, zeta functions and M\"obius functions can be found in~\cite[Chapter~3]{Stanley}.

\begin{Thm}\label{t:solomon}
Let $\Bbbk$ be a commutative ring with unit and $\Lambda$ a finite (meet) semilattice.  Then $\Bbbk \Lambda\cong \Bbbk^{\Lambda}$ and, in particular, is unital.

Moreover, if \[\varepsilon_x=\sum_{y\leq x}\mu(y,x)y\] for $x\in \Lambda$, then the $\varepsilon_x$ form a complete set of orthogonal idempotents of $\Bbbk \Lambda$ and $\Bbbk\Lambda\varepsilon_x=\Bbbk \varepsilon_x$.  Furthermore, if $y\in \Lambda$ and $c\in \Bbbk$, then \[yc\varepsilon_x = \begin{cases}c\varepsilon_x, & \text{if}\ y\geq x\\ 0, & \text{else}\end{cases}\] holds.  In particular, if $\Bbbk$ is a field, then the $\varepsilon_x$ form a complete set of orthogonal primitive idempotents.
\end{Thm}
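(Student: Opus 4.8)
The plan is to realize $\Bbbk\Lambda$ concretely as the function algebra $\Bbbk^{\Lambda}$ (with pointwise operations) via an explicit isomorphism, and then read off every assertion by transporting the obvious idempotents of $\Bbbk^{\Lambda}$ back to $\Bbbk\Lambda$. First I would define a $\Bbbk$-linear map $\Phi\colon \Bbbk\Lambda\to \Bbbk^{\Lambda}$ on the basis $\Lambda$ by $\Phi(y)(x)=\zeta(x,y)$, that is, $\Phi(y)(x)=1$ if $x\leq y$ and $0$ otherwise. The essential point is that $\Phi$ is an \emph{algebra} homomorphism: since $x\leq y\wedge z$ if and only if $x\leq y$ and $x\leq z$, we have $\Phi(y\wedge z)(x)=\zeta(x,y)\zeta(x,z)=\bigl(\Phi(y)\Phi(z)\bigr)(x)$, so $\Phi(y\wedge z)=\Phi(y)\Phi(z)$ for all $y,z\in\Lambda$, and this relation extends to all of $\Bbbk\Lambda$ by bilinearity.

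Next I would compute the images of the proposed idempotents. Writing $\delta_x\in\Bbbk^{\Lambda}$ for the indicator function of $x$, I claim $\Phi(\varepsilon_x)=\delta_x$. Indeed, for $w\in\Lambda$,
\[
\Phi(\varepsilon_x)(w)=\sum_{y\leq x}\mu(y,x)\,\zeta(w,y)=\sum_{w\leq y\leq x}\mu(y,x)=\delta_{w,x},
\]
the last equality being the defining relation $\mu\cdot\zeta=\delta$ of the M\"obius function in $I(\Lambda;\Bbbk)$. Because $\varepsilon_x=\sum_{y\leq x}\mu(y,x)y$ has coefficient $\mu(x,x)=1$ on $x$ and involves only $y\leq x$, the transition matrix from the basis $\{y\}$ to $\{\varepsilon_x\}$ is unitriangular, so $\{\varepsilon_x\}_{x\in\Lambda}$ is a $\Bbbk$-basis of $\Bbbk\Lambda$. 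Since $\Phi$ carries this basis bijectively onto the standard basis $\{\delta_x\}$ of $\Bbbk^{\Lambda}$, it is a $\Bbbk$-module isomorphism, hence an algebra isomorphism; this proves $\Bbbk\Lambda\cong\Bbbk^{\Lambda}$, and in particular $\Bbbk\Lambda$ is unital with identity $\Phi^{-1}\bigl(\sum_x\delta_x\bigr)=\sum_x\varepsilon_x$. As the $\delta_x$ form a complete set of orthogonal idempotents of $\Bbbk^{\Lambda}$ with $\Bbbk^{\Lambda}\delta_x=\Bbbk\delta_x$, transporting through $\Phi^{-1}$ shows that the $\varepsilon_x$ form a complete set of orthogonal idempotents with $\Bbbk\Lambda\varepsilon_x=\Bbbk\varepsilon_x$.

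The multiplication rule for $yc\varepsilon_x$ follows immediately after applying $\Phi$: for $y\in\Lambda$ and $c\in\Bbbk$,
\[
\Phi(y\,c\,\varepsilon_x)=c\,\Phi(y)\,\delta_x=c\,\Phi(y)(x)\,\delta_x=c\,\zeta(x,y)\,\delta_x,
\]
since the pointwise product $\Phi(y)\delta_x$ is supported at $x$ with value $\Phi(y)(x)$. As $\zeta(x,y)=1$ exactly when $y\geq x$ and is $0$ otherwise, injectivity of $\Phi$ gives $yc\varepsilon_x=c\varepsilon_x$ if $y\geq x$ and $yc\varepsilon_x=0$ otherwise. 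Finally, when $\Bbbk$ is a field, commutativity of $\Lambda$ gives the corner algebra $\varepsilon_x\Bbbk\Lambda\varepsilon_x=\Bbbk\varepsilon_x\cong\Bbbk$, whose only idempotents are $0$ and $\varepsilon_x$; hence no $\varepsilon_x$ splits as a sum of two orthogonal nonzero idempotents, so each $\varepsilon_x$ is primitive and the $\varepsilon_x$ form a complete set of orthogonal primitive idempotents.

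The only genuinely substantive steps are the verification that $\Phi$ respects products and the M\"obius computation identifying $\Phi(\varepsilon_x)$ with $\delta_x$; everything else is formal once $\Phi$ is in place. Both of these reduce to the defining properties of the meet and of $\mu$, so I expect no real difficulty beyond keeping the M\"obius convention ($\mu\cdot\zeta=\delta$ rather than $\zeta\cdot\mu=\delta$) straight in the summation.
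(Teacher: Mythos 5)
Your proof is correct and follows essentially the same route as the paper: your $\Phi$ is exactly the paper's homomorphism $\p(y)=\sum_{z\leq y}\delta_z$, and your computation $\Phi(\varepsilon_x)=\delta_x$ is the paper's verification that $\psi(\delta_x)=\varepsilon_x$ inverts $\p$, with the only cosmetic difference that you deduce bijectivity from unitriangularity of the basis change rather than writing down the inverse map explicitly. The remaining assertions are read off from $\Bbbk^{\Lambda}$ in the same way in both arguments.
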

\begin{proof}
For $y\in \Lambda$, put \[\delta_y(z) = \begin{cases} 1, & \text{if}\ z=y \\ 0, & \text{else.}\end{cases}\] and note that the $\delta_y$ with $y\in \Lambda$ form a $\Bbbk$-basis for $\Bbbk^{\Lambda}$.

Define $\p\colon \Bbbk \Lambda\to \Bbbk^{\Lambda}$ and $\psi\colon \Bbbk^{\Lambda}\to \Bbbk \Lambda$ by
\begin{align*}
\p(x) &= \sum_{z\leq x}\delta_z\\
\psi(f) &= \sum_{x\in \Lambda}\sum_{z\leq x}f(x)\mu(z,x)z
\end{align*}
for $x\in \Lambda$.  In particular, \[\psi(\delta_x) = \sum_{z\leq x}\mu(z,x)z.\]

First we verify that $\p$ is a homomorphism.  It suffices to check that $\p(x)\p(y)=\p(xy)$ for $x,y\in \Lambda$.  Indeed, we have
\[\p(x)\p(y) = \sum_{z\leq x}\delta_z\cdot \sum_{u\leq y}\delta_u = \sum_{z\leq x,y}\delta_z=\sum_{z\leq xy}\delta_z=\p(xy).\]  Next we establish that $\p$ and $\psi$ are inverse mappings. For $x\in \Lambda$, we have
\begin{align*}
\psi(\p(x)) &= \sum_{y\leq x}\sum_{z\leq y}\mu(z,y)z=\sum_{z\leq x}\left(\sum_{z\leq y\leq x}\zeta(y,x)\mu(z,y)\right)z=x\\
\p(\psi(\delta_x))&= \sum_{y\leq x}\sum_{z\leq y}\mu(y,x)\delta_z=\sum_{z\leq x}\left(\sum_{z\leq y\leq x}\mu(y,x)\zeta(z,y)\right)\delta_z=\delta_x
\end{align*}
as required.  This completes the proof that $\p$ is an isomorphism of $\Bbbk$-algebras with inverse $\psi$.

As $\varepsilon_x=\psi(\delta_x)$, it remains to consider the action of $y\in\Lambda$ on $\Bbbk\Lambda\varepsilon_x$ or, equivalently, the action of $\p(y)$ on $\Bbbk^{\Lambda}\delta_x=\Bbbk\delta_x$.  But we have \[\p(y)\delta_x = \sum_{z\leq y}\delta_z\delta_x= \begin{cases} \delta_x, & \text{if}\ y\geq x\\ 0, & \text{else.}\end{cases}\]  This completes the proof.
\end{proof}

Recall that if $B$ is a left regular band, then $\Lambda(B)=\{Bb\mid b\in B\}$ is a meet semilattice with $Ba\cap Bb=Bab=Bba$ and we have the support homomorphism $\sigma\colon B\to \Lambda(B)$ given by $\sigma(b)=Bb$.  As before, if $X\in \Lambda(B)$, then $L_X=\sigma\inv(X)$ is an $\mathscr L$-class of $B$.   Abusing notation, we shall also write $\sigma$ for the unique extension $\sigma\colon \Bbbk B\to \Bbbk\Lambda(B)$.  The following result is a special case of the results of~\cite{MunnJacobson}; see also~\cite{Brown1}.

\begin{Thm}\label{t:nilkernel}
Let $B$ be a finite left regular band and $\Bbbk$ a commutative ring with unit.  Let $J$ be the kernel of the homomorphism $\sigma\colon \Bbbk B\to \Bbbk \Lambda(B)$ induced by $b\mapsto Bb$.  Let $n$ be the largest integer such that there is a chain $X_1>X_2>\cdots>X_n$ in $\Lambda(B)$.  Then $J^{n+1}=0$ and hence $J$ is nilpotent.
\end{Thm}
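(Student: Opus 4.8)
The plan is to exhibit $J$ explicitly and then to filter $\Bbbk B$ by a chain of ideals that each drop $J$ by one step, indexed by the rank function of the semilattice $\Lambda(B)$. Since $\sigma\colon \Bbbk B\to \Bbbk\Lambda(B)$ sends $\sum_{b\in B}c_bb$ to $\sum_{X\in\Lambda(B)}\bigl(\sum_{b\in L_X}c_b\bigr)X$, an element $\sum_b c_b b$ lies in $J=\ker\sigma$ if and only if $\sum_{b\in L_X}c_b=0$ for every $X\in\Lambda(B)$, where $L_X=\sigma\inv(X)$. For $X\in\Lambda(B)$ let $c(X)$ denote the maximal number of elements in a chain of $\Lambda(B)$ having $X$ as its least element; note that $c(X)\geq 1$, that $Y<X$ forces $c(Y)\geq c(X)+1$ (prepend $Y$ below a longest chain with minimum $X$), and that $\max_X c(X)=n$ by the definition of $n$. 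Define $V_k$ to be the $\Bbbk$-span of $\{b\in B\mid c(\sigma(b))>k\}$. Because $\sigma$ is a homomorphism, $\sigma(abc')=\sigma(a)\wedge\sigma(b)\wedge\sigma(c')\leq\sigma(b)$, so each $V_k$ is a two-sided ideal, and we obtain a filtration $\Bbbk B=V_0\supseteq V_1\supseteq\cdots\supseteq V_n=\{0\}$.

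The heart of the argument will be the claim $V_k\cdot J\subseteq V_{k+1}$. To prove it I would take a basis element $d\in B$ with $c(\sigma(d))>k$ and an element $u=\sum_b c_b b\in J$, and compute
\[
du=\sum_{b\in B}c_b\,(db),\qquad \sigma(db)=\sigma(d)\wedge\sigma(b).
\]
I split this sum according to whether $\sigma(b)\geq\sigma(d)$. The key structural input is that for the idempotent $d$ one has $\sigma(d)\leq\sigma(b)$ iff $Bd\subseteq Bb$ iff $d\in Bb$ iff $db=d$; hence every term with $\sigma(b)\geq\sigma(d)$ simplifies to $db=d$, and these terms contribute $\bigl(\sum_{b:\,\sigma(b)\geq\sigma(d)}c_b\bigr)d$, which vanishes because the coefficient sum decomposes as $\sum_{X\geq\sigma(d)}\sum_{b\in L_X}c_b=0$ by membership in $J$. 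Every remaining term has $\sigma(db)<\sigma(d)$ and therefore $c(\sigma(db))\geq c(\sigma(d))+1>k+1$, so it lies in $V_{k+1}$. Thus $du\in V_{k+1}$, and linearity gives $V_k\cdot J\subseteq V_{k+1}$.

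Granting the claim, a straightforward induction yields $J^m\subseteq V_{m-1}$ for all $m\geq 1$: indeed $J\subseteq V_0$, and $J^{m+1}=J^m\cdot J\subseteq V_{m-1}\cdot J\subseteq V_m$. Taking $m=n+1$ gives $J^{n+1}\subseteq V_n=\{0\}$, so $J$ is nilpotent, as required. The step I expect to be the main obstacle is pinning down the correct one-sided multiplication together with the correct direction of the grading: right multiplication by $d$ does \emph{not} lower the support — a parallel computation shows $ud$ is congruent, modulo lower terms, to the $\sigma(d)$-component of $u$, which need not vanish — so it is essential to use \emph{left} multiplication by $d$, where the relations defining $J$ collapse precisely the top layer. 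Making this cancellation interact correctly with a rank function that \emph{increases} as one descends in $\Lambda(B)$ is the one place where the bookkeeping must be handled with care.
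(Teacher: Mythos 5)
Your proof is correct, and it reaches the conclusion by a genuinely different organization than the paper's. The paper first decomposes $J=\bigoplus_{X\in\Lambda(B)}J_X$ into its $\mathscr L$-homogeneous components (with $J_X$ the elements of $J$ supported on $L_X=\sigma\inv(X)$), proves the pairwise vanishing $J_YJ_Z=0$ whenever $Y\leq Z$ --- using the same identity $ab=a$ for $Ba\subseteq Bb$ that drives your cancellation --- and then finishes with a pigeonhole argument: in a product $J_{Y_1}\cdots J_{Y_{n+1}}$ the descending chain of partial meets $X_i=Y_1\wedge\cdots\wedge Y_i$ must stabilize, and at a repeat $X_i=X_{i+1}\leq Y_{i+1}$ the product dies. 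You instead filter all of $\Bbbk B$ by the two-sided ideals $V_k$ attached to your depth function $c$ and prove $V_k\cdot J\subseteq V_{k+1}$ in one stroke, absorbing both the cancellation (the part of $u$ supported above $\sigma(d)$ collapses onto $d$ with total coefficient $\sum_{X\geq\sigma(d)}\sum_{b\in L_X}c_b=0$) and the combinatorial bookkeeping (strict descent of support forces $c$ to increase) into a single step, after which $J^{m}\subseteq V_{m-1}$ is mechanical. The trade-off is mild: your filtration makes the length bound transparent and avoids the pigeonhole, while the paper's version isolates the sharper local statement $J_YJ_Z=0$ for $Y\leq Z$. Your closing observation about why left multiplication by $d$ (rather than right multiplication) is the move that produces cancellation is accurate, and it is exactly the same asymmetry the paper exploits when it computes $rb=r$ for $r\in J_Y$ and $\sigma(b)\geq Y$.
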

\begin{proof}
If $r=\sum_{b\in B}c_bb$ with the $c_b\in \Bbbk$ and $X\in \Lambda(B)$, then put $r_X=\sum_{b\in L_X}c_bb$.  Notice that $r=\sum_{X\in \Lambda(X)}r_X$ and that \[\sigma(r)=\sum_{b\in B}c_b\sigma(b)=\sum_{X\in \Lambda}\left(\sum_{b\in L_X}c_b\right)X.\] Therefore, $r\in J$ if and only if, for each $X\in \Lambda(B)$, one has that $\sum_{b\in L_X}c_b=0$.  It follows from this that $r\in J$ if and only if $r_X\in J$ for all $X\in \Lambda(B)$.

Let us put $J_X=\{r\in J\mid r=r_X\}$.  Then we have that \[J=\bigoplus_{X\in \Lambda(B)} J_X\] from the discussion in the previous paragraph.  Thus to prove our theorem it suffices to prove that if $Y_1,\ldots, Y_{n+1}\in \Lambda(B)$, then $J_{Y_1}\cdots J_{Y_{n+1}}=0$. Put $X_i=Y_1\wedge Y_2\wedge \cdots \wedge Y_i$ for $1\leq i\leq n+1$.

We claim that if $Y\leq Z$, then $J_YJ_Z=0$.  Indeed, if $r=\sum_{a\in L_Y}c_aa$ and $b\in B$ with $\sigma(b)=Z\geq Y$, then \[rb=\sum_{a\in L_Y}c_aab=\sum_{a\in L_Y}c_aa=r\] because $ab=a$ if $Ba\subseteq Bb$.  Therefore, if $r\in J_Y$ and $s=\sum_{b\in L_Z}d_bb\in J_Z$, then \[rs=\sum_{b\in L_Z}d_brb=\left(\sum_{b\in L_Z}d_b\right)r=0\] as $s\in J_Z$ implies $\sum_{b\in L_Z}d_b=0$.  We conclude that $J_YJ_Z=0$ whenever $Y\leq Z$.

Next observe that $J_{Y_1}\cdots J_{Y_i}\subseteq J_{X_i}$ for $1\leq i\leq n+1$ because if $\sigma(y_k)= Y_k$ for $1\leq k\leq i$, then $\sigma(y_1\cdots y_i)= \sigma(y_1)\cdots \sigma(y_i)=X_i$.  As \[X_1\geq X_2\geq \cdots \geq X_{n+1}\] we deduce by choice of $n$ that $X_i=X_{i+1}$ for some $1\leq i<n+1$.  But then $X_i=X_{i+1}=X_i\wedge Y_{i+1}\leq Y_{i+1}$.  Therefore, we have
\[J_{Y_1}\cdots J_{Y_i}J_{Y_{i+1}}\subseteq J_{X_i}J_{Y_{i+1}}=0\] where the last equality follows from the claim because $X_i\leq Y_{i+1}$.  We conclude that $J_{Y_1}\cdots J_{Y_{n+1}}=0$, completing the proof.
\end{proof}

As a consequence, we can deduce that $J$ is the Jacobson radical of $\Bbbk B$ if the ground ring $\Bbbk$ is sufficiently nice.

\begin{Cor}\label{c:jac.radical}
Let $B$ be a finite left regular band and $\Bbbk$ a semiprimitive commutative ring with unit (e.g.\ a field).  Then the Jacobson radical of $\Bbbk B$ is the kernel of the homomorphism $\sigma\colon \Bbbk B\to \Bbbk \Lambda(B)$ induced by $b\mapsto Bb$ for $b\in B$.
\end{Cor}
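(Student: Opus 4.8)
The plan is to establish the two inclusions $J\subseteq \rad(\Bbbk B)$ and $\rad(\Bbbk B)\subseteq J$ separately, where $J=\ker\sigma$, using Theorem~\ref{t:nilkernel} for the first and Solomon's theorem (Theorem~\ref{t:solomon}) for the second. By Theorem~\ref{t:nilkernel} the ideal $J$ is nilpotent, and since every nilpotent ideal is contained in the Jacobson radical (a fact recalled in the preliminaries), this immediately yields $J\subseteq \rad(\Bbbk B)$.

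For the reverse inclusion I would first identify the quotient $\Bbbk B/J$. As $\sigma$ is onto $\Bbbk\Lambda(B)$ (every element of $\Lambda(B)$ is of the form $Bb$), we have $\Bbbk B/J\cong \Bbbk\Lambda(B)$, and by Theorem~\ref{t:solomon} the latter is isomorphic to $\Bbbk^{\Lambda(B)}$. Because $\Bbbk$ is semiprimitive and the radical of a finite direct product of unital rings is the product of the radicals, $\Bbbk^{\Lambda(B)}$ is semiprimitive, i.e.\ $\rad(\Bbbk\Lambda(B))=0$.

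To conclude I would exploit that $\Bbbk B$ has a right identity $\eta$ by Theorem~\ref{primidempotentprops}, so that $\rad(\Bbbk B)$ is the intersection of the annihilators of the simple left $\Bbbk B$-modules. Since $J$ is nilpotent it annihilates every simple module (if $JS\neq 0$ then $JS=S$ by simplicity, whence $J^kS=S$ for all $k$, contradicting nilpotence); hence every simple $\Bbbk B$-module is inflated from a simple $\Bbbk B/J\cong\Bbbk\Lambda(B)$-module, and conversely. For such a module $S$ one has $\operatorname{ann}_{\Bbbk B}(S)=\sigma^{-1}\bigl(\operatorname{ann}_{\Bbbk\Lambda(B)}(S)\bigr)$, and intersecting over all simples gives
\[
\rad(\Bbbk B)=\sigma^{-1}\bigl(\rad(\Bbbk\Lambda(B))\bigr)=\sigma^{-1}(0)=J,
\]
the middle equality using that $\Bbbk\Lambda(B)$ is unital, so its radical too is the intersection of annihilators of its simple modules. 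Together with the first inclusion this proves $\rad(\Bbbk B)=J$.

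The only genuine subtlety, and the step I would take most care over, is the non-unital nature of $\Bbbk B$: one must phrase everything in terms of modular left ideals, or equivalently lean on the right identity $\eta$ to justify the annihilator description of the radical, rather than blindly invoking the unital identity $\rad(R/I)=\rad(R)/I$. Everything else is routine bookkeeping with the two cited theorems.
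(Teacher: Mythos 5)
Your proof is correct and follows essentially the same route as the paper: Theorem~\ref{t:nilkernel} gives $J\subseteq\rad(\Bbbk B)$ via nilpotence, and Theorem~\ref{t:solomon} gives semiprimitivity of $\Bbbk B/J\cong\Bbbk^{\Lambda(B)}$, forcing $\rad(\Bbbk B)\subseteq J$. The paper compresses the second inclusion into one line (using that a surjection carries the radical into the radical of the image), whereas you unwind it through the annihilator description of the radical via the right identity $\eta$; this is a legitimate and careful elaboration of the same step, not a different argument.
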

\begin{proof}
A nilpotent ideal is contained in the Jacobson radical of any ring.  Thus $\ker\sigma\subseteq \rad(\Bbbk B)$ by Theorem~\ref{t:nilkernel}. The isomorphism $\Bbbk\Lambda(B)\cong \Bbbk^{\Lambda}$ of Theorem~\ref{t:solomon} implies that $\Bbbk \Lambda(B)$ is semiprimitive. We deduce that $\rad(\Bbbk B)\subseteq \ker \sigma$ and the theorem follows.
\end{proof}

It follows from Corollary~\ref{c:jac.radical} that $\Bbbk B$ is a split basic algebra over any field $\Bbbk$ whenever it is unital.

Note that as a $\Bbbk B$-module we have that
\begin{equation}\label{eq:decomposelambdaB}
\Bbbk \Lambda(B)\cong \bigoplus_{X\in \Lambda(B)} \Bbbk_X
\end{equation}
by Theorem~\ref{t:solomon}.  This provides another proof of Corollary~\ref{c:simples}, which states that when $\Bbbk$ is a field and $\Bbbk B$ is unital, then the $\Bbbk_X$ with $X\in \Lambda(B)$ are the simple $\Bbbk B$-modules.

It will be convenient to have  a basis for the radical of a left regular band algebra.

\begin{Prop}\label{rad.basis}
Let $B$ be a left regular band and $\Bbbk$ a commutative ring with unit.  Let $J$ be the kernel of the homomorphism $\sigma\colon \Bbbk B\to \Bbbk \Lambda(B)$ induced by $\sigma(b)=Bb$ for $b\in B$.  Fix, for each $X\in \Lambda(B)$, an element $e_X\in B$ with $Be_X=X$.  Then the non-zero elements of the form $b-e_{Bb}$ form a basis for $J$ over $\Bbbk$.
\end{Prop}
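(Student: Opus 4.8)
The plan is to reduce the entire statement to a coordinate computation in the basis $B$ of $\Bbbk B$, using the explicit description of $J$ already extracted in the proof of Theorem~\ref{t:nilkernel}. Writing an arbitrary element as $r=\sum_{b\in B}c_b b$ with $c_b\in\Bbbk$, that proof shows $\sigma(r)=\sum_{X\in\Lambda(B)}\bigl(\sum_{b\in L_X}c_b\bigr)X$, so that $r\in J$ precisely when $\sum_{b\in L_X}c_b=0$ for every $X\in\Lambda(B)$. With this membership criterion in hand, I would first note that each element $b-e_{Bb}$ lies in $J$: the elements $b$ and $e_{Bb}$ have the same support $Bb$, whence $\sigma(b-e_{Bb})=Bb-Bb=0$. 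Moreover $b-e_{Bb}=0$ exactly when $b$ is the chosen representative $e_X$ of $X=Bb$, so the non-zero elements of this form are indexed by $b\in B\setminus\{e_X\mid X\in\Lambda(B)\}$.

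For spanning, I would take $r=\sum_{b}c_b b\in J$ and simply compute
\[
\sum_{b\in B}c_b\,(b-e_{Bb})=r-\sum_{b\in B}c_b e_{Bb}=r-\sum_{X\in\Lambda(B)}\Bigl(\sum_{b\in L_X}c_b\Bigr)e_X=r,
\]
where the last equality uses the criterion $\sum_{b\in L_X}c_b=0$. Since the summands with $b=e_{Bb}$ vanish, this exhibits $r$ as a $\Bbbk$-linear combination of the non-zero elements $b-e_{Bb}$, so they span $J$.

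For linear independence, I would argue directly in coordinates, so as to avoid any dimension count and thereby cover an arbitrary commutative ring $\Bbbk$ rather than only a field via rank--nullity. Suppose $\sum_b \lambda_b\,(b-e_{Bb})=0$, the sum ranging over $b\notin\{e_X\mid X\in\Lambda(B)\}$. Each such non-representative $b_0$ occurs among the basis vectors appearing in these summands only as the leading term of the single summand indexed by $b_0$: the subtracted vectors $e_{Bb}$ are all chosen representatives and hence distinct from $b_0$. Reading off the coefficient of $b_0$ therefore yields $\lambda_{b_0}=0$, and as $b_0$ was arbitrary, all coefficients vanish.

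The argument is essentially bookkeeping once the description of $J$ is available; the only point requiring care is the independence step, where one must ensure that the representatives $e_X$ never coincide with a non-representative index $b$, so that extracting the coefficient of $b$ isolates a single $\lambda_b$. This is exactly what allows the proof to go through verbatim over any commutative ring with unit.
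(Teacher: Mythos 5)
Your proposal is correct and follows essentially the same route as the paper: membership in $J$ via $\sigma(b-e_{Bb})=Bb-Bb=0$, spanning by the telescoping identity $r=\sum_b c_b(b-e_{Bb})$ using the criterion $\sum_{b\in L_X}c_b=0$, and independence from the linear independence of $B$ (which the paper states tersely and you spell out by noting that each non-representative $b_0$ appears in exactly one summand). No gaps.
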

\begin{proof}
The linear independence of the non-zero elements of the form $b-e_{Bb}$ is immediate from the linear independence of $B$.  Trivially, $\sigma(b-e_{Bb})=Bb-Bb=0$.  So it remains to show that these elements span $J$ as a $\Bbbk$-module. So suppose that
\[x=\sum_{b\in B}c_bb=\sum_{X\in \Lambda(B)}\sum_{Bb=X}c_bb\]
belongs to $J$.    Then we have \[0=\sum_{X\in \Lambda(B)}\sum_{Bb=X}c_bX\] and hence for each $X\in \Lambda(B)$, one has that \[\sum_{Bb=X}c_b=0.\]  It follows that \[x=\sum_{X\in \Lambda(B)}\sum_{Bb=X}c_b(b-e_{Bb})\] as required.
\end{proof}

A consequence of Proposition~\ref{rad.basis} that we shall use in the next section is the following.

\begin{Cor}\label{c:add.one}
Let $B$ be a left regular band and $\Bbbk$ a field.  Let $J$ be the kernel of the natural homomorphism $\sigma\colon \Bbbk B\to \Bbbk \Lambda(B)$ induced by $b\mapsto Bb$ for $b\in B$.  Then $J$ is the Jacobson radical of $\Bbbk B^1$.
\end{Cor}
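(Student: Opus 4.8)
The plan is to apply Corollary~\ref{c:jac.radical} to the left regular band monoid $B^1$ and then identify the resulting radical with $J$. Recall that $B^1$ is itself a left regular band (as noted when $B^1$ was introduced) and that it is unital, with identity the adjoined element $1$. Since $\Bbbk$ is a field it is semiprimitive, so Corollary~\ref{c:jac.radical} gives $\rad(\Bbbk B^1)=\ker\sigma_1$, where $\sigma_1\colon \Bbbk B^1\to \Bbbk\Lambda(B^1)$ is the support homomorphism $b\mapsto B^1b$ of $B^1$. Thus the whole task reduces to proving the equality of $\Bbbk$-submodules $\ker\sigma_1=J$ inside $\Bbbk B^1$.

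First I would pin down the support semilattice of $B^1$. For $b\in B$, idempotency gives $b=b^2\in Bb$, so $B^1b=\{b\}\cup Bb=Bb$; meanwhile $B^1\cdot 1=B^1$, and since $1\notin B$ the element $B^1$ is distinct from every $Bb$ with $b\in B$. Hence, as a set, $\Lambda(B^1)=\Lambda(B)\sqcup\{B^1\}$ (with $B^1$ an adjoined maximum), and under this identification $\sigma_1(b)=\sigma(b)\in\Lambda(B)$ for $b\in B$ while $\sigma_1(1)=B^1$. In particular the restriction of $\sigma_1$ to the subspace $\Bbbk B$ is $\sigma$ followed by the inclusion $\Bbbk\Lambda(B)\hookrightarrow \Bbbk\Lambda(B^1)$, which is injective.

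Next I would compute $\ker\sigma_1$ directly on the basis $B^1$. Writing a general element as $x=c\cdot 1+\sum_{b\in B}c_bb$ with $c,c_b\in\Bbbk$, we obtain
\[
\sigma_1(x)=c\,B^1+\sum_{X\in\Lambda(B)}\Bigl(\sum_{b\in L_X}c_b\Bigr)X .
\]
Because $B^1$ together with the elements of $\Lambda(B)$ are part of a $\Bbbk$-basis of $\Bbbk\Lambda(B^1)$, the vanishing of $\sigma_1(x)$ is equivalent to $c=0$ together with $\sum_{b\in L_X}c_b=0$ for every $X\in\Lambda(B)$. The first condition says $x\in\Bbbk B$, and then the second is exactly the condition that $x\in\ker\sigma=J$. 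Conversely, any element of $J\subseteq\Bbbk B$ satisfies $\sigma_1(x)=\sigma(x)=0$. Therefore $\ker\sigma_1=J$, and combining this with the first paragraph yields $J=\rad(\Bbbk B^1)$.

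The argument is essentially bookkeeping; the only point requiring care is the identification of $\Lambda(B^1)$ with $\Lambda(B)\sqcup\{B^1\}$ and the compatibility $\sigma_1|_{\Bbbk B}=\sigma$, which is precisely what allows the radical of the larger algebra $\Bbbk B^1$ to be captured entirely inside $\Bbbk B$. I do not anticipate a genuine obstacle beyond this verification.
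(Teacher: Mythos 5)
Your proof is correct and follows essentially the same route as the paper: apply Corollary~\ref{c:jac.radical} to the left regular band monoid $B^1$ and then identify $\ker\sigma_1$ with $J$ via the observation that $\Lambda(B^1)=\Lambda(B)\sqcup\{B^1\}$. The only cosmetic difference is that the paper carries out the identification by matching the bases supplied by Proposition~\ref{rad.basis}, whereas you compute the kernel directly from the coefficient sums over $\mathscr L$-classes; these are two phrasings of the same verification.
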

\begin{proof}
Let $R$ be the kernel of the natural homomorphism $\rho\colon \Bbbk B^1\to \Bbbk \Lambda(B^1)$ induced by $b\mapsto B^1b$ for $b\in B^1$; its the radical of $\Bbbk B^1$ by Corollary~\ref{c:jac.radical}.  As $1$ is the unique generator of the left ideal $B^1$ and $Ba=Bb$ if and only if $B^1a=B^1b$ for $a,b\in B$, it follows from Proposition~\ref{rad.basis} that $J$ and $R$ have the same basis and hence coincide.
\end{proof}

\subsection{Existence of identity elements in left regular band algebras}\label{ss:existid}
Let $B$ be a finite left regular band.  It will be important to characterize when $\Bbbk B$ is a unital ring.  The answer turns out to be independent of the ground ring $\Bbbk$ and depends, instead, on the topology of $B$ and its contractions as  posets.   Following our earlier convention, we say that $B$ is \emph{connected}\index{connected} if $\Delta(B_{\geq X})$ (or, equivalently, the Hasse diagram of $B_{\geq X}$) is connected for each $X\in \Lambda(B)$.  Of course, monoids are connected since all contractions of a monoid are monoids and hence have contractible order complexes.  It turns out that $\Bbbk B$ is unital if and only if $B$ is connected.

The proof is surprisingly indirect in that some representation theory is used.  It would be nice to have a completely elementary proof of the existence of an identity.   Later we shall see that if $B$ is a connected left regular band, then, in fact, $\Delta(B_{\geq X})$ is an acyclic topological space for all $X\in \Lambda(B)$ using the existence of an identity.

\begin{Thm}\label{t:unital}
The following conditions are equivalent for a finite left regular band $B$.
\begin{enumerate}
\item $B$ is connected.
\item $\mathbb ZB$ is unital.
\item $\Bbbk B$ is unital for each commutative ring with unit $\Bbbk$.
\end{enumerate}
\end{Thm}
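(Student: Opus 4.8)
The plan is to reduce unitality to a single assertion about the canonical right identity, and then to split that assertion into an easy half (a "component-sum" count) and a hard half (the genuine obstacle). Throughout, let $\eta=\sum_{X\in\Lambda(B)}\eta_X$ be the idempotent right identity of Theorem~\ref{primidempotentprops}; it has integer coefficients. Since $\eta$ is a right identity, $\Bbbk B$ is unital if and only if $\eta$ is also a left identity, i.e.\ iff left multiplication $L_\eta$ is the identity map on $\Bbbk B$. Using the decomposition $\Bbbk B\cong\bigoplus_{X}\Bbbk L_X$ into Sch\"utzenberger modules (Theorem~\ref{Schutz}), this holds iff $\eta$ acts as the identity on each $\Bbbk L_X$. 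The $\Bbbk B$-action on $\Bbbk L_X$ factors through $\rho_X\colon\Bbbk B\to\Bbbk B_{\geq X}$, and a short computation with the defining recursion for the $\eta_Y$ shows that $\rho_X(\eta)$ is the canonical right identity $\eta'$ of $\Bbbk B_{\geq X}$; moreover $\Bbbk L_X$ is precisely the minimal-ideal Sch\"utzenberger module of $B_{\geq X}$. So the whole theorem reduces to one statement, to be applied to each contraction $B'=B_{\geq X}$: writing $L_0$ for the minimal $\mathscr L$-class of $B'$ and $\phi=L_{\eta'}|_{\Bbbk L_0}$, the idempotent $\phi$ is the identity iff $\Delta(B')$ is connected.

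Granting that statement, the bookkeeping is routine. $(3)\Rightarrow(2)$ is trivial. For $(2)\Rightarrow(3)$: if $\mathbb ZB$ is unital its identity must be $\eta$ (as $\eta$ is always a right identity), so $\eta$ is a left identity over $\mathbb Z$, and applying the ring homomorphism $\mathbb ZB\to\Bbbk B$ shows the image of $\eta$ is a two-sided identity of $\Bbbk B$. For $(1)\Rightarrow(2)$: connectedness gives $\phi=\mathrm{id}$ for every contraction, hence $\eta$ is a left identity of $\mathbb QB$; since $\eta b-b\in\mathbb ZB$ vanishes already in $\mathbb QB$, it vanishes in $\mathbb ZB$, so $\mathbb ZB$ is unital. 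Finally $(2)\Rightarrow(1)$ and $(3)\Rightarrow(1)$ follow from the disconnected half below, together with the fact that the surjective homomorphism $\rho_X$ carries the identity of $\Bbbk B$ to an identity of $\Bbbk B_{\geq X}$.

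The easy half — $\Delta(B')$ disconnected $\Rightarrow\phi\neq\mathrm{id}$ — I would prove by a component count. Three facts drive it. First, every connected component of $\Delta(B')$ meets $L_0$, because each $a\in B'$ dominates $a e_{\hat 0}\in L_0$. Second, $\eta_Y$ is supported on $e_YB'$, which lies in a single component (everything there is $\leq e_Y$), and $\eta_Yb$ stays in that same component for any $b$ (each occurring $cb$ satisfies $cb\leq c$). Third, the augmentation $\epsilon\colon\Bbbk B'\to\Bbbk$, $b\mapsto1$, factors through $\sigma$, and $\epsilon(\eta_Y)=\sum_{Z\leq Y}\mu(Z,Y)=\delta_{Y,\hat 0}$ by M\"obius inversion. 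Combining these, for $b\in L_0$ the total coefficient of $\phi(b)=\eta' b$ lying in a given component $i$ equals $\delta_{i,c(\hat 0)}$, where $c(\hat 0)$ is the component containing $e_{\hat 0}$, independently of $b$. Choosing $b\in L_0$ in any component other than $c(\hat 0)$ — possible by the first fact once $\Delta(B')$ is disconnected — forces $\phi(b)\neq b$. This argument is valid over any nonzero ground ring, which is exactly what lets it serve both $(2)\Rightarrow(1)$ and $(3)\Rightarrow(1)$.

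The hard half — $\Delta(B')$ connected $\Rightarrow\phi=\mathrm{id}$, equivalently $\ker\phi=0$ — is where I expect the real difficulty, and where representation theory must enter (this is the ``indirectness'' the text warns about). The plan is to identify $\ker\phi$ (equivalently the cokernel of $\mathrm{id}-\phi$) with the reduced homology $\widetilde{H}_0$ of the connectivity graph $\Gamma(\hat 0)$ of Lemma~\ref{l:Franco.graph}, whose vanishing is precisely connectivity of $\Delta(B')$. Concretely, I would build the degree-$\leq 1$ tail of a chain complex with $\Bbbk L_0$ in degree $0$ and the edges of $\Gamma(\hat 0)$ in degree $1$, and show that its $0$-th homology computes the relevant quotient of $\Bbbk L_0$, so that $\Gamma(\hat 0)$ connected forces $\phi$ to have full rank. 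Making this identification precise is the crux: it requires controlling the projection maps $b\mapsto e_Yb$ on $L_0$ and a M\"obius inversion over $\Lambda(B')$, rather than a purely combinatorial manipulation, and it is the low-degree shadow of the projective resolution of the trivial module constructed later in the paper. Once $\phi=\mathrm{id}$ is established for every contraction $B_{\geq X}$, the reduction of the first paragraph completes $(1)\Rightarrow(2)$ and closes the cycle.
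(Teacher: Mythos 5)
Your reduction to the contractions $B_{\geq X}$ and their minimal $\mathscr L$-classes is sound (the computation $\rho_X(\eta)=\eta'$ does go through by induction on the defining recursion), and your ``easy half'' is correct and is a genuinely different, more hands-on argument than the paper's: for $(3)\Rightarrow(1)$ the paper simply composes surjections $\Bbbk B\to\Bbbk B_{\geq X}\to \Bbbk\,\pi_0(\Delta(B_{\geq X}))$ onto the algebra of a left-zero semigroup, which is not unital, whereas you count augmentations component by component; both work over any nonzero ring.

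The gap is the hard half, $(1)\Rightarrow(2)$, which is the substance of the theorem, and there you have only a plan --- as you yourself concede (``making this identification precise is the crux''). Moreover the plan as stated runs into a concrete obstruction. Showing that $b-\eta'b$ lies in the span of the edge-boundaries $x-y$ of $\Gamma(\wh 0)$ proves nothing: when $\Gamma(\wh 0)$ is connected that span is the \emph{entire} augmentation-zero subspace of $\Bbbk L_0$, and your own easy-half computation already shows $b-\eta'b$ has augmentation zero in each component. What you would actually need is the opposite factorization, namely that $\mathrm{id}-\phi$ \emph{kills} every edge-boundary $x-y$, so that connectivity forces it to kill the whole augmentation-zero subspace and hence to vanish. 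But the one-line trick that achieves this in the paper, $\p(x-y)=\p(c(x-y))=c\,\p(x-y)=0$ for a common upper bound $c$, is only available for homomorphisms into the simple module $\mathbb Q_{\wh 1}$, on which every $c\in B$ acts as zero; applied to the endomorphism $\mathrm{id}-\phi$ of $\Bbbk L_0$, the analogous step $\eta'(x-y)=(\eta'c)(x-y)$ just lands you back at the statement you are trying to prove. The paper escapes by dualizing: it shows $(1-\eta)\mathbb QB\subseteq J=\ker\sigma$, uses nilpotence of $J$ (Theorem~\ref{t:nilkernel}) to reduce to $(1-\eta)[J/J^2]=0$, and identifies $(1-\eta)[J/J^2]$ with $\Hom_{\mathbb QB^1}(J,\mathbb Q_{\wh 1})$ via semisimplicity of $\mathbb QB^1/J$; only in that Hom-space does connectivity of the graphs $\Gamma(X)$ (for \emph{all} $X$, acting on the basis $b-e_{Bb}$ of $J$ from Proposition~\ref{rad.basis}) kill everything. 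That duality step, or some substitute for it, is exactly what is missing from your proposal.
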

\begin{proof}
Clearly the second item implies the third as $\Bbbk B\cong \Bbbk\otimes_{\mathbb Z}\mathbb ZB$ has identity $1\otimes 1$. Suppose that $\Delta(B_{\geq X})$ is not connected.  Denote by $\pi_0(Y)$ the set of connected components of a topological space $Y$.  There is a surjective semigroup homomorphism $B_{\geq X}\to \pi_0(\Delta(B_{\geq X}))$, sending $b\in B$ to its component, where $\pi_0(\Delta(B_{\geq X}))$ is given the left zero multiplication (in fact, this is the maximal left zero image of $B_{\geq X}$; see~\cite{embedpaper}). Indeed, $a\geq ab$, for $a,b\in B_{\geq X}$, and hence $a$ and $ab$ belong to the same connected component. Thus we have surjective homomorphisms \[\Bbbk B\to \Bbbk B_{\geq X}\to \Bbbk \pi_0(\Delta(B_{\geq X}))\] and hence $\Bbbk B$ is not unital, as $\Bbbk \pi_0(\Delta(B_{\geq X}))$ is not by the discussion at the beginning of Section~\ref{s:lrbalgebra}.  This proves that the third item implies the first.

Suppose now that $B$ is connected. Fix, for each $X\in \Lambda(B)$, an element $e_X\in B$ with $X=Be_X$. If $\mathbb ZB$ is unital, then its identity must be $\eta=\sum_{X\in \Lambda(B)}\eta_X$, where the $\eta_X$ are as defined as per \eqref{defidempotents}, since $\eta$ is a right identity by Theorem~\ref{primidempotentprops}.  It clearly suffices to show that  $\eta$ is, in fact, a left identity for $\mathbb QB$.

Consider $\mathbb QB^1$ and let $\wh 1=B^1\in \Lambda(B^1)$.  Then $\eta_{\wh 1}=1-\eta$, together with the $\eta_X$ with $X\in \Lambda(B)$, form a complete set of orthogonal primitive idempotents for $\mathbb QB^1$ by Theorem~\ref{primidempotentprops}.   Note that $\mathbb QB$ is an ideal in the unital $\mathbb Q$-algebra $\mathbb QB^1$. Our goal is to show that $(1-\eta)\mathbb QB=0$, as this implies that $x=\eta x$ for all $x\in \mathbb QB$, as required.  Let $J$ be the kernel of the surjective homomorphism $\sigma\colon \mathbb QB\to \mathbb Q\Lambda(B)$ induced by $\sigma(b)=Bb$ for $b\in B$.    The algebra $\mathbb Q\Lambda(B)$ is unital by Theorem~\ref{t:solomon} and hence $\sigma(\eta)$ is the identity of $\mathbb Q\Lambda(B)$ because surjective homomorphisms map right identities to right identities.  Therefore, $\sigma(x-\eta x)=0$ for all $x\in \mathbb QB$ and so $(1-\eta)\mathbb QB\subseteq J$, whence $(1-\eta)\mathbb QB=(1-\eta)J$.  So it suffices to prove that $(1-\eta)J=0$.

As the ideal $J$ is nilpotent by Theorem~\ref{t:nilkernel}, it suffices to prove that $(1-\eta)J=(1-\eta)J^2$ or, equivalently, $(1-\eta)J\subseteq J^2$.  Thus, to complete the proof of the theorem, we are reduced to proving that $(1-\eta)[J/J^2]=0$.  Recall that $J$ is the Jacobson radical of $\mathbb QB^1$ by Corollary~\ref{c:add.one} and $1-\eta$ is the primitive idempotent corresponding to the simple module $\mathbb Q_{\wh 1}$, which is annihilated by $B$ and fixed by $1$.  Therefore, we have
\begin{align*}
(1-\eta)[J/J^2]& \cong \Hom_{\mathbb QB^1/J}([\mathbb QB^1/J](1-\eta),J/J^2)\\ &=\Hom_{\mathbb QB^1/J}(\mathbb Q_{\wh 1},J/J^2)\\ &\cong \Hom_{\mathbb QB^1/J}(J/J^2,\mathbb Q_{\wh 1})\\& \cong \Hom_{\mathbb QB^1}(J,\mathbb Q_{\wh 1})
\end{align*}
where the penultimate isomorphism uses that $\mathbb QB^1/J$ is a split semisimple $\mathbb Q$-algebra and the last isomorphism uses that $J^2=\rad(J)$ and that $\mathbb Q_{\wh 1}$ is simple.  In summary, to prove that $\eta$ is an identity for $\mathbb QB$, it remains to show that there are no non-zero $\mathbb QB^1$-module homomorphisms from $J$ to $\mathbb Q_{\wh 1}$.

So let $\p\colon J\to \mathbb Q_{\wh 1}$ be a $\mathbb QB^1$-module homomorphism.  By Proposition~\ref{rad.basis}, $J$ has  $\mathbb Q$-vector space basis consisting of elements of the form $b-e_X$ where $Bb=X$ and $b\neq e_X$.  Thus to prove that $\p$ is the zero mapping, it suffices to show that if $x,y\in B$ with $Bx=By=X$, then $\p(x-y)=0$.

 Let $\Gamma(X)$ be the connected graph from Lemma~\ref{l:Franco.graph}.  Assume first that $x,y\in L_X$ are adjacent vertices.  Then there exists $b\in B$ with $bx=x$ and $by=y$.  Therefore, we  have that \[\p(x-y)=\p(b(x-y))=b\p(x-y)=0.\]  If $x,y$ are not adjacent, then there is a sequence $x=x_0,x_1,\ldots, x_n=y$ with $x_i,x_{i+1}$ adjacent vertices of $\Gamma(X)$ for $i=0,\ldots, n-1$. But then we have the telescoping sum
\[x-y=\sum_{i=0}^{n-1}(x_i-x_{i+1})\] and so
\[\p(x-y) = \sum_{i=0}^{n-1}\p(x_i-x_{i+1})=0\] where the last equality holds by the previous case.  We conclude that $\p=0$, as required.  This concludes the proof that $\mathbb ZB$ has an identity.
\end{proof}

\begin{Rmk}
The unit of $\Bbbk B$, when $B$ is a connected left regular band, is given by $\eta=\sum_{X\in \Lambda(B)}\eta_X$ and hence can be explicitly written down.  But the formula is not very enlightening.
Using methods similar to ours, a slightly more concrete formula in the case of an affine arrangement is given in \cite[Section~14.3.3]{THA}.
\end{Rmk}

Most of our examples of left regular bands that are not monoids are strong elimination systems and hence are connected left regular bands.

\begin{Cor}
Let $(E,\mathcal L)$ be a strong elimination system and $\Bbbk$ a field. Then $\Bbbk \mathcal L$ is a unital $\Bbbk$-algebra.
\end{Cor}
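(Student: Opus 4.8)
The plan is to obtain this as an immediate consequence of two results already established: the connectivity of strong elimination systems (Proposition~\ref{p:se.conn}) and the characterization of unitality for left regular band algebras (Theorem~\ref{t:unital}). First I would check that a strong elimination system $\mathcal L$ really is a finite left regular band, so that both cited results apply verbatim. By definition $\mathcal L\subseteq L^E$ satisfies (OM2), hence is closed under the product of $L^E$ and so is a subsemigroup of $L^E$. Since $L^E$ is a left regular band (a finite power of $L$) and the left regular band identities $x^2=x$ and $xyx=xy$ are inherited by subsemigroups, $\mathcal L$ is a left regular band.

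With this in place, the substance is connectivity, which is exactly Proposition~\ref{p:se.conn}: for every $X\in\Lambda(\mathcal L)$ the graph $\Gamma(X)$ of Lemma~\ref{l:Franco.graph} is connected, and hence $\Delta(\mathcal L_{\geq X})$ is connected. Thus $\mathcal L$ is a \emph{connected} left regular band in the sense required by Theorem~\ref{t:unital}. Invoking the implication from connectedness to unitality in that theorem then shows that $\Bbbk\mathcal L$ is unital for every commutative ring with unit $\Bbbk$, and in particular whenever $\Bbbk$ is a field.

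I do not expect any genuine obstacle at this stage, because all of the difficulty lies in the two results being cited: the strong elimination axiom (OM3) powers the induction on the size of the separation set $|S(x,y)|$ used to prove connectivity of $\Gamma(X)$ in Proposition~\ref{p:se.conn}, while the (somewhat indirect, representation-theoretic) argument deducing the existence of a two-sided identity from connectivity is carried out in Theorem~\ref{t:unital}. The only point requiring care is confirming that the hypotheses of those statements are met, which reduces to the elementary observation of the first paragraph that $\mathcal L$ is a left regular band; the remainder of the proof is a direct appeal to the two prior results.
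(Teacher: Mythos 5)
Your proof is correct and follows the same route as the paper: the paper's argument is exactly the two-line deduction that $\mathcal L$ is a connected left regular band by Proposition~\ref{p:se.conn}, hence $\Bbbk\mathcal L$ is unital by Theorem~\ref{t:unital}. Your preliminary check that (OM2) makes $\mathcal L$ a subsemigroup of the left regular band $L^E$ is a reasonable (if routine) addition that the paper leaves implicit.
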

\begin{proof}
This follows from Theorem~\ref{t:unital} because $\mathcal L$ is a connected left regular band by Proposition~\ref{p:se.conn}.
\end{proof}

\subsection{Cartan invariants}
Let $A$ be a unital finite dimensional $\Bbbk$-algebra over a field $\Bbbk$.  Let $P_1,\ldots, P_r$  be a complete set of representatives of the isomorphism classes of projective indecomposable $A$-modules and $S_1,\ldots, S_r$ be a complete set of representatives of the isomorphism classes of simple $A$-modules such that $S_i\cong P_i/\rad(P_i)$. Also, let $I_1,\ldots, I_r$ be a complete set of representatives of the isomorphism classes of injective indecomposable $A$-modules such that $I_i$ has socle isomorphic to $S_i$. The \emph{Cartan matrix}\index{Cartan matrix} of $A$ is the $r\times r$-matrix $C$ with $C_{ij}$ the number of composition factors of $P_j$ isomorphic to $S_i$, or equivalently the number of composition factors of $I_i$ isomorphic to $S_j$.  If $A$ is a split $\Bbbk$-algebra, then
\[C_{ij}=\dim_{\Bbbk}\Hom_A(P_i,P_j)=\dim_{\Bbbk} e_iAe_j\] where $e_i, e_j$ are primitive idempotents with $P_i\cong Ae_i$ and $P_j\cong Ae_j$.

Saliola computed the Cartan matrix of the algebra of a left regular band monoid in terms of the M\"obius function of its support lattice.  We provide here the straightforward generalization for the algebra of a connected left regular band.  If $B$ is a connected left regular band, then both the projective indecomposable $\Bbbk B$-modules and the simple $\Bbbk B$-modules are indexed by $\Lambda(B)$ and so it is natural to index the Cartan matrix by this set.  More precisely, if $X,Y\in \Lambda(B)$, then
$C_{X,Y}=\dim_{\Bbbk}\Hom(\Bbbk L_X,\Bbbk L_Y)$.

\begin{Thm}\label{t:Cartan}
Let $B$ be a connected left regular band with support semilattice $\Lambda(B)$ and let $\Bbbk$ be a field. Fix $e_X\in B$ with $Be_X=X$ for all $X\in \Lambda(B)$.  Then the Cartan matrix of $\Bbbk B$ is given by
\begin{equation}\label{eq:cartan}
C_{X,Y}=\sum_{Z\leq X}|e_ZB\cap L_Y|\cdot \mu(Z,X)
\end{equation}
for $X,Y\in \Lambda(B)$ where $\mu$ is the M\"obius function of $\Lambda(B)$. In particular, $C_{X,Y}$ is unipotent lower triangular with respect to any linear extension of the partial order on $\Lambda(B)$.
\end{Thm}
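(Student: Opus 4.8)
The plan is to reduce the whole computation to a single dimension count of the idempotents $e_X$ acting on the Schützenberger modules, and then to extract the stated formula by Möbius inversion. Since $B$ is connected, $\Bbbk B$ is unital by Theorem~\ref{t:unital} and split basic by Corollary~\ref{c:jac.radical}, with the projective indecomposables $\Bbbk L_X\cong \Bbbk B\eta_X$ and the simple modules $\Bbbk_X$ both indexed by $\Lambda(B)$ (Theorem~\ref{Schutz} and Corollary~\ref{c:simples}). For a split basic algebra the Cartan invariant $C_{X,Y}=\dim_\Bbbk\Hom(\Bbbk L_X,\Bbbk L_Y)$ equals the composition multiplicity $[\Bbbk L_Y:\Bbbk_X]$, and this is the quantity I would compute.

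First I would record how the chosen idempotents act on modules. Each $e_X$ is idempotent in $B$ (every element of a band is), hence an idempotent of $\Bbbk B$, so the functor $M\mapsto e_XM$ is exact. On the simple module $\Bbbk_Z$ the element $e_X$ acts as the identity when $\sigma(e_X)=X\geq Z$ and as zero otherwise, whence $\dim_\Bbbk e_X\Bbbk_Z=1$ if $Z\leq X$ and $0$ otherwise. On $\Bbbk L_Y$, the operator $e_X$ sends a basis element $b\in L_Y$ to $e_Xb$ when $X\geq Y$ and to $0$ otherwise. Using that $\sigma(e_Xb)=X\wedge Y=Y$ in the first case (Proposition~\ref{p:supp.lattice}), that every $c\in e_XB$ satisfies $e_Xc=c$, and that $e_XB\cap L_Y=\emptyset$ when $Y\not\leq X$, I would identify the image of the idempotent operator $e_X$ on $\Bbbk L_Y$ with $\Bbbk(e_XB\cap L_Y)$ and conclude $\dim_\Bbbk e_X\Bbbk L_Y=|e_XB\cap L_Y|$. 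This concrete identification is the crux of the argument; everything else is formal.

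Combining the two computations via exactness of $M\mapsto e_XM$ gives the key identity
\begin{equation*}
|e_XB\cap L_Y|=\sum_{Z}[\Bbbk L_Y:\Bbbk_Z]\,\dim_\Bbbk e_X\Bbbk_Z=\sum_{Z\leq X}C_{Z,Y}.
\end{equation*}
Fixing $Y$ and applying Möbius inversion in the variable $X$ over the finite semilattice $\Lambda(B)$ then yields exactly \eqref{eq:cartan}. The only point demanding care here is the direction of the inversion, namely that $g(X):=|e_XB\cap L_Y|$ is the down-set sum of $h(Z):=C_{Z,Y}$, so that $h(X)=\sum_{Z\leq X}\mu(Z,X)g(Z)$.

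Finally I would deduce the triangularity directly from \eqref{eq:cartan}. Because $\sigma$ is order preserving (Proposition~\ref{p:supportiscellular}), any $b\leq e_Z$ has $\sigma(b)\leq Z$, so $e_ZB\cap L_Y=\emptyset$ unless $Y\leq Z$; hence every term of \eqref{eq:cartan} vanishes unless $Y\leq Z\leq X$, forcing $C_{X,Y}=0$ whenever $Y\not\leq X$. For the diagonal, the only surviving term of $C_{X,X}$ is $Z=X$, and $e_XB\cap L_X=\{e_X\}$ (an element $b\leq e_X$ with $\sigma(b)=X=\sigma(e_X)$ must equal $e_X$ by the strictness in Proposition~\ref{p:supportiscellular}), so $C_{X,X}=\mu(X,X)=1$. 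Ordering $\Lambda(B)$ by any linear extension of its partial order therefore renders $C$ unipotent lower triangular, completing the proof.
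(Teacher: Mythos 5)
Your proof is correct and follows essentially the same route as the paper: both arguments reduce to the identity $|e_XB\cap L_Y|=\sum_{Z\leq X}C_{Z,Y}$ followed by M\"obius inversion, the paper obtaining it by evaluating the character of $\Bbbk L_Y$ at the idempotent $e_X$ (trace of an idempotent operator) while you compute the rank of that same operator via exactness of $M\mapsto e_XM$ — two phrasings of the same count. The triangularity argument at the end also matches the paper's.
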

\begin{proof}
Let $\rho\colon B\to M_n(\Bbbk)$ be the matrix representation afforded by $\Bbbk L_Y$ with respect to the basis $L_Y$ and let $\chi$ be the character of $\rho$, that is, $\chi(b)$ is the trace of $\rho(b)$.   Note that $\chi(b) = |bB\cap L_Y|$ because $b$ is idempotent.   Recall from Corollary~\ref{c:simples} that if $Z\in \Lambda(B)$, then the irreducible representation $\chi_Z$ corresponding to the simple module $\Bbbk_Z$ is given by
\[\chi_Z(a) = \begin{cases} 1, & \text{if}\ Ba\geq Z\\ 0, & \text{else}\end{cases}\]
for $a\in B$.

Choosing a basis for $\Bbbk L_Y$ adapted to a composition series, we conclude that
\[\chi=\sum_{Z\in \Lambda(B)}C_{Z,Y}\cdot \chi_Z\] and hence
\[|e_XB\cap L_Y|=\chi(e_X)=\sum_{Z\in \Lambda(B)}C_{Z,Y}\cdot \chi_Z(e_X) =\sum_{Z\leq X}C_{Z,Y}.\]  An application of Rota's M\"obius inversion theorem yields \eqref{eq:cartan}.

For the final statement, observe that $e_ZB\cap L_Y=\emptyset$ if $Z\ngeq Y$. Thus $C_{X,Y}=0$ unless $X\geq Y$ and $C_{X,X}=|e_XB\cap L_X|\cdot \mu(X,X)=1$. This completes the proof.
\end{proof}

\section{Projective resolutions and global dimension}\label{s:projrel}
In this section we use various simplicial and CW complexes associated to connected left regular bands to compute projective resolutions of their simple modules.  As a consequence, we obtain a more direct proof of the results of our previous paper on the global dimension of left regular bands~\cite{oldpaper}, which went through classifying spaces of small categories. Also by allowing left regular bands that are not monoids, the theory can include a number of new and interesting examples.  The strongest results are obtained for connected CW left regular bands, for which we are able to construct the minimal projective resolutions of the simple modules using topology.

\subsection{Actions of left regular bands on CW posets}
Although at the end of the day we are interested in actions of left regular bands on regular CW complexes, it is more convenient to work with actions on CW posets as these are more combinatorial in nature. Thanks to the functor from CW posets to regular CW complexes, we can then obtain actions on the associated CW complexes and hence on their cellular chain complexes.

\subsubsection{}
\nomenclature[L, 18]{$\Stab(x)$}{stabilizer of $x$}%
 If $B$ is a left regular band acting on a set $X$ and $x\in X$, then \[\Stab(x) = \{b\in B\mid bx=x\}\] denotes the \emph{stabilizer}\index{stabilizer} of $x$.  We say that the action of $B$ on $X$ is \emph{unitary}\index{unitary} if $BX=X$.  Note that this is equivalent to $\Stab(x)\neq \emptyset$ for all $x\in X$. Indeed, if $BX=X$ and $x=by$ with $y\in X$, then $x=bx$ and hence $\Stab(x)\neq \emptyset$.  Conversely, if $b\in \Stab(x)$, then $x=bx\in BX$.  Notice that if $B$ is a monoid, then the action of $B$ on $X$ is unitary if and only if the identity acts via the identity mapping.  Clearly, $BX=X$ if the identity of $B$ acts by the identity mapping. If $\Stab(x)\neq\emptyset$, say $bx=x$, then $1x=1(bx)=(1b)x=bx=x$.  Therefore, the identity acts via the identity mapping.
Notice that the action of $B$ on the left of itself is unitary because $b\in \Stab(b)$ for all $b\in B$.  The importance of unitary actions stems from the following proposition.

\begin{Prop}\label{p:unitary}
Let $B$ be a connected left regular band and $\Bbbk$ a commutative ring with unit.  Suppose that $B$ has a unitary action on a set $X$.  Then $\Bbbk X$ is a unitary $\Bbbk B$-module.
\end{Prop}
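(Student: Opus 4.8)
The plan is to reduce the statement to the unitality of $\Bbbk B$ followed by a one-line verification. First I would invoke Theorem~\ref{t:unital}: since $B$ is connected, $\Bbbk B$ is a unital ring, and by Theorem~\ref{primidempotentprops} its identity is $\eta=\sum_{X\in\Lambda(B)}\eta_X$, so in particular $1_{\Bbbk B}=\eta$. The observation I would then record is purely formal: for any unital ring $R$ and any left $R$-module $M$ (not assumed unitary), one has $RM=1_R M$, because $rm=(1_R r)m=1_R(rm)$ shows $RM\subseteq 1_R M$, and the reverse inclusion is trivial. Since $1_R\cdot$ is an idempotent endomorphism of $M$, its image $1_R M$ equals $M$ exactly when $1_R$ acts as the identity; hence $M$ is unitary if and only if $RM=M$. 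Thus it will suffice to prove the equality $\Bbbk B\cdot\Bbbk X=\Bbbk X$.

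Next I would verify this equality directly from unitarity of the action. Fix $x\in X$. As recorded in the text, the condition $BX=X$ is equivalent to $\Stab(x)\neq\emptyset$ for every $x$, so I may choose $b\in\Stab(x)$, giving $bx=x$; therefore $x=b\cdot x\in\Bbbk B\cdot\Bbbk X$. Because $X$ is a $\Bbbk$-spanning set of $\Bbbk X$ and $\Bbbk B\cdot\Bbbk X$ is a $\Bbbk$-submodule of $\Bbbk X$ containing each such $x$, it follows that $\Bbbk B\cdot\Bbbk X=\Bbbk X$. Combining this with the previous paragraph yields $\eta\cdot\Bbbk X=1_{\Bbbk B}\cdot\Bbbk X=\Bbbk X$, which is precisely the assertion that $\Bbbk X$ is a unitary $\Bbbk B$-module.

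The only substantive input here is the existence of the two-sided identity $\eta$, and that is exactly where connectedness of $B$ enters, through Theorem~\ref{t:unital}; the remainder of the argument is soft module theory. I therefore expect no genuine obstacle beyond citing that theorem correctly. In particular, the one point that could have caused trouble---that $\eta$ is \emph{a priori} only a right identity of $\Bbbk B$ and so need not act as the identity on an arbitrary module---is dissolved automatically once connectedness guarantees that $\eta$ is a two-sided identity, at which stage the equality $RM=1_R M$ makes the acting-as-identity property equivalent to the generation property $\Bbbk B\cdot\Bbbk X=\Bbbk X$.
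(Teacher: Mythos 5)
Your proposal is correct and is essentially the paper's own argument: the paper likewise takes the identity $\eta$ of $\Bbbk B$ (guaranteed by connectedness via Theorem~\ref{t:unital}), picks $b\in\Stab(x)$ for each $x\in X$, and computes $\eta x=\eta(bx)=(\eta b)x=bx=x$ directly. Your detour through the general equivalence ``unitary $\Leftrightarrow$ $RM=M$'' for unital $R$ is just this same computation packaged as a lemma, so the two proofs coincide in substance.
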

\begin{proof}
Let $\eta$ be the identity of $\Bbbk B$ and let $x\in X$.  Suppose that $b\in B$ with $bx=x$.  Then $\eta x=\eta (bx)=(\eta b)x=bx=x$ and so $\eta$ fixes the basis $X$ of $\Bbbk X$.  It follows that $\Bbbk X$ is a unitary $\Bbbk B$-module.
\end{proof}

The following proposition will be useful several times.

\begin{Prop}\label{stabilizersonsimplicesfromstabs}
Let $B$ be a connected left regular band acting by cellular maps on a poset $P$.  Let $\sigma_0<\sigma_1<\cdots<\sigma_q$ form a simplex $\sigma$ of $\Delta(P)$.  Then $\Stab(\sigma)=\Stab(\sigma_q)$.  In particular, the action of $B$ on $P$ is unitary if and only if all of the simplices of $\Delta(P)$ have non-empty stabilizers under the action of $B$ on $\Delta(P)$ by simplicial maps (i.e., the action of $B$ on the set of simplices of $\Delta(P)$ is unitary).
\end{Prop}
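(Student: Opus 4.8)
The plan is to reduce both inclusions to the behaviour of the single self-map of $P$ determined by an element $b\in B$. First I would observe that for each $b\in B$ the map $\beta_b\colon P\to P$ given by $\beta_b(x)=bx$ is cellular by hypothesis (hence order-preserving, by the definition of a cellular map of posets), and is moreover \emph{idempotent} because $B$ is a band: $\beta_b(\beta_b(x))=b(bx)=b^2x=bx$. Thus $\beta_b$ falls directly under the hypotheses of Proposition~\ref{retract}, which is the engine that will drive the argument.

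To prove $\Stab(\sigma)\subseteq\Stab(\sigma_q)$, I would suppose $b\sigma=\sigma$. Since $\beta_b$ is order-preserving, the images satisfy $b\sigma_0\leq b\sigma_1\leq\cdots\leq b\sigma_q$, so $b\sigma_q$ is the greatest element of the chain $\{b\sigma_0,\ldots,b\sigma_q\}$. As this vertex set equals $\{\sigma_0,\ldots,\sigma_q\}$, whose greatest element is $\sigma_q$, comparing maxima of the two equal chains forces $b\sigma_q=\sigma_q$, i.e.\ $b\in\Stab(\sigma_q)$.

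For the reverse inclusion $\Stab(\sigma_q)\subseteq\Stab(\sigma)$---which I expect to be the substantive step---I would suppose $b\sigma_q=\sigma_q$, that is $\beta_b(\sigma_q)=\sigma_q$. Applying Proposition~\ref{retract} to the idempotent cellular map $\beta_b$ yields $\beta_b(p)=p$ for every $p\leq\sigma_q$. Since $\sigma_0<\cdots<\sigma_{q-1}<\sigma_q$, each $\sigma_i$ lies below $\sigma_q$, so $b\sigma_i=\sigma_i$ for all $i$, whence $b\sigma=\sigma$ and $b\in\Stab(\sigma)$. This establishes the equality $\Stab(\sigma)=\Stab(\sigma_q)$.

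Finally, I would deduce the ``in particular'' clause from the identity just proved together with the characterization of unitary actions recalled earlier: an action on a set $X$ is unitary precisely when $\Stab(x)\neq\emptyset$ for all $x\in X$. The $0$-simplices of $\Delta(P)$ are the singletons $\{p\}$ with $p\in P$, and $b\in\Stab(\{p\})$ if and only if $bp=p$, so $\Stab(\{p\})=\Stab(p)$. Hence if every simplex of $\Delta(P)$ has non-empty stabilizer, then in particular every vertex does, and the action on $P$ is unitary; conversely, if the action on $P$ is unitary, then the top $\sigma_q\in P$ of any simplex $\sigma$ has $\Stab(\sigma_q)\neq\emptyset$, so $\Stab(\sigma)=\Stab(\sigma_q)\neq\emptyset$ by the main equality, and every simplex has non-empty stabilizer. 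The only mildly delicate point in the whole argument is the maxima-matching in the forward inclusion; the genuine content rests on recognizing $x\mapsto bx$ as an idempotent cellular self-map so that Proposition~\ref{retract} applies.
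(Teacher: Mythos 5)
Your proof is correct and follows essentially the same route as the paper: the inclusion $\Stab(\sigma_q)\subseteq\Stab(\sigma)$ via Proposition~\ref{retract} applied to the idempotent cellular map $x\mapsto bx$, and the reverse inclusion by comparing the image chain $b\sigma_0\leq\cdots\leq b\sigma_q$ with the original simplex. The ``in particular'' clause is also handled as the paper intends.
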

\begin{proof}
If $b\in B$ and $b\sigma_q=\sigma_q$, then Proposition~\ref{retract} implies that $b$ stabilizes $\sigma$.  Conversely, if $b$ stabilizes $\sigma$, then from $b\sigma_0\leq b\sigma_1\leq\cdots\leq b\sigma_q$ and $\{\sigma_0,\ldots, \sigma_q\}=\{b\sigma_0,\ldots, b\sigma_q\}$, we deduce that $b\sigma_q=\sigma_q$.
\end{proof}

It is crucial that $B$ acts on itself by cellular mappings.

\begin{Prop}\label{cellactonB}
Let $B$ be a left regular band.  Then the action of $B$ on itself by left multiplication is unitary and by cellular maps.
\end{Prop}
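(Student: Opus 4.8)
The plan is to verify the two assertions separately, both of which reduce to elementary identities in $B$. The unitarity is immediate and was in fact already observed in the discussion preceding Proposition~\ref{p:unitary}: since $B$ is a band, every $b\in B$ satisfies $bb=b$, so $b\in \Stab(b)$ and hence $\Stab(b)\neq\emptyset$ for all $b\in B$. This is exactly the condition that the action is unitary.

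For the claim that the action is by cellular maps, I would show that each left translation $\lambda_b\colon B\to B$, $x\mapsto bx$, is a cellular map of posets. First recall that $\lambda_b$ is order-preserving by Lemma~\ref{l:left.act.order}. Then I would invoke the reformulation of cellularity (stated just before Proposition~\ref{faceposetasfunctor}) that an order-preserving map $f$ is cellular precisely when $f(B_{\leq x})=B_{\leq f(x)}$ for every $x$. The key observation is the identification of principal down-sets in the $\mathscr R$-order with principal right ideals: for $x\in B$ one has $B_{\leq x}=xB$, because $y\leq x$ means $xy=y$, and this holds if and only if $y\in xB$ (if $y=xc$ then $xy=xxc=xc=y$, and conversely $xy=y$ exhibits $y\in xB$).

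With this identification in hand, the verification is a one-line associativity computation: for any $b,x\in B$,
\[\lambda_b(B_{\leq x})=b(xB)=(bx)B=B_{\leq bx}=B_{\leq \lambda_b(x)},\]
where the second equality uses that $b(xc)=(bx)c$ for all $c\in B$. Thus $\lambda_b$ satisfies the cellular condition. I expect no genuine obstacle here; the only substantive point is recognizing that the down-set $B_{\leq x}$ coincides with the principal right ideal $xB$, so that left multiplication carries the closed cell $xB$ onto the closed cell $(bx)B$, after which cellularity follows purely from associativity of the product in $B$.
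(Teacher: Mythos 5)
Your proof is correct and is essentially the paper's argument in different clothing: the paper verifies the lifting condition pointwise (given $b'\leq ab$, the element $bb'\leq b$ satisfies $a(bb')=(ab)b'=b'$), which is exactly your identity $b(xB)=(bx)B$ read through the identification $B_{\leq x}=xB$. Both reduce to associativity together with idempotency, so there is nothing to add.
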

\begin{proof}
By Lemma~\ref{l:left.act.order} the action is order preserving and we have already observed that it is unitary.  Let $a,b\in B$ and suppose that $b'\leq ab$.  Then $bb'\leq b$ and $a(bb')=(ab)b'=b'$.  Thus left multiplication by $a$ is a cellular mapping.
\end{proof}

\subsubsection{Simplicial homology}
Let us remind the reader of some basics of simplicial homology.  See~\cite[Chapter~1.5]{Munkres} for details.

Fix for this discussion a commutative ring with unit $\Bbbk$.
Let $K$ be a (finite) simplicial complex.  If $\sigma=\{v_0,\ldots, v_q\}$ is a $q$-simplex, then an \emph{orientation}\index{orientation} of $\sigma$ is an equivalence class of total orderings of the vertices of $\sigma$, where two orderings are equivalent if they differ by an even permutation.  Each $q$-simplex admits exactly two orientations if $q>0$ (and one if $q=0$).   We write $[v_{i_0},\ldots, v_{i_q}]$ for the class of the ordering $v_{i_0},\ldots, v_{i_q}$. An \emph{oriented $q$-simplex}\index{oriented $q$-simplex} consists of a $q$-simplex together with an orientation.

\nomenclature[R, 07]{$C_{\bullet}(K; \Bbbk)$}{simplicial chain complex of $K$ with coefficients in $\Bbbk$}%
To construct the \emph{simplicial chain complex}\index{simplicial chain complex} $C_{\bullet}(K;\Bbbk)$ of $K$ with coefficients in $\Bbbk$, we recall that the
$q^{th}$-chain module $C_q(K;\Bbbk)$  is  the quotient of the free $\Bbbk$-module on the set of oriented simplices by the relations identifying an oriented simplex $\sigma$ with $-\sigma'$ where $\sigma'$ has the same underlying simplex as $\sigma$ but with the opposite orientation (for $q>0$). Of course, by fixing an orientation of each simplex, it is easy to see that $C_q(K;\Bbbk)$ is a free $\Bbbk$-module with a basis in bijection with the set of $q$-simplices.

 The boundary operator $d_q\colon C_q(K;\Bbbk)\to C_{q-1}(K;\Bbbk)$ for $q\geq 1$ is given by
\[d_q([v_0,\ldots, v_q]) = \sum_{i=0}^q (-1)^i[v_0,\ldots, \wh{v_i},\ldots, v_q]\] where $\wh{v_i}$ means omit $v_i$.  The augmentation $\varepsilon\colon C_0(K;\Bbbk)\to\Bbbk$ is given by $\varepsilon(v)=1$ for any $0$-simplex $v$ (where we recall that an oriented $0$-simplex is the same thing as a $0$-simplex).  The (reduced) simplicial homology $\til H_{\bullet}(K;\Bbbk)$ of $K$ with coefficients in $\Bbbk$ is then the homology of the (augmented) simplicial  chain complex.  The (reduced) cohomology of $K$ with coefficients in $\Bbbk$  is defined dually and denoted $\til H^{\bullet}(K;\Bbbk)$.  The (non-reduced) homology and cohomology of $K$ are denoted by $H_\bullet(K;\Bbbk)$ and $H^\bullet(K;\Bbbk)$, respectively.

\nomenclature[R, 09]{$f_{\ast}$}{induced chain map}%
If $f\colon K\to L$ is a \emph{simplicial map}\index{simplicial map}, then the induced chain map \[f_{\ast}\colon C_{\bullet}(K;\Bbbk)\to C_{\bullet}(L;\Bbbk)\] is defined on an oriented $q$-simplex by
\[f_*([v_0,\ldots, v_q]) = \begin{cases}[f(v_0),\ldots, f(v_q)], & \text{if}\ f(v_0),\ldots, f(v_q)\ \text{are distinct}\\ 0, & \text{else.}\end{cases}\]  The simplicial chain complex construction is functorial on the category of simplicial complexes and simplicial maps.

\nomenclature[R, 08]{$C_{\bullet}(K, L; \Bbbk)$}{simplicial chain complex of $K$ relative to the subcomplex $L$ and with coefficients in $\Bbbk$}%
If $L$ is a subsimplicial complex of a simplicial complex $K$, then $C_\bullet(L;\Bbbk)$ is a subchain complex of $C_\bullet(K;\Bbbk)$.  The \emph{relative simplicial chain complex}\index{relative simplicial chain complex} $C_\bullet(K,L;\Bbbk)$ is the quotient chain complex.

\subsubsection{Simplicial homology for connected left regular bands}

The next proposition will be used throughout the text, often without comment.

\begin{Prop}\label{makeunital}
Let $B$ be a connected left regular band acting on a simplicial complex $K$ by simplicial maps such that the stabilizer of each simplex is non-empty.  Let $L$ be a subcomplex invariant under $B$.  Then the simplicial chain complex $C_\bullet(K;\Bbbk)$ and the relative simplicial chain complex $C_\bullet(K,L;\Bbbk)$ are chain complexes of unitary $\Bbbk B$-modules.  Thus  the relative homology $H_q(K,L;\Bbbk)$ is a unitary $\Bbbk B$-module for all $q\geq 0$.
\end{Prop}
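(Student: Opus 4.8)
The plan is to build the $\Bbbk B$-module structure from the functoriality of the simplicial chain complex, and then verify unitarity one basis element at a time, using that stabilizers consist of idempotents.

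First I would record the module structure. Since $B$ acts on $K$ by simplicial maps, each $b\in B$ determines a simplicial self-map $\beta_b\colon K\to K$, and because the action is a left action we have $\beta_b\circ\beta_{b'}=\beta_{bb'}$. Functoriality of the simplicial chain complex construction then produces chain endomorphisms $(\beta_b)_\ast$ of $C_\bullet(K;\Bbbk)$ with $(\beta_b)_\ast\circ(\beta_{b'})_\ast=(\beta_{bb'})_\ast$, so $b\mapsto (\beta_b)_\ast$ is a semigroup homomorphism from $B$ into the monoid of chain endomorphisms of $C_\bullet(K;\Bbbk)$. Extending $\Bbbk$-linearly endows each chain module $C_q(K;\Bbbk)$ with the structure of a $\Bbbk B$-module, and the boundary maps $d_q$ are $\Bbbk B$-linear because each $(\beta_b)_\ast$ commutes with $d_q$; thus $C_\bullet(K;\Bbbk)$ is a chain complex of $\Bbbk B$-modules. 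As $L$ is $B$-invariant, $C_\bullet(L;\Bbbk)$ is a $\Bbbk B$-subcomplex and the relative complex $C_\bullet(K,L;\Bbbk)=C_\bullet(K;\Bbbk)/C_\bullet(L;\Bbbk)$ inherits the structure.

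Next I would prove unitarity, which is where the hypotheses are really used. Since $B$ is connected, $\Bbbk B$ is unital by Theorem~\ref{t:unital}, with two-sided identity $\eta=\sum_{X\in\Lambda(B)}\eta_X$. Fix a basis of $C_q(K;\Bbbk)$ by orienting each $q$-simplex, and let $\sigma=[v_0,\ldots,v_q]$ be such a basis element. By hypothesis $\Stab(\sigma)\neq\emptyset$, so choose $b\in\Stab(\sigma)$. The crucial point is that $b$ fixes $\sigma$ \emph{pointwise}: as $b$ is idempotent, $\beta_b$ is an idempotent simplicial map, whose image coincides with its fixed-point set; and $b\in\Stab(\sigma)$ means $\beta_b$ restricts to a surjection of the finite vertex set of $\sigma$ onto itself, so it fixes each $v_i$. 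Hence $(\beta_b)_\ast\sigma=[\beta_b(v_0),\ldots,\beta_b(v_q)]=\sigma$, with its given orientation, and therefore
\[
\eta\cdot\sigma=\eta\cdot(b\cdot\sigma)=(\eta b)\cdot\sigma=b\cdot\sigma=\sigma,
\]
using $\eta b=b$. Thus $\eta$ fixes the chosen basis, so $C_q(K;\Bbbk)$ is unitary; this is the chain-level analogue of Proposition~\ref{p:unitary}. The relative module $C_q(K,L;\Bbbk)$, being a quotient of a unitary module, is unitary as well. Finally, over the unital ring $\Bbbk B$ any submodule or quotient of a unitary module is again unitary (the identity $\eta$ acts as the identity on every element of any submodule or quotient), and since $H_q(K,L;\Bbbk)=\ker d_q/\operatorname{im} d_{q+1}$ is a subquotient of the unitary module $C_q(K,L;\Bbbk)$, it is a unitary $\Bbbk B$-module for all $q\geq 0$.

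I expect the main obstacle to be the unitarity verification rather than the module structure: everything in the functoriality step is routine, but to conclude that $\eta$ acts as the identity one must first know $\eta$ exists at all, which is the nontrivial input of connectedness via Theorem~\ref{t:unital}, and one must argue that a stabilizing element fixes a simplex pointwise, which hinges on the band identity $b^2=b$ forcing the induced vertex permutation to be trivial. Once these two points are in place, the homology statement follows immediately.
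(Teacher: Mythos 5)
Your proof is correct and follows essentially the same route as the paper's: the module structure comes from functoriality of the simplicial chain complex, unitarity is checked on the basis of oriented simplices by observing that an idempotent stabilizing a simplex must fix its (finite) vertex set pointwise, and then $\eta\sigma=\eta(b\sigma)=(\eta b)\sigma=b\sigma=\sigma$. The passage to the subcomplex, the quotient, and the relative homology is handled exactly as in the paper.
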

\begin{proof}
First note that $C_\bullet(K;\Bbbk)$ is a chain complex of $\Bbbk B$-modules by functoriality of simplicial homology under simplicial maps. Let us verify that the  $\Bbbk B$-module  $C_q(K;\Bbbk)$ is unitary.  Indeed, let $\sigma=[v_0,\ldots, v_q]$ be an oriented $q$-simplex and suppose that $b\in B$ stabilizes the underlying $q$-simplex $\{v_0,\ldots, v_q\}$.  Then $\{bv_0,\ldots, bv_q\}=\{v_0,\ldots, v_q\}$ and hence $b$ fixes each of $v_0,\ldots, v_q$, being an idempotent.  Thus $b_*([v_0,\ldots, v_q])=[bv_0,\ldots, bv_q]=[v_0,\ldots, v_q]$ and so $b\sigma=\sigma$.  Hence if $\eta$ is the identity of $\Bbbk B$, then we have that $\eta\sigma=\eta(b\sigma)=(\eta b)\sigma=b\sigma=\sigma$.  Therefore, $C_q(K;\Bbbk)$ is a unitary $\Bbbk B$-module for each $q\geq 0$ and so $C_{\bullet}(K;\Bbbk)$ is a chain complex of unitary $\Bbbk B$-modules.  We conclude that the subchain complex $C_\bullet(L;\Bbbk)$ consists of unitary $\Bbbk B$-modules, as does the quotient complex $C_\bullet(K,L;\Bbbk)$.  It follows that the relative homology consists of unitary $\Bbbk B$-modules.
\end{proof}

With Proposition~\ref{makeunital} in hand, we can now establish that if $B$ is a connected left regular band, then each simplicial complex $\Delta(B_{\geq X})$ with $X\in \Lambda(B)$ is acyclic.

\begin{Thm}\label{t:acyclicordercomplex}
Let $B$ be a connected left regular band.  Then $\Delta(B_{\geq X})$ is acyclic for all $X\in \Lambda(B)$.
\end{Thm}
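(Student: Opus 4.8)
The plan is to reduce to an arbitrary connected left regular band and then exploit its self-action. First I would observe that it suffices to prove that $\Delta(B')$ is acyclic for \emph{every} connected left regular band $B'$: fixing $X\in \Lambda(B)$, the contraction $B'=B_{\geq X}$ is again a connected left regular band, since $\Lambda(B_{\geq X})\cong \Lambda(B)_{\geq X}$ and every contraction of $B_{\geq X}$ is a contraction of $B$, hence has connected order complex by hypothesis. Applying the acyclicity statement to $B_{\geq X}$ then yields the theorem. Because $B'$ is connected, the algebra $\mathbb Z B'$ is unital by Theorem~\ref{t:unital}; write $\eta$ for its identity.

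Next I would set up the self-action. By Proposition~\ref{cellactonB}, $B'$ acts on the poset $B'$ by left multiplication unitarily and by cellular maps, inducing an action on $\Delta(B')$ by simplicial maps. Since this action is unitary, Proposition~\ref{stabilizersonsimplicesfromstabs} guarantees that every simplex of $\Delta(B')$ has non-empty stabilizer, so by Proposition~\ref{makeunital} each $H_q(\Delta(B');\mathbb Z)$ is a unitary $\mathbb Z B'$-module. For $q=0$ the reduced homology vanishes because $\Delta(B')$ is connected; it therefore remains to treat $q\geq 1$, where reduced and unreduced homology coincide.

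The crux is to show that every $b\in B'$ acts as zero on $H_q(\Delta(B');\mathbb Z)$ for $q\geq 1$. Left multiplication $\ell_b\colon x\mapsto bx$ is an idempotent cellular self-map of $B'$, so its image $bB'$ is a lower set by Proposition~\ref{retract}; moreover $bB'$ is a left regular band monoid with identity $b$, and $b$ is the maximum of $bB'$ in the $\R$-order, so $\Delta(bB')$ is a cone and hence contractible. The self-map $b_*$ of $\Delta(B')$ thus factors through the inclusion $\Delta(bB')\hookrightarrow \Delta(B')$, and on $H_q$ with $q\geq 1$ it factors through $H_q(\Delta(bB');\mathbb Z)=0$; hence $b$ acts as zero. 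I would then finish by writing $\eta=\sum_{b\in B'}c_b\,b$ as a $\mathbb Z$-linear combination of elements of $B'$: the identity $\eta$ acts as zero on $H_q(\Delta(B');\mathbb Z)$, while it acts as the identity map since the module is unitary. Therefore $H_q(\Delta(B');\mathbb Z)=0$ for all $q\geq 1$, completing the proof.

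The main point — really the key idea rather than a technical hurdle — is that although left multiplication by $b$ merely retracts $\Delta(B')$ onto the contractible subcomplex $\Delta(bB')$ (this is a retraction but not a deformation retraction, so contractibility of $\Delta(B')$ is not available), it is nonetheless enough for homology: it forces each generator $b$, and hence the unital identity $\eta$, to annihilate $H_q$. The remaining care is the equivariance and unitarity bookkeeping, namely that $b$ really acts via the map induced by $\ell_b$ and that subquotients of unitary modules are unitary, both of which follow from Proposition~\ref{makeunital} together with functoriality of simplicial homology.
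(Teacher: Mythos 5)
Your proof is correct, and it reaches the conclusion by a mechanism that differs from the paper's. The paper works at the chain level: it views the augmented chain complex $C$ of $\Delta(B)$ as a subcomplex of the augmented chain complex $C'$ of the cone $\Delta(B^1)$, observes that the identity $\eta\in\mathbb ZB$ induces an idempotent chain endomorphism of $C'$ whose image lies in $C$ and which fixes $C$ (by Proposition~\ref{makeunital}), and concludes that $C$ is a retract of the contractible complex $C'$ --- which yields not only acyclicity but the slightly stronger fact that $C$ is a contractible chain complex over $\mathbb Z$. You instead stay inside $\Delta(B)$ and argue on homology: left multiplication by $b$ factors through the cone $\Delta(bB)$ with cone point $b$, so $b$ annihilates $H_q$ for $q\geq 1$ (the same factorization would also kill $\tilde H_0$, though invoking connectedness there is equally fine); by linearity $\eta$ annihilates these groups, while unitarity forces $\eta$ to act as the identity, so they vanish. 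Both arguments hinge on Theorem~\ref{t:unital} together with a cone trick; yours replaces the single global cone $\Delta(B^1)$ by the family of cones $\Delta(bB)$ and avoids passing to $B^1$ altogether, at the cost of obtaining only the vanishing of homology rather than chain contractibility. All the ingredients you cite (Propositions~\ref{cellactonB}, \ref{stabilizersonsimplicesfromstabs}, \ref{makeunital} and \ref{retract}) precede this theorem in the text, so there is no circularity.
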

\begin{proof}
As $B_{\geq X}$ is a connected left regular band in its own right, for all $X\in \Lambda(B)$, it suffices to prove that $\Delta(B)$ is acyclic. (One can then replace $B$ by $B_{\geq X}$ to obtain the general case.)
Note that $\Delta(B)$ is a subcomplex of $\Delta(B^1)$.  Therefore, the augmented simplicial chain complex $C_\bullet$ of $\Delta(B)$ (over $\mathbb Z$) can be viewed as a subcomplex of the augmented simplicial chain complex $C'_\bullet$ of $\Delta(B^1)$ by identifying $q$-chains on $\Delta(B)$ with $q$-chains on $\Delta(B^1)$ that are supported on oriented simplices of $\Delta(B)$.  The complex $C'_\bullet$ is acyclic (even contractible) because $\Delta(B^1)$ is contractible, having cone point $1$.

The complex $C'_\bullet$ is, in fact, a complex of $\mathbb ZB^1$-modules by Proposition~\ref{makeunital}.  Thus each element $x\in \mathbb ZB^1$ induces a chain map $C'_\bullet\to C'_\bullet$ (as chain complexes of abelian groups) via left multiplication. Moreover, since $b\Delta(B^1)\subseteq \Delta(B)$ for all $b\in B$, it follows that if $x\in \mathbb ZB$ then the image of the chain map $C'_\bullet\to C'_\bullet$ induced by $x$ is contained in $C_\bullet$.  In particular, let $\eta\in \mathbb ZB$ be the identity element guaranteed by Theorem~\ref{t:unital}.  Then $\eta$ induces an idempotent chain map $C'_\bullet\to C'_\bullet$ with image contained in $C_\bullet$ and, moreover, it fixes $C_\bullet$ by Propositions~\ref{stabilizersonsimplicesfromstabs}--\ref{makeunital}. Thus the chain complex $C_\bullet$ is a retract of the acyclic chain complex $C'_\bullet$.  This implies that $C_\bullet$ is acyclic (by the functoriality of homology).  In fact, since $C'_\bullet$ is a contractible chain complex over $\mathbb Z$, so is its retract $C_\bullet$.  This completes the proof.
\end{proof}

We do not know whether $B$ connected implies $B_{\geq X}$ is contractible for all $X\in \Lambda(B)$, although this happens to be the case in all our examples.  
Notice that if we replace $\mathbb Z$ in the above proof with an arbitrary commutative ring with unit $\Bbbk$, we would get that $\til H_q(\Delta(B);\Bbbk)=0$ for all $q\geq 0$ without appealing to the universal coefficient theorem.

\subsubsection{Cellular homology}
Let us remind the reader of some basics of cellular homology.
See~\cite{Massey} or~\cite[Chapter~4.39]{Munkres} for details.

Let $\Bbbk$ be a commutative ring with unit.
The \emph{cellular chain complex} with coefficients in $\Bbbk$ for a CW poset $P$ can be described as follows.    For $q\geq 0$, put \[P^{q}=\{\sigma\in P\mid \dim \sigma\leq q\}.\]  It is the face poset of the $q$-skeleton of $\CW(P)$.  One has $C_q(\CW(P);\Bbbk)=H_q(\Delta(P^q),\Delta(P^{q-1});\Bbbk)$, where the left hand side is the cellular chain module and the right hand side is the relative simplicial homology module.  The boundary map is the composition
\begin{align*}
H_q(\Delta(P^q),\Delta(P^{q-1});\Bbbk)&\xrightarrow{\,\,\partial\,\,} H_{q-1}(\Delta(P^{q-1});\Bbbk)\\ &\to H_{q-1}(\Delta(P^{q-1}),\Delta(P^{q-2});\Bbbk)
\end{align*}
where $\partial$ is the boundary map from the long exact sequence in relative simplicial homology.  If $\tau$ is a $q$-cell of $P$, there is a homeomorphism of pairs
\[(\Delta(P_{\leq \tau}),\Delta(P_{<\tau}))\cong (E^q,S^{q-1})\] where we recall that $E^q$ denotes the unit ball in $\mathbb R^q$.  On the level of homology this yields
 \[H_q(\Delta(P_{\leq \tau}),\Delta(P_{<\tau});\Bbbk)\cong H_q(E^q,S^{q-1};\Bbbk)\cong \Bbbk.\]  An orientation for $\tau$ is a choice of a basis element $[\tau]$ for the rank one free $\Bbbk$-module \[H_q(\Delta(P_{\leq \tau}),\Delta(P_{<\tau});\Bbbk).\]  We also write $[\tau]$ for its image under the canonical embedding \[H_q(\Delta(P_{\leq \tau}),\Delta(P_{<\tau});\Bbbk)\to H_q(\Delta(P^q),\Delta(P^{q-1});\Bbbk)=C_q(\CW(P);\Bbbk).\]  The elements $[\tau]$ with $\tau$ a $q$-cell of $P$ form a basis for $C_q(\CW(P);\Bbbk)$.

\subsubsection{Cellular homology for connected left regular bands}
We now assume that $B$ is a connected left regular band with a unitary action on a CW poset $P$ by cellular maps.
There is an induced action $B\curvearrowright \CW(P)$ by regular cellular maps by functoriality of $\CW$.
 Notice that $\Bbbk$ has the structure of a trivial $\Bbbk B$-module. The augmented cellular chain complex \[C_{\bullet}(\CW(P);\Bbbk)\xrightarrow{\,\,\varepsilon\,\,} \Bbbk\] for $\CW(P)$ with coefficients in $\Bbbk$ then becomes a chain complex of unitary $\Bbbk B$-modules (finitely generated over $\Bbbk$) by functoriality of cellular homology with respect to cellular maps and Propositions~\ref{stabilizersonsimplicesfromstabs} and~\ref{makeunital}.  Alternatively, a cellular map $f\colon P\to P$ obviously sends $P^q$ into $P^q$ and so the induced simplicial map sends $\Delta(P^q)$ into $\Delta(P^q)$.
If $P$ is acyclic, then \[C_{\bullet}(\CW(P);\Bbbk)\xrightarrow{\,\,\varepsilon\,\,} \Bbbk\] is a resolution of the trivial module $\Bbbk$. As mentioned above, $C_q(\CW(P);\Bbbk)$ has a basis consisting of the oriented $q$-cells $[\sigma]$ of $P$.  Moreover, if $b\in B$, then $b[\sigma]=0$ if $\dim b\sigma<\dim \sigma$ and otherwise $b[\sigma]\in \Bbbk [b\sigma]$ because $B$ acts by regular cellular mappings.

\subsubsection{Semi-free actions}
\label{sssec:semi-free-actions}
\nomenclature[L, 10]{$B \curvearrowright P$}{an action of the left regular band on a poset $P$}%
Let us say that an action $B\curvearrowright P$ of a left regular band $B$ on a poset $P$ by cellular maps is \emph{semi-free}\index{semi-free} if $\Stab(\tau)$ has a minimum element (or equivalently a zero element) for each $\tau\in P$.  In particular, each stabilizer is assumed non-empty and hence the action is unitary.   Notice that in this case if $Q\subseteq P$ is a $B$-invariant lower set, then the action of $B$ on $Q$ is also by cellular maps and semi-free.  For example, the action of $B$ on the left of itself (cf.~Proposition~\ref{cellactonB}) is semi-free: $b$ is the minimum element of $\Stab(b)$ for $b\in B$.  We say that an action of $B$ on a simplicial complex $K$ is semi-free if the induced action on the face poset $\mathcal P(K)$ is \emph{semi-free}\index{semi-free}.

\begin{Lemma}\label{ordercomplexissemifree}
Suppose that the action $B\curvearrowright P$ of a left regular band $B$  on a poset $P$ is semi-free.  Then the action of $B$ on $\mathcal P(\Delta(P))$ is also semi-free, i.e., the action of $B$ on $\Delta(P)$ is semi-free.
\end{Lemma}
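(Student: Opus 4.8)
The plan is to show directly that for every simplex $\sigma$ of $\Delta(P)$ the stabilizer $\Stab(\sigma)$ (for the induced action of $B$ on $\mathcal P(\Delta(P))$) possesses a minimum element; by definition this is precisely what it means for the action of $B$ on $\Delta(P)$, equivalently on $\mathcal P(\Delta(P))$, to be semi-free. Write such a $\sigma$ as a chain $\sigma_0<\sigma_1<\cdots<\sigma_q$ in $P$. The whole argument rests on the single identity
\[\Stab(\sigma)=\Stab(\sigma_q),\]
which says that stabilizing the simplex $\sigma$ is the same as fixing its top vertex $\sigma_q$. Once this is in hand, semi-freeness of the action $B\curvearrowright P$ supplies a minimum element of $\Stab(\sigma_q)$, and that element is then automatically the minimum of $\Stab(\sigma)$.

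To prove the identity I would argue the two inclusions separately, reproducing the reasoning of Proposition~\ref{stabilizersonsimplicesfromstabs} while keeping track of the fact that only cellularity of the action, and not any connectedness hypothesis on $B$, is used. For $\supseteq$, suppose $b\in\Stab(\sigma_q)$, i.e.\ $b\sigma_q=\sigma_q$. Since $b$ is idempotent and acts by a cellular map, left multiplication by $b$ is an idempotent cellular self-map of $P$, so Proposition~\ref{retract} forces $b$ to fix every element $\leq\sigma_q$; in particular $b\sigma_i=\sigma_i$ for all $i\leq q$, whence $b$ fixes the chain $\sigma$ and $b\in\Stab(\sigma)$. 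For $\subseteq$, suppose $b\in\Stab(\sigma)$, so that $\{b\sigma_0,\ldots,b\sigma_q\}=\{\sigma_0,\ldots,\sigma_q\}$; in particular the $b\sigma_i$ are distinct, so $b$ restricts to a bijection of the finite chain $\{\sigma_0,\ldots,\sigma_q\}$, and being order preserving this bijection must be the identity, giving $b\sigma_q=\sigma_q$ and $b\in\Stab(\sigma_q)$.

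The conclusion is then immediate: since $B\curvearrowright P$ is semi-free, $\Stab(\sigma_q)$ has a minimum element $z$, and by the identity $z$ is the minimum of $\Stab(\sigma)$ as well. As $\sigma$ ranges over all simplices of $\Delta(P)$, every stabilizer in $\mathcal P(\Delta(P))$ acquires a minimum, which is exactly the assertion that the action of $B$ on $\Delta(P)$ is semi-free. The one genuine point to watch, and the step I expect to require the most care, is the reduction $\Stab(\sigma)=\Stab(\sigma_q)$: I must confirm that an element stabilizing the simplex as a set actually fixes each of its vertices (so that no vertices are permuted), which is where order preservation of the cellular action is essential, and that Proposition~\ref{retract} applies verbatim to left multiplication by an idempotent without invoking connectedness of $B$.
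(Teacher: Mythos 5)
Your proof is correct and follows the same route as the paper: the paper's proof of this lemma is a one-line citation of Proposition~\ref{stabilizersonsimplicesfromstabs} (that $\Stab(\sigma)=\Stab(\sigma_q)$), whose proof is exactly your two-inclusion argument via Proposition~\ref{retract} and order preservation. Your remark that connectedness of $B$ is not actually used in establishing $\Stab(\sigma)=\Stab(\sigma_q)$ is accurate, since Proposition~\ref{retract} is a statement about idempotent cellular self-maps of a poset and makes no reference to $B$.
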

\begin{proof}
This follows from Proposition~\ref{stabilizersonsimplicesfromstabs}, which states that the stabilizer of a simplex coincides with the stabilizer of its maximum element.
\end{proof}

In particular, the action of $B$ on $\Delta(B)$ is semi-free.    Note that if $B$ has an identity, then $\Delta(B)$ is contractible because the identity $1$ is a cone point. More generally, if $B$ is connected, then $\Delta(B)$ is acyclic by Theorem~\ref{t:acyclicordercomplex}.

If $B$ acts on a poset $P$ by cellular maps and $a,b\in B$ with $Ba=Bb$, then the subposets $aP$ and $bP$ are isomorphic. Indeed, the left action of $b$ on $P$ induces a mapping $aP\to bP$ whose inverse is given by the left action of $a$ because $ab=a$ and $ba=b$.  If $A\subseteq B$, we put $AP = \bigcup_{a\in A}aP$; note that $AP$ is a lower set.  If $A$ is a left ideal, then $AP$ is $B$-invariant.

So far the only examples we have of semi-free actions of a left regular band are on itself and on its order complex.  To provide more examples, we consider the special subclass of geometric left regular bands.

\subsubsection{Geometric left regular bands}\label{ss:geom.regular}
In~\cite{oldpaper} a left regular band $B$ was defined to be \emph{geometric}\index{geometric} if the left stabilizer of each element of $B$ is commutative.  It then follows that the stabilizer of any subset of $B$ is commutative and hence, if non-empty, has a minimum element.  Note that if $B$ embeds in the left regular band $\{0,+,-\}^n$ of covectors for some $n\geq 0$, then $B$ is geometric.  Therefore, geometric left regular bands include free partially commutative left regular bands~\cite[Theorem~3.7]{oldpaper}, COMs and strong eliminations systems such as central and affine hyperplane face semigroups, (affine) oriented matroids, face semigroups of $T$-convex sets of topes and face semigroups of CAT(0) cube complexes. The class of semigroups embeddable into $\{0,+,-\}^n$ is characterized in our paper~\cite{embedpaper}.   The main examples in this text that are not geometric left regular bands are face monoids of complex hyperplane arrangements and monoids of covectors of oriented interval greedoids.

Assume that $B$ is a connected geometric left regular band and let $X\in \Lambda(B)$.  Let $\Delta_X$ be the simplicial complex with vertex set the corresponding $\mathscr L$-class $L_X$ and where a subset $\tau\subseteq L_X$ forms a simplex if the elements of $\tau$ have a common upper bound. Because $B$ is geometric $\tau$, in fact, has a least upper bound in $B$.  As $L_X$ consists of the minimal elements of $B_{\geq X}$, it follows from the dual of Rota's cross-cut theorem (Theorem~\ref{t:crosscut}) that $\Delta_X$ is homotopy equivalent to $\Delta(B_{\geq X})$ and hence is acyclic by Theorem~\ref{t:acyclicordercomplex}. Notice that if $B$ is a monoid, then $\Delta_X$ is just the full simplex with vertex set $L_X$.

There is a natural semi-free action of $B_{\geq X}\curvearrowright \Delta_X$ by simplicial maps.  Indeed, if $\tau=\{b_0,\ldots, b_q\}$ has upper bound $b$ and $a\in B_{\geq X}$, then $a\tau=\{ab_0,\ldots,ab_q\}$ has upper bound $ab$. Also, if $b$ is an upper bound for $\tau$ then $b\in \Stab(\tau)$.  Since the stabilizer of each simplex is a non-empty commutative left regular band, it has a minimum element and hence the action is semi-free.

For example, if $B$ is the face semigroup of a CAT(0) cube complex $K$ and $X=\wh 0$ is the bottom element of the support semilattice, then $\Delta_{\wh 0}$ has vertices the vertices of $K$ and a collection of vertices forms a simplex of $\Delta_{\wh 0}$ if they belong to some cube of $K$.

\subsection{Projective resolutions}
Our goal is to show that if a connected left regular band $B$ has a semi-free action on an acyclic CW poset $P$ (by cellular maps), then the augmented cellular chain complex $C_\bullet(\CW(P);\Bbbk)\xrightarrow{\,\,\varepsilon\,\,}\Bbbk$ is a projective resolution of the trivial $\Bbbk B$-module $\Bbbk$.  In particular, this applies to $P=\mathcal P(\Delta(B))$  and so the augmented simplicial chain complex $C_\bullet(\Delta(B);\Bbbk)\xrightarrow{\,\,\varepsilon\,\,}\Bbbk$ is a projective resolution of the trivial $\Bbbk B$-module $\Bbbk$.  We shall give some other examples as well.

We begin with a lemma about recognizing Sch\"utzenberger representations. If $e\in B$ and $L_e=\sigma\inv(Be)$ is the $\mathscr L$-class of $e$, then $B$ acts on $L_e$ by partial maps (by restricting the regular action of $B$ on the left of itself).  That is, if $b\in B$ and $a\in L_e$, then
\[b\cdot a = \begin{cases} ba, & \text{if}\ Bb\supseteq Be\\ \text{undefined}, & \text{else.}\end{cases}\] The Sch\"utzenberger representation defined in Section~\ref{ss:Schutz} is then the linearization of this action by partial maps.

\begin{Lemma}\label{l:findschutz}
Let $B$ be a left regular band and let $Be\in \Lambda(B)$.  Suppose that $B$ acts on a finite non-empty set $Y$ by partial transformations such that:
\begin{enumerate}
\item $by$ is defined, for $b\in B$ and $y\in Y$, if and only if $Bb\supseteq Be$;
\item $aY\cap bY\neq \emptyset$ if and only if $a=b$ for all $a,b\in L_e$;
\item $BeY=L_eY=Y$.
\end{enumerate}
Then $Y$ is $B$-equivariantly isomorphic to a disjoint union of $|eY|$ copies of the action of $B$ on $L_e$.  More precisely, $Bx$ provides an isomorphic copy of $L_e$ for each $x\in eY$ and distinct elements of $eY$ provide disjoint copies.
\end{Lemma}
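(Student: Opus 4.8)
The plan is to realize $Y$ explicitly as a disjoint union of translates $L_e x$ of the $\mathscr L$-class $L_e$, one for each $x\in eY$, and to check that each translate carries a copy of the Sch\"utzenberger action. Throughout I would use that $B$ is a band, so every element is idempotent, and that any $a\in L_e$ satisfies $Ba=Be$, whence $a$ and $e$ are $\mathscr L$-equivalent idempotents; by the remarks at the start of Section~\ref{s:lrbs} this gives the identities $ea=e$ and $ae=a$. I would also record that, by condition (1), an element $b$ with $Bb\supseteq Be$ acts as a \emph{total} map on $Y$; in particular $e$ and every $a\in L_e$ act totally, and $eY=\{x\in Y\mid ex=x\}$. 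For each $x\in eY$ I would then define $\p_x\colon L_e\to Y$ by $\p_x(a)=ax$ (well-defined since $Ba=Be\supseteq Be$). Equivariance is immediate from associativity of the action once one notes that $b\cdot(ax)$ and $(ba)x$ have the same domain of definition, namely $Bb\supseteq Be$ (using $B(ba)=Bb\cap Be$), so that $b\cdot\p_x(a)=\p_x(b\cdot a)$ whenever either side is defined. Injectivity of $\p_x$ is exactly where condition (2) enters: if $ax=a'x$ with $a,a'\in L_e$, then this common point lies in $aY\cap a'Y$, which is therefore non-empty, forcing $a=a'$.

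The key computational device is that $e$ retracts each translate onto its base point: for $a\in L_e$ and $x\in eY$ one has $e\cdot\p_x(a)=(ea)x=ex=x$, using $ea=e$. From this both disjointness and covering follow quickly. For disjointness, if $ax=a'x'$ with $a,a'\in L_e$ and $x,x'\in eY$, then applying $e$ yields $x=x'$; hence the sets $L_e x$ are pairwise disjoint as $x$ ranges over $eY$. For covering, condition (3) gives $Y=L_eY$, so an arbitrary $z\in Y$ may be written $z=ay$ with $a\in L_e$ and $y\in Y$; setting $x=ez\in eY$ and using $ea=e$ together with $ae=a$, one computes $ax=a(ey)=(ae)y=ay=z$, so $z=\p_x(a)\in L_e x$. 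Combining the three properties, $Y=\bigsqcup_{x\in eY}L_e x$ and each $\p_x$ restricts to a $B$-equivariant bijection from $L_e$ onto $L_e x$, which is precisely the assertion that $Y$ is a disjoint union of $|eY|$ copies of the action of $B$ on $L_e$, with $Bx=L_e x$ giving the copy indexed by $x$.

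The argument is essentially routine once the maps $\p_x$ are introduced, and I do not expect a genuine obstacle. The only steps that require care are matching the domains of the partial maps when verifying equivariance—so as never to invoke the action where it is undefined—and recognizing that condition (2) is calibrated exactly to yield injectivity of each $\p_x$, while the retraction identity $e\cdot(ax)=x$ is what simultaneously produces disjointness and the surjectivity needed for covering. The mild bookkeeping distinguishing partial from total maps is thus the main thing to handle cleanly.
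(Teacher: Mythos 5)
Your proof is correct and follows essentially the same route as the paper's: condition (2) yields injectivity, the retraction identity $e\cdot(ax)=x$ (from $ea=e$) yields disjointness, and condition (3) yields covering. The only cosmetic difference is that you build bijections $\p_x\colon L_e\to Bx$, $a\mapsto ax$, whereas the paper constructs the inverse maps $F\colon By\to L_e$, $by\mapsto be$, after first noting transitivity of $B$ on each $By$.
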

\begin{proof}
Fix $y\in eY$.  First we observe that $B$ acts transitively on $By$.  Indeed, if $Bb\supseteq Be$, then $By\supseteq Bby\supseteq Bey=By$ and so $By=Bby$.  Next, we define a map $F\colon By\to L_e$ by $F(by)=be$ for $Bb\supseteq Be$.  To see that this is well defined, observe that if $ay=by$ with $Ba\supseteq Be$, then $aey=bey$ and $ae,be\in L_e$.  Thus by assumption (2), we conclude $ae=be$ and so $F$ is well defined.  It is injective because $ae=F(ay)=F(by)=be$ implies $ay=aey=bey=by$.  It is surjective because $b=be=F(by)$ for $b\in L_e$.  If $a,b\in B$ with $Bb\supseteq Be$, then $a(by)$ is defined if and only if $Ba\supseteq Be$, which is if and only if $a\cdot be$ is defined, in which case $F(a(by))=F((ab)y)=(ab)e=a(be) = a\cdot F(by)$.  This proves that $F$ is $B$-equivariant.

Next observe that if $y'\in eY$ and $By'\cap By\neq \emptyset$, then $By'=By$ by  transitivity.  But if $by=y'$, then since $Bb\supseteq Be$, we have $eb=e$ and so $y=ey=eby=ey'=y'$.  Thus, from $BeY=Y$, we deduce that $Y$ is a disjoint union of $|eY|$ isomorphic copies of the action of $B$ on $L_e$, one for each $y\in eY$, as required.
\end{proof}

A \emph{principal series}\index{principal series} for a finite semigroup $S$ is an unrefinable chain $I_0\subset I_1\subset \cdots\subset I_n=S$ of two-sided ideals.  Set $I_{-1}=\emptyset$ for convenience.  It is known~\cite[Theorem~2.40]{CP} (see also~\cite[Proposition~1.20]{repbook}) that $I_m\setminus I_{m-1}$ is a $\mathscr J$-class for $0\leq m\leq n$ and each $\mathscr J$-class occurs in this way exactly once; moreover, $I_0$ is always the minimal ideal of $S$.  For a left regular band $B$, ideals are the same thing as left ideals and so a principal series is just a maximal chain
\begin{equation}\label{principalseries}
\wh 0=L_0\subset L_1\subset\cdots \subset L_n=B
\end{equation}
of left ideals of $B$.  The $\mathscr L$-classes each appear exactly once as $L_m\setminus L_{m-1}$ (where $L_{-1}=\emptyset$).

\begin{Rmk}
If $B$ has a unitary action on a poset $P$ by cellular maps, then we can use the principal series \eqref{principalseries} to build a filtration
\[L_0P\subseteq L_1P\subseteq \cdots\subseteq L_nP=P.\]  If $P$ is the face poset of a regular CW complex $X$, then the corresponding filtration of $X$ gives rise to a filtration of the cellular chain complex whose associated quotients will be relative cellular chain complexes.  As $L_m\setminus L_{m-1}$ is an $\mathscr L$-class, this will allow the projective indecomposable Sch\"utzenberger representations to come into play to show that the relative cellular chain complexes are chain complexes of projective modules if the action is semi-free.  From this we will deduce that the original cellular chain complex is a complex of projective modules.
\end{Rmk}

The following lemma says that a module filtered by projective modules is projective.

\begin{Lemma}\label{l:projectivefiltration}
Let $R$ be a unital ring and let $M$ be an $R$-module.  Suppose that $M$ has a filtration
\[0=M_0\subseteq M_1\subseteq\cdots\subseteq M_n=M\] with $M_i/M_{i-1}$ a projective module for $i=1,\ldots, n$.  Then we have \[M\cong \bigoplus_{i=1}^n M_i/M_{i-1}\] and hence $M$ is projective.
\end{Lemma}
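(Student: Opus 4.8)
The plan is to prove the statement by induction on the length $n$ of the filtration, using the splitting of short exact sequences with projective cokernel as the engine. The base case $n=1$ is immediate: since $M_0=0$ we have $M=M_1=M_1/M_0$, which is projective by hypothesis, and the asserted direct-sum decomposition is trivial.

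For the inductive step, I would first apply the induction hypothesis to the truncated filtration $0=M_0\subseteq M_1\subseteq\cdots\subseteq M_{n-1}$ of the module $M_{n-1}$, whose successive quotients are exactly the projective modules $M_i/M_{i-1}$ for $1\le i\le n-1$. This yields an isomorphism $M_{n-1}\cong\bigoplus_{i=1}^{n-1}M_i/M_{i-1}$ and, in particular, shows that $M_{n-1}$ is itself projective, being a finite direct sum of projectives.

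Next I would consider the short exact sequence of $R$-modules
\[0\to M_{n-1}\to M_n\to M_n/M_{n-1}\to 0.\]
The key step is that, because $M_n/M_{n-1}$ is projective, this sequence splits: the identity map on the projective quotient lifts along the surjection $M_n\to M_n/M_{n-1}$ to give a section. Hence $M\cong M_{n-1}\oplus M_n/M_{n-1}$, and combining this with the inductive isomorphism for $M_{n-1}$ produces $M\cong\bigoplus_{i=1}^n M_i/M_{i-1}$. Since each summand is projective and a finite direct sum of projective modules is projective, $M$ is projective.

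There is essentially no serious obstacle here. The one point that must be handled correctly is that the splitting is obtained from the defining lifting property of projective \emph{quotients}, not from any injectivity or hypothesis on the submodule $M_{n-1}$; this is precisely what lets the argument run over an arbitrary unital ring with no extra assumptions on $M$. The remaining care is purely bookkeeping: applying the induction to the correct sub-filtration and checking that the iterated splittings assemble into the single direct-sum decomposition indexed by $1\le i\le n$.
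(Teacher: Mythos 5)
Your proof is correct and follows essentially the same route as the paper: induct on $n$, split the short exact sequence $0\to M_{n-1}\to M_n\to M_n/M_{n-1}\to 0$ using projectivity of the quotient, and apply the inductive hypothesis to the truncated filtration of $M_{n-1}$. Nothing to add.
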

\begin{proof}
We proceed via induction on $n$. If $n=1$, there is nothing to prove. In general, since the sequence
\[\xymatrix{0\ar[r] & M_{n-1}\ar[r] & M_n\to M_n/M_{n-1}\ar[r] & 0}\] is exact and $M_n/M_{n-1}$ is projective, we have $M_n\cong M_{n-1}\oplus M_n/M_{n-1}$.  The result now follows by induction applied to the filtration \[0=M_0\subseteq M_1\subseteq\cdots\subseteq M_{n-1}\] of $M_{n-1}$.
\end{proof}

The reason for using filtrations by projective modules is exhibited in the following example.

\begin{Example}
Let $B$ be the free left regular band on $\{a,b\}$.  A principal series for $B$ is
\[L_0=\{ab,ba\}\subsetneq L_1=\{ab,ba,a\}\subsetneq L_2=\{ab,ba,a,b\}\subsetneq L_3=B.\]
We have a corresponding filtration
\[\Delta(L_0)\subsetneq \Delta(L_1)\subsetneq \Delta(L_2)\subsetneq \Delta(L_3)=\Delta(B)\] of $\Delta(B)$ by $B$-invariant subcomplexes.

Obviously,
\[C_0(\Delta(B);\Bbbk)\cong \Bbbk B\cong \Bbbk L_B\oplus \Bbbk L_{Bb}\oplus \Bbbk L_{Ba}\oplus \Bbbk L_{Bab}\]
is projective.

The $2$-simplices of $\Delta(B)$ are $\{1,a,ab\}$ and $\{1,b,ba\}$.  They are both fixed by $1$ and are sent to simplices of smaller dimension by $a,b,ab,ba$.  Thus \[C_2(\Delta(B);\Bbbk)\cong \Bbbk L_B\oplus \Bbbk L_B\] and hence is projective.

The situation for $1$-simplices is more complicated.  They are $\{1,a\}$, $\{1,b\}$, $\{1,ab\}$, $\{1,ba\}$, $\{a,ab\}$, $\{b,ba\}$.  Notice that $a$ sends $\{1,a\}$ to the $0$-simplex $\{a\}$ and $b$ sends it to the $1$-simplex $\{b,ba\}$. So while $1,a$ act on the basis element of $C_1(\Delta(B);\Bbbk)$ corresponding to $\{1,a\}$ as they do on $1$ in the Sch\"utzenberger representation  $\Bbbk L_{B}$, the element $b$ does not.  Let us instead look at the relative chain module \[C_1(\Delta(B),\Delta(L_2); \Bbbk)=C_1(\Delta(B);\Bbbk)/C_1(\Delta(L_2);\Bbbk).\]  This has basis the cosets of $\{1,a\}$, $\{1,b\}$, $\{1,ab\}$, $\{1,ba\}$ and now $1$ fixes these elements and $a,b,ab,ba$ annihilate them.  So this module is isomorphic to a direct sum of $4$ copies of $\Bbbk L_{B}$ and hence is projective. Similarly, $C_1(\Delta(L_2),\Delta(L_1);\Bbbk)$ has basis the coset of $\{b,ba\}$ and the module structure is that of $\Bbbk L_{Bb}$ and $C_1(\Delta(L_1),\Delta(L_0);\Bbbk)$ has basis the coset of $\{a,ab\}$ and the module structure is that of $\Bbbk L_{Ba}$.  Thus the filtration
\[C_1(\Delta(L_0);\Bbbk)\leq C_1(\Delta(L_1);\Bbbk)\leq C_1(\Delta(L_2);\Bbbk)\leq C_1(\Delta(B);\Bbbk)\] has projective subquotients and hence $C_1(\Delta(B);\Bbbk)$ is projective by Lemma~\ref{l:projectivefiltration}.  In fact, we conclude that
\[C_1(\Delta(B);\Bbbk)\cong \Bbbk L_{B}\oplus \Bbbk L_B\oplus \Bbbk L_B\oplus \Bbbk L_B\oplus \Bbbk L_{Bb}\oplus \Bbbk L_{Ba}.\]
\end{Example}

The following theorem, whose proof generalizes that in the example above, is one of the principal results of this text.

\begin{Thm}\label{t:projectiveresgeneral}
Let $B$ be a connected left regular band and $\Bbbk$ a commutative ring with unit. Suppose that $B\curvearrowright P$ is a semi-free action of $B$ on an acyclic CW poset $P$.  Then the augmented cellular chain complex $C_\bullet(\CW(P);\Bbbk)\xrightarrow{\,\,\varepsilon\,\,}\Bbbk$ is a projective resolution of the trivial $\Bbbk B$-module.

More precisely, if we fix $e_X$ with $Be_X=X$ for all $X\in \Lambda(B)$, then \[C_q(\CW(P);\Bbbk)\cong \bigoplus_{X\in \Lambda(B)} n_X\cdot \Bbbk L_X\] where $\Bbbk L_X$ is the Sch\"utzenberger representation associated to $X$ and $n_X$ is the number of $q$-cells $\sigma$ with $e_X$ as the minimum element of $\Stab(\sigma)$.
\end{Thm}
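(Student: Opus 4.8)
The plan is to establish exactness and projectivity separately, reading off the precise decomposition along the way. First I would note that $C_\bullet(\CW(P);\Bbbk)\xrightarrow{\varepsilon}\Bbbk$ is a complex of $\Bbbk B$-modules (by the functoriality discussion preceding the theorem, together with Propositions~\ref{stabilizersonsimplicesfromstabs} and~\ref{makeunital}) whose homology is the reduced homology of $\CW(P)\cong\|\Delta(P)\|$. Since $P$ is acyclic this vanishes, so the complex is a resolution of the trivial module $\Bbbk$. It then remains to prove that each $C_q(\CW(P);\Bbbk)$ is projective, which I would do by exhibiting it as a direct sum of Schützenberger representations.

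For the module structure I would filter by a principal series. Fix a principal series $\wh 0=L_0\subset L_1\subset\cdots\subset L_n=B$ as in~\eqref{principalseries}, with $L_m\setminus L_{m-1}=L_{X_m}$ and $L_{-1}=\emptyset$; the sequence $X_1,\ldots,X_n$ is a linear extension of $\Lambda(B)$. Each $L_m$ is a left ideal, hence a lower set in the $\mathscr R$-order, so $L_mP=\bigcup_{a\in L_m}aP$ is a $B$-invariant lower set of $P$ and $C_q(L_mP;\Bbbk)$ is a $\Bbbk B$-submodule of $C_q(\CW(P);\Bbbk)$. Writing $m_\sigma$ for the minimum of $\Stab(\sigma)$ (which exists by semi-freeness), the observation $\sigma\in aP\iff a\in\Stab(\sigma)$ gives $\sigma\in L_mP\iff m_\sigma\in L_m$, so the successive quotient $M_m=C_q(L_mP)/C_q(L_{m-1}P)$ has $\Bbbk$-basis the oriented $q$-cells $\sigma$ with $Bm_\sigma=X_m$.

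The heart of the argument is to recognize $M_m$ as a multiple of $\Bbbk L_{X_m}$ via Lemma~\ref{l:findschutz}, applied to the set $Y_m$ of $q$-cells with $Bm_\sigma=X_m$ under the partial action $b\cdot\sigma=b\sigma$. I would first prove the dimension dichotomy: for $\sigma\in Y_m$ and $X=X_m=Bm_\sigma$, one has $\dim b\sigma=q$ and $Bm_{b\sigma}=X$ precisely when $Bb\ge X$, and otherwise $b[\sigma]=0$ in $M_m$. The hard direction uses that $c=bm_\sigma$ lies in $L_X$ with $m_\sigma c=m_\sigma$ and $cm_\sigma=c$ (consequences of $Bc=Bm_\sigma$ and the characterization $\sigma(u)=\sigma(v)\iff uv=u,\ vu=v$); then $c$ and $m_\sigma$ act as mutually inverse isomorphisms $P_{\le\sigma}\cong P_{\le b\sigma}$, where Proposition~\ref{retract} is essential since any stabilizer element acts as the identity on the cells below the fixed cell, forcing $\dim b\sigma=q$. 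When $Bb\not\ge X$, the inequality $Bm_{b\sigma}\le B(bm_\sigma)=Bb\wedge X<X$ places $b\sigma$ in $L_{m-1}P$ (everything strictly below $X_m$ in $\Lambda(B)$ already occurs among $X_1,\ldots,X_{m-1}$), so $b[\sigma]=0$ in $M_m$. With the dichotomy in hand, conditions (1)--(3) of Lemma~\ref{l:findschutz} follow for $(Y_m,e_{X_m})$, and I would compute $e_{X_m}Y_m=\{\sigma\in Y_m\mid m_\sigma=e_{X_m}\}$ using that elements which are $\eL$-equivalent and comparable in the $\mathscr R$-order coincide; thus $|e_{X_m}Y_m|=n_{X_m}$.

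Finally I would pass from the $B$-set to the module. The linearization of Lemma~\ref{l:findschutz} gives $M_m\cong n_{X_m}\cdot\Bbbk L_{X_m}$ provided the orientation signs can be trivialized, and this is exactly guaranteed by Proposition~\ref{retract}: every $b\in\Stab(\sigma)$ fixes $\sigma$ together with all its faces, hence acts as the identity on $H_q(\Delta(P_{\le\sigma}),\Delta(P_{<\sigma}))$ and so fixes the oriented cell $[\sigma]$. Transporting orientations from one representative per orbit is therefore well defined, giving $M_m\cong\Bbbk Y_m$ as a $\Bbbk B$-module. Each $\Bbbk L_{X_m}$ is projective by Theorem~\ref{Schutz}, so Lemma~\ref{l:projectivefiltration} yields $C_q(\CW(P);\Bbbk)\cong\bigoplus_m n_{X_m}\Bbbk L_{X_m}=\bigoplus_{X\in\Lambda(B)}n_X\,\Bbbk L_X$, which is projective and finishes the proof. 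I expect the dimension dichotomy, namely verifying that left multiplication preserves the dimension of a cell exactly on the contraction $B_{\ge X}$, to be the main obstacle, since it is the one step that genuinely couples the cellular structure of $P$ to the semigroup order.
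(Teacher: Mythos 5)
Your proposal is correct and follows essentially the same route as the paper: filter $P$ by the invariant lower sets $L_mP$ coming from a principal series, identify each subquotient of the cellular chain complex with a multiple of a Sch\"utzenberger representation via Lemma~\ref{l:findschutz} (your ``dimension dichotomy'' is precisely the verification of its hypotheses, with semi-freeness used to get condition (2)), fix orientations equivariantly, and conclude with Lemma~\ref{l:projectivefiltration}. The only cosmetic difference is that you justify the consistency of the transported orientations by noting stabilizers act trivially on the relative homology of a cell, whereas the paper gets well-definedness directly from the uniqueness statement in Lemma~\ref{l:findschutz}; both work.
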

\begin{proof}
The augmented cellular chain complex is a resolution of $\Bbbk$ because $P$ is acyclic.
Fix a principal series for $B$ as in \eqref{principalseries}. Let $P_m$ be the subposet $L_mP$ and note that $L_nP=BP=P$ since the action is unitary.  We take $P_{-1}=\emptyset$. Note that $\sigma\in P_m$ if and only if $\Stab(\sigma)\cap L_m\neq \emptyset$ and so $\sigma\in P_m\setminus P_{m-1}$ if and only if the minimum element of $\Stab(\sigma)$ is in $L_m\setminus L_{m-1}$.

 Recall that the relative chain complex $C_\bullet(\CW(P_m),\CW(P_{m-1});\Bbbk)$ is defined to be  $C_\bullet(\CW(P_m);\Bbbk)/C_\bullet(\CW(P_{m-1});\Bbbk)$.  The filtration \[\CW(P_0)\subseteq \CW(P_1)\subseteq\cdots \subseteq \CW(P_n)=\CW(P)\] of $\CW(P)$ by $B$-invariant subcomplexes induces
a filtration of chain complexes of unitary $\Bbbk B$-modules
\[0= C_\bullet(\CW(P_{-1});\Bbbk)\subseteq C_\bullet(\CW(P_0);\Bbbk)\subseteq\cdots\subseteq C_\bullet(\CW(P_n);\Bbbk) = C_\bullet(\CW(P);\Bbbk)\] with subquotients the relative chain complexes $C_\bullet(\CW(P_m),\CW(P_{m-1});\Bbbk)$.  So, to show that $C_q(\CW(P);\Bbbk)$ is projective, it  suffices to prove that the $\Bbbk B$-modules $C_q(\CW(P_m),\CW(P_{m-1});\Bbbk)$ are projective for all $m,q\geq 0$. It will then follow that
\begin{equation}\label{e:chaincomplexdecomp}
C_q(\CW(P);\Bbbk)\cong \bigoplus_{m=0}^n  C_q(\CW(P_m),\CW(P_{m-1});\Bbbk)
\end{equation}
is projective by Lemma~\ref{l:projectivefiltration}.

If $P_m$ has the same set of $q$-cells as $P_{m-1}$, then there is nothing to prove.  So assume that $P_m\setminus P_{m-1}$ contains some $q$-cells. Let $Y$ be the set of $q$-cells of $P_m$ that do not belong to $P_{m-1}$.  As $P_m$ and $P_{m-1}$ are $B$-invariant, we have that $B$ acts by partial maps on $Y$ by restricting the action of $B$ on $P$ to $Y$. That is, if $b\in B$ and $\tau\in Y$, then
\[b\cdot \tau = \begin{cases}b\tau, & \text{if}\ b\tau \in Y\\ \text{undefined}, & \text{else.}\end{cases}\]
 Suppose that $L_m\setminus L_{m-1}=\sigma\inv(X)$ with $X\in \Lambda(B)$, that is, $L_m\setminus L_{m-1}$ is the $\mathscr L$-class of $e_X$. We claim that the conditions of Lemma~\ref{l:findschutz} hold with $e=e_X$.

If $\tau\in Y$, then $\tau=a\tau$ for some $a\in L_m\setminus L_{m-1}=L_e$.  Thus the third condition is satisfied.  We continue to assume that $a\tau=\tau$ with $a\in L_e$. If $Bb\supseteq Be=Ba$, then $cb=a$ for some $c\in B$. But then $\tau=a\tau=(cb)\tau$ and so $b\tau$ is defined, i.e., $b\tau\in P_m\setminus P_{m-1}$ and is a $q$-cell.  On the other hand, if $Bb\nsupseteq Be$, then  $ba\in L_{m-1}$ and so $b\tau=ba\tau\in P_{m-1}$.  Thus $b$ is undefined at $\tau$ for the action on $Y$. This establishes the first condition of Lemma~\ref{l:findschutz}.

For the second condition, suppose that $a,b\in L_e=L_m\setminus L_{m-1}$ and $\tau\in aY\cap bY$.  Then $a,b\in \Stab(\tau)$.  Let $c$ be the minimum element of $\Stab(\tau)$ (guaranteed by the definition of a semi-free action).  Then $c\leq a,b$.  We recall that in a left regular band, $x<y$ implies $\sigma(x)<\sigma(y)$ by Proposition~\ref{p:supportiscellular}.  Therefore, if $a\neq b$, then $c<a,b$ and so $c\in L_m\setminus L_e=L_{m-1}$.  But then $\tau=c\tau\in P_{m-1}$, contradicting $\tau\in Y$.  Thus $a=b$. Note that $\sigma\in eY$ if and only if $\sigma$ is a $q$-cell and $e$ is the minimum element of $\Stab(\sigma)$.  We conclude that the module $\Bbbk Y$ is isomorphic to $n_X\cdot \Bbbk L_X$ by Lemma~\ref{l:findschutz}

Now we prove that we can find orientations for the $q$-cells of $Y$ such that the action of $B$ (restricted to $Y$) is orientation preserving.   For each $\tau\in eY$ fix an orientation $[\tau]$ of $\tau$.  Recall that an orientation for $\tau$ is a choice of a basis element $[\tau]$ for $H_q(\Delta(P_{\leq \tau}),\Delta(P_{<\tau});\Bbbk)\cong \Bbbk$.
Now, for $a\in L_m\setminus L_{m-1}$, we have that the action of $a$ on $P$ gives rise to a poset isomorphism of $P_{\leq \tau}$ and $P_{\leq a\tau}$ and hence a homeomorphism of pairs \[(\Delta(P_{\leq \tau}),\Delta(P_{<\tau}))\to (\Delta(P_{\leq a\tau}),\Delta(P_{<a\tau}))\] (with inverse given by the action of $e$). Thus we can set $[a\tau]= a[\tau]$ as an orientation for $a\tau$.  By Lemma~\ref{l:findschutz}, for each $\tau'\in Y$, there exist unique $a\in L_e$ and $\tau\in eY$ such that $\tau'=a\tau$.  Thus our scheme  provides a well-defined orientation for each element of $Y$. Then if $b\in B$ with $Bb\supseteq Be$, $a\in L_e$ and $\tau\in eY$, we have $b[a\tau] = ba[\tau]=[ba\tau]$ where the last equality follows because $ba\in L_m\setminus L_{m-1}$. Thus $b$ sends the chosen orientation of $a\tau$ to the chosen orientation of $ba\tau$. If $Bb\nsupseteq Be$, then $bY\subseteq P_{m-1}$ and so $b$ annihilates $C_q(\CW(P_m),\CW(P_{m-1});\Bbbk)$.  It now follows that $C_q(\CW(P_m),\CW(P_{m-1});\Bbbk)\cong \Bbbk Y\cong n_X\cdot \Bbbk L_X$ where the first isomorphism sends the coset $[\tau]+C_q(\CW(P_{m-1};\Bbbk))$ to $\tau$. In light of \eqref{e:chaincomplexdecomp}, this completes the proof.
\end{proof}

The following special case is of central importance. We recall that if $B$ is a connected left regular band, then $\Delta(B)$ is acyclic by Theorem~\ref{t:acyclicordercomplex}. 

\begin{Thm}\label{t:projectiveres}
Let $B$ be a connected left regular band and $\Bbbk$ a commutative ring with unit.   Then the augmented
simplicial chain complex \[C_\bullet(\Delta(B);\Bbbk)\xrightarrow{\,\,\varepsilon\,\,}\Bbbk\] is a projective resolution of the trivial $\Bbbk B$-module.

More precisely, if we fix $e_X$ with $Be_X=X$ for all $X\in \Lambda(B)$, then \[C_q(\Delta(B);\Bbbk)\cong \bigoplus_{X\in \Lambda(B)} n_X\cdot \Bbbk L_X\] where $\Bbbk L_X$ is the Sch\"utzenberger representation associated to $X$ and $n_X$ is the number of $q$-simplices in $\Delta(e_XB)$ with $e_X$ as a vertex.
\end{Thm}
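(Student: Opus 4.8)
The plan is to obtain Theorem~\ref{t:projectiveres} as the special case of Theorem~\ref{t:projectiveresgeneral} in which $P$ is taken to be the face poset $\mathcal P(\Delta(B))$ of the order complex of $B$. First I would check that this $P$ is an acyclic CW poset. Since $\Delta(B)$ is a simplicial complex, hence a regular CW complex, its face poset $\mathcal P(\Delta(B))$ is a CW poset; moreover $\|\Delta(\mathcal P(\Delta(B)))\|$ is the barycentric subdivision of $\Delta(B)$, so it is homeomorphic to $\|\Delta(B)\|$, which is acyclic by Theorem~\ref{t:acyclicordercomplex}. Thus $\mathcal P(\Delta(B))$ is acyclic in the required sense.

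Next I would supply the semi-free action. By Proposition~\ref{cellactonB} the left multiplication action of $B$ on itself is unitary and cellular, and it is semi-free because $b$ is the minimum element of $\Stab(b)=\{c\in B\mid cb=b\}=B_{\geq b}$. Lemma~\ref{ordercomplexissemifree} then upgrades this to a semi-free action of $B$ on $\mathcal P(\Delta(B))$, so Theorem~\ref{t:projectiveresgeneral} applies and shows that the augmented cellular chain complex of $\CW(\mathcal P(\Delta(B)))$ is a projective resolution of the trivial module. It remains to identify this with the simplicial data of $\Delta(B)$. Because $\Delta(B)$ is a simplicial complex, one has $\CW(\mathcal P(\Delta(B)))=\Delta(B)$ as regular CW complexes with the induced cellular maps coinciding with the simplicial maps; consequently the cellular chain complex of $\CW(\mathcal P(\Delta(B)))$ is $B$-equivariantly isomorphic to the simplicial chain complex $C_\bullet(\Delta(B);\Bbbk)$, whose $q$-cells are exactly the $q$-simplices of $\Delta(B)$.

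Finally I would translate the multiplicity $n_X$ into the stated combinatorial count. By Proposition~\ref{stabilizersonsimplicesfromstabs}, the stabilizer of a simplex $\sigma_0<\cdots<\sigma_q$ equals $\Stab(\sigma_q)$, and as above the minimum element of $\Stab(\sigma_q)=B_{\geq \sigma_q}$ is $\sigma_q$ itself. Hence the $q$-cells whose stabilizer has minimum $e_X$ are precisely the chains whose maximum vertex is $e_X$. Using that $a\leq e_X$ if and only if $e_X a=a$, i.e. $a\in e_X B$, such chains are exactly those contained in $e_X B$ with top element $e_X$; and since $e_X$ is the maximum of the monoid $e_X B$, a $q$-simplex of $\Delta(e_X B)$ contains $e_X$ if and only if $e_X$ is its maximum vertex. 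Therefore $n_X$ counts the $q$-simplices of $\Delta(e_X B)$ having $e_X$ as a vertex, matching the stated formula. I expect the only point requiring genuine care to be the $B$-equivariant identification of the cellular chain complex of $\CW(\mathcal P(\Delta(B)))$ with $C_\bullet(\Delta(B);\Bbbk)$, together with the identification of the minimum of each stabilizer; both are settled by the observations above rather than by any new computation.
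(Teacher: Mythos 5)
Your proposal is correct and follows exactly the route the paper intends: Theorem~\ref{t:projectiveres} is stated as the special case of Theorem~\ref{t:projectiveresgeneral} for $P=\mathcal P(\Delta(B))$, with the semi-freeness coming from Proposition~\ref{cellactonB} and Lemma~\ref{ordercomplexissemifree} and the acyclicity from Theorem~\ref{t:acyclicordercomplex}. Your identification of $n_X$ via Proposition~\ref{stabilizersonsimplicesfromstabs} (the minimum of the stabilizer of a chain is its top element, so the relevant $q$-cells are the chains in $e_XB$ with maximum $e_X$) is precisely the bookkeeping the paper leaves implicit.
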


Notice that $C_0(\Delta(B);\Bbbk)=\Bbbk B$ and hence the resolution in Theorem~\ref{t:projectiveres} is never minimal if $B$ is non-trivial.

Theorem~\ref{t:projectiveresgeneral}, in conjunction with Corollary~\ref{computeExt} and Theorem~\ref{t:acyclicordercomplex}, yields the following corollary.

\begin{Cor}\label{c:othersimples}
Let $B$ be a connected left regular band and $\Bbbk$ a commutative ring with unit. Suppose that $X\in \Lambda(B)$.  If $B_{\geq X}\curvearrowright P$ is a semi-free action on an acyclic CW poset complex $P$, then the augmented cellular chain complex $C_\bullet(\CW(P);\Bbbk)\xrightarrow{\,\,\varepsilon\,\,}\Bbbk_X$ is a finite projective resolution of $\Bbbk_X$ by finitely generated $\Bbbk B$-modules.  In particular,  $C_\bullet(\Delta(B_{\geq X});\Bbbk)\xrightarrow{\,\,\varepsilon\,\,}\Bbbk_X$ is a finite projective resolution of $\Bbbk_X$ by finitely generated $\Bbbk B$-modules.

Fix, for each $Y\in \Lambda(B_{\geq X})$, an element $e_Y$ with $Y=Be_Y$.   The number of occurrences of the Sch\"utzenberger representation $\Bbbk L_Y$ as a direct summand in $C_q(\CW(P);\Bbbk)$ is the number of $q$-cells $\tau$ of $P$ such that $e_Y$ is the minimum element of $\Stab(\tau)$.  In particular, the number of occurrences of $\Bbbk L_Y$ as a direct summand in $C_q(\Delta(B_{\geq X});\Bbbk)$ is the number of $q$-simplices of $\Delta(e_YB_{\geq X})$ with $e_Y$ as a vertex.
\end{Cor}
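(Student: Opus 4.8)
The plan is to deduce everything by applying Theorem~\ref{t:projectiveresgeneral} to the contraction $B_{\geq X}$ in place of $B$, and then transporting the conclusion from $\Bbbk B_{\geq X}$-modules to $\Bbbk B$-modules. First I would note that $B_{\geq X}$ is itself a connected left regular band: by Proposition~\ref{p:support.lat.sub} one has $\Lambda(B_{\geq X})\cong \Lambda(B)_{\geq X}$, and for $Y\geq X$ the contraction $(B_{\geq X})_{\geq Y}$ coincides with $B_{\geq Y}$, whose order complex is acyclic, hence connected, by Theorem~\ref{t:acyclicordercomplex}. Thus $B_{\geq X}\curvearrowright P$ is a semi-free action of a \emph{connected} left regular band on an acyclic CW poset, and Theorem~\ref{t:projectiveresgeneral} applies verbatim: the augmented cellular chain complex $C_\bullet(\CW(P);\Bbbk)\xrightarrow{\,\,\varepsilon\,\,}\Bbbk$ is a projective resolution of the trivial $\Bbbk B_{\geq X}$-module, with $C_q(\CW(P);\Bbbk)\cong\bigoplus_{Y\in\Lambda(B_{\geq X})} n_Y\cdot\Bbbk L_Y$, where $n_Y$ counts the $q$-cells $\tau$ having $e_Y$ as the minimum of $\Stab(\tau)$ and $\Bbbk L_Y$ is the Sch\"utzenberger representation of $B_{\geq X}$. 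Finiteness is immediate since $P$ is finite.

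Next I would pass to $\Bbbk B$. By definition $\Bbbk_X$ is exactly the trivial $\Bbbk B_{\geq X}$-module regarded as a $\Bbbk B$-module via $\rho_X$, so the augmentation target becomes $\Bbbk_X$. The final sentence of Corollary~\ref{niceprojectives} says every projective $\Bbbk B_{\geq X}$-module is projective over $\Bbbk B$ via $\rho_X$, and Corollary~\ref{computeExt} says a projective resolution over $\Bbbk B_{\geq X}$ remains one over $\Bbbk B$; applying this to the complex above yields the first assertion, that $C_\bullet(\CW(P);\Bbbk)\xrightarrow{\,\,\varepsilon\,\,}\Bbbk_X$ is a finite projective resolution of $\Bbbk_X$ by finitely generated $\Bbbk B$-modules. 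The ``in particular'' statement is the special case $P=\mathcal P(\Delta(B_{\geq X}))$: the self-action of $B_{\geq X}$ is semi-free (each $b$ is the minimum of $\Stab(b)$), so by Lemma~\ref{ordercomplexissemifree} the induced action on $\mathcal P(\Delta(B_{\geq X}))$ is semi-free, and $\Delta(B_{\geq X})$ is acyclic by Theorem~\ref{t:acyclicordercomplex}; since $\CW(\mathcal P(\Delta(B_{\geq X})))=\Delta(B_{\geq X})$, the cellular chain complex is the simplicial one.

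For the multiplicity count I would carry the decomposition of Theorem~\ref{t:projectiveresgeneral} through the transfer, after checking that the Sch\"utzenberger module $\Bbbk L_Y$ of $B_{\geq X}$ agrees, upon pullback along $\rho_X$, with the Sch\"utzenberger module $\Bbbk L_Y$ of $B$. Indeed $L_Y\subseteq B_{\geq X}$ for $Y\geq X$, and for $a\in B$, $c\in L_Y$ the two actions $a\cdot c$ coincide: both equal $ac$ when $\sigma(a)\geq Y$ and both equal $0$ otherwise, using $\rho_X(a)=a$ when $X\leq\sigma(a)\not\geq Y$ and $\rho_X(a)=0$ when $\sigma(a)\not\geq X$. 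This identifies the summands, so the occurrences of $\Bbbk L_Y$ in $C_q(\CW(P);\Bbbk)$ number exactly the $q$-cells $\tau$ with $e_Y$ the minimum of $\Stab(\tau)$. For $P=\mathcal P(\Delta(B_{\geq X}))$, Proposition~\ref{stabilizersonsimplicesfromstabs} reduces the stabilizer of a chain to that of its maximum vertex, whose stabilizer-minimum (under the self-action) is the vertex itself; hence the relevant $q$-cells are the chains whose top element is $e_Y$, which are precisely the $q$-simplices of $\Delta(e_Y B_{\geq X})=\Delta((B_{\geq X})_{\leq e_Y})$ containing $e_Y$ as a vertex, matching the claimed count.

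The main obstacle is not any single hard step but the bookkeeping: verifying that the module structures genuinely match under $\rho_X$ so that the $\Bbbk L_Y$ appearing over $B_{\geq X}$ are the asserted Sch\"utzenberger representations over $B$, and confirming that the special case reduces to the self-action on $\Delta(B_{\geq X})$ with the stabilizer-minimum computation above. Once Theorem~\ref{t:projectiveresgeneral} is invoked for the contraction, the corollary follows formally.
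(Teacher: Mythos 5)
Your proposal is correct and follows exactly the route the paper intends: the corollary is stated there as an immediate consequence of Theorem~\ref{t:projectiveresgeneral} applied to the connected left regular band $B_{\geq X}$, combined with Corollary~\ref{computeExt} (to transfer the projective resolution from $\Bbbk B_{\geq X}$ to $\Bbbk B$) and Theorem~\ref{t:acyclicordercomplex} (for the acyclicity of $\Delta(B_{\geq X})$ in the ``in particular'' case). Your additional checks---that the Sch\"utzenberger modules $\Bbbk L_Y$ agree under pullback along $\rho_X$ and that the stabilizer-minimum of a chain is its top element---are exactly the bookkeeping the paper leaves implicit, and they are carried out correctly.
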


An important special case is when $B_{\geq X}$ is a CW poset.

\begin{Cor}\label{c:othersimplesCWcontraction}
Let $B$ be a connected left regular band and $\Bbbk$ a commutative ring with unit. Suppose that $X\in \Lambda(B)$ and $B_{\geq X}$ is a CW poset.   Then the augmented cellular chain complex $C_\bullet(\CW(B_{\geq X});\Bbbk)\xrightarrow{\,\,\varepsilon\,\,}\Bbbk_X$ is a finite projective resolution of $\Bbbk_X$ by finitely generated $\Bbbk B$-modules.

Moreover, we have the direct sum decomposition
\begin{equation}\label{e:schutzdecompositioncwcase}
C_q(\CW(B_{\geq X});\Bbbk)\cong \bigoplus_{\rk[X,Y]=q}\Bbbk L_Y
\end{equation}
where we note that $\Lambda(B)_{\geq X}$ is a graded poset by Proposition~\ref{p:supportiscellular} and Corollary~\ref{c:preservesgraded}, and where $\Bbbk L_Y$ denotes the Sch\"utzenberger representation associated to $Y\in \Lambda(B)$.
\end{Cor}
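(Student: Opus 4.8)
The plan is to derive this from Corollary~\ref{c:othersimples} by taking $P=B_{\geq X}$ equipped with the action of $B_{\geq X}$ on itself by left multiplication, and then to read off the precise decomposition by computing stabilizers and translating cell dimensions into ranks of the support semilattice.

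First I would check that this self-action meets the hypotheses of Corollary~\ref{c:othersimples}. The poset $B_{\geq X}$ is a CW poset by assumption, and it is acyclic by Theorem~\ref{t:acyclicordercomplex} (applied to $B$, or equally to the connected left regular band $B_{\geq X}$). The action of $B_{\geq X}$ on itself is by cellular maps and is semi-free: for each $\tau\in B_{\geq X}$ one has $\tau\in\Stab(\tau)$, while $b\tau=\tau$ forces $\tau\leq b$, so $\tau$ is the minimum element of $\Stab(\tau)$ (cf.\ Proposition~\ref{cellactonB} and the discussion following the definition of semi-free). Corollary~\ref{c:othersimples} then immediately gives that $C_\bullet(\CW(B_{\geq X});\Bbbk)\xrightarrow{\,\,\varepsilon\,\,}\Bbbk_X$ is a finite projective resolution of $\Bbbk_X$ by finitely generated $\Bbbk B$-modules, settling the first assertion.

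For the decomposition I would invoke the multiplicity count in Corollary~\ref{c:othersimples}: the number of copies of $\Bbbk L_Y$ in $C_q(\CW(B_{\geq X});\Bbbk)$ is the number of $q$-cells $\tau$ for which $e_Y$ is the minimum element of $\Stab(\tau)$. Since that minimum is $\tau$ itself, the condition forces $\tau=e_Y$, so the multiplicity of $\Bbbk L_Y$ equals $1$ when $e_Y$ is a $q$-cell and $0$ otherwise. Thus everything reduces to determining when $\dim e_Y=q$. Here I would use Proposition~\ref{p:supportiscellular} and Corollary~\ref{c:preservesgraded}: the support map $\sigma\colon B_{\geq X}\to\Lambda(B)_{\geq X}$ is surjective, cellular and strictly order-preserving, and because $B_{\geq X}$ is a CW poset (hence graded) so is $\Lambda(B)_{\geq X}$, with $\sigma$ preserving ranks of intervals. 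Noting that every minimal element of $B_{\geq X}$ has support $X$ (if $\sigma(c)=W>X$ then $ce_X<c$ still lies in $B_{\geq X}$, so $c$ is not minimal), one may pick a minimal $c\leq e_Y$, for instance $c=e_Ye_X$, and obtain $\dim e_Y=\rk[c,e_Y]=\rk[\sigma(c),\sigma(e_Y)]=\rk[X,Y]$, using $\dim c=0$ and rank-preservation. Hence $e_Y$ is a $q$-cell exactly when $\rk[X,Y]=q$, and summing the multiplicities over $Y\in\Lambda(B)_{\geq X}$ yields $C_q(\CW(B_{\geq X});\Bbbk)\cong\bigoplus_{\rk[X,Y]=q}\Bbbk L_Y$.

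The main obstacle is the last bookkeeping step, which converts the combinatorial dimension $\dim e_Y$ of a cell of $B_{\geq X}$ into the rank $\rk[X,Y]$ in the support semilattice; the delicate point is that rank-preservation of $\sigma$ must be applied to an interval anchored at a minimal (support-$X$) element rather than at a global bottom element, so I would make sure to justify both the existence of such a $c\leq e_Y$ and the identification of the minimal elements of $B_{\geq X}$ with $L_X$.
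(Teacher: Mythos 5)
Your proposal is correct and follows essentially the same route as the paper: apply Corollary~\ref{c:othersimples} to the semi-free self-action of $B_{\geq X}$, observe that the minimum of $\Stab(\tau)$ is $\tau$ itself so the multiplicity of $\Bbbk L_Y$ is $1$ exactly when $e_Y$ is a $q$-cell, and identify $\dim e_Y$ with $\rk[X,Y]$ via Proposition~\ref{p:supportiscellular} and Corollary~\ref{c:preservesgraded}. The only difference is that you spell out the rank bookkeeping (anchoring the interval at a minimal element of support $X$ below $e_Y$) that the paper leaves implicit.
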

\begin{proof}
The CW poset $B_{\geq X}$ is acyclic by Theorem~\ref{t:acyclicordercomplex}. Corollary~\ref{c:othersimples} establishes everything except the decomposition \eqref{e:schutzdecompositioncwcase}.  Fix $e_Y\in B$ with $Y=Be_Y$.
If $Y\geq X$, then $e_Y$ is the unique cell of $B_{\geq X}$ with $e_Y$ as the minimum element of its stabilizer and the dimension of this cell in $B_{\geq X}$ is $\rk[X,Y]$ by Proposition~\ref{p:supportiscellular} and Corollary~\ref{c:preservesgraded}.  Applying again Corollary~\ref{c:othersimples} completes the proof of \eqref{e:schutzdecompositioncwcase}.
\end{proof}

\begin{Rmk}\label{r:cell.case}
The minimal element of the stabilizer of $b\in B_{\geq X}$ is $b$ and so it easily follows from how we orient the cells in the proof of Theorem~\ref{t:projectiveresgeneral} that the isomorphism in \eqref{e:schutzdecompositioncwcase} takes the oriented $q$-cell $[b]$ to $b$, for $b\in L_Y$ with $\rk[X,Y]=q$.  Indeed, if $Ba\supseteq Bb$, then $a[b]=[ab]$ by the proof of Theorem~\ref{t:projectiveresgeneral}.  If $a\in B_{\geq X}$ and $Ba\nsupseteq Bb$, then $Bab\subsetneq Bb$ and so $\rk[X,Bab]<q$.  Thus the dimension of the cell $ab$ is less than $q$ in $B_{\geq X}$ and so $a[b]=0$ in $C_q(\CW (B_{\geq X});\Bbbk)$.
\end{Rmk}

\subsection{Ext and global dimension}
We now compute $\Ext^n_{\Bbbk B}(\Bbbk_X,\Bbbk_Y)$ for $X,Y\in \Lambda(B)$.  Restricting to the case that $B$ is a monoid, we obtain an alternate proof of the main  results of~\cite{oldpaper}.
First we need a lemma.

\begin{Lemma}\label{l:homtosimples}
Let $Y\in \Lambda(B)$ and let $V$ be a $\Bbbk B$-module. Fix $e_Y$ with $Be_Y=Y$.  Then $\Hom_{\Bbbk B}(V,\Bbbk_{Y})$ can be identified as a $\Bbbk$-module with the set of $\Bbbk$-linear mappings $f\colon e_YV\to \Bbbk$ that vanish on $\Bbbk[\bd e_YB]V$ (where the latter is interpreted as $0$ if $\bd e_YB = \emptyset$).
\end{Lemma}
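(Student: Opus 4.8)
The plan is to exhibit two explicit, visibly $\Bbbk$-linear maps between $\Hom_{\Bbbk B}(V,\Bbbk_Y)$ and the space $W$ of $\Bbbk$-linear maps $g\colon e_YV\to\Bbbk$ vanishing on $\Bbbk[\bd e_YB]V$, and show they are mutually inverse. Recall that $\Bbbk_Y$ is a copy of $\Bbbk$ as a $\Bbbk$-module and that $b\in B$ acts on it through $\rho_Y$, i.e.\ as the identity when $\sigma(b)\geq Y$ and as $0$ when $\sigma(b)\not\geq Y$. The only semigroup input I need is a dichotomy for left multiplication by $e_Y$: if $\sigma(b)\geq Y=Be_Y$, then $e_Y\in Be_Y\subseteq Bb$, so $e_Y=cb$ for some $c$, whence $e_Yb=cb\,b=cb=e_Y$; while if $\sigma(b)\not\geq Y$, then $\sigma(e_Yb)=Y\wedge\sigma(b)<Y=\sigma(e_Y)$, so $e_Yb\neq e_Y$ and therefore $e_Yb\in\bd e_YB$. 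I also record that $\Bbbk[\bd e_YB]V\subseteq e_YV$, since every $a\in\bd e_YB$ satisfies $a=e_Ya$ and hence $av=e_Y(av)$.

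First I would define the restriction map $\Phi\colon\Hom_{\Bbbk B}(V,\Bbbk_Y)\to W$ by $\Phi(f)=f|_{e_YV}$ and check it is well defined. Indeed, for $a\in\bd e_YB$ the computation above gives $\sigma(a)<Y$, so $a$ acts as $0$ on $\Bbbk_Y$ and $f(av)=\rho_Y(a)f(v)=0$; hence $f|_{e_YV}$ vanishes on $\Bbbk[\bd e_YB]V$ and lands in $W$. In the other direction I would define $\Psi\colon W\to\Hom_{\Bbbk B}(V,\Bbbk_Y)$ by $\Psi(g)(v)=g(e_Yv)$. The substance is that $\Psi(g)$ is a $\Bbbk B$-module homomorphism: for $b\in B$ one has $\Psi(g)(bv)=g(e_Ybv)$, and by the dichotomy either $\sigma(b)\geq Y$, in which case $e_Yb=e_Y$ yields $g(e_Ybv)=g(e_Yv)=\rho_Y(b)\Psi(g)(v)$, or $\sigma(b)\not\geq Y$, in which case $e_Yb\in\bd e_YB$ forces $e_Ybv\in\Bbbk[\bd e_YB]V$, so $g(e_Ybv)=0=\rho_Y(b)\Psi(g)(v)$. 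In both cases the module-homomorphism identity holds.

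Finally I would verify that $\Phi$ and $\Psi$ are mutually inverse. For $g\in W$ and $w\in e_YV$ we have $e_Yw=w$, so $\Phi(\Psi(g))(w)=\Psi(g)(w)=g(e_Yw)=g(w)$, giving $\Phi\Psi=\mathrm{id}$; and for $f\in\Hom_{\Bbbk B}(V,\Bbbk_Y)$, since $\sigma(e_Y)=Y\geq Y$ we get $f(e_Yv)=\rho_Y(e_Y)f(v)=f(v)$, so $\Psi(\Phi(f))(v)=f(e_Yv)=f(v)$, giving $\Psi\Phi=\mathrm{id}$. As both maps are $\Bbbk$-linear, this produces the desired identification of $\Bbbk$-modules. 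The one step demanding care, and the crux of the argument, is the verification that $\Psi(g)$ respects the $B$-action: this is exactly where the characterization of $\bd e_YB$ as the set of elements of $e_YB$ of support strictly below $Y$ gets matched against the two cases of the $\rho_Y$-action on $\Bbbk_Y$. Note that no hypothesis of connectedness of $B$ or unitality of $V$ enters, so the statement holds for an arbitrary left regular band and an arbitrary $\Bbbk B$-module.
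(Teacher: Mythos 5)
Your proposal is correct and follows essentially the same route as the paper: restrict a homomorphism to $e_YV$, observe it kills $\Bbbk[\bd e_YB]V$ because elements of $\bd e_YB$ have support strictly below $Y$, and recover the inverse by $v\mapsto g(e_Yv)$, with the module-homomorphism check resting on the same dichotomy ($Bb\supseteq Y$ forces $e_Yb=e_Y$, otherwise $e_Yb\in\bd e_YB$). Your write-up is a little more explicit about the two maps being mutually inverse, but the content is identical.
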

\begin{proof}
Let $f\in\Hom_{\Bbbk B}(V,\Bbbk_{Y})$ and $v\in V$.  Then $f(e_Yv) = e_Yf(v) = f(v)$ and so $f$ is uniquely determined by its restriction to $e_YV$. Moreover, if $b\in \bd e_YB$, then $f(bv) = bf(v)=0$.  Therefore, it remains to show that if $g\colon e_YV\to \Bbbk$ vanishes on $\Bbbk[\bd e_YB]V$ and is $\Bbbk$-linear, then defining $f(v) = g(e_Yv)$ results in a $\Bbbk B$-module homomorphism $f\colon V\to \Bbbk_Y$.  Let $b\in B$.  If $Bb\nsupseteq Y$, then $e_Yb\in \bd e_YB$ and so $f(bv) = g(e_Ybv)=0=bf(v)$.  On the other hand, if $Bb\supseteq Y$, then $e_Yb=e_Y$ and so $f(bv) = g(e_Ybv)=g(e_Yv)=bf(v)$.  This completes the proof of the lemma.
\end{proof}

The next theorem is a fundamental result of this paper.  Throughout this text we follow the standard convention that
\[\til H^{-1}(X;\Bbbk)= \begin{cases} \Bbbk, & \text{if}\ X=\emptyset\\  0, & \text{else.}\end{cases}\]
We shall use throughout that if $f\colon X\to X$ is a regular cellular map, then the image of a subcomplex of $X$ under $f$ is a subcomplex.

\begin{Thm}\label{t:extthmnew}
Let $B$ be a connected left regular band and $\Bbbk$ a commutative ring with unit.  Let $X,Y\in \Lambda(B)$.  Fix $e_Y$ with $Be_Y=Y$.   Suppose that $B_{\geq X}\curvearrowright P$ is a semi-free action of $B_{\geq X}$ on an acyclic CW poset $P$ (by cellular maps).  Then
\begin{align*}
\Ext^n_{\Bbbk B}(\Bbbk_X,\Bbbk_Y) &= \begin{cases}H^n(\CW(e_YP),\CW((\bd e_YB_{\geq X})P);\Bbbk), & \text{if}\ Y\geq X\\ 0, & \text{else.}\end{cases} \\ &= \begin{cases} \Bbbk, &\text{if}\ X=Y,\ n=0 \\ \til H^{n-1}(\CW((\bd e_YB_{\geq X})P);\Bbbk), & \text{if}\ Y> X,\ n\geq 1\\ 0, & \text{else.}\end{cases}
\end{align*}
\end{Thm}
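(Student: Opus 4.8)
The plan is to dualize the projective resolution of $\Bbbk_X$ furnished by Corollary~\ref{c:othersimples}. Since $B_{\geq X}\curvearrowright P$ is a semi-free action on an acyclic CW poset, that corollary gives a finite projective resolution $C_\bullet(\CW(P);\Bbbk)\xrightarrow{\varepsilon}\Bbbk_X$ by finitely generated $\Bbbk B$-modules, so $\Ext^n_{\Bbbk B}(\Bbbk_X,\Bbbk_Y)=H^n\bigl(\Hom_{\Bbbk B}(C_\bullet(\CW(P);\Bbbk),\Bbbk_Y)\bigr)$. I would compute each $\Hom_{\Bbbk B}(C_q(\CW(P);\Bbbk),\Bbbk_Y)$ using Lemma~\ref{l:homtosimples}, which identifies it with the $\Bbbk$-linear maps $e_YC_q(\CW(P);\Bbbk)\to\Bbbk$ vanishing on $\Bbbk[\bd e_YB]C_q(\CW(P);\Bbbk)$. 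The case $Y\not\geq X$ is immediate: the module $C_q(\CW(P);\Bbbk)$ is a $\Bbbk B_{\geq X}$-module regarded as a $\Bbbk B$-module through $\rho_X$, and $\rho_X(e_Y)=0$ when $\sigma(e_Y)=Y\not\geq X$, so $e_YC_q(\CW(P);\Bbbk)=0$ and every Hom vanishes, giving $\Ext^n_{\Bbbk B}(\Bbbk_X,\Bbbk_Y)=0$.

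The heart of the first formula is, for $Y\geq X$, to identify the cochain complex $\Hom_{\Bbbk B}(C_\bullet(\CW(P);\Bbbk),\Bbbk_Y)$ with the relative cellular cochain complex of the pair $(\CW(e_YP),\CW((\bd e_YB_{\geq X})P))$. Here $e_Y$, being idempotent and acting by cellular maps, has image the lower set $e_YP$, which it fixes pointwise (Proposition~\ref{retract}); since $B$ acts by regular cellular maps, $e_Y[\tau]$ is $0$ or $\pm[e_Y\tau]$, whence $e_YC_q(\CW(P);\Bbbk)=C_q(\CW(e_YP);\Bbbk)$. For the submodule I would first observe that any $b\in\bd e_YB$ with $\sigma(b)\not\geq X$ acts as $0$ through $\rho_X$, so $\Bbbk[\bd e_YB]C_q=\Bbbk[\bd e_YB_{\geq X}]C_q$; then, since each $a\in\bd e_YB_{\geq X}$ is idempotent with $aP$ a lower set fixed pointwise, the span of the $a[\tau]$ is exactly $C_q(\CW((\bd e_YB_{\geq X})P);\Bbbk)$, a subcomplex of $\CW(e_YP)$ because $a=e_Ya$ forces $aP\subseteq e_YP$. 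These identifications are natural in $q$ since all maps in sight are $\Bbbk B$-linear, so the coboundary matches the relative cellular coboundary and $\Ext^n_{\Bbbk B}(\Bbbk_X,\Bbbk_Y)=H^n(\CW(e_YP),\CW((\bd e_YB_{\geq X})P);\Bbbk)$.

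To pass to the second, more explicit formula I would exploit that $\CW(e_YP)$ is acyclic: it is the image of the acyclic CW poset $P$ under the idempotent cellular map $e_Y$, so Proposition~\ref{retract} applies. The decisive combinatorial point is that $b\in\bd e_YB$ forces $\sigma(b)<Y$: from $e_Yb=b$ one gets $\sigma(b)=Y\wedge\sigma(b)\leq Y$, and if $\sigma(b)=Y$ then $b$ and $e_Y$ lie in the same $\mathscr L$-class, which is a left-zero semigroup, giving $e_Yb=e_Y$ and hence $b=e_Y$, a contradiction. Consequently $\bd e_YB_{\geq X}=\{b\in e_YB\mid X\leq\sigma(b)<Y\}$ is empty precisely when $X=Y$. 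When $X=Y$ the pair reduces to $(\CW(e_YP),\emptyset)$ and acyclicity gives $H^0=\Bbbk$ and $H^n=0$ for $n\geq1$; when $Y>X$ the subcomplex is nonempty, and the long exact sequence of the pair in reduced cohomology together with $\til H^\ast(\CW(e_YP))=0$ yields $H^n(\CW(e_YP),\CW((\bd e_YB_{\geq X})P);\Bbbk)\cong\til H^{n-1}(\CW((\bd e_YB_{\geq X})P);\Bbbk)$. Assembling the cases, and using the convention $\til H^{-1}(\emptyset)=\Bbbk$, produces the stated three-way formula.

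The main obstacle I anticipate is the bookkeeping in the middle paragraph: verifying that the data produced by Lemma~\ref{l:homtosimples} assembles into genuinely the relative cellular cochain complex of the pair. In particular one must check that the two chain-level identifications $e_YC_q\cong C_q(\CW(e_YP))$ and $\Bbbk[\bd e_YB]C_q\cong C_q(\CW((\bd e_YB_{\geq X})P))$ are compatible with the differentials and with the orientations chosen in the proof of Theorem~\ref{t:projectiveresgeneral}, so that no sign or boundary term is lost. Everything else is either a direct invocation of earlier results or a short argument with the left regular band structure.
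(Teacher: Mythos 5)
Your proposal is correct and follows essentially the same route as the paper's own proof: dualizing the projective resolution from Corollary~\ref{c:othersimples}, using Lemma~\ref{l:homtosimples} to identify the Hom complex with the relative cellular cochain complex of the pair $(\CW(e_YP),\CW((\bd e_YB_{\geq X})P))$, and then applying acyclicity of $e_YP$ (via Proposition~\ref{retract}) and the long exact sequence of the pair. The extra details you supply in the middle paragraph (the reduction from $\bd e_YB$ to $\bd e_YB_{\geq X}$ via $\rho_X$, and the verification that $\bd e_YB_{\geq X}=\emptyset$ exactly when $X=Y$) are correct and are implicit in the paper's argument.
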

\begin{proof}
We compute $\Ext^n_{\Bbbk B}(\Bbbk_X,\Bbbk_Y)$ using the projective resolution \[C_\bullet(\CW(P);\Bbbk)\xrightarrow{\,\,\varepsilon\,\,}\Bbbk_X\] from Corollary~\ref{c:othersimples}.  Thus $\Ext^n_{\Bbbk B}(\Bbbk_X,\Bbbk_Y)=H^n(\Hom_{\Bbbk B}(C_{\bullet}(\CW(P);\Bbbk),\Bbbk_Y))$.

First suppose that $Y\ngeq X$.  Then $e_YC_q(\CW(P);\Bbbk)=0$ and so we have $\Hom_{\Bbbk B}(C_q(\CW(P);\Bbbk),\Bbbk_Y)=0$ for all $q\geq 0$ by Lemma~\ref{l:homtosimples}.  It follows that $\Ext^n_{\Bbbk B}(\Bbbk_X,\Bbbk_Y)=0$ for all $n\geq 0$ in this case.

Next suppose that $Y\geq X$.  Then observe that
\begin{align*}
e_YC_q(\CW(P);\Bbbk) &= C_q(\CW(e_YP);\Bbbk)\\ \Bbbk[\bd e_YB]C_q(\CW(P);\Bbbk) &= C_q(\CW((\bd e_YB_{\geq X})P);\Bbbk).
\end{align*}
Thus Lemma~\ref{l:homtosimples} identifies $\Hom_{\Bbbk B}(C_{\bullet}(\CW(P);\Bbbk),\Bbbk_Y))$ with the relative co\-chain complex $C^\bullet(\CW(e_YP),\CW(\bd e_YB_{\geq X}P);\Bbbk)$ and so \[\Ext^n_{\Bbbk B}(\Bbbk_X,\Bbbk_Y) = H^n(\CW(e_YP),\bd \CW(e_YB_{\geq X}P);\Bbbk)\] for all $n\geq 0$ in this case.  The first equality is now proved.

For the second equality, we recall that $e_YP$ is acyclic by Proposition~\ref{retract}.
In the case $X=Y$, we have $\bd e_YB_{\geq X}=\emptyset$ and so \[H^n(\CW(e_YP),\CW(\bd e_YB_{\geq X}P);\Bbbk) = H^n(\CW(e_YP);\Bbbk)\] which is $\Bbbk$ in dimension zero and zero in all other dimensions.

If $Y>X$, then $\bd e_YB_{\geq X}\neq \emptyset$.  The long exact sequence for reduced relative cohomology, together with the acyclicity of $e_YP$, then yields \[H^n(\CW(e_YP),\CW((\bd e_YB_{\geq X})P);\Bbbk)\cong \til H^{n-1}(\CW((\bd e_YB_{\geq X})P);\Bbbk)\] for all $n\geq 1$. We have  $H^0(\CW(e_YP),\CW((\bd e_YB_{\geq X})P);\Bbbk)=0$ because $\CW(e_YP)$ is connected and $\CW((\bd e_YB_{\geq X})P)$ is non-empty.
\end{proof}

Specializing to  the case that $P$ is the face poset of $\Delta(B_{\geq X})$ and $B$ a monoid recovers the main result of~\cite[Theorem~4.1]{oldpaper}, which was proved using classifying spaces of small categories, Quillen's Theorem~A~\cite{Quillen} and homological trickery.

\begin{Thm}\label{t:extthm}
Let $B$ be a connected left regular band and $\Bbbk$ a commutative ring with unit.  Let $X,Y\in \Lambda(B)$.  Fix $e_Y$ with $Be_Y=Y$.   Then
\begin{align*}
\Ext^n_{\Bbbk B}(\Bbbk_X,\Bbbk_Y) &= \begin{cases}H^n(\Delta(e_YB_{\geq X}),\Delta(\bd e_YB_{\geq X});\Bbbk), & \text{if}\ Y\geq X\\ 0, & \text{else.}\end{cases} \\ &= \begin{cases} \Bbbk, &\text{if}\ X=Y,\ n=0 \\ \til H^{n-1}(\Delta(\bd e_YB_{\geq X});\Bbbk), & \text{if}\ Y> X,\ n\geq 1\\ 0, & \text{else.}\end{cases}
\end{align*}
\end{Thm}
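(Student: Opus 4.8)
The plan is to deduce Theorem~\ref{t:extthm} as the special case of Theorem~\ref{t:extthmnew} in which the acyclic CW poset $P$ is taken to be $\mathcal P(\Delta(B_{\geq X}))$, the face poset of the order complex of $B_{\geq X}$, so that $\CW(P)\cong \Delta(B_{\geq X})$ under the canonical identification $\CW(\mathcal P(K))\cong K$ for a regular CW complex $K$. First I would verify that the hypotheses of Theorem~\ref{t:extthmnew} hold for this $P$. Since every contraction of $B_{\geq X}$ is a contraction of $B$, the left regular band $B_{\geq X}$ is again connected, so $\Delta((B_{\geq X})_{\geq Z})$ is acyclic for each $Z\geq X$ by Theorem~\ref{t:acyclicordercomplex}; in particular $\Delta(B_{\geq X})$ is acyclic, and as face posets of simplicial complexes are CW posets, $P$ is an acyclic CW poset. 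For semi-freeness, Proposition~\ref{cellactonB} shows that $B_{\geq X}$ acts on itself by left multiplication unitarily and by cellular maps, and $b$ is visibly the minimum element of $\Stab(b)$ for each $b$, so that action is semi-free; Lemma~\ref{ordercomplexissemifree} then upgrades this to semi-freeness of the induced action of $B_{\geq X}$ on $P=\mathcal P(\Delta(B_{\geq X}))$.

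With the hypotheses in place, Theorem~\ref{t:extthmnew} applies verbatim, and it remains only to translate the three regular CW complexes in its statement back into order complexes. Under $\CW(\mathcal P(K))\cong K$, a $B_{\geq X}$-invariant lower set of $P$ corresponds to a $B_{\geq X}$-invariant subcomplex of $\Delta(B_{\geq X})$, and the two identifications I need are $\CW(e_YP)=\Delta(e_YB_{\geq X})$ and $\CW((\bd e_YB_{\geq X})P)=\Delta(\bd e_YB_{\geq X})$. Substituting these into the two displayed formulas of Theorem~\ref{t:extthmnew} produces exactly the two cases asserted in Theorem~\ref{t:extthm}, so no further computation is needed.

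The one point requiring genuine care — and the main obstacle — is establishing these two subcomplex identifications. For the first, left multiplication by $e_Y$ sends a chain $\sigma_0<\cdots<\sigma_q$ of $B_{\geq X}$ to the chain $e_Y\sigma_0\le\cdots\le e_Y\sigma_q$ of its images (by Proposition~\ref{stabilizersonsimplicesfromstabs} the stabilizer and hence the image are controlled by the top vertex), all of whose entries lie in $e_YB_{\geq X}$; conversely any chain supported on $e_YB_{\geq X}=(B_{\geq X})_{\le e_Y}$ is fixed by $e_Y$, so $e_YP$ is precisely $\mathcal P(\Delta(e_YB_{\geq X}))$ and $\CW(e_YP)=\Delta(e_YB_{\geq X})$. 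For the second, for $a\in \bd e_YB_{\geq X}$ one has $aP=\mathcal P(\Delta(aB_{\geq X}))$ with $aB_{\geq X}=(B_{\geq X})_{\le a}\subseteq \bd e_YB_{\geq X}$ because $a<e_Y$, while any chain supported on $\bd e_YB_{\geq X}$ lies in $cB_{\geq X}$ for $c$ its maximum element; hence $(\bd e_YB_{\geq X})P=\mathcal P(\Delta(\bd e_YB_{\geq X}))$ and $\CW((\bd e_YB_{\geq X})P)=\Delta(\bd e_YB_{\geq X})$. These facts are already implicit in the proof of Theorem~\ref{t:extthmnew}, where the identities $e_YC_q(\CW(P);\Bbbk)=C_q(\CW(e_YP);\Bbbk)$ and $\Bbbk[\bd e_YB]C_q(\CW(P);\Bbbk)=C_q(\CW((\bd e_YB_{\geq X})P);\Bbbk)$ are used, so the remaining work is purely bookkeeping about which chains survive under the action.
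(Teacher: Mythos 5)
Your proposal is correct and follows exactly the paper's own route: the paper proves Theorem~\ref{t:extthm} by taking $P=\Delta(B_{\geq X})$ in Theorem~\ref{t:extthmnew} and invoking Theorem~\ref{t:acyclicordercomplex} for acyclicity. You have merely spelled out the routine verifications (connectedness of $B_{\geq X}$, semi-freeness via Proposition~\ref{cellactonB} and Lemma~\ref{ordercomplexissemifree}, and the identifications $\CW(e_YP)=\Delta(e_YB_{\geq X})$ and $\CW((\bd e_YB_{\geq X})P)=\Delta(\bd e_YB_{\geq X})$) that the paper leaves implicit.
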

\begin{proof}
Take $P=\Delta(B_{\geq X})$ in Theorem~\ref{t:extthmnew} and use that $\Delta(B_{\geq X})$ is acyclic by Theorem~\ref{t:acyclicordercomplex}.
\end{proof}

As a corollary, we recover a description of the quiver of a left regular band  monoid algebra~\cite{Saliola,oldpaper}, now under the weaker assumption of connectedness of the left regular band.

\begin{Thm}\label{t:quiver}
Let $B$ be a connected left regular band and $\Bbbk$ a field.  Then the quiver of $\Bbbk B$ has vertex set $\Lambda(B)$.  There are no arrows $X\to Y$ unless $X<Y$, in which case the number of arrows $X\to Y$ is one fewer than the number of connected components of $\Delta(\bd e_YB_{\geq X})$ where $Y=Be_Y$.
\end{Thm}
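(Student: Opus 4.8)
The plan is to read the quiver directly off the $\Ext^1$-computation in Theorem~\ref{t:extthm}, combined with the general description of the quiver of a split basic algebra recalled in Section~\ref{s:algebraprelim}. First I would record the standing structural facts: since $B$ is connected, $\Bbbk B$ is unital by Theorem~\ref{t:unital}, and hence split basic over the field $\Bbbk$ by the remark following Corollary~\ref{c:jac.radical}; by Corollary~\ref{c:simples} its simple modules are exactly the $\Bbbk_X$ with $X\in\Lambda(B)$, pairwise non-isomorphic. Consequently the vertex set of the Gabriel quiver $Q(\Bbbk B)$ is $\Lambda(B)$, and by the theory recalled in Section~\ref{s:algebraprelim} the number of arrows $X\to Y$ is $\dim_\Bbbk\Ext^1_{\Bbbk B}(\Bbbk_X,\Bbbk_Y)$, where the direction convention is that edges $\Bbbk_X\to\Bbbk_Y$ count $\Ext^1(\Bbbk_X,\Bbbk_Y)$.

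Next I would specialize Theorem~\ref{t:extthm} to $n=1$. That theorem gives $\Ext^1_{\Bbbk B}(\Bbbk_X,\Bbbk_Y)=0$ unless $Y>X$, and $\Ext^1_{\Bbbk B}(\Bbbk_X,\Bbbk_Y)\cong\til H^0(\Delta(\bd e_YB_{\geq X});\Bbbk)$ when $Y>X$ (note in particular that the case $X=Y$ contributes only in degree $0$, so there are no loops). The first assertion immediately yields that there are no arrows $X\to Y$ unless $X<Y$. For the count I would invoke the elementary fact that for a non-empty topological space the reduced zeroth cohomology over a field has dimension one less than the number of connected components, so that $\dim_\Bbbk\til H^0(\Delta(\bd e_YB_{\geq X});\Bbbk)=c-1$, where $c$ is the number of connected components of $\Delta(\bd e_YB_{\geq X})$. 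This is exactly the claimed count.

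The one point requiring a small verification is that $\bd e_YB_{\geq X}$ is non-empty whenever $Y>X$, so that the reduced-cohomology formula applies rather than the degenerate convention $\til H^{-1}(\emptyset;\Bbbk)=\Bbbk$. Here I would argue that $e_YB_{\geq X}$ is a left regular band monoid with identity $e_Y$ whose support semilattice is isomorphic to the interval $\Lambda(B)_{[X,Y]}$ via Proposition~\ref{p:support.lat.sub} (since $\Lambda(B_{\geq X})\cong\Lambda(B)_{\geq X}$ and $\Lambda(e_YB_{\geq X})\cong\Lambda(B_{\geq X})_{\leq Y}$); because $Y>X$ this interval has at least two elements, so $e_YB_{\geq X}$ contains an element of support strictly below $Y$, hence an element distinct from $e_Y$, i.e. an element of $\bd e_YB_{\geq X}$.

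I do not expect a genuine obstacle: the statement is essentially a translation of the degree-one case of Theorem~\ref{t:extthm} into the language of the Gabriel quiver. The only care needed is bookkeeping, namely matching the arrow orientation $X\to Y$ with the $\Ext^1(\Bbbk_X,\Bbbk_Y)$ convention fixed in Section~\ref{s:algebraprelim}, and correctly handling the empty versus non-empty dichotomy for $\til H^{-1}$ against $\til H^0$ so that the ``one fewer than the number of connected components'' formula is applied only in the range where it is valid.
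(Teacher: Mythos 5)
Your proposal is correct and follows essentially the same route as the paper: the paper's proof of Theorem~\ref{t:quiver} is precisely the specialization of Theorem~\ref{t:extthm} to $n=1$ together with the observation that $\dim_\Bbbk\til H^0(K;\Bbbk)$ is one fewer than the number of connected components of $K$. The extra bookkeeping you supply (split basicness, the identification of the simples, and the non-emptiness of $\bd e_YB_{\geq X}$ for $Y>X$) is all already established in the paper's preliminaries and in the proof of Theorem~\ref{t:extthmnew}, so nothing is missing.
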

\begin{proof}
This follows from Theorem~\ref{t:extthm} as $\dim \til H^0(K)$ is one fewer than the number of connected components of $K$ for a simplicial complex $K$.
\end{proof}

\begin{Cor}\label{c:suspension}
$B$ be a connected left regular band and $\Bbbk$ a commutative ring with unit.  Let $X,Y\in \Lambda(B)$.  Fix $e_Y$ with $Be_Y=Y$.   Let $-\infty$ be the minimum element of $\Lambda(\mathsf S(B))=\Lambda(B)\cup \{-\infty\}$.  Then
\begin{enumerate}
\item $\Ext^n_{\Bbbk \mathsf S(B)}(\Bbbk_X,\Bbbk_Y) =  \begin{cases} \Bbbk, &\text{if}\ X=Y,\ n=0 \\ \til H^{n-1}(\Delta(\bd e_YB_{\geq X});\Bbbk), & \text{if}\ Y> X,\ n\geq 1\\ 0, & \text{else;}\end{cases}$
\item $\Ext^n_{\Bbbk \mathsf S(B)}(\Bbbk_{-\infty},\Bbbk_Y) = \begin{cases}\til H^{n-2}(\Delta(\bd e_YB);\Bbbk), & \text{if}\ n \geq 1\\  0, & \text{else;}\end{cases}$
\item $\Ext^n_{\Bbbk \mathsf S(B)}(\Bbbk_{-\infty},\Bbbk_{-\infty}) = \begin{cases} \Bbbk, &\text{if}\ n=0 \\ 0, &\text{else;}\end{cases}$;
\item $\Ext^n_{\Bbbk \mathsf S(B)}(\Bbbk_X,\Bbbk_{-\infty})=0$ for all $X\in \Lambda(B)$.
\end{enumerate}
\end{Cor}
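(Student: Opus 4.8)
The plan is to apply Theorem~\ref{t:extthm} directly to the left regular band $\mathsf S(B)=\partial L\ast B$, whose support semilattice is $\Lambda(\mathsf S(B))=\Lambda(B)\cup\{-\infty\}$ with $-\infty$ adjoined as a minimum. Since Theorem~\ref{t:extthm} requires a \emph{connected} left regular band, the first step is to check that $\mathsf S(B)$ is connected. For $X\in\Lambda(B)$ the elements of $\partial L$ have support $-\infty\not\geq X$, so the contraction $\mathsf S(B)_{\geq X}=B_{\geq X}$ is connected by hypothesis; and $\mathsf S(B)_{\geq-\infty}=\mathsf S(B)$ has order complex $\Delta(\partial L)\ast\Delta(B)=S^0\ast\Delta(B)$ by the identity $\Delta(P\ast Q)=\Delta(P)\ast\Delta(Q)$, which is the suspension of the nonempty complex $\Delta(B)$ and hence connected. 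Thus Theorem~\ref{t:extthm} is available for $\mathsf S(B)$.

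Next I would record the local geometry at a fixed $Y\in\Lambda(B)$. Choose $e_Y\in B$ with $Be_Y=Y$; then $e_Y$ is also a representative for $Y$ in $\mathsf S(B)$, i.e. $\sigma_{\mathsf S(B)}(e_Y)=Y$. Because $e_Y a=a$ for $a\in\partial L$ (join relation) and $e_Y a$ is the ordinary product for $a\in B$, the monoid $e_Y\mathsf S(B)$ equals $\partial L\cup e_YB=\partial L\ast e_YB=\mathsf S(e_YB)$, whose unique maximum is the identity $e_Y$. Deleting this maximum yields $\bd e_Y\mathsf S(B)=\partial L\ast\bd e_YB$, and therefore $\Delta(\bd e_Y\mathsf S(B))=\Delta(\partial L)\ast\Delta(\bd e_YB)=S^0\ast\Delta(\bd e_YB)=\mathsf S(\Delta(\bd e_YB))$. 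The same computation carried out inside a contraction shows $\mathsf S(B)_{\geq X}=B_{\geq X}$ and $e_Y\mathsf S(B)_{\geq X}=e_YB_{\geq X}$ for every $X\in\Lambda(B)$.

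With these identifications the four items fall out of Theorem~\ref{t:extthm} applied to $\mathsf S(B)$. For $X,Y\in\Lambda(B)$ (item~(1)) the relevant complex is $\Delta(\bd e_YB_{\geq X})$, which is verbatim the formula of Theorem~\ref{t:extthm} for $B$, so (1) is immediate. For $Y=-\infty$ (item~(4)), $Y$ is the minimum of $\Lambda(\mathsf S(B))$, hence $Y\not\geq X$ for any $X\in\Lambda(B)$ and all $\Ext$-groups vanish. For $X=Y=-\infty$ (item~(3)) the first clause of Theorem~\ref{t:extthm} gives $\Bbbk$ in degree $0$ and $0$ otherwise. The only genuine computation is item~(2), where $X=-\infty<Y$: Theorem~\ref{t:extthm} gives $\til H^{n-1}(\Delta(\bd e_Y\mathsf S(B));\Bbbk)$ for $n\geq1$, and applying the suspension isomorphism $\til H^{n-1}(\mathsf S(Z);\Bbbk)\cong\til H^{n-2}(Z;\Bbbk)$ with $Z=\Delta(\bd e_YB)$ converts this to $\til H^{n-2}(\Delta(\bd e_YB);\Bbbk)$, while $\Ext^0=0$ because $X\neq Y$.

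I expect the only delicate point to be the boundary case $\bd e_YB=\varnothing$ (that is, $Y$ maximal in $\Lambda(B)$), where $\Delta(\bd e_YB)=\varnothing$ and $\Delta(\bd e_Y\mathsf S(B))=S^0\ast\varnothing=S^0$. Here one must verify that the suspension isomorphism, read with the stated convention $\til H^{-1}(\varnothing;\Bbbk)=\Bbbk$, still gives $\til H^{0}(S^0;\Bbbk)=\Bbbk=\til H^{-1}(\varnothing;\Bbbk)$, so that item~(2) holds uniformly for all $n\geq1$ including $n=1$. Everything else is routine bookkeeping with the join operation, the support map, and the passage between $\mathsf S(B)$ and its contractions.
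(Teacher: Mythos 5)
Your proof is correct and takes essentially the same route as the paper's own (one-line) argument: apply Theorem~\ref{t:extthm} to $\mathsf S(B)$ and use the suspension isomorphism $\til H^{q+1}(\mathsf S(Z);\Bbbk)\cong \til H^{q}(Z;\Bbbk)$; you have simply made explicit the identifications $\mathsf S(B)_{\geq X}=B_{\geq X}$, $\bd e_Y\mathsf S(B)=\partial L\ast \bd e_YB$, and the connectedness of $\mathsf S(B)$ that the paper leaves implicit. One tiny slip in a parenthetical: $\bd e_YB=\varnothing$ happens exactly when $Y$ is the \emph{minimum} of $\Lambda(B)$ (so that $e_YB=\{e_Y\}$), not when $Y$ is maximal, but this does not affect your verification of the degenerate case.
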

\begin{proof}
This follows from Theorem~\ref{t:extthm} and the top\-o\-lo\-gi\-cal fact (which can be easily proved via the Meyer-Vietoris sequence) that $\til H^{q+1}(\mathsf S(X);\Bbbk) \cong \til H^q(X;\Bbbk)$.
\end{proof}

Our next corollary concerns joins.
\begin{Cor}
Let $B'$ be a left regular band, $B$ be a connected left regular band  and $\Bbbk$ a field.
\begin{enumerate}
\item  If $X,Y\in \Lambda(B)$, then \[\Ext^n_{\Bbbk (B'\ast B)}(\Bbbk_X,\Bbbk_Y) =  \begin{cases} \Bbbk, &\text{if}\ X=Y,\ n=0 \\ \til H^{n-1}(\Delta(\bd e_YB_{\geq X});\Bbbk), & \text{if}\ Y> X,\ n\geq 1\\ 0, & \text{else.}\end{cases}\]
\item If $X,Y\in \Lambda(B')$, then \[\Ext^n_{\Bbbk (B'\ast B)}(\Bbbk_X,\Bbbk_Y) =  \begin{cases} \Bbbk, &\text{if}\ X=Y,\ n=0 \\ \til H^{n-1}(\Delta(\bd e_YB'_{\geq X});\Bbbk), & \text{if}\ Y> X,\ n\geq 1\\ 0, & \text{else.}\end{cases}\]
\item  If $X\in \Lambda(B')$ and $Y\in \Lambda(B)$, then \[\Ext^n_{\Bbbk (B'\ast B)}(\Bbbk_X,\Bbbk_Y) = \bigoplus_{p+q=n-2}\til H^p(\Delta(B'_{\geq X});\Bbbk)\otimes_\Bbbk \til H^q(\Delta(\bd e_YB);\Bbbk)\] if $n\geq 1$ and otherwise is zero.
\item  If $X\in \Lambda(B)$ and $Y\in \Lambda(B')$, then $\Ext^n_{\Bbbk (B'\ast B)}(\Bbbk_X,\Bbbk_Y) = 0$.
\end{enumerate}
\end{Cor}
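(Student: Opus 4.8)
The plan is to deduce all four formulas from Theorem~\ref{t:extthm} applied to the ambient left regular band $B'\ast B$, once I have checked that $B'\ast B$ is connected and identified, in each case, the contraction $(B'\ast B)_{\geq X}$ together with the boundary complex $\bd e_Y(B'\ast B)_{\geq X}$ appearing in that theorem. Throughout I use that $\Lambda(B'\ast B)=\Lambda(B')\ast\Lambda(B)$, so that in the support semilattice every element of $\Lambda(B')$ lies strictly below every element of $\Lambda(B)$, the first (primed) factor occupying the bottom and $B$ the top.

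First I would record the contractions. If $X\in\Lambda(B)$ then no element of $B'$ has support $\geq X$, so $(B'\ast B)_{\geq X}=B_{\geq X}$; if $X\in\Lambda(B')$ then all of $B$ has support $\geq X$, so $(B'\ast B)_{\geq X}=B'_{\geq X}\ast B$. In the first case $B_{\geq X}$ is connected because $B$ is; in the second, $\Delta(B'_{\geq X}\ast B)=\Delta(B'_{\geq X})\ast\Delta(B)$ is a join of two non-empty complexes, hence connected. Thus $B'\ast B$ is a connected left regular band, and Theorem~\ref{t:extthm} applies (its hypothesis that $\Delta((B'\ast B)_{\geq X})$ be acyclic is supplied by Theorem~\ref{t:acyclicordercomplex}).

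Case (4) is then immediate: for $X\in\Lambda(B)$ and $Y\in\Lambda(B')$ one has $Y<X$, so $Y\not\geq X$ and Theorem~\ref{t:extthm} gives $0$. For the remaining cases I would evaluate $e_Y$ against the contraction using the join relations $\beta\gamma=\beta=\gamma\beta$ for $\beta\in B'$ and $\gamma\in B$. In case (1), $X,Y\in\Lambda(B)$ and I choose $e_Y\in B$, giving $e_Y(B'\ast B)_{\geq X}=e_YB_{\geq X}$ and hence $\bd e_Y(B'\ast B)_{\geq X}=\bd e_YB_{\geq X}$; the formula is then exactly Theorem~\ref{t:extthm}. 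In case (2), $X,Y\in\Lambda(B')$ and $e_Y\in B'$; since $e_Y\gamma=e_Y$ for every $\gamma\in B$, the top factor collapses and $e_Y(B'_{\geq X}\ast B)=e_YB'_{\geq X}$, so the boundary is $\bd e_YB'_{\geq X}$, again matching the stated answer.

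The substantive case is (3), with $X\in\Lambda(B')$ and $Y\in\Lambda(B)$, where $Y>X$ automatically and I take $e_Y\in B$. Now $e_Y\beta=\beta$ for $\beta\in B'$, so the bottom factor is fixed and $e_Y(B'_{\geq X}\ast B)=B'_{\geq X}\ast e_YB$; removing the global maximum $e_Y$ yields $\bd e_Y(B'\ast B)_{\geq X}=B'_{\geq X}\ast\bd e_YB$, whence $\Delta(\bd e_Y(B'\ast B)_{\geq X})=\Delta(B'_{\geq X})\ast\Delta(\bd e_YB)$. Theorem~\ref{t:extthm} then gives, for $n\geq1$, that $\Ext^n$ is $\til H^{n-1}$ of this join, and the reduced Künneth formula for a join over a field, $\til H^{m}(A\ast C;\Bbbk)\cong\bigoplus_{p+q=m-1}\til H^p(A;\Bbbk)\otimes_\Bbbk\til H^q(C;\Bbbk)$, produces the displayed direct sum with the shift $p+q=n-2$ (one shift from the relative-to-reduced passage in Theorem~\ref{t:extthm}, one from the join). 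I expect the main obstacle to be the bookkeeping here: pinning down the two degree shifts and invoking the field hypothesis correctly, since it is exactly over a field that the $\mathrm{Tor}$ summands in the join Künneth formula vanish and the answer becomes a clean tensor product.
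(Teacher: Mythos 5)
Your proof is correct and follows essentially the same route as the paper: apply Theorem~\ref{t:extthm} to the join, identify the contractions $(B'\ast B)_{\geq X}$ and the boundaries $\bd e_Y(B'\ast B)_{\geq X}$ case by case, and finish case (3) with the reduced K\"unneth formula for joins with its two degree shifts. Your explicit verification that $B'\ast B$ is connected, and your identification $\bd e_Y(B'\ast B)_{\geq X}=B'_{\geq X}\ast\bd e_YB$ (which the paper's proof states with a typographical slip, writing $\bd e_YB'$), are welcome clarifications of steps the paper leaves implicit.
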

\begin{proof}
This follows from Theorem~\ref{t:extthm}, the observation that \[\bd e_Y(B'\ast B)_{\geq X}=B'_{\geq X}\ast \bd e_YB'\] for $X\in \Lambda(B')$ and $Y\in \Lambda(B)$ and the fact that
\[\til H^{n+1}(X\ast Y;\Bbbk)\cong \bigoplus_{p+q=n}\til H^p(X;\Bbbk)\otimes_\Bbbk \til H^q(Y;\Bbbk)\] (cf.~\cite[Equation~(9.12)]{bjornersurvey}). Here again we use the usual conventions for cohomology in negative degrees.
\end{proof}

Theorem~\ref{t:extthm} allows us to compute the global dimension of $\Bbbk B$ when $\Bbbk$ is a field. This result was first obtained in~\cite[Theorem~4.4]{oldpaper} for left regular band monoids.

\begin{Cor}\label{c:globaldimension}
If $\Bbbk$ is a field and $B$ is a connected left regular band, then the global dimension of $\Bbbk B$ is the largest $n$ such that the reduced cohomology $\til H^{n-1}(\Delta(\bd e_YB_{\geq X});\Bbbk)\neq 0$ for some $X,Y\in \Lambda(B)$ with $Y\geq X$.
\end{Cor}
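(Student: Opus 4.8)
The plan is to combine the standard homological characterization of global dimension with the explicit $\Ext$ computation of Theorem~\ref{t:extthm}. Since $B$ is connected, Theorem~\ref{t:unital} shows $\Bbbk B$ is unital, hence a finite dimensional $\Bbbk$-algebra, and by Theorem~\ref{t:quiver} its quiver is acyclic, so $\gldim \Bbbk B$ is finite and the maxima below are attained. Recall from Section~\ref{s:algebraprelim} that for a finite dimensional algebra
\[\gldim \Bbbk B=\max\{n\geq 0\mid \Ext^n_{\Bbbk B}(S,S')\neq 0\ \text{for some simple modules}\ S,S'\},\]
and by Corollary~\ref{c:simples} the simple $\Bbbk B$-modules are exactly the $\Bbbk_X$ with $X\in\Lambda(B)$. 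Thus the maximum ranges over the pairs $(\Bbbk_X,\Bbbk_Y)$, $X,Y\in\Lambda(B)$.

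The core of the argument is to recast Theorem~\ref{t:extthm} as the single biconditional, for all $n\geq 0$ and all $X,Y\in\Lambda(B)$,
\[\Ext^n_{\Bbbk B}(\Bbbk_X,\Bbbk_Y)\neq 0\iff Y\geq X\ \text{and}\ \til H^{n-1}(\Delta(\bd e_YB_{\geq X});\Bbbk)\neq 0.\]
When $Y\ngeq X$ both sides vanish by Theorem~\ref{t:extthm}, and when $Y>X$ and $n\geq 1$ the equivalence is exactly the second line of that theorem. The remaining cases — degree $n=0$, and $Y=X$ in every degree — hinge on the convention $\til H^{-1}(\Delta(L);\Bbbk)=\Bbbk$ iff $L=\emptyset$, together with the dichotomy that, for $Y\geq X$, the poset $\bd e_YB_{\geq X}$ is empty precisely when $X=Y$.

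I would establish that dichotomy directly via Proposition~\ref{p:supportiscellular}, which says $a<b$ forces $\sigma(a)<\sigma(b)$. Every element of $e_YB_{\geq X}$ is of the form $e_Yb$ with $\sigma(b)\geq X$, hence has support $Y\wedge\sigma(b)$. If $X=Y$, this support is always $X$; since $e_Xa=a$ gives $a\leq e_X$ in the $\R$-order while $\sigma(a)=X=\sigma(e_X)$, Proposition~\ref{p:supportiscellular} forces $a=e_X$, so $e_XB_{\geq X}=\{e_X\}$ and $\bd e_XB_{\geq X}=\emptyset$; thus $\til H^{n-1}(\Delta(\emptyset);\Bbbk)\neq 0$ exactly when $n=0$, matching $\Ext^0(\Bbbk_X,\Bbbk_X)=\Bbbk$ and $\Ext^n(\Bbbk_X,\Bbbk_X)=0$ for $n\geq 1$. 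If instead $X<Y$, then $e_X\in B_{\geq X}$ and the element $e_Ye_X$ has support $Y\wedge X=X<Y$, so $e_Ye_X\neq e_Y$ lies in $\bd e_YB_{\geq X}$, making it nonempty; hence $\til H^{-1}=0$ in degree $n=0$, matching $\Ext^0(\Bbbk_X,\Bbbk_Y)=0$ for $X\neq Y$. This verifies the biconditional in all degrees.

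Finally, taking the maximum of $n$ over all pairs on each side of the biconditional yields
\[\gldim \Bbbk B=\max\{n\geq 0\mid \til H^{n-1}(\Delta(\bd e_YB_{\geq X});\Bbbk)\neq 0\ \text{for some}\ X,Y\in\Lambda(B)\ \text{with}\ Y\geq X\},\]
which is the claim. I expect no serious obstacle: essentially all the work is already packaged in Theorem~\ref{t:extthm} and Theorem~\ref{t:acyclicordercomplex}, and the one genuinely new ingredient is the elementary emptiness dichotomy above, whose sole purpose is to reconcile the range $Y\geq X$ in the statement with the degree-zero $\Ext$ groups.
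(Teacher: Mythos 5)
Your proposal is correct and follows exactly the route the paper intends: the corollary is stated immediately after Theorem~\ref{t:extthm} with no separate argument, precisely because it is the combination of the standard fact that $\gldim \Bbbk B$ is the largest $n$ with $\Ext^n_{\Bbbk B}(S,S')\neq 0$ for simple modules $S,S'$ (recalled in Section~\ref{s:algebraprelim}), the classification of simples in Corollary~\ref{c:simples}, and the case analysis of Theorem~\ref{t:extthm}. Your extra verification that $\bd e_YB_{\geq X}=\emptyset$ exactly when $X=Y$ (for $Y\geq X$), reconciling the $n=0$ cases with the convention on $\til H^{-1}$, is a correct and worthwhile piece of bookkeeping that the paper leaves implicit.
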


Corollary~\ref{c:globaldimension} was used in~\cite{oldpaper} to compute the global dimension of almost all the examples of left regular band monoids that we have been considering in this paper.   We recall briefly some of the results.  First we give an upper bound in terms of Leray numbers, which improves upon the upper bound that~\cite{Nico1,Nico2} would provide in the case of a left regular band.

\begin{Cor}\label{c:leraybound}
Let $B$ be a finite connected left regular band and $\Bbbk$ a field. Then
$\gldim \Bbbk B$ is bounded above by the $\Bbbk$-Leray number $L_{\Bbbk}(\Delta(B))$ of the order
complex of $B$.
\end{Cor}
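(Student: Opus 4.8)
The plan is to combine the homological computation of the global dimension in Corollary~\ref{c:globaldimension} with the purely combinatorial definition of the $\Bbbk$-Leray number, the bridge between them being the elementary observation that order complexes of subposets are \emph{induced} subcomplexes.

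First I would recall from Corollary~\ref{c:globaldimension} that $\gldim \Bbbk B$ equals the largest $n$ for which there exist $X,Y\in\Lambda(B)$ with $Y\geq X$ and $\til H^{n-1}(\Delta(\bd e_YB_{\geq X});\Bbbk)\neq 0$, where $Y=Be_Y$. The key step is then to identify the simplicial complex $\Delta(\bd e_YB_{\geq X})$ as an induced subcomplex of $\Delta(B)$. Indeed, $\bd e_YB_{\geq X}$ is a subset of the vertex set $B$ of $\Delta(B)$ (since $\bd e_YB_{\geq X}\subseteq B_{\geq X}\subseteq B$), and a chain in the subposet $\bd e_YB_{\geq X}$ is precisely a chain of $B$ all of whose elements lie in $\bd e_YB_{\geq X}$; hence $\Delta(\bd e_YB_{\geq X})=\Delta(B)[\bd e_YB_{\geq X}]$, the subcomplex of $\Delta(B)$ induced by the vertex set $\bd e_YB_{\geq X}$.

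With this identification in hand, the corollary is immediate from the definition of the $\Bbbk$-Leray number. By definition, for every $i\geq L_\Bbbk(\Delta(B))$ and every $W\subseteq B$ one has $\til H^i(\Delta(B)[W];\Bbbk)=0$. Applying this with $W=\bd e_YB_{\geq X}$ shows that $\til H^{n-1}(\Delta(\bd e_YB_{\geq X});\Bbbk)\neq 0$ can occur only when $n-1<L_\Bbbk(\Delta(B))$, that is, $n\leq L_\Bbbk(\Delta(B))$. Taking the largest such $n$ and invoking Corollary~\ref{c:globaldimension} yields $\gldim \Bbbk B\leq L_\Bbbk(\Delta(B))$.

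Since the argument is essentially a translation, I do not anticipate a genuine obstacle; the only point requiring care is that $\Delta(\bd e_YB_{\geq X})$ is identified with an \emph{induced} subcomplex of $\Delta(B)$ and not merely with an arbitrary subcomplex, as the Leray number controls only induced subcomplexes. It is worth noting that the hypothesis that $B$ be connected enters solely through Corollary~\ref{c:globaldimension} (via the acyclicity of $\Delta(B_{\geq X})$ from Theorem~\ref{t:acyclicordercomplex} and the unitality of $\Bbbk B$ from Theorem~\ref{t:unital}), and is not needed again once that formula for the global dimension is available.
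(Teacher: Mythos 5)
Your proof is correct and follows exactly the paper's argument: invoke Corollary~\ref{c:globaldimension} and observe that $\Delta(\bd e_YB_{\geq X})$ is an induced subcomplex of $\Delta(B)$, so the definition of the $\Bbbk$-Leray number gives the bound. The extra care you take to justify that the subcomplex is induced (chains in a subposet are exactly chains of $B$ contained in that subset) is exactly the point the paper leaves implicit.
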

\begin{proof}
This is immediate from Corollary~\ref{c:globaldimension} as $\Delta(\bd e_YB_{\geq X})$ is an induced subcomplex of $\Delta(B)$ for $X<Y$ in $\Lambda(B)$ and $Y=Be_Y$.
\end{proof}

A graph $\Gamma$ is \emph{chordal}\index{chordal} if it contains no induced cycle on four or more vertices.  It is well known (see~\cite[Proposition~4.8]{oldpaper} for a proof, but we claim no originality) that a simplicial complex has $\Bbbk$-Leray number zero if and only if it is a simplex and has $\Bbbk$-Leray number at most one if and only if it is the clique complex of a chordal graph.

A left regular band monoid is called \emph{right hereditary}\index{right hereditary} if its Hasse diagram is a rooted tree.  For instance, the free left regular band monoid and  the left regular bands associated by Brown to matroids~\cite{Brown1} and by Bj\"orner~\cite{bjorner2} to interval greedoids are right hereditary.  Further examples, coming from Karnofsky-Rhodes expansions of semilattices, are given in~\cite{oldpaper}.  In~\cite[Theorem~4.9]{oldpaper}, we verified that if $B$ is a right hereditary left regular band, then $\Delta(B)$ is the clique complex of a chordal graph and hence has Leray number at most one.  From this and Corollary~\ref{c:leraybound}, we deduce the following result, which is~\cite[Theorem~4.9]{oldpaper}.

\begin{Cor}
Let $B$ be a right hereditary left regular band monoid.  Then $\Bbbk B$ is hereditary for any field $\Bbbk$.
\end{Cor}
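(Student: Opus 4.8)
The plan is to combine the topological input from~\cite[Theorem~4.9]{oldpaper} with the homological bound of Corollary~\ref{c:leraybound}, so that the corollary becomes a formal concatenation of already-established facts. First I would observe that since $B$ is a (finite) left regular band monoid, $\Bbbk B$ is a finite-dimensional unital $\Bbbk$-algebra and $B$ is connected (monoids are connected, as noted in the text). Hence Corollary~\ref{c:leraybound} applies and gives $\gldim \Bbbk B \leq L_\Bbbk(\Delta(B))$. Recalling that hereditary algebras are precisely those of global dimension at most one, it therefore suffices to prove that $L_\Bbbk(\Delta(B)) \leq 1$.

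The substantive ingredient is that $\Delta(B)$ is the clique complex of a chordal graph, which is exactly the content of~\cite[Theorem~4.9]{oldpaper}; I would import this. The underlying reason is that when the Hasse diagram of $B$ is a rooted tree, two elements of $B$ are comparable precisely when one lies on the unique branch joining the other to the root, so the comparability graph $\Gamma$ of the poset $B$ contains no induced cycle of length four or more, i.e.\ $\Gamma$ is chordal. Since an order complex is always a flag complex, $\Delta(B)$ coincides with the clique complex $\Cliq(\Gamma)$ of this comparability graph.

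Granting this, the stated characterization of Leray numbers (a simplicial complex has $\Bbbk$-Leray number at most one if and only if it is the clique complex of a chordal graph) yields $L_\Bbbk(\Delta(B)) \leq 1$. Feeding this into Corollary~\ref{c:leraybound} gives $\gldim \Bbbk B \leq L_\Bbbk(\Delta(B)) \leq 1$, and so $\Bbbk B$ is hereditary, as desired.

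The main obstacle is entirely concentrated in the chordality claim, that is, in~\cite[Theorem~4.9]{oldpaper}: once that is in hand, everything is purely formal. In particular, no case-by-case analysis of the induced subcomplexes $\Delta(\bd e_Y B_{\geq X})$ appearing in Corollary~\ref{c:globaldimension} is required, since the single global Leray bound already forces $\gldim \Bbbk B \leq 1$. This is precisely what makes the present topological route cleaner than Brown's original argument, which established the free case by computing the quiver and comparing dimensions of the path algebra with the cardinality of the monoid.
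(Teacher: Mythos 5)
Your proof is correct and follows essentially the same route as the paper: both cite \cite[Theorem~4.9]{oldpaper} for the fact that $\Delta(B)$ is the clique complex of a chordal graph, bound its $\Bbbk$-Leray number by $1$ via the stated characterization, and conclude with Corollary~\ref{c:leraybound}. The extra sketch you give of why the comparability graph of a tree-ordered poset is chordal is a reasonable gloss on the imported result but does not change the argument.
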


Theorem~\ref{t:extthm} and Proposition~\ref{p:topology} imply a tight connection between the homology of induced subcomplexes of clique complexes and the representation theory of free partially commutative left regular bands~\cite[Theorem~4.16]{oldpaper}.

\begin{Cor}\label{c:freepchom}
Let $\Gamma=(V,E)$ be a  finite simple graph, $\Bbbk$ a field and $B(\Gamma)$ the free partially commutative left regular band associated to $\Gamma$. Identifying $\Lambda(B(\Gamma))$ with the power set  of $V$, we have  \[\Ext^n_{\Bbbk B(\Gamma)}(\Bbbk_X,\Bbbk_Y)\cong \til{H}^{n-1}(\Cliq (\Gamma[X\setminus Y];\Bbbk))\]  for $X\supsetneq Y$ where $\Gamma[X\setminus Y]$ is the induced subgraph on the vertex set $X\setminus Y$.
\end{Cor}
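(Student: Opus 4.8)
The plan is to derive the corollary immediately from the general Ext computation in Theorem~\ref{t:extthm} together with the topological identification in Proposition~\ref{p:topology}(6); the only genuine work is to reconcile the two orderings in play. First I would note that $B(\Gamma)$ is a left regular band \emph{monoid}, hence connected, so Theorem~\ref{t:extthm} applies over any field $\Bbbk$. Next I would recall that the identification $\Lambda(B(\Gamma))\cong(P(V),\cup)$ carries the natural order on the support semilattice (inclusion of principal left ideals) to \emph{reverse} inclusion of subsets, since $F(V)u\subseteq F(V)v$ if and only if $c(v)\subseteq c(u)$. Thus, writing elements of $\Lambda(B(\Gamma))$ as subsets of $V$, one has $X\leq_\Lambda Y$ precisely when $Y\subseteq X$ as subsets, and $B(\Gamma)_{\geq X}=\{a\mid c(a)\subseteq X\}$. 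In particular the hypothesis $X\supsetneq Y$ of the corollary is exactly the condition $Y>_\Lambda X$ appearing in Theorem~\ref{t:extthm}.

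With this dictionary fixed, I would apply Theorem~\ref{t:extthm}: fixing $e_Y\in B(\Gamma)$ with support $Y$ and using $Y>_\Lambda X$, it yields
\[\Ext^n_{\Bbbk B(\Gamma)}(\Bbbk_X,\Bbbk_Y)\cong \til H^{n-1}(\Delta(\bd e_YB(\Gamma)_{\geq X});\Bbbk)\]
for all $n\geq 1$, while $\Ext^0=0$ since $X\neq Y$. The translated hypothesis $Y\subseteq X$ (as subsets) is exactly what is required by Proposition~\ref{p:topology}(6), which provides a homotopy equivalence $\|\Delta(\bd e_YB(\Gamma)_{\geq X})\|\simeq\|\Cliq(\Gamma[X\setminus Y])\|$. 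Since reduced cohomology is a homotopy invariant (and simplicial cohomology agrees with singular cohomology of the geometric realization), the two right-hand sides are isomorphic, giving $\Ext^n_{\Bbbk B(\Gamma)}(\Bbbk_X,\Bbbk_Y)\cong\til H^{n-1}(\Cliq(\Gamma[X\setminus Y]);\Bbbk)$ for $n\geq 1$.

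Finally I would dispose of the degree-zero case against the standard convention for $\til H^{-1}$. Because $X\supsetneq Y$, the set $X\setminus Y$ is non-empty, so $\Cliq(\Gamma[X\setminus Y])$ is a non-empty complex and $\til H^{-1}(\Cliq(\Gamma[X\setminus Y]);\Bbbk)=0$, matching $\Ext^0=0$; hence the formula holds for all $n$. The only point requiring real care is the bookkeeping of the two opposite orderings: one must confirm that the support subset of $e_Y$, the contraction index $X$, and the set difference $X\setminus Y$ are each matched correctly between the reverse-inclusion order on $\Lambda(B(\Gamma))$ and the inclusion order used in Proposition~\ref{p:topology}(6). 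Once the correspondences $X\supsetneq Y\Leftrightarrow Y>_\Lambda X$ and $B(\Gamma)_{\geq X}=\{a\mid c(a)\subseteq X\}$ are in place, everything lines up, and no further analytic difficulty remains since both the homological and the topological inputs are already established.
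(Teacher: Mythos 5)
Your proposal is correct and follows exactly the route the paper intends: the corollary is stated there as an immediate consequence of Theorem~\ref{t:extthm} combined with Proposition~\ref{p:topology}(6), with no further argument given. The only content beyond citing those two results is the order-reversal bookkeeping (reverse inclusion on $\Lambda(B(\Gamma))\cong P(V)$, so $X\supsetneq Y$ corresponds to $Y>_\Lambda X$) and the vacuous $n=0$ check, both of which you handle correctly.
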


Inspection of Corollary~\ref{c:freepchom} shows that the $\Bbbk$-Leray number the clique complex $\Cliq(\Gamma)$ is exactly the global dimension of $\Bbbk B(\Gamma)$.  This result first appeared as~\cite[Corollary~4.17]{oldpaper} and gives an interpretation of the Leray number in terms of non-commutative algebra.  The Leray number is usually given a commutative algebra interpretation as the Castelnuovo-Mumford regularity of the Stanley-Reisner ring of $\Cliq(\Gamma)$.

\begin{Cor}\label{Lerayofclique}
Let $\Gamma$ be a finite simple graph and $\Bbbk$ a field.  Denoting by $B(\Gamma)$ the free partially commutative left regular band associated to $\Gamma$, we have that the global dimension of $\Bbbk B(\Gamma)$ is precisely the $\Bbbk$-Leray number of the clique complex of $\Gamma$.
\end{Cor}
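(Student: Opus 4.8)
The plan is to read the global dimension directly off the $\Ext$-computation of Corollary~\ref{c:freepchom} and then identify the resulting numerical invariant with the Leray number by matching definitions. First I would record that $B(\Gamma)$ is a left regular band monoid, hence connected, so that $\Bbbk B(\Gamma)$ is a finite dimensional split basic $\Bbbk$-algebra whose simple modules are exactly the $\Bbbk_X$ indexed by $X\in\Lambda(B(\Gamma))\cong P(V)$ (Corollary~\ref{c:simples}). By the characterization of global dimension recalled in Section~\ref{s:algebraprelim}, $\gldim\Bbbk B(\Gamma)$ is then the largest $n$ for which $\Ext^n_{\Bbbk B(\Gamma)}(\Bbbk_X,\Bbbk_Y)\neq 0$ for some $X,Y\in\Lambda(B(\Gamma))$.

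Next I would invoke Corollary~\ref{c:freepchom}, which gives $\Ext^n_{\Bbbk B(\Gamma)}(\Bbbk_X,\Bbbk_Y)\cong\til H^{n-1}(\Cliq(\Gamma[X\setminus Y]);\Bbbk)$ whenever $X\supsetneq Y$; together with Theorem~\ref{t:extthm} this accounts for every nonzero $\Ext$-space apart from the degree-zero terms $\Ext^0(\Bbbk_X,\Bbbk_X)\cong\Bbbk$, which only force $\gldim\geq 0$. The combinatorial heart of the matter is the observation that, as $(X,Y)$ ranges over all pairs of subsets of $V$ with $X\supsetneq Y$, the difference $X\setminus Y$ ranges over exactly the nonempty subsets $W\subseteq V$ (any nonempty $W$ is realized by $W=W\setminus\varnothing$), and that the induced subcomplex $\Cliq(\Gamma)[W]$ coincides with $\Cliq(\Gamma[W])$, since a subset of $W$ is a clique of $\Gamma$ if and only if it is a clique of $\Gamma[W]$. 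These two facts yield
\[
\gldim\Bbbk B(\Gamma)=\max\{\,n\geq 0 \mid \til H^{n-1}(\Cliq(\Gamma)[W];\Bbbk)\neq 0 \text{ for some } W\subseteq V\,\}.
\]

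Finally I would match this quantity with the definition of $L_{\Bbbk}(\Cliq(\Gamma))$. Writing $M$ for the largest degree $i\geq 0$ in which some induced subcomplex $\Cliq(\Gamma)[W]$ has nonvanishing reduced cohomology—and noting that for nonempty $W$ the complex $\Cliq(\Gamma)[W]$ is nonempty, so the degree $-1$ convention never enters—the definition of the Leray number gives $L_{\Bbbk}(\Cliq(\Gamma))=M+1$, while the displayed formula gives $\gldim\Bbbk B(\Gamma)=M+1$ as well. In the degenerate case where no induced subcomplex has nonvanishing reduced cohomology in nonnegative degrees, which happens precisely when $\Gamma$ is complete and $\Cliq(\Gamma)$ is a simplex, both invariants equal $0$. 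The only genuinely delicate points here are this degree shift between $\Ext$ and reduced cohomology and the bookkeeping for the empty-set and complete-graph edge cases; once these conventions are fixed the equality $\gldim\Bbbk B(\Gamma)=L_{\Bbbk}(\Cliq(\Gamma))$ is immediate, which is exactly the identity foreshadowed in the discussion following Corollary~\ref{c:freepchom}.
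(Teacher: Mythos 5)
Your argument is correct and is exactly the route the paper takes: the paper's ``proof'' of Corollary~\ref{Lerayofclique} is the single sentence preceding it, asserting that the identity follows by inspection of Corollary~\ref{c:freepchom}, and your proposal simply carries out that inspection — using the $\Ext$-characterization of global dimension, the identification $\Cliq(\Gamma)[W]=\Cliq(\Gamma[W])$ as $W=X\setminus Y$ ranges over nonempty subsets of $V$, and the degree shift between $\Ext^n$ and $\til H^{n-1}$. Your handling of the edge cases (the degree $-1$ convention and the complete-graph case where both invariants vanish) is accurate and slightly more explicit than the paper.
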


Next we turn to the global dimension of connected CW left regular bands.  This is a new result, which encompasses several results from~\cite{oldpaper} (where only monoids were considered).

\begin{Cor}\label{c:cwlrb}
Let $B$ be a connected left regular band and $X\in \Lambda(B)$ such that $B_{\geq X}$ is a CW poset. (Note that $\Lambda(B)_{\geq X}$ is then a graded poset by Proposition~\ref{p:supportiscellular} and Corollary~\ref{c:preservesgraded}.)
Then, for $X,Y\in \Lambda(B)$, we have that
\begin{equation}\label{extcomputationCW}
\dim_\Bbbk \Ext^q_{\Bbbk B}(\Bbbk_X,\Bbbk_Y) = \begin{cases} 1, & \text{if}\ X\leq Y\ \text{and}\ \rk[X,Y]=q\\ 0, & \text{else.}\end{cases}
\end{equation}

In particular, if $B$ is a connected CW left regular band, then \eqref{extcomputationCW} holds for all $X,Y\in \Lambda(B)$ and consequently\[\gldim \Bbbk B=\dim \CW (B)\] holds.
\end{Cor}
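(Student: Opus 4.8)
The plan is to deduce the global dimension formula from the $\Ext$ computation \eqref{extcomputationCW} together with the rank-preserving properties of the support map. First I would observe that since $B$ is a connected CW left regular band, by definition every contraction $B_{\geq X}$ with $X\in\Lambda(B)$ is a CW poset; applying the first part of the corollary to each such $X$ immediately gives that \eqref{extcomputationCW} holds for all pairs $X,Y\in\Lambda(B)$. In particular, taking $X$ to be the minimum $\wh 0$ of the finite meet-semilattice $\Lambda(B)$ yields $B_{\geq\wh 0}=B$, so $B$ is itself a CW poset and $\CW(B)$ is defined, with $\dim\CW(B)=\dim\Delta(B)$.

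Next, assuming $\Bbbk$ is a field, I would invoke the standard fact that for a finite dimensional algebra $A$ one has $\gldim A=\max\{n: \Ext^n_A(S,S')\neq 0\ \text{for some simple}\ S,S'\}$ (see~\cite[Theorem~A.4.8]{assem}). Since $B$ is connected, $\Bbbk B$ is unital by Theorem~\ref{t:unital}, and the simple $\Bbbk B$-modules are exactly the $\Bbbk_X$ with $X\in\Lambda(B)$ by Corollary~\ref{c:simples}. Feeding \eqref{extcomputationCW} into this characterization, $\Ext^q_{\Bbbk B}(\Bbbk_X,\Bbbk_Y)\neq 0$ precisely when $X\leq Y$ and $\rk[X,Y]=q$. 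Hence $\gldim\Bbbk B$ equals the maximal value of $\rk[X,Y]$ over intervals $[X,Y]$ of $\Lambda(B)$, which is the length $\dim\Delta(\Lambda(B))$ of a longest chain in $\Lambda(B)$.

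It then remains to prove $\dim\Delta(\Lambda(B))=\dim\CW(B)=\dim\Delta(B)$. Here I would use that the support map $\sigma\colon B\to\Lambda(B)$ is surjective, cellular, and strictly order-preserving by Proposition~\ref{p:supportiscellular}; since $B$ is graded (being a CW poset), Corollary~\ref{c:preservesgraded} then applies and shows that $\Lambda(B)$ is graded and that $\sigma$ preserves ranks of intervals. For the inequality $\dim\Delta(\Lambda(B))\geq\dim\Delta(B)$, a longest chain in $B$ maps under $\sigma$ to a chain of the same length in $\Lambda(B)$, because $\sigma$ is strictly order-preserving. For the reverse inequality, I would take a longest chain $X_0<\cdots<X_\ell$ of $\Lambda(B)$, choose a preimage $a\in B$ of $X_\ell$ (using surjectivity), and lift the chain to one of equal length in $B$ via the chain-lifting property of cellular maps (Lemma~\ref{l:chainlifting}). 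Combining the two inequalities yields the desired equality, whence $\gldim\Bbbk B=\dim\CW(B)$.

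The computation is essentially routine once \eqref{extcomputationCW} is available; the only mildly delicate point is the final paragraph, where one must translate the poset-theoretic quantity $\max_{X\leq Y}\rk[X,Y]$ coming out of the $\Ext$ groups into the cell-dimension $\dim\CW(B)$. This is exactly what the rank-preservation of the support map (Corollary~\ref{c:preservesgraded}) and chain lifting (Lemma~\ref{l:chainlifting}) are designed to supply, so I do not anticipate a genuine obstacle here.
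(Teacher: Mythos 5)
Your proposal never proves the main assertion of the corollary, namely equation \eqref{extcomputationCW} itself. You open by saying you will "deduce the global dimension formula from the $\Ext$ computation \eqref{extcomputationCW}," and you later remark that "the computation is essentially routine once \eqref{extcomputationCW} is available" — but \eqref{extcomputationCW} is precisely the statement you are being asked to prove, and treating "the first part of the corollary" as an available ingredient is circular. Everything you do establish (the reduction of the "in particular" clause to the general case via contractions, the identification of $\gldim \Bbbk B$ with $\max_{X\leq Y}\rk[X,Y]$ using the characterization of global dimension by $\Ext$ between simples, and the equality $\dim\Delta(\Lambda(B))=\dim\Delta(B)=\dim\CW(B)$ via the rank-preserving cellular surjection $\sigma$) is correct and matches what the paper does for its final sentence, but it is the easy half.

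The missing argument is short but is the actual content. Fix $X$ with $B_{\geq X}$ a CW poset and let $Y\geq X$ with $Y=Be_Y$. Theorem~\ref{t:extthm} gives $\Ext^0_{\Bbbk B}(\Bbbk_X,\Bbbk_X)=\Bbbk$, $\Ext^n_{\Bbbk B}(\Bbbk_X,\Bbbk_Y)=\til H^{n-1}(\Delta(\bd e_YB_{\geq X});\Bbbk)$ for $Y>X$ and $n\geq 1$, and $0$ otherwise. Because $B_{\geq X}$ is a CW poset and $e_Y$ is a cell of dimension $\rk[X,Y]$ (here is where Proposition~\ref{p:supportiscellular} and Corollary~\ref{c:preservesgraded} are used, to identify $\dim e_Y$ in $B_{\geq X}$ with $\rk[X,Y]$ in $\Lambda(B)$), the space $\|\Delta(\bd e_YB_{\geq X})\|$ is a sphere of dimension $\rk[X,Y]-1$ (with the convention $S^{-1}=\emptyset$). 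Its reduced cohomology is one-dimensional in degree $\rk[X,Y]-1$ and vanishes elsewhere, which is exactly \eqref{extcomputationCW}. Without this step your proof establishes only that the global dimension formula follows from the $\Ext$ formula, not the $\Ext$ formula itself.
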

\begin{proof}
Suppose that $X\leq Y$. As $\|\Delta(\bd e_YB_{\geq X})\|$ is a sphere of dimension $\dim \Delta(e_YB_{\geq X})-1=\rk[X,Y]-1$ (where $S^{-1}=\emptyset$), we deduce~\eqref{extcomputationCW} from Theorem~\ref{t:extthm}.  Since the longest chain in $\Lambda(B)$ is of length $\dim \CW(B)$, the statement on global dimension for connected CW left regular bands is immediate from \eqref{extcomputationCW}.
\end{proof}

Corollary~\ref{c:cwlrb} covers the cases of face monoids of central hyperplane arrangements~\cite{Saliolahyperplane,oldpaper} and face monoids of complex hyperplane arrangements~\cite{oldpaper}.  The principal new examples in this paper are the semigroups of covectors of COMs, including the face semigroup associated to a $T$-convex set of topes, the face semigroup of an affine hyperplane arrangement, the semigroup associated to an affine oriented matroid and the face semigroup of a CAT(0) cube complex.  Let us record a few special cases of Corollary~\ref{c:cwlrb} in the following corollary.

\begin{Cor}\label{c:special.cwlrbs}
Let $\Bbbk$ be a field.
\begin{enumerate}
\item If $(E,\mathcal L)$ is a COM, then $\gldim \Bbbk \mathcal L=d$ where $d$ is the dimension of the regular CW complex with face poset $\mathcal L$.
\item If $\mathcal A$ is an essential central hyperplane arrangement in $\mathbb R^d$ with face monoid $\FFF(\mathcal A)$, then $\gldim \Bbbk\FFF(\mathcal A)=d$.
\item  Suppose that $\mathcal A$ is an essential affine hyperplane arrangement in $\mathbb R^d$. Then $\gldim \Bbbk \FFF(\mathcal A) = d$.
\item If $\mathcal A$ is an essential complex hyperplane arrangement in $\mathbb C^d$, then $\gldim \Bbbk \FFF(\mathcal A)=2d$.
\item If $(E,\mathcal L)$ is an oriented matroid, then $\gldim \Bbbk \mathcal L$ is the rank of $(E,\mathcal L)$.
\item If $(E,\mathcal L,g)$ is an affine oriented matroid with the rank of $(E,\mathcal L)$ equal to $d+1$, then $\gldim \Bbbk\mathcal L^+(g)=d$.
\item if $K$ is a $d$-dimensional finite CAT(0) cube complex with face semigroup $\FFF(K)$, then $\gldim \Bbbk\FFF(K)=d$.
\end{enumerate}
\end{Cor}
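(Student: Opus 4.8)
The plan is to obtain every one of the seven equalities as a special case of Corollary~\ref{c:cwlrb}, which says that a connected CW left regular band $B$ has $\gldim \Bbbk B = \dim \CW(B)$, with $\dim \CW(B)$ equal to the length of a longest chain in the support semilattice $\Lambda(B)$. So for each family I would first verify that the left regular band is both connected and a CW left regular band, and then compute the dimension of the associated regular CW complex. Once the hypotheses of Corollary~\ref{c:cwlrb} are checked, every statement becomes a dimension count.

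Connectedness and the CW property are already available from the earlier sections. The monoids $\FFF(\mathcal A)$ of a central real arrangement (2), of a complex arrangement (4), and the covector monoid $\mathcal L$ of an oriented matroid (5) are connected because they are monoids, and they are CW left regular bands by Proposition~\ref{p:topology}, items (1), (4) and (1) respectively. For a COM $(E,\mathcal L)$, connectedness follows from Proposition~\ref{p:se.conn} (every COM is a strong elimination system) and the CW property from Proposition~\ref{p:topology}(3); this simultaneously handles (1), the essential affine real arrangement (3) and the affine oriented matroid (6), since a face semigroup of an affine arrangement is a realizable COM and $\mathcal L^+(g)$ is also covered by Proposition~\ref{p:topology}(2). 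Finally, for a finite CAT(0) cube complex $K$, Theorem~\ref{embedinhypercube} shows directly that $\FFF(K)$ is a connected CW left regular band with $\FFF(K)\cong \mathcal P(K)$.

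For the dimension counts in the non-affine cases I would argue as follows. In (1) the number $d$ is by definition $\dim \CW(\mathcal L)$, so there is nothing to compute. In (5) the support lattice $\Lambda(\mathcal L)$ is the geometric lattice of flats of the underlying matroid, whose longest chain has length equal to the rank, giving $\gldim \Bbbk\mathcal L = \rk(E,\mathcal L)$; restricting to a realizable oriented matroid from an essential central arrangement in $\mathbb R^d$ yields (2), because $\Lambda(\FFF(\mathcal A))$ is then the intersection lattice $\mathcal L(\mathcal A)$ whose rank is the codimension $d$ of the common intersection. For (4) I would invoke the fact, recorded earlier, that the augmented intersection lattice $\LLL(\mathcal A)$ of an essential complex arrangement in $\mathbb C^d$ is a semimodular lattice of length $2d$, so $\dim \CW(\FFF(\mathcal A)) = 2d$. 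And (7) is immediate, since $\dim \CW(\FFF(K)) = \dim \mathcal P(K) = \dim K = d$.

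The affine statements (3) and (6) require matching a combinatorial rank against a geometric dimension, and this is the step I expect to be the main obstacle. For (3) the intersection semilattice of an essential affine arrangement in $\mathbb R^d$ must be shown to have a longest chain of length exactly $d$: essentiality produces a vertex (a zero-dimensional flat), giving a chain of length $d$ from the ambient space down to a point, and no flat can have negative dimension, so the length is exactly $d$. For (6) I would use that an oriented matroid of rank $d+1$ has $\mathcal L\setminus\{0\}$ isomorphic to the face poset of a regular CW decomposition of a $d$-sphere; the positive half $\mathcal L^+(g)$ is by Proposition~\ref{p:topology}(2) the face poset of a regular CW decomposition of a ball, and this ball is $d$-dimensional, whence $\gldim \Bbbk\mathcal L^+(g) = d$. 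This is precisely the oriented-matroid analogue of (3), obtained from a central arrangement of rank $d+1$ one dimension higher. Once the correspondence between the length of $\Lambda(B)$ and the dimension of the enveloping ball or sphere is pinned down in these two affine cases, the remaining cases close by direct citation.
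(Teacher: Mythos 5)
Your proposal is correct and takes essentially the same route as the paper: the paper gives no explicit proof, simply recording all seven items as special cases of Corollary~\ref{c:cwlrb}, with the connectedness and CW hypotheses supplied by Propositions~\ref{p:se.conn} and~\ref{p:topology} and Theorem~\ref{embedinhypercube}, exactly as you do. Your dimension counts (geometric lattice rank for (2) and (5), length $2d$ of the augmented intersection lattice for (4), the $d$-ball for (3) and (6), and $\FFF(K)\cong\mathcal P(K)$ for (7)) are the intended ones.
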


The reader should refer back to Section~\ref{ss:geom.regular} for the terminology in the next theorem.

\begin{Thm}
Let $B$ be a connected geometric left regular band and $X\in \Lambda(B)$. Let $\Delta_X$ be the simplicial complex constructed in Section~\ref{ss:geom.regular}. Then the augmented simplicial chain complex $C_\bullet(\Delta_X;\Bbbk)\to \Bbbk_X$ is a finite projective resolution by finitely generated $\Bbbk B$-modules. Moreover,  $C_0(\Delta_X;\Bbbk)\to \Bbbk_X$ is the projective cover of $\Bbbk_X$. In particular, the global dimension of $B$ is bounded above by one less than the largest size of an $\mathscr L$-class of $B$.
\end{Thm}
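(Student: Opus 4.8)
The plan is to obtain the theorem as an application of Corollary~\ref{c:othersimples}, together with the facts assembled in the discussion preceding the statement. First I would note that $\Delta_X$, being a simplicial complex, is a regular CW complex whose face poset $\mathcal P(\Delta_X)$ is a CW poset, and that $\CW(\mathcal P(\Delta_X))$ is canonically identified with $\Delta_X$; under this identification the cellular chain complex coincides with the simplicial chain complex $C_\bullet(\Delta_X;\Bbbk)$. The preamble already shows that $\Delta_X$ is acyclic (by the dual cross-cut theorem, Theorem~\ref{t:crosscut}, together with Theorem~\ref{t:acyclicordercomplex}) and that the action $B_{\geq X}\curvearrowright\Delta_X$ is semi-free: an idempotent stabilizing a simplex fixes each of its vertices, so the stabilizer of $\tau=\{b_0,\dots,b_q\}$ equals $\bigcap_i\Stab(b_i)$, which is a non-empty finite commutative band and hence a semilattice with a minimum. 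Corollary~\ref{c:othersimples} then applies directly and yields that $C_\bullet(\Delta_X;\Bbbk)\xrightarrow{\,\,\varepsilon\,\,}\Bbbk_X$ is a finite projective resolution of $\Bbbk_X$ by finitely generated $\Bbbk B$-modules, which is the first assertion.

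Next I would identify the bottom term in order to recover the projective cover statement. The $0$-cells of $\Delta_X$ are exactly the elements of $L_X$, and for $b\in L_X$ the relation $ab=b$ is equivalent to $b\le a$, so $b$ is itself the minimum of $\Stab(b)$. Hence in the Sch\"utzenberger decomposition furnished by Corollary~\ref{c:othersimples} only the class $L_X$ contributes in degree $0$, and $C_0(\Delta_X;\Bbbk)\cong\Bbbk L_X$ as $\Bbbk B$-modules; indeed the chain-level action $b\cdot v=bv$ when $\sigma(b)\ge X$ and $b\cdot v=0$ otherwise is precisely the Sch\"utzenberger action of Section~\ref{ss:Schutz}. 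Under this isomorphism the augmentation $\varepsilon$ becomes the map $v\mapsto 1$, which is the projective cover $\Bbbk L_X\to\Bbbk_X$ by Corollary~\ref{c:simples}. This proves the second assertion.

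Finally, for the global-dimension bound over a field $\Bbbk$, I would invoke the standard fact that the global dimension of a finite dimensional algebra is the maximum of the projective dimensions of its simple modules~\cite[Theorem~A.4.8]{assem}; here $\Bbbk B$ is unital by Theorem~\ref{t:unital} and its simple modules are exactly the $\Bbbk_X$ with $X\in\Lambda(B)$ by Corollary~\ref{c:simples}. Since $C_\bullet(\Delta_X;\Bbbk)$ is concentrated in degrees $0\le q\le\dim\Delta_X$, the projective dimension of $\Bbbk_X$ is at most $\dim\Delta_X$, and as every simplex of $\Delta_X$ has all its vertices in $L_X$ we get $\dim\Delta_X\le |L_X|-1\le\bigl(\max_{Y\in\Lambda(B)}|L_Y|\bigr)-1$. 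Taking the maximum over $X$ gives $\gldim\Bbbk B\le\bigl(\max_Y|L_Y|\bigr)-1$, i.e.\ one less than the largest $\mathscr L$-class. The argument is mostly bookkeeping once Corollary~\ref{c:othersimples} is available; the only subtle point---and the sole place where geometricity is genuinely used---is that each simplex stabilizer is commutative, hence a semilattice with a minimum, so that the action is semi-free and the corollary applies. This verification is exactly the content of the discussion preceding the theorem.
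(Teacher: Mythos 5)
Your proposal is correct and follows essentially the same route as the paper: both rest on the observations made in the preamble that $\Delta_X$ is acyclic and that the $B_{\geq X}$-action is semi-free (each simplex stabilizer being a non-empty commutative band, hence a semilattice with a minimum), and then invoke the general semi-free projective resolution result (Theorem~\ref{t:projectiveresgeneral}, packaged for you as Corollary~\ref{c:othersimples}), identify $C_0(\Delta_X;\Bbbk)\cong\Bbbk L_X$ for the projective cover, and bound the global dimension by $\dim\Delta_X\leq |L_X|-1$. Your additional bookkeeping (the idempotent-permutation argument for stabilizers and the degree-$0$ multiplicity count) just makes explicit what the paper leaves to the preceding discussion.
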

\begin{proof}
Since $B_{\geq X}\curvearrowright \Delta_X$ is semi-free and $\Delta_X$ is acyclic, the first statement follows from Theorem~\ref{t:projectiveresgeneral}.  The second statement follows because $C_0(\Delta_X;\Bbbk)\cong \Bbbk L_X$.  The final statement is a consequence of the inequality $\dim \Delta_X\leq |L_X|-1$.
\end{proof}

\subsection{Minimal projective resolutions}
\nomenclature[R, 10]{$P_\bullet \to M$}{projective resultion of the module $M$}%
Assume for this section that $\Bbbk$ is a field. Let $A$ be a finite dimensional $\Bbbk$-algebra and let $M$ be a finite dimensional $A$-module.  A projective resolution $P_\bullet\to M$ is said to be \emph{minimal}\index{minimal!projective resolution}\index{resolution!minimal projective} if, for each $q>0$, one has $d(P_q)\subseteq \rad(P_{q-1})$ or, equivalently, $d\colon P_{q-1}\to d(P_{q-1})$ is a projective cover. Every finite dimensional module admits a minimal projective resolution, which is unique up to isomorphism (cf.~Proposition~\ref{minresolution} below).  The following well-known proposition is stated in the context of group algebras in~\cite[Proposition~3.2.3]{cohomologyringbook}, but the proof given there is valid for any finite dimensional algebra.  We shall sketch the proof of the equivalence of (1)--(3) for the reader's convenience, as these are the only parts we shall need.

\begin{Prop}\label{minresolution}
Let $A$ be a finite dimensional $\Bbbk$-algebra, $M$ a finite dimensional $A$-module and $P_\bullet\to M$ a projective resolution.  Then the following are equivalent.
\begin{enumerate}
\item $P_\bullet\to M$ is a minimal resolution of $M$.
\item $\Hom_A(P_q,S)\cong\Ext_A^q(M,S)$ for any $q\geq 0$ and simple $A$-module $S$.
\item The coboundary map $d^*\colon \Hom_A(P_q,S)\to \Hom_A(P_{q+1},S)$ is the zero map for all simple $A$-modules $S$ and $q\geq 0$.
\item If $Q_\bullet\to M$ is a projective resolution of $M$, then the chain map $Q_\bullet\to P_\bullet$ lifting the identity map on $M$ is surjective in each degree $q\geq 0$.
\item If $Q_\bullet\to M$ is a projective resolution of $M$, then the chain map $P_\bullet\to Q_\bullet$ lifting the identity map on $M$ is injective in each degree $q\geq 0$.
\end{enumerate}
\end{Prop}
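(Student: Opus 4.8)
The plan is to prove the chain of equivalences by first disposing of the three ``intrinsic'' conditions (1), (2), (3), which refer only to $P_\bullet$, and then relating them to the two ``comparison'' conditions (4), (5) through a single lemma about self-maps of a minimal resolution. For the intrinsic part, I would first record the standard facts (already used in the excerpt) that $\rad(P)=\rad(A)P$ for a projective module $P$ and that $\rad(P)$ is the intersection of the kernels of all homomorphisms $P\to S$ with $S$ simple. Since the coboundary $d^*$ in (3) is precomposition with the differential $d_{q+1}\colon P_{q+1}\to P_q$, the map $d^*\colon\Hom_A(P_q,S)\to\Hom_A(P_{q+1},S)$ vanishes for every simple $S$ exactly when $d_{q+1}(P_{q+1})$ lies in every such kernel, i.e. when $d_{q+1}(P_{q+1})\subseteq\rad(P_q)$; ranging over $q$ this is precisely the defining condition (1), giving (1)$\Leftrightarrow$(3). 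For (2)$\Leftrightarrow$(3) I would argue by a dimension count: $\Ext^q_A(M,S)$ is the cohomology of $\Hom_A(P_\bullet,S)$, so $\dim_\Bbbk\Ext^q_A(M,S)=\dim_\Bbbk\Hom_A(P_q,S)$ for all $q$ and all simple $S$ if and only if the image of every coboundary $d^*$ is zero, which is (3).

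The heart of the argument is a lemma I would isolate: if $P_\bullet$ is minimal and $h\colon P_\bullet\to P_\bullet$ is any chain map lifting $\mathrm{id}_M$, then each $h_q$ is an isomorphism. To prove it, I would choose a chain homotopy $s$ with $h_q-\mathrm{id}_{P_q}=d_{q+1}s_q+s_{q-1}d_q$ and reduce modulo the radical. By minimality each differential has image in the radical, so $d_{q+1}s_q$ vanishes modulo $\rad(P_q)$; and since $s_{q-1}$ is $A$-linear it carries $d_q(P_q)\subseteq\rad(P_{q-1})$ into $\rad(P_q)$, so $s_{q-1}d_q$ vanishes there as well. Hence the induced endomorphism $\overline{h_q}$ of $P_q/\rad(P_q)$ is the identity, and Nakayama's lemma together with finite-dimensionality forces $h_q$ to be an isomorphism.

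With the lemma in hand, (1)$\Rightarrow$(4) and (1)$\Rightarrow$(5) follow at once: given any resolution $Q_\bullet\to M$, comparison chain maps $f\colon P_\bullet\to Q_\bullet$ and $g\colon Q_\bullet\to P_\bullet$ lifting $\mathrm{id}_M$ exist, and $gf$ is a self-lift of $\mathrm{id}_M$ on the minimal $P_\bullet$, hence a degreewise isomorphism by the lemma; therefore each $f_q$ is split injective and each $g_q$ split surjective, which is exactly (5) and (4). For the converses I would invoke the existence of a minimal resolution $P'_\bullet$ of $M$ (standard, via iterated projective covers) and the universal lower bound $\dim_\Bbbk P_q\geq\dim_\Bbbk P'_q$ valid for every resolution $P_\bullet$, which itself follows from (1)$\Rightarrow$(4) applied to $P'_\bullet$. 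Then (4) (resp. (5)) for $P_\bullet$, taken with $Q_\bullet=P'_\bullet$, makes the comparison map between $P_\bullet$ and $P'_\bullet$ a degreewise surjective (resp. injective) map between spaces of equal dimension, hence a degreewise isomorphism of complexes, so $P_\bullet\cong P'_\bullet$ is minimal. I expect the only genuinely delicate point to be the mod-radical computation in the lemma, in particular checking that both homotopy terms die modulo the radical, together with the mild care needed because comparison maps are unique only up to homotopy; the latter is harmless precisely because every self-lift of $\mathrm{id}_M$ reduces to the identity modulo the radical, so the conclusion is independent of all choices.
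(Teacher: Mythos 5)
Your proof is correct and is essentially the standard argument (the equivalence of (1)--(3) via $\rad(P_q)=\bigcap\ker(P_q\to S)$ plus a dimension count, and the key lemma that any self-lift of $\mathrm{id}_M$ over a minimal resolution is a degreewise isomorphism, proved by reducing the homotopy identity modulo the radical and applying Nakayama). The paper itself gives no proof, deferring to \cite[Proposition~3.2.3]{cohomologyringbook}, and your argument matches that standard treatment, including the correct handling of the non-uniqueness of comparison maps.
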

\begin{proof}
The equivalence of (1) and (3) follows because $d(P_q)\subseteq \rad(P_{q-1})$ if and only if  $fd=0$ for all homomorphisms $f\colon P_{q-1}\to S$ with $S$ simple, i.e., if and only if $d^*(f)=0$ for all $f\in \Hom_A(P_{q-1},S)$ with $S$ simple.  Trivially, (3) implies (2) as $\Ext_A^\bullet(M,S)$ is the cohomology of the co-chain complex $\Hom_A(P_\bullet,S)$.  Conversely, (2) implies (3) because if $d^*\colon \Hom_A(P_q,S)\to \Hom_A(P_{q+1},S)$ is non-zero, then we have $\dim \Ext_A^q(M,S)\leq \dim \ker d^*<\dim \Hom_A(P_q,S)$.
\end{proof}

The last two items of the proposition explain why minimal resolutions are called ``minimal.'' The second item is a convenient way to test if a resolution is minimal.  In the case that $A$ is split over $\Bbbk$, it can be reformulated as saying that the number of occurrences of the projective cover of a simple module $S$ as a direct summand in $P_q$ is $\dim_\Bbbk \Ext_A^q(M,S)$. Indeed, $\Hom_A(P_q,S)=\Hom_{A/\rad(A)}(P_q/\rad(P_q),S)$ and so the claim follows from Schur's lemma because $S$ is absolutely irreducible.  We shall make use of this formulation of (2).

\begin{Thm}\label{minresoftrivial}
Let $B$ be a connected left regular band, $X\in \Lambda(B)$ and $\Bbbk$ be a field. Suppose that $B_{\geq X}$ is a  CW poset.  Then the augmented cellular chain complex $C_\bullet(\CW(B_{\geq X});\Bbbk)\to \Bbbk_X$ is the minimal projective resolution of $\Bbbk_{X}$.
\end{Thm}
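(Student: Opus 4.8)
The plan is to verify minimality directly from the cohomological test in Proposition~\ref{minresolution}(2), using its reformulation for split basic algebras. First I would record that the hypotheses place us in a very favorable situation: since $B$ is connected, $\Bbbk B$ is unital by Theorem~\ref{t:unital}, and since $\Bbbk B/\rad(\Bbbk B)\cong \Bbbk\Lambda(B)\cong \Bbbk^{\Lambda(B)}$ by Corollary~\ref{c:jac.radical} together with Theorem~\ref{t:solomon}, the algebra $\Bbbk B$ is split basic over $\Bbbk$. Hence every simple module is one-dimensional and absolutely irreducible, so the minimality criterion takes the following form: a projective resolution $P_\bullet\to \Bbbk_X$ is minimal if and only if, for every $Y\in\Lambda(B)$ and every $q\geq 0$, the number of direct summands of $P_q$ isomorphic to the projective cover of $\Bbbk_Y$ equals $\dim_\Bbbk\Ext^q_{\Bbbk B}(\Bbbk_X,\Bbbk_Y)$.

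The two inputs this requires are already available. On the one hand, by Corollary~\ref{c:othersimplesCWcontraction} the complex $C_\bullet(\CW(B_{\geq X});\Bbbk)\to\Bbbk_X$ is a finite projective resolution whose $q$-th term decomposes as $C_q(\CW(B_{\geq X});\Bbbk)\cong\bigoplus_{\rk[X,Y]=q}\Bbbk L_Y$. Since $\Bbbk L_Y$ is precisely the projective cover of $\Bbbk_Y$ (Theorem~\ref{Schutz} and Corollary~\ref{c:simples}), this says that the multiplicity of the projective cover of $\Bbbk_Y$ in $C_q(\CW(B_{\geq X});\Bbbk)$ equals $1$ when $Y\geq X$ and $\rk[X,Y]=q$, and $0$ otherwise. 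On the other hand, because $B_{\geq X}$ is a CW poset, Corollary~\ref{c:cwlrb} gives $\dim_\Bbbk\Ext^q_{\Bbbk B}(\Bbbk_X,\Bbbk_Y)=1$ exactly when $X\leq Y$ and $\rk[X,Y]=q$, and $0$ otherwise.

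Comparing these two expressions, I would observe that they agree term by term: for each $Y\in\Lambda(B)$ and each $q\geq 0$, both the multiplicity of the projective cover of $\Bbbk_Y$ in $C_q(\CW(B_{\geq X});\Bbbk)$ and the integer $\dim_\Bbbk\Ext^q_{\Bbbk B}(\Bbbk_X,\Bbbk_Y)$ vanish unless $Y\geq X$ and $\rk[X,Y]=q$, in which case both equal $1$. By the split form of the minimality criterion this is exactly the condition for $C_\bullet(\CW(B_{\geq X});\Bbbk)\to\Bbbk_X$ to be the minimal projective resolution of $\Bbbk_X$, completing the argument.

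Since essentially all of the real work has been carried out in Corollaries~\ref{c:othersimplesCWcontraction} and~\ref{c:cwlrb}, there is no genuine topological or homological obstacle remaining at this stage. The only care required is bookkeeping: matching the indexing set $\{Y\mid \rk[X,Y]=q\}$ of the Sch\"utzenberger decomposition exactly against the support of $\Ext^q$, and confirming that $\Bbbk L_Y$ is indeed the projective cover of $\Bbbk_Y$ so that the split reformulation of Proposition~\ref{minresolution}(2) applies. The main pitfall to guard against is conflating the multiplicity of a projective summand with the $\Bbbk$-dimension of the chain module, and overlooking that the cases $Y\not\geq X$ must contribute zero on both sides.
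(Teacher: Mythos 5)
Your proposal is correct and follows essentially the same route as the paper: both verify minimality via the split reformulation of Proposition~\ref{minresolution}(2), matching the multiplicities of the Sch\"utzenberger summands $\Bbbk L_Y$ in $C_q(\CW(B_{\geq X});\Bbbk)$ from Corollary~\ref{c:othersimplesCWcontraction} against the $\Ext$ dimensions from Corollary~\ref{c:cwlrb}. The only difference is that you spell out the supporting facts (unitality, split basicness, $\Bbbk L_Y$ being the projective cover) that the paper leaves implicit.
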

\begin{proof}
It follows from Corollary~\ref{c:othersimplesCWcontraction} that $C_\bullet(\CW(B_{\geq X});\Bbbk)\to \Bbbk_X$ is a projective resolution and that the number of occurrences of the Sch\"utzenberger representation $\Bbbk L_Y$ associated to $Y$ in $C_q(\CW(B_{\geq X});\Bbbk)$ is $1$ if $Y\geq X$ and $\rk[X,Y]=q$, and otherwise is $0$.  On the other hand, $\dim_\Bbbk \Ext_{\Bbbk B}^q(\Bbbk_X,\Bbbk_Y)=1$ if $Y\geq X$ and $\rk[X,Y]=q$, and otherwise is $0$ by Corollary~\ref{c:cwlrb}.  We conclude that $C_\bullet(\CW(B_{\geq X});\Bbbk)\to \Bbbk_X$ is minimal by Proposition~\ref{minresolution}(2).
\end{proof}

As a corollary, we obtain minimal projective resolutions for all the simple modules in the case of a connected CW left regular band.

\begin{Cor}
Let $B$ be a connected CW left regular band and let $\Bbbk$ be a field. Then the augmented cellular chain complex $C_\bullet(\CW(B_{\geq X});\Bbbk)\to \Bbbk_X$ is the minimal projective resolution of $X$ for all $\Bbbk_X\in \Lambda(B)$.
\end{Cor}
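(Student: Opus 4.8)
The plan is to deduce this corollary directly from Theorem~\ref{minresoftrivial}, which already establishes the statement for a single contraction $B_{\geq X}$ under the hypothesis that $B_{\geq X}$ is a CW poset. The only additional input required is the defining property of a CW left regular band. First I would recall that $B$ is, by definition, a CW left regular band precisely when $B_{\geq X}$ is a CW poset for \emph{every} $X\in\Lambda(B)$. Hence the hypothesis of Theorem~\ref{minresoftrivial} holds simultaneously for all elements of the support semilattice.

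Next, fixing an arbitrary $X\in\Lambda(B)$, I would invoke Theorem~\ref{minresoftrivial} to conclude that the augmented cellular chain complex $C_\bullet(\CW(B_{\geq X});\Bbbk)\to\Bbbk_X$ is the minimal projective resolution of the simple module $\Bbbk_X$. Since $X$ was arbitrary and, by Corollary~\ref{c:simples}, the modules $\Bbbk_X$ with $X\in\Lambda(B)$ form a complete set of representatives of the isomorphism classes of simple $\Bbbk B$-modules (here using that $\Bbbk B$ is unital, which holds by Theorem~\ref{t:unital} because $B$ is connected), this yields minimal projective resolutions for all the simple $\Bbbk B$-modules at once.

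There is essentially no obstacle here: the entire content resides in Theorem~\ref{minresoftrivial} and the two results on which it rests, namely Corollary~\ref{c:othersimplesCWcontraction}, which identifies $C_\bullet(\CW(B_{\geq X});\Bbbk)\to\Bbbk_X$ as a projective resolution whose Sch\"utzenberger summands $\Bbbk L_Y$ in degree $q$ occur exactly when $Y\geq X$ and $\rk[X,Y]=q$, and Corollary~\ref{c:cwlrb}, which computes $\dim_\Bbbk\Ext^q_{\Bbbk B}(\Bbbk_X,\Bbbk_Y)$ to be $1$ in precisely those cases and $0$ otherwise. Matching these multiplicities through the minimality criterion of Proposition~\ref{minresolution}(2) is what drives Theorem~\ref{minresoftrivial}, and the present corollary merely packages those per-contraction statements into one assertion ranging over all simple modules. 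Accordingly, the proof I would write is a single sentence: each contraction $B_{\geq X}$ is a CW poset by hypothesis, so the claim is immediate from Theorem~\ref{minresoftrivial}.
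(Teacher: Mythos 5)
Your proposal is correct and matches the paper exactly: the paper states this corollary without proof, treating it as immediate from Theorem~\ref{minresoftrivial} once one recalls that a CW left regular band is by definition one whose every contraction $B_{\geq X}$ is a CW poset. Your one-sentence argument is precisely the intended one.
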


In particular, the corollary applies to the case where $B$ is  the face monoid of a real or complex hyperplane arrangement,  the monoid of covectors of an oriented matroid or oriented interval greedoid, the face semigroup of an affine hyperplane arrangement or CAT(0) cube complex, the semigroup of covectors associated to an affine oriented matroid or, more generally, the semigroup of covectors of a COM.

\section{Quiver presentations}\label{s:quiverpres}
Let us fix for the discussion a finite connected left regular band $B$ and a field $\Bbbk$. In this section we will compute a quiver presentation for $\Bbbk B$ under some very strong hypotheses on $B$ that will be satisfied by connected CW left regular bands.

\subsection{A general result}
Recall that the quiver of $\Bbbk B$ is denoted $Q(\Bbbk B)$. It has vertex set $\Lambda(B)$.  If $X<Y$, then the number of directed edges from $X$ to $Y$ is one less than the number of connected components of $\Delta(\bd e_YB_{\geq X})$ by Theorem~\ref{t:extthm};  there are no other arrows. In particular, $Q(\Bbbk B)$ is acyclic and is independent of $\Bbbk$. Fix a complete set of primitive orthogonal idempotents $\{\eta_X\mid X\in \Lambda(B)\}$ constructed  as per Section~\ref{sssec:orthogonal-idempotents}.

Let us assume the following four properties of $B$.
\begin{itemize}
\item [(B1)] $Q(\Bbbk B)$ has no multiple edges.
\item [(B2)] $Q(\Bbbk B)$ is graded in the sense that all paths between two given vertices $X<Y$ have the same length. Set $\rk[X,Y]$ to be the common length of all paths from $X$ to $Y$.
\item [(B3)] \[\dim_\Bbbk \Ext^2_{\Bbbk B}(\Bbbk_X,\Bbbk_Y) = \begin{cases} 1, & \text{if}\ \rk[X,Y]=2\\ 0, & \text{else.}\end{cases}\]
\item [(B4)] There is a $\Bbbk B$-module homomorphism $\partial\colon \Bbbk B\to \Bbbk B$ such that:
\begin{itemize}
\item [(a)] $\partial^2=0$;
\item [(b)] $\partial(\eta_X)\eta_X=0$, for all $X\in \Lambda(B)$;
\item [(c)] $\partial(\eta_Y)\eta_X\neq 0$ if $X\to Y$ is an arrow of $Q(\Bbbk B)$.
\end{itemize}
\end{itemize}
We remark that under these assumptions, $Q(\Bbbk B)$ is the Hasse diagram of a  partial order $\preceq$ on $\Lambda(B)$ that is coarser than the original ordering. More precisely, we put $X\preceq Y$ if there is a path from $X$ to $Y$ in $Q(\Bbbk B)$ and note that $X\preceq Y$ implies $X\leq Y$ and that $(\Lambda(B),\preceq)$ is a graded poset. As usual, let us write $X\prec Y$ if $X\preceq Y$ and $X\neq Y$.

\begin{Thm}\label{t:quiverpres}
Suppose that $B$ is a connected left regular band satisfying (B1)--(B4).  Then $\Bbbk B$ has a quiver presentation $(Q(\Bbbk B),I)$ where $I$ is given by the following minimal system of relations $R$. For each $X<Y$ with $\rk[X,Y]=2$, we have a relation \[r_{X,Y}=\sum_{X\prec Z\prec Y}(X\to Z\to Y)\] in $R$.  In particular, $I$ is a homogeneous ideal  and $\Bbbk B\cong \Bbbk Q(\Bbbk B)/I$ is a quadratic algebra.
\end{Thm}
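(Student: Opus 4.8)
The plan is to realize the quiver presentation of $\Bbbk B$ through one explicit homomorphism built out of the boundary operator $\partial$ of (B4), and then to pin the relations down with Bongartz's theorem (Theorem~\ref{t:bongartz}) together with (B3). Since $B$ is connected, $\Bbbk B$ is unital (Theorem~\ref{t:unital}) with identity $1=\sum_{X\in\Lambda(B)}\eta_X$; set $w:=\partial(1)$. Because $\partial$ is a left $\Bbbk B$-module homomorphism it is right multiplication by $w$, i.e.\ $\partial(a)=aw$, so $\partial(\eta_Y)=\eta_Y w$ and the relation $\partial^2=0$ becomes $w^2=0$. By (B1) there is at most one arrow between two vertices, and by the discussion preceding the theorem the arrows of $Q(\Bbbk B)$ are exactly the covering relations $X\prec Y$ of $(\Lambda(B),\preceq)$; for such a cover I write $a_X^Y\colon X\to Y$ for the unique arrow. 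Recall also that, since $\Bbbk B$ is split basic (Corollary~\ref{c:jac.radical}), for any admissible presentation $\Bbbk B\cong\Bbbk Q(\Bbbk B)/I_0$ one has $\eta_Y\rad^2(\Bbbk B)\eta_X\cong\varepsilon_Y(J^2/I_0)\varepsilon_X$, which is spanned by the length-$\geq 2$ paths from $X$ to $Y$.

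First I would define an algebra homomorphism $\Phi\colon\Bbbk Q(\Bbbk B)\to\Bbbk B$ by $\varepsilon_X\mapsto\eta_X$ and $a_X^Y\mapsto\partial(\eta_Y)\eta_X=\eta_Y w\eta_X$, and check that it is a legitimate realization of the quiver presentation in the sense of Gabriel's recipe recalled in the text. For a cover $X\prec Y$ the element $\eta_Y w\eta_X$ is nonzero by (B4)(c). Moreover gradedness (B2) forces every path from $X$ to $Y$ to have length $\rk[X,Y]=1$, so there are no paths of length $\geq 2$, whence $\eta_Y\rad^2(\Bbbk B)\eta_X=0$ and a nonzero element of $\eta_Y\rad(\Bbbk B)\eta_X$ cannot lie in $\rad^2(\Bbbk B)$. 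Thus each $\eta_Y w\eta_X$ spans the one-dimensional space $\eta_Y[\rad/\rad^2]\eta_X$, so $\Phi$ is surjective with admissible kernel $I:=\ker\Phi$ and $\Bbbk B\cong\Bbbk Q(\Bbbk B)/I$.

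Next I would verify that $r_{X,Y}\in I$ for every $X<Y$ with $\rk[X,Y]=2$. Applying $\Phi$ to each length-two path and using $\eta_Z^2=\eta_Z$ gives
\[
\Phi(r_{X,Y})=\sum_{X\prec Z\prec Y}\eta_Y w\,\eta_Z\,w\,\eta_X .
\]
A term is nonzero only if both $\eta_Y w\eta_Z\ne 0$ and $\eta_Z w\eta_X\ne 0$; since these lie in the Peirce components $\eta_Y\rad(\Bbbk B)\eta_Z$ and $\eta_Z\rad(\Bbbk B)\eta_X$ (using (B4)(b) to rule out $Z=X$ and $Z=Y$), this already forces $X\prec Z\prec Y$. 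Hence extending the sum over all $Z\in\Lambda(B)$ changes nothing, and with $\sum_Z\eta_Z=1$ and $w^2=0$ one obtains $\Phi(r_{X,Y})=\eta_Y w\,1\,w\,\eta_X=\eta_Y w^2\eta_X=0$. So the homogeneous degree-two elements $r_{X,Y}$ all lie in $I$.

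Finally I would identify $R=\{r_{X,Y}\}$ as a minimal system of relations generating $I$. By Theorem~\ref{t:bongartz} and (B3), $\dim_\Bbbk\varepsilon_Y[I/(JI+IJ)]\varepsilon_X$ equals $1$ when $\rk[X,Y]=2$ and $0$ otherwise. Since $I\subseteq J^2$ we have $JI,IJ\subseteq J^3$, so the pure degree-two element $r_{X,Y}$ cannot lie in $JI+IJ$ unless it vanishes; being a nonzero sum of distinct length-two paths, its class is nonzero and therefore spans the relevant one-dimensional space. Thus the classes of the $r_{X,Y}$ form a basis of $I/(JI+IJ)$, so $I=\langle R\rangle+JI+IJ$, and the nilpotent-iteration used in the proof of Theorem~\ref{t:bongartz} (valid since $Q(\Bbbk B)$ is acyclic and $J$ is nilpotent) upgrades this to $\langle R\rangle=I$. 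As $R$ is concentrated in degree two, $I$ is homogeneous and $\Bbbk B\cong\Bbbk Q(\Bbbk B)/\langle R\rangle$ is quadratic. The one delicate point is the reconciliation in the middle two paragraphs: the single choice $a_X^Y\mapsto\partial(\eta_Y)\eta_X$ must be both a valid Gabriel choice (a $\rad/\rad^2$-basis) and annihilate the relations, and it is precisely the cooperation of (B2) — killing higher paths between covers — with $w^2=0$ that makes this work, so that interplay is where the argument must be carried out carefully.
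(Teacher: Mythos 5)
Your proof is correct and takes essentially the same route as the paper's: the same surjection $\varepsilon_X\mapsto\eta_X$, $(X\to Y)\mapsto\partial(\eta_Y)\eta_X$, the same use of (B2) and (B4)(b) to reduce $\Phi(r_{X,Y})$ to $\partial^2(\eta_Y)\eta_X=0$ (your $w=\partial(1)$, $w^2=0$ reformulation is just a compact restatement of the paper's computation using that $\partial$ is a module map), and the same appeal to Theorem~\ref{t:bongartz} together with (B3) to pin down the $r_{X,Y}$ as a minimal system of relations. The only cosmetic difference is at the end, where you verify directly that the classes of the $r_{X,Y}$ give a basis of $I/(JI+IJ)$ rather than comparing $r_{X,Y}$ with an abstract minimal relation $s_{X,Y}$ by a degree count, but the two arguments are interchangeable.
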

\begin{proof}
Set $Q=Q(\Bbbk B)$ and
suppose that there is an edge $X\to Y$ in $Q$.  Then, by (B2), there are no paths of length greater than one in $Q(\Bbbk B)$ from $X$ to $Y$ and so $\eta_Y\rad^2(\Bbbk B)\eta_X=0$.  Therefore, we have by (B1) that
\begin{align*}
1&=\dim_\Bbbk \Ext^1_{\Bbbk B}(\Bbbk_X,\Bbbk_Y)= \dim_\Bbbk \eta_Y[\rad(\Bbbk B)/\rad^2(\Bbbk B)]\eta_X \\ &= \dim_\Bbbk \eta_Y\Bbbk B\eta_X.
\end{align*}
Observe that, by (B4), $0\neq \partial(\eta_Y)\eta_X = \eta_Y\partial(\eta_Y)\eta_X$ and so $\{\partial(\eta_Y)\eta_X\}$ is a basis for $\eta_Y\Bbbk B\eta_X\cong \eta_Y[\rad(\Bbbk B)/\rad^2(\Bbbk B)]\eta_X$.  Thus we have a surjective homomorphism $\p\colon \Bbbk Q\to \Bbbk B$ given by $\p(\varepsilon_X)=\eta_X$ and $\p(X\to Y) = \partial(\eta_Y)\eta_X$ with $\ker \p$ admissible by the general theory of split basic finite dimensional algebras; see the discussion in Section~\ref{ss:finite.dim}.

Next we show that $r_{X,Y}\in \ker \p$ if $\rk[X,Y]=2$. Note that
\begin{equation}\label{imageofrelation}
\p(r_{X,Y}) = \sum_{X\prec Z\prec Y} \partial(\eta_Y)\eta_Z\partial(\eta_Z)\eta_X =\sum_{X\prec Z\prec Y} \eta_Y\partial(\eta_Y)\eta_Z\partial(\eta_Z)\eta_X.
\end{equation}
Notice that if $Z\in \Lambda(B)$ and $X\prec Z\prec Y$ is false, then \[\eta_Y\partial(\eta_Y)\eta_Z\partial(\eta_Z)\eta_X=0.\]  Indeed, if $Z\notin \{X,Y\}$, then there is no path in $Q$ from $X$ to $Y$ through $Z$ by definition of $\preceq$ and so $\eta_Y\Bbbk B\eta_Z\Bbbk B\eta_X=0$.  If $Z\in \{X,Y\}$ we use that $\partial(\eta_X)\eta_X=0=\partial(\eta_Y)\eta_Y$ by (B4). Thus the right hand side of \eqref{imageofrelation} is equal to
\begin{align*}
\sum_{Z\in \Lambda(B)} \eta_Y\partial(\eta_Y)\eta_Z\partial(\eta_Z)\eta_X &= \sum_{Z\in \Lambda(B)}\partial[\partial(\eta_Y)\eta_Z]\eta_X\\ &= \partial\left[\partial(\eta_Y)\cdot\sum_{Z\in \Lambda(B)}\eta_Z\right]\eta_X\\ &=\partial^2(\eta_Y)\eta_X \\ &=0
\end{align*}
using that $\partial^2=0$ and that $\sum_{Z\in \Lambda(B)}\eta_Z=1$.
This proves that $\p(r_{X,Y})=0$.

To complete the proof, let $R'$ be a minimal system of relations for $I=\ker \p$. We claim that \[R'=\{c_{X,Y}\cdot r_{X,Y}\mid \rk[X,Y]=2\}\] where each $c_{X,Y}\in \Bbbk\setminus \{0\}$.  It will then follow that the $r_{X,Y}$ also form a minimal system of relations for $I$.

By Theorem~\ref{t:bongartz} and (B3) we have that $R' = \{s_{X,Y}\mid \rk[X,Y]=2\}$ where $s_{X,Y}$ is a non-zero linear combination of paths from $X$ to $Y$ in $Q$, necessarily of length $2$ by (B2). It follows that if $\rk[X,Y]=2$, then
\begin{equation*}
r_{X,Y} = \sum_{\rk[U,V]=2}p_{U,V}s_{U,V}q_{U,V}
\end{equation*}
where $p_{U,V}\in \varepsilon_Y\Bbbk Q\varepsilon_V$ and $q_{U,V}\in \varepsilon_U\Bbbk Q\varepsilon_X$.  But since each path in $r_{X,Y}$ has length $2$ and each path in $s_{U,V}$ has length $2$, we conclude that $p_{U,V}s_{U,V}q_{U,V}=0$ if $(U,V)\neq (X,Y)$ and that $r_{X,Y}=k_{X,Y}\cdot s_{X,Y}$ for some $k_{X,Y}\in \Bbbk\setminus 0$.  This establishes the claim and completes the proof.
\end{proof}

\subsection{A quiver presentation for CW left regular bands}
One of the main results of this text is the next theorem, which computes a quiver presentation of the algebra of a connected CW left regular band, generalizing a result of the second author for face monoids of central hyperplane arrangements~\cite{Saliolahyperplane}.

\begin{Thm}\label{t:crucial}
Suppose that $B$ is a connected CW left regular band.
\begin{itemize}
\item [(a)] The quiver $Q=Q(\Bbbk B)$ of $\Bbbk B$ is the Hasse diagram of $\Lambda(B)$.
\item [(b)] $\Lambda(B)$ is graded.
\item [(c)] $\Bbbk B$ has a quiver presentation $(Q,I)$ where $I$ has minimal system of relations \[r_{X,Y} = \sum_{X<Z<Y} (X\to Z\to Y)\] ranging over all rank $2$ intervals $[X,Y]$ in $\Lambda(B)$.
\end{itemize}
\end{Thm}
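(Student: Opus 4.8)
The plan is to verify that a connected CW left regular band $B$ satisfies hypotheses (B1)--(B4) of Theorem~\ref{t:quiverpres}, whereupon the desired quiver presentation follows essentially verbatim (noting that for CW left regular bands the coarser order $\preceq$ coincides with the original order on $\Lambda(B)$, so that ``$X\prec Z\prec Y$'' becomes ``$X<Z<Y$'' as stated in part (c)). First I would establish (a) and (b). By Corollary~\ref{c:cwlrb}, for a connected CW left regular band we have $\dim_\Bbbk\Ext^1_{\Bbbk B}(\Bbbk_X,\Bbbk_Y)=1$ exactly when $\rk[X,Y]=1$ and $0$ otherwise. Since $\rk[X,Y]=1$ means precisely that $Y$ covers $X$ in the graded poset $\Lambda(B)$ (graded by Proposition~\ref{p:supportiscellular} and Corollary~\ref{c:preservesgraded}), the quiver $Q(\Bbbk B)$ is exactly the Hasse diagram of $\Lambda(B)$ with no multiple edges; this simultaneously yields (a), (b) and hypothesis (B1). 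Hypothesis (B2) then holds because $\Lambda(B)$ is graded, so all Hasse-diagram paths from $X$ to $Y$ have length $\rk[X,Y]$, and (B3) is immediate from Corollary~\ref{c:cwlrb} specialized to $q=2$.

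The main obstacle is hypothesis (B4): constructing a $\Bbbk B$-module endomorphism $\partial\colon\Bbbk B\to\Bbbk B$ with $\partial^2=0$, $\partial(\eta_X)\eta_X=0$, and $\partial(\eta_Y)\eta_X\neq0$ whenever $X\to Y$ is an arrow (i.e.\ $Y$ covers $X$). My approach is to produce $\partial$ geometrically from the cellular boundary maps supplied by Theorem~\ref{minresoftrivial} and Corollary~\ref{c:othersimplesCWcontraction}. For a connected CW left regular band, $B_{\geq X}$ is a CW poset for every $X$, and the minimal projective resolution of $\Bbbk_X$ is the augmented cellular chain complex $C_\bullet(\CW(B_{\geq X});\Bbbk)\to\Bbbk_X$, with the crucial identification (Remark~\ref{r:cell.case}) that the direct-sum decomposition $C_q\cong\bigoplus_{\rk[X,Y]=q}\Bbbk L_Y$ sends the oriented cell $[b]$ to $b$ for $b\in L_Y$. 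The idea is to take $X=\wh0$ to be the minimum of $\Lambda(B)$, so that $B_{\geq\wh0}=B$ and $C_\bullet(\CW(B);\Bbbk)\to\Bbbk$ is a resolution of the trivial module with $C_q\cong\bigoplus_{\dim Y=q}\Bbbk L_Y$. Under the identification $\Bbbk B\cong\bigoplus_{X\in\Lambda(B)}\Bbbk L_X$ of Theorem~\ref{Schutz}, these chain modules assemble the whole of $\Bbbk B$, and I would \emph{define} $\partial\colon\Bbbk B\to\Bbbk B$ to be the total cellular boundary operator $d=\bigoplus_q d_q$ (with $d_0$ interpreted as $0$, discarding the augmentation into $\Bbbk$), which is a $\Bbbk B$-module map by functoriality of cellular chains under the cellular $B$-action.

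With $\partial$ so defined, the three conditions of (B4) should fall out. Property $\partial^2=0$ is just $d_{q-1}d_q=0$, the defining property of a chain complex. For the idempotent analysis I would use that $\eta_X$ corresponds, under $\Bbbk L_X\cong\Bbbk B\eta_X$, to the oriented cell $[e_X]$ of dimension $\dim X$: then $\partial(\eta_X)=d[e_X]$ is a chain supported on cells of dimension $\dim X-1$, i.e.\ a combination of elements $b$ with $\dim(Bb)<\dim X$, so right-multiplying by $\eta_X$ (which by Theorem~\ref{primidempotentprops}(1) kills any $b$ with $Bb\not\geq X$) gives $\partial(\eta_X)\eta_X=0$, yielding (b). For (c), when $Y$ covers $X$ the cell $e_Y$ has $e_X$ (up to the $\mathscr L$-class identification) in its boundary, and minimality of the resolution—equivalently $\dim_\Bbbk\Ext^1=1$ with the boundary map landing in $\rad$—forces the corresponding incidence coefficient to be a nonzero element of $\eta_Y\Bbbk B\eta_X$, so $\partial(\eta_Y)\eta_X\neq0$. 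The delicate point to handle carefully will be checking that the geometric boundary $\partial(\eta_Y)\eta_X$ is genuinely nonzero (not merely nonzero before projecting to the relevant summand); here I would lean on Remark~\ref{r:cell.case} together with the fact that, for a regular CW complex, the incidence number between a cell and a codimension-one face in its boundary is $\pm1$, never zero. Once (B1)--(B4) are verified, Theorem~\ref{t:quiverpres} delivers the quiver presentation with relations $r_{X,Y}=\sum_{X<Z<Y}(X\to Z\to Y)$ over all rank~$2$ intervals, completing the proof of parts (a), (b) and (c).
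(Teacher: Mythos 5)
Your proposal is correct and follows essentially the same route as the paper: establish (a) and (b) from Corollary~\ref{c:cwlrb} together with Proposition~\ref{p:supportiscellular} and Corollary~\ref{c:preservesgraded}, verify (B1)--(B3) the same way, and obtain (B4) by transporting the total cellular boundary of $C_\bullet(\CW(B);\Bbbk)$ to $\Bbbk B\cong\bigoplus_{X}\Bbbk L_X$ via Corollary~\ref{c:othersimplesCWcontraction} and Remark~\ref{r:cell.case}. For the one point you flag as delicate, the paper closes it exactly as you suggest, noting that $\bd e_YB_{\geq X}\cong S^0$ gives precisely two maximal proper faces $a_1,a_2$ of $e_Y$ with support $X$, so $\partial(\eta_Y)\eta_X=[e_Y:a_1]a_1\eta_X+[e_Y:a_2]a_2\eta_X$ is a $\pm1$-combination of two distinct basis elements and hence nonzero.
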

\begin{proof}
The fact that $\Lambda(B)$ is graded is immediate from  Proposition~\ref{p:supportiscellular} and Corollary~\ref{c:preservesgraded} because $B$ is graded, being a CW poset.

By taking $q=1$ in \eqref{extcomputationCW} from Corollary~\ref{c:cwlrb}, we see that there is an edge $X\to Y$ in $Q$ if and only if $X<Y$ is a cover relation in $\Lambda(B)$ and that there are no multiple edges. This establishes (a).

To prove the remainder of the theorem, it suffices to show that Theorem~\ref{t:quiverpres} applies to $B$. It is immediate from the previous discussion that (B1)--(B2) are satisfied. Property (B3) is a consequence of \eqref{extcomputationCW} from Corollary~\ref{c:cwlrb}.  So it remains to verify that (B4) holds.

Choose for each $X\in \Lambda(B)$ an idempotent $e_X$ with $Be_X=X$ and let $\{\eta_X\mid X\in \Lambda(B)\}$ be the complete set of orthogonal primitive idempotents for $\Bbbk B$ from Section~\ref{sssec:orthogonal-idempotents}.
Let, as usual, $\Bbbk L_X$ denote the Sch\"utzenberger representation associated to the $\mathscr L$-class of $e_X$. Then as a left $\Bbbk B$-module we have
\begin{equation}\label{eq:dopresentation}
\Bbbk B\cong \bigoplus_{X\in \Lambda(B)} \Bbbk L_X\cong \bigoplus_{q\geq 0} C_q(\CW(B);\Bbbk)
\end{equation}
where the last isomorphism follows from \eqref{e:schutzdecompositioncwcase} of Corollary~\ref{c:othersimplesCWcontraction} with $X=\wh 0$.  Thus we can define a $\Bbbk$-module homomorphism $\partial\colon \Bbbk B\to \Bbbk B$ by transporting the differential \[d\colon \bigoplus_{q\geq 0} C_q(\CW(B);\Bbbk)\to \bigoplus_{q\geq 0} C_q(\CW(B);\Bbbk)\] under the isomorphism between $\Bbbk B$ and $\bigoplus_{q\geq 0} C_q(\CW(B);\Bbbk)$. Clearly, we have $\partial^2=0$.

Let us make the definition of $\partial$ more explicit. We recall from Corollary~\ref{basisofidempotents} that $\{b\eta_X\mid \sigma(b)=X\}$ is a basis for $\Bbbk B$ and that $\Bbbk L_X\cong \Bbbk B\eta_X$.  Moreover, there is a direct sum decomposition\[\Bbbk B=\bigoplus_{X\in \Lambda(B)} \Bbbk B\eta_X.\]  Identifying $q$-cells of $B$ with basis elements of $C_q(\CW(B);\Bbbk)$, we have \[d(b) = \sum [b:a]a\] where the sum runs over all maximal proper faces $a$ of $b$ and the $[b:a]=\pm 1$ are the incidence numbers associated to the boundary maps of $C_\bullet(\CW(B);\Bbbk)$ with respect to the orientation chosen in the proof of Theorem~\ref{t:projectiveresgeneral} (see also Remark~\ref{r:cell.case}). See~\cite[Chapter~IX, \S 7]{Massey} for details on incidence numbers for regular cell complexes.  Since the isomorphism in \eqref{eq:dopresentation} sends the cell corresponding to $b\in B$ to $b\eta_{\sigma(b)}\in \Bbbk B$, we then have, for $b\in B$, that
\[\partial(b\eta_{\sigma(b)}) = \sum [b:a]a\eta_{\sigma(a)}\]
where the sum runs over all maximal proper faces $a$ of $b$ (and, in particular, is $0$ if $b$ is a $0$-cell of $\CW(B)$).

To complete the proof, first note that \[\partial(\eta_X)\eta_X=\partial(e_X\eta_X)\eta_X=\sum [e_X:a]a\eta_{\sigma(a)}\eta_X=0\] because each maximal proper face $a$ of $e_X$ satisfies $\sigma(a)<X$ and so $\eta_{\sigma(a)}\eta_X=0$ by orthogonality (cf.~Theorem~\ref{primidempotentprops}).
On the other hand, if $Y$ covers $X$ in $\Lambda(B)$, then $\bd e_YB_{\geq X}\cong S^0$ because $B_{\geq X}$ is a CW poset and $e_Y$ is a $1$-cell of $B_{\geq X}$ (as $\sigma\colon B_{\geq X}\to \Lambda(B)_{\geq X}$ is rank preserving).  Therefore, $e_Y$ has exactly two maximal proper faces $a_1,a_2$ with support $X$.  Thus
\begin{align*}
\partial(\eta_Y)\eta_X &= \partial(e_Y\eta_Y)\eta_X= \sum [e_Y:a]a\eta_{\sigma(a)}\eta_X \\ &= [e_Y:a_1]a_1\eta_X+[e_Y:a_2]a_2\eta_X\neq 0.
\end{align*}
An application of Theorem~\ref{t:quiverpres} completes the proof of the theorem.
\end{proof}

\begin{Example}[Ladders]\label{ladders2}
Let $L_n$ be the ladder left regular band from Example~\ref{ladders}.  Theorem~\ref{t:crucial} shows that the quiver of $\Bbbk L_n$ is a directed path with $n+1$ vertices (that is, the Dynkin quiver $A_{n+1}$ oriented as a directed path) and that the relations declare all paths of length two to be zero. In other words, $\Bbbk L_n$ is the quadratic monomial algebra $\Bbbk A_{n+1}/\rad^2(\Bbbk A_{n+1})$.
\end{Example}

We shall later use Theorem~\ref{t:crucial} to show that the algebra of the face semigroup of a CAT(0) cube complex is isomorphic to the incidence algebra of its intersection semilattice.

An interesting question is whether $\mathbb ZB$ is isomorphic to the quotient of the integral path ring of the Hasse diagram of $\Lambda(B)$ modulo the ideal generated by the sum of all paths of length $2$ when $B$ is a connected CW left regular band.

\section{Quadratic and Koszul duals}\label{s:koszul}
In this section we consider quadratic and Koszul duals for connected CW left regular band algebras.

\subsection{Quadratic duals}
Let $P$ be a graded poset with Hasse diagram $Q$.  Let $I$ of $\Bbbk Q$ be the ideal generated by the elements
\begin{equation}\label{e:ourrelations}
r_{\sigma,\tau} = \sum_{\sigma<\gamma<\tau} (\sigma\to \gamma\to \tau), \quad \text{with}\ \rk[\sigma,\tau]=2.
\end{equation}
Motivated by Theorem~\ref{t:quiverpres}, we are led to consider the following question: what is the quadratic dual of $A=\Bbbk Q/I$?

In our examples the poset $P$ will also have an additional property termed strong connectedness in~\cite{abstractpolytope}.

A simplicial complex $K$ is called a \emph{chamber complex}\index{chamber complex} if it is pure and any two facets can be connected by a gallery.  Recall that if $C,C'$ are facets, then a \emph{gallery}\index{gallery} from $C$ to $C'$ is a sequence $C=C_0,\ldots,C_n=C'$ of facets such that $C_i,C_{i+1}$ are distinct and adjacent, where  $C_i,C_{i+1}$ are \emph{adjacent}\index{adjacent} if they have a common codimension one face. The reader is referred to~\cite[Appendix~A.1]{Brown:book2} for details on chamber complexes.  Crucial for us is the fact that every connected triangulated manifold without boundary is a chamber complex (cf.~\cite[Exercise~A.17]{Brown:book2}).

Notice that if $P$ is a graded poset with $\Delta(P)$ a pure simplicial complex, then two facets $\sigma_0<\cdots<\sigma_n$ and $\tau_0<\cdots<\tau_n$ of $\Delta(P)$ are adjacent if and only if there exists $0\leq i\leq n$ with $\sigma_j=\tau_j$ for $j\neq i$ and $\sigma_i\neq \tau_i$;  see Figure~\ref{f:galleryordercomplex}.
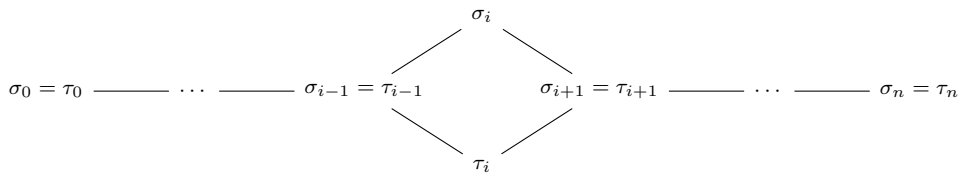
\begin{figure}[tbhp]
\begin{center}
\begin{tikzpicture}[scale=.8]
\tikzset{every node/.style={font=\scriptsize}}
\node  	(A) 								{$\sigma_0=\tau_0$};
\node  	(B) [right= 1cm of A] 		        {$\cdots$};
\node  	(C) [right= 1cm of B] 	            {$\sigma_{i-1}=\tau_{i-1}$};
\node   (F) [right= .5cm of C]               {};
\node 	(D) [above of=F]			        {$\sigma_i$};
\node 	(E) [below of=F]			        {$\tau_i$};
\node   (G) [right= .5cm of F]               {$\sigma_{i+1}=\tau_{i+1}$};
\node   (H) [right= 1cm of G]               {$\cdots$};
\node   (I) [right= 1cm of H]               {$\sigma_n=\tau_n$};
\draw (A)--(B);
\draw (B)--(C);
\draw (C)--(D);
\draw (C)--(E);
\draw (D)--(G);
\draw (E)--(G);
\draw (G)--(H);
\draw (H)--(I);
\end{tikzpicture}
\end{center}
\caption{Adjacent facets in an order complex\label{f:galleryordercomplex}}
\end{figure}

\begin{Lemma}\label{l:chamberpush}
Let $f\colon P\to Q$ be a surjective cellular map of posets such that $\sigma<\sigma'$ implies $f(\sigma)<f(\sigma')$ and suppose that $\Delta(P)$ is a chamber complex.  Then $\Delta(Q)$ is a chamber complex of the same dimension.
\end{Lemma}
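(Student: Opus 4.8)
The goal is to show that a surjective cellular map $f\colon P\to Q$ that preserves strict inequalities carries the chamber-complex structure of $\Delta(P)$ over to $\Delta(Q)$. First I would recall that Corollary~\ref{c:preservesgraded} already applies under these hypotheses: since $f$ is surjective, cellular, and strictly monotone, we know $P$ is graded if and only if $Q$ is, and $f$ preserves ranks of intervals. A chamber complex is pure by definition, so $\Delta(P)$ pure forces $P$ graded (in the poset-topology sense, via $\Delta(P_{\leq p})$); then $Q$ is graded and $\Delta(Q)$ is pure of the same dimension $n=\dim\Delta(P)$. So the dimension claim and purity are immediate from the earlier corollary; the real content is the gallery-connectivity of $\Delta(Q)$.

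\textbf{Key steps.} The central tool is the chain-lifting lemma (Lemma~\ref{l:chainlifting}): given a maximal chain $\tau_0<\cdots<\tau_n$ in $Q$ and any $\sigma\in P$ with $f(\sigma)=\tau_n$, there is a chain $\sigma_0<\cdots<\sigma_n$ in $P$ lifting it. Because $f$ is surjective I can pick such a $\sigma$ (indeed pick any element of $f^{-1}(\tau_n)$), and since $f$ preserves strict inequalities the lifted chain $\sigma_0<\cdots<\sigma_n$ is a genuine facet of $\Delta(P)$ mapping onto the facet $\tau_0<\cdots<\tau_n$. Thus \emph{every} facet of $\Delta(Q)$ is the $f$-image of a facet of $\Delta(P)$. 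Now given two facets $D,D'$ of $\Delta(Q)$, lift them to facets $C,C'$ of $\Delta(P)$ with $\Delta(f)(C)=D$, $\Delta(f)(C')=D'$. Since $\Delta(P)$ is a chamber complex there is a gallery $C=C_0,\dots,C_m=C'$. The plan is to push this gallery forward: I claim $D_k:=\Delta(f)(C_k)$ forms a gallery from $D$ to $D'$ after deleting repetitions. Using the explicit description of adjacency in an order complex (Figure~\ref{f:galleryordercomplex}), consecutive facets $C_k,C_{k+1}$ differ in exactly one coordinate, say position $i$, so $C_k=(\sigma_0<\cdots<\sigma_i<\cdots<\sigma_n)$ and $C_{k+1}=(\sigma_0<\cdots<\sigma_i'<\cdots<\sigma_n)$ with $\sigma_i\neq\sigma_i'$. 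Applying $f$ and using that $f$ preserves strict inequalities, the images agree in every coordinate $j\neq i$ and their $i$-th coordinates are $f(\sigma_i),f(\sigma_i')$. If $f(\sigma_i)\neq f(\sigma_i')$, then $D_k,D_{k+1}$ are adjacent facets of $\Delta(Q)$; if $f(\sigma_i)=f(\sigma_i')$ then $D_k=D_{k+1}$ and we simply drop the repetition.

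\textbf{The main obstacle.} The one place requiring care is verifying that the pushed-forward image $D_k=\Delta(f)(C_k)$ is always a genuine \emph{facet} of $\Delta(Q)$ (not a lower-dimensional simplex) and that adjacent lifted facets really map to adjacent or equal facets. The first point is where strict monotonicity is essential: because $f(\sigma_0)<\cdots<f(\sigma_n)$ is a strictly increasing chain of length $n$ in $Q$, and $\Delta(Q)$ is pure of dimension $n$, this chain is automatically a facet. For the adjacency point I must confirm that when $f(\sigma_i)=f(\sigma_i')$ the two images genuinely coincide as simplices, which follows since the remaining coordinates are identical; and when they differ, I need that $D_k$ and $D_{k+1}$ share a common codimension-one face, namely the chain obtained by deleting position $i$ from either — this is exactly the order-complex adjacency criterion, and it holds because deleting the $i$-th vertex yields the same simplex for both. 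Collating the nonrepeated images $D_k$ then gives a gallery from $D$ to $D'$, establishing that $\Delta(Q)$ is a chamber complex. I expect the bookkeeping around the collapsing of repeated images to be the only subtle part; everything else is a direct application of Lemma~\ref{l:chainlifting} and Corollary~\ref{c:preservesgraded}.
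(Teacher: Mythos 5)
Your proposal is correct and follows essentially the same route as the paper: the paper's (much terser) proof likewise invokes Corollary~\ref{c:preservesgraded} and its proof to get purity, dimension preservation, and surjectivity on facets, and then observes that $f$ either collapses a pair of adjacent facets or sends them to adjacent facets, so galleries push forward after deleting repetitions. Your write-up simply supplies the details the paper leaves as "the lemma follows easily from these observations."
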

\begin{proof}
Corollary~\ref{c:preservesgraded} and its proof imply that $\Delta(Q)$ is pure, $f\colon \Delta(P)\to \Delta(Q)$ preserves dimensions of simplices and $f$ maps the set of facets of $\Delta(P)$ onto the set of facets of $\Delta(Q)$. Moreover, given a pair of adjacent facets of $\Delta(P)$, either $f$ collapses them or takes them to adjacent facets.  The lemma follows easily from these observations.
\end{proof}

The following definition is from~\cite{abstractpolytope}.

\begin{Def}[Strongly connected]
Let $P$ be a graded poset. Then $P$ is said to be \emph{strongly connected}\index{strongly connected poset}\index{poset!strongly connected} if $\Delta([\sigma,\tau])$ is a chamber complex for each closed interval $[\sigma,\tau]$ of $P$.
\end{Def}

For example,  any semimodular lattice is strongly connected (see the proof of~\cite[Proposition 9.6]{Saliolahyperplane}).  Notice that $P$ is strongly connected if and only if $P^{op}$ is strongly connected.

\begin{Rmk}\label{r:coning}
Observe that if $K$ is a chamber complex, then a cone $K\ast x$ on $K$ with cone point $x$ is also a chamber complex.  Indeed, the map $F\mapsto F\cup \{x\}$ gives a bijection between facets of $K$ and facets of $K\ast x$ which raises the dimension by $1$.  Moreover, this bijection preserves adjacency.
\end{Rmk}

The reader should recall the definition of the \emph{incidence algebra}\index{incidence algebra} $I(P;\Bbbk)$ of a poset $P$ from Section~\ref{ss:jacobson}.  The following lemma provides a quiver presentation for the incidence algebra of a strongly connected graded poset.  It is closely connected to~\cite[Proposition~B.10]{THA}, which implies the lemma and has virtually the same proof; our proof predates the availability of~\cite{THA}.

\begin{Lemma}\label{l:strongconnposet}
Let $P$ be a strongly connected graded poset with Hasse diagram $Q$ and let $\Bbbk$ be a commutative ring with unit.  Then there is an isomorphism $I(P;\Bbbk)\cong \Bbbk Q/I$ where $I$ is generated by all differences $p-q$ of parallel paths $p,q$ in $Q$ of length $2$.
\end{Lemma}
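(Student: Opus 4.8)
The plan is to realize $I(P;\Bbbk)$ as an explicit quotient of $\Bbbk Q$ and then to identify the kernel with $I$. First I would define a $\Bbbk$-algebra homomorphism $\varphi\colon \Bbbk Q\to I(P;\Bbbk)$ by setting $\varphi(\varepsilon_\sigma)=(\sigma,\sigma)$ for each $\sigma\in P$ and $\varphi(a)=(\sigma,\tau)$ for each arrow $a\colon\sigma\to\tau$ of $Q$ (that is, each cover relation $\sigma\lessdot\tau$), extended multiplicatively. Using the product rule $(\sigma,\tau)(\alpha,\beta)=(\alpha,\tau)$ if $\beta=\sigma$ and $0$ otherwise from Subsection~\ref{ss:jacobson}, a direct check shows that $\varphi$ sends any path $p$ from $\sigma$ to $\tau$ to $(\sigma,\tau)$ and sends a product of non-composable paths to $0$, so $\varphi$ is a homomorphism carrying the identity $\sum_\sigma\varepsilon_\sigma$ to the identity $\sum_\sigma(\sigma,\sigma)$. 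Since $P$ is graded, every pair $\sigma\le\tau$ admits a maximal chain, hence a path from $\sigma$ to $\tau$ in $Q$; thus $\varphi$ is surjective. Two parallel paths of length $2$ obviously have the same image, so $I\subseteq\ker\varphi$ and $\varphi$ factors through $\bar\varphi\colon\Bbbk Q/I\to I(P;\Bbbk)$.

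The core of the argument is to show that, modulo $I$, any two parallel paths are equal. Every path from $\sigma$ to $\tau$ in $Q$ is a saturated chain and, since $\sigma,\tau$ are the extreme elements of the interval $[\sigma,\tau]$, a maximal chain of $[\sigma,\tau]$; by gradedness all such chains have length $n=\rk[\sigma,\tau]$ and are precisely the facets of $\Delta([\sigma,\tau])$. By strong connectedness $\Delta([\sigma,\tau])$ is a chamber complex, so any two of its facets are joined by a gallery. Using the characterization of adjacency recorded just before the lemma (Figure~\ref{f:galleryordercomplex}), two adjacent facets are maximal chains agreeing in every position except a single interior one, say position $i$ with $0<i<n$, where they traverse two distinct saturated chains $\sigma_{i-1}\lessdot\sigma_i\lessdot\sigma_{i+1}$ and $\sigma_{i-1}\lessdot\tau_i\lessdot\sigma_{i+1}$ of the rank-$2$ subinterval $[\sigma_{i-1},\sigma_{i+1}]$. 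Hence the two corresponding paths differ only by a common prefix, times a difference of two parallel length-$2$ paths, times a common suffix; as $I$ is a two-sided ideal this difference lies in $I$. Traversing the gallery then shows the two original parallel paths are congruent modulo $I$.

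With this established I would conclude by a basis count valid over an arbitrary commutative ring. Choosing one path $p_{\sigma,\tau}$ for each pair $\sigma\le\tau$, the preceding paragraph shows the classes $\{[p_{\sigma,\tau}]\}$ span $\Bbbk Q/I$, since any path is congruent to the chosen representative with the same source and target. Their images under $\bar\varphi$ are exactly the basis $\{(\sigma,\tau)\mid \sigma\le\tau\}$ of the free $\Bbbk$-module $I(P;\Bbbk)$. Because a spanning set whose image is $\Bbbk$-linearly independent must itself be independent, $\{[p_{\sigma,\tau}]\}$ is a basis of $\Bbbk Q/I$, and $\bar\varphi$ carries it bijectively onto a basis of $I(P;\Bbbk)$. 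Therefore $\bar\varphi$ is an isomorphism, which gives $\ker\varphi=I$ and the desired isomorphism $I(P;\Bbbk)\cong\Bbbk Q/I$.

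I expect the main obstacle to lie in the second paragraph: namely, translating the purely combinatorial notion of an elementary gallery step in $\Delta([\sigma,\tau])$ into the algebraic statement that the move is realized by a length-$2$ relation, and verifying that the differing position is always interior (the endpoints being forced to equal $\sigma$ and $\tau$) so that each move genuinely takes place inside a rank-$2$ subinterval.
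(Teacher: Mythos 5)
Your proof is correct and follows essentially the same route as the paper: the heart of both arguments is that strong connectedness makes $\Delta([\sigma,\tau])$ a chamber complex, so any two parallel paths in $Q$ are joined by a gallery whose elementary steps are realized by a difference of length-$2$ parallel paths conjugated by a common prefix and suffix. The only difference is that the paper quotes the standard presentation $I(P;\Bbbk)\cong \Bbbk Q/I'$ with $I'$ generated by all differences of parallel paths and reduces the lemma to showing $I=I'$, whereas you construct the surjection $\Bbbk Q\to I(P;\Bbbk)$ explicitly and finish with a basis count --- a self-contained variant that works transparently over an arbitrary commutative ring.
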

\begin{proof}
First note that $I(P;\Bbbk)\cong \Bbbk Q/I'$ where $I'$ is spanned by all differences $p-q$ of parallel paths in $Q$.  Thus $I\subseteq I'$.  We claim that  $I=I'$.

Notice that paths from $\sigma$ to $\tau$ in $Q$ correspond to facets of $\Delta([\sigma,\tau])$.  If $p,q\colon \sigma\to \tau$ are two parallel paths in $Q$ corresponding to distinct adjacent facets in $\Delta([\sigma,\tau])$, then we can write $p=uav$ and $q=ubv$ where $a,b$ are parallel paths of length $2$; see Figure~\ref{f:adjacentfacetsinquiver}.
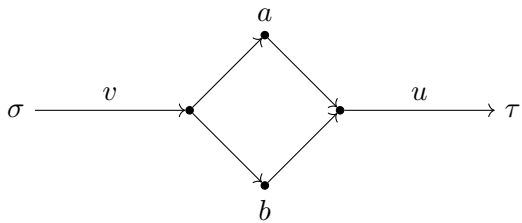
\begin{figure}[tbhp]
\begin{center}
\begin{tikzpicture}[->,vertices/.style={draw, fill=black, circle, inner sep=1pt}]
\node                                   (A) 						        {$\sigma$};
\node[vertices]  	                    (B) [right=2cm of A] 		        {};
\node                                   (C) [right of=B] 	                {};
\node[vertices,label=above:{$a$}]       (D) [above of=C]                    {};
\node[vertices,label=below:{$b$}] 	    (E) [below of=C]			        {};
\node[vertices]                         (F) [right of=C]                    {};
\node                                   (G) [right=2cm of F]                {$\tau$};
\path (A) edge node [above] {$v$}   (B)
      (B) edge                      (D)
      (B) edge                      (E)
      (D) edge                      (F)
      (E) edge                      (F)
      (F) edge node [above] {$u$}   (G);
\end{tikzpicture}
\end{center}
\caption{Parallel paths in $Q$ corresponding to adjacent facets\label{f:adjacentfacetsinquiver}}
\end{figure}
Thus $p-q=u(a-b)v\in I$.

Now suppose that $p,q\colon \sigma\to \tau$ are arbitrary parallel paths in $Q$.  Then since $\Delta([\sigma,\tau])$ is a chamber complex we can find a sequence $p=p_0,\ldots, p_n=q$ of paths from $\sigma$ to $\tau$ such that $p_i$ and $p_{i+1}$ represent distinct adjacent facets of $\Delta([\sigma,\tau])$.  Then $p-q = (p_0-p_1)+(p_1-p_2)+\cdots + (p_{n-1}-p_n)\in I$. It follows that $I=I'$, as required.  This completes the proof.
\end{proof}

The following generalizes~\cite[Proposition 9.6]{Saliolahyperplane} from the case of central hyperplane arrangements.

\begin{Thm}\label{quadraticdual}
Let $P$ be a strongly connected graded poset.  Let $Q$ be the Hasse diagram of $P$ and let $A=\Bbbk Q/I$ where $I$ is the ideal generated by the system of relations \eqref{e:ourrelations}.  Then the quadratic dual $A^!$ is isomorphic to the incidence algebra $I(P^{op};\Bbbk)$ of the opposite poset of $P$.
\end{Thm}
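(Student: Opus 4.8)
The plan is to compute the degree-two part of the defining ideal, take its orthogonal complement under the pairing $\langle\cdot,\cdot\rangle$ used to define the quadratic dual, and then recognize the resulting quotient of $\Bbbk Q^{op}$ as the incidence algebra of $P^{op}$ by means of Lemma~\ref{l:strongconnposet}.

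First I would record the combinatorial structure of $\Bbbk Q_2$. Because $P$ is graded, a path of length two in the Hasse diagram $Q$ is precisely a chain of cover relations $\sigma\lessdot\gamma\lessdot\tau$, so such paths are indexed by the interior points $\gamma$ of the rank-two intervals $[\sigma,\tau]$. Writing $p_\gamma$ for the path $\sigma\to\gamma\to\tau$, one obtains a direct sum decomposition $\Bbbk Q_2=\bigoplus_{\rk[\sigma,\tau]=2}V_{\sigma,\tau}$, where $V_{\sigma,\tau}$ has basis $\{p_\gamma\mid \sigma<\gamma<\tau\}$. Since the generators \eqref{e:ourrelations} are homogeneous of degree two, the degree-two part $I_2=I\cap\Bbbk Q_2$ of the defining ideal is $\bigoplus_{\rk[\sigma,\tau]=2}\Bbbk\,r_{\sigma,\tau}$, and each $r_{\sigma,\tau}=\sum_\gamma p_\gamma$ is the ``all-ones'' vector of $V_{\sigma,\tau}$; these lie in distinct summands and so are linearly independent.

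Next I would compute $I_2^\perp\subseteq\Bbbk Q^{op}_2$. The pairing satisfies $\langle p,q\rangle=1$ exactly when $q=p^\circ$, so $\{p_\gamma^\circ\}$ is the dual basis to $\{p_\gamma\}$, and the form is block-diagonal with respect to the decomposition above, since a length-two path from $\sigma$ to $\tau$ pairs nontrivially only with its reverse, which runs from $\tau$ to $\sigma$. Hence $I_2^\perp=\bigoplus_{\rk[\sigma,\tau]=2}(\Bbbk\,r_{\sigma,\tau})^\perp$, and because $r_{\sigma,\tau}$ is the all-ones vector, its annihilator is the hyperplane of sum-zero coefficient vectors, spanned by the differences $p_\gamma^\circ-p_{\gamma'}^\circ$. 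Translating back, $I_2^\perp$ is exactly the span of all differences of reverses of parallel length-two paths in $Q$; equivalently, it is the span of all differences of parallel length-two paths of the quiver $Q^{op}$.

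Finally I would invoke Lemma~\ref{l:strongconnposet}. The quiver $Q^{op}$ is the Hasse diagram of $P^{op}$, and $P^{op}$ is again strongly connected and graded: strong connectedness is self-dual by the remark following its definition, and $\Delta([\tau,\sigma]_{P^{op}})=\Delta([\sigma,\tau]_P)$, so the grading and chamber-complex conditions on intervals are unchanged. Therefore Lemma~\ref{l:strongconnposet}, applied to $P^{op}$, identifies $\Bbbk Q^{op}/\langle I_2^\perp\rangle$ with $I(P^{op};\Bbbk)$. Since $A^!=\Bbbk Q^{op}/\langle I_2^\perp\rangle$ by the definition of the quadratic dual, this yields $A^!\cong I(P^{op};\Bbbk)$. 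The main obstacle is the orthogonal-complement computation: one must check carefully that the pairing is block-diagonal with respect to the rank-two intervals and that the annihilator of a single all-ones vector is precisely the difference-spanned hyperplane, so that $I_2^\perp$ reduces to exactly the relation space of Lemma~\ref{l:strongconnposet} for $Q^{op}$ rather than a larger subspace. Everything else is bookkeeping with the path-length grading and the self-duality of the hypotheses.
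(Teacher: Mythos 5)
Your proposal is correct and follows essentially the same route as the paper's proof: the paper likewise observes that $I_2^{\perp}$ is spanned by the differences of parallel length-two paths in $Q^{op}$ and then applies Lemma~\ref{l:strongconnposet} to the strongly connected graded poset $P^{op}$. Your block-diagonal decomposition of the pairing over rank-two intervals is exactly the justification the paper leaves implicit behind the word ``clearly.''
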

\begin{proof}
Clearly $I_2^{\perp}$ is spanned by all differences $p-q$ of parallel paths of length $2$ in $Q^{op}$. As $P^{op}$ is a strongly connected graded poset, the result follows from Lemma~\ref{l:strongconnposet}.
\end{proof}

In order to apply Theorem~\ref{quadraticdual} to connected CW left regular bands, we need that $\Lambda(B)$ is a strongly connected graded poset.

\begin{Lemma}\label{l:CWsupp.strong.conn}
Let $B$ be a connected CW left regular band.  Then $\Lambda(B)$ is a strongly connected graded poset.
\end{Lemma}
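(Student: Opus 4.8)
The plan is to establish the two assertions—graded and strongly connected—by transporting the relevant structure from $B$, or from a suitable contraction of it, down to $\Lambda(B)$ along the support map $\sigma$. For gradedness, observe that since $B$ is a CW left regular band, the contraction $B_{\geq\wh 0}=B$ (where $\wh 0$ is the minimum of $\Lambda(B)$) is itself a CW poset, hence graded. By Proposition~\ref{p:supportiscellular} the support map $\sigma\colon B\to\Lambda(B)$ is surjective, cellular, and satisfies $a<b\Rightarrow\sigma(a)<\sigma(b)$; Corollary~\ref{c:preservesgraded} then gives that $\Lambda(B)$ is graded and, moreover, that $\sigma$ preserves ranks of intervals. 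This last fact will be needed below.

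For strong connectedness I would fix an arbitrary closed interval $[X,Y]$ of $\Lambda(B)$ and set $k=\rk[X,Y]$, the goal being to show $\Delta([X,Y])$ is a chamber complex. When $k\leq 1$ the complex $\Delta([X,Y])$ is a single point or a single edge and is trivially a chamber complex, so I would assume $k\geq 2$. Choose $e_Y\in B$ with $Be_Y=Y$; since $Y\geq X$ we have $e_Y\in B_{\geq X}$, which is a CW poset by the definition of a CW left regular band. The lower set $(B_{\geq X})_{\leq e_Y}$ is then a CW poset with maximum $e_Y$, and $\dim e_Y=\rk[X,Y]=k$ because $\sigma$ preserves ranks.

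The heart of the argument is to show that $\Delta\bigl((B_{\geq X})_{\leq e_Y}\bigr)$ is a chamber complex and then to push it forward along $\sigma$ using Lemma~\ref{l:chamberpush}. For the first part, I would write $\Delta\bigl((B_{\geq X})_{\leq e_Y}\bigr)=\Delta\bigl((B_{\geq X})_{<e_Y}\bigr)\ast e_Y$ as a cone, and note that $\|\Delta((B_{\geq X})_{<e_Y})\|\cong S^{k-1}$ by the CW poset property. Since $k\geq 2$, this sphere is a connected triangulated manifold without boundary and hence a chamber complex; by Remark~\ref{r:coning} its cone is a chamber complex as well. It remains to verify that $\sigma$ corestricts to a surjective, cellular, strictly order-preserving map $(B_{\geq X})_{\leq e_Y}\to[X,Y]$: surjectivity and the identification of the image with $(\Lambda(B)_{\geq X})_{\leq Y}=[X,Y]$ follow from cellularity of $\sigma$ (a cellular map carries the lower set $(B_{\geq X})_{\leq e_Y}$ onto the lower set $\Lambda(B_{\geq X})_{\leq Y}$), and strictness is again Proposition~\ref{p:supportiscellular}. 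Lemma~\ref{l:chamberpush} then yields that $\Delta([X,Y])$ is a chamber complex, and as $[X,Y]$ was arbitrary, $\Lambda(B)$ is strongly connected.

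The main obstacle will be the bookkeeping around the corestricted support map—checking that its restriction to the closed cell $(B_{\geq X})_{\leq e_Y}$ is genuinely cellular with image exactly $[X,Y]$, and confirming via rank preservation that $\dim e_Y=\rk[X,Y]$ so that the boundary sphere has the correct dimension. A related subtlety is that the low-rank cases must be isolated: for $k=1$ the boundary ``sphere'' is the \emph{disconnected} $S^0$, to which the cited chamber-complex criterion for manifolds does not apply, which is precisely why the intervals of rank at most one are dispatched by hand before invoking the cone-and-pushforward machinery.
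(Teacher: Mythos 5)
Your proof is correct and is essentially the paper's argument: since $(B_{\geq X})_{\leq e_Y}=e_YB_{\geq X}$, your cone decomposition, the appeal to Remark~\ref{r:coning}, and the pushforward along $\sigma$ via Proposition~\ref{p:supportiscellular} and Lemma~\ref{l:chamberpush} coincide with what the paper does. Your explicit handling of the rank $\leq 1$ intervals (where the boundary $S^0$ is disconnected and so not literally a chamber complex) is a small refinement the paper glosses over.
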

\begin{proof}
Theorem~\ref{t:crucial} establishes that $\Lambda(B)$ is graded.  Let $[X,Y]$ be a closed interval of $\Lambda(B)$.  Choose an idempotent $e_Y$ with $Be_Y=Y$.  Then $[X,Y]$ is the support lattice of $e_YB_{\geq X}$.  Note that $\Delta(e_YB_{\geq X})$ is the cone \[\Delta(\bd e_YB_{\geq X})\ast e_Y\] and that $\Delta(\bd e_YB_{\geq X})$ is a chamber complex, being a triangulation of a sphere.  Thus $\Delta(e_YB_{\geq X})$ is a chamber complex by Remark~\ref{r:coning}. We conclude using Proposition~\ref{p:supportiscellular} and Lemma~\ref{l:chamberpush} that $\Delta([X,Y])$ is a chamber complex and hence $\Lambda(B)$ is strongly connected.
\end{proof}

\begin{Thm}\label{t:quadraticdual}
Let $B$ be a connected CW left regular band (and so in particular $\Bbbk B$ is a quadratic algebra with the quiver presentation from Theorem~\ref{t:crucial}). Then the quadratic dual $\Bbbk B^!$ is isomorphic to $I(\Lambda(B)^{op};\Bbbk)$.
\end{Thm}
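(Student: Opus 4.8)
The plan is to deduce this statement directly from the general result Theorem~\ref{quadraticdual} by verifying that its hypotheses apply to the poset $P = \Lambda(B)$ equipped with the quiver presentation of $\Bbbk B$ obtained earlier.

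First I would recall from Theorem~\ref{t:crucial} that, because $B$ is a connected CW left regular band, the quiver $Q = Q(\Bbbk B)$ is exactly the Hasse diagram of $\Lambda(B)$, that $\Lambda(B)$ is graded, and that $\Bbbk B \cong \Bbbk Q/I$ where $I$ has the minimal system of relations $r_{X,Y} = \sum_{X<Z<Y}(X\to Z\to Y)$ ranging over all rank $2$ intervals $[X,Y]$ of $\Lambda(B)$. The key observation is that this is precisely the presentation appearing in the hypothesis of Theorem~\ref{quadraticdual} with $P = \Lambda(B)$: the generating relations $r_{X,Y}$ coincide with the relations \eqref{e:ourrelations}. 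Since these relations are homogeneous of degree $2$, the ideal $I$ is quadratic (as already recorded in Theorem~\ref{t:crucial}), so that the quadratic dual $\Bbbk B^!$ is well-defined and computed from this very presentation.

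Second, I would invoke Lemma~\ref{l:CWsupp.strong.conn}, which guarantees that $\Lambda(B)$ is a strongly connected graded poset. This is exactly the standing hypothesis required to apply Theorem~\ref{quadraticdual}. With both ingredients in place, the conclusion follows immediately by taking $P = \Lambda(B)$ in Theorem~\ref{quadraticdual}: the quadratic dual of $A = \Bbbk Q/I \cong \Bbbk B$ is isomorphic to the incidence algebra $I(P^{op};\Bbbk) = I(\Lambda(B)^{op};\Bbbk)$.

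I do not expect a genuine obstacle here, since the substantive work has been distributed among the three earlier results and this theorem is their confluence. The only point meriting care is bookkeeping: confirming that the quiver-with-relations presentation of $\Bbbk B$ furnished by Theorem~\ref{t:crucial} matches, relation for relation and vertex for vertex, the presentation of the algebra $A$ in the statement of Theorem~\ref{quadraticdual}, so that the phrase ``the quadratic dual of $\Bbbk B$'' and the symbol ``$A^!$'' refer to the same object. Once that identification is made explicit, the isomorphism $\Bbbk B^! \cong I(\Lambda(B)^{op};\Bbbk)$ is a direct application.
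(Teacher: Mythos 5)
Your proposal is correct and matches the paper's own proof, which likewise deduces the statement as an immediate consequence of Theorem~\ref{t:crucial}, Lemma~\ref{l:CWsupp.strong.conn} and Theorem~\ref{quadraticdual}. The bookkeeping point you flag (matching the presentation of $\Bbbk B$ to the algebra $A$ of Theorem~\ref{quadraticdual}) is exactly the identification the paper relies on implicitly.
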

\begin{proof}
The theorem is an immediate consequence of Theorem~\ref{t:crucial}, Theorem~\ref{quadraticdual} and Lemma~\ref{l:CWsupp.strong.conn}.
\end{proof}

We next aim to give a necessary condition for the algebra of a connected CW left regular band to be an incidence algebra.  Consequently, most of our CW left regular bands are not isomorphic to incidence algebras.  We will also show that if $K$ is a CAT(0) cube complex, then the algebra of its face semigroup $\FFF(K)$ is isomorphic to the incidence algebra of its intersection semilattice $\mathcal L(K)$.  The same is true, more generally, for any lopsided system.

A graded poset is said to be \emph{thin}\index{thin poset}\index{poset!thin} (cf.~\cite[Section~4.7]{OrientedMatroids1999}) if each interval $[x,y]$ of rank $2$ has cardinality $4$; see Figure~\ref{f:thin}.
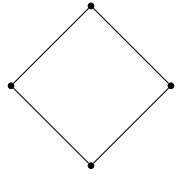
\begin{figure}[tb]
\begin{center}
\begin{tikzpicture}[vertices/.style={draw, fill=black, circle, inner sep=0.75pt}]
\node[vertices]  	(A) 								{};
\node[vertices]  	(B) [below left =of A] 		{};
\node[vertices]  	(C) [below right=of A] 	{};
\node[vertices] 	(D) [below left=of C]			{};
\draw (A)--(B);
\draw (A)--(C);
\draw (B)--(D);
\draw (C)--(D);
\end{tikzpicture}
\end{center}
\caption{A rank $2$ interval in a thin poset\label{f:thin}}
\end{figure}
For example, if $X$ is a simplicial complex and $P$ is the face poset of $X$ with an adjoined empty face, then $P$ is thin.  This is because each subposet $P_{\leq p}$ is a boolean lattice and boolean lattices are thin.

\begin{Thm}\label{t:CWLRB.thin}
Let $B$ be a connected CW left regular band and $\Bbbk$ a field.  If $\Bbbk B$ is isomorphic to the incidence algebra of a poset, then $\Lambda(B)$ is thin.
\end{Thm}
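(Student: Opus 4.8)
The plan is to compare two descriptions of $\Bbbk B$ as a bound quiver algebra and deduce a structural constraint on the poset $\Lambda(B)$. By Theorem~\ref{t:crucial}, $\Bbbk B\cong \Bbbk Q/I$ where $Q$ is the Hasse diagram of $\Lambda(B)$ and $I$ is generated by the relations $r_{X,Y}=\sum_{X<Z<Y}(X\to Z\to Y)$ over rank~$2$ intervals. On the other hand, if $\Bbbk B$ is isomorphic to the incidence algebra $I(P;\Bbbk)$ of some poset $P$, then since the quiver of a finite dimensional split basic algebra is an isomorphism invariant, the Hasse diagram of $P$ must be isomorphic to $Q$, so we may identify $P$ with $\Lambda(B)$ (as posets, up to the ambiguity that a poset and its opposite have the same Hasse diagram, which does not affect thinness). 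The point is then to compare the relations: the incidence algebra has relations given by differences $p-q$ of parallel paths of length~$2$ (by Lemma~\ref{l:strongconnposet}, using that $\Lambda(B)$ is strongly connected graded by Lemma~\ref{l:CWsupp.strong.conn}), whereas $\Bbbk B$ has the single relation $r_{X,Y}$ per rank~$2$ interval.

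First I would fix a rank~$2$ interval $[X,Y]$ in $\Lambda(B)$ and count $\dim_\Bbbk \varepsilon_Y[I/(JI+IJ)]\varepsilon_X$, which by the proof of Theorem~\ref{t:bongartz} (equation~\eqref{eq:relation.dim}) equals the number of elements of a minimal system of relations landing in $\varepsilon_Y I\varepsilon_X$, and which by Bongartz's theorem equals $\dim_\Bbbk\Ext^2_{\Bbbk B}(\Bbbk_X,\Bbbk_Y)$. For a connected CW left regular band this is $1$ by Corollary~\ref{c:cwlrb} (equation~\eqref{extcomputationCW}). Now suppose for contradiction that $\Lambda(B)$ is \emph{not} thin. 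Then some rank~$2$ interval $[X,Y]$ contains at least three intermediate elements $Z_1,Z_2,Z_3$, so $Q$ has at least three parallel paths of length~$2$ from $X$ to $Y$. The key computation I would carry out is that for the incidence algebra $I(\Lambda(B);\Bbbk)$, the dimension of $\varepsilon_Y[I'/(JI'+I'J)]\varepsilon_X$ is \emph{one fewer than} the number of such parallel paths of length~$2$: the space $\varepsilon_Y I'\varepsilon_X$ modulo $JI'+I'J$ is spanned by differences of the path-classes, and since all length-$2$ paths through this interval become equal in the quotient algebra while the relation module in this bidegree records $(\#\text{paths})-1$ independent differences. Thus for a non-thin interval the incidence-algebra count is at least $2$, contradicting the value $1$ forced by $\Ext^2_{\Bbbk B}(\Bbbk_X,\Bbbk_Y)$, which is an isomorphism invariant.

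Concretely, the argument runs: since $\Ext^2_A(\Bbbk_X,\Bbbk_Y)$ depends only on the isomorphism type of $A=\Bbbk B$, and $\Bbbk B\cong I(\Lambda(B);\Bbbk)$ by hypothesis, I would compute $\Ext^2$ of the incidence algebra in this bidegree directly from its quiver presentation via Bongartz's formula. Using Lemma~\ref{l:strongconnposet}, a minimal system of relations for $I(\Lambda(B);\Bbbk)$ restricted to the interval $[X,Y]$ of rank~$2$ consists of the differences of consecutive parallel length-$2$ paths, and the number of independent such differences is exactly $(\#\{Z : X<Z<Y\})-1$. Equating this with $\dim_\Bbbk\Ext^2_{\Bbbk B}(\Bbbk_X,\Bbbk_Y)=1$ yields $\#\{Z: X<Z<Y\}=2$, i.e.\ $[X,Y]$ has cardinality~$4$. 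Since $[X,Y]$ was an arbitrary rank~$2$ interval, $\Lambda(B)$ is thin.

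The main obstacle I anticipate is justifying cleanly that the minimal relation count for the incidence algebra in a given bidegree is $(\#\text{intermediate elements})-1$ rather than, say, the full dimension of $\varepsilon_Y\langle\text{relations}\rangle\varepsilon_X$; this requires correctly identifying the bidegree-$(X,Y)$ part of $I/(JI+IJ)$ and verifying the relations are minimal there. The safest route is to invoke Bongartz's theorem (Theorem~\ref{t:bongartz}) on \emph{both} sides, so that the comparison reduces to the single clean identity $\dim_\Bbbk\Ext^2=1$ for $\Bbbk B$ versus the computation of $\dim_\Bbbk\Ext^2$ for the incidence algebra; the latter computation for a strongly connected graded poset is where I would spend care, establishing that in a rank~$2$ interval the second Ext dimension counts the number of atoms of the interval minus one. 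This matches the known fact that $\Ext^2$ of an incidence algebra between adjacent-rank-by-two vertices measures the reduced homology $\til H^0$ of the open interval, whose dimension is one less than the number of its (connected components, here points) intermediate elements.
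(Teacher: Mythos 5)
Your proof is correct and follows essentially the same route as the paper's: identify $P$ with $\Lambda(B)$ via the quiver, then compare the two quiver presentations (the single relation $r_{X,Y}$ versus the differences of parallel length-$2$ paths) on a rank-$2$ interval to force exactly two intermediate elements. The only difference is bookkeeping --- the paper directly computes $\dim_\Bbbk\eta_Y\Bbbk B\eta_X=1$ and $\dim_\Bbbk\varepsilon_YI\varepsilon_X=1$ to conclude there are two paths, whereas you equate $\dim_\Bbbk\Ext^2_{\Bbbk B}(\Bbbk_X,\Bbbk_Y)=1$ with the count $m-1$ for the incidence algebra via Bongartz's theorem; these are equivalent ways of recording the same comparison.
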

\begin{proof}
Let $Q$ be the Hasse diagram of $\Lambda(B)$.  If $P$ is a poset, then the quiver of the incidence algebra of $P$ is the Hasse diagram of $P$. Since $Q$ is the quiver of $\Bbbk B$ by Theorem~\ref{t:crucial}, we conclude that if $\Bbbk B$ is isomorphic to an incidence algebra, then it is isomorphic to the incidence algebra of $\Lambda(B)$. Let us assume that this is indeed the case.  We show that $\Lambda(B)$ is thin.

Let $[X,Y]$ be an interval of rank $2$ in $\Lambda(B)$. Let $\{\eta_X\mid X\in \Lambda(B)\}$ be a complete set of orthogonal primitive idempotents constructed as per Section~\ref{sssec:orthogonal-idempotents} and let $I$ be the admissible ideal from Theorem~\ref{t:crucial}.  Then $\dim \eta_Y\Bbbk B\eta_X=1$ because $I(\Lambda(B);\Bbbk)$ is the quotient of $\Bbbk Q$ by the ideal $I'$ generated by differences of parallel paths and so $\dim \varepsilon_Y[\Bbbk Q/I']\varepsilon_X=1$. Thus $\varepsilon_YI\varepsilon_X$ has codimension $1$ in $\varepsilon_Y\Bbbk Q\varepsilon_X$.  On the other hand, we claim $\dim \varepsilon_YI\varepsilon_X=1$.  Indeed, retaining the notation of Theorem~\ref{t:crucial}, if $r\in \varepsilon_YI\varepsilon_X$, then
\begin{equation*}
r= \sum_{\rk[U,V]=2}p_{U,V}r_{U,V}q_{U,V}
\end{equation*}
where $p_{U,V}\in \varepsilon_Y\Bbbk Q\varepsilon_V$ and $q_{U,V}\in \varepsilon_U\Bbbk Q\varepsilon_X$.  But since each path in $r$ has length $2$ and each path in $r_{U,V}$ has length $2$, it follows that $p_{U,V}r_{U,V}q_{U,V}=0$ if $(U,V)\neq (X,Y)$ and that $r=k_{X,Y}\cdot r_{X,Y}$ for some $k_{X,Y}\in \Bbbk$.  Thus $\varepsilon_Y I\varepsilon_X=\Bbbk r_{X,Y}$.  We conclude that $\dim \varepsilon_Y\Bbbk Q\varepsilon_X=1+\dim\varepsilon_Y I\varepsilon_X= 2$ and hence $[X,Y]$ consists of four elements.  Thus $\Lambda(B)$ is thin.
\end{proof}

\begin{Rmk}
The  intersection lattice of the braid arrangement in $\mathbb R^n$ for $n\geq 3$, or of any central line arrangement in $\mathbb R^2$ with at least $3$ lines, is not thin and so their algebras are not incidence algebras.
\end{Rmk}

\begin{Cor}\label{c:cat0.as.inc}
Let $K$ be a finite CAT(0) cube complex and let $\Bbbk$ be a field.  Let $\FFF(K)$ be the face semigroup of $K$ and  $\mathcal L(K)$ the intersection semilattice of $K$.  Then $\Bbbk \FFF(\mathcal A)\cong I(\mathcal L(K);\Bbbk)$.
\end{Cor}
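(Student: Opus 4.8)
The plan is to show that $\FFF(K)$ satisfies the hypotheses of Theorem~\ref{t:quadraticdual} and that its support semilattice $\mathcal L(K)$ is thin, so that the quadratic dual computation collapses to an isomorphism with the incidence algebra itself. First I would invoke Theorem~\ref{embedinhypercube}, which establishes that $\FFF(K)$ is a connected CW left regular band with $\Lambda(\FFF(K))\cong \mathcal L(K)$. This immediately places us in the setting of Theorem~\ref{t:crucial}, so $\Bbbk\FFF(K)$ is the quadratic algebra $\Bbbk Q/I$ where $Q$ is the Hasse diagram of $\mathcal L(K)$ and $I$ is generated by the relations $r_{X,Y}=\sum_{X<Z<Y}(X\to Z\to Y)$ over rank~$2$ intervals.

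The heart of the argument is to verify that $\mathcal L(K)$ is thin, i.e.\ that every rank~$2$ interval $[X,Y]$ has exactly four elements, equivalently exactly two elements strictly between $X$ and $Y$. Geometrically, using the identification of $\mathcal L(K)$ with intersections of hyperplanes (Proposition~\ref{p:intersectcat0faceposet}), an element $X\in\mathcal L(K)$ corresponds to an intersection $\bigcap_{i\in A_X}H_i$ of hyperplanes, and by Theorem~\ref{embedinhypercube}(4)--(5) the interval structure is governed by the nerve $\mathcal N(\mathcal H)$, which is a flag complex. Since $\mathcal L(K)$ can be identified with the face poset of $\mathcal N(\mathcal H)$ together with an empty face, and face posets of simplicial complexes with an adjoined empty face are thin (each $P_{\leq p}$ is a boolean lattice, as noted just before Theorem~\ref{t:CWLRB.thin}), thinness follows. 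Concretely, a rank~$2$ interval $[X,Y]$ corresponds to passing from a face to one containing exactly two more vertices, and a boolean lattice of rank~$2$ has precisely four elements.

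With thinness established, I would finish as follows. Each relation $r_{X,Y}$ is a sum over the two intermediate elements $Z_1,Z_2$, so $r_{X,Y}=(X\to Z_1\to Y)+(X\to Z_2\to Y)$. Over a field, after a sign change on the arrows (or equivalently choosing the incidence numbers appropriately), generating the ideal by $r_{X,Y}$ is the same as generating it by the difference of the two parallel paths $(X\to Z_1\to Y)-(X\to Z_2\to Y)$. Indeed, the two arrows out of $X$ can be rescaled so that the sum relation becomes the difference relation; this is where one uses that $[X,Y]$ has exactly two intermediate elements. Then by Lemma~\ref{l:strongconnposet} (noting $\mathcal L(K)$ is strongly connected graded by Lemma~\ref{l:CWsupp.strong.conn}), the quotient of $\Bbbk Q$ by the ideal generated by differences of parallel length-$2$ paths is precisely $I(\mathcal L(K);\Bbbk)$. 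Hence $\Bbbk\FFF(K)\cong \Bbbk Q/I\cong I(\mathcal L(K);\Bbbk)$.

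The main obstacle I anticipate is the passage from the sum relations $r_{X,Y}$ to the difference relations defining the incidence algebra. This rescaling of arrows is harmless precisely because each rank~$2$ interval is thin (so there are exactly two summands), but it must be done consistently across all intervals simultaneously so that a single change of basis (rescaling the arrows of $Q$) converts all sum relations into difference relations at once. I would handle this by a triangularity argument: order $\Lambda(B)$ by a linear extension and inductively choose nonzero scalars on the arrows, so that each $r_{X,Y}$ becomes a scalar multiple of a difference of parallel paths; thinness guarantees there is no competition among summands within a single interval, and the overall consistency follows because distinct rank~$2$ intervals share at most an initial or terminal arrow. This identifies the ideal $I$ with the ideal $I'$ of Lemma~\ref{l:strongconnposet}, completing the proof.
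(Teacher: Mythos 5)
Your argument is correct in outline but follows a genuinely different route from the paper's. The paper never touches the quiver presentation of $\Bbbk\FFF(K)$: it observes that $\FFF(K)$ is a right ideal in $L^n$, so that $\Bbbk\FFF(K)=f\,\Bbbk L^n f$ for an idempotent $f$ which is a partial sum of the primitive idempotents $\eta_X$, and then uses that such a corner $fAf$ of an incidence algebra is again an incidence algebra (of the corresponding subposet), together with $I(P;\Bbbk)\otimes I(P';\Bbbk)\cong I(P\times P';\Bbbk)$, to reduce everything to the trivial observation that $\Bbbk L$ is the incidence algebra of the two-element chain. Your route instead goes through Theorem~\ref{t:crucial}: present $\Bbbk\FFF(K)$ as $\Bbbk Q/I$ with $Q$ the Hasse diagram of $\mathcal L(K)$ and $I$ generated by the two-term sums $r_{X,Y}$ over rank-$2$ intervals, note that $\mathcal L(K)$ is thin because it is the face poset of the nerve of the hyperplanes with an empty face adjoined (Proposition~\ref{p:intersectcat0faceposet}), and then rescale arrows so that each sum becomes a difference of parallel paths, after which Lemma~\ref{l:strongconnposet} together with Lemma~\ref{l:CWsupp.strong.conn} identifies the quotient with $I(\mathcal L(K);\Bbbk)$. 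Both proofs work; yours isolates exactly what is needed (thinness plus an orientation of the Hasse diagram) and meshes with Theorem~\ref{t:CWLRB.thin}, while the paper's requires no quiver computation and, as the paper remarks, applies verbatim to any connected right ideal of $L^n$ (e.g.\ any lopsided system).

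The one step you should not wave at is the ``consistent rescaling.'' You need $\epsilon\colon Q_1\to\Bbbk^*$ with $\epsilon(X\to Z_1)\epsilon(Z_1\to Y)=-\epsilon(X\to Z_2)\epsilon(Z_2\to Y)$ for every diamond, and your justification --- a greedy assignment along a linear extension plus the remark that two diamonds share at most one arrow --- is not a proof: the diamond constraints are not independent (already in a rank-$3$ boolean interval the six constraints multiply to a forced identity, since every arrow there lies in exactly two diamonds), so a greedy choice could in principle reach a diamond all four of whose arrows are already pinned down by earlier choices. The consistency does hold here, but the honest way to get it is to write the solution down: fix a linear order on the hyperplanes and set $\epsilon(A\to A\cup\{h\})=(-1)^{|\{h'\in A\mid h'<h\}|}$; the identity $\partial^2=0$ for simplicial incidence numbers is exactly the required diamond condition. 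With that substitution your proof is complete.
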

\begin{proof}
The semigroup $\FFF(K)$ is a right ideal in $L^n$ where $n$ is the number of hyperplanes of $K$.  The support semilattice $\Lambda(\FFF(K))\cong \mathcal L(K)$ is then an ideal in $\Lambda(L^n)$, which in turn is congruent to the boolean lattice $P(\mathcal H)$ where $\mathcal H$ is the set of hyperplanes of $K$.  If we fix $e_X$ with $L^ne_X=X$ for $X\in \Lambda(L^n)$ such  that $e_X\in \FFF(K)$ whenever $X\in \Lambda(\FFF(K))$ (viewed as an ideal of $\Lambda(L^n)$), then using that $\FFF(K)$ is a right ideal we conclude that the primitive idempotent $\eta_X$, constructed as per Section~\ref{sssec:orthogonal-idempotents}, belongs to $\Bbbk \FFF(K)$ whenever $X\in \Lambda(\FFF(K))$.

We conclude that if $f=\sum_{X\in \Lambda(\FFF(K))}\eta_X$, then $\Bbbk \FFF(K)=f\Bbbk L^nf$.  If $A$ is an incidence algebra and $f$ is a sum of a subset of a complete set of orthogonal primitive idempotents, then $fAf$ is the incidence algebra of the subposet corresponding to all elements of the poset whose corresponding primitive idempotent appears in $f$.  Thus it suffices to show that $\Bbbk L^n$ is isomorphic to an incidence algebra.

But $\Bbbk [M\times N]\cong \Bbbk M\otimes \Bbbk N$ for any monoids $M,N$ and the class of incidence algebras of posets is closed under tensor product because $I(P;\Bbbk)\otimes I(P';\Bbbk)\cong I(P\times P';\Bbbk)$.  So it suffices to show that $\Bbbk L$ is the incidence algebra of a poset.  But the quiver of $L$ is $\wh 0\longrightarrow \wh 1$ and hence $\Bbbk L$ is the incidence algebra of the two element chain.  The result follows.
\end{proof}

\begin{Rmk}
The proof of Corollary~\ref{c:cat0.as.inc}  applies to any right ideal in $L^n$ that is a connected left regular band and, in particular, to lopsided systems.
\end{Rmk}

\subsection{Koszul duals}
Let $B$ be a connected CW left regular band.  Our goal in this section is to prove that $\Bbbk B$ is a Koszul algebra.  The Koszul dual is then $I(\Lambda(B)^{op};\Bbbk)$ and $\Ext(\Bbbk B)\cong I(\Lambda(B);\Bbbk)$. We follow closely~\cite[Section~9]{Saliolahyperplane}.

Fix a complete set of orthogonal primitive idempotents $\{\eta_X\mid X\in \Lambda(B)\}$ constructed as per Section~\ref{sssec:orthogonal-idempotents}. Then the grading of $\Bbbk B$ coming from the quiver presentation in Theorem~\ref{t:crucial} can be defined `intrinsically' by
\[\Bbbk B=\bigoplus_{n\geq 0}\left[\bigoplus_{\rk[X,Y]=n} \eta_Y\Bbbk B\eta_X\right]\] (or one can just check directly that this is a grading with $(\Bbbk B)_0\cong \Bbbk^{\Lambda(B)}$).

\begin{Thm}\label{t:Koszul}
Let $B$ be a connected CW left regular band.  Then $\Bbbk B$ is a Koszul algebra and the Koszul dual $\Bbbk B^!$ is isomorphic to $I(\Lambda(L)^{op};\Bbbk)$. Hence $\Ext(\Bbbk B)\cong I(\Lambda(L);\Bbbk)$.
\end{Thm}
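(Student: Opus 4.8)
The plan is to verify Koszulness by exhibiting, for each simple module $\Bbbk_X$, a \emph{linear} projective resolution, that is, a resolution by graded projectives $P_q$ generated in degree $q$. We already have from Theorem~\ref{minresoftrivial} (and Corollary~\ref{c:othersimplesCWcontraction}) an explicit minimal projective resolution of $\Bbbk_X$, namely the augmented cellular chain complex $C_\bullet(\CW(B_{\geq X});\Bbbk)\to \Bbbk_X$, with the direct-sum decomposition
\[
C_q(\CW(B_{\geq X});\Bbbk)\cong \bigoplus_{\rk[X,Y]=q}\Bbbk L_Y
\]
from \eqref{e:schutzdecompositioncwcase}. Since $\Bbbk L_Y$ is the projective cover of the simple module $\Bbbk_Y$ and since $\Bbbk_Y$ sits at ``height $\rk[X,Y]$'' with respect to the intrinsic grading defined just before the theorem, the summands appearing in $C_q$ are exactly the projective covers of simples $\Bbbk_Y$ with $\rk[X,Y]=q$. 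Thus $C_q$ is generated in degree $q$, provided we confirm that the differentials in the cellular chain complex are homogeneous of degree zero with respect to the grading of $\Bbbk B$ coming from Theorem~\ref{t:crucial}. This compatibility is the technical heart of the argument.

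First I would make the grading of $\Bbbk B$ explicit using the quiver presentation of Theorem~\ref{t:crucial}: paths in $Q(\Bbbk B)$ are graded by length, which coincides with $\rk[X,Y]$ via the gradedness of $\Lambda(B)$ (Theorem~\ref{t:crucial}(b)). Then I would recall from Remark~\ref{r:cell.case} that the isomorphism $\Bbbk B\cong \bigoplus_{q\geq 0} C_q(\CW(B);\Bbbk)$ sends the oriented cell $[b]$ to $b$ for $b\in L_Y$; under this identification, the boundary map $d$ of the cellular chain complex sends a $q$-cell to a signed sum of its maximal proper faces, which are $(q-1)$-cells. Since a $q$-cell $b$ has $\sigma(b)=Y$ with $\rk[\wh 0,Y]=q$ and each maximal proper face $a$ has $\sigma(a)$ covered by $Y$, the differential lowers the grading-degree by exactly one; equivalently, for the resolution of $\Bbbk_X$, it maps the degree-$q$ part of $C_q(\CW(B_{\geq X});\Bbbk)$ into the degree-$(q-1)$ part of $C_{q-1}$. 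Hence, after the standard regrading making each $P_q$ generated in degree $q$, the maps have degree zero. This exhibits a linear resolution of each $\Bbbk_X$, and since $(\Bbbk B)_0\cong \Bbbk^{\Lambda(B)}$ is semisimple and every simple has a linear resolution, $\Bbbk B$ is Koszul.

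With Koszulness established, the remaining assertions are formal. By the general theory (recalled in the subsection on Koszul algebras), the Koszul dual of a Koszul algebra coincides with its quadratic dual, so $\Bbbk B^!\cong I(\Lambda(B)^{op};\Bbbk)$ by Theorem~\ref{t:quadraticdual}. Finally, again by the general theory $\Ext(\Bbbk B)\cong (\Bbbk B^!)^{op}\cong I(\Lambda(B)^{op};\Bbbk)^{op}$, and since the opposite of the incidence algebra of a poset is the incidence algebra of the opposite poset, $I(\Lambda(B)^{op};\Bbbk)^{op}\cong I(\Lambda(B);\Bbbk)$, giving $\Ext(\Bbbk B)\cong I(\Lambda(B);\Bbbk)$. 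The main obstacle I anticipate is the degree-zero verification of the cellular differential: one must carefully match the chosen orientations and incidence numbers from the proof of Theorem~\ref{t:projectiveresgeneral} with the idempotent-based grading, ensuring the identification of $\Bbbk L_Y$ with $\Bbbk B\eta_Y$ is genuinely graded and that no higher-degree components of the differential appear.
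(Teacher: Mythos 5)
Your proposal is correct and follows essentially the same route as the paper: the minimal projective resolution $C_\bullet(\CW(B_{\geq X});\Bbbk)\to\Bbbk_X$ from Theorem~\ref{minresoftrivial}, the decomposition $C_q\cong\bigoplus_{\rk[X,Y]=q}\Bbbk B\eta_Y$ from Corollary~\ref{c:othersimplesCWcontraction}, a check that each summand is generated in degree $q$ and that $d$ has degree $0$, and then Theorem~\ref{t:quadraticdual} plus the general Koszul duality formalism for the statements about $\Bbbk B^!$ and $\Ext(\Bbbk B)$. The ``technical heart'' you flag is resolved in the paper exactly by the idempotent-based grading you mention at the end --- one puts $\eta_W\Bbbk B\eta_Y$ in degree $\rk[X,W]$, so that $d(\eta_Wa\eta_Y)=\eta_W d(a\eta_Y)$ visibly stays in the $\eta_W$-component and $d$ is degree-preserving (not degree $-1$ followed by a regrading).
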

\begin{proof}
To prove that $\Bbbk B$ is Koszul with respect to the above grading it suffices to show that each simple module $\Bbbk_X$ (viewed as a graded module concentrated in degree $0$) has a linear resolution.

The augmented cellular chain complex $C_\bullet(\CW(L_{\geq X});\Bbbk)\to \Bbbk_X$ is the minimal projective resolution by Theorem~\ref{minresoftrivial}. We need to show that it is a linear resolution. As usual let $\Bbbk L_Y$ be the Sch\"utzenberger representation associated to $Y\in \Lambda(B)$. Then
\begin{equation}\label{e:needgrade}
C_q(\CW(B_{\geq X});\Bbbk) \cong \bigoplus_{\rk[X,Y]=q}\Bbbk L_Y \cong \bigoplus_{\rk[X,Y]=q}\Bbbk B\eta_Y
\end{equation}
by Corollary~\ref{c:othersimplesCWcontraction}. As before, we identify $C_q(\CW(B_{\geq X});\Bbbk)$ with \[\bigoplus_{\rk[X,Y]=q}\Bbbk B\eta_Y\] via the isomorphism sending an oriented cell $[b]$ (as per the proof of Theorem~\ref{t:projectiveresgeneral}) to the element $b\eta_{\sigma(b)}$; see Remark~\ref{r:cell.case}.

Next we observe that if $\rk[X,Y]=q$, then we can define a grading
\begin{equation*}
    \Bbbk B\eta_Y=\bigoplus_{i\geq 0}\left[\bigoplus_{\rk [X,W]=i}\eta_W\Bbbk B\eta_Y\right]
\end{equation*}
with lowest degree term of degree $q$ using that $\eta_W\Bbbk B\eta_Y=0$ if $W\ngeq Y$ and so, in particular, $\eta_W\Bbbk B\eta_Y=0$ if $\rk[X,W]<q$.
This, of course, puts a grading on the right hand side of \eqref{e:needgrade} whose lowest degree term is $q$.
The degree $q$ component of $\Bbbk B\eta_Y$ is $\eta_Y\Bbbk B\eta_Y=\Bbbk\eta_Y$.
Thus $\Bbbk B\eta_Y$ is generated in degree $q$ and hence so is the right hand side of \eqref{e:needgrade}.
It remains to show that $d$ is a degree $0$ map.
Indeed, we have \[d(\eta_Wa\eta_Y)=\eta_Wd(a\eta_Y)\in \bigoplus_{\rk [X,Y']=q-1}\eta_W\Bbbk B\eta_{Y'}\] and so $d$ has degree $0$.
We may now conclude that $C_\bullet(\CW(B_{\geq X});\Bbbk)\to \Bbbk_X$ is a linear resolution.
\end{proof}

In particular, this result applies to the case where $B$ is the face monoid of a real or complex hyperplane arrangement, the monoid of covectors of an oriented matroid or oriented interval greedoid, the face semigroup of an affine hyperplane arrangement or CAT(0) cube complex or the semigroup of covectors of an affine oriented matroid or, more generally, a COM.

\begin{Example}[Ladders]
This is a continuation of Examples~\ref{ladders} and~\ref{ladders2}.  Theorem~\ref{t:Koszul} implies that $\Bbbk L_n$ is a Koszul algebra whose dual is the incidence algebra of a chain of $n+1$ elements.  But this incidence algebra is precisely the path algebra of the Dynkin quiver $A_{n+1}$, oriented as a directed path.  Of course, it is well known that the Koszul dual of the path algebra of the Dynkin quiver $A_{n+1}$, oriented as a directed path, is the monomial algebra obtained by factoring out the paths of length two.
\end{Example}

 Polo and Woodcock~\cite{Polo,Woodcock} independently showed that the incidence algebra $I(P;\Bbbk)$ of a graded poset $P$ is a Koszul algebra with respect to the natural grading if and only if each open interval in $P$ is $\Bbbk$-Cohen-Macaulay. We thus obtain the following theorem, which will be applied to enumerate cells of connected CW left regular bands.

\begin{Thm}\label{t:cohenmac}
Let $B$ be a connected CW left regular band.  Then each open interval of $\Lambda(B)$ is a Cohen-Macaulay poset.  In particular, if $B$ is a monoid, then the proper part $\Lambda(B)\setminus \{\wh 0,\wh 1\}$ of $\Lambda(B)$ is Cohen-Macaulay.
\end{Thm}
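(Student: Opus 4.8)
The plan is to read off the Cohen--Macaulay property from the Koszulity of $\Bbbk B$ already established in Theorem~\ref{t:Koszul}, fed into the Polo--Woodcock criterion stated just above. I would begin by fixing an arbitrary field $\Bbbk$. By Theorem~\ref{t:Koszul} the algebra $\Bbbk B$ is Koszul and its Koszul dual is $\Bbbk B^!\cong I(\Lambda(B)^{op};\Bbbk)$, equivalently $\Ext(\Bbbk B)\cong I(\Lambda(B);\Bbbk)$. Since the Koszul dual of a Koszul algebra is again Koszul (standard Koszul duality, as reflected in the relations $A^!\cong \Ext(A)^{op}$ and $\Ext(\Ext(A))\cong A$ recorded in Section~\ref{s:algebraprelim}), the incidence algebra $I(\Lambda(B);\Bbbk)$ is Koszul with respect to its natural (rank) grading; note that this grading is exactly the cohomological one under the isomorphism, since $\Ext^n_{\Bbbk B}(\Bbbk_X,\Bbbk_Y)\neq 0$ forces $\rk[X,Y]=n$ by Corollary~\ref{c:cwlrb}.

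Next I would invoke that $\Lambda(B)$ is a graded poset by Theorem~\ref{t:crucial}(b), so that the theorem of Polo and Woodcock~\cite{Polo,Woodcock} applies to $P=\Lambda(B)$. The forward implication of that equivalence --- namely that Koszulity of $I(\Lambda(B);\Bbbk)$ forces every open interval of $\Lambda(B)$ to be $\Bbbk$-Cohen--Macaulay --- immediately gives the conclusion over the field $\Bbbk$.

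Because the entire argument runs uniformly over every field $\Bbbk$, each open interval of $\Lambda(B)$ is $\Bbbk$-Cohen--Macaulay for all fields $\Bbbk$; by the universal coefficient theorem (recalled just after the definition of Cohen--Macaulay complexes) this is equivalent to being $\mathbb Z$-Cohen--Macaulay, i.e.\ Cohen--Macaulay in the unqualified sense. This settles the first assertion. For the ``in particular'' statement, when $B$ is a monoid the semilattice $\Lambda(B)$ is a lattice with maximum $\wh 1 = B$ and minimum $\wh 0$, so its proper part $\Lambda(B)\setminus\{\wh 0,\wh 1\}$ is precisely the open interval $(\wh 0,\wh 1)$ and hence Cohen--Macaulay by what was just proved.

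I expect the only genuinely delicate point to be the passage from $\Bbbk$-Cohen--Macaulayness for all fields to Cohen--Macaulayness over $\mathbb Z$, which rests squarely on the universal coefficient theorem; everything else is a direct chaining of Theorem~\ref{t:Koszul}, the self-duality of the Koszul property, and the Polo--Woodcock equivalence. A minor point to keep straight is the opposite-poset bookkeeping: one may work with either $\Ext(\Bbbk B)\cong I(\Lambda(B);\Bbbk)$ or the Koszul dual $\Bbbk B^!\cong I(\Lambda(B)^{op};\Bbbk)$, since $\Delta((x,y))$ and $\Delta((y,x))$ are the same simplicial complex and hence open intervals of $\Lambda(B)$ and of $\Lambda(B)^{op}$ carry identical order complexes.
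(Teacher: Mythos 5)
Your proposal is correct and follows essentially the same route as the paper: the paper's proof is a one-line invocation of Theorem~\ref{t:Koszul} together with the Polo--Woodcock equivalence, followed by the universal coefficient theorem to pass from $\Bbbk$-Cohen--Macaulayness for all fields to the integral statement. Your write-up merely makes explicit the intermediate steps (self-duality of the Koszul property, gradedness of $\Lambda(B)$, and the opposite-poset bookkeeping) that the paper leaves implicit.
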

\begin{proof}
By the theorem of Polo and Woodcock~\cite{Polo,Woodcock} and Theorem~\ref{t:Koszul}, we have that each open interval of $\Lambda(B)$ is $\Bbbk$-Cohen-Macaulay for every field $\Bbbk$ and hence is Cohen-Macaulay by the universal coefficient theorem.
\end{proof}

\section{Injective envelopes for hyperplane arrangements, oriented matroids, CAT(0) cube complexes and COMs}\label{s:injenv}
In this section we give a geometric construction of the injective envelopes of the simple modules for $\Bbbk B$ when $B$ has the property that $bB$ is isomorphic to the face monoid of a central hyperplane arrangement or, more generally,  to the monoid of covectors of an oriented matroid for all $b\in B$.  This includes left regular bands associated to central and affine hyperplane arrangements, (possibly affine) oriented matroids, CAT(0) cube complexes and, more generally, COMs.

Recall that if $A$ is a finite dimensional $\Bbbk$-algebra and $e$ is a primitive idempotent of $A$, then $\Hom_{\Bbbk}(eA,\Bbbk)$ is the injective envelope of the simple module $Ae/\rad(A)e$.   That is, the injective envelope of a simple left module is the vector space dual of the right projective cover of the corresponding simple right module, so we proceed by constructing the right projective cover of each simple right module. We first do this for hyperplane arrangements and then we give the more general construction for oriented matroids.  Although this has some redundancy, there may be readers who are more familiar with hyperplane arrangements than oriented matroids and so we would like that case to be self-contained.

Let us fix a field $\Bbbk$ for this section.

\subsection{Generalities}
We begin with some generalities on right modules for left regular bands. Let $B$ be a connected left regular band and $X\in \Lambda(B)$.  Note first that the simple right $\Bbbk B$-module corresponding to $X$ is $\eta_X\Bbbk \Lambda(B)=\Bbbk \Lambda(B)\eta_X=\Bbbk\eta_X\cong \Bbbk$ with action
\[kb = \begin{cases}k, & \text{if}\ \sigma(b)\geq X\\ 0, & \text{else} \end{cases}\] for $k\in \Bbbk$ and $b\in B$.   We denote this simple module also by $\Bbbk_X$.

If $B$ is a left regular band and $e\in B$, then we define $\p_e\colon B\to eB$ by $\p_e(b)=eb$.  Recall that $eB=eBe$ is a left regular band monoid.

\begin{Prop}\label{p:retracthom}
The mapping $\p_e\colon B\to eB$ is a surjective homomorphism.
\end{Prop}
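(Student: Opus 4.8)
The claim is that $\p_e\colon B\to eB$ defined by $\p_e(b)=eb$ is a surjective homomorphism of left regular bands. The plan is to verify the two required properties directly from the left regular band axioms $x^2=x$ and $xyx=xy$.

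First I would establish surjectivity, which is immediate: for any element $eb\in eB$, we have $\p_e(b)=eb$, and moreover $\p_e$ restricted to $eB$ is the identity, since for $b\in eB$ we may write $b=eb'$ and then $\p_e(b)=eb=e(eb')=eb'=b$ using idempotency of $e$. So the image is exactly $eB$.

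The main content is showing $\p_e$ is a homomorphism, that is, $\p_e(ab)=\p_e(a)\p_e(b)$ for all $a,b\in B$, which unwinds to the identity $e(ab)=(ea)(eb)$. Here I would use the left regular band axiom in the form that allows absorbing the interposed factor: starting from the right-hand side, $(ea)(eb)=eaeb$, and since $eae=ea$ by the identity $xyx=xy$ (taking $x=e$, $y=a$), we get $eaeb=eab=e(ab)$, as desired. This single application of the defining identity is the crux of the argument; there is no genuine obstacle, as it is a one-line computation, but care is needed to invoke $xyx=xy$ in the correct orientation (so that $eae$ collapses to $ea$ rather than attempting to collapse a factor that does not appear adjacently). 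I expect the verification to be entirely routine once this reduction is made, so the proof will be very short.

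\begin{proof}
The map $\p_e$ is surjective since every element of $eB$ has the form $eb=\p_e(b)$.  To see that $\p_e$ is a homomorphism, let $a,b\in B$.  Using the left regular band identity $xyx=xy$ with $x=e$ and $y=a$, we have $eae=ea$, and therefore
\[
\p_e(a)\p_e(b)=(ea)(eb)=e(ae)b=(ea)b=e(ab)=\p_e(ab).
\]
Thus $\p_e\colon B\to eB$ is a surjective homomorphism.
\end{proof}
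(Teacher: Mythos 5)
Your proof is correct and takes essentially the same route as the paper: both reduce the homomorphism property to the computation $eab=eaeb$ via the identity $eae=ea$, and note surjectivity is immediate. The only difference is that you spell out explicitly which instance of $xyx=xy$ is being invoked, which the paper leaves implicit.
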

\begin{proof}
We have $\p_e(ab)=eab=eaeb=\p_e(a)\p_e(b)$ by the left regular band axioms.  Clearly, $\p$ is surjective.
\end{proof}

Any right $\Bbbk eB$-module can be lifted to a $\Bbbk B$-module via $\p_e$.  This applies in particular to $\Bbbk eB$ itself.

\begin{Prop}\label{p:liftup}
Let $B$ be a left regular band and $e\in B$.  There is an isomorphism of right $\Bbbk B$-modules $e\Bbbk B\to \Bbbk eB$ induced by the identity map on $eB$ where we give $\Bbbk eB$ the right $\Bbbk B$-module structure induced via $\p_e$ and we view $e\Bbbk B$ as a right ideal of $\Bbbk B$.
\end{Prop}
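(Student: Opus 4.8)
The plan is to make the asserted isomorphism completely explicit and then reduce the verification of right $\Bbbk B$-linearity to Proposition~\ref{p:retracthom}. First I would observe that, as a $\Bbbk$-module, the right ideal $e\Bbbk B$ is spanned by $\{eb\mid b\in B\}$; since $eB\subseteq B$ while $B$ is a $\Bbbk$-basis of $\Bbbk B$, the elements of the set $eB$ are linearly independent and hence form a $\Bbbk$-basis of $e\Bbbk B$. By definition $\Bbbk eB$ is free with the very same basis $eB$. Consequently the identity map on the set $eB$ extends to a $\Bbbk$-linear bijection $\psi\colon e\Bbbk B\to \Bbbk eB$, and it remains only to check that $\psi$ respects the two right $\Bbbk B$-module structures.

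Next I would test the homomorphism condition. Since both actions are $\Bbbk$-bilinear, it suffices to verify compatibility on a basis element $eb\in eB$ and on an element $a\in B$. On the one hand, inside the right ideal $e\Bbbk B\subseteq \Bbbk B$ the action is ordinary multiplication in $\Bbbk B$, so $(eb)\cdot a = eba$ and thus $\psi\big((eb)\cdot a\big)=eba$, now regarded as a basis element of $\Bbbk eB$. On the other hand, by the construction of the module structure on $\Bbbk eB$ via $\p_e$, the element $a$ acts through $\p_e(a)=ea$, so $\psi(eb)\cdot a = (eb)\,\p_e(a)=(eb)(ea)$, the product being computed in the monoid $eB$.

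The entire statement therefore collapses to the single identity $eba=(eb)(ea)$ in $B$, which is precisely the assertion that $\p_e$ is a semigroup homomorphism, established in Proposition~\ref{p:retracthom}: applying that homomorphism to the product $ba$ gives $\p_e(ba)=\p_e(b)\p_e(a)$, i.e.\ $eba=(eb)(ea)$. With this identity in hand, $\psi$ intertwines the two right actions on basis elements, hence on all of $e\Bbbk B$ by bilinearity; being a $\Bbbk$-linear bijection, it is an isomorphism of right $\Bbbk B$-modules, as required.

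The argument is essentially a bookkeeping exercise, so I do not anticipate a serious obstacle. The only point demanding care is to unwind the two genuinely different right $\Bbbk B$-module structures correctly — ordinary multiplication inside $\Bbbk B$ on the domain versus restriction of scalars along $\p_e$ on the codomain — so that the linearity check reduces exactly to the homomorphism property of $\p_e$ rather than to some spurious relation; once this matching is set up, Proposition~\ref{p:retracthom} closes the proof immediately.
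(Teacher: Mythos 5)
Your proof is correct and follows essentially the same route as the paper: the paper's one-line verification $ab=eab=eaeb=aeb=a\p_e(b)$ for $a\in eB$, $b\in B$ is exactly your identity $eba=(eb)(ea)$, obtained there directly from the left regular band axioms rather than by citing Proposition~\ref{p:retracthom}. Your additional remarks on why $eB$ is a $\Bbbk$-basis of $e\Bbbk B$ are accurate and just make explicit what the paper leaves implicit.
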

\begin{proof}
If $a\in eB$ and $b\in B$, then $ab=eab=eaeb=aeb=a\p_e(b)$ and the result follows.
\end{proof}

An immediate consequence of Proposition~\ref{p:liftup} is the following corollary.

\begin{Cor}\label{cor:proj.right.lift}
Let $B$ be a connected left regular band and $X\in \Lambda(B)$ with $X=Be_X$.  Then the simple right $\Bbbk B$-module $\Bbbk_X$ is obtained by lifting $\Bbbk e_XB/\Bbbk[\bd e_XB]$ from $\Bbbk e_XB$ via $\p_e$.  Moreover, if $P_X$ is a projective indecomposable right $\Bbbk e_XB$-module with $P_X/\rad(P_X)\cong \Bbbk e_XB/\Bbbk[\bd e_XB]$, then $P_X$ is a direct summand in $e_X\Bbbk B$ as a right $\Bbbk B$-module and is thus the right projective cover of $\Bbbk_X$ as a $\Bbbk B$-module.
\end{Cor}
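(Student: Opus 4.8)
The plan is to establish the two assertions separately, deducing the projective-cover statement from Proposition~\ref{p:liftup} together with standard facts about radicals under surjective algebra maps. Throughout, $e=e_X$ is idempotent (being an element of the band $B$) and $e_XB$ is a left regular band monoid with identity $e_X$, so $\Bbbk e_XB$ is unital by Theorem~\ref{t:unital}; likewise $\Bbbk B$ is unital since $B$ is connected.

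First I would check that lifting $\Bbbk e_XB/\Bbbk[\bd e_XB]$ along $\p_e$ produces the simple right module $\Bbbk_X$. Since $e_X$ is the identity, hence the maximum, of $e_XB$, its $\mathscr L$-class in $e_XB$ is the singleton $\{e_X\}$; thus in the one-dimensional module $\Bbbk e_XB/\Bbbk[\bd e_XB]$ an element $a\in e_XB$ acts on the image of $e_X$ by $1$ if $a=e_X$ and by $0$ otherwise. Lifting along $\p_e(b)=e_Xb$, an element $b\in B$ then acts by the scalar attached to $e_Xb$. Using $\sigma(e_Xb)=X\wedge Bb$ and again that the $\mathscr L$-class of $e_X$ in $e_XB$ is a single point, one gets $e_Xb=e_X$ precisely when $Bb\geq X$. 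Hence $b$ acts by $1$ if $\sigma(b)\geq X$ and by $0$ otherwise, which is exactly the character of $\Bbbk_X$ (recall from Theorem~\ref{t:solomon} and Corollary~\ref{c:jac.radical} that the simple $\Bbbk B$-modules are the one-dimensional characters factoring through $\sigma\colon \Bbbk B\to\Bbbk\Lambda(B)$, with $X$ corresponding to $b\mapsto 1$ iff $\sigma(b)\geq X$; these characters are left--right symmetric since $\Bbbk\Lambda(B)$ is commutative).

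For the ``moreover'' part I would use that the regular right module $\Bbbk e_XB$ decomposes into projective indecomposable right $\Bbbk e_XB$-modules, with $P_X$ (the one with top $\Bbbk e_XB/\Bbbk[\bd e_XB]$) occurring among the summands. Restricting scalars along the induced surjection $\p_e\colon \Bbbk B\to\Bbbk e_XB$ preserves this decomposition, so $\Bbbk e_XB\cong P_X\oplus(\cdots)$ as right $\Bbbk B$-modules; by Proposition~\ref{p:liftup}, $e_X\Bbbk B\cong\Bbbk e_XB$ as right $\Bbbk B$-modules, and therefore $P_X$ is a direct summand of $e_X\Bbbk B$. Moreover $e_X\Bbbk B$ is a direct summand of the free module $\Bbbk B$ (as $e_X$ is idempotent in the unital algebra $\Bbbk B$), hence projective over $\Bbbk B$, so its summand $P_X$ is projective over $\Bbbk B$ as well. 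It is important here to obtain $\Bbbk B$-projectivity through $e_X\Bbbk B$ rather than by claiming that restriction of scalars along $\p_e$ preserves projectivity, which fails in general.

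Finally I would identify the $\Bbbk B$-top of $P_X$. The step I expect to be the main obstacle is showing that the radical of $P_X$ over $\Bbbk B$ coincides with its radical over $\Bbbk e_XB$. For this, since $\p_e\colon \Bbbk B\to\Bbbk e_XB$ is a surjective homomorphism of finite-dimensional algebras, $\p_e(\rad\Bbbk B)=\rad\Bbbk e_XB$: the image of the radical is a nilpotent ideal and hence lies in $\rad\Bbbk e_XB$, while surjectivity of the induced map $\Bbbk B/\rad\Bbbk B\to\Bbbk e_XB/\p_e(\rad\Bbbk B)$ exhibits the target as a quotient of a semisimple algebra, giving the reverse inclusion. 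Consequently $\rad(\Bbbk B)\,P_X=\p_e(\rad\Bbbk B)\,P_X=\rad(\Bbbk e_XB)\,P_X$, so the two radicals agree and $P_X/\rad_{\Bbbk B}(P_X)\cong\Bbbk e_XB/\Bbbk[\bd e_XB]$, which lifts to $\Bbbk_X$ by the first part. A projective module with simple top is the projective cover of that top (and is indecomposable), so $P_X$ is the right projective cover of $\Bbbk_X$ over $\Bbbk B$, as claimed.
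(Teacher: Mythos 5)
Your proof is correct and follows the route the paper intends: the paper states this corollary without proof, declaring it an immediate consequence of Proposition~\ref{p:liftup}, and your argument supplies exactly the missing details (the identification of the lifted one-dimensional module with the right simple $\Bbbk_X$, projectivity over $\Bbbk B$ obtained through the summand $e_X\Bbbk B$ of the unital algebra $\Bbbk B$ rather than by restriction of scalars, and the equality $\p_e(\rad \Bbbk B)=\rad \Bbbk e_XB$ to match the tops). The only quibble is notational: since $P_X$ is a right module its radical is $P_X\rad(\Bbbk B)$ rather than $\rad(\Bbbk B)P_X$, but this does not affect the argument.
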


Thus, for constructing right projective covers of simple modules, we are reduced to considering left regular band monoids $B$ and $\Bbbk_{\wh 1}$ where $\wh 1=B$ is the maximum element of the lattice $\Lambda(B)$.  The next lemma provides necessary and sufficient conditions for a quotient by a right ideal to give a right projective cover of $\Bbbk_{\wh 1}$.

\begin{Lemma}\label{l:right.proj.cover}
Let $B$ be a left regular band monoid and denote by $\wh{1}$ the maximum of $\Lambda(B)$.  Suppose that $R\subsetneq B$ is a proper right ideal.  Then the natural homomorphism $\psi\colon \Bbbk B/\Bbbk R\to \Bbbk_{\wh 1}$ induced by $1\mapsto 1$ and $b\mapsto 0$ for $b\neq 1$ is a right projective cover if and only if
\begin{enumerate}
\item $BR=\bd B$;
\item $R$ is a connected left regular band.
\end{enumerate}
\end{Lemma}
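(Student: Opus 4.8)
The plan is to characterize when the surjection $\psi\colon \Bbbk B/\Bbbk R\to \Bbbk_{\wh 1}$ is a right projective cover by separately analyzing projectivity and the condition $\ker\psi\subseteq\rad(\Bbbk B/\Bbbk R)$. Recall that a projective cover is precisely a surjection from a projective module whose kernel lies in the radical. Here $\ker\psi = \Bbbk[\bd B]/\Bbbk R = \Bbbk[\bd B\setminus R]$ (spanned by the cosets of elements of $\bd B$ not already killed). First I would handle the radical condition. The radical of $\Bbbk B$ is $\ker\sigma$ by Corollary~\ref{c:jac.radical}, and since $B$ is a monoid its algebra is unital, so $\rad(\Bbbk B/\Bbbk R)$ is the image of $\rad(\Bbbk B)$. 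The quotient $\Bbbk B/\Bbbk R$ has its simple modules indexed by those $X\in\Lambda(B)$ with $L_X\not\subseteq R$, and the condition that $\ker\psi$ contains no copy of the trivial top (other than through $1$) is what forces $\psi$ to be an essential surjection. I expect condition (1), namely $BR=\bd B$, to be exactly the combinatorial translation of $\ker\psi\subseteq\rad$: it says every non-identity element of $B$ is reachable from $R$ by left multiplication, which is equivalent to saying the top of $\Bbbk B/\Bbbk R$ is simple and concentrated at $\wh 1$.

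Next I would address projectivity of $\Bbbk B/\Bbbk R$ as a right module, which is where condition (2) enters. The key tool is the dual (right-handed) version of the theory developed in Section~\ref{s:lrbalgebra}: right $\Bbbk B$-modules of the form $\Bbbk B/\Bbbk R$ for a right ideal $R$ should be projective exactly when the complementary structure is well-behaved. The natural approach is to exhibit $\Bbbk B/\Bbbk R$ as (isomorphic to) a summand of a free module, or to use that a quotient $\Bbbk B/\Bbbk R$ is projective iff the short exact sequence $0\to\Bbbk R\to\Bbbk B\to\Bbbk B/\Bbbk R\to 0$ splits. I would relate the splitting to the homological vanishing $\mathrm{Ext}^1_{\Bbbk B}(\Bbbk B/\Bbbk R,-)=0$, and then translate projectivity of $\Bbbk B/\Bbbk R$ into the requirement that $R$, as a left regular band in its own right, have a unital algebra. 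By Theorem~\ref{t:unital}, $\Bbbk R$ is unital if and only if $R$ is connected, which is precisely condition (2). Concretely, connectedness of $R$ guarantees that $\Bbbk R$ has an identity $\eta_R$, and this idempotent provides the splitting idempotent realizing $\Bbbk B/\Bbbk R$ as a direct summand $(1-\eta_R)\Bbbk B$ (or the appropriate one-sided analogue) of the projective module $\Bbbk B$.

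For the forward direction I would argue contrapositively: if $R$ is not connected, then some contraction $R_{\geq X}$ has disconnected order complex, and by the argument in the proof of Theorem~\ref{t:unital} (the surjection onto a left-zero semigroup algebra via $\pi_0$) one obtains that $\Bbbk R$ is not unital, hence the relevant sequence does not split and $\Bbbk B/\Bbbk R$ fails to be projective. Similarly, if $BR\neq\bd B$, then there is a non-identity element of $B$ lying in a left ideal disjoint from $BR$, producing an extra simple summand in the top of $\Bbbk B/\Bbbk R$, so $\ker\psi\not\subseteq\rad$ and $\psi$ is not essential. Assembling the two equivalences gives the result.

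The main obstacle I anticipate is correctly setting up the \emph{right-module} bookkeeping, since all of Section~\ref{s:lrbalgebra} (the idempotents $\eta_X$, the Schützenberger modules $\Bbbk L_X$, Theorem~\ref{Schutz}) is developed for left modules, and the interaction between left multiplication governing the poset order and right multiplication governing the module action must be tracked carefully. In particular, the precise identification of $\rad(\Bbbk B/\Bbbk R)$ and the claim that $BR=\bd B$ forces a simple top both require the right-handed analogues of the orthogonal-idempotent machinery; verifying that the right identity $\eta_R$ of $\Bbbk R$ splits off $\Bbbk B/\Bbbk R$ as a projective right module is the delicate computational heart of the argument, and I would want to check that condition (1) is genuinely independent of (2) rather than implied by it.
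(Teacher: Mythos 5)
Your proposal is correct and follows essentially the same route as the paper: condition (1) is shown to be equivalent to $\ker\psi\subseteq\rad(\Bbbk B/\Bbbk R)$ (via the radical basis of Proposition~\ref{rad.basis} and, for the converse, a nonzero homomorphism onto some $\Bbbk_{Bb}$ with $b\in\bd B\setminus BR$), while condition (2) is equivalent to projectivity because the identity $\eta$ of $\Bbbk R$ (which exists iff $R$ is connected, by Theorem~\ref{t:unital}) yields the splitting $\Bbbk B=\Bbbk R\oplus(1-\eta)\Bbbk B$, and conversely a splitting forces $\Bbbk R=\eta\Bbbk B$ to be unital. The right-module bookkeeping you worry about is not an obstacle in the paper's proof, since only the two-sided identity of $\Bbbk R$ and the (two-sided) radical are used rather than any right-handed analogue of the $\eta_X$ machinery.
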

\begin{proof}
Suppose first that $\psi$ is a right projective cover.  Then the exact sequence
\[\xymatrix{0\ar[r] & \Bbbk R\ar[r] & \Bbbk B\ar[r] & \Bbbk B/\Bbbk R\ar[r] & 0}\] splits because $\Bbbk B/\Bbbk R$ is projective.  It follows that $\Bbbk R$ is  projective and hence of the form $\eta \Bbbk B$ for some idempotent $\eta\in \Bbbk B$, which must then necessarily be in $\Bbbk R$.  But then $\eta$ is a left identity for $\Bbbk R$, which is therefore a unital algebra as all left regular band algebras have a right identity by Theorem~\ref{primidempotentprops}.  Thus $R$ is connected by Theorem~\ref{t:unital}.  Suppose that $BR\neq \bd B$ and let $b\in \bd B\setminus BR$.  Notice that if $Ba\supseteq Bb$, then $ba=b$ and so  $a\in B\setminus R$.  Thus we can define a non-zero $\Bbbk B$-module homomorphism $\alpha\colon \Bbbk B/\Bbbk R\to \Bbbk_{Bb}$ by
\[\alpha(a+\Bbbk R)= \begin{cases}1, & \text{if}\ Ba\supseteq Bb\\ 0, & \text{else.}\end{cases}\]  This contradicts that $\Bbbk B/\Bbbk R$ has simple top $\Bbbk_{\wh 1}$.  Thus $BR=\bd B$.

We begin the proof of the converse by verifying that $\Bbbk B/\Bbbk R$ is indecomposable. This requires only assumption (1).
Clearly, $\psi$ is a surjective module homomorphism with kernel $\Bbbk \bd B/\Bbbk R$.  We first show that $\ker \psi=\rad(\Bbbk B/\Bbbk R)$.  The inclusion $\rad(\Bbbk B/\Bbbk R)\subseteq \ker \psi$ follows because $\Bbbk_{\wh 1}$ is simple. We show the reverse inclusion by verifying that \[\Bbbk \bd B/\Bbbk R\subseteq (\Bbbk B/\Bbbk R)\cdot\rad(\Bbbk B).\]  Let $x\in \bd B$.  By hypothesis, we can find $r\in R$ with $x\in Br$.  Then $Bx=Brx$ and so $x-rx\in \rad(\Bbbk B)$ by Proposition~\ref{rad.basis}.  But then \[(1+\Bbbk R)(x-rx) = x-rx+\Bbbk R=x+\Bbbk R\] because $rx\in R$. This completes the proof that $\rad(\Bbbk B/\Bbbk R)=\ker \psi$.  We deduce that $\Bbbk B/\Bbbk R$ is an indecomposable module (since a module over a finite dimensional algebra with simple top is indecomposable).

Now we prove that assumption (2) yields projectivity.  Indeed, since $R$ is connected we deduce that $\Bbbk R$ has an identity $\eta$ by Theorem~\ref{t:unital}. As $\Bbbk R$ is a right ideal, clearly $\eta\Bbbk B\subseteq \Bbbk R$.  As the reverse inclusion is obvious, we deduce that $\Bbbk B=\eta\Bbbk B\oplus (1-\eta)\Bbbk B=\Bbbk R\oplus (1-\eta)\Bbbk B$ and so $\Bbbk B/\Bbbk R\cong (1-\eta)\Bbbk B$ is a projective right $\Bbbk B$-module.
\end{proof}

\begin{Rmk}
Let $A$ be a set with at least two elements and $F(A)$ the free left regular band monoid on $A$.  As the elements of $A$ are in distinct maximal $\mathscr L$-classes of $F(A)$, the only proper right ideal $R$ of $F(A)$ with $F(A)R=\bd F(A)$ is $R=\bd F(A)$.  But the Hasse diagram of $\bd F(A)$ is a forest with $|A|$ connected components.  It follows from Lemma~\ref{l:right.proj.cover} that the right projective cover of $\Bbbk_{\wh 1}$ is not of the form $\Bbbk F(A)/\Bbbk R$ for any right ideal $R$ of $F(A)$.
\end{Rmk}

Our goal is to show that Lemma~\ref{l:right.proj.cover} can be applied to the algebras of a number of the CW left regular bands that we have been considering.

\subsection{Hyperplane arrangements}
Let us  begin with the case of the face monoid of a central arrangement.  The idea behind what follows was inspired by considering the line shelling associated to a zonotope~\cite[Chapter~8.2]{Ziegler}, but we will work with the dual picture. Let $\AAA=\{H_1,\ldots, H_n\}$ be an essential central hyperplane in $V=\mathbb R^d$. Let us assume that $H_i$ is defined by a form $f_i\in V^*$.  Let $\theta\colon \mathbb R^d\to \FFF(\AAA)$ be given by
\[\theta(x)=(\sgn(f_1(x)),\ldots, \sgn(f_n(x))).\]

We say that a hyperplane $H$ in $V$ is \emph{in general position}\index{in general position} or \emph{generic}\index{generic} with respect to $\AAA$ if $H$ contains no element of the intersection lattice $\mathcal L(\AAA)\cong \Lambda(\FFF(\AAA))$ except the origin.  Equivalently, if $f\in V^*$ is a form defining $H$, then $f(W)=\{0\}$ implies $W=\{0\}$
for all $W\in \mathcal L(\AAA)$.

\begin{Prop}\label{p:generic.hyp}
There exist generic hyperplanes with respect to $\AAA$.
\end{Prop}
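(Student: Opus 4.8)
The statement to prove is that generic hyperplanes exist with respect to $\AAA$. A hyperplane $H$ with defining form $f\in V^*$ is generic precisely when $f$ does not vanish on any nonzero subspace $W\in\mathcal L(\AAA)$, i.e., when $\ker f\cap W=\{0\}$ for every such $W$ other than $\{0\}$ itself (equivalently, $f$ restricts to a nonzero functional on each nonzero flat). Since $\mathcal L(\AAA)$ is a finite collection of subspaces, the plan is to show that the set of ``bad'' forms—those vanishing on some nonzero flat—is a finite union of proper subspaces of $V^*$, and hence cannot exhaust $V^*$.

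First I would fix a nonzero element $W\in\mathcal L(\AAA)$ and consider the set $B_W=\{f\in V^*\mid f(W)=\{0\}\}$. This is precisely the annihilator $W^{\perp}\subseteq V^*$, which is a linear subspace of dimension $d-\dim W$. Since $W\neq\{0\}$, we have $\dim W\geq 1$, so $B_W$ is a proper subspace of $V^*$. The key observation is that $H$ (with form $f$) fails to be generic exactly when $f\in B_W$ for some nonzero $W\in\mathcal L(\AAA)$; that is, the set of bad forms is $\bigcup_{\{0\}\neq W\in\mathcal L(\AAA)} W^{\perp}$.

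Next I would invoke the standard fact that a vector space over an infinite field (here $\mathbb R$) is never a finite union of proper subspaces. Because $\mathcal L(\AAA)$ is finite (being a sublattice of the intersection lattice of finitely many hyperplanes), the union $\bigcup_{\{0\}\neq W} W^{\perp}$ is a finite union of proper subspaces of $V^*$. Therefore there exists $f\in V^*$ lying in none of them; any such $f$ is nonzero (the zero form lies in every $W^{\perp}$), and its kernel $H=\ker f$ is a hyperplane in general position with respect to $\AAA$. One minor point to confirm is that a generic $f$ indeed satisfies the essential requirement that $f(W)=\{0\}$ forces $W=\{0\}$: this is immediate, since $f\notin W^{\perp}$ for every nonzero $W\in\mathcal L(\AAA)$ means $f$ does not annihilate any nonzero flat.

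There is no serious obstacle here; the only thing requiring care is the infinite-field lemma, and I would either cite it or give the one-line argument that if $V^*=\bigcup_{i=1}^m U_i$ with the $U_i$ proper, one can pick $u\notin U_1$ and $v\in U_1$, and then among the infinitely many vectors $v+tu$ ($t\in\mathbb R$) two lie in a common $U_i$, forcing $u\in U_i$ and a contradiction by pigeonhole. This establishes the existence of the desired generic form and completes the proof.
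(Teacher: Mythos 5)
Your proof is correct and follows essentially the same argument as the paper: identify the bad forms with the annihilators $W^{\perp}$ of the nonzero elements of $\mathcal L(\AAA)$, observe these are finitely many proper subspaces of $V^*$, and conclude via the fact that a vector space over an infinite field is never a finite union of proper subspaces. Your additional sketch of that last lemma is a fine supplement but not needed beyond what the paper records.
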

\begin{proof}
Suppose that $\mathcal L(\AAA)\setminus \{\{0\}\}=\{W_1,\ldots, W_r\}$.  Then the annihilator $W_i^{\perp}$ is a proper subspace of $V^*$.  As a vector space over an infinite field is never a finite union of proper subspaces, we conclude that there is a form $f\in V^*\setminus \bigcup_{i=1}^r W_i^{\perp}$.  The hyperplane $H$ defined by $f=0$ is generic with respect to $\AAA$.
\end{proof}

Fix now a hyperplane $H$ generic with respect to $\AAA$ defined by a form $f\in V^*$.  We retain all the previous notation.  Let $\wh{\AAA}=\AAA\cup \{H\}$ and let $\wh{\theta}\colon \mathbb R^d\to \FFF(\wh{\AAA})$ be given by
\[\wh{\theta}(x)=(\sgn(f_1(x)),\ldots, \sgn(f_n(x)),\sgn(f(x))).\]  The natural projection $\pi\colon L^{n+1}\to L^n$ defined by \[\pi(x_1,\ldots, x_{n+1})=(x_1,\ldots, x_n)\] restricts to a surjective homomorphism $\pi\colon \FFF(\wh{\AAA})\to \FFF(\AAA)$ as $\pi(\wh{\theta}(x))=\theta(x)$ for $x\in \mathbb R^d$.   Denote by $\FFF^+(\wh{\AAA})$ the set of elements $(x_1,\ldots, x_{n+1})\in \FFF(\wh{\AAA})$ such that $x_{n+1}=+$. Note that $\FFF^+(\wh{\AAA})$ is a right ideal in $\FFF(\wh{\AAA})$ and is isomorphic to the face semigroup of the $(d-1)$-dimensional affine arrangement obtained by intersecting the hyperplanes of $\AAA$ with the affine hyperplane defined by $f=1$.  Thus $\FFF^+(\wh{\AAA})$ is a connected CW left regular band by Proposition~\ref{p:topology} (it is, in fact, an affine oriented matroid).

We define the \emph{visual hemisphere}\index{visual hemisphere} $R(H)$ of $\AAA$ with respect to $H$ to be $\pi(\FFF^+(\wh{\AAA}))$.  This is a right ideal of $\FFF(A)$.  The reason for the name is that if one performs a line shelling of the zonotope $Z(\AAA)\subseteq V^*$ polar to $\AAA$ with respect to the line spanned by $f$ in $V^*$, then the faces of $Z(\AAA)$ corresponding to the elements of $R(H)$ make up the visual hemisphere of $Z(\AAA)$ with respect to this shelling (cf.~\cite[Theorem~8.12]{Ziegler}).  Note that $R(H)$ consists of those faces $F$ containing a point $x$ with $f(x)>0$ by construction  and hence is a realizable COM.

\begin{Thm}\label{t:right.proj.central}
Let $\AAA$ be an essential central hyperplane arrangement in $\mathbb R^d$, let $\Bbbk$ be a field and let $H$ be a hyperplane in $\mathbb R^d$ which is generic with respect to $\AAA$.  Let $\wh 1$ be the maximum element of the intersection lattice $\mathcal L(\AAA)\cong \Lambda(\FFF(\AAA))$.  Then the natural homomorphism \[\psi\colon \Bbbk \FFF(\AAA)/\Bbbk R(H)\to \Bbbk_{\wh 1}\] is the right projective cover where $R(H)$ is the visual hemisphere of $\AAA$ with respect to $H$.
\end{Thm}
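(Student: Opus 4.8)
The plan is to apply Lemma~\ref{l:right.proj.cover} to the left regular band monoid $B=\FFF(\AAA)$, whose identity is the covector $\theta(0)=0$ of the origin and whose maximum $\wh 1\in\Lambda(\FFF(\AAA))\cong\mathcal L(\AAA)$ is the flat $\{0\}$. Since any face in $R(H)$ contains a point where $f>0$, it cannot be the origin; in particular $R(H)\subseteq\bd\FFF(\AAA)=\FFF(\AAA)\setminus\{0\}$, so $R(H)$ is a \emph{proper} right ideal, and the hypotheses of Lemma~\ref{l:right.proj.cover} make sense. It then suffices to verify the two conditions of that lemma: (2) $R(H)$ is a connected left regular band, and (1) $\FFF(\AAA)\,R(H)=\bd\FFF(\AAA)$.

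\textbf{Connectivity (condition (2)).} Here I would invoke the description, noted just before the statement, that $R(H)$ is exactly the set of faces of $\AAA$ meeting the open convex set $\{x\mid f(x)>0\}$. By the discussion of realizable COMs in Section~\ref{s:lrbs}, this exhibits $R(H)$ as a realizable COM, hence a strong elimination system, and Proposition~\ref{p:se.conn} then gives at once that $R(H)$ is a connected left regular band. (Alternatively, one checks that $\pi$ restricts to an injective homomorphism on $\FFF^+(\wh\AAA)$, identifying $R(H)$ with $\FFF^+(\wh\AAA)$, which is connected by Proposition~\ref{p:topology}; either route closes this step.)

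\textbf{The ideal condition (1), where genericity is used.} One inclusion is formal: $R(H)\subseteq\bd\FFF(\AAA)$ and $\bd\FFF(\AAA)$ is a two-sided ideal, so $\FFF(\AAA)\,R(H)\subseteq\bd\FFF(\AAA)$. For the reverse inclusion I would argue geometrically. Let $F$ be a nonzero face and let $W=\sigma(F)\in\mathcal L(\AAA)$ be its support flat; since $F\neq 0$ we have $W\neq\{0\}$. Genericity of $H=\ker f$ means precisely that $W\not\subseteq H$, so $f|_W$ is a nonzero linear functional on the subspace $W$ and therefore takes a positive value at some $x\in W$. Set $K=\theta(x)\in R(H)$. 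Because $x$ lies in every hyperplane $H_i$ containing $F$, the covector $K$ has $K_i=\sgn(f_i(x))=0$ for every $i$ in the zero set $Z(F)$, i.e. $Z(F)\subseteq Z(K)$. Consequently the Tits product satisfies $(FK)_i=F_i$ wherever $F_i\neq 0$ and $(FK)_i=K_i=0=F_i$ for $i\in Z(F)$, so $FK=F$. Thus $F=FK\in\FFF(\AAA)\,R(H)$, establishing $\bd\FFF(\AAA)\subseteq\FFF(\AAA)\,R(H)$ and hence condition (1).

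\textbf{Expected obstacle.} With both conditions verified, Lemma~\ref{l:right.proj.cover} yields that $\psi$ is the right projective cover and the proof is complete. The only genuinely delicate point is the reverse inclusion in condition (1): one must pin down the correct product direction $FK=F$ (rather than $KF$) and see that the hypothesis ``$H$ generic'' is exactly what guarantees $f$ does not vanish on the flat $W$, so that a face of $R(H)$ lying inside $W$ is always available. Everything else is bookkeeping, and the connectivity step is immediate from the COM/strong-elimination machinery already developed.
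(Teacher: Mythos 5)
Your proof is correct and follows essentially the same route as the paper: both verify the two hypotheses of Lemma~\ref{l:right.proj.cover}, establishing properness via essentiality, connectivity by identifying $R(H)$ with the face semigroup of an affine arrangement (or, equivalently, a realizable COM), and the ideal condition from genericity of $H$. The only cosmetic difference is in condition (1), where the paper picks $x\in F$ with $f(x)\neq 0$ and writes $F=\theta(x)\theta(-x)$ when $f(x)<0$, while you pick $x$ in the support flat with $f(x)>0$ and check $F\theta(x)=F$ directly; both computations with the Tits product are sound.
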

\begin{proof}
We verify that the right ideal $R(H)$ of $\FFF(\AAA)$ satisfies the conditions of Lemma~\ref{l:right.proj.cover}. We retain the previous notation.  First we observe that $R(H)$ is proper.  Indeed, suppose that $0\in R(H)$.  Then  $0=\pi(\wh{\theta}(x))=\theta(x)$ with $\wh{\theta}(x)\in \FFF^+(\wh{\AAA})$.   But as $\AAA$ is essential,  $0=\theta(x)$ implies $x=0$.  But then $\wh{\theta}(0)=0\notin \FFF^+(\wh{\AAA})$.   This contradiction shows that $0\notin R(H)$.  Next we observe that $\pi\colon \FFF^+(\wh{\AAA})\to R(H)$ is an isomorphism because all elements of $\FFF^+(\wh{\AAA})$ have $+$ as the last coordinate.  Therefore, $R(H)$ is a connected left regular band, being isomorphic to the face semigroup of an affine hyperplane arrangement (or affine oriented matroid).  It remains to verify that $\FFF(\AAA)R(H)=\bd \FFF(\AAA)$.

Let $F\in \FFF(\AAA)$ with $F\neq 0$.   We continue to denote by $f$ the form defining the hyperplane $H$.    As $H$ is generic with respect to $\AAA$, it follows that the form $f$ does not vanish on the span of $F$ (which belongs to $\mathcal L(\AAA)$) and hence does not vanish on $F$. Therefore, we can find $x\in F$ with $f(x)\neq 0$. Note that $F=\theta(x)$.   If $f(x)>0$, then $\wh{\theta}(x)\in \FFF^+(\wh{\AAA})$ and so $F=\theta(x)=\pi(\wh{\theta}(x))\in R(H)$.  If $f(x)<0$, then $f(-x)=-f(x)>0$ and so $\wh{\theta}(-x)\in \FFF^+(\wh{\AAA})$. We conclude that $\theta(-x)=\pi(\wh{\theta}(-x))\in R(H)$.  But $x,-x$ belong to the same elements of the intersection lattice $\mathcal L(\AAA)$ and so $F=\theta(x)=\theta(x)\theta(-x)$.  Thus $F\in \FFF(\AAA)R(H)$. An application of Lemma~\ref{l:right.proj.cover} completes the proof of the theorem.
\end{proof}

Combining Theorem~\ref{t:right.proj.central} with Corollary~\ref{cor:proj.right.lift} allows us to compute right projective covers and injective envelopes for all simple modules for a number of examples including face semigroups of central and affine hyperplane arrangements, $T$-convex sets of topes in central hyperplane arrangements, CAT(0) cube complexes and realizable COMs.

\begin{Cor}\label{c:inj.ev.hyp}
Let $B$ be a connected left regular band and $\Bbbk$ a field. Let $X\in \Lambda(B)$ and assume that $X=Be_X$.  Suppose that $\tau\colon e_XB\to \FFF(\AAA)$ is an isomorphism where $\AAA$ is an essential central hyperplane arrangement and that $H$ is a generic hyperplane with respect to $\AAA$ with associated visual hemisphere $R(H)$.
\begin{enumerate}
\item The right projective  cover of $\Bbbk_X$ is the natural homomorphism \[\psi\colon \Bbbk e_XB/\Bbbk \tau\inv(R(H))\to \Bbbk_{X}\] induced by $e_X\mapsto 1$ and $b\mapsto 0$ for $b\in \bd e_XB$.
\item   The injective envelope of $\Bbbk_X$ is the left $\Bbbk B$-module $I_X$ consisting of all mappings $f\colon e_XB\to \Bbbk$ vanishing on $\tau\inv(R(H))$ with pointwise vector space operations and the natural left action of $B$ given by $(bf)(a)=f(ab)$ for $b\in B$ and $a\in e_XB$. The simple socle of $I_X$ is the subspace of those maps vanishing on $\bd e_XB$; it is isomorphic to $\Bbbk_X$.
\end{enumerate}
\end{Cor}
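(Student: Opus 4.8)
The plan is to obtain both assertions from Theorem~\ref{t:right.proj.central} together with the reduction in Corollary~\ref{cor:proj.right.lift}, after which part (2) is a formal consequence of the duality between right projective covers and injective envelopes recalled at the start of this section. Throughout I use that $\Bbbk B$ is split basic, which holds because $B$ is connected and hence $\Bbbk B$ is unital.

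For part (1), I would transport Theorem~\ref{t:right.proj.central} across the isomorphism $\tau\colon e_XB\to\FFF(\AAA)$. The monoid $e_XB$ has identity $e_X$, corresponding under $\tau$ to the identity $0$ of $\FFF(\AAA)$, and the maximum of $\Lambda(e_XB)$ corresponds to $\wh 1\in\mathcal L(\AAA)$. Since the proof of Theorem~\ref{t:right.proj.central} shows $0\notin R(H)$, we get $\tau\inv(R(H))\subseteq\bd e_XB$, so the map $\psi$ with $e_X\mapsto 1$ and $\bd e_XB\mapsto 0$ is well defined; pulling back the theorem gives that $P_X=\Bbbk e_XB/\Bbbk\tau\inv(R(H))$ is the right projective cover of the top simple $\Bbbk e_XB$-module. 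In particular $P_X$ is projective indecomposable with $P_X/\rad(P_X)\cong\Bbbk e_XB/\Bbbk[\bd e_XB]$, so Corollary~\ref{cor:proj.right.lift} applies directly: lifting $P_X$ through $\p_{e_X}$ produces the right projective cover of $\Bbbk_X$ over $\Bbbk B$, which is exactly $\psi$.

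For part (2), I would use that the injective envelope of the simple left module $\Bbbk_X$ is $\Hom_\Bbbk(P_X,\Bbbk)$, the $\Bbbk$-dual of the right projective cover from part (1). Unwinding, a functional on $P_X=\Bbbk e_XB/\Bbbk\tau\inv(R(H))$ is the same as a $\Bbbk$-linear map $f\colon e_XB\to\Bbbk$ vanishing on $\tau\inv(R(H))$, giving the stated underlying space $I_X$. The left $\Bbbk B$-action on the dual of a right module is $(bf)(a)=f(a\cdot b)$; since $e_XB$ is a right ideal, the right action lifted via $\p_{e_X}$ satisfies $a\cdot b=a\p_{e_X}(b)=ab$ by Proposition~\ref{p:liftup}, so $(bf)(a)=f(ab)$, matching the asserted formula.

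Finally, for the socle I would invoke the standard isomorphism $\soc{M^*}\cong(M/\rad M)^*$ for a finite-dimensional right module $M$: a functional $f$ is annihilated by $\rad(\Bbbk B)$ precisely when $f$ vanishes on $\rad(P_X)=P_X\rad(\Bbbk B)$, since $(rf)(m)=f(mr)$. By part (1) the top of $P_X$ is $\Bbbk_X$ and $\rad(P_X)=\ker\psi$ is the image of $\Bbbk\bd e_XB$ in $P_X$; hence $\soc{I_X}$ is exactly the space of $f\colon e_XB\to\Bbbk$ vanishing on $\bd e_XB$, which is one-dimensional (spanned by the functional dual to $e_X$) and isomorphic to $\Bbbk_X$ since $\Bbbk B$ is split basic. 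The only delicate points are the well-definedness secured by $\tau\inv(R(H))\subseteq\bd e_XB$ and the careful tracking of the right action lifted through $\p_{e_X}$; all the genuine content is already isolated in Theorem~\ref{t:right.proj.central}, so I expect no serious obstacle beyond this bookkeeping.
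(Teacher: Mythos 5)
Your proposal is correct and follows exactly the route the paper intends: the paper gives no separate argument for this corollary, presenting it as the immediate combination of Theorem~\ref{t:right.proj.central} (transported along $\tau$), Corollary~\ref{cor:proj.right.lift}, and the duality between right projective covers and injective envelopes of simples stated at the start of Section~\ref{s:injenv}. Your bookkeeping — the containment $\tau\inv(R(H))\subseteq\bd e_XB$, the identification $a\cdot b=ab$ via Proposition~\ref{p:liftup}, and the socle computation from $\rad(P_X)=\ker\psi$ — correctly fills in the routine details the paper leaves implicit.
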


Recall that if $\AAA$ is a hyperplane arrangement (central or affine) and $G\in \FFF(\AAA)$, then $G\FFF(\AAA)$ is isomorphic to the face monoid of a central hyperplane arrangement which is a deletion of $\AAA$ (one deletes all the hyperplanes except those containing $G$).  Thus Corollary~\ref{c:inj.ev.hyp} can be applied to compute the injective envelope of any simple module for a hyperplane face semigroup or, more generally, for the semigroup associated to a $T$-convex set of topes for a hyperplane arrangement or a realizable COM.

Let us consider some examples.

\begin{Example}[Boolean arrangement]
As an example, if $\AAA$ is the boolean arrangement in $\mathbb R^n$, then $F(\AAA)=L^n$.  The hyperplane defined by \[f(x_1,\ldots,x_n)=x_1+\cdots+x_n\] is generic with respect to $\AAA$ and $R(H)$ consists of those sign vectors with at least one positive coordinate.  Therefore, $\Bbbk F(\AAA)/\Bbbk R(H)$ has basis the cosets of elements of $\{0,-\}^n$ and hence has dimension $2^n$.  Consequently, the injective envelope of $\Bbbk_{\wh 1}$ has dimension $2^n$.

The zonotope associated to the boolean arrangement is the hypercube.  Thus if $K$ is a CAT(0) cube complex, $X$ is an element of the intersection semilattice of $K$ and $C_X$ is a cube crossed exactly by the hyperplanes in $X$, then the injective envelope of $\Bbbk_X$ consists of all mappings from faces of $C_X$ to $\Bbbk$ that vanish on those faces of $C_X$ that lie in the positive half-space associated to some element of $X$ where we fix an orientation of the hyperplanes of $X$.
\end{Example}

\begin{Example}[Braid arrangement]\label{ex:braid}
Recall that the braid arrangement in $V=\mathbb R^n$ is defined by the set of hyperplanes $H_{ij}=\{x\in \mathbb R^n\mid x_i=x_j\}$ with $1\leq i<j\leq n$.  This is not an essential arrangement: the intersection of the hyperplanes is the line $\ell$ given by the equation $x_1=x_2=\cdots=x_n$.  Let $\AAA$ be the essential arrangement in $V/\ell$ induced by the braid arrangement.  It is well known, and not too difficult to show (cf.~\cite{BHR}), that the elements of $\FFF(\AAA)$ are in bijection with ordered set partitions of $[n]=\{1,\ldots,n\}$.  An ordered set partition $(P_1,\ldots, P_r)$ corresponds to the face in $\mathbb R^n$ defined by the equations $x_i=x_j$ for $i,j$ belonging to the same block  and the inequalities $x_i<x_j$ if the block containing $i$ is to the left of the block containing $j$.  For example, when $n=4$, the ordered set partition $(\{1,3\},\{2,4\})$ corresponds to the face in $\mathbb R^n$ defined by $x_1=x_3<x_2=x_4$.  The product of ordered partitions is given by the simple rule:
\[(P_1,\ldots, P_r)(Q_1,\ldots, Q_s)=(P_1\cap Q_1,\ldots, P_1\cap Q_s,\ldots, P_r\cap Q_1,\ldots, P_r\cap Q_s)^{\wedge}\]
where $(B_1,\ldots, B_k)^{\wedge}$ is the result of removing all occurrences of the empty set from $(B_1,\ldots, B_k)$.
The support lattice $\Lambda(\FFF(\AAA))$ can be identified with the lattice of set partitions of $[n]$ ordered by refinement and the support map $\sigma\colon \FFF(\AAA)\to \Lambda(\FFF(\AAA))$ is given by $\sigma((P_1,\ldots, P_r))=\{P_1,\ldots, P_r\}$.

We can identify the dual space of $V/\ell$ with the space of functionals on $V$ which vanish on $\ell$.  It is not difficult to check that the functional \[f(x) = (n-1)x_n-(x_1+\cdots+x_{n-1})\] vanishes on $\ell$ and defines a generic hyperplane $H$ with respect to $\AAA$.  Clearly, $f$ is positive at some point of the face corresponding to an ordered set partition $(P_1,\ldots, P_r)$ if and only if $n\notin P_1$.  Thus $\Bbbk \FFF(\AAA)/\Bbbk R(H)$ has basis the cosets of faces corresponding to ordered partitions $(P_1,\ldots, P_r)$ with $n\in P_1$.

One can define an equivalence relation on ordered set partitions of $[n]$ by identifying cyclic conjugates, that is,
\[(P_1,P_2,\ldots, P_r)\sim (P_2,P_3,\ldots, P_r,P_1)\sim\cdots \sim (P_r,P_1,\ldots, P_{r-1}).\]  An equivalence class of ordered set partitions is called a \emph{necklace of partitions of $[n]$}\index{necklace of partitions of $[n]$}.  Clearly, each necklace contains exactly one representative $(P_1,\ldots, P_r)$ with $n\in P_1$.  Thus the dimension of the injective envelope of $\Bbbk_{\wh 1}$ is the number of necklaces of partitions of $[n]$.  On the other hand, the results of the second author~\cite[Proposition~6.4]{Saliolahyperplane} (or Theorem~\ref{t:cartan.cw} below) implies that if $X\in \Lambda(\FFF(\AAA))$, then the multiplicity of $\Bbbk _X$ as a composition factor of the injective envelope of $\Bbbk_{\wh 1}$ is given by $|\mu(X,\wh 1)|$ where $\mu$ is the M\"obius function of the partition lattice.  As a consequence we obtain the following combinatorial identity, which can also be verified directly via a straightforward computation with the M\"obius function of the partition lattice.

\begin{Prop}
The number of necklaces of partitions of $[n]$ is \[\sum_{X\in \Pi_n}|\mu(X,\wh 1)|\] where $\Pi_n$ is the lattice of set partitions of $[n]$, $\mu$ is the M\"obius function of $\Pi_n$ and $\wh 1$ is the partition into one block.
\end{Prop}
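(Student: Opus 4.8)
The plan is to deduce the identity by computing the dimension of the injective envelope $I_{\wh 1}$ of the trivial module $\Bbbk_{\wh 1}$ in two different ways. First I would invoke the general principle recalled at the start of this section: $I_{\wh 1}$ is the $\Bbbk$-vector space dual of the right projective cover of the simple right module $\Bbbk_{\wh 1}$, so $\dim_\Bbbk I_{\wh 1}$ equals the dimension of that right projective cover. By Theorem~\ref{t:right.proj.central}, applied with the generic functional $f$ exhibited just above, this right projective cover is $\Bbbk\FFF(\AAA)/\Bbbk R(H)$, where $R(H)$ is the visual hemisphere determined by the generic hyperplane $H$.

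Next I would count $\dim_\Bbbk \Bbbk\FFF(\AAA)/\Bbbk R(H)$ combinatorially. As observed in the discussion preceding the statement, the cosets of those faces $(P_1,\dots,P_r)$ with $n\in P_1$ form a basis of the quotient. Assigning to each necklace its unique representative with $n$ in the leading block gives a bijection between necklaces of partitions of $[n]$ and such ordered set partitions. Hence $\dim_\Bbbk I_{\wh 1}$ is exactly the number of necklaces of partitions of $[n]$.

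Finally I would compute the same dimension from the composition factors of $I_{\wh 1}$. Since $\Bbbk\FFF(\AAA)$ is split basic over $\Bbbk$, every simple module $\Bbbk_X$ is one-dimensional, so $\dim_\Bbbk I_{\wh 1}$ equals the total number of composition factors of $I_{\wh 1}$ counted with multiplicity. The multiplicity of $\Bbbk_X$ as a composition factor of the injective envelope $I_{\wh 1}$ is the relevant Cartan number, which equals $|\mu(X,\wh 1)|$ by the computation of the second author (or Theorem~\ref{t:cartan.cw}), where $\mu$ is the M\"obius function of $\Pi_n\cong\Lambda(\FFF(\AAA))$. Summing over $X\in\Pi_n$ then yields
\[
\#\{\text{necklaces of partitions of }[n]\}=\dim_\Bbbk I_{\wh 1}=\sum_{X\in\Pi_n}|\mu(X,\wh 1)|,
\]
which is the claim.

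I do not anticipate a serious obstacle, since every ingredient has already been established in the text. The only point demanding a little care is the bijection in the second step, namely verifying that each necklace contains precisely one ordered set partition with $n$ in its first block; but this is immediate from the definition of cyclic equivalence, as exactly one cyclic rotation places the block containing $n$ at the front.
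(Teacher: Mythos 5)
Your proposal is correct and follows essentially the same route as the paper: the identity is obtained by computing $\dim_\Bbbk I_{\wh 1}$ once as the number of cosets of ordered set partitions with $n$ in the first block (i.e.\ necklaces), via Theorem~\ref{t:right.proj.central} and duality with the right projective cover, and once as the sum of composition-factor multiplicities $|\mu(X,\wh 1)|$ from Theorem~\ref{t:cartan.cw}. The two computations and the bijection with necklaces are exactly those given in the surrounding discussion of Example~\ref{ex:braid}.
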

\end{Example}

\subsection{Oriented matroids}

To adapt the proof of Theorem~\ref{t:right.proj.central} to oriented matroids, we need an oriented matroid analogue of adjoining a generic hyperplane.   Let $(E,\mathcal L)$ be an oriented matroid and let $Z\colon \mathcal L\to \Lambda(\mathcal L)$ be the map taking a sign vector in $L^E$ to its zero set.  A \emph{cocircuit}\index{cocircuit} of $\mathcal L$ is a maximal non-identity element of $\mathcal L$ (with respect to the natural partial order on $\mathcal L$) or, equivalently, a non-zero element $x\in \mathcal L$ with $Z(x)$ maximal with respect to inclusion. Note that $y$ is a cocircuit if and only if $-y$ is a cocircuit. It is a general fact about oriented matroids that they are generated as a monoid by their cocircuits (cf.~the dual of~\cite[Proposition~3.7.2]{OrientedMatroids1999}).

An oriented matroid $(\til E,\til {\mathcal L})$ is a \emph{single element extension}\index{single element extension} of an oriented matroid $(E,\mathcal L)$ if $\til E=E\cup \{p\}$ for some $p\notin E$ and $\mathcal L$ is the image of $\til{\mathcal L}$ under the projection $\pi\colon L^{\til E}\to L^E$ given by $\pi(x)=x|_E$.  Let $\mathcal C^*$ denote the set of cocircuits of $\mathcal L$.  Then there is a unique mapping $\sigma\colon \mathcal C^*\to L$ satisfying $\sigma(-y)=-\sigma(y)$ for all $y\in \mathcal C^*$ and some additional axioms such that the set $\til {\mathcal C}^*$ of cocircuits of $\til{\mathcal L}$ is given by
\begin{equation}\label{eq:new.cocircuits}
\begin{split}
\til{\mathcal C}^*=&\{(y,\sigma(y))\mid y\in \mathcal C^*\}\cup {} \\ &\{(y_1y_2,0)\mid y_1,y_2\in \mathcal C^*,\ \sigma(y_1)=-\sigma(y_2)\neq 0,\\ & S(y_1,y_2)=\emptyset,\rho(y_1y_2)=2\}
\end{split}
\end{equation}
where $\rho$ is obtained from the rank function on the geometric lattice $\Lambda(\mathcal L)$ by $\rho(x)=\mathrm{rank}(E)-\mathrm{rank}(Z(x))$.  See~\cite[Proposition~7.1.4 and Theorem~7.1.8]{OrientedMatroids1999} for details.  We will write $\mathcal L_{\sigma}$ for the single element extension of $\mathcal L$ associated to a mapping $\sigma\colon \mathcal C^*\to L$ satisfying the equivalent conditions of~\cite[Theorem~7.1.8]{OrientedMatroids1999} for defining a single element extension of $\mathcal L$.

A single element extension $\mathcal L_{\sigma}$ of $\mathcal L$ is said to be \emph{in general position}\index{in general position} or \emph{generic}\index{generic} if $\sigma(y)\neq 0$ for all $y\in \mathcal C^*$.  Such extensions exist (cf.~\cite[Proposition~7.2.2]{OrientedMatroids1999}).  One method to construct them is via lexicographic extensions.   Here is the setup.  Let $E$ be the ground set of $\mathcal L$, let  $I=\{e_1,\ldots, e_k\}$ be a linearly ordered subset of $E$ and let $\alpha_1,\ldots,\alpha_k\in \{+,-\}$.  Define a mapping $\sigma\colon \mathcal C^*\to L$ by
\[\sigma(y)= \begin{cases} \alpha_iy_{e_i}, & \text{if}\ i\ \text{is minimal with}\ y_{e_i}\neq 0\\ 0, & \text{if}\ y_{e_i}=0\ \text{for all}\ 1\leq i\leq k.\end{cases}\]
Then $\sigma$ defines a single element extension $\mathcal L_{\sigma}$ called a \emph{lexicographic extension}\index{lexicographic extension} of $\mathcal L$.  See~\cite[Proposition~7.2.4]{OrientedMatroids1999}. If $I=E$, then $\mathcal L_{\sigma}$ is obviously generic.

For example, if $\mathcal L$ is the oriented matroid associated to a central hyperplane arrangement and $f_i$ is the form defining the hyperplane $e_i\in I$, for $1\leq i\leq k$, then the lexicographic extension associated to $I$ and the $\alpha_i$ is obtained by adding the hyperplane $f=0$ to the original arrangement where \[f=\varepsilon \alpha_1f_1+\varepsilon^2\alpha_2f_2+\cdots+\varepsilon^k\alpha_kf_k\] for some small $\varepsilon>0$.  See the discussion preceding~\cite[Proposition~7.2.4]{OrientedMatroids1999}.

Let $(E,\mathcal L)$ be an oriented matroid and let $(E\cup \{p\}, \mathcal L_{\sigma})$ be a generic single element extension with corresponding map $\sigma\colon \mathcal C^*\to L$ where $\mathcal C^*$ denotes the set of cocircuits of $\mathcal L$.  Then there is a corresponding affine oriented matroid $(E\cup \{p\}, \mathcal L_{\sigma},p)$ with associated left regular band $\mathcal L^+_{\sigma}(p)=\{x\in \mathcal L_{\sigma}\mid x_p=+\}$.  Let $\pi\colon \mathcal L_{\sigma}\to \mathcal L$ be the projection $\pi(x)=x|_E$.  We define $R(\sigma)=\pi(\mathcal L^+_{\sigma}(p))$ to be the \emph{visual hemisphere}\index{visual hemisphere} associated to $\sigma$.  Since $\mathcal L^+_{\sigma}(p)$ is a right ideal of $\mathcal L_{\sigma}$ and $\pi$ is surjective, we conclude that $R(\sigma)$ is a right ideal of $\mathcal L$.

\begin{Thm}\label{t:right.proj.central.oriented}
Let $(E,\mathcal L)$ be an oriented matroid, let $\Bbbk$ be a field and let $(E\cup \{p\}, \mathcal L_{\sigma})$ be a generic single element extension with corresponding map $\sigma\colon \mathcal C^*\to L$ where $\mathcal C^*$ denotes the set of cocircuits of $\mathcal L$. Let $\wh 1=\mathcal L$ be the maximum element of  $\Lambda(\mathcal L)$.  Then the natural homomorphism \[\psi\colon \Bbbk \mathcal L/\Bbbk R(\sigma)\to \Bbbk_{\wh 1}\] is the right projective cover where $R(\sigma)$ is the visual hemisphere associated to $\sigma$.
\end{Thm}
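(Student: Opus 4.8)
The plan is to prove Theorem~\ref{t:right.proj.central.oriented} by running the argument of Theorem~\ref{t:right.proj.central} in the oriented matroid setting, reducing everything to Lemma~\ref{l:right.proj.cover}. Regarding $\mathcal L$ as a left regular band monoid with maximum $\wh 1=\mathcal L$ of $\Lambda(\mathcal L)$, it suffices to check that the visual hemisphere $R(\sigma)=\pi(\mathcal L^+_\sigma(p))$ is a proper right ideal satisfying the two hypotheses of that lemma, namely that $\mathcal L R(\sigma)=\bd\mathcal L$ and that $R(\sigma)$ is a connected left regular band. Since $\pi\colon\mathcal L_\sigma\to\mathcal L$ is a surjective homomorphism and $\mathcal L^+_\sigma(p)$ is a right ideal of $\mathcal L_\sigma$, the image $R(\sigma)$ is automatically a right ideal of $\mathcal L$.

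First I would dispose of connectedness and properness. Because every element of $\mathcal L^+_\sigma(p)$ has $p$-coordinate $+$, the restriction of $\pi$ to $\mathcal L^+_\sigma(p)$ is injective, hence a semigroup isomorphism onto $R(\sigma)$. As $\mathcal L^+_\sigma(p)$ is the left regular band of the affine oriented matroid $(E\cup\{p\},\mathcal L_\sigma,p)$, it is a COM and therefore a connected strong elimination system by Proposition~\ref{p:se.conn}; thus $R(\sigma)$ is connected. For properness it is enough to see that $0\notin R(\sigma)$, i.e.\ that $(0_E,+)\notin\mathcal L_\sigma$, where $0_E$ denotes the all-zero covector on $E$. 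This should follow from the facts that the zero set of any covector is a flat and that the generic extension does not make $p$ a coloop, so that $p$ lies in the closure of $E$; hence $E$ is not a flat of $\mathcal L_\sigma$ and no covector has zero set exactly $E$. Consequently $0\notin R(\sigma)$, so $R(\sigma)$ is proper; moreover $\mathcal L R(\sigma)\subseteq\bd\mathcal L$, since $a\neq 0$ forces $ar\neq 0$ (the support of a nonzero element lies strictly below $\wh 1$).

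The crux is the reverse inclusion $\bd\mathcal L\subseteq\mathcal L R(\sigma)$, and this is where genericity enters. Given $F\neq 0$, I would use that the cocircuits generate $\mathcal L$ as a monoid (the dual of~\cite[Proposition~3.7.2]{OrientedMatroids1999}) to write $F=y_1\cdots y_k$ with each $y_i$ a cocircuit and $k\geq 1$. By \eqref{eq:new.cocircuits} each $(y_i,\sigma(y_i))$ is a cocircuit of $\mathcal L_\sigma$, and since the extension is generic we have $\sigma(y_i)\neq 0$ for all $i$. Computing the product in $L^{E\cup\{p\}}$, whose coordinate at each index is the first nonzero entry, the $p$-coordinate of $(y_1,\sigma(y_1))\cdots(y_k,\sigma(y_k))$ equals $\sigma(y_1)\in\{+,-\}$, so $(F,\sigma(y_1))\in\mathcal L_\sigma$ with $\sigma(y_1)\neq 0$. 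If $\sigma(y_1)=+$, then $F=\pi(F,\sigma(y_1))\in R(\sigma)\subseteq\mathcal L R(\sigma)$. If $\sigma(y_1)=-$, then negating gives $(-F,+)\in\mathcal L_\sigma$, so $-F\in R(\sigma)$, and using the identity $F(-F)=F$ valid in any $L^E$ (the product retains $F_e$ where $F_e\neq 0$ and otherwise equals $-F_e=0$), we obtain $F=F\cdot(-F)\in\mathcal L R(\sigma)$. This establishes $\mathcal L R(\sigma)=\bd\mathcal L$, and Lemma~\ref{l:right.proj.cover} then yields that $\psi$ is the right projective cover.

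The main obstacle I anticipate is not the generic-lift computation, which is clean, but the bookkeeping around properness: one must translate the geometric fact that a positive-dimensional face cannot lie in a generic hyperplane into the purely combinatorial statement $(0_E,+)\notin\mathcal L_\sigma$, taking care that single-element extensions preserve rank so that $p$ is not a coloop and $E$ fails to be a flat. Everything else is a faithful transcription of the hyperplane proof, with the cocircuit factorization $F=y_1\cdots y_k$ playing the role of choosing a point $x\in F$ at which the defining form of the generic hyperplane is nonzero.
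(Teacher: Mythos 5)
Your proposal is correct and follows essentially the same route as the paper: reduce to Lemma~\ref{l:right.proj.cover}, get connectedness from the isomorphism $\pi\colon\mathcal L^+_{\sigma}(p)\to R(\sigma)$ onto an affine oriented matroid, and get $\bd \mathcal L\subseteq \mathcal L R(\sigma)$ from a cocircuit factorization $F=y_1\cdots y_k$, genericity, and the identity $x(-x)=x$ (the paper negates only the leading cocircuit $y_1$ where you negate the whole product, an immaterial difference). The one step you leave soft, properness, is closed in the paper without any appeal to flats or coloops: by \eqref{eq:new.cocircuits} no cocircuit of $\mathcal L_{\sigma}$ has the form $(0,\pm)$, and since the cocircuits generate $\mathcal L_{\sigma}$ and zero sets only shrink under composition, $(0,+)\notin\mathcal L_{\sigma}$.
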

\begin{proof}
We check that the right ideal $R(\sigma)$ of $\mathcal L$ satisfies the conditions of Lemma~\ref{l:right.proj.cover}. Let us maintain the previous notation.  The set of cocircuits of $\mathcal L_{\sigma}$ will be denoted by $\til{\mathcal C}^*$.  Notice that $\til{\mathcal C}^*$ contains no element of the form $(0,z)$ with $z\in \{+,-\}$ by \eqref{eq:new.cocircuits} because $0\notin \mathcal C^*$.  Thus $\mathcal L_{\sigma}$ contains no such element because $\til {\mathcal C}^*$ generates $\mathcal L_{\sigma}$.  Therefore, $(0,+)\notin \mathcal L^+_{\sigma}(p)$ and so $R(\sigma)$ is proper.

Next observe that $\pi\colon \mathcal L^+_{\sigma}(p)\to R(\sigma)$ is an isomorphism since all elements of $\mathcal L^+_{\sigma}(p)$ have $+$ as the last coordinate.  Therefore, $R(\sigma)$ is a connected left regular band as affine oriented matroids are such.  It remains to verify that $\mathcal LR(\sigma)=\bd \mathcal L$.

Let $y\in \mathcal L\setminus \{0\}$.  Then $y=y_1\cdots y_m$ where $y_1,\ldots, y_m$ are cocircuits.  Put $\til y_i=(y_i,\sigma(y_i))\in \til{\mathcal C}^*$ (using \eqref{eq:new.cocircuits}).  Note that $\pi(\til y_i)=y_i$.   Since $\mathcal L_{\sigma}$ is generic, either $\sigma(y_1)=+$ or $\sigma(y_1)=-$.  Assume first that $\sigma(y_1)=+$.  Then $\til y_1=(y_1,+)$ and so $\til y = \til y_1\cdots\til y_m\in \mathcal L^+_{\sigma}(p)$ and hence $y=y_1\cdots y_m=\pi(\til y)\in R(\sigma)$.  Next assume that $\sigma(y_1)=-$.  Then $-\til y_1=(-y_1,+)\in \mathcal L^+_{\sigma}(p)$ and $\pi(-\til y_1)=-y_1$.  Therefore, $(-y_1)y_2\cdots y_m=\pi((-\til y_1)\til y_2\cdots \til y_m)\in \pi(\mathcal L^+_{\sigma}(p))=R(\sigma)$ and hence $y=y_1\cdots y_m = y_1(-y_1)y_2\cdots y_m\in \mathcal LR(\sigma)$ using that $x(-x)=x$ in $L^E$.

The theorem now follows via an application of Lemma~\ref{l:right.proj.cover}.
\end{proof}

One advantage of Theorem~\ref{t:right.proj.central.oriented} over Theorem~\ref{t:right.proj.central} for the case of central hyperplane arrangements is that the former is combinatorial: one can effectively construct a generic lexicographic single element extension.  Writing explicitly a generic hyperplane is often more complicated.

Theorem~\ref{t:right.proj.central.oriented}, together with Corollary~\ref{cor:proj.right.lift}, allows us to compute right projective covers and injective envelopes for all simple modules for left regular bands associated to COMs, including oriented matroids and affine oriented matroids. We summarize in the following corollary.

\begin{Cor}\label{c:inj.env.om}
Let $B$ be a connected left regular band and $\Bbbk$ a field. Let $X\in \Lambda(B)$ with $X=Be_X$.  Suppose that $\tau\colon e_XB\to \mathcal L$ is an isomorphism where $\mathcal L$ is the monoid of covectors of an oriented matroid and that $\mathcal L_{\sigma}$ is a generic single element extension of $\mathcal L$ with associated visual hemisphere $R(\sigma)$.
\begin{enumerate}
\item The right projective  cover of $\Bbbk_X$ is the natural homomorphism \[\psi\colon \Bbbk e_XB/\Bbbk \tau\inv(R(\sigma))\to \Bbbk_{X}\] induced by $e_X\mapsto 1$ and $b\mapsto 0$ for $b\in \bd e_XB$.
\item   The injective envelope of $\Bbbk_X$ is the left $\Bbbk B$-module $I_X$ consisting of all mappings $f\colon e_XB\to \Bbbk$ vanishing on $\tau\inv(R(\sigma))$ with pointwise vector space operations and the natural left action of $B$ given by $(bf)(a)=f(ab)$ for $b\in B$ and $a\in e_XB$. The simple socle of $I_X$ (isomorphic to $\Bbbk _X$) is the subspace of those maps vanishing $\bd e_XB$.\end{enumerate}
\end{Cor}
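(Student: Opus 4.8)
The plan is to derive both parts from Theorem~\ref{t:right.proj.central.oriented} by transporting the projective cover along the isomorphism $\tau$, lifting the result to $\Bbbk B$ through Corollary~\ref{cor:proj.right.lift}, and then passing to the injective envelope by vector space duality.

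For part (1), I first note that $e_XB = e_XBe_X$ is a left regular band monoid with identity $e_X$, so $\tau\colon e_XB \to \mathcal{L}$ is an isomorphism of left regular band monoids. It therefore carries the maximum $e_XB$ of $\Lambda(e_XB)$ to $\wh 1 = \mathcal{L}$, the boundary $\bd e_XB$ to $\bd\mathcal{L}$, and the top simple right module $\Bbbk e_XB/\Bbbk[\bd e_XB] \cong \Bbbk_X$ to $\Bbbk_{\wh 1}$. By Theorem~\ref{t:right.proj.central.oriented}, the natural map $\Bbbk\mathcal{L}/\Bbbk R(\sigma) \to \Bbbk_{\wh 1}$ is the right projective cover over $\Bbbk\mathcal{L}$; transporting along $\tau$ makes $P_X := \Bbbk e_XB/\Bbbk\tau^{-1}(R(\sigma))$ a projective indecomposable right $\Bbbk e_XB$-module with top $\Bbbk_X$, together with its cover map to $\Bbbk_X$. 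Corollary~\ref{cor:proj.right.lift} then promotes $P_X$, viewed as a right $\Bbbk B$-module via $\p_{e_X}$ (equivalently as the right ideal $e_X\Bbbk B$, using Proposition~\ref{p:liftup}), to the right projective cover of $\Bbbk_X$ over $\Bbbk B$; this cover map is precisely $\psi$.

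For part (2), I would invoke the duality recalled at the beginning of this section: the injective envelope of a simple left $\Bbbk B$-module is the $\Bbbk$-linear dual of the right projective cover of the corresponding simple right module. Applying this to $P_X$ gives $I_X = \Hom_\Bbbk(P_X,\Bbbk)$, which is the space of functionals on $\Bbbk e_XB$ annihilating $\Bbbk\tau^{-1}(R(\sigma))$, that is, maps $f\colon e_XB\to\Bbbk$ vanishing on $\tau^{-1}(R(\sigma))$. Since the dual of a right module $M$ carries the left action $(bf)(a)=f(ab)$, and $ab\in e_XB$ because $e_XB$ is a right ideal, this recovers the stated module structure. The socle of $I_X$ is the dual of the top of $P_X$, hence isomorphic to $\Bbbk_X$ (as $\Bbbk_X$ is one-dimensional, it is self-dual); concretely it consists of those functionals factoring through $\Bbbk e_XB/\Bbbk[\bd e_XB]$, namely the maps vanishing on $\bd e_XB$.

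The individual computations here are essentially bookkeeping; the one point that will require genuine care is the compatibility of module structures across the two passages — transporting the cover from $\Bbbk\mathcal{L}$ to $\Bbbk e_XB$ and then lifting it along $\p_{e_X}$ — ensuring throughout that the maps remain $\Bbbk B$-linear and that the socle of the resulting dual is correctly identified with the functionals vanishing on $\bd e_XB$.
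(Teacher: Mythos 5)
Your proposal is correct and follows exactly the route the paper intends: the corollary is presented there as a direct summary of Theorem~\ref{t:right.proj.central.oriented} combined with Corollary~\ref{cor:proj.right.lift}, with part (2) obtained from part (1) by the standard $\Bbbk$-duality between right projective covers and left injective envelopes recalled at the start of the section. Your bookkeeping of the module structures (transport along $\tau$, lift along $\p_{e_X}$, identification of the socle as the annihilator of $\rad(P_X)=\Bbbk[\bd e_XB]/\Bbbk\tau\inv(R(\sigma))$) is accurate.
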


Recall that if $(E,\mathcal L)$ is an oriented matroid and $x\in \mathcal L$, then $x\mathcal L$ is isomorphic to the monoid of covectors for the deletion $(E\setminus A,\mathcal L\setminus A)$ where $A=E\setminus Z(x)$.  Thus Corollary~\ref{c:inj.env.om} can be used to compute the injective envelopes of all simple modules for the semigroup of an oriented matroid, affine oriented matroid or face semigroups of $T$-convex set of topes.  More generally, $x\mathcal L$ is isomorphic to the monoid of covectors of an oriented matroid for any COM $(E,\mathcal L)$ (cf.~\cite{COMS}) and so Corollary~\ref{c:inj.env.om} can be applied to compute injective envelopes for algebras of COMs.

It is an open problem to give an explicit construction of injective indecomposables for more general classes of left regular band algebras.

\section{Enumeration of cells for CW left regular bands}\label{s:enum}
In this section we prove analogues of Zaslavsky's theorem~\cite[Theorem~4.6.1]{OrientedMatroids1999}, and other enumerative results from hyperplane/oriented matroid theory, for connected CW left regular bands. For CAT(0) cube complexes, our results recover those of Dress \textit{et. al}~\cite{dresscube}, which were originally stated in the language of median graphs. In the case of complex hyperplane arrangements, our result specializes to that of Bj\"orner~\cite{bjorner2}. Other generalizations of Zaslavsky's theorem in the literature include~\cite{Desh1,Desh2}.
 When comparing our results to those in~\cite{OrientedMatroids1999}, one should bear in mind that people in hyperplane and oriented matroid theory work with the dual cell complex to the monoid of covectors and they do not include the identity as corresponding to a face.

\subsection{Flag vectors}
\nomenclature[C, 09]{$\chi(X)$}{Euler characteristic of the CW complex $X$}%
\nomenclature[C, 05]{$f(X)$}{$f$-vector of the CW complex $X$}%
If $X$ is a (finite) $d$-dimensional regular CW complex, then $f_k$ denotes the number of $k$-dimensional cells of $X$ for $0\leq k\leq d$.  The sequence \[f(X)=(f_0,\ldots, f_d)\] is called the \emph{$f$-vector}\index{$f$-vector} of $X$.  The
\emph{Euler characteristic}\index{Euler characteristic} of $X$ is \[\chi(X)=\sum_{k=0}^d (-1)^kf_k.\]  It is well known that \[\chi(X) = \sum_{k=0}^d(-1)^k\beta_k(X)\] where $\beta_k(X)$ is the dimension of $H_k(X;\mathbb Q)$.

\nomenclature[C, 06]{$\til f(X)$}{flag vector, or Fine $f$-vector, of the CW complex $X$}%
More generally, for $J=\{j_1,\ldots, j_k\}$ with $0\leq j_1<j_2<\cdots<j_k\leq d$, let $f_J$ be the number of chains of cells $e_1<e_2<\cdots<e_k$ in the face poset $\mathcal P(X)$ such that $\dim e_i=j_i$ for $1\leq i\leq k$.  The array $\til f(X)=(f_J)_{J\subseteq \{0,\ldots,d\}}$ is called the \emph{flag vector}\index{flag vector} (or, \emph{Fine $f$-vector}\index{Fine $f$-vector}) of $X$.  In this section, we will compute $f(\CW(B))$ and $\til f(\CW(B))$ for the regular cell complex $\CW(B)$ associated to a connected CW left regular band $B$ in terms of $\Lambda(B)$.  This generalizes well-known results for hyperplane arrangements and oriented matroids~\cite[Section 4.6]{OrientedMatroids1999}.

We begin with a lemma about cellular maps between posets satisfying properties analogous to that of the map from a connected CW left regular band to its support semilattice.

\begin{Lemma}\label{l:zaslavsky}
Let $f\colon P\to Q$ be a cellular mapping of finite posets such that:
\begin{enumerate}
\item $Q$ has a minimum $\wh 0$;
\item $f$ preserves strict inequalities;
\item $f\inv(Q_{\geq q})$ is an acyclic CW poset for all $q\in Q$.
\end{enumerate}
Then \[|f\inv(q)| = \sum_{q'\geq q}(-1)^{\dim q'-\dim q}\mu_Q(q,q')\] where $\mu_Q$ is the M\"obius function of $Q$.

If, in addition, each open interval of $Q$ is a Cohen-Macaulay poset, then
\[|f\inv(q)| = \sum_{q'\geq q}|\mu_Q(q,q')|\]  holds.
\end{Lemma}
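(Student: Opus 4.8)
The plan is to deduce the second formula from the first (already established) part of the lemma by showing that, under the Cohen--Macaulay hypothesis, every summand satisfies $(-1)^{\dim q'-\dim q}\mu_Q(q,q')=|\mu_Q(q,q')|$. Equivalently, I would prove that the M\"obius function of $Q$ alternates in sign with rank, i.e.\ $\sgn\mu_Q(q,q')=(-1)^{\dim q'-\dim q}$ whenever $q\le q'$. Substituting this into the first formula then yields $|f\inv(q)|=\sum_{q'\ge q}|\mu_Q(q,q')|$ at once.

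First I would record that $Q$ is graded and that $\dim q'-\dim q$ is the rank $\rk[q,q']$ of the interval. Since each $f\inv(Q_{\geq q})$ is acyclic it is in particular nonempty, so choosing $p$ with $f(p)\ge q$ and using that $f$ is cellular gives $q\in Q_{\le f(p)}=f(P_{\le p})$; hence $q\in f(P)$ and $f$ is surjective. Taking $q=\wh 0$ shows $P=f\inv(Q_{\geq\wh 0})$ is an acyclic CW poset, hence graded, so by Corollary~\ref{c:preservesgraded} the poset $Q$ is graded and $f$ preserves ranks of intervals. In particular $\dim q'-\dim q=\rk[q,q']$, and for $q<q'$ the order complex $\Delta((q,q'))$ of the open interval is pure of dimension $\rk[q,q']-2$, since a maximal chain of $(q,q')$ has $\rk[q,q']-1$ elements.

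Next, for $q<q'$ I would use the classical theorem of Philip Hall expressing the M\"obius function as the reduced Euler characteristic of the order complex of the open interval, $\mu_Q(q,q')=\til\chi(\Delta((q,q')))$. Writing $r=\rk[q,q']=\dim q'-\dim q$, the Cohen--Macaulay assumption on $(q,q')$ forces $\til H_i(\Delta((q,q'));\mathbb Q)=0$ for $i<r-2$, so the Euler characteristic collapses to its top term:
\[\mu_Q(q,q')=\til\chi(\Delta((q,q')))=(-1)^{r-2}\dim_{\mathbb Q}\til H_{r-2}(\Delta((q,q'));\mathbb Q).\]
Since $(-1)^{r-2}=(-1)^{r}$ and the right-hand dimension is a nonnegative integer, this gives $\sgn\mu_Q(q,q')=(-1)^{r}=(-1)^{\dim q'-\dim q}$ and $|\mu_Q(q,q')|=\dim_{\mathbb Q}\til H_{r-2}(\Delta((q,q'));\mathbb Q)$, as required. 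The degenerate cases $q'=q$ (where the term is $\mu_Q(q,q)=1$) and $r=1$ (where $\Delta((q,q'))=\emptyset$, with $\til H_{-1}(\emptyset;\mathbb Q)=\mathbb Q$ and $\mu_Q(q,q')=-1$) fit the same formula.

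The one delicate point is the homology-concentration step: I must know that the Cohen--Macaulay complex $\Delta((q,q'))$ has reduced rational homology concentrated in its top degree $r-2$. This is the standard fact that a Cohen--Macaulay complex is pure and satisfies $\til H_i=0$ below the top dimension (the link condition of the definition applied to the empty face), for which I would cite~\cite{Stanleycommut}; it is exactly the input that turns the alternating sum of the first formula into a sum of absolute values. Everything else is bookkeeping with ranks and signs, so this sign computation is the heart of the argument.
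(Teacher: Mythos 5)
There is a genuine gap: you prove only the final sentence of the lemma and never establish the first displayed formula, which is the lemma's actual content. You describe that formula as ``already established,'' but nothing earlier in the paper proves it --- it is part of this very lemma, and the hypotheses (acyclicity of each $f\inv(Q_{\geq q})$, preservation of strict inequalities) are there precisely to make it work. The missing argument runs as follows: since $f$ preserves strict inequalities it preserves dimensions of cells, and since each $f\inv(Q_{\geq q})$ is an acyclic CW poset its Euler characteristic equals $1$; grouping the cells of $f\inv(Q_{\geq q})$ according to their images under $f$ gives
\[
1=\chi\bigl(f\inv(Q_{\geq q})\bigr)=\sum_{p\in f\inv(Q_{\geq q})}(-1)^{\dim p-\dim q}=\sum_{q'\geq q}(-1)^{\dim q'-\dim q}\,|f\inv(q')|
\]
for every $q\in Q$; multiplying through by $(-1)^{\dim q}$ and applying M\"obius inversion over $Q_{\geq q}$ then yields $|f\inv(q)|=\sum_{q'\geq q}(-1)^{\dim q'-\dim q}\mu_Q(q,q')$. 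None of this appears in your proposal, so as written the main assertion is unproved.

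The part you do argue --- that the Cohen--Macaulay hypothesis forces $(-1)^{\dim q'-\dim q}\mu_Q(q,q')\geq 0$, turning the alternating sum into a sum of absolute values --- is correct, and your derivation via Philip Hall's theorem together with the concentration of the reduced homology of a Cohen--Macaulay complex in top degree is a legitimate (indeed more self-contained) proof of the fact the paper simply cites from Stanley. Your preliminary observations (surjectivity of $f$, gradedness of $Q$ via Corollary~\ref{c:preservesgraded}, the identification $\dim q'-\dim q=\rk[q,q']$) are also needed and correctly handled. But all of this is the one-line addendum to the lemma, not the lemma itself; you must supply the Euler-characteristic and M\"obius-inversion argument above for the proof to stand.
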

\begin{proof}
Note that $P=f\inv(Q_{\geq \wh 0})$ is a CW poset and hence is graded.  Therefore, $Q$ is graded by Corollary~\ref{c:preservesgraded}.
As the Euler characteristic of any acyclic CW complex is $1$ and $f$ preserves dimensions (because it preserves strict inequalities), we obtain
\[1=\chi(f\inv (Q_{\geq q})) = \sum_{p\in f\inv(Q_{\geq q})}(-1)^{\dim p-\dim q}=\sum_{q'\geq q}(-1)^{\dim q'-\dim q}|f\inv (q')|.\]
Consequently, we have
\[(-1)^{\dim q} = \sum_{q'\geq q}(-1)^{\dim q'}|f\inv (q')|\] and so by M\"obius inversion we obtain
\[(-1)^{\dim q}|f\inv(q)|=\sum_{q'\geq q}(-1)^{\dim q'}\mu_Q(q,q').\]  It follows that
\[|f\inv (q)|=\sum_{q'\geq q}(-1)^{\dim q'-\dim q}\mu_Q(q,q')\] as required.

The final statement follows because $(-1)^{\dim q'-\dim q}\mu_Q(q,q')\geq 0$ holds if $\Delta((q,q'))$ is Cohen-Macaulay (cf.~\cite[Theorem~10]{StanleyCohen}).
\end{proof}

Recall that, for a poset $Q$ with minimum $\wh 0$, one puts $\mu_Q(x) =\mu_Q(\wh 0,x)$.

\begin{Cor}
With $f\colon P\to Q$ as in Lemma~\ref{l:zaslavsky}, one has \[|f\inv(\wh 0)| = \sum_{q\geq \wh 0}(-1)^{\dim q}\mu_Q(\wh 0,q) = \sum_{q\geq \wh 0}(-1)^{\dim q}\mu_Q(q).\]  In addition, if each open interval $(\wh 0,q)$ of $Q$ is Cohen-Macaulay, then \[|f\inv(\wh 0)| = \sum_{q\in Q} |\mu_Q(q)|\] holds.
\end{Cor}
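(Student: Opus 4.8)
The plan is to obtain this corollary as the special case $q=\wh 0$ of Lemma~\ref{l:zaslavsky}, the only additional ingredient being the observation that $\dim \wh 0 = 0$. First I would recall that, as established in the proof of Lemma~\ref{l:zaslavsky}, the poset $Q$ is graded (it is the image of the graded CW poset $P=f\inv(Q_{\geq \wh 0})$ under $f$, so gradedness follows from Corollary~\ref{c:preservesgraded}), and hence the dimension function is defined on $Q$. Since $\wh 0$ is the minimum of $Q$, we have $Q_{\leq \wh 0}=\{\wh 0\}$, so $\Delta(Q_{\leq \wh 0})$ is a single vertex and therefore $\dim \wh 0 = \dim \Delta(Q_{\leq \wh 0}) = 0$.

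Substituting $q=\wh 0$ into the formula of Lemma~\ref{l:zaslavsky} then yields
\[|f\inv(\wh 0)| = \sum_{q'\geq \wh 0}(-1)^{\dim q'-\dim \wh 0}\mu_Q(\wh 0,q') = \sum_{q'\geq \wh 0}(-1)^{\dim q'}\mu_Q(\wh 0,q').\]
Because $\wh 0$ is the minimum of $Q$, the constraint $q'\geq \wh 0$ is vacuous, so the sum ranges over all $q'\in Q$; and by the convention $\mu_Q(q')=\mu_Q(\wh 0,q')$ this is exactly the first displayed chain of equalities in the corollary.

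For the second statement I would revisit the non-negativity argument used at the end of the proof of Lemma~\ref{l:zaslavsky}: each summand $(-1)^{\dim q'-\dim q}\mu_Q(q,q')$ is non-negative provided the open interval $(q,q')$ is Cohen-Macaulay (by \cite[Theorem~10]{StanleyCohen}). When $q=\wh 0$ the only open intervals that arise are the $(\wh 0,q')$, so the \emph{full} Cohen-Macaulay hypothesis of the lemma is not required—the weaker assumption of the corollary, that each $(\wh 0,q)$ is Cohen-Macaulay, already suffices. Under this hypothesis every term $(-1)^{\dim q}\mu_Q(\wh 0,q)$ is non-negative, hence equals $|\mu_Q(\wh 0,q)| = |\mu_Q(q)|$, and summing gives $|f\inv(\wh 0)| = \sum_{q\in Q}|\mu_Q(q)|$.

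There is no genuine obstacle here, as the mathematical content lies entirely in Lemma~\ref{l:zaslavsky}. The single point demanding a moment's attention is reconciling the hypothesis of the corollary (Cohen-Macaulayness of the intervals $(\wh 0,q)$ only) with that of the lemma (all open intervals), which is resolved precisely by the remark that the specialization $q=\wh 0$ involves only intervals whose bottom element is $\wh 0$.
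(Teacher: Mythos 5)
Your proposal is correct and is exactly the argument the paper intends: the corollary is the specialization $q=\wh 0$ of Lemma~\ref{l:zaslavsky}, using $\dim\wh 0=0$ and the fact that the sum over $q'\geq\wh 0$ is a sum over all of $Q$. Your observation that the second statement only needs Cohen-Macaulayness of the intervals $(\wh 0,q)$ — rather than all open intervals as in the lemma — is a correct and worthwhile refinement of the hypothesis, resolved precisely as you say by noting that only intervals with bottom element $\wh 0$ enter the non-negativity argument.
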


Applying this result to the case of a connected CW left regular band, we obtain the following result, generalizing the Las Vergnas-Zaslavsky Theorem~\cite[Theorem 4.6.1]{OrientedMatroids1999} for hyperplane arrangements and oriented matroids.

\begin{Thm}\label{t:cwlrbenum}
Let $B$ be a connected CW left regular band with support homomorphism $\sigma\colon B\to \Lambda(B)$.  Let $X\in \Lambda(B)$.  Then \[|\sigma\inv(X)|=\sum_{Y\geq X}|\mu_{\Lambda(B)}(X,Y)|\] where $\mu_{\Lambda(B)}$ is the M\"obius function of $\Lambda(B)$.  In particular, the size of the minimal ideal of $B$ is \[f_0(\CW(B))=\sum_{X\in \Lambda(B)}|\mu_{\Lambda(B)}(X)|.\]  More generally, \[f_k(\CW(B)) = \sum_{\dim A=k,B\geq A}|\mu_{\Lambda(B)}(A,B)|\] holds.
\end{Thm}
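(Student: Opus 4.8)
The plan is to derive the entire theorem, and in particular the final $f_k$-formula, from Lemma~\ref{l:zaslavsky} applied to the support homomorphism $\sigma\colon B\to\Lambda(B)$ regarded as an order-preserving map of posets. First I would check that $\sigma$ meets the three hypotheses of that lemma with $P=B$ and $Q=\Lambda(B)$. Being a finite meet semilattice, $\Lambda(B)$ has a minimum $\wh 0$, giving hypothesis (1). By Proposition~\ref{p:supportiscellular}, $\sigma$ is cellular and $a<b$ implies $\sigma(a)<\sigma(b)$, which supplies the cellularity and hypothesis (2). For hypothesis (3), the fibre over an up-set is the contraction $\sigma\inv(\Lambda(B)_{\geq X})=B_{\geq X}$, which is a CW poset by the definition of a CW left regular band and is acyclic by Theorem~\ref{t:acyclicordercomplex}, since $B_{\geq X}$ is itself a connected left regular band. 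As each open interval of $\Lambda(B)$ is Cohen-Macaulay by Theorem~\ref{t:cohenmac}, the second (absolute-value) conclusion of Lemma~\ref{l:zaslavsky} applies, giving
\[
|\sigma\inv(X)|=\sum_{Y\geq X}|\mu_{\Lambda(B)}(X,Y)|,
\]
which is the first displayed identity.

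Next I would establish that $\sigma$ is dimension-preserving on cells, that is, $\dim b=\dim\sigma(b)$ for every $b\in B$. Fix $b$ and choose a minimal element $t\leq b$ of $B$; since $B$ and the lower set $B_{\leq b}=bB$ are CW posets, hence graded, one has $\dim b=\rk[t,b]$ computed in $B$. Cellularity of $\sigma$ gives $\sigma(B_{\leq t})=\Lambda(B)_{\leq\sigma(t)}$, and as $t$ is minimal we have $B_{\leq t}=\{t\}$, forcing $\Lambda(B)_{\leq\sigma(t)}=\{\sigma(t)\}$; thus $\sigma(t)$ is minimal in $\Lambda(B)$, i.e. $\sigma(t)=\wh 0$, with $\dim\wh 0=0$. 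By Corollary~\ref{c:preservesgraded} the map $\sigma$ preserves ranks of intervals, so
\[
\dim b=\rk[t,b]=\rk[\wh 0,\sigma(b)]=\dim\sigma(b).
\]
Consequently the $k$-cells of $\CW(B)$ are exactly the elements lying in the fibres $\sigma\inv(X)$ with $\dim X=k$.

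Summing the first identity over all such $X$ then gives
\[
f_k(\CW(B))=\sum_{\dim X=k}|\sigma\inv(X)|=\sum_{\dim X=k}\ \sum_{Y\geq X}|\mu_{\Lambda(B)}(X,Y)|,
\]
which is the final statement (the letter $B$ appearing in the theorem's index ``$B\geq A$'' and inside $\mu_{\Lambda(B)}(A,B)$ being a bound variable ranging over $\Lambda(B)$, not the left regular band itself). The $f_0$-formula is the special case $k=0$: by the dimension identity a cell $b$ is a vertex precisely when $\sigma(b)=\wh 0$, so the $0$-cells form the minimal ideal $\sigma\inv(\wh 0)$, and specializing the first identity to $X=\wh 0$ yields $f_0=\sum_{Y}|\mu_{\Lambda(B)}(\wh 0,Y)|=\sum_{Y}|\mu_{\Lambda(B)}(Y)|$.

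The main obstacle is the dimension-preservation step of the second paragraph: one must confirm that the cell-dimension used to build $\CW(B)$ (the rank in the face poset $B$) is transported exactly to the poset dimension in $\Lambda(B)$, the crucial sub-point being that every minimal cell, i.e. every tope, has support $\wh 0$. Once this is in place, everything reduces to the Euler-characteristic/Möbius-inversion identity already packaged in Lemma~\ref{l:zaslavsky}, together with the Cohen-Macaulayness of intervals furnished by Theorem~\ref{t:cohenmac}, and no further computation is needed.
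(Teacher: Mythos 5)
Your proposal is correct and follows the same route as the paper: both verify the hypotheses of Lemma~\ref{l:zaslavsky} for the support map $\sigma\colon B\to\Lambda(B)$ (minimum in $\Lambda(B)$, strict-inequality preservation via Proposition~\ref{p:supportiscellular}, acyclicity of the contractions $B_{\geq X}$ via Theorem~\ref{t:acyclicordercomplex}, and Cohen--Macaulayness of open intervals via Theorem~\ref{t:cohenmac}) and then apply the lemma. The only difference is that you spell out the dimension-preservation step (every minimal cell has support $\wh 0$, so $\dim b=\rk[\wh 0,\sigma(b)]$), which the paper leaves implicit inside the proof of Lemma~\ref{l:zaslavsky}; your justification of it is sound.
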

\begin{proof}
The support homomorphism $\sigma\colon B\to \Lambda(B)$ is a cellular mapping preserving strict inequalities by Proposition~\ref{p:supportiscellular}. The semilattice $\Lambda(B)$ has a minimum element and $\sigma\inv(\Lambda(B)_{\geq X})=B_{\geq X}$ is an acyclic CW poset for $X\in \Lambda(B)$ by Theorem~\ref{t:acyclicordercomplex} and the definition of a CW left regular band. Moreover, each open interval in $\Lambda(B)$ is Cohen-Macaulay by Theorem~\ref{t:cohenmac}.  Lemma~\ref{l:zaslavsky} now yields the result.
\end{proof}

Notice in particular, that the cardinalities of $B$ and all its $\mathscr L$-classes depend only on $\Lambda(B)$.  This can also be deduced from the fact that $\Bbbk B$ depends only on $\Lambda(B)$ up to isomorphism.

We now specialize Theorem~\ref{t:cwlrbenum} to the case of a CAT(0) cube complex in order to recover a result of Dress \textit{et al.}~\cite[Remark of Section~3]{dresscube}, which  is stated there in the language of median graphs.  To translate between our results and theirs, one should use the equivalence between CAT(0) cube complexes and median graphs and the bijection between hyperplanes and what these authors call splits, cf.~\cite{roller}.

\begin{Thm}\label{t:enum.Cat(0)}
Let $K$ be a finite CAT(0) cube complex with set of hyperplanes $\mathcal H$.  Let $\Gamma$ be the graph with vertex set $\mathcal H$ and where two hyperplanes are connected if they intersect.  Then we have
\[f_k(K) = \sum_{i\geq k}\binom{i}{k} f_{i-1}(\Cliq(\Gamma))\] where $f_{-1}(\Cliq(\Gamma))=1$ by convention.  In particular, $f_0(K)$ is one more than the number of non-empty simplices in $\Cliq(\Gamma)$.
\end{Thm}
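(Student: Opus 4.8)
The plan is to apply Theorem~\ref{t:cwlrbenum} to the face semigroup $\FFF(K)$ of the CAT(0) cube complex $K$, after first identifying the combinatorial data appearing in Theorem~\ref{t:enum.Cat(0)} in terms of the support semilattice. By Theorem~\ref{embedinhypercube}, $\FFF(K)$ is a connected CW left regular band whose associated CW complex is $K$, and $\Lambda(\FFF(K))\cong \mathcal L(K)$, the intersection semilattice of $K$. Moreover, by Proposition~\ref{p:intersectcat0faceposet}, $\mathcal L(K)$ is isomorphic to the face poset $\mathcal P(\mathcal N(\mathcal H))\cup\{\emptyset\}$ of the nerve $\mathcal N(\mathcal H)$, with $K$ corresponding to the empty face. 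Since $\mathcal N(\mathcal H)$ is the flag complex on the graph $\Gamma$ whose edges record intersecting hyperplanes (by the Helly property), we have $\mathcal N(\mathcal H)=\Cliq(\Gamma)$. Thus $\mathcal L(K)$ is the face poset of $\Cliq(\Gamma)$ with an adjoined bottom element $\wh 0$ corresponding to $K$ itself, ordered by reverse inclusion of hyperplane sets (so that a subset $A$ of hyperplanes sits above $K$).

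First I would record that each element $X\in\mathcal L(K)$ is uniquely an intersection of a clique $A_X\in\Cliq(\Gamma)$ of hyperplanes, with $\dim X = |A_X|$ (a $k$-element clique of pairwise-intersecting hyperplanes has nonempty common intersection by Helly, giving a codimension-$k$ element, i.e.\ a cell of dimension $\dim K - k$ in $K$ but of dimension $k$ as measured in the graded poset $\Lambda(B)$ via $\dim\Delta((\Lambda(B))_{\leq X})$). Applying the cellular formula $f_k(K)=\sum_{\dim A = k,\ B\geq A}|\mu_{\Lambda(B)}(A,B)|$ from Theorem~\ref{t:cwlrbenum}, I reduce the problem to a purely combinatorial Möbius-function identity inside the poset $\mathcal P(\Cliq(\Gamma))\cup\{\wh 0\}$. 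The key step is then to compute $\sum_{Y\geq X}|\mu(X,Y)|$ for an element $X$ of dimension $k$, i.e.\ corresponding to a $k$-clique.

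For the Möbius computation I would use the standard fact that the face poset of a simplicial complex with adjoined bottom element is built from Boolean intervals: the interval $[X,Y]$ in $\mathcal L(K)$ between two cells $A_X\subseteq A_Y$ (cliques) is a Boolean lattice of rank $|A_Y|-|A_X|$, so $\mu(X,Y)=(-1)^{|A_Y|-|A_X|}$ and hence $|\mu(X,Y)|=1$. Therefore $\sum_{Y\geq X}|\mu(X,Y)|$ simply counts the cliques $A_Y$ of $\Gamma$ containing the fixed clique $A_X$. Summing this over all $k$-cliques $A_X$ gives $f_k(K)=\sum_{A\ \text{a }k\text{-clique}}\#\{B\supseteq A: B\in\Cliq(\Gamma)\}$. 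Reindexing by the larger clique $B$ (of size $i\geq k$) and counting the $\binom{i}{k}$ choices of $k$-subclique $A\subseteq B$ yields
\[
f_k(K)=\sum_{i\geq k}\binom{i}{k}\,f_{i-1}(\Cliq(\Gamma)),
\]
where $f_{i-1}(\Cliq(\Gamma))$ counts the $(i-1)$-simplices, that is the $i$-element cliques, and the convention $f_{-1}=1$ accounts for the empty clique $B=\emptyset$ (the element $\wh 0$, i.e.\ $K$ itself). The special case $k=0$ gives $f_0(K)=\sum_{i\geq 0}f_{i-1}(\Cliq(\Gamma))$, which is one more than the total number of nonempty simplices.

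The main obstacle I anticipate is bookkeeping the two competing notions of dimension and the reversal of order: in $K$ a cell crossed by more hyperplanes has smaller dimension, while in the graded poset $\Lambda(B)\cong\mathcal L(K)$ the grading $\dim X$ matches $|A_X|$ and increases upward, with $\wh 0$ being the whole complex $K$. I would take care to verify that the formula of Theorem~\ref{t:cwlrbenum}, which is stated intrinsically in terms of $\Lambda(B)$, matches the geometric $f$-vector of $K$ under the isomorphism $\CW(\FFF(K))\cong K$; the fact that $\dim$ in the sense of Theorem~\ref{t:cwlrbenum} agrees with the CW-dimension of the corresponding cell of $K$ is exactly what justifies transporting the cell counts. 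Once the Boolean-interval structure of $\mathcal L(K)$ and the clique-counting reindexing are in place, the identity follows immediately, so the essential content is the identification of $\Lambda(\FFF(K))$ with $\mathcal P(\Cliq(\Gamma))\cup\{\wh 0\}$ together with the triviality of its Möbius function up to sign.
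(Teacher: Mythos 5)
Your proposal is correct and follows essentially the same route as the paper: identify $\Lambda(\FFF(K))\cong\mathcal L(K)$ with the face poset of $\Cliq(\Gamma)$ together with an empty face (via Proposition~\ref{p:intersectcat0faceposet}, Theorem~\ref{embedinhypercube} and the Helly property), observe that all intervals are Boolean so $|\mu(X,Y)|=1$, and then apply Theorem~\ref{t:cwlrbenum} with the same $\binom{i}{k}$ reindexing over cliques. The only cosmetic difference is your parenthetical about "codimension-$k$" subspaces, which is inessential since, as you note, the relevant grading is $\dim X=|A_X|$ in $\Lambda(\FFF(K))$, exactly as in the paper.
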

\begin{proof}
If $K$ is a CAT(0) cube complex, then $\Lambda(\FFF(K))$ is isomorphic to the face poset, together with an empty face, of the nerve of the collection $\mathcal H$ by Proposition~\ref{p:intersectcat0faceposet} and Theorem~\ref{embedinhypercube}.  Note, though, that the empty simplex has rank $0$ and hence the rank of a simplex in $\Lambda(\FFF(K))$ is its  number of vertices, rather than it dimension.   Also, by the Helly property for CAT(0) cube complexes~\cite{chepoi}, the nerve of $\mathcal H$ is a flag complex and hence isomorphic to the clique complex of  $\Gamma$. Note that the interval $[X,Y]$ is isomorphic to a boolean algebra for each $X\leq Y$ in $\Lambda(\FFF(K))$.  Therefore, $\mu_{\Lambda(\FFF(K))}(X,Y)=(-1)^{\rk[X,Y]}$ and so $|\mu_{\Lambda(\FFF(K))}(X,Y)|=1$.

If $i\geq k$, then there are $f_{i-1}(\Cliq(\Gamma))$ elements $Y\in \Lambda(\FFF(K))$ with $i$ vertices.  Identifying $Y$ with an $i$-element clique in $\Gamma$, we see that $Y$ has $\binom{i}{k}$ subcliques with $k$ elements and hence there are $\binom{i}{k}$ choices of $X\in \Lambda(\FFF(K))$ with $X$ having rank $k$ and $X\leq Y$.  Denoting the number of vertices of $X\in \Lambda(\FFF(K))$ by $|X|$, it follows from Theorem~\ref{t:cwlrbenum} that
\begin{align*}
f_k(K)&=  \sum_{|X|=k,Y\geq X}|\mu_{\Lambda(\FFF(K))}(X,Y)|\\ &=\sum_{i\geq k}\sum_{|Y|=i}\left(\sum_{|X|=k, X\leq Y}|\mu_{\Lambda(\FFF(K))}(X,Y)|\right)\\ &=\sum_{i\geq k}\binom{i}{k}f_{i-1}(\Cliq(\Gamma))
\end{align*}
using that $|\mu_{\Lambda(\FFF(K))}(X,Y)|=1$.   This completes the proof.
\end{proof}

For example, if $T$ is a tree (i.e., a one-dimensional CAT(0) cube complex), then the hyperplanes of $T$ are the midpoints of edges.  No two hyperplanes intersect.  Theorem~\ref{t:enum.Cat(0)} then recovers, in this case, the well-known fact that the number of vertices of a tree is one more than the number of edges.

The next proposition generalizes~\cite[Proposition~4.6.2]{OrientedMatroids1999}. We use here that the support map $\sigma\colon B\to \Lambda(B)$ takes chains to chains of the same dimension.

\begin{Prop}\label{p:4.6.2}
Let $B$ be a connected CW left regular band and suppose that
$X_1<X_2<\cdots<X_k$ is a chain in $\Lambda(B)$. Then
\[|\sigma\inv(X_1,X_2,\ldots,X_k)|=\prod_{i=1}^k\sum_{X_i\leq Y\leq X_{i+1}}|\mu_{\Lambda(B)}(X_i,Y)|\]
where $X_k\leq Y\leq X_{k+1}$ is interpreted as $X_k\leq Y$.
\end{Prop}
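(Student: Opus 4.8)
The plan is to argue by induction on the length $k$ of the chain, peeling off the top element $b_k$. The base case $k=1$ is precisely Theorem~\ref{t:cwlrbenum}, which gives $|\sigma\inv(X_1)| = \sum_{Y\geq X_1}|\mu_{\Lambda(B)}(X_1,Y)|$, matching the single factor (with the stated convention that $X_1\leq Y\leq X_2$ is read as $X_1\leq Y$). For the inductive step, I would first record that a flag counted by $\sigma\inv(X_1,\ldots,X_k)$ is a chain $b_1<\cdots<b_k$ in $B$ with $\sigma(b_i)=X_i$, and partition these flags according to their unique top element $b_k\in L_{X_k}=\sigma\inv(X_k)$. This gives
\[
|\sigma\inv(X_1,\ldots,X_k)| = \sum_{b_k\in L_{X_k}} N(b_k),
\]
where $N(b_k)$ is the number of chains $b_1<\cdots<b_{k-1}<b_k$ in $B$ with $\sigma(b_i)=X_i$.

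The key step is to reinterpret $N(b_k)$ inside the monoid $b_kB$. Since $b_kB=\{c\in B\mid c\leq b_k\}$ is a left regular band monoid with identity $b_k$, and since $\sigma(b_{k-1})=X_{k-1}<X_k=\sigma(b_k)$ forces $b_{k-1}<b_k$ by Proposition~\ref{p:supportiscellular}, the chains counted by $N(b_k)$ are exactly the chains of $b_kB$ whose successive supports are $X_1<\cdots<X_{k-1}$, computed with the support map of $b_kB$, which is the restriction of $\sigma$. Here $\Lambda(b_kB)\cong \Lambda(B)_{\leq X_k}$ by Proposition~\ref{p:support.lat.sub}, so $N(b_k) = |\sigma_{b_kB}\inv(X_1,\ldots,X_{k-1})|$.

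To apply the induction hypothesis to $b_kB$, I need it to be a connected CW left regular band. It is connected because it is a monoid. It is CW because, for $X\leq X_k$, the contraction $(b_kB)_{\geq X}$ is the principal lower set $\{c\in B_{\geq X}\mid c\leq b_k\}$ of $b_k$ inside the CW poset $B_{\geq X}$, and lower sets of CW posets are again CW posets. Applying the proposition to $b_kB$, whose support lattice has top $X_k$, then yields
\[
N(b_k) = \prod_{i=1}^{k-1}\ \sum_{X_i\leq Y\leq X_{i+1}} |\mu_{\Lambda(b_kB)}(X_i,Y)|,
\]
where for $i=k-1$ the upper bound is the top $X_k$ of $\Lambda(b_kB)$. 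Because the M\"obius function of a closed interval is intrinsic to that interval, $\mu_{\Lambda(b_kB)}(X_i,Y)=\mu_{\Lambda(B)}(X_i,Y)$ for $Y\leq X_k$, so this product depends only on the chain $X_1<\cdots<X_k$ and not on the particular $b_k$. Combining with $|L_{X_k}| = \sum_{Y\geq X_k}|\mu_{\Lambda(B)}(X_k,Y)|$ from Theorem~\ref{t:cwlrbenum} assembles the desired product over $i=1,\ldots,k$.

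The main obstacle is the bookkeeping around the passage to $b_kB$: one must verify that $b_kB$ inherits the connected CW left regular band structure (through the lower-set property of CW posets) and that the identification $\Lambda(b_kB)\cong\Lambda(B)_{\leq X_k}$ correctly aligns the ``no upper bound'' convention for the top factor of the $b_kB$-product with the genuine upper bound $X_k$ appearing in the $i=k-1$ factor of the global product. Once these are in place, the M\"obius function comparison and the independence of $N(b_k)$ from the choice of $b_k$ are routine.
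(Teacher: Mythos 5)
Your proposal is correct and follows essentially the same route as the paper: an induction on $k$ in which one first chooses the top element $b_k\in\sigma\inv(X_k)$ (counted by Theorem~\ref{t:cwlrbenum}) and then counts the remaining chain inside the connected CW left regular band $b_kB$, whose support lattice is $\Lambda(B)_{\leq X_k}$. The only differences are cosmetic: you supply more detail on why $b_kB$ is a connected CW left regular band and on aligning the top factor, while the paper invokes Lemma~\ref{l:chainlifting} and states these facts without elaboration.
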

\begin{proof}
We proceed by induction, the case $k=1$ being handled by Theorem~\ref{t:cwlrbenum}.  Assume it is true for $k-1$.  By Lemma~\ref{l:chainlifting}, to choose a preimage of $(X_1,X_2,\ldots, X_k)$ is to choose a preimage $x_k$ of $X_k$ in $B$ and then a preimage of $(X_1,X_2,\ldots, X_{k-1})$ in $x_kB$ under the restriction $\sigma\colon x_kB\to \Lambda(x_kB)=\Lambda(B)_{\leq X_k}$. Note that $x_kB$ is a connected CW left regular band.  By Theorem~\ref{t:cwlrbenum} there are $\sum_{X_k\leq Y}|\mu_{\Lambda(B)}(X_k,Y)|$ ways to choose $x_k$ and by induction there are then \[\prod_{i=1}^{k-1}\sum_{X_i\leq Y\leq X_{i+1}}|\mu_{\Lambda(B)}(X_i,Y)|\] ways to choose a preimage of $(X_1,\ldots,X_{k-1})$ in $x_kB$.  Thus
\[|\sigma\inv(X_1,X_2,\ldots,X_k)|=\prod_{i=1}^k\sum_{X_i\leq Y\leq X_{i+1}}|\mu_{\Lambda(B)}(X_i,Y)|\]
completing the induction.
\end{proof}

We can now prove the following analogue for connected CW left regular bands of a result of Bayer and Sturmfels~\cite[Corollary~4.6.3]{OrientedMatroids1999}.

\begin{Cor}\label{c:bayersturmfels}
The flag vector $\til f(\CW(B))$ for a connected CW left regular band $B$ depends only on its support semilattice $\Lambda(B)$.  More precisely, if $\dim B=d$ and $J=\{j_1,\ldots, j_k\}$ with $0\leq j_1<j_2<\cdots<j_k\leq d$, then
\[f_J(\CW(B)) = \sum\prod_{i=1}^k\sum_{X_i\leq Y\leq X_{i+1}}|\mu_{\Lambda(B)}(X_i,Y)|\] where the first sum runs over all chains $(X_1,\ldots, X_k)$ in $\Lambda(B)$ with $\dim X_i=j_i$ and where $X_k\leq Y\leq X_{k+1}$ is interpreted as $X_k\leq Y$.
\end{Cor}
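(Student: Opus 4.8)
The plan is to derive this corollary directly from Proposition~\ref{p:4.6.2}, which already computes the number of preimages $|\sigma\inv(X_1,\dots,X_k)|$ of a \emph{fixed} chain in $\Lambda(B)$ in terms of M\"obius numbers. The flag number $f_J(\CW(B))$ counts chains of cells $e_1<e_2<\cdots<e_k$ in $B$ with $\dim e_i=j_i$, so the essential observation is that the support map $\sigma\colon B\to\Lambda(B)$ organizes these chains of cells according to their image chains in $\Lambda(B)$.

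First I would recall, from Proposition~\ref{p:supportiscellular} and Corollary~\ref{c:preservesgraded}, that $\sigma$ preserves strict inequalities and dimensions of cells; in particular a chain $e_1<e_2<\cdots<e_k$ in $B$ maps to a chain $\sigma(e_1)<\sigma(e_2)<\cdots<\sigma(e_k)$ in $\Lambda(B)$ of the same length, and $\dim e_i=\dim\sigma(e_i)$. Conversely, by the chain lifting property of cellular maps (Lemma~\ref{l:chainlifting}), every chain $(X_1,\dots,X_k)$ in $\Lambda(B)$ with $X_i\leq X_{i+1}$ is the image of chains in $B$, and the preimages of the chain $(X_1,\dots,X_k)$ under $\sigma$ (acting on chains) are exactly what $|\sigma\inv(X_1,\dots,X_k)|$ denotes in Proposition~\ref{p:4.6.2}. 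This gives the partition
\[
f_J(\CW(B)) = \sum_{\substack{(X_1,\dots,X_k)\\ X_1<\cdots<X_k,\ \dim X_i=j_i}} |\sigma\inv(X_1,\dots,X_k)|,
\]
where the sum runs over all chains in $\Lambda(B)$ whose $i$-th entry has dimension $j_i$.

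Next I would substitute the formula from Proposition~\ref{p:4.6.2}, namely
\[
|\sigma\inv(X_1,\dots,X_k)| = \prod_{i=1}^{k}\ \sum_{X_i\leq Y\leq X_{i+1}} |\mu_{\Lambda(B)}(X_i,Y)|
\]
(with the convention $X_k\leq Y\leq X_{k+1}$ read as $X_k\leq Y$), directly into the partition identity. This yields exactly the claimed expression for $f_J(\CW(B))$. The final sentence---that $\til f(\CW(B))$ depends only on $\Lambda(B)$---then follows immediately, since the right-hand side is manifestly a function of the poset $\Lambda(B)$ alone, via its dimension function and M\"obius function.

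The only subtle point, and the one I would state carefully, is the bookkeeping in the first step: one must be sure that the counting function $|\sigma\inv(X_1,\dots,X_k)|$ appearing in Proposition~\ref{p:4.6.2} genuinely counts cell-chains in $B$ mapping onto the fixed chain $(X_1,\dots,X_k)$, so that summing over all chains of the prescribed dimensions exhausts all cell-chains counted by $f_J$ exactly once. This is where dimension-preservation (so that $\dim e_i=j_i$ matches $\dim X_i=j_i$) and the fact that $\sigma$ sends a cell-chain to a chain of the \emph{same} length (no collapsing, by strictness) are both used; these were precisely the properties invoked in the statement of Proposition~\ref{p:4.6.2}. Since those properties have already been established, no further obstacle arises, and the corollary follows by a one-line substitution once the partition is set up.
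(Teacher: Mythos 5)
Your proposal is correct and follows essentially the same route as the paper: partition the cell-chains counted by $f_J(\CW(B))$ according to their image chains under the support map $\sigma$ (using that $\sigma$ preserves strict inequalities and dimensions of flags), then substitute the formula from Proposition~\ref{p:4.6.2}. The extra care you take with the bookkeeping in the first step is a sensible elaboration of what the paper dispatches in one line.
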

\begin{proof}
Since $\sigma\colon B\to \Lambda(B)$ preserves dimensions and flags, it is clear that
\[f_J(\CW(B)) = \sum |\sigma\inv(X_1,\ldots, X_k)|\] where the sum runs over all chains $(X_1,\ldots, X_k)$ in $\Lambda(B)$ with $\dim X_i=j_i$. The result now follows from Proposition~\ref{p:4.6.2}.
\end{proof}

\subsection{Cartan invariants}
We use the previous results of this section to compute the Cartan matrix for a connected CW left regular band.
If $B$ is any left regular band, recall that the Cartan matrix of $\Bbbk B$ is the mapping $C\colon \Lambda(B)\times \Lambda(B)\to \mathbb N$ with $C_{X,Y}$ the multiplicity of the simple module $\Bbbk_X$ as a composition factor in the projective cover $\Bbbk L_Y$ of $\Bbbk_Y$.  Suppose that $X=Be_X$ for $X\in \Lambda(B)$.  Then Theorem~\ref{t:Cartan} states
\[C_{X,Y} = \sum_{Y\leq Z\leq X} |e_ZB\cap L_Y|\cdot \mu_{\Lambda(B)}(Z,X).\]
Therefore, we have by M\"obius inversion that
\begin{equation}\label{e:computecartan}
|e_XB\cap L_Y|= \sum_{Y\leq Z\leq X} C_{Z,Y}
\end{equation}
for $X\geq Y$.

Assume now that $B$ is a connected CW left regular band and $Y\leq X$ in $\Lambda(B)$.  Then $e_XB_{\geq Y}$ is a connected CW left regular band monoid with support lattice $[Y,X]$ and $e_XB\cap L_Y$ is the vertex set of this complex.  Thus, by Theorem~\ref{t:cwlrbenum}, we have that
\begin{equation}\label{e:computecartan2}
|e_XB\cap L_Y|=\sum_{Y\leq Z\leq X}|\mu_{\Lambda(B)}(Y,Z)|.
\end{equation}
A comparison of \eqref{e:computecartan} and \eqref{e:computecartan2}, together with an application of M\"obius inversion, yields $C_{X,Y} = |\mu_{\Lambda(B)}(Y,X)|$ for $X\geq Y$.  We have thus proved the following theorem, generalizing a result of the second author for hyperplane face monoids~\cite[Proposition~6.4]{Saliolahyperplane}.

\begin{Thm}\label{t:cartan.cw}
Let $B$ be a connected CW left regular band and $\Bbbk$ a field.  Then the Cartan matrix for $\Bbbk B$ is given by \[C_{X,Y} = \begin{cases} |\mu_{\Lambda(B)}(Y,X)|, & \text{if}\ X\geq Y\\ 0, & \text{else.}\end{cases}\]
\end{Thm}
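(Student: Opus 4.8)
The plan is to deduce the Cartan formula by combining the general Cartan matrix identity of Theorem~\ref{t:Cartan} with the cell-counting result of Theorem~\ref{t:cwlrbenum}, using M\"obius inversion to pass between them. The key observation is that both the quantity $|e_XB\cap L_Y|$ and the desired Cartan entries $C_{X,Y}$ can be expressed as sums over the interval $[Y,X]$ in $\Lambda(B)$, so that inverting one relation against the other isolates $C_{X,Y}$ as a single M\"obius value.

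First I would start from Theorem~\ref{t:Cartan}, which gives \[C_{X,Y}=\sum_{Y\leq Z\leq X}|e_ZB\cap L_Y|\cdot \mu_{\Lambda(B)}(Z,X).\] Applying M\"obius inversion over the interval $[Y,X]$ (using that this sum is triangular with respect to the partial order on $\Lambda(B)$) yields the dual relation \[|e_XB\cap L_Y|=\sum_{Y\leq Z\leq X}C_{Z,Y},\] valid for $X\geq Y$. This is the content of equation~\eqref{e:computecartan}, and it expresses the vertex count of a contracted complex as a partial sum of Cartan entries.

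Next I would compute $|e_XB\cap L_Y|$ independently. The crucial structural fact is that, for $Y\leq X$, the set $e_XB_{\geq Y}$ is itself a connected CW left regular band monoid: by the earlier analysis ($e_XB\cong$ a deletion and $B_{\geq Y}$ a contraction, with $\Lambda(e_XB_{\geq Y})\cong [Y,X]$), its support lattice is the interval $[Y,X]$, and $e_XB\cap L_Y$ is precisely the set of minimal elements, i.e., the $0$-cells of $\CW(e_XB_{\geq Y})$. Theorem~\ref{t:cwlrbenum} applied to this connected CW left regular band then gives \[|e_XB\cap L_Y|=\sum_{Y\leq Z\leq X}|\mu_{\Lambda(B)}(Y,Z)|,\] which is equation~\eqref{e:computecartan2}. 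Comparing the two expressions~\eqref{e:computecartan} and~\eqref{e:computecartan2} for $|e_XB\cap L_Y|$, both of which are sums over $[Y,X]$, and performing one further M\"obius inversion (this time inverting the variable $Z$ ranging upward from $Y$) forces $C_{X,Y}=|\mu_{\Lambda(B)}(Y,X)|$ for $X\geq Y$. For $X\not\geq Y$ the entry vanishes since $e_ZB\cap L_Y=\emptyset$ whenever $Z\not\geq Y$, exactly as in the triangularity remark of Theorem~\ref{t:Cartan}.

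The main obstacle I anticipate is not any single computation—each M\"obius inversion is routine—but rather the bookkeeping needed to justify that $e_XB_{\geq Y}$ is a connected CW left regular band with support lattice exactly $[Y,X]$, so that Theorem~\ref{t:cwlrbenum} legitimately applies to it. This requires invoking the closure of the CW left regular band property under both deletion ($b\mapsto bB$) and contraction ($B\mapsto B_{\geq X}$), together with the isomorphisms $\Lambda(e_XB)\cong \Lambda(B)_{\leq X}$ and $\Lambda(B_{\geq Y})\cong \Lambda(B)_{\geq Y}$ established earlier via Proposition~\ref{p:support.lat.sub}. Once the identification of $e_XB_{\geq Y}$ and its support interval is in hand, the rest is a clean double M\"obius inversion.
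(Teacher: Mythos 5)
Your proposal is correct and follows essentially the same route as the paper's own proof: starting from Theorem~\ref{t:Cartan}, inverting to obtain \eqref{e:computecartan}, applying Theorem~\ref{t:cwlrbenum} to the connected CW left regular band monoid $e_XB_{\geq Y}$ with support lattice $[Y,X]$ to get \eqref{e:computecartan2}, and comparing via a final M\"obius inversion. The point you flag as the main obstacle---identifying $e_XB_{\geq Y}$ as a connected CW left regular band monoid with support lattice $[Y,X]$---is exactly the structural fact the paper relies on, so nothing is missing.
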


\begin{Example}
If $K$ is a CAT(0) cube complex, then $\Lambda(\FFF(K))$ is isomorphic to the face poset, together with an empty face, of the nerve of the collection of hyperplanes of $K$ and hence each interval $[Y,X]$ is isomorphic to a boolean algebra.  Therefore, $\mu_{\Lambda(\FFF(K))}(Y,X)=\pm 1$ and so $|\mu_{\Lambda(\FFF(K))}(Y,X)|=1$.  Thus the Cartan matrix of $\Bbbk\FFF(K)$ is the zeta function of $\Lambda(\FFF(K))$.  This also follows from the fact that $\Bbbk\FFF(K)$ is isomorphic to the incidence algebra of $\Lambda(\FFF(K))$ by Corollary~\ref{c:cat0.as.inc}.
\end{Example}

\section{Cohomological dimension}\label{s:cohom}
In this section, we compute the cohomological dimension of a finite left regular band monoid.  Monoid cohomology has a long history, in part because of its connections with
string rewriting systems~\cite{Squier,brownrewriting,KobayashiFP1,KobayashiOtto,Lafont,Pridefiniteness,GubaPride,GubaPrideBurnside,Nicocohom1,AdamsRieffel,Nicocohom2,Nunes,GrayPride,CohenFP,GraySteinberg1,GraySteinberg2}. See~\cite{Browncohomology} for background on cohomology of groups and~\cite{Webb} for the cohomology of categories with applications.

\nomenclature[R, 11]{$\cdim_{\Bbbk} M$}{cohomological dimension of the module $M$}%
Let $M$ be a monoid.  Then the cohomology of $M$ with coefficients in a $\mathbb ZM$-module $V$ is defined by \[H^n(M,V) = \Ext_{\mathbb ZM}^n(\mathbb Z,V).\]  If $V$ is a $\Bbbk M$-module for some commutative ring with unit $\Bbbk$, then it is known that $\Ext_{\Bbbk M}^n(\Bbbk, V)= H^n(M,V)$ (use the standard bar resolution).  The \emph{$\Bbbk$-cohomological dimension}\index{$\Bbbk$-cohomological dimension} of $M$ is \[\cdim_{\Bbbk} M=\sup\{n\mid H^n(M,V)\neq 0,\ \text{for some $\Bbbk M$-module}\ V\}.\] Obviously $\cdim_{\Bbbk} M\leq \cdim_{\mathbb Z} M=\cdim M$.  Using the projective resolution of the trivial module from Theorem~\ref{t:projectiveres}, we can compute the cohomological dimension of a finite left regular band monoid over any base ring.

\begin{Thm}\label{t:cohomdim}
Let $B$ be a  left regular band monoid and $\Bbbk$ a commutative ring with unit. Then $\cdim_{\Bbbk} B$ is the largest $n$ such that  $\til H^{n-1}(\Delta(\bd e_YB);\Bbbk)\neq 0$ for some $Y\in \Lambda(B)$.
\end{Thm}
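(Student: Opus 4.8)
The plan is to identify $\cdim_\Bbbk B$ with the projective dimension of the trivial $\Bbbk B$-module and then read this dimension off from the $\Ext$-computation of Theorem~\ref{t:extthm}. First, observe that the trivial module (on which every element of $B$ acts as the identity) is exactly $\Bbbk_{\wh 0}$, where $\wh 0$ is the minimum of $\Lambda(B)$: since $\sigma(b)\geq \wh 0$ for all $b$, each $b$ acts on $\Bbbk_{\wh 0}$ as multiplication by $1$. Because $H^n(B,V)=\Ext^n_{\Bbbk B}(\Bbbk_{\wh 0},V)$ and the projective dimension of a module is the largest $n$ for which some $\Ext^n(\Bbbk_{\wh 0},-)$ is nonzero, we get $\cdim_\Bbbk B=\mathrm{pd}_{\Bbbk B}(\Bbbk_{\wh 0})$. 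Write $N$ for the quantity on the right-hand side of the theorem. Since a monoid is connected and $B_{\geq\wh 0}=B$, Theorem~\ref{t:extthm} applied with $X=\wh 0$ gives $\Ext^n_{\Bbbk B}(\Bbbk_{\wh 0},\Bbbk_Y)\cong \til H^{n-1}(\Delta(\bd e_YB);\Bbbk)$ for $Y>\wh 0$ and $n\geq 1$. The lower bound $\cdim_\Bbbk B\geq N$ is then immediate: any $Y$ with $\til H^{N-1}(\Delta(\bd e_YB);\Bbbk)\neq 0$ produces a nonvanishing $\Ext^N(\Bbbk_{\wh 0},\Bbbk_Y)$.

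The content is the upper bound $\mathrm{pd}(\Bbbk_{\wh 0})\leq N$, i.e.\ $\Ext^n_{\Bbbk B}(\Bbbk_{\wh 0},V)=0$ for every $\Bbbk B$-module $V$ and every $n>N$. I would first reduce to coefficient modules killed by the radical. Let $J=\ker(\sigma\colon \Bbbk B\to \Bbbk\Lambda(B))$; it is nilpotent by Theorem~\ref{t:nilkernel} and $\Bbbk B/J\cong \Bbbk\Lambda(B)\cong \Bbbk^{\Lambda(B)}$ by Theorem~\ref{t:solomon}. The finite filtration $V\supseteq JV\supseteq J^2V\supseteq\cdots$ has subquotients that are $\Bbbk B/J$-modules, so the long exact sequence of $\Ext(\Bbbk_{\wh 0},-)$ together with downward induction reduces the vanishing to the case where $V$ is a $\Bbbk B/J$-module. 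Using the orthogonal idempotents of Theorem~\ref{t:solomon}, such a $V$ splits as $\bigoplus_Y \Bbbk_Y\otimes_\Bbbk W_Y$ with each $W_Y$ a $\Bbbk$-module, so the task becomes showing $\Ext^n_{\Bbbk B}(\Bbbk_{\wh 0},\Bbbk_Y\otimes_\Bbbk W)=0$ for all $n>N$, all $Y$, and all $\Bbbk$-modules $W$.

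To compute these I would use the $\Bbbk$-free projective resolution $C_\bullet(\Delta(B);\Bbbk)\to \Bbbk_{\wh 0}$ of Theorem~\ref{t:projectiveres}. Exactly as in the proof of Theorem~\ref{t:extthm}, Lemma~\ref{l:homtosimples} (with the coefficient object $\Bbbk$ replaced by $W$) identifies $\Hom_{\Bbbk B}(C_\bullet,\Bbbk_Y\otimes_\Bbbk W)$ with $\Hom_\Bbbk(C^{\mathrm{rel}}_\bullet,W)$, where $C^{\mathrm{rel}}_\bullet=C_\bullet(\Delta(e_YB),\Delta(\bd e_YB);\Bbbk)$ is a bounded complex of finitely generated free $\Bbbk$-modules; since $\Delta(e_YB)$ is a cone on $e_Y$ and hence $\Bbbk$-contractible, this yields $\Ext^n_{\Bbbk B}(\Bbbk_{\wh 0},\Bbbk_Y\otimes_\Bbbk W)\cong \til H^{n-1}(\Delta(\bd e_YB);W)$. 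Setting $D^\bullet=\Hom_\Bbbk(C^{\mathrm{rel}}_\bullet,\Bbbk)$, finiteness and freeness give $\Hom_\Bbbk(C^{\mathrm{rel}}_\bullet,W)\cong D^\bullet\otimes_\Bbbk W$. By the definition of $N$ we have $H^n(D^\bullet)\cong \til H^{n-1}(\Delta(\bd e_YB);\Bbbk)=0$ for $n>N$, and a bounded cochain complex of finitely generated free modules whose cohomology vanishes above degree $N$ is, by the standard top-down Gaussian-elimination argument, chain homotopy equivalent over $\Bbbk$ to a complex concentrated in degrees $\leq N$. Tensoring this homotopy equivalence with $W$ forces $H^n(D^\bullet\otimes_\Bbbk W)=0$ for $n>N$, which is the required vanishing.

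The main obstacle is precisely this last transfer over an arbitrary base ring: with $\Bbbk$ only a commutative ring, vanishing of $\til H^{\,\geq N}(\Delta(\bd e_YB);\Bbbk)$ does not pass to arbitrary coefficient modules $W$ by a naive universal-coefficient argument. Splitting off the acyclic free tail of $D^\bullet$ (the homotopy truncation) is what makes the passage valid, and combining it with the nilpotent-radical reduction is the crux that upgrades the field case to general $\Bbbk$. Everything else—the identification of the trivial module, the invocation of Theorem~\ref{t:extthm}, and the lower bound—is routine.
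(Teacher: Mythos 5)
Your proof is correct, but it reaches the upper bound by a genuinely different route than the paper. The paper never considers cohomology with coefficients other than $\Bbbk$: it observes that for $n=\cdim_\Bbbk B$ the functor $H^n(B,-)$ is right exact and, by Brown's finiteness criterion (the trivial module has a resolution finitely generated in each degree), commutes with direct limits, which forces $H^n(B,\Bbbk B)\neq 0$; it then filters the \emph{regular module} $\Bbbk B$ by the powers of $J=\ker\sigma$ and repeats the same right-exactness/direct-limit reduction to land on $H^n(B,\Bbbk\Lambda(B))\cong\bigoplus_X H^n(B,\Bbbk_X)$, after which Theorem~\ref{t:extthm} finishes. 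You instead filter an \emph{arbitrary} coefficient module $V$ by powers of $J$, decompose the $\Bbbk^{\Lambda(B)}$-module subquotients as $\bigoplus_Y\Bbbk_Y\otimes_\Bbbk W_Y$ via the idempotents of Theorem~\ref{t:solomon}, and then prove the vanishing of $\til H^{n-1}(\Delta(\bd e_YB);W)$ for $n>N$ and every $\Bbbk$-module $W$ by splitting off the acyclic top of the bounded complex of finitely generated free $\Bbbk$-modules computing the relative cohomology. You correctly flag the real danger point---over a general commutative ring, $\Bbbk$-coefficient vanishing does not pass to arbitrary coefficients by universal coefficients---and the homotopy-truncation argument you give (repeatedly splitting surjections onto projectives from the top degree down) is a valid fix; the extension of Lemma~\ref{l:homtosimples} to coefficients in $\Bbbk_Y\otimes_\Bbbk W$ also goes through verbatim. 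What your approach buys is self-containedness (no appeal to Brown's direct-limit result or to right exactness of the top cohomology functor) and the strictly stronger conclusion that the relevant reduced cohomology vanishes above degree $N-1$ with arbitrary coefficients; what the paper's approach buys is brevity, since by reducing to the single module $\Bbbk B$ it sidesteps non-field coefficient systems on the order complexes entirely.
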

\begin{proof}
Theorem~\ref{t:projectiveres} shows that the trivial $\Bbbk B$-module admits a finite projective resolution by finitely generated $\Bbbk B$-modules.  Thus $\cdim_{\Bbbk} B<\infty$.  Suppose that $n=\cdim_{\Bbbk} B$.  Then the functor $H^n(B,-)$ is right exact by consideration of the long exact sequence for cohomology.  Because every module is a quotient of a free module, it then follows that $H^n(B,F)\neq 0$ for some free $\Bbbk B$-module $F$.
Since the trivial $\Bbbk B$-module admits a projective resolution that is finitely generated in each degree, $H^n(B,-)$ preserves direct limits by a result of Brown~\cite{Brownfiniteness}. As every free module is a direct limit of finitely generated free modules and $H^n(B,-)$ commutes with finite direct sums, it follows that $H^n(B,\Bbbk B)\neq 0$.  We conclude that $n$ is the largest non-negative integer for which $H^n(B,\Bbbk B)\neq 0$.

Let $J=\ker \sigma$ where $\sigma\colon \Bbbk B\to \Bbbk \Lambda(B)$ is induced by the support homomorphism.  Recall from Theorem~\ref{t:nilkernel} that $J$ is nilpotent and let $m\geq 1$ be such that $J^m=0$.  Then we have a filtration \[0=J^m\Bbbk B\subseteq J^{m-1}\Bbbk B\subseteq\cdots \subseteq J^0\Bbbk B=\Bbbk B.\]  It follows using the long exact sequence for cohomology that $\cdim_{\Bbbk} B$ is the largest integer $n\geq 0$ such that \[H^n(B,J^j\Bbbk B/J^{j+1}\Bbbk B)\neq 0\] for some $0\leq j\leq m-1$.  But each $J^j\Bbbk B/J^{j+1}\Bbbk B$ is a $\Bbbk \Lambda(B)$-module.  Thus   $\cdim_{\Bbbk} B$ is the largest integer $n$ such that $H^n(B,V)\neq 0$ for some $\Bbbk \Lambda(B)$-module $V$.  But, arguing as above, this is the same as the largest integer $n$ such that $H^n(B,\Bbbk\Lambda(B))\neq 0$.  We conclude from \eqref{eq:decomposelambdaB} that \[\cdim_{\Bbbk} B=\max\{n\mid H^n(B,\Bbbk_X)\neq 0,\ \text{for some}\ X\in \Lambda(B)\}\]  The result now follows from Theorem~\ref{t:extthm}.
\end{proof}

Let us state some corollaries.

\begin{Cor}
Let $B$ be a left regular band monoid and $\Bbbk$ a commutative ring with unit.  Then $\cdim_\Bbbk \mathsf S(B)=\cdim_\Bbbk B+1$
\end{Cor}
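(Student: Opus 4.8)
The plan is to compute both cohomological dimensions using Theorem~\ref{t:cohomdim}, which expresses $\cdim_\Bbbk$ of a left regular band monoid in terms of the reduced cohomology of order complexes of boundary posets $\Delta(\bd e_YB)$ ranging over $Y\in\Lambda(B)$. Recall that $\mathsf S(B)=\partial L\ast B$ and that $\Lambda(\mathsf S(B))=\Lambda(B)\cup\{-\infty\}$ where $-\infty$ is an adjoined minimum. So the support lattice of the suspension is obtained from that of $B$ by adjoining a new bottom element, and I need to understand the boundary posets $\bd e_Z\,\mathsf S(B)$ for each $Z\in\Lambda(\mathsf S(B))$ and relate their order complexes to those arising from $B$.

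First I would handle the elements $Z=Y\in\Lambda(B)$. For such $Y$, choosing an idempotent $e_Y$ lying in the $B$-part of $\mathsf S(B)=\partial L\ast B$, the principal right ideal $e_Y\,\mathsf S(B)$ coincides with $e_YB$ because every element of $\partial L$ acts as a right identity on $B$ (by the definition of the join, $b'b=b=bb'$ for $b\in B$, $b'\in\partial L$), so right-multiplication by elements of $\partial L$ fixes $e_Y$. Hence $\bd e_Y\,\mathsf S(B)=\bd e_YB$ and the contribution of these $Z$ to $\cdim_\Bbbk\mathsf S(B)$ is exactly the same collection of reduced cohomology groups $\til H^{n-1}(\Delta(\bd e_YB);\Bbbk)$ that computes $\cdim_\Bbbk B$. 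The remaining case is $Z=-\infty$, the adjoined minimum of $\Lambda(\mathsf S(B))$. Here a representing idempotent $e_{-\infty}$ lies in $\partial L$, and I would show that $\bd e_{-\infty}\,\mathsf S(B)$ is, as a poset, the suspension of $\bd B=B\setminus\{1\}$ together with the other sign of $\partial L$. More carefully, using that $\partial L=\{+,-\}$ with left-zero structure and that $e_{-\infty}\in\{+,-\}$ is a minimal element whose principal right ideal is all of $\mathsf S(B)$ after removing $e_{-\infty}$ itself, I expect $\Delta(\bd e_{-\infty}\,\mathsf S(B))$ to be homeomorphic (or at least homotopy equivalent) to the simplicial suspension $\mathsf S(\Delta(\bd B))$ of $\Delta(\bd B)$, where $\bd B=\partial B$.

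The key computation is then the suspension isomorphism in reduced cohomology, $\til H^{q+1}(\mathsf S(X);\Bbbk)\cong\til H^{q}(X;\Bbbk)$, which is already invoked in the proof of Corollary~\ref{c:suspension} (provable via Mayer--Vietoris). Applying this, the contribution of the bottom element $-\infty$ to $\cdim_\Bbbk\mathsf S(B)$ is the largest $n$ with $\til H^{n-1}(\mathsf S(\Delta(\bd B));\Bbbk)=\til H^{n-2}(\Delta(\bd B);\Bbbk)\neq0$, i.e.\ one more than the largest $m$ with $\til H^{m-1}(\Delta(\bd B);\Bbbk)\neq0$. Since $\bd B=\bd e_{\wh 1}B$ where $\wh 1=B$ is the maximum of $\Lambda(B)$, this quantity is at most $\cdim_\Bbbk B+1$, and in fact the index $Y=\wh 1$ is among the $Y$ that already appear in the formula for $\cdim_\Bbbk B$. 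Thus I would argue that the new contribution from $-\infty$ is precisely $\cdim_\Bbbk B+1$, dominating all the contributions from $Z=Y\in\Lambda(B)$, which are bounded by $\cdim_\Bbbk B$. Taking the maximum over all $Z\in\Lambda(\mathsf S(B))$ via Theorem~\ref{t:cohomdim} then gives $\cdim_\Bbbk\mathsf S(B)=\cdim_\Bbbk B+1$.

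The main obstacle will be pinning down precisely the poset structure of $\bd e_{-\infty}\,\mathsf S(B)$ and confirming that its order complex is the suspension of $\Delta(\bd B)$ rather than merely being homotopy equivalent to something more complicated; I must be careful about how the two elements of $\partial L$ sit below the copy of $B$ and how the identity $1$ of $B$ interacts, since $\mathsf S(B)$ is a monoid exactly when $B$ is. I would verify this by directly describing the order on $\partial L\ast B$: the two elements $\pm$ of $\partial L$ are minimal, each lies below every element of $B$, and they are mutually incomparable, so removing the chosen minimal idempotent $e_{-\infty}\in\{+,-\}$ leaves the other sign as an isolated minimal vertex joined to all of $\bd B$, realizing the simplicial suspension. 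A subtlety worth double-checking is whether one should use $\bd B$ or $B$ itself in the suspension, and correspondingly whether the degree shift lands the top nonvanishing cohomology at $\cdim_\Bbbk B+1$ exactly; matching this against the already-established Corollary~\ref{c:suspension}, whose $\Ext$ computations for $\Bbbk_{-\infty}$ involve precisely $\til H^{n-2}(\Delta(\bd e_YB);\Bbbk)$, should confirm the bookkeeping and furnish the result cleanly.
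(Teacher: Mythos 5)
Your overall strategy (apply Theorem~\ref{t:cohomdim} to both $B$ and $\mathsf S(B)$ and compare the boundary posets) is the right one, but your identification of those boundary posets is wrong, because you have the multiplication in the join backwards. In the paper's definition of $B\ast B'$ the rule is $b'b=b=bb'$ for $b$ in the \emph{first} factor and $b'$ in the \emph{second}; in $\mathsf S(B)=\partial L\ast B$ the first factor is $\partial L$, so $ax=x=xa$ for $a\in B$ and $x\in\partial L$. Thus $\partial L$ is an \emph{ideal}, the elements of $B$ act as identities on it, and $+,-$ are the two minimal elements of the poset $\mathsf S(B)$ (consistent with $-\infty$ being the adjoined \emph{minimum} of $\Lambda(\mathsf S(B))$, and with $\mathsf S(B)$ being a monoid iff $B$ is). Consequently, for $Y\in\Lambda(B)$ one has $e_Y\mathsf S(B)=\partial L\cup e_YB$, so $\bd e_Y\mathsf S(B)=\partial L\ast\bd e_YB$ and $\Delta(\bd e_Y\mathsf S(B))\cong \mathsf S(\Delta(\bd e_YB))$: the suspension, and hence the degree shift $\til H^{n-1}\leadsto\til H^{n-2}$, occurs at \emph{every} $Y\in\Lambda(B)$, not at none of them. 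Dually, $e_{-\infty}\mathsf S(B)=\{e_{-\infty}\}$ (since $+$ is a left zero for everything), so $\bd e_{-\infty}\mathsf S(B)=\emptyset$ and the bottom element contributes only $n=0$ via $\til H^{-1}(\emptyset)=\Bbbk$ --- exactly the opposite of the picture you describe. With the corrected identifications the maximum over $Z\in\Lambda(\mathsf S(B))$ is $\max\{0,\cdim_\Bbbk B+1\}=\cdim_\Bbbk B+1$, which is the paper's argument: Corollary~\ref{c:suspension}(2) already records $\Ext^n_{\Bbbk\mathsf S(B)}(\Bbbk_{-\infty},\Bbbk_Y)=\til H^{n-2}(\Delta(\bd e_YB);\Bbbk)$ for all $Y\in\Lambda(B)$, and $\Bbbk_{-\infty}$ is the trivial $\Bbbk\mathsf S(B)$-module, so the shift by one is uniform.

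Even setting aside the reversed multiplication, your final step does not follow from your own setup. You locate the entire degree shift at $Z=-\infty$, where the relevant complex is (by your account) $\mathsf S(\Delta(\bd B))$ with $\bd B=\bd e_{\wh 1}B$; but the quantity $\max\{m:\til H^{m-1}(\Delta(\bd B);\Bbbk)\neq 0\}$ is only the contribution of the single index $Y=\wh 1$ to $\cdim_\Bbbk B$, and the maximum in Theorem~\ref{t:cohomdim} may be attained at some other $Y$. For instance, $\Delta(\bd B)$ can be contractible while $\cdim_\Bbbk B>0$ (this happens already for free partially commutative left regular bands whose clique complex is contractible but has an induced subcomplex with nonvanishing cohomology), in which case your argument would yield $\cdim_\Bbbk\mathsf S(B)=\cdim_\Bbbk B$ rather than $\cdim_\Bbbk B+1$. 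So the claim that the $-\infty$ term ``dominates'' and equals $\cdim_\Bbbk B+1$ is unjustified; the correct proof needs the shift to apply to every $Y\in\Lambda(B)$ simultaneously, which is exactly what the join structure $\bd e_Y\mathsf S(B)=\partial L\ast \bd e_YB$ provides.
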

\begin{proof}
This is immediate from Theorem~\ref{t:cohomdim} and Corollary~\ref{c:suspension}.
\end{proof}
Note that in the next two corollaries we do \emph{not} require $B_{\geq X}$ to be a CW poset for all $X\in \Lambda(B)$.

\begin{Cor}\label{c:Cwposetcohom}
Let $B$ be a left regular band monoid which is a CW poset.  Then $\cdim_\Bbbk B=\cdim B=\dim \CW(B)$.  This applies in particular to real and complex hyperplane face monoids, oriented matroids and oriented interval greedoids.
\end{Cor}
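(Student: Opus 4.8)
The plan is to feed the hypothesis into Theorem~\ref{t:cohomdim}, which identifies $\cdim_\Bbbk B$ with the largest $n$ for which $\til H^{n-1}(\Delta(\bd e_YB);\Bbbk)\neq 0$ for some $Y\in\Lambda(B)$, and then to read this quantity off from the CW poset structure of $B$. First I would observe that, for a chosen idempotent $e_Y$ with $Be_Y=Y$, the principal right ideal $e_YB$ coincides with the lower set $B_{\leq e_Y}$ (since $b\in e_YB$ iff $e_Yb=b$ iff $b\leq e_Y$). As a non-empty lower set of the CW poset $B$, it is itself a CW poset, and it has a maximum, namely $e_Y$. Consequently $\bd e_YB=(e_YB)_{<e_Y}$, and the very definition of a CW poset gives that $\|\Delta(\bd e_YB)\|$ is homeomorphic to a sphere of dimension $\dim e_Y-1$, where $\dim e_Y=\dim\Delta(e_YB)$ (the convention $S^{-1}=\emptyset$ covering the case where $e_Y$ is a vertex).

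Next I would compute the reduced cohomology. Since $\til H^{n-1}(S^{d};\Bbbk)$ is nonzero exactly when $n-1=d$ — and for $d=-1$ this reads off the empty complex via the convention $\til H^{-1}(\emptyset;\Bbbk)=\Bbbk$ — the largest $n$ with $\til H^{n-1}(\Delta(\bd e_YB);\Bbbk)\neq0$ is precisely $n=\dim e_Y$. Therefore Theorem~\ref{t:cohomdim} yields
\[
\cdim_\Bbbk B=\max_{Y\in\Lambda(B)}\dim e_Y=\max_{Y\in\Lambda(B)}\dim\Delta(e_YB),
\]
the quantity being independent of the chosen representatives since $Bb=Bb'$ forces $bB\cong b'B$ as left regular band monoids.

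It remains to identify this maximum with $\dim\CW(B)$. Because $B$ is a monoid, the identity $1$ is the maximum of $B$ and $B\cdot 1=B=\wh 1$ is the top of $\Lambda(B)$, so one may take $e_{\wh 1}=1$; then $\dim e_{\wh 1}=\dim\Delta(B_{\leq 1})=\dim\Delta(B)=\dim\CW(B)$. Conversely each $\Delta(e_YB)$ is a subcomplex of $\Delta(B)$, whence $\dim e_Y\leq\dim\Delta(B)$ for every $Y$. This pins the maximum at $\dim\CW(B)$ and gives $\cdim_\Bbbk B=\dim\CW(B)$ for every commutative ring with unit $\Bbbk$. Since the right-hand side does not depend on $\Bbbk$, specializing to $\Bbbk=\mathbb Z$ gives $\cdim B=\dim\CW(B)$ as well, establishing the full chain of equalities. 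The ``in particular'' clause then follows because each listed family consists of left regular band monoids that are CW posets: this is Proposition~\ref{p:topology} applied at $X=\wh 0$ (the minimum of the support semilattice), together with the observation that these monoids of covectors contain the all-zero covector as identity.

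Finally I would note where the only real care is needed: the argument hinges on recognizing $\bd e_YB$ as the strict lower set $(e_YB)_{<e_Y}$ of a CW poset with maximum $e_Y$, so that the sphericity axiom applies directly, and on checking that the maximal cell dimension is attained at the identity rather than at a proper deletion. Both points become routine once the lower-set identity $e_YB=B_{\leq e_Y}$ is established, so I do not anticipate a genuine obstacle beyond this bookkeeping.
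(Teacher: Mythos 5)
Your proof is correct and follows essentially the same route as the paper: the paper's (very terse) proof likewise observes that the $\|\Delta(\bd e_YB)\|$ are the boundary spheres of the closed cells of $\CW(B)$ and then invokes Theorem~\ref{t:cohomdim}. You have simply filled in the bookkeeping (the identification $e_YB=B_{\leq e_Y}$, the sphericity from the CW poset axiom, and the attainment of the maximal dimension at the identity), all of which is accurate.
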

\begin{proof}
The $\|\Delta(\bd e_YB)\|$ are precisely the boundary spheres of the closed cells of $\CW(B)$. The result is then immediate from Theorem~\ref{t:cohomdim}.
\end{proof}

\begin{Cor}
Let $B$ be a left regular band which is an acyclic CW poset.  Then $\cdim_\Bbbk B^1=\cdim B^1=\dim \CW(B)$.  This applies in particular to covector semigroups of COMs such as affine hyperplane arrangements, CAT(0) cube complexes and affine oriented matroids.
\end{Cor}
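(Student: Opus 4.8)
The plan is to apply Theorem~\ref{t:cohomdim} directly to the monoid $B^1$. Since $B^1$ is a left regular band monoid (and monoids are connected), that theorem asserts $\cdim_{\Bbbk} B^1$ is the largest $n$ for which $\til H^{n-1}(\Delta(\bd e_Y B^1);\Bbbk)\neq 0$ for some $Y\in \Lambda(B^1)$. First I would record the effect of adjoining an identity on the support semilattice: since $B^1 b = Bb$ for every $b\in B$ (as $b$ is idempotent) while $B^1\cdot 1 = B^1$, adjoining $1$ creates exactly one new principal left ideal, so that $\Lambda(B^1)=\Lambda(B)\cup\{\wh 1\}$ with $\wh 1 = B^1$ an adjoined maximum. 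The element $1$ is the unique element of support $\wh 1$, and for $Y\in\Lambda(B)$ one takes an idempotent $e_Y\in B$ with $Be_Y=Y$.

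Next I would evaluate the two families of boundary complexes. For $Y=\wh 1$ we have $e_{\wh 1}=1$ and $\bd e_{\wh 1}B^1=\bd B^1 = B^1\setminus\{1\}=B$, so the relevant complex is $\Delta(B)$; as $B$ is acyclic by hypothesis, $\til H^{\ast}(\Delta(B);\Bbbk)=0$ over every $\Bbbk$, and this $Y$ contributes nothing. For $Y\in\Lambda(B)$, idempotence of $e_Y$ gives $e_Y B^1 = e_Y B = B_{\leq e_Y}$, the closed cell of the CW poset $B$ determined by $e_Y$, whence $\bd e_Y B^1 = \bd e_Y B = B_{<e_Y}$. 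Because $B$ is a CW poset, $\|\Delta(B_{<e_Y})\|$ is a $(\dim e_Y-1)$-sphere, so $\til H^{n-1}(\Delta(\bd e_Y B^1);\Bbbk)\neq 0$ precisely when $n=\dim e_Y$ (the convention $S^{-1}=\emptyset$ covers $\dim e_Y=0$).

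It then remains to maximize over $Y$. Every cell $b$ of $B$ lies in the $\mathscr L$-class $L_{Bb}$, and $Ba=Bb$ implies $aB\cong bB$, so elements of a fixed $\mathscr L$-class determine isomorphic, hence equidimensional, closed cells; therefore $\{\dim e_Y\mid Y\in\Lambda(B)\}$ is exactly the set of dimensions of cells of $B$, with maximum $\dim\CW(B)$. Combining this with the vanishing contribution of $\wh 1$, Theorem~\ref{t:cohomdim} gives $\cdim_{\Bbbk}B^1=\dim\CW(B)$ for every commutative ring with unit $\Bbbk$; taking $\Bbbk=\mathbb Z$ yields $\cdim B^1=\dim\CW(B)$, and independence of $\Bbbk$ reflects that spheres have the same reduced cohomology over all coefficient rings. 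The stated examples follow since each such $B$ is a contractible, hence acyclic, CW poset by Proposition~\ref{p:topology}. The step demanding the most care is the bookkeeping of $\Lambda(B^1)$ and these two boundary families: the crux is recognizing that adjoining $1$ creates a single new ``top ball'' whose boundary is all of $\Delta(B)$, so that acyclicity of $B$ is precisely what keeps this adjoined cell from raising the cohomological dimension, leaving it governed entirely by the genuine cells of $B$.
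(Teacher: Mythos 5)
Your proof is correct and follows the same route as the paper's: the paper likewise applies Theorem~\ref{t:cohomdim} to $B^1$, observes that $\bd B^1=B$ is acyclic and that $\bd e_YB^1=\bd e_YB$ for $Y\in\Lambda(B)$ are the boundary spheres of the closed cells of $\CW(B)$, and concludes by maximizing over the cell dimensions. Your write-up just makes explicit the bookkeeping of $\Lambda(B^1)$ and the $\mathscr L$-class invariance of cell dimension that the paper leaves implicit.
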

\begin{proof}
One has $\Delta(\bd B^1) = \Delta(B)$ is acyclic and $\bd e_YB^1=\bd e_YB$ for $Y\in \Lambda(B)$.
The $\|\Delta(\bd e_YB)\|$ are precisely the boundary spheres of the closed cells of $\CW(B)$. The result is then immediate from Theorem~\ref{t:cohomdim}.
\end{proof}

We end by giving a surprising connection between the cohomological dimension of free partially commutative left regular bands and the Leray number of flag complexes.  Connections between right-angled Artin groups and flag complexes can be found in~\cite{BestvinaBrady}.

\begin{Cor}
If $\Gamma$ is a graph, then the $\Bbbk$-cohomological dimension of the free partially commutative left regular band associated to $\Gamma$ is the $\Bbbk$-Leray number of $\Gamma$. In particular, $\cdim (B(\Gamma))\leq 1$ if and only if $\Gamma$ is chordal.
\end{Cor}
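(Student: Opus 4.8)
The plan is to combine the computation of cohomological dimension in Theorem~\ref{t:cohomdim} with the topological identification of the boundary complexes of $B(\Gamma)$ furnished by Proposition~\ref{p:topology}. Since $B(\Gamma)$ is a left regular band \emph{monoid} with $\Lambda(B(\Gamma))\cong (P(V),\cup)$ (so that the order is reverse inclusion, with minimum $\wh 0 = V$ and maximum $\wh 1=\emptyset$), Theorem~\ref{t:cohomdim} says that $\cdim_\Bbbk B(\Gamma)$ is the largest $n$ for which $\til H^{n-1}(\Delta(\bd e_YB(\Gamma));\Bbbk)\neq 0$ for some $Y\in \Lambda(B(\Gamma))$, where $e_Y$ has support $Y\subseteq V$. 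First I would identify each such boundary complex as an induced subcomplex of $\Cliq(\Gamma)$. Taking $X=V$ in Proposition~\ref{p:topology}(6), so that $B(\Gamma)_{\geq V}=B(\Gamma)$ (equivalently, applying to $\Gamma[V\setminus Y]$ the core homotopy equivalence $\Delta(\bd B(\Gamma'))\simeq \Cliq(\Gamma')$ from its proof, together with the deletion isomorphism $e_YB(\Gamma)\cong B(\Gamma[V\setminus Y])$), yields a homotopy equivalence $\Delta(\bd e_YB(\Gamma))\simeq \Cliq(\Gamma[V\setminus Y])=\Cliq(\Gamma)[V\setminus Y]$. As $Y$ ranges over $P(V)$, the complement $W=V\setminus Y$ ranges over all of $P(V)$, and since homotopy equivalences preserve reduced cohomology this gives
\[
\cdim_\Bbbk B(\Gamma)=\max\{n\geq 0\mid \til H^{n-1}(\Cliq(\Gamma)[W];\Bbbk)\neq 0\ \text{for some}\ W\subseteq V\}.
\]

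The remaining step is a bookkeeping comparison of this quantity with the Leray number $L_\Bbbk(\Cliq(\Gamma))=\min\{d\mid \til H^i(\Cliq(\Gamma)[W];\Bbbk)=0,\ \forall i\geq d,\ \forall W\}$. Writing $K=\Cliq(\Gamma)$ and letting $m$ denote the displayed maximum, I would first observe that $m\geq 0$ is always attained: for $Y=V$ (i.e.\ $W=\emptyset$) the complex $K[\emptyset]$ is empty and $\til H^{-1}(\emptyset;\Bbbk)=\Bbbk\neq 0$, so $n=0$ meets the condition. If $m\geq 1$, there is a $W$ with $\til H^{m-1}(K[W])\neq 0$ and $m-1\geq 0$, which forces $L_\Bbbk(K)\geq m$, while $\til H^{n-1}(K[W])=0$ for all $n>m$ and all $W$ gives $L_\Bbbk(K)\leq m$; hence $L_\Bbbk(K)=m$. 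If instead $m=0$, then $\til H^i(K[W])=0$ for all $i\geq 0$ and all $W$, so $L_\Bbbk(K)=0=m$ as well, the degree $-1$ term never entering the Leray condition because the minimizing $d$ is forced to be nonnegative. In either case $\cdim_\Bbbk B(\Gamma)=L_\Bbbk(\Cliq(\Gamma))$, which is the main assertion.

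Finally, the ``in particular'' follows by specializing to $\Bbbk=\mathbb Z$, giving $\cdim B(\Gamma)=\cdim_{\mathbb Z}B(\Gamma)=L_{\mathbb Z}(\Cliq(\Gamma))$, and then invoking the known characterization recalled earlier that $L_\Bbbk(K)\leq 1$ precisely when $K$ is the clique complex of a chordal graph. I expect the only genuinely delicate point to be the degree-shift bookkeeping of the second paragraph: one must track carefully the role of the empty induced subcomplex, which produces the unavoidable $\til H^{-1}$ term (hence $\cdim\geq 0$), and confirm that this is exactly the contribution excluded from the Leray number by the nonnegativity of $d$, so that the two extremal degrees match with no off-by-one error.
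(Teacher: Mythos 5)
Your proof is correct and takes essentially the same approach as the paper, whose entire argument for this corollary reads ``This is immediate from Proposition~\ref{p:topology} and Theorem~\ref{t:cohomdim}.'' Your two paragraphs simply supply the degree-shift and empty-subcomplex bookkeeping (the $\til H^{-1}$ contribution from $Y=V$ and the matching nonnegativity of $d$ in the Leray number) that the word ``immediate'' suppresses; there are no gaps.
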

\begin{proof}
This is immediate from Proposition~\ref{p:topology} and Theorem~\ref{t:cohomdim}.
\end{proof}

It is well known that in $M$ and $N$ are monoids, then
\begin{equation}\label{chdir}
\max\{\cdim M,\cdim N\}\leq \cdim (M\times N)\leq \cdim (M)+\cdim(N)
\end{equation}
 cf.~\cite{GubaPride}. In particular, if $\cdim N=0$, then $\cdim (M\times N)=\cdim M$. Also, it is known that if $M$ is a monoid containing a right zero, then $\cdim M=0$. Indeed, if $e$ is a right zero then $\mathbb Z\cong \mathbb ZMe$ and so $\mathbb Z$ is projective and hence has projective dimension $0$. In particular, if $B$ is a left regular band monoid then $\cdim B^{op}=0$.  Recall that a \emph{regular band}\index{regular band} is a subdirect product of a left regular band and a right regular band (defined dually).

\begin{Cor}
For any pair of non-negative integers $m,n$, there is a finite regular band monoid $B$ with $\cdim B=m$ and $\cdim B^{op}=n$.
\end{Cor}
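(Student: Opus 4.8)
The plan is to build $B$ as a direct product of a ladder with the opposite of another ladder. Recall from Example~\ref{ladders} that the ladder $L_k=\mathsf S^{k}(\{0\})$ is a left regular band monoid which is a CW poset of dimension $k$; in particular it is a band, and reversing the multiplication makes $L_k^{op}$ a right regular band monoid. I would set
\[
B=L_m\times L_n^{op}.
\]
Since $L_m$ is a left regular band and $L_n^{op}$ is a right regular band, the direct product $B$ is a subdirect product of a left regular band and a right regular band (the two coordinate projections supply the required surjections), so $B$ is a finite regular band monoid with identity $(1,1)$. Its opposite is $B^{op}=L_m^{op}\times L_n$, which is again a regular band monoid.

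Next I would compute $\cdim B$. By Corollary~\ref{c:Cwposetcohom}, since $L_m$ is a CW poset, $\cdim L_m=\dim\CW(L_m)=m$. On the other hand $L_n^{op}$ is the opposite of the left regular band monoid $L_n$, so $\cdim L_n^{op}=0$ by the remark that $\cdim C^{op}=0$ for every left regular band monoid $C$ (indeed, such an opposite contains a right zero). The direct product inequality \eqref{chdir} specializes, when one factor has cohomological dimension $0$, to $\cdim(M\times N)=\cdim M$; applying this with $M=L_m$ and $N=L_n^{op}$ yields $\cdim B=\cdim L_m=m$.

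Finally the computation of $\cdim B^{op}$ is entirely symmetric: writing $B^{op}=L_m^{op}\times L_n$, we have $\cdim L_m^{op}=0$ (as $L_m$ is a left regular band monoid) and $\cdim L_n=n$ (again by Corollary~\ref{c:Cwposetcohom}), so \eqref{chdir} gives $\cdim B^{op}=\cdim L_n=n$. The construction is valid for all $m,n\ge 0$, the degenerate cases $m=0$ or $n=0$ reducing to $L_0=\{0\}$, which has cohomological dimension $0$. I do not expect a serious obstacle here: the only points needing care are verifying that $L_m\times L_n^{op}$ genuinely qualifies as a regular band (immediate from the definition as a subdirect product) and invoking the fact that the opposite of a ladder monoid has cohomological dimension $0$, so that in each product the opposite factor contributes nothing and the surviving factor—a ladder realized as a CW poset of the prescribed dimension—determines the answer.
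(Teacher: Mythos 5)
Your proposal is correct and follows exactly the paper's argument: the paper also takes $B=L_m\times L_n^{op}$, computes $\cdim L_m=m$ and $\cdim L_n=n$ via Corollary~\ref{c:Cwposetcohom}, and concludes with \eqref{chdir} together with the observation that the opposite of a left regular band monoid has cohomological dimension $0$. The only difference is that you spell out the details (the subdirect product verification, the degenerate cases) more explicitly than the paper does.
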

\begin{proof}
Recall the definition of ladders from Example~\ref{ladders}.
We have $\cdim L_m=m$ and $\cdim L_n=n$ by Corollary~\ref{c:Cwposetcohom} and so if $B=L_m\times L_n^{op}$, then $\cdim B =\cdim L_m=m$ and $\cdim B^{op}=\cdim L_n=n$ by \eqref{chdir}.
\end{proof}

Of course, we can replace $L_m$ and $L_n$ by face monoids of essential hyperplane arrangements in $\mathbb R^m$ and $\mathbb R^n$, respectively, and still get the same result.
In~\cite{GubaPride} it is shown that, for any monoid $M$ and for any $0\leq m,n\leq \infty$, there is a monoid $N$ containing $M$ as a submonoid with $\cdim N=m$ and $\cdim N^{op}=n$.

\section*{Acknowledgments}
We would like to thank Jonathan Leech for reading an earlier version of this manuscript and for sending us corrections and helpful comments.  We also would  like to thank the referee for a very thorough reading of the manuscript and for making extensive and very useful remarks.

\newpage
\bibliographystyle{abbrv}
\bibliography{lrbcomplexes}

\newpage
\printnomenclature[1.0in]

\newpage
\printindex

\end{document}